\documentclass[12pt]{amsart}

\usepackage{amsmath} 
\usepackage{amsthm} 
\usepackage{amsfonts} 
\usepackage{amssymb} 
\usepackage{stmaryrd} 
\usepackage{mathtools} 
\usepackage[all]{xy} 
\usepackage{tikz-cd} 
\usepackage{array} 
\usepackage{enumerate} 
\usepackage[shortlabels]{enumitem} 
\usepackage{fancyhdr} 
\usepackage{imakeidx}

\usepackage[marginratio=1:2,paper=letterpaper,tmargin=52pt,lmargin=74pt,rmargin=74pt,includehead]{geometry} 

\usepackage[pagebackref=true,
]{hyperref} 
\hypersetup{colorlinks=true,linkcolor=blue,filecolor=magenta,urlcolor=cyan,final} 

\usepackage{bm}

\usepackage{comment}
\usepackage{extarrows}

\makeatletter

\def\chaptermark#1{}

\def\chapter{%
	\if@openright\cleardoublepage\else\clearpage\fi
	\thispagestyle{plain}\global\@topnum\z@
	\@afterindenttrue \secdef\@chapter\@schapter}

\def\@chapter[#1]#2{\refstepcounter{chapter}%
	\ifnum\c@secnumdepth<\z@ \let\@secnumber\@empty
	\else \let\@secnumber\thechapter \fi
	\typeout{\chaptername\space\@secnumber}%
	\def\@toclevel{0}%
	\ifx\chaptername\appendixname \@tocwriteb\tocappendix{chapter}{#2}%
	\else \@tocwriteb\tocchapter{chapter}{#2}\fi
	\chaptermark{#1}%
	\addtocontents{lof}{\protect\addvspace{10\p@}}%
	\addtocontents{lot}{\protect\addvspace{10\p@}}%
	\@makechapterhead{#2}\@afterheading}
\def\@schapter#1{\typeout{#1}%
	\let\@secnumber\@empty
	\def\@toclevel{0}%
	\ifx\chaptername\appendixname \@tocwriteb\tocappendix{chapter}{#1}%
	\else \@tocwriteb\tocchapter{chapter}{#1}\fi
	\chaptermark{#1}%
	\addtocontents{lof}{\protect\addvspace{10\p@}}%
	\addtocontents{lot}{\protect\addvspace{10\p@}}%
	\@makeschapterhead{#1}\@afterheading}
\newcommand\chaptername{Chapter}

\def\@makechapterhead#1{\global\topskip 7.5pc\relax
	\begingroup
	\fontsize{\@xivpt}{18}\bfseries\centering
	\ifnum\c@secnumdepth>\m@ne
	\leavevmode \hskip-\leftskip
	\rlap{\vbox to\z@{\vss
			\centerline{\normalsize\mdseries
				\uppercase\@xp{\chaptername}\enspace\thechapter}
			\vskip 3pc}}\hskip\leftskip\fi
	#1\par \endgroup
	\skip@34\p@ \advance\skip@-\normalbaselineskip
	\vskip\skip@ }
\def\@makeschapterhead#1{\global\topskip 7.5pc\relax
	\begingroup
	\fontsize{\@xivpt}{18}\bfseries\centering
	#1\par \endgroup
	\skip@34\p@ \advance\skip@-\normalbaselineskip
	\vskip\skip@ }
\def\appendix{\par
	\c@chapter\z@ \c@section\z@
	\let\chaptername\appendixname
	\def\thechapter{\@Alph\c@chapter}}

\newcounter{chapter}

\newif\if@openright

\makeatother

\newtheorem{thm}{Theorem}[chapter]
\newtheorem{lem}[thm]{Lemma}
\newtheorem{prop}[thm]{Proposition}
\newtheorem{cor}[thm]{Corollary}

\theoremstyle{definition}
\newtheorem{dfn}[thm]{Definition}
\newtheorem{ex}[thm]{Example}

\newtheorem{set}[thm]{Setting}

\theoremstyle{remark}
\newtheorem{remk}[thm]{Remark}
\newtheorem{quest}[thm]{Question}
\newtheorem{assumption}[thm]{Assumption}

\newtheorem{open}{Question}

\numberwithin{section}{chapter}
\numberwithin{equation}{chapter}

\DeclareMathOperator{\Aut}{Aut}

\DeclareMathOperator{\Tors}{Tors}
\DeclareMathOperator{\Ext}{Ext}
\DeclareMathOperator{\Hom}{Hom}
\DeclareMathOperator{\Res}{Res}
\DeclareMathOperator{\im}{im}
\DeclareMathOperator{\Id}{Id}
\DeclareMathOperator{\lc}{\centerdot}
\DeclareMathOperator{\Tot}{Tot}
\DeclareMathOperator{\Ad}{Ad}
\DeclareMathOperator{\gen}{gen}
\DeclareMathOperator{\Gdm}{Gdm}
\DeclareMathOperator{\UCT}{UCT}
\DeclareMathOperator{\Homm}{\mathcal{H}om}
\DeclareMathOperator{\Extt}{\mathcal{E}xt}
\DeclareMathOperator{\res}{res}

\DeclareMathOperator{\Cone}{Cone}

\DeclareMathOperator{\Log}{Log}

\newcommand{\bH}{\mathbb{H}}
\newcommand{\bP}{{\mathbb P}}
\newcommand{\cB}{{\mathcal B}}
\newcommand{\cL}{{\mathcal L}}
\newcommand{\cM}{{\mathcal M}}
\newcommand{\cF}{{\mathcal F}}

\newcommand{\cE}{{\mathcal E}}
\newcommand{\cA}{{\mathcal A}}
\newcommand{\uR}{\underline{\mathbb R}}

\newcommand{\ul}[1]{\underline{#1}}
\newcommand{\wt}[1]{\widetilde{#1}}
\newcommand{\wh}[1]{\widehat{#1}}
\newcommand{\ov}[1]{\overline{#1}}
\newcommand{\onarrow}[1]{\overset{#1}{\longrightarrow}}
\newcommand{\darg}{\Im\frac{df}{f}}

\newcommand{\hyp}[1]{\hyperref[#1]{\ref{#1}}} 

\newcommand{\bi}[1]{\textbf{\textit{#1}}} 

\newcommand{\C}{\mathbb{C}} 
\newcommand{\N}{\mathbb{N}} 
\newcommand{\Q}{\mathbb{Q}} 
\newcommand{\R}{\mathbb{R}} 
\newcommand{\Z}{\mathbb{Z}} 

\newcommand{\calA}{\mathcal{A}}
\newcommand{\calB}{\mathcal{B}}
\newcommand{\calC}{\mathcal{C}}

\newcommand{\calE}{\mathcal{E}}
\newcommand{\calF}{\mathcal{F}}
\newcommand{\calG}{\mathcal{G}}
\newcommand{\calH}{\mathcal{H}}

\newcommand{\calK}{\mathcal{K}}
\newcommand{\calL}{\mathcal{L}}
\newcommand{\calM}{\mathcal{M}}

\newcommand{\calO}{\mathcal{O}}

\newcommand{\logdr}[2]{\Omega^\bullet_{#1}(\log #2)} 

\newcommand{\Gr}{\mathrm{Gr}} 
\newcommand{\id}{\mathrm{id}} 
\newcommand{\Hdg}[2]{\calH dg^\bullet(#1\,\log #2)} 

\newcommand{\f}[5]{
	\begin{array}{rcl}
		#1\colon #2 & \longrightarrow & #3 \\
		#4 & \longmapsto & #5 \\
	\end{array}
}

\newcommand{\ff}[4]{
	\begin{array}{rcl}
		#1 & \longrightarrow & #2 \\
		#3 & \longmapsto & #4 \\
	\end{array}
}

\title[]{Mixed Hodge Structures on Alexander Modules}


\author{Eva Elduque}
\address{Department of Mathematics, University of Michigan-Ann Arbor, 530 Church St, Ann Arbor, MI 48109, USA}
\curraddr{Departamento de Matem\' aticas, Universidad Aut\' onoma de Madrid, 28049 Madrid, Spain}
\urladdr{https://matematicas.uam.es/~eva.elduque/}
\email{eva.elduque@uam.es}
\thanks{E. Elduque was partially supported by an AMS-Simons Travel Grant}

\author{Christian Geske}
\address{Department of Mathematics, Northwestern University, 2033 Sheridan Rd, Evanston, IL 60208, USA}
\urladdr{https://sites.math.northwestern.edu/~cgeske/}
\email{christian.geske@northwestern.edu}
\thanks{}

\author{Mois\'es Herrad\'on Cueto}
\address{Department of Mathematics, Louisiana State University, 303 Lockett Hall, Baton Rouge, LA 70803, USA}
\curraddr{Departamento de Matem\' aticas, Universidad Aut\' onoma de Madrid, 28049 Madrid, Spain}
\email{moises@lsu.edu}
\urladdr{https://math.lsu.edu/~moises/}
\thanks{M. Herrad\'on Cueto was partially supported by an AMS-Simons Travel Grant}

\author{Lauren\c{t}iu Maxim}
\address{Department of Mathematics, University of Wisconsin-Madison, 480 Lincoln Drive, Madison WI 53706-1388, USA}
\email{maxim@math.wisc.edu}
\urladdr{https://people.math.wisc.edu/~maxim/}
\thanks{L. Maxim is partially supported by the Simons Foundation (Collaboration Grant \#567077) and by the Romanian Ministry of National Education (grant PN-III-P4-ID-PCE-2020-0029).}

\author{Botong Wang}
\address{Department of Mathematics, University of Wisconsin-Madison, 480 Lincoln Drive, Madison WI 53706-1388, USA}
\email{wang@math.wisc.edu}
\urladdr{https://people.math.wisc.edu/~wang/}
\thanks{B. Wang is partially supported by the NSF grant DMS-1701305 and by a Sloan Fellowship}

\keywords{infinite cyclic cover, Alexander module, mixed Hodge structure, thickened complex, limit mixed Hodge structure, semisimplicity}

\subjclass[2020]{Primary 14C30, 14D07, 32S35. Secondary 14F45, 32S30, 32S40, 55N30}

\dedicatory{Dedicated to the memory of Prof. \c Stefan Papadima}

	\makeindex

\begin{document}
	
	\date{\today}
	
	\maketitle
	
		\begin{abstract}
		Motivated by the limit mixed Hodge structure on the Milnor fiber of a hypersurface singularity germ, we construct a natural mixed Hodge structure on the torsion part of the Alexander modules of a smooth connected complex algebraic variety. More precisely, let $U$ be a smooth connected complex algebraic variety and let $f\colon U\to \C^*$ be an algebraic map inducing an epimorphism in fundamental groups. The pullback of the universal cover of $\C^*$ by $f$ gives rise to an infinite cyclic cover $U^f$ of $U$. The action of the deck group $\Z$ on $U^f$ induces a $\Q[t^{\pm 1}]$-module structure on $H_*(U^f;\Q)$. We show that the torsion parts $A_*(U^f;\Q)$ of the Alexander modules $H_*(U^f;\Q)$ carry canonical $\Q$-mixed Hodge structures. We also prove that the covering map $U^f \to U$ induces a mixed Hodge structure morphism on the torsion parts of the Alexander modules. As applications, we investigate the semisimplicity of $A_*(U^f;\Q)$, as well as possible weights of the constructed mixed Hodge structures. 
		Finally, in the case when $f\colon U\to \C^*$ is proper, we prove the semisimplicity and purity of $A_*(U^f;\Q)$, and we compare our mixed Hodge structure on $A_*(U^f;\Q)$ with the limit mixed Hodge structure on the generic fiber of $f$. 
	\end{abstract}

\newpage

\tableofcontents


\chapter{Introduction}

Let $U$ be a connected topological space of finite homotopy type, and let \begin{align*} \xi\colon\pi_1(U) \twoheadrightarrow \mathbb{Z}\end{align*} be an epimorphism. Denote by $U^\xi$ the infinite cyclic cover of $U$ corresponding to $\ker \xi$. 
Let $k$ be a subfield of $\mathbb{R}$, and denote by $R=k[t^{\pm 1}]$ the ring of Laurent polynomials in variable $t$ with $k$-coefficients.
The group of covering transformations of $U^\xi$ is isomorphic to $\mathbb{Z}$, and it induces an $R$-module structure on each group $H_i(U^\xi;k)$. By analogy with knot theory, the $R$-module $H_i(U^\xi;k)$ is called the $i$-th (homology) {\it $k$-Alexander module} of the pair $(U,\xi)$. Since $U$ is homotopy equivalent to a finite CW-complex, $H_i(U^\xi;k)$ is a finitely generated $R$-module, for each integer $i$.

Note that $\xi\colon \pi_1(U) \to \mathbb{Z}$ can be regarded as an element in $H^1(U;\mathbb{Z})$ via the canonical identification:
\[
\Hom\big(\pi_1(U),\mathbb{Z}\big) \cong \Hom\big(H_1(U;\mathbb{Z}),\mathbb{Z}\big) \cong H^1(U;\mathbb{Z}).
\]
 Moreover, any such class in $H^1(U;\mathbb{Z})$ is represented by a homotopy class of continuous maps $U \to S^1$. Whenever such a representative $f\colon U \to S^1$ for $\xi$ is fixed (that is, $\xi=f_{*}$), we will also use the notation $U^f$ for the corresponding infinite cyclic cover of $U$.

For example, if $f\colon U \to S^1$ is a fiber bundle with connected fiber $F$ a finite CW-complex, then $\xi=f_*\colon \pi_1(U) \to \pi_1(S^1)=\mathbb{Z}$ is surjective, and the corresponding infinite cyclic cover $U^f$ is homeomorphic to $F \times \mathbb{R}$ and hence homotopy equivalent to $F$. The deck group action on $H_i(U^f;k)$ is isomorphic 
(up to a choice of orientation on $S^1$, as described in Lemma \ref{lem:fiberMonodromy}) to the 
monodromy action on $H_i(F;k)$, which gives the latter vector spaces $R$-module structures. Therefore $H_i(U^f;k)\cong H_i(F;k)$ is a torsion $R$-module for all $i \geq 0$. This applies in particular to the following geometric situations:

\begin{enumerate}[(a)]
\item a smooth proper surjective submersion (with connected fibers) $f\colon U \to \Delta^*$ from an open set of a smooth complex algebraic variety to a punctured disc.
\item the Milnor fibration $f\colon U \to S^1$ associated to a reduced complex hypersurface singularity germ, with $F$ the corresponding Milnor fiber.
\item \label{item:milnor} the global (affine) Milnor fiber $f\colon U=\mathbb{C}^n \setminus \{f=0\} \to \mathbb{C}^*$, where $f$ is a square-free homogeneous polynomial in $n$ complex variables and $F=f^{-1}(1)$.
\end{enumerate}

In a different vein, it was shown in \cite{LMW} that if $U$ is a smooth quasi-projective variety of complex dimension $n$, admitting a proper semi-small map (e.g., a finite map or a closed embedding) to some complex semiabelian variety, then for any {\it generic} epimorphism $\xi\colon \pi_1(U) \to \mathbb{Z}$ the corresponding Alexander modules $H_i(U^\xi; k)$ are torsion $R$-modules for all $i \neq n$. 

However, for an arbitrary topological space $U$ of finite homotopy type, the Alexander modules $H_i(U^\xi; k)$ are not torsion $R$-modules in general. One then considers the torsion part 
\[
A_i(U^\xi;k)\coloneqq \Tors_R H_i(U^\xi;k)	
\]
of the $R$-module $H_i(U^\xi;k)$. This is a 
$k$-vector space of finite dimension on which a generating covering transformation (i.e., $t$-multiplication) acts as a linear automorphism. Moreover, if $U$ is a smooth complex algebraic variety, then all eigenvalues of the $t$-action on $A_i(U^\xi;k)$ are roots of unity, for any integer $i$ (see Proposition \ref{eig1}). 

\medskip

In this paper, we assume that $U$ is a smooth complex algebraic variety and we investigate the existence of mixed Hodge structures on $A_i(U^\xi;k)$ for $k=\mathbb{Q}$ or $\mathbb{R}$. Note that $U^{\xi}$ is not in general a complex algebraic variety, so the classical Deligne theory does not apply. Specifically, we address the following question, communicated to the authors by \c{S}tefan Papadima:

\begin{quest}\label{conj}
Let $U$ be a smooth connected complex algebraic variety. Let $\xi\colon \pi_1(U) \to \mathbb{Z}$ be an epimorphism with corresponding infinite cyclic cover $U^{\xi}$. Is there a natural $\Q$-mixed Hodge structure on the torsion part $A_i(U^\xi;k)$ of the $\Q[t^{\pm 1}]$-module $H_i(U^{\xi};\Q)$, for all $i \geq 0$?
\end{quest}

The purpose of this paper is to give a positive answer to Question \ref{conj} in the case when the epimorphism $\xi\colon \pi_1(U) \to \mathbb{Z}$ is realized by an {\it algebraic} map $f\colon U \to \mathbb{C}^*$. We remark here that a homomorphism $\xi\colon \pi_1(U)\to \Z$ is induced by an algebraic map $f\colon U\to \C^*$ if and only if, when considered as an element in $H^1(U; \Z)$, $\xi$ is of type $(1, 1)$, that is 
$\xi\in F^1H^1(U; \C)\cap \overline{F^1H^1(U; \C)}.$
This is a consequence of Deligne's theory of $1$-motives (cf. \cite[(10.I.3)]{De3}). 
In this algebraic context, Question \ref{conj} has already been answered positively in the following special situations:
\begin{enumerate}
\item\label{hain} When $H_i(U^{\xi};\Q)$ is $\Q[t^{\pm 1}]$-torsion for all $i \geq 0$ and the $t$-action is unipotent, see \cite{hain1987rham}; 
\item \label{dlt}When $f\colon U=\mathbb{C}^n \setminus \{f=0\}\to \mathbb{C}^*$ is induced by a reduced complex polynomial $f\colon \mathbb{C}^n \to \mathbb{C}$ which is {\it transversal at infinity} (i.e., the hyperplane at infinity in $\mathbb{C}P^n$ is transversal in the stratified sense to the projectivization of $\{f=0\}$), for $\xi=f_*$ and $i<n$; see \cite{DL, Liu}. In fact, in this case it was shown in \cite{Max06} that the corresponding Alexander modules $H_i(U^\xi;\mathbb{Q})$ are torsion $\mathbb{Q}[t^{\pm 1}]$-modules for $i<n$, while $H_n(U^\xi;\mathbb{Q})$ is free and $H_i(U^\xi;\mathbb{Q})=0$ for $i>n$. Furthermore, the $t$-action on $H_i(U^\xi;\mathbb{C})$ is diagonalizable (semisimple) for $i<n$, and the corresponding eigenvalues
are roots of unity of order $d=\deg(f)$.
\item\label{lkk} When $f\colon U=\mathbb{C}^n \setminus \{f=0\}\to \mathbb{C}^*$ is induced by a complex polynomial $f\colon \mathbb{C}^n \to \mathbb{C}$ which has at most isolated singularities, including at infinity, in the sense that both the projectivization of $\{f=0\}$ and its intersection with the hyperplane at infinity have at most isolated singularities. In this case, and with $\xi=f_*$, there is only one interesting Alexander module, $H_{n-1}(U^\xi;\mathbb{Q})$, which is torsion (see \cite[Theorem 4.3, Remark 4.4]{Lib94}), and a mixed Hodge structure on it was constructed in \cite{Lib96}; see also \cite{KK} for the case of plane curves under some extra conditions.
\end{enumerate}

In this paper we prove the following general statement (see Corollary \ref{halexandermhs}):

\begin{thm}\label{mhsexistence} 
Let $U$ be a smooth connected complex algebraic variety, with an algebraic map $f\colon U \to \mathbb{C}^*$. Assume that $\xi=f_*\colon\pi_1(U) \to \mathbb{Z}$ is an epimorphism, and denote by $U^f = \{(x,z)\in U\times \C \mid f(x) = e^z \}$ the corresponding infinite cyclic cover. Then the torsion part $A_i(U^f;\Q)$ of the $\mathbb{Q}[t^{\pm 1}]$-module 
$H_i(U^f;\mathbb{Q})$ carries a canonical $\Q$-mixed Hodge structure for any $i\geq 0$.

Suppose $N$ is a positive integer, chosen such that $t^N$ acts unipotently on $A_i(U^f;\Q)$. Let $\log(t^N)$ denote the Taylor series centered at $t^N=1$. Then the action of $\log(t^N)$ is a mixed Hodge structure morphism $A_i(U^f;\Q)\to A_i(U^f;\Q)(-1)$, where $(-1)$ denotes the Tate twist of mixed Hodge structures.
\end{thm}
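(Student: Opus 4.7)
My plan is to build the mixed Hodge structure on $A_i(U^f;\mathbb{Q})$ via an algebraic proxy for the (non-algebraic) infinite cyclic cover, in the spirit of Steenbrink's limit mixed Hodge structure; the central technical device is a \emph{thickened logarithmic de Rham complex} on a good compactification of $U$. First I would perform two reductions: (i) by Proposition \ref{eig1}, every eigenvalue of $t$ on $A_i(U^f;\mathbb{Q})$ is a root of unity, so replacing $f$ by $f'(x,w)=w$ on the finite \'etale cover $U'=\{(x,w)\in U\times \mathbb{C}^*\mid w^N=f(x)\}$ yields $(U')^{f'}\cong U^f$ with unipotent deck-transformation action on the Alexander torsion, reducing everything to the unipotent case modulo Galois descent along the covering group $\mathbb{Z}/N$; and (ii) choose a smooth projective compactification $j\colon U\hookrightarrow X$ with $D=X\setminus U$ a simple normal crossing divisor, to which $f$ extends as $\bar f\colon X\to \mathbb{P}^1$, so that $df/f$ becomes a section of $\Omega^1_X(\log D)$.

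For each $k\geq 1$, let $V_k=\mathbb{Q}[t]/(t-1)^k$ with nilpotent endomorphism $N_k=\log t$ (a finite Taylor polynomial), and form the thickened complex
\begin{equation*}
\bigl(\Omega^{\bullet}_X(\log D)\otimes_{\mathbb{Q}} V_k,\ d_k\bigr),\qquad d_k = d\otimes \mathrm{id}\;+\;(df/f)\wedge(-)\otimes N_k.
\end{equation*}
Equip it with the stupid Hodge filtration $F^{\bullet}$ and with a weight filtration $W_{\bullet}$ built from Deligne's weight filtration on $\Omega^{\bullet}_X(\log D)$ combined with the monodromy weight filtration on $V_k$, suitably shifted. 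Adapting Deligne's and Steenbrink's methodology, I would prove that this bifiltered complex induces a $\mathbb{Q}$-MHS on its hypercohomology, and establish a twisted de Rham comparison identifying that hypercohomology with (a suitable dual of) $H_*(U;\mathcal{L}_k)\otimes \mathbb{C}$, where $\mathcal{L}_k$ is the local system of $V_k$-modules on $U$ with monodromy $t$. Since $t$ is unipotent on the torsion, the natural transition maps $H_i(U;\mathcal{L}_{k+1})\twoheadrightarrow H_i(U;\mathcal{L}_k)$ identify $A_i(U^f;\mathbb{Q})$ with a canonical subquotient of $H_i(U;\mathcal{L}_k)$ for $k$ large, letting the MHS descend to $A_i(U^f;\mathbb{Q})$; independence of $X$, $k$, $N$ would follow from resolution-of-singularities and the standard naturality arguments.

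The $\log(t^N)$ statement falls out of the construction: on the thickened complex, $N_k=\log t$ is implemented, up to a $V_k$-automorphism, by the operator $(df/f)\wedge(-)$; since $df/f$ is a class of Hodge type $(1,1)$ and weight $2$ in $H^0(X,\Omega^1_X(\log D))$, wedging with it sends $F^p$ into $F^{p+1}$ and $W_m$ into $W_{m+2}$, which is precisely the data of a morphism of $\mathbb{Q}$-mixed Hodge structures into the Tate twist $(-1)$. Passing through the identifications above transports the claim to $\log(t^N)$ acting on the actual Alexander torsion.

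The principal obstacle, as I see it, is verifying strictness of the weight and Hodge filtrations on the thickened complex: because $d_k$ intertwines the de Rham differential with the nilpotent $N_k$ via $df/f$, the classical Deligne/Steenbrink weight spectral sequence does not apply verbatim, and one must control how the residue maps along the components of $D$ interact with $N_k$. A secondary challenge is to make the twisted de Rham comparison uniform in $k$ and compatible with the $R=\mathbb{Q}[t^{\pm 1}]$-module transition maps, so that the MHS on $A_i(U^f;\mathbb{Q})$ is well-defined, natural in $(U,f)$, and correctly glued through the reduction cover $U'\to U$.
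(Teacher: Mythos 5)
Your overall route -- pass to the finite cover to make the monodromy unipotent, compactify with $D$ simple normal crossings so that $df/f$ is a log form, thicken the log de Rham complex against a nilpotent operator on $V_k=\Q[t]/(t-1)^k$, and carve $A_i(U^f;\Q)$ out of $H_*(U;\mathcal{L}_k)$ for $k\gg 0$ -- is the paper's route. But the step you single out as the principal obstacle is precisely where your proposal, as formulated, does not go through. With a Steenbrink-style ``monodromy weight filtration on $V_k$, suitably shifted,'' verifying the mixed-Hodge-complex axioms for the thickened complex (purity and $E_1$-degeneration on the $W$-graded pieces) genuinely requires new input, and in the non-proper setting there are no nearby cycles to fall back on; you flag this but do not resolve it. The paper dissolves the problem by a different choice of weight filtration: $s^j$ is placed in weight $-2j$, i.e.\ $W_iA(\eta,m)=\bigoplus_j W_{i+2j}A\otimes k\langle s^j\rangle$ (Section~\ref{thickenedcomplexesandfiltrations}), so that on $W$-graded pieces the $\eta\wedge(-)\otimes s$ part of the differential vanishes (Lemma~\ref{gradedcomponents}) and each graded piece is a direct sum of Tate twists of the classical pure Hodge complexes; the axioms then follow from the untwisted case with no residue analysis (Theorem~\ref{mhsthickened}). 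A second missing ingredient is the rational (and real) structure: $\Omega^\bullet_X(\log D)\otimes V_k$ is only the $\C$-component of a mixed Hodge complex. The paper must thicken the whole multiplicative complex -- $j_*\mathcal{E}^\bullet_U$ by $\Im(df/f)$ and the Peters--Steenbrink rational complex $\mathcal{K}^\bullet_\infty$ by $1\otimes f$ -- check that these classes are cohomologous inside $W_1$ (Assumption~\ref{as}, Lemma~\ref{imaginarycohomologous}), identify the thickened real de Rham complex as a resolution of $\ov\cL\otimes_R R_m$ only after reparametrizing the $R_m$-action by $\tilde s=e^{2\pi s}-1$ (Proposition~\ref{propLocal}), and prove the $\Q$-versus-$\R$ compatibility (Theorem~\ref{Qalexandermhs}); none of this appears in your sketch.

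The $\log(t^N)$ argument is also wrong as stated. On the thickened complex, $\log t$ (equivalently $s$, up to a unit of $R_m$) is realized by multiplication by $s$ on the $V_k$-factor, a degree-zero operator sending $F^p$ into $F^{p-1}$ and $W_i$ into $W_{i-2}$, hence a morphism into the $(-1)$ Tate twist (Lemma~\ref{inducedmhsmaps}); it is \emph{not} implemented by $(df/f)\wedge(-)$, which raises cohomological degree by one, raises the Hodge filtration, and does not lower the weight filtration. That wedge operator instead computes, up to a nonzero constant, the composite $H^i(U;\Q)\to \Tors_R H^{i+1}(U;\ov\cL)\hookrightarrow H^{i+1}(U;\ov\cL/s^m)$ of Section~\ref{sec:main}, i.e.\ the cup-product-type map attached to the covering, not the logarithm of the monodromy. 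Your filtration bookkeeping is likewise inverted: a map taking $W_m$ into $W_{m+2}$ and $F^p$ into $F^{p+1}$ is a morphism into the $(+1)$ twist, whereas $\log t$, like the nilpotent operator of a limit mixed Hodge structure, lowers weights by two.
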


In Section~\ref{ss:DLandLiu} we prove that, if $U$ and $f$ are as in case (\ref{dlt}) above, the mixed Hodge structure of Theorem \ref{mhsexistence} recovers the mixed Hodge structure obtained by different means in both \cite{DL, Liu}. We expect, but have not proven, the same to be true for cases (\ref{hain}) and (\ref{lkk}). Moreover, in Section~\ref{ss:genFiber}, we also prove that, in the global affine Milnor fibration case \ref{item:milnor} mentioned above, the mixed Hodge structure of Theorem~\ref{mhsexistence} recovers Deligne's mixed Hodge structure on the fiber (Corollary \ref{cor:quasihom}).

The proof of Theorem~\ref{mhsexistence} makes use of a sequence of reductions (e.g., after pulling back to a finite cover, one may assume that the $t$-action on $A_*(U^\xi;\Q)$ is unipotent, it also suffices to work with cohomological Alexander modules, etc.), and it relies on the construction of a suitable {\it thickening} of the Hodge-de Rham complex (see Chapter \ref{mhsal} for details).
There are choices made in the construction, but we prove that the resultant mixed Hodge structure is {independent} of them (see Theorems \ref{indcompactification} and \ref{indN}) and further that it behaves functorially with respect to algebraic maps over $\C^*$ (see Theorem~\ref{functorial}). The only choice that affects the mixed Hodge structure is the choice of infinite cyclic cover $U^f$. This is because $\Z$ acts on this space as deck transformations and the mixed Hodge structure is {\bf not} preserved in general (see Proposition~\ref{prop:depf}). However, this is consistent with the behavior of the limit mixed Hodge structure (see Theorem~\ref{comp}), which is also not preserved by deck transformations.

We proceed to relate our mixed Hodge structures on the torsion parts of the Alexander modules to known mixed Hodge structures, then derive consequences of our construction and these relations. Centrally, in Chapter~\ref{sec:main} we prove that the infinite cyclic covering map induces a morphism of mixed Hodge structures:

\begin{thm}\label{geoIntro}
In the setting of Theorem~\ref{mhsexistence}, the vector space map $A_i(U^f; \Q) \to H_i(U;\Q)$ induced by the covering $U^f \rightarrow U$ is a morphism of mixed Hodge structures for all $i \geq 0$, where $H_i(U;\Q)$ is equipped with (the dual of) Deligne's mixed Hodge structure. 
\end{thm}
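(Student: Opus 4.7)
The plan is to dualize the statement and lift it to a map of filtered complexes. Since $H_i(U;\Q)$ is equipped with the dual of Deligne's MHS on $H^i(U;\Q)$, and the MHS on $A_i(U^f;\Q)$ is constructed (in Section \ref{mhsal}) as the dual of one on the cohomological torsion Alexander module $A^i(U^f;\Q)$, it suffices to produce a morphism of MHS
$$p^{*}\colon H^{i}(U;\Q)\longrightarrow A^{i}(U^f;\Q)$$
whose $\Q$-linear dual agrees, on the torsion summand, with the homology map induced by the covering $p\colon U^f\to U$. I would therefore first reformulate the theorem cohomologically and then lift.

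The MHS on $A^{i}(U^f;\Q)$ arises from a thickened Hodge--de Rham complex $\mathcal{T}^{\bullet}$ on a good compactification $X \supset U$ with simple normal crossings boundary $D$. By design, $\mathcal{T}^{\bullet}$ contains the classical log-de Rham complex $\Omega^{\bullet}_X(\log D)$ as its ``constant part''---corresponding to the unit in the $\Q[t^{\pm 1}]$-coefficient system defining $U^f$---yielding a natural inclusion of complexes
$$\iota\colon \Omega^{\bullet}_X(\log D) \hookrightarrow \mathcal{T}^{\bullet}$$
that realizes $p^{*}$ on cohomology. Composing with the projection to the torsion summand $A^{i}(U^f;\Q)\subseteq H^{i}(U^f;\Q)$ produces the candidate morphism, and I would next verify that $\iota$ is bifiltered. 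Compatibility with the Hodge filtrations is essentially tautological, since both sides are stupid column-truncations and $\iota$ is column-preserving. Compatibility with Deligne's weight filtration on $\Omega^{\bullet}_X(\log D)$ (pole order along $D$) and with the weight filtration on $\mathcal{T}^{\bullet}$ (which incorporates additional shifts and Tate twists coming from the thickening parameters, as already visible in the $(-1)$-twist of Theorem \ref{mhsexistence}) requires a direct inspection of the explicit weight formula developed in the construction of $\mathcal{T}^{\bullet}$.

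The main obstacle is ensuring that the weight compatibility survives the projection onto the torsion summand, which is extracted via the spectral decomposition of the deck-transformation $t$-action. Because $t$ preserves $W_{\bullet}\mathcal{T}^{\bullet}$ (being geometrically induced) and commutes trivially with $\iota$ (log-forms pulled back from $U$ are deck-invariant), the weight-filtered inclusion respects the generalized-eigenspace decomposition of $t$; the nontrivial content is then concentrated in the unipotent $(t=1)$-part, which is precisely where $p^{*}$ can be nonzero, so the check reduces to a single eigen-block. Invoking the independence results (Theorems \ref{indcompactification} and \ref{indN}) to pick a convenient $(X,D)$ and integer $N$, I would carry out this weight verification directly on graded pieces. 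Passing to cohomology and then dualizing via the canonical MHS duality between homology and cohomology then yields the desired morphism $A_i(U^f;\Q) \to H_i(U;\Q)$.
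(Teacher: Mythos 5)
Your overall strategy (dualize, work with the cohomological map $H^i(U;\Q)\to \Tors_R H^{i+1}(U;\ov\cL)$, and lift it to the level of the thickened Hodge--de Rham complex) points in the right direction, but the map you propose to lift does not exist, and even after repairing it the essential identification is missing. The ``constant part'' inclusion $\omega\mapsto\omega\otimes 1$ of $\Omega^\bullet_X(\log D)$ into the thickened complex is not a morphism of complexes: the thickened differential gives $d_\eta(\omega\otimes 1)=d\omega\otimes 1+(\eta\wedge\omega)\otimes s$, and the extra term obstructs it (only the projection in the other direction, killing $s$, is a chain map). Moreover, your $\iota$ preserves cohomological degree, whereas the covering-induced map on the cohomological side shifts degree by one, $H^i(U;\Q)\to \Tors_R H^{i+1}(U;\ov\cL)$, because the torsion of the cohomological Alexander module is identified with $\Ext^1_R(H_i(U;\cL),R)\cong \bigl(\Tors_R H_i(U^f;\Q)\bigr)^{\vee_k}$ (Proposition \ref{propcanon}); and there is no natural ``projection onto the torsion summand $A^i(U^f;\Q)\subseteq H^i(U^f;\Q)$'' in the construction --- the torsion enters as a sub-MHS of $H^{i+1}(U;\ov\cL\otimes_R R_m)$ via $\phi_m$, not as a summand of $H^i(U^f;\Q)$. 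The map that actually works is the degree-one map given by wedging with the closed $1$-form $\Im\frac{df}{f}$ (resp.\ $\frac{1}{i}\frac{df}{f}$) into the shift $[1]$ of the thickened complex; verifying that this respects the weight and Hodge filtrations is the easy part of the argument (Lemma \ref{lem:theMapA}).

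The genuinely nontrivial step, which your proposal does not address, is showing that this de Rham--level map induces the topological map coming from the covering $U^f\to U$. One must first express the covering map, which is defined through the universal coefficient theorem and the residue isomorphism $\Ext^1_R(A,R)\cong A^{\vee_k}$, as the map induced by the morphism of local systems $\pi_\cL^\vee\colon \ul{k}[-1]\to\ov\cL$ (Propositions \ref{prop:geomMapPoincare} and \ref{prop:mapsAreEqual}), and then prove that $\phi_m\circ\pi_\cL^\vee$ agrees, up to a nonzero scalar, with the wedge map. In the paper this is not done by inspecting graded pieces of filtrations; it is done by reducing to the universal case $U=\C^*$, $f=\mathrm{id}$, where $\Hom_{D^b}\bigl(\uR_{\C^*},\tfrac{\ov\cL_{\C^*}}{s^m}[1]\bigr)\cong H^1(\C^*;\ov\cL_{\C^*}/s^m)$ is one-dimensional, so the two maps are proportional once both are shown to be nonzero (Lemma \ref{lem:proofInCircle}), and then pulling back along $f$ (Lemma \ref{lem:pullbacks}). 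Without an argument playing this role, your assertion that $\iota$ ``realizes $p^*$ on cohomology'' is precisely the statement to be proved, so the proposal is circular at its central point.
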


We use this theorem and our construction to obtain several results, including a bound on the weight filtrations of the mixed Hodge structures on the torsion parts of the Alexander modules (Theorem \ref{thm:boundedWeights}). This bound coincides with the known bound for the homology of smooth algebraic varieties of the same dimension as the generic fiber of $f$ (cf. \cite[Corollaire 3.2.15]{De2}).

\begin{thm}\label{boundsIntro}
Assume the setting of Theorem~\ref{mhsexistence}. Let $i \geq 0$.
If $k\notin[i,2i]\cap[i,{2\dim_\C(U)-2]}$, then
\[
\Gr^W_{-k} A_i(U^f;\Q) = 0
\]
where $\Gr^W_{-k}$ denotes the $-k$th graded piece of the weight filtration.
\end{thm}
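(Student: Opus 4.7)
The plan is to combine the MHS morphism $\phi\colon A_i(U^f;\Q) \to H_i(U;\Q)$ induced by the covering map (Theorem~\ref{geoIntro}) with Deligne's classical weight bounds on $H_i(U;\Q)$, and with strictness of the weight-lowering MHS endomorphism $\log(t^N)$ from Theorem~\ref{mhsexistence}. Write $n := \dim_\C U$. The sharper upper bound on $k$ depending on $n$ will require additional information extracted from the thickened Hodge-de Rham complex constructed in Section~\ref{mhsal}.

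First I would identify $\ker \phi$. The Wang long exact sequence for the infinite cyclic cover $U^f \to U$ shows that the kernel of the full map $H_i(U^f;\Q) \to H_i(U;\Q)$ is $(t-1)H_i(U^f;\Q)$. Decomposing $H_i(U^f;\Q)$ as a $\Q[t^{\pm 1}]$-module into $A_i(U^f;\Q)$ plus a free complement, on which $t-1$ acts injectively, gives $\ker \phi = (t-1)A_i(U^f;\Q) = \im \log(t^N)$, the latter equality using that on the unipotent torsion part, $t-1$ and $\log(t^N)$ differ by an invertible factor. For the lower bound $k \geq i$, let $w_{\max}$ denote the largest weight with $\Gr^W_{w_{\max}} A_i(U^f;\Q) \neq 0$. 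Strictness of $\phi$ yields a vector-space splitting $\Gr^W_{w_{\max}} A_i = \Gr^W_{w_{\max}} \ker \phi \oplus \Gr^W_{w_{\max}} \im \phi$. If $w_{\max} > -i$, then Deligne's Corollaire~3.2.15 (dualized) gives $\Gr^W_{w_{\max}} H_i(U;\Q) = 0$, so $\Gr^W_{w_{\max}} \im \phi = 0$; meanwhile, strictness of $\log(t^N)\colon A_i \to A_i(-1)$ identifies $\Gr^W_{w_{\max}} \ker \phi$ with the image of the induced map $\Gr^W_{w_{\max}+2} A_i \to \Gr^W_{w_{\max}} A_i$, which vanishes by maximality. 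Hence $w_{\max} \leq -i$, i.e., $\Gr^W_{-k} A_i(U^f;\Q) = 0$ for $k < i$.

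For the upper bound $k \leq \min(2i, 2n-2)$ the argument is more delicate. Iterating strictness of $\log(t^N)$ at the minimum weight $w_{\min}$ forces a chain of nonzero graded pieces $\Gr^W_{w_{\min}+2j} A_i$, and combining with the nilpotency of $\log(t^N)$ gives partial control of $w_{\min}$; however, the sharp range does not follow from Theorems~\ref{mhsexistence} and \ref{geoIntro} alone. I would obtain the refined bound by directly analyzing the weight filtration on the thickened Hodge-de Rham complex from Section~\ref{mhsal}: its graded pieces should be supported on strata of a smooth compactification of $U$ whose complex dimensions are bounded by $n-1$, reflecting the fibration direction captured by $f$. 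A weight spectral-sequence argument parallel to Deligne's classical dimensional estimates for smooth open varieties would then yield $\Gr^W_{-k} A_i(U^f;\Q) = 0$ for $k > \min(2i, 2n-2)$. The main obstacle is this last step: pinning down the dimensional bound $2(n-1)$ rather than the naive $2n$ on the weight-graded pieces of the thickened complex, and verifying that the associated weight spectral sequence transfers this estimate cleanly to $A_i(U^f;\Q)$.
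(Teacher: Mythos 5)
Your first step is sound and close in spirit to half of the paper's argument: identifying $\ker\bigl(A_i(U^f;\Q)\to H_i(U;\Q)\bigr)$ with $(t-1)A_i(U^f;\Q)$ via the Milnor/Wang sequence is exactly Proposition~\ref{prop:kerim}, and the max-weight argument using strictness of $\log(t^N)$ does give the bound $k\geq i$. One caveat: the equality $(t-1)A_i(U^f;\Q)=\im\log(t^N)$ holds only when $t$ itself is unipotent; if $t$ has eigenvalues $\neq 1$, then $t-1$ is invertible on those generalized eigenspaces while $\log(t^N)$ is nilpotent there, so $\ker\phi\supsetneq\im\log(t^N)$. This is fixable by first passing to the finite cover $U_N$ (as the paper does, using that $H_i(\pi_N)$ is still a MHS morphism and that the Alexander module and its MHS are unchanged), but as written the step is not correct.

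The genuine gap is the upper bound, which you do not prove: both $k\leq 2i$ and $k\leq 2n-2$ are deferred to a speculative analysis of the thickened complex, and you explicitly flag the key estimate as unresolved. The missing idea in the paper is that the sequence continues: not only is the covering map a MHS morphism with image $\ker\log(t)$, but the \emph{other} Wang map is as well. Cohomologically, the paper produces a four-term exact sequence of mixed Hodge structures
\[
H^i(U;\Q)\longrightarrow \Tors_R H^{i+1}(U;\ov\cL)\xrightarrow{\ \log t\ } \Tors_R H^{i+1}(U;\ov\cL)(-1)\xrightarrow{\ \pi_{\ov\cL}\ } H^{i+1}(U;\Q),
\]
obtained by completing the sheaf-level short exact sequences $\ov\cL\to\ov\cL\to\ul\Q$ and $\ov\cL/s^m\to\ov\cL/s^{m+1}\to\ul\Q$ to exact triangles and checking that all four maps (including $\pi_{\ov\cL}$) are MHS morphisms. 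Taking $\Gr^W_k$ and using Deligne's bounds on the weights of \emph{both} $H^i(U;\Q)$ and $H^{i+1}(U;\Q)$ yields $\Gr^W_k\cong\Gr^W_{k-2}(-1)$ outside the stated range, and finite-dimensionality then kills everything outside $[i,\min\{2i,2n-2\}]$; the $2n-2$ comes from the weight bound $\leq 2n$ on $H^{i+1}(U;\Q)$ combined with the weight-$2$ drop of $\log t$, not from any dimension count on strata. Your proposed alternative—that the weight-graded pieces of the thickened complex are supported on strata of dimension $\leq n-1$—is not correct as stated: by Lemma~\ref{gradedcomponents} those graded pieces are Tate twists of the graded pieces of the log de Rham complex, which include summands (e.g.\ $\Gr^W_0$) supported on all of $X$, so that route would at best reproduce the naive bound $2n$ and cannot deliver the sharper estimate without the exact-sequence input above.
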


Other consequences of our construction and Theorem~\ref{geoIntro} are related to the $t$-action on the torsion parts of the Alexander modules. 
For example, we apply it to determine bounds on the size of the Jordan blocks of this $t$-action (see Corollary \ref{cor:jordan}), which nearly cut in half existing bounds, as in \cite[Proposition 1.10]{BudurLiuWang}. We also address conditions under which this $t$-action is a mixed Hodge structure morphism. We prove that this is the case if and only if the $t$-action is semisimple (see Corollary \ref{cor:t} and Proposition \ref{prop:semisimpleIsEasyConverse}).

\begin{thm}\label{tsemisimpleIntro}
	Assume the setting of Theorem \ref{mhsexistence}. Let $i \geq 0$. The $t$-action on $A_i(U^f; \Q)$ is a mixed Hodge structure morphism if and only if it is semisimple.
\end{thm}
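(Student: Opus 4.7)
The plan is to leverage Theorem~\ref{mhsexistence}, which asserts that $\log(t^N)\colon A_i(U^f;\Q) \to A_i(U^f;\Q)(-1)$ is a morphism of mixed Hodge structures for $N$ chosen so that $t^N$ acts unipotently. Since the eigenvalues of $t$ on $A_i(U^f;\Q)$ are roots of unity by Proposition~\ref{eig1}, $t$ is semisimple if and only if $t^N = I$, i.e.\ if and only if $\log(t^N) = 0$. Thus the theorem reduces to showing that $\log(t^N) = 0$ is equivalent to $t$ being a mixed Hodge structure morphism.

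For the implication that $t$ being a mixed Hodge structure morphism forces semisimplicity, use the multiplicative Jordan decomposition $t = su = us$ with $s$ semisimple and $u$ unipotent. Both $s$ and $u$ are $\Q$-polynomials in $t$, and hence are themselves mixed Hodge structure morphisms. Consequently $\log u$, expressible as a finite polynomial in the nilpotent endomorphism $u - I$, is a mixed Hodge structure morphism $A_i(U^f;\Q) \to A_i(U^f;\Q)$; in particular $\log u(W_k) \subseteq W_k$. On the other hand, $\log u = N^{-1}\log(t^N)$ is also a mixed Hodge structure morphism into the Tate twist $A_i(U^f;\Q)(-1)$, which gives $\log u(W_k) \subseteq W_{k-2}$. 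Invoking strictness of morphisms in the abelian category of mixed Hodge structures with respect to $W_\bullet$, writing $W_k$ as shorthand for $W_k A_i(U^f;\Q)$,
\[
\im(\log u) \cap W_k = \log u(W_k) = \im(\log u) \cap W_{k-2}.
\]
Iterating this equality yields $\im(\log u) \cap W_k = \im(\log u) \cap W_{k-2m}$ for every $m \geq 0$, and because $W_\bullet$ is bounded below the right-hand side vanishes for $m$ sufficiently large. Hence $\im(\log u) \cap W_k = 0$ for all $k$, so $\log u = 0$ and $t = s$ is semisimple.

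For the converse, assume $t$ is semisimple, so that $t^N = I$ for some $N$. The construction of the mixed Hodge structure in Theorem~\ref{mhsexistence} proceeds by first passing to a finite $\Z/N$-Galois cover of $U$ on which $t^N$ becomes unipotent. The semisimple part of $t$ is realized on $A_i(U^f;\Q)$ as the action of a generator of this $\Z/N$-Galois group, induced by an algebraic automorphism. By functoriality of the construction (Theorem~\ref{functorial}), this action is automatically a mixed Hodge structure morphism, so the semisimple part of $t$ always acts by a mixed Hodge structure morphism. When $t$ is semisimple one has $t = s$, so $t$ itself is a mixed Hodge structure morphism.

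The principal difficulty lies in the forward direction: one must first verify that the semisimple and unipotent parts of a mixed Hodge structure automorphism are themselves mixed Hodge structure morphisms (a consequence of their being $\Q$-polynomials in the original), and then combine strictness with the boundedness of $W_\bullet$ to conclude $\log u = 0$. The converse is the ``easy converse'' hinted at by the naming of Proposition~\ref{prop:semisimpleIsEasyConverse}, reducing to the compatibility of the construction with algebraic covering automorphisms.
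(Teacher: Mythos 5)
Your forward direction (MHS morphism $\Rightarrow$ semisimple) is correct, and it is essentially the paper's argument (Proposition \ref{prop:semisimpleIsEasyConverse}) with a small variation: the paper reduces to the case $\theta^2=0$, where $\theta=\log(t^N)$, to see that $\theta=\exp(\theta)-1$ is an untwisted MHS endomorphism, whereas you obtain the same input by noting that the multiplicative Jordan factors are $\Q$-polynomials in $t$, so $\log u$ is an untwisted MHS endomorphism directly. From there both arguments are identical: $\log u$ is simultaneously a MHS morphism $A_i(U^f;\Q)\to A_i(U^f;\Q)$ and $A_i(U^f;\Q)\to A_i(U^f;\Q)(-1)$ (the latter from Theorem \ref{mhsexistence}), and strictness of the weight filtration plus its finiteness force $\log u=0$.

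The converse, however, has a genuine gap as written. You claim that the relevant action is induced by the algebraic generator $\sigma$ of the $\Z/N$-deck group of $U_N\to U$ and is ``automatically'' a MHS morphism by functoriality (Theorem \ref{functorial}). But Theorem \ref{functorial} only applies to algebraic maps \emph{over} $\C^*$, i.e.\ satisfying $f_2\circ g=f_1$, and $\sigma$ does not commute with $f_N$: one has $f_N\circ\sigma=\xi f_N$ with $\xi$ a primitive $N$-th root of unity. Applying functoriality to $\sigma$ therefore only gives a MHS morphism from the structure built from $f_N$ to the structure built from $\xi f_N$, and these two mixed Hodge structures do not coincide for free --- this is exactly the subtlety of Remark \ref{rem:ACHTUNG} and Proposition \ref{prop:depf}: deck transformations do \emph{not} preserve the MHS in general, which is also why your stronger assertion that the semisimple part $t_s$ ``always'' acts by a MHS morphism is in fact listed by the paper as an open question, not a consequence of functoriality. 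To close the gap you must use the semisimplicity hypothesis substantively, e.g.\ as the paper does: by Theorem \ref{thm:geomIsMHS} the covering map induces a MHS morphism $H^j(U_N;\Q)\to \Tors_R H^{j+1}(U;\ov\cL)$, whose image is the $(t^N-1)$-torsion (Proposition \ref{prop:kerim}), hence it is surjective precisely when $t$ is semisimple (Corollary \ref{cor:surj}); since $t$ acts algebraically on $U_N$, it is a MHS morphism on $H^j(U_N;\Q)$, and one pushes this through the $t$-equivariant surjection (Corollary \ref{cor:t}). Alternatively, one can use Proposition \ref{prop:depf} with $c=i$ together with Corollary \ref{cor:semisimpleIsEasy}, where semisimplicity makes the correction factor $t^{\Im c}$ equal to the identity; either way, the semisimplicity assumption is what rescues the step you treated as automatic.
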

 
Semisimplicity has other pleasing consequences.
We prove that, when the $t$-action is semisimple, the mixed Hodge structure on the torsion parts of the Alexander modules can be constructed directly using a finite cyclic cover, which, unlike an infinite cyclic cover, is always a complex algebraic variety.
This bypasses our rather abstract general construction of the mixed Hodge structure. 
We present two different viewpoints.
In the first, we utilize cap product with the pullback of a generator of $H^1(\C^*;\Q)$. 
In the second, we utilize a generic fiber of the algebraic map, which is always a complex algebraic variety. For the following result see Corollary \ref{cor:cup} and Corollary \ref{cor:fiber}.

\begin{thm}\label{finiteIntro}
	Assume the setting of Theorem \ref{mhsexistence}. 
	Let $i \geq 0$ and assume that the $t$-action on $A_i(U; \Q)$ is semisimple. 
	Let $N$ be such that the action of $t^N$ on $A_i(U^f;\Q)$ is unipotent, and let $U_N = \{(x,z) \in U \times \C^*\mid f(x) = z^N\}$ denote the corresponding $N$-fold cyclic cover.
	Equip the rational homology of $U_N$ with the (dual of) Deligne's mixed Hodge structure.
	
	\begin{enumerate}[(A)]
	\item Let $f_N\colon U_N \rightarrow \C^*$ denote the algebraic map induced by projection onto the second component, and let $\gen \in H^1(\C^*;\Q)$ be a generator. Then $A_i(U^f;\Q)$ is isomorphic as a mixed Hodge structure to the image of the mixed Hodge structure morphism induced by cap product with $f^*_N(\gen)$
	\[(-) \frown f^*_N(\gen) \colon H_{i+1}(U_N;\Q)(-1) \rightarrow H_i(U_N;\Q) ,\]
	where $(-1)$ denotes the $-1$th Tate twist of a mixed Hodge structure. 
	
	\item Let $F \hookrightarrow U$ be the inclusion of any generic fiber of $f$ and let $F \hookrightarrow U_N$ be any lift of this inclusion. 
	Then $A_i(U^f;\Q)$ is isomorphic as a mixed Hodge structure to the image of the mixed Hodge structure morphism
	\[H_i(F; \Q) \rightarrow H_i(U_N; \Q)\] 
	induced by the inclusion, where $H_i(F;\Q)$ is equipped with Deligne's mixed Hodge structure.
	\end{enumerate}
\end{thm}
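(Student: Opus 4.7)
The plan is to pass to the pair $(U_N, f_N)$, use Milnor's long exact sequence for the infinite cyclic cover $\pi\colon U^f \to U_N$ to identify both images with $\pi_*(A_i(U^f;\Q))$, and then upgrade to an isomorphism of mixed Hodge structures using results established earlier in the paper. By Theorem~\ref{indN}, the mixed Hodge structure on $A_i(U^f;\Q) = A_i(U_N^{f_N};\Q)$ can be computed from $(U_N, f_N)$, where the deck transformation $s := t^N$ of $U_N^{f_N} \to U_N$ acts unipotently on the torsion part by hypothesis. From the short exact sequence of chain complexes $0 \to C_*(U^f;\Q) \xrightarrow{s-1} C_*(U^f;\Q) \to C_*(U_N;\Q) \to 0$ one extracts the Milnor long exact sequence, and a standard Gysin-type computation yields $\pi_* \circ \partial = (-)\frown f_N^*(\gen)$, where $\partial$ is the connecting homomorphism.

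Semisimplicity handles most of the linear algebra. A semisimple unipotent operator is the identity, so $s = \Id$ on $A_i(U^f;\Q)$; decomposing the $\Q[s^{\pm 1}]$-module $H_i(U^f;\Q)$ as $A_i(U^f;\Q) \oplus F$ with $F$ free, and noting that $s-1$ acts injectively on $F$, one obtains
\[
\ker(s-1) = A_i(U^f;\Q) \quad \text{and} \quad \operatorname{image}(s-1) \cap A_i(U^f;\Q) = \{0\}.
\]
Combined with Milnor's sequence, these give both the injectivity of $\pi_*|_{A_i(U^f;\Q)}$ and the equality $\operatorname{image}((-)\frown f_N^*(\gen)) = \pi_*(A_i(U^f;\Q))$. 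To upgrade to mixed Hodge structures for Part~(A): since $\gen \in H^1(\C^*;\Q) = \Q(-1)$ is of Hodge-Tate type $(1,1)$, so is $f_N^*(\gen)$, making cap product a mixed Hodge structure morphism $H_{i+1}(U_N;\Q)(-1) \to H_i(U_N;\Q)$. Theorem~\ref{geoIntro} provides that $\pi_*$ restricted to $A_i(U^f;\Q)$ is a mixed Hodge structure morphism, and strictness then gives the claimed mixed Hodge structure isomorphism for Part~(A).

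For Part~(B), the chosen lift factors $F \hookrightarrow U^f \xrightarrow{\pi} U_N$, so one has
\[
\operatorname{image}\bigl(H_i(F;\Q) \to H_i(U_N;\Q)\bigr) = \pi_*\bigl(\operatorname{image}\bigl(H_i(F;\Q) \to H_i(U^f;\Q)\bigr)\bigr),
\]
and it suffices to show this equals $\pi_*(A_i(U^f;\Q))$. Restricting $f_N$ over the Zariski-open smooth locus $\C^* \setminus \Sigma$ gives a $C^\infty$ fiber bundle, and a Wang-type argument identifies the image of $H_i(F;\Q)$ in the homology of $f_N^{-1}(\C^*\setminus\Sigma)$ with the monodromy coinvariants of $H_i(F;\Q)$; in the semisimple case coinvariants equal invariants, and these match $A_i(U^f;\Q)$ modulo $\operatorname{image}(s-1)$ in $H_i(U^f;\Q)$, giving the desired equality after applying $\pi_*$. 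The mixed Hodge structure statement then follows from functoriality of Deligne's mixed Hodge structure under the algebraic inclusion $F \hookrightarrow U_N$ combined with Part~(A).

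The main obstacle is the identification in Part~(B), since $f$ need not be proper and may have critical values in $\C^*$, so one cannot simply invoke a global Wang sequence; classes in $H_i(F;\Q)$ must be tracked across possibly-singular fibers, and the algebraicity of $f$ is essential both to control the monodromy and to extend classes from the smooth locus back to $U_N$. The semisimplicity hypothesis plays its most decisive role here, equating monodromy coinvariants with invariants and ruling out Jordan blocks that would otherwise obstruct the image identification.
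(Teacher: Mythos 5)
Your Part~(A) is essentially correct, and it takes a genuinely different route from the paper. The paper proves that the image of $A_i(U^f;\Q)$ in $H_i(U_N;\Q)$ coincides with the image of cap product by working directly with the thickened mixed Hodge complex: Proposition~\ref{prop:cup} exhibits a commutative triangle in hypercohomology of $\calH dg^\bullet(X\,\log D)\left(\frac{1}{i}\frac{df}{f},m\right)[1]$ involving the map $A_\R$ from Lemma~\ref{lem:theMapA} and Remark~\ref{rem:holyGrail}. You instead read the identity $\pi_*\circ\partial = (-)\frown f_N^*(\gen)$ (up to sign) off the Milnor exact sequence and a Gysin-type identification, then transfer the MHS structure using Theorem~\ref{geoIntro}, injectivity of $\pi_*|_{A_i}$ (which is exactly Corollary~\ref{cor:surj}), and strictness. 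Your argument that $\operatorname{image}(\pi_*\circ\partial) = \pi_*(A_i)$ follows from exactness alone is cleaner than going through duality; both approaches converge to the same endpoint. The only soft spot is that the Gysin identity, while plausible and checkable in examples, is asserted rather than proved; in the paper it is replaced by an explicit computation at the level of mixed Hodge complexes.

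Part~(B), however, has a real gap, and I think you misidentify where the difficulty lies. The crux is the surjectivity of the map $H_i(i_\infty)\colon H_i(F;\Q)\to A_i(U^f;\Q)$ induced by any lift of the inclusion of a generic fiber. In the paper this is Proposition~\ref{genf}, and importantly it holds \emph{without any semisimplicity hypothesis}: it is proved by a Mayer--Vietoris/excision argument on the base, cutting $\C$ into a left half-plane together with small disks around the preimages of the singular values under $\exp$, and identifying the relative homology $H_j(U^f,F;\Q)$ as a free $R$-module built from the local pieces. Your proposed substitute---a Wang-type argument over the smooth locus $\C^*\setminus\Sigma$ identifying $\operatorname{image}(H_i(F)\to H_i(f_N^{-1}(\C^*\setminus\Sigma)))$ with monodromy coinvariants---does not close the gap, for two reasons. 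First, $\pi_1(\C^*\setminus\Sigma)$ is a free group of rank $|\Sigma|$, not $\Z$, so ``monodromy coinvariants'' there is not the same as coinvariants under the deck group $\Z$, and the Wang sequence is for $\Z$-covers only. Second, the hard part---passing from the smooth locus back to $U_N$, i.e.\ tracking cycles across the singular fibers---is precisely what you flag as the ``main obstacle'' and then leave unresolved; it is also exactly what Proposition~\ref{genf}'s excision computation accomplishes. Finally, semisimplicity does \emph{not} intervene in the surjectivity of $H_i(i_\infty)$; its role in Part~(B) is the same as in Part~(A), namely to make $\pi_*|_{A_i(U^f;\Q)}$ injective (Corollary~\ref{cor:surj}), so that the factorization $H_i(i_N) = H_i(\pi_N)\circ H_i(i_\infty)$ has image isomorphic to $A_i(U^f;\Q)$ as a MHS. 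So the logical skeleton you set up for Part~(B) is right, but the load-bearing lemma is missing, and the sketch you offer in its place is not viable as written.
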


Theorem \ref{finiteIntro}, when it applies, brings the mixed Hodge structures on the torsion parts of the Alexander modules down to earth, and reinforces the significance of semisimplicity. 
The first viewpoint granted by semisimplicity, in terms of cap products (Theorem \ref{finiteIntro}A), is suggested by the thickened complexes that play the central role in our construction (see Section \ref{ss:cupcap}). Regarding the second viewpoint, note that the homologies of different choices of generic fibers in the same degree may have different mixed Hodge structures, but \emph{any} choice is allowed in Theorem \ref{finiteIntro}B.
This shows that the mixed Hodge structures on the torsion parts of the Alexander modules are common quotients of the homologies of all generic fibers, when semisimplicity holds.

Our results in this paper show that semisimplicity is not a rare occurrence. In fact, we have proven that semisimplicity holds in many situations: it always holds on the torsion part of the first Alexander module (see Corollary \ref{cor:semisimple}) and, when $f$ is proper, it necessarily holds on the torsion parts of all Alexander modules (see Theorem \ref{thmsimple} and Corollary \ref{corss}). In fact, we do not know of any example where semisimplicity does not hold--see the open questions at the end of Chapter~\ref{sec:examples}. This lack of examples is mainly due to the fact that higher Alexander modules are harder to compute than the first. An interesting point is that we have used the mixed Hodge structure constructed in this paper to prove the semisimplicity of the first Alexander module for arbitrary $U$ and $f$. This was previously known in the case of certain Alexander modules associated to affine curve complements, as is explained in \cite[Corollary 1.7]{DL}, which combines results from \cite{KK,DN,dimca-jag} (see Remark~\ref{remk:A1Semisimple} for details).

We use the above-mentioned semisimplicity in the case when $f$ is proper to show the following (see Corollary \ref{cor:pure}).
\begin{thm}\label{thm:pur} If $f\colon U \to \C^*$ is a proper algebraic map, then $A_i(U^f;\Q)$ carries a pure Hodge structure of weight $-i$.
\end{thm}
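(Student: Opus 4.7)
The plan is to combine the semisimplicity available in the proper case with Theorem \ref{finiteIntro}(B), and then exploit the fact that a smooth proper fiber carries a pure Hodge structure.

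First, since $f$ is proper, the $t$-action on $A_i(U^f;\Q)$ is semisimple by Theorem \ref{thmsimple} (equivalently, Corollary \ref{corss}). This puts us squarely in the hypotheses of Theorem \ref{finiteIntro}(B): choose $N$ so that $t^N$ acts unipotently on $A_i(U^f;\Q)$, form the $N$-fold cyclic cover
\[
U_N = \{(x,z)\in U\times \C^* \mid f(x)=z^N\},
\]
and pick a generic fiber $F$ of $f$ together with a lift $F\hookrightarrow U_N$ of the inclusion $F\hookrightarrow U$. Theorem \ref{finiteIntro}(B) then identifies $A_i(U^f;\Q)$, as a $\Q$-mixed Hodge structure, with the image of the mixed Hodge structure morphism
\[
H_i(F;\Q) \longrightarrow H_i(U_N;\Q)
\]
induced by this inclusion, where both sides carry the dual of Deligne's mixed Hodge structure.

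Next I would observe that $F$ is a smooth proper complex algebraic variety: smoothness follows from generic smoothness in characteristic zero applied to the smooth variety $U$, and properness from the hypothesis on $f$. Consequently $H^i(F;\Q)$ carries Deligne's pure Hodge structure of weight $i$, so its dual $H_i(F;\Q)$ is pure of weight $-i$. Since morphisms of mixed Hodge structures are strict with respect to the weight filtration, the image of a pure Hodge structure of weight $w$ under any such morphism is again pure of weight $w$. Applying this to the morphism above, the image, and hence $A_i(U^f;\Q)$, is pure of weight $-i$.

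The main obstacle does not lie in this deduction itself, which is a formal consequence of strictness; rather, the substantive input is Theorem \ref{finiteIntro}(B), which only applies because of the proper-case semisimplicity supplied by Theorem \ref{thmsimple}/Corollary \ref{corss}. Given those two inputs, the purity of weight $-i$ is immediate. One can also sanity-check against Theorem \ref{boundsIntro}, which already forces $\Gr^W_{-k}A_i(U^f;\Q)=0$ for $k<i$, matching the upper half of the purity assertion; the argument above closes the remaining gap by ruling out weights strictly less than $-i$.
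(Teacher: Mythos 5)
Your proof is correct and follows essentially the same route as the paper: semisimplicity in the proper case (Theorem \ref{thmsimple}/Corollary \ref{corss}), then the identification of $A_i(U^f;\Q)$ with the image of the MHS morphism out of a generic fiber (Corollary \ref{cor:fiber}, i.e.\ Theorem \ref{finiteIntro}B), plus purity of $H_i(F;\Q)$ for $F$ smooth proper. The paper phrases the last step as ``$A_i(U^f;\Q)$ is a quotient of $H_i(F;\Q)$'' while you phrase it as ``the image of a morphism out of $H_i(F;\Q)$,'' but by strictness these are the same subquotient, so the arguments coincide.
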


\medskip 
There is a well-known and motivating (for our construction) mixed Hodge structure that we would be remiss not to explicitly address.
In the situations (a)--(c) mentioned above, the mixed Hodge structure on $A_i(U^\xi;\Q)$ coincides with the {\it limit} mixed Hodge structure on $H_i(F;\Q)$. In the situation considered in this paper, the epimorphism $\xi$ is realized by an algebraic map $f\colon U \to \mathbb{C}^*$. Let $D^*$ be a sufficiently small punctured disk centered at $0$ in $\C$, such that $f\colon f^{-1}(D^*)\rightarrow D^*$ is a fibration, and let $T^*=f^{-1}(D^*)$. The infinite cyclic cover $(T^*)^f$ is homotopy equivalent to $F$, where $F$ denotes any fiber of the form $f^{-1}(c)$, for $c\in D^*$. With this in mind, $(T^*)^f$ can be regarded as the canonical fiber of $f\colon f^{-1}(D^*)\rightarrow D^*$. If $f$ is proper, $H_i((T^*)^f;\Q)$ is also endowed with a limit mixed Hodge structure, which we compare with the one we construct on $A_i(U^f;\Q)$. In Chapter~\ref{rellim} we show the following:

\begin{thm}\label{comp} In the setup of Theorem \ref{mhsexistence}, assume $f$ is proper, and let $D^*$ be a small enough punctured disk centered at $0$ in $\C$ such that $f\colon f^{-1}(D^*)\rightarrow D^*$ is a fibration. Let $T^*=f^{-1}(D^*)$, and let $(T^*)^f$ be its corresponding infinite cyclic cover induced by $f$. Then, for all $i\geq 0$, the inclusion $(T^*)^f\subset U^f$ induces an epimorphism of $\Q$-mixed Hodge structures
\begin{align*}
H_i((T^*)^f;\Q) \twoheadrightarrow A_i(U^f;\Q),	
\end{align*}
where $H_i((T^*)^f;\Q)$ is endowed with its limit mixed Hodge structure. If, moreover, $f$ is a fibration, then the two mixed Hodge structures are isomorphic.
\end{thm}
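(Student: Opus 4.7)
\emph{The plan is} to identify both mixed Hodge structures as arising from a common filtered complex on a smooth compactification, with the inclusion-induced map captured by a natural restriction morphism. First I would choose a smooth compactification $\bar f\colon X \to \mathbb{P}^1$ of $f$, with $D = X \setminus U = D_0 \cup D_\infty$ a simple normal crossings divisor, where $D_0 = \bar f^{-1}(0)$ and $D_\infty = \bar f^{-1}(\infty)$; this is the setup underlying the construction in Section~\ref{mhsal}. Passing to a finite cyclic cover as in the proof of Theorem~\ref{mhsexistence}, I reduce to the case where the $t$-action on $A_i(U^f;\Q)$ and the monodromy operator relevant to the limit mixed Hodge structure are both unipotent, so that both mixed Hodge structures in question are built out of Koszul-type thickenings with purely nilpotent operators.

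The central step is a comparison of two filtered complexes on $X$. The mixed Hodge structure on $A_i(U^f;\Q)$ is computed from the hypercohomology of a thickening of the logarithmic Hodge--de Rham complex on $(X,D)$, while the limit mixed Hodge structure on $H_i((T^*)^f;\Q) \cong H_i(F;\Q)$ is computed, following Steenbrink, by an analogous thickening localized near $D_0$. I would show that restriction of the global thickened complex on $X$ to a small tubular neighborhood of $D_0$ is quasi-isomorphic, as a bifiltered complex (for both Hodge and weight filtrations), to Steenbrink's limit complex. The associated restriction-of-cochains map is then a morphism of filtered complexes which, at the level of cohomology---and dually on homology---coincides with the map induced by the inclusion $(T^*)^f \subset U^f$, yielding the claimed mixed Hodge structure morphism.

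For surjectivity, I would use the properness of $f$: the pullback projection $\pi\colon U^f \to \C$ along $\exp\colon \C \to \C^*$ is proper, so every compact cycle in $U^f$ can be deformed into a prescribed tubular neighborhood of a regular fiber, and for torsion classes this deformation preserves the homology class, exhibiting it in the image of $H_i((T^*)^f;\Q)$. Alternatively, the purity of $A_i(U^f;\Q)$ established in Theorem~\ref{thm:pur}, together with the morphism property and weight considerations for the limit mixed Hodge structure, should force the image to fill out $A_i(U^f;\Q)$. In the fibration case, the inclusion $(T^*)^f \hookrightarrow U^f$ is a homotopy equivalence---both spaces fiber over the contractible inclusion $\tilde D^* \hookrightarrow \C$ with the same fiber $F$---so the induced map on $H_i$ is a $\Q$-vector space isomorphism and, combined with the morphism property, an isomorphism of mixed Hodge structures. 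I expect the principal obstacle to be the precise identification in the second step: matching the restricted global thickened complex with Steenbrink's limit complex, along with the compatibility of both the Hodge and weight filtrations.
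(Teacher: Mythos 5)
Your outline follows the same route as the paper (compare the thickened log de Rham mixed Hodge complex on $(X,D)$ with Steenbrink's limit complex, identify the induced map with the one coming from $(T^*)^f\subset U^f$, handle surjectivity topologically, and use the homotopy equivalence in the fibration case), but the central step is stated in a form that is false, and the mechanism that makes the identification work is missing. The restriction of the $m$-thickened complex to a tube around $D_0=E$ is \emph{not} bifiltered quasi-isomorphic to Steenbrink's complex: its hypercohomology computes $H^{k}(T^*;\ov\cL\otimes_R R_m)$, which for unipotent monodromy and $m\gg 0$ has dimension $\dim H^{k-1}(F;\Q)+\dim H^{k}(F;\Q)$ (use the sequence coming from $0\to\ov\cL\xrightarrow{s^m}\ov\cL\to\ov\cL\otimes_R R_m\to 0$ together with Corollary~\ref{isocohom}), whereas $\mathbb H^{k-1}(E;\psi_{\bar f}\ul\Q)\cong H^{k-1}(F;\Q)$. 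What is true, and what the paper uses (Lemma~\ref{surj}, Corollary~\ref{relnlimitmhc}), is that there is a degreewise \emph{surjective quotient} morphism of mixed Hodge complexes $\logdr{X}{D}\left(\tfrac{1}{2\pi i}\tfrac{df}{f},m\right)[1]\to \Tot\calA^{\bullet,\bullet}$ (kill $W_j$ in the $s^j$-summand), followed by adjunction to $E$; the MHS morphism $\Phi^*$ on the torsion is induced by this, not by a filtered quasi-isomorphism. Moreover, you assert that the induced map on hypercohomology ``coincides with the map induced by the inclusion $(T^*)^f\subset U^f$'' but give no mechanism; this is exactly the hard point (Theorem~\ref{thm:limitMap}), and the paper's proof needs the surjectivity of $H^j(U;\Q)\to\Tors_R H^{j+1}(U;\ov\cL)$, which rests on semisimplicity of the $t$-action for proper $f$ (Theorem~\ref{thmsimple}, via the decomposition theorem) together with unipotency and Corollary~\ref{cor:surj}; precomposing with this surjection reduces the comparison of the two maps out of the torsion module to an explicit de Rham computation out of $H^j(U;\Q)$. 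Your plan never invokes semisimplicity, and without it (or a substitute) the identification does not follow from ``restriction of cochains.''

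The surjectivity step is also off target. The statement that every compact cycle in $U^f$ can be pushed into a tube over a regular value would force all of $H_i(U^f;\Q)$ to come from the fiber, contradicting the presence of a free part; $f_\infty\colon U^f\to\C$ is not a fibration over $\exp^{-1}(\calB)$, so no such deformation exists in general. In fact surjectivity of $H_i((T^*)^f;\Q)\to A_i(U^f;\Q)$ is a purely topological fact, already established without any properness hypothesis (Proposition~\ref{genf} and Remark~\ref{remk:tube}), so no new argument is needed here; and your alternative via purity is circular in the paper's logic, since Corollary~\ref{cor:pure} is itself deduced from this surjection (via Corollary~\ref{cor:fiber}). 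The fibration case as you state it is fine: the inclusion is a homotopy equivalence and a bijective morphism of MHS is an isomorphism by strictness.
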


\medskip

The paper is structured as follows. In Chapter~\ref{sec2}, we recall the relevant background and set notations for the rest of the paper. We introduce here the Alexander modules (homological and cohomological versions, and their descriptions in terms of local systems), differential graded algebras (dga), mixed Hodge complexes, and recall how the latter are used to get mixed Hodge structures on smooth varieties. We also summarize here the construction of the limit mixed Hodge structure associated to a family of projective manifolds. Chapter~\ref{sec3} is devoted to the theory of thickened complexes, which already made an appearance in \cite{BudurLiuWang}. The purpose of Chapter~\ref{sec4} is to show that, under certain technical assumptions, the thickened complex of a multiplicative mixed Hodge complex is again a mixed Hodge complex (Theorem \ref{mhsthickened}). In Chapter~\ref{mhsal}, we perform the construction of a suitable thickening of the Hodge-de Rham complex (see Theorem \ref{logthickenedmhs}) and prove Theorem \ref{mhsexistence} (see Corollary \ref{halexandermhs}). We also show that, fixing $f$, the mixed Hodge structure constructed here is independent of all choices (finite cover used to make the monodromy unipotent, and good compactification) and is functorial. 

The content of the remaining sections can largely be classified either as (1) comparing our mixed Hodge structures on the torsion parts of the Alexander modules with known mixed Hodge structures or (2) investigating applications. In Chapter~\ref{sec:main}, we prove that the infinite cyclic covering map induces morphisms of mixed Hodge structures from the torsion parts of the Alexander modules into the homology vector spaces of the base space (see Theorem \ref{geoIntro}). The consequences of this fact are discussed in Chapter~\ref{sec:consequences}, in particular we obtain sharp bounds on the weight filtrations (Theorem~\ref{boundsIntro}) and explore how the $t$-action interacts with the mixed Hodge structures.
It is here that we hit on the significance of semisimplicity, and prove theorems to which it relates (see Theorems \ref{tsemisimpleIntro} and \ref{finiteIntro}). 
We also show that our construction coincides with the mixed Hodge structure of case (\ref{dlt}) above, when the latter is defined.
In Chapter~\ref{sect:ss}, we prove the semisimplicity of $A_i(U^f; \Q)$ in the case when $f\colon U \to \C^*$ is a proper map (see Corollary \ref{corss}), as well as the purity statement of Theorem~\ref{thm:pur}. 
In Chapter~\ref{rellim}, we compare the mixed Hodge structure we constructed on the torsion part of the Alexander modules with the limit mixed Hodge structure on the generic fiber of the algebraic map realizing the infinite cyclic cover. In particular, Theorem \ref{comp} is proved in this chapter.
We apply our results to specific examples in Chapter~\ref{sec:examples}, more exactly to the first Alexander modules of hyperplane arrangements. We conclude by listing open questions regarding our constructions, as well as some of our expectations for what the answers will be.

The interested reader may also benefit from consulting the survey \cite{EGHMW}, which summarizes the main results, constructions and ideas of proofs from this paper.

\medskip

\textbf{Acknowledgements.} We thank Lingquan Ma and Mircea Mus\-ta\-\c{t}\u{a} for helpful discussions. We are also grateful to the referee for carefully reading the paper and for useful comments. 


	\chapter{Preliminaries}\label{sec2}

In this chapter, we recall some standard results about Alexander modules, mixed Hodge complexes, differential graded algebras and limit mixed Hodge structures. We closely follow the presentation of \cite{peters2008mixed}. However, we choose different conventions in several places. 

\section{Denotations and Assumptions}\label{ss:notations}
In this section, we fix the notations for the rest of the paper. 
Let $k$\index{k@$k$} denote a field of characteristic zero.
Unless otherwise stated, $\otimes$\index{$\otimes$} denotes $\otimes_k$.
If $V$ is a graded vector space, we denote by $V^i$ its $i$-th graded component.

Let $R$\index{R@$R$} denote the ring $k[t^{\pm 1}]$ of Laurent polynomials in the variable $t$\index{t@$t$!ring element} over the field $k$. Let $R_\infty$\index{R@$R$!R_infty@$R_\infty$} denote the ring $k[[s]]$ of formal power series in the variable $s$\index{s@$s$} over the field $k$. We identify $R$ with a distinguished subring of $R_\infty$ by setting $t = 1+s$. For $m \geq 1$ let $R_m$\index{R@$R$!R_m@$R_m$} denote the quotient ring $R/((t-1)^m) = R_\infty/(s^m)$. Throughout, $M^\vee$\index{${\cdot}^\vee$} will denote the (sometimes derived) dual of $M$ as an $R$-module, and we will use $M^{\vee_k}$\index{${\cdot}^{\vee_k}$} for the dual as a $k$-vector space.

If $U$ is a smooth manifold, let $\calE^\bullet_U$\index{E@$\calE^\bullet$} denote the real de Rham complex of sheaves on $U$.
If $U$ is moreover a complex manifold, let $\Omega^\bullet_U$\index{Omega@$\Omega^\bullet$} denote the holomorphic de Rham complex of sheaves on $U$.
If $X$ is a complex manifold, and $D \subset X$ is a simple normal crossing divisor, let $\logdr{X}{D}$\index{OmegaXlogD@$\logdr{X}{D}$} denote the log de Rham complex of sheaves on $X$.

When working with sheaves, we follow the notations of \cite{dimca2004sheaves}. In particular, the pullback is denoted $f^{-1}$\index{f-1@$f^{-1}$}. For a module $M$ (over $R$ or $k$) and a space $X$, we will use $\ul M_X$\index{Runderline@$\uR$} to denote the constant sheaf on $X$ with stalk $M$. $\Hom$ will denote the vector space or module of global sheaf morphisms, and $\Homm$ will be used to denote the sheaf of locally defined morphisms\index{Hom@$\Hom$}\index{Homm@$\Homm$}.

If the differential of a cochain complex is not specified, it is assumed to be denoted by $d$. Given any double complex $\calA^{\bullet,\bullet}$, we denote the total complex by $\Tot \calA^{\bullet, \bullet}$\index{Tot@$\Tot$}. We use the sign conventions that agree with \cite[III \S 3.2]{gelfandmanin}. Most importantly, given a map of complexes $f\colon A^\bullet\to B^\bullet$, its cone is $\Cone(f)^i = B^i\oplus A^{i+1}$\index{cone}, with differential $\begin{pmatrix}
d_B & f\\
0 & -d_A
\end{pmatrix}$. The maps $B^\bullet\to \Cone(f)\to A^\bullet[1]$ are induced by $\Id_B$ and $\Id_A$.

\section{Alexander Modules}
\label{ssAlex}

Let $U$\index{U@$U$} be a smooth connected complex algebraic variety, and let $f\colon U\rightarrow \C^*$\index{f@$f$} be an algebraic map inducing an epimorphism $f_*\colon\pi_1(U)\twoheadrightarrow \Z$ on fundamental groups. Let $\exp\colon\C\to \C^*$\index{exp@$\exp$} be the infinite cyclic cover\index{cover!infinite cyclic}\index{infinite cyclic cover}\index{infinite cyclic cover|see{Cover}}, and let $U^f$\index{Uf@$U^f$} be the following fiber product:
\begin{equation}\label{eq:fiberProductIntro}
\begin{tikzcd}
U^f\subset U\times \C\arrow[r,"f_\infty"] \arrow[d,"\pi"]\arrow[dr,phantom,very near start, "\lrcorner"]&
 \C \arrow[d,"\exp"] \\
 U\arrow[r,"f"] &
 \C^*.
\end{tikzcd}
\end{equation}
Under this presentation $U^f$ is embedded in $U\times \C$:
\[
U^f = \{ (x,z)\in U\times \C\mid f(x) = e^z\}.
\]
Let $f_\infty$\index{f_infty@$f_\infty$} be the restriction to $U^f$ of the projection $U\times \C\to \C$. Since $\exp$ is an infinite cyclic cover, $\pi\colon U^f\to U$\index{pi@$\pi$} is the infinite cyclic cover induced by $\ker f_*$. The group of covering transformations of $U^f$ is isomorphic to $\mathbb{Z}$, and it induces an $R$-module structure on each group $H_i(U^f;k)$, with $1\in \mathbb{Z}$ corresponding to $t \in R = k[t^{\pm 1}]$\index{t@$t$!deck transformation}. We will also say $t$ acts on $U^f$ as the deck transformation $(x,z)\mapsto (x,z+2\pi i)$.

\begin{dfn}
The \bi{$i$-th homological Alexander module}\index{Alexander module!homological} of $U$ associated to the algebraic map $f\colon U\rightarrow \C^*$ is the $R$-module
$$
H_i(U^f;k).
$$
\end{dfn}

\begin{remk}\label{rem:ACHTUNG}
The deck transformation group, generated by $t$, induces an automorphism of $H_i(U^f;k)$, but in principle it does not preserve the mixed Hodge structure that we will define on its torsion. We will show in Corollary~\ref{cor:t} and Proposition~\ref{prop:semisimpleIsEasyConverse} that this is the case if and only if the $t$-action is semisimple on $\Tors_R H_i(U^f;k)$. Therefore, it is important that $f_\infty$ is fixed, as choosing a map of the form $f_\infty\circ t^m = f_\infty + 2\pi i m$ for any $m\in \Z$ would still identify $U^f$ with the fiber product $U\times_{\C^*}\C$, but it could give rise to an isomorphic, but not equal, mixed Hodge structure on the same vector space.

This is to be expected: the limit mixed Hodge structure exhibits the same behavior, see \cite[11.2]{peters2008mixed}.
\end{remk}

For our purposes, it is more convenient to realize the Alexander modules as homology groups of a certain local system on $U$, which we now define. Let $\calL = \pi_! \ul k_{U^f}$\index{L@$\cL$}. The action of $t$ on $U^f$ as deck transformations induces an automorphism of $\calL$, making $\cL$ into a local system of rank $1$ free $R$-modules. For any point $x\in U$, the stalks are given by $
\cL_x = \bigoplus_{x'\in \pi^{-1}(x)} k$. The monodromy action of a loop $\gamma$ on $\cL_x$ interchanges the summands according to the monodromy action of $\gamma$ on $\pi^{-1}(x)$. Therefore, the monodromy of $\cL$ is the representation:
\[
\begin{array}{ccc}
 \pi_1(U) & \longrightarrow & \Aut_R(R)\\
 \gamma & \mapsto & \left(1\mapsto t^{f_*(\gamma)}\right).
\end{array}
\]
\begin{remk}\label{remk:isoLocal-Uf}
By \cite[Theorem 2.1]{KL}, there are natural isomorphisms of $R$-modules for all $i$: $
H_i(U;\calL)\cong H_i(U^f;k)$. These come from an isomorphism at the level of chain complexes.
\end{remk}
\begin{remk}\label{homologyAM}
The definition of the homological Alexander modules can be applied to every connected space $Y$ of finite homotopy type and any epimorphism $\pi_1(Y)\twoheadrightarrow \Z$. See for example \cite{DN, BudurLiuWang, Max06, MaxTommy}. Note that every complex algebraic variety has the homotopy type of a finite CW-complex \cite[p.27]{dimca1992hypersurfaces}.
\end{remk}

\begin{remk}\label{rem:conjugate}
If $M$ is a (sheaf of) $R$-modules, we will use $\ov{M}$ to denote $M$ with the conjugate $R$-module structure, where $t$ acts by $t^{-1}$. This is the same as saying $\ov{M} = M\otimes_R R$, where the tensor is via the map $R\to R$ sending $t$ to $t^{-1}$. Note that every (sheaf of) $R$-modules has a canonical $k$-linear isomorphism $M\cong \ov{M}$, namely $m\mapsto \ov m\coloneqq m\otimes 1$.
\end{remk}

\begin{dfn}\label{def:2.2.6}\index{Alexander module!cohomological}
The \bi{$i$-th cohomology Alexander module} of $U$ associated to the algebraic map $f$ is the $R$-module
$$
\overline{H^i(U;\calL)}.
$$
Note that by flatness there is an isomorphism 
\[
\ov{H^i(U;\cL)} = H^i(U;\cL)\otimes_R R\cong H^i(U;\cL\otimes_R R) = H^i(U;\ov\cL).
\]
\index{L@$\cL$!Lbar@$\ov\cL$}
\end{dfn}

\begin{remk}\label{UCT}
The Universal Coefficients Theorem (UCT)\index{universal coefficients theorem}\index{universal coefficients theorem|see{UCT}}\index{UCT} relates the two notions of Alexander modules as follows (for a proof see Lemma~\ref{lem:UCT}). There is a natural short exact sequence of $R$-modules
\[
0\to \Ext^1_R(H_{i-1}(U;\calL),R)\to H^i(U;\Homm_R(\calL,\ul R)) \to \Hom_R(H_i(U;\calL),R)\to 0.
\]
Moreover, this short exact sequence splits, but the splitting is not natural. Taking into account Remark~\ref{rem:oppositeDual} below, the middle term can be identified with $\ov{H^i(U;\cL)}$.
\end{remk}

\begin{remk}\label{rem:oppositeDual}
Both rank 1 local systems $\Homm_R(\cL,\ul R)$ and $\ov\cL$ have monodromy $\gamma\mapsto t^{-f_*(\gamma)}$. Let us make a canonical choice of isomorphism between them. Since $\cL=\pi_!\ul k$, for any $x\in U$, the stalk $\cL_x$ has a $k$-basis parametrized by $\pi^{-1}(x)$. Let us call this basis $\{\delta_{x'} \}_{x'\in \pi^{-1}(x)}$, and note that $t\delta_{x'} = \delta_{tx'}$\index{delta@$\delta_x$}\index{delta@$\delta_x$!deltabar@$\ov\delta_x$}\index{delta@$\delta_x$!deltawedge@$\delta_x^\wedge$} Then we have bases:
\[
\ov\cL_x = k\langle \ov\delta_{x'}\rangle_{x'\in \pi^{-1}(x)} ;\quad 
\Homm_R(\cL,\ul R)_x \cong k\langle \delta_{x'}^\wedge\rangle_{x'\in \pi^{-1}(x)}.
\]
Here $\delta_{x'}^\wedge$ is the element in $\Hom_R(\cL_x,R)$ mapping $\delta_{x'}\mapsto 1$. Let $\gamma$ be a loop acting on the stalks by the monodromy. Then:
\[
\gamma\ov\delta_{x'}=
 t^{-f_*\gamma}\ov\delta_{x'}; \quad 
 t\ov\delta_{x'} = \ov{\delta}_{t^{-1}x'}
; \quad
\gamma\delta_{x'}^\wedge = t^{-f_*\gamma}\delta_{x'}^\wedge;\quad t\delta_{x'}^\wedge = \delta_{t^{-1}x'}^\wedge .
\]
Therefore, mapping $\ov\delta_{x'}\mapsto \delta_{x'}^\wedge$ on every stalk gives an isomorphism
\[
\ov\cL\cong \Homm_R(\cL,\ul R).
\]
We will freely identify these local systems from now on.
\end{remk}

\begin{remk}\label{rem:cohom_not_iso} In general, the cohomological Alexander modules are not isomorphic as $R$-modules to the cohomology of the corresponding infinite cyclic cover. Indeed, $\overline{H^i(U;\calL)}$ is a finitely generated $R$-module for all $i$. However, if $H_i(U^f;k)$ is not a finite dimensional $k$-vector space, then $H^i(U^f;k)$ is not a finitely generated $R$-module. Alternatively, $H^*(U^f;k)$ and $H^*(U;\cL)$ are the hypercohomology of $U$ with coefficients in $R\pi_*\pi^{-1}\ul k_U=\pi_*\ul k_{U^f}$ and $\cL=\pi_!\ul k_{U^f}$ respectively, so they need not be isomorphic.

However, the torsion parts of the cohomological Alexander modules and the cohomology of $U^f$ are related in a way which will be made more precise in Proposition~\ref{propcanon}.
\end{remk}

\section{Universal Coefficient Theorem}
In this paper, we work with local systems of $R$-modules. For an $R$-module $M$ (or a complex thereof), we denote $M^\vee = R\Hom_R^\bullet(M,R)$\index{${\cdot}^\vee$}. Similarly, for a sheaf of $R$-modules $\mathcal{M}$ on a topological space $X$ (or a complex thereof), we denote $\mathcal{M}^\vee = R\Homm_R^\bullet(\mathcal{M},\underline{R}_X)$.

\begin{dfn}\label{dfn:complex}
Let $U$ be a connected locally contractible space, and let $\cF^\bullet$ be a bounded complex of local systems of finitely generated \textit{free} $R$-modules on $U$. Let $x\in U$, and denote $\pi_1\coloneqq\pi_1(U,x)$. Let $\wt U$ be any covering space induced by a normal subgroup $H$ of $\pi_1$ on which the inverse image of all the local systems $\cF^i$ becomes trivial. Following \cite[VI.3]{whitehead}, we define
\begin{itemize}
\item $C_\bullet(\cF^\bullet) \coloneqq C_\bullet(\wt U;R)\otimes_{R[\pi_1/H]} \cF^\bullet_x$
\item $C^\bullet(\cF^\bullet) \coloneqq \Hom_{R[\pi_1/H]}^\bullet(C_\bullet(\wt U;R),\cF^\bullet_x)$
\end{itemize}
Here $C_\bullet(\wt U;R)$ are the singular chains with coefficients in $R$ with a right action of $\pi_1/H$ by deck transformations.\index{C(F)@$C_\bullet(\cF^\bullet)$}\index{C(F)@$C^\bullet(\cF^\bullet)$}
\end{dfn}

\begin{remk}\label{remk:dfncomplex}
Let $\cM$ be a local system of $R$-modules on $U$, and let $\cF^\bullet\rightarrow \cM$ be a resolution of $\cM$, where $U$ and $\cF^\bullet$ are as in Definition \ref{dfn:complex}.

$C_\bullet(\cF^\bullet)$ is a chain complex computing the homology of $\cM$. Indeed, since $C_\bullet(\wt U;R)$ is a complex of free $R[\pi_1/H]$-modules, the tensor product appearing in the formula defining $C_\bullet(\cF^\bullet)$ coincides with the left derived tensor product, so
$
C_\bullet(\cF^\bullet)$ is quasiisomorphic to $C_\bullet(\wt U;R)\otimes_{R[\pi_1/H]} \cM_x= C_{\bullet}(\cM)$ (the quasiisomorphism given by the resolution $\cF^\bullet\rightarrow \cM$),
which is the complex computing the homology of $\cM$ by \cite[VI.3]{whitehead}.

Similarly, $C^\bullet(\cF^\bullet)$ is quasiisomorphic to $\Hom^\bullet_{R[\pi_1/H]}(C_\bullet(\wt U;R), \cM_x)= C^{\bullet}(\cM)$ (the quasiisomorphism given by the resolution $\cF^\bullet\rightarrow \cM$). $C^\bullet(\cM$) is a chain complex computing the cohomology of $\cM$ (by loc. cit.). Again, note that the derived $R\Hom_{R[\pi_1/H]}^\bullet$ coincides with the usual $\Hom_{R[\pi_1/H]}^\bullet$ in the definition of $C^\bullet(\cF^\bullet)$, by the freeness of $C_\bullet(\wt U;R)$. 
\end{remk}

The Universal Coefficients Theorem will be important in Chapter~\ref{sec:main}. Below is the version that we will need to use, which contains the definitions of the maps $\UCT_R$ and $\UCT_{k}$. We include the proof, since we will later make use of the spectral sequence that appears in it, and we have not been able to find this version of the Universal Coefficients Theorem in the literature. When using homological notation, we let $H_j=H^{-j}$. 

\begin{lem}[Definition of the maps $\UCT_R$ and $\UCT_k$]\label{lem:UCT}\index{UCT!$\UCT_k$}\index{UCT!$\UCT_R$}\index{UCT}
Let $U$ be a locally contractible space, and let $\cF^\bullet$ be a bounded complex of local systems of finitely generated \textit{free} $R$-modules. Let $x\in U$, and let $\pi_1\coloneqq\pi_1(U,x)$. Since $R$ is a PID, we have the following natural Universal Coefficients Theorem short exact sequence
\[
0\to \Ext^1_R(H_{i-1}(C_\bullet((\cF^\bullet)^\vee));R) \to H^i(C^\bullet(\cF^\bullet)) \to \Hom_R(H_i(C_\bullet((\cF^\bullet)^\vee)),R)\to 0,
\]
which gives us a natural isomorphism
$$\UCT_R \colon
\Ext^1_R(H_{i-1}(C_\bullet((\cF^\bullet)^\vee));R) \xrightarrow{\sim} \Tors_R H^{i}(C^\bullet(\cF^\bullet)).$$

Now, suppose that $\cF^\bullet$ is a single local system $\cM$ located at degree 0. If the stalk of $\cM$ at $x$ is a finitely generated \textit{torsion} $R$-module, we can consider $\cM$ as a local system of $k$-vector spaces. Analogously to the short exact sequence above, we can replace $R$ by $k$ and $\cF^\bullet$ by $\cM$, to obtain the natural isomorphism $\UCT_k\colon
H^i(U;\cM) \xrightarrow{\sim} \Hom_{k} ( H_i( U;\Homm_{k}(\cM,k)), k)$.
\end{lem}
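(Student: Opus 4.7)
The plan is to reduce both claims to the classical algebraic Universal Coefficients Theorem over a PID by identifying $C^\bullet(\cF^\bullet)$ with the $R$-linear dual of a chain complex of free $R$-modules. Set $D_\bullet \coloneqq C_\bullet((\cF^\bullet)^\vee) = C_\bullet(\wt U;R)\otimes_{R[\pi_1/H]}(\cF^\bullet)^\vee_x$. Since each $\cF^i$ is a local system of finitely generated free $R$-modules, the same is true of $(\cF^i)^\vee = \Homm_R(\cF^i,\underline R)$, and because $C_\bullet(\wt U;R)$ is a complex of free $R[\pi_1/H]$-modules, the tensor product $D_\bullet$ is a bounded complex of \emph{free} $R$-modules. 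That is the input we need in order to apply the classical UCT.

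Next I would apply the tensor-hom adjunction over the group ring $R[\pi_1/H]$, combined with the canonical isomorphism $\Hom_R(\Hom_R(M,R),R)\cong M$ valid for finitely generated free $M$, to obtain a natural isomorphism of cochain complexes
\[
\Hom^\bullet_R\bigl(D_\bullet,R\bigr)
\;\cong\;
\Hom^\bullet_{R[\pi_1/H]}\!\bigl(C_\bullet(\wt U;R),\,\Hom_R((\cF^\bullet)^\vee_x,R)\bigr)
\;\cong\;
C^\bullet(\cF^\bullet).
\]
Since $R$ is a PID and $D_\bullet$ is a bounded complex of free $R$-modules, the standard UCT yields the desired natural short exact sequence
\[
0\to \Ext^1_R(H_{i-1}(D_\bullet),R)\to H^i(C^\bullet(\cF^\bullet))\to \Hom_R(H_i(D_\bullet),R)\to 0.
\]
Naturality of this sequence in $\cF^\bullet$ is inherited from the naturality of both the adjunction above and the standard UCT.

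To extract the isomorphism $\UCT_R$, I would use two PID facts: for a finitely generated $R$-module $M$, $\Ext^1_R(M,R)$ is torsion and $\Hom_R(M,R)$ is torsion-free. Hence the left term of the short exact sequence is torsion and the right term is torsion-free, so taking $\Tors_R$ of $H^i(C^\bullet(\cF^\bullet))$ identifies it with the $\Ext^1$ term via the inclusion; this identification is natural because the sequence is. For the $\UCT_k$ version, when $\cF^\bullet=\cM$ is concentrated in degree zero with torsion (hence finite-dimensional over $k$) stalks, I would rerun the exact same argument with the ground ring $R$ replaced by $k$, using that $\cM$ is then a local system of finite-dimensional $k$-vector spaces and that $\Homm_k(\cM,k)$ plays the role of $\cM^\vee$. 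Since $k$ is a field, $\Ext^1_k$ vanishes, the short exact sequence collapses, and $\UCT_k$ is the resulting isomorphism $H^i(U;\cM)\xrightarrow{\sim}\Hom_k(H_i(U;\Homm_k(\cM,k)),k)$.

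The only technical obstacle I anticipate is bookkeeping the left/right module structures needed to make the tensor-hom adjunction over $R[\pi_1/H]$ legitimate: $C_\bullet(\wt U;R)$ carries a right $\pi_1/H$-action by deck transformations, while the stalks $(\cF^\bullet)^\vee_x$ carry a left action via monodromy, so one has to be careful to convert sides (using, as in Remark \ref{rem:conjugate}, the canonical anti-involution $t\mapsto t^{-1}$ on $R[\pi_1/H]$ that turns right modules into left ones) before invoking the adjunction. Once this is set up cleanly, both short exact sequences and the stated isomorphisms follow formally.
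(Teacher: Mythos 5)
Your proposal is correct and follows essentially the same route as the paper: both identify $C^\bullet(\cF^\bullet)$ with $\Hom_R^\bullet(C_\bullet((\cF^\bullet)^\vee),R)$ via tensor-hom adjunction (using double duality for finitely generated free modules), then invoke the classical UCT over the PID $R$, and obtain $\UCT_k$ by running the same argument over $k$. The paper additionally records a spectral-sequence derivation of the same short exact sequence (via the injective resolution $R\to K\to K/R$), but only because the explicit description of the maps is needed later in Proposition~\ref{prop:mapsAreEqual}; this is not required for the lemma itself.
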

\begin{proof}
By tensor-hom adjunction, we have the following isomorphism. 
\[
C^\bullet(\cF^\bullet) = \Hom_{R[\pi_1/H]}^\bullet\left(C_\bullet(\wt U;R),\Hom^\bullet_R((\cF^\bullet)^\vee_x,R)\right) \cong\]
\[
\cong \Hom_{R}^\bullet\left(C_\bullet(\wt U;R)\otimes_{R[\pi_1/H]}(\cF^\bullet)^\vee_x ,R\right) = \Hom_{R}^\bullet\left(C_\bullet((\cF^\bullet)^\vee) ,R\right).
\]
Note that we are using $(\cF^\bullet)^\vee = R\Homm_R^\bullet(\cF^\bullet,\underline R)$, but since $\cF^\bullet$ is a complex of local systems of free $R$-modules, then we have $(\cF^\bullet)^\vee = \Homm_R^\bullet(\cF^\bullet,\underline R)$, which is again a bounded complex of local systems of free $R$-modules. We are also using that for bounded complexes of local systems, $\Homm_R^\bullet$ commutes with taking stalks.

Now, since $R$ is a PID, we can proceed as in the proof of the Universal Coefficient Theorem in \cite[Chapter 3.1]{hatcher}, to arrive at the natural Universal Coefficients short exact sequence from the statement of the lemma.
\begin{equation}\label{eq:UCTses}
0\to \Ext^1_R(H_{i-1}(C_\bullet((\cF^\bullet)^\vee));R) \to H^i(C^\bullet(\cF^\bullet)) \to \Hom_R(H_i(C_\bullet((\cF^\bullet)^\vee)),R)\to 0.
\end{equation}
Alternatively, we can obtain this short exact sequence using a spectral sequence, namely the one obtained from the Grothendieck spectral sequence for the composition of two derived functors $RF\circ RG$, taking $G = \Id$ and $F = \Hom_R(\bullet, R)$. Later on we will need to worry about what the maps are, so let us recall how it is obtained. Let $K=k(t)$ be the field of fractions of $R$, viewed as an $R$-module. Let $I^\bullet$ be the following injective resolution of $R$, with the obvious maps:
\[
R\to \underset{(=I^0)}{K}\to \underset{(=I^1)}{K/R.}
\]
Consider the map of complexes induced by the map $I^0\to I^1$:
\[
\Hom_R^\bullet(C_\bullet((\cF^\bullet)^\vee),I^0) \to \Hom_R^\bullet(C_\bullet((\cF^\bullet)^\vee),I^1).
\]
We can turn this map into a double complex where each of the complexes above is a row, and the map between them (with some sign changes) becomes the vertical differential. We consider the spectral sequences associated to this double complex.

Consider the spectral sequence that starts by taking the vertical cohomology. Since $R\to I^\bullet$ is an injective resolution, the cohomology of the columns is $\Ext^i(C_\bullet((\cF^\bullet)^\vee),R)$, which vanishes if $i\neq 0$ since $C_\bullet((\cF^\bullet)^\vee)$ is a complex of free $R$-modules. We are left with only one nonzero row, namely the complex $\Hom^{\bullet}_R(C_\bullet((\cF^\bullet)^\vee),R)$. Therefore, the spectral sequence converges in the second page, to the cohomology of $\Hom^{\bullet}_R(C_\bullet((\cF^\bullet)^\vee),R)$. In other words, the cohomology of the total complex is the cohomology of $C^\bullet(\cF^\bullet)$.

Now we consider the other spectral sequence, starting with the horizontal cohomo\-logy. Since $I^i$ is injective, there are natural isomorphisms:
\[
H^j(\Hom_R^\bullet(C_\bullet((\cF^\bullet)^\vee),I^i)) \cong \Hom_R(H_j(C_\bullet((\cF^\bullet)^\vee)),I^i). 
\]
Since $I^\bullet$ is an injective resolution of $R$, we have isomorphisms as follows:
\[
H^i(\Hom_R(H_j(C_\bullet((\cF^\bullet)^\vee)),I^\bullet)) \cong \Ext^i_R(H_j(C_\bullet((\cF^\bullet)^\vee)),R ) .
\]
Further, the map $R\to I^\bullet$ induces maps $R\to K$ and $\beta\colon K/R[-1]\to R$, which means that the isomorphisms above are induced by the inclusion $R\to K$ if $i=0$ and by $\beta$ if $i=1$ (we will need to use this fact in Proposition~\ref{prop:mapsAreEqual}). At this point, the spectral sequence has converged (just by looking at the degrees), yielding a filtration of $H^{i}(C^\bullet(\cF^\bullet))$ by the cohomology groups above. In other words, we obtain the desired short exact sequence~(\ref{eq:UCTses}).

The isomorphism $\UCT_k$ is obtained by replacing $R$ by $k$, and $\cF^\bullet$ by $\cM$ in the proof above.
\end{proof}

\begin{remk}\label{remk:UCT}
If we take $\cF^\bullet=\ov\cL$ seen as a complex in degree $0$, we get that $(\cF^\bullet)^\vee\cong \cL$. Taking into account Remark \ref{remk:dfncomplex}, the UCT short exact sequence in Lemma \ref{lem:UCT} (in $R$) becomes
$$
0\to \Ext^1_R(H_{i-1}(U;\cL);R) \to H^i(U;\ov\cL) \to \Hom_R(H_i(U;\cL),R)\to 0,
$$
that is, the UCT short exact sequence in Remark~\ref{UCT}.
\end{remk}

\section{Cohomology Alexander Modules and Duality}\label{ss:duality}

The relation between the cohomology Alexander modules and the corresponding infinite cyclic cover can be made more precise as follows.

\begin{prop}
\label{propcanon}\index{functoriality}
There is a natural $R$-module isomorphism $$\Tors_R H^i(U;\ov\calL)\cong (\Tors_R H_{i-1}(U^f;k))^{\vee_k},$$ where $^{\vee_k}$ denotes the dual as a $k$-vector space. 

Moreover, this isomorphism is functorial in the following sense. Let $U_1$ and $U_2$ be smooth connected complex algebraic varieties, with algebraic maps $f_j\colon U_j\rightarrow \C^*$ such that $f_j$ induces an epimorphism in fundamental groups for $j=1,2$, and assume that there exists an algebraic map $g\colon U_1\rightarrow U_2$ that makes the following diagram commutative.

\begin{center}
\begin{tikzcd}[row sep = 1em]
U_1\arrow[rd,"f_1"']\arrow[rr, "g"] & \ & U_2\arrow[ld,"f_2"] \\
\ & \C^* & \
\end{tikzcd}
\end{center}

Let $\pi_j\colon U_j^{f_j}\to U_j$ be the corresponding infinite cyclic covers, and let $\calL_j = \pi_! \ul k_{U_j^{f_j}}$ be the local systems of $R$-modules induced by $f_j$ for $j=1,2$. Then, $g$ induces a commutative diagram of $R$-modules compatible with the above isomorphism as follows
\begin{center}
\begin{tikzcd}[row sep = 1.4em]
\Tors_RH^i(U_2;\ov{\calL_2})\arrow[r,"\cong"]\arrow[d, "g^*"]& (\Tors_R H_{i-1}(U_2^{f_2},k))^{\vee_k}\arrow[d,"(g_*)^{\vee_k}"] \\
\Tors_R H^i(U_1;\ov{\calL_1})\arrow[r,"\cong"]& (\Tors_R H_{i-1}(U_1^{f_1},k))^{\vee_k}.
\end{tikzcd}
\end{center}
\end{prop}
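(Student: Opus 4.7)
The plan is to carry out the proof in three steps. First, I would apply the Universal Coefficients Theorem from Remark \ref{UCT} (established in Lemma \ref{lem:UCT}), giving the natural short exact sequence
\[
0\to \Ext^1_R(H_{i-1}(U;\cL),R)\to H^i(U;\ov\cL) \to \Hom_R(H_i(U;\cL),R)\to 0.
\]
Since $\Hom_R(-,R)$ takes values in torsion-free $R$-modules and $\Ext^1_R(-,R)$ vanishes on free $R$-modules (as $R$ is a PID), this restricts to a natural $R$-module isomorphism
\[
\Tors_R H^i(U;\ov\cL) \cong \Ext^1_R(\Tors_R H_{i-1}(U;\cL), R).
\]
Combined with the natural identification $H_{i-1}(U;\cL)\cong H_{i-1}(U^f;k)$ from Remark \ref{remk:isoLocal-Uf}, this reduces the problem to producing a natural $R$-module isomorphism $\Ext^1_R(M, R) \cong M^{\vee_k}$ for any finitely generated torsion $R$-module $M$.

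Second, I would establish this general lemma. Since $M$ is finitely generated and torsion over $R = k[t^{\pm 1}]$, it is finite-dimensional as a $k$-vector space, and its $R$-module structure is determined by the $k$-linear automorphism $t_M\colon M \to M$ of multiplication by $t$. The key tool is the natural ``Koszul-type'' free $R$-module resolution
\[
0 \to R \otimes_k M \xrightarrow{t \otimes 1 - 1 \otimes t_M} R \otimes_k M \xrightarrow{\mathrm{ev}} M \to 0,
\]
where $\mathrm{ev}(r\otimes m) = r\cdot m$; exactness follows since $\det(tI - t_M)$ is the characteristic polynomial of $t_M$, a nonzero element of $R$. Applying $\Hom_R(-,R)$, using the identification $\Hom_R(R\otimes_k M, R)\cong \Hom_k(M,R)\cong R\otimes_k M^{\vee_k}$ (valid since $\dim_k M < \infty$), and noting $\Hom_R(M,R) = 0$ for torsion $M$, yields
\[
R\otimes_k M^{\vee_k} \xrightarrow{t\otimes 1 - 1\otimes t_M^*} R\otimes_k M^{\vee_k} \to \Ext^1_R(M,R) \to 0.
\]
In the cokernel, the relation $t\cdot(1\otimes\phi) = 1\otimes t_M^*\phi$ allows us to represent every class uniquely by an element of $1\otimes M^{\vee_k}$, with $t$ acting as $t_M^*$. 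This is precisely the canonical $R$-module structure on $M^{\vee_k}$, so we obtain the desired natural isomorphism $\Ext^1_R(M,R) \cong M^{\vee_k}$.

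Third, I would verify functoriality. Given $g\colon U_1\to U_2$ with $f_2\circ g = f_1$, the map lifts canonically to an $R$-equivariant map $\tilde g\colon U_1^{f_1}\to U_2^{f_2}$ via $(x,z)\mapsto (g(x),z)$, inducing $\tilde g_*$ on the homology of the covers and $g^*$ on cohomology with local coefficients. The UCT short exact sequence is natural by its construction in Lemma \ref{lem:UCT}, the identification $H_{i-1}(U;\cL)\cong H_{i-1}(U^f;k)$ is natural in $(U,f)$, and the Koszul-type resolution together with the subsequent identifications is functorial in $M$. Composing these natural isomorphisms gives the required commutative diagram. The main obstacle is step two: producing the isomorphism $\Ext^1_R(M,R)\cong M^{\vee_k}$ in a \emph{functorial} way. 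While some such isomorphism is a standard consequence of the classification of finitely generated modules over a PID, naturality requires working with a resolution that depends only on the intrinsic $R$-module structure of $M$, which is exactly what the Koszul-type resolution provides.
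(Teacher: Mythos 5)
Your proposal is correct, and its outer steps coincide with the paper's: the UCT reduction of $\Tors_R H^i(U;\ov\cL)$ to $\Ext^1_R(\Tors_R H_{i-1}(U^f;k),R)$, and the deduction of functoriality from naturality of the UCT and of the duality isomorphism. Where you genuinely diverge is in the key duality lemma $\Ext^1_R(M,R)\cong M^{\vee_k}$. The paper (Lemma \ref{lemcanon}, via Definition \ref{dfn:Res}) computes $\Ext^1$ from the injective resolution $0\to R\to K\to K/R\to 0$ with $K=k(t)$, identifies $\Ext^1_R(A,R)\cong\Hom_R(A,K/R)$ for torsion $A$, and then applies a residue map $K/R\to k$ built from partial fractions, proving bijectivity first over $\bar{k}$ and descending by a Galois-invariance argument; you instead compute $\Ext^1$ from the functorial characteristic (Koszul-type) resolution $0\to R\otimes_k M\to R\otimes_k M\to M\to 0$ and identify the resulting cokernel of $t\otimes 1-1\otimes t_M^*$ with $M^{\vee_k}$ carrying the transpose $t$-action. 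Your route is more elementary, works uniformly over any coefficient field without passing to the algebraic closure, and has naturality built in because the resolution itself is functorial in $M$; the paper's route produces the specific isomorphism $\Res$ through $\Hom_R(A,K/R)$ and the connecting map $\beta$, a description that is used verbatim in later diagram chases (e.g.\ Proposition \ref{prop:mapsAreEqual} and Lemma \ref{233}), and your natural isomorphism need not agree with $\Res$ on the nose --- harmless here, since the statement only asks for some natural isomorphism. Two small points to tighten: the nonvanishing of $\det(tI-t_M)$ only gives injectivity of the first map of your resolution, while exactness at the middle term and surjectivity of evaluation require the (easy) observation that modulo the image every element reduces to one of the form $1\otimes m$; and in your first step you should record that $\Ext^1_R(\Tors_R H_{i-1},R)$ is itself a torsion $R$-module (immediate from your lemma, as it is finite dimensional over $k$), which is what identifies it with the full torsion submodule of $H^i(U;\ov\cL)$ rather than merely a submodule of it.
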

\begin{proof}
By the UCT (Remark~\ref{remk:UCT}), there is a short exact sequence of $R$-modules
\[
0\to \Ext^1_R\big(H_{i-1}(U;\calL),R\big)\to H^i(U;\ov\calL) \to \Hom_R\big(H_i(U;\calL),R\big)\to 0.
\]
Since $R$ is a PID, the $R$-module $\Hom_R(H_i(U;\calL),R)$ is free and the $R$-module $\Ext^1_R(H_{i-1}(U;\calL),R)$ is  torsion. Moreover, there is a natural isomorphism of $R$-modules
\[\Ext^1_R(H_{i-1}(U;\calL),R)\cong\Ext^1_R(\Tors_R H_{i-1}(U^f;k),R).\]
Therefore, we have a natural isomorphism of $R$-modules
\begin{equation}
\Tors_R H^i(U;\ov \calL)\cong \Ext^1_R(\Tors_R H_{i-1}(U^f;k),R).
\label{eqn1}
\end{equation}

Using equation (\ref{eqn1}) and applying Lemma \ref{lemcanon} (see below) to the finitely generated torsion $R$-module $\Tors_R H_{i-1}(U^f;k)$ yields a natural isomorphism of $R$-modules
$$
\Tors_R H^i(U;\ov\calL)\xrightarrow{\cong}(\Tors_R H_{i-1}(U^f;k))^{\vee_k},
$$
concluding our proof of the first part of this proposition.

The functoriality follows from the functoriality of the UCT and Lemma \ref{lemcanon}.
\end{proof}

\begin{lem}
Let $A$ be a finitely generated torsion $R$-module. Then, there exists a natural isomorphism of $R$-modules
$$
\Res\colon\Ext^1_R(A,R)\xrightarrow{\cong} A^{\vee_k},
$$\index{residue!Res@$\Res$}
where $^{\vee_k}$ denotes the dual as a $k$-vector space.
\label{lemcanon}
\end{lem}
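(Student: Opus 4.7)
Since $R = k[t^{\pm 1}]$ is a PID, the fraction field $K = k(t)$ and the quotient $K/R$ are both divisible, hence injective, $R$-modules. The sequence $0 \to R \to K \to K/R \to 0$ is therefore an injective resolution of $R$. Applying $\Hom_R(A,-)$ and using that $A$ is torsion while $K$ is torsion-free (so $\Hom_R(A,K)=0$), the associated long exact sequence of $\Ext$-groups collapses to a natural $R$-module isomorphism $\partial\colon \Hom_R(A,K/R)\xrightarrow{\cong}\Ext^1_R(A,R)$. My strategy is to compose $\partial^{-1}$ with a natural $k$-linear map $\tau\colon K/R\to k$ in order to produce
\[
\Res\colon \Ext^1_R(A,R)\xrightarrow{\partial^{-1}} \Hom_R(A,K/R)\xrightarrow{\tau_*} \Hom_k(A,k)=A^{\vee_k},
\]
where $\tau_*(\phi)=\tau\circ\phi$. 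Naturality of $\Res$ in $A$ will then follow automatically, since both $\partial$ (from the long exact sequence of $\Ext$) and $\tau_*$ (from post-composition) are natural in $A$.

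I will construct $\tau$ via residues. Identifying $\operatorname{Spec} R$ with $\mathbb G_{m,k}\subset\mathbb P^1_k$ and viewing each $\alpha\in K$ as the coefficient of a rational $1$-form $\alpha\, dt$, set
\[
\tau(\alpha)\;=\;\sum_{x\in|\mathbb G_{m,k}|}\operatorname{Tr}_{k(x)/k}\!\bigl(\res_x(\alpha\, dt)\bigr),
\]
where the (finite) sum ranges over closed points, $k(x)=R/\mathfrak m_x$ is the residue field, and $\res_x$ denotes the usual residue of the rational form computed in the completed local ring at $x$. If $\alpha\in R$, then $\alpha\, dt$ is regular on $\mathbb G_{m,k}$, all such residues vanish, and $\tau$ descends to a well-defined $k$-linear map $K/R\to k$. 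This is canonical, given the fixed generator $t\in R$.

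It remains to prove $\Res$ is an isomorphism. Both $\Ext^1_R(-,R)$ and $(-)^{\vee_k}$ are additive contravariant functors on finitely generated torsion $R$-modules, so by primary decomposition and the structure theorem over the PID $R$ the problem reduces to the case $A=R/\mathfrak m^n$ for a single maximal ideal $\mathfrak m=(p)$. The resolution $0\to R\xrightarrow{p^n} R\to R/\mathfrak m^n\to 0$ yields $\Ext^1_R(A,R)\cong R/\mathfrak m^n$, and both sides of the putative isomorphism have $k$-dimension $n[k(\mathfrak m):k]$. A direct residue calculation in the completion $\widehat{R_\mathfrak m}$ then reduces non-degeneracy of $\Res$ on $R/\mathfrak m^n$ to non-degeneracy of the trace form $\operatorname{Tr}_{k(\mathfrak m)/k}\colon k(\mathfrak m)\to k$ paired against the socle of $R/\mathfrak m^n$, which holds because $k(\mathfrak m)/k$ is separable (as $\operatorname{char}(k)=0$). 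The main obstacle is the second step: setting up $\tau$ in a manifestly canonical way and carrying out the cyclic residue computation; once $\tau$ is in hand, the rest is formal, resting on the separability of the residue field extensions over $k$.
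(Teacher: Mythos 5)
Your proposal is correct, and its skeleton is the same as the paper's: both exploit the injective resolution $0\to R\to K\to K/R\to 0$ together with $\Hom_R(A,K)=0$ to identify $\Ext^1_R(A,R)$ with $\Hom_R(A,K/R)$, and then post-compose with a $k$-linear ``residue'' functional $K/R\to k$; naturality is automatic for both of us, and the isomorphism statement is checked on building blocks. Where you genuinely diverge is in the construction of the functional and the non-degeneracy argument. The paper first assumes $k=\bar k$, defines $\res$ by the partial-fraction formula $b\mapsto\sum_i\alpha_{i,1}$, proves injectivity by showing $\ker(\res)$ contains no nonzero $R$-submodule of $K/R$, matches dimensions, and finally descends to general $k$ by Galois invariance of the partial-fraction decomposition. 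You instead define $\tau(\alpha)=\sum_x \mathrm{Tr}_{k(x)/k}(\res_x(\alpha\,dt))$ intrinsically over $k$, reduce via the structure theorem to $A=R/\mathfrak m^n$, and get injectivity from non-degeneracy of the trace form $\mathrm{Tr}_{k(\mathfrak m)/k}$ (using $\mathrm{char}\,k=0$, so $p$ is separable and $\res_{\mathfrak m}(dt/p)\neq 0$). The two functionals agree: over $\bar k$ your trace-of-residues is exactly the paper's sum of first-order partial-fraction coefficients, so your map is the descended one. What your route buys is that it never leaves the ground field (no Galois descent step) and it makes visible exactly where the standing hypothesis on $k$ enters (separability of residue fields); what the paper's route buys is that it avoids completed local rings and local residues altogether, using only elementary partial fractions, at the cost of an extra descent argument. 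Both are complete proofs of the lemma; just make sure, when you write up the ``direct residue calculation,'' to record that a generator of any nonzero submodule of the $\mathfrak m$-primary part of $K/R$ can be multiplied into the socle $\mathfrak m^{-1}R/R$, which is where the trace-form pairing is actually tested.
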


This lemma is essentially a special case of the relative local duality theorem of Smith, see \cite[Theorem 1.5]{localduality}. Since we are only using a very special version of the local duality theorem, we sketch a more elementary proof.

We first make the following definition, which we will use in the proof.

\begin{dfn}\label{dfn:Res}
Let $K = k(t)$, and let $\beta\colon K/R[-1]\to R$ be the map in the derived category of $R$-modules given by the floor diagram:\index{beta@$\beta$}
\[
\begin{tikzcd}[row sep = 1.4em]
0 \arrow[d]& K\arrow[r,"1",leftarrow]\arrow[l,leftarrow]\arrow[d,"1"]\arrow[dr,phantom,"\sim"] & R\arrow[d]\\
K/R & K/R \arrow[l,"1",leftarrow]\arrow[r,leftarrow]& 0.
\end{tikzcd}
\]
For an $R$-module $M$, this induces a map
\[
R^1\Hom_R^\bullet(M,\beta)\colon \Hom_R(M,K/R)\to \Ext^1_R(M,R).
\]
Let $\res_*$\index{residue!res@$\res_*$} be the composition $\Hom_R(\cdot,K/R)\to \Hom_k(\cdot,K/R)\to \Hom_k(\cdot,k)$, where the first arrow is just the forgetful arrow and the second arrow is the residue of a rational function (see below for details). For every torsion $R$-module $A$, the isomorphism of $R$-modules $\Res$ is defined as $$\Res=\res_*\circ (R^1\Hom_R^\bullet(A,\beta))^{-1}\colon\Ext^1_R(A,R)\rightarrow \Hom_k(A,k),$$ i.e., it is the composition of the inverse of the map $\Hom_R(A,K/R)\rightarrow \Ext^1_R(A,R)$ in the long exact sequence corresponding to $R\Hom_R(A,\cdot)$ applied to the short exact sequence of $R$-modules $0\rightarrow R\rightarrow K\rightarrow K/R\rightarrow 0$ with $\res_*$.
\end{dfn}

\begin{proof}[Sketch of proof of Lemma~\ref{lemcanon}]

First, we prove that $\Res$ is an isomorphism assuming that $k$ is algebraically closed. Since $R$ is a PID, the following is an injective resolution of $R$:
\[
0\rightarrow R\rightarrow K \rightarrow K/R\rightarrow 0.
\]
By definition, there is a natural isomorphism 
$$
\Ext^1_R(A, R)\cong \Hom_R(A,K/R)/\Hom_R(A,K).
$$ 
Since $A$ is a torsion $R$-module, $\Hom_R(A,K)=0$ and hence we have a natural isomorphism
\begin{equation}\label{eqiso} 
\Ext^1_R(A, R)\cong \Hom_R(A,K/R).
\end{equation}

We next define a $k$-linear map $\mathrm{res}\colon K/R\to k$ as follows\index{residue!res@$\res$}. 
Using the division algorithm and partial fraction decomposition, every element $b\in K$ can be written uniquely as a finite sum
\[
b=v(t)+\sum_{i=1}^r\sum_{j=1}^{q_i}\frac{\alpha_{i,j}}{(t-\beta_i)^j},
\]
where $v(t)\in R$, and $\alpha_{i,j}\in k, \beta_i\in k\setminus \{0\}$ are constants. We define $\mathrm{res}(b)=\sum_{i=1}^{r}\alpha_{i, 1}$. Composing with the $k$-linear map $\mathrm{res}$, we get a $k$-linear map 
\[
\mathrm{res}_*\colon \Hom_R(A,K/R)\to \Hom_k(A, k)=A^{\vee_k}.
\]
One can check that this is a homomorphism of $R$-modules. In fact, it is easy to check this after localizing at each $\beta_i\in k\setminus \{0\}=\mathrm{mSpec}(R)$. Since $A$ is a finitely generated torsion $R$-module, both $\Hom_R(A,K/R)$ and $\Hom_k(A, k)$ decompose into direct sums of the localizations.

Next, we prove that $\mathrm{res}_*$ is an isomorphism of $R$-modules. 
We claim that $\ker(\mathrm{res})\subset K/R$ does not contain any non-zero $R$-submodule $M$. 
In fact, without loss of generality, we can assume that $M$ is supported at one point $\beta_1$. Then a nonzero element of $M$ is of the form $\big[\frac{w(t)}{(t-\beta_1)^q}\big]$, where $q\geq 1$ and $w(\beta_1)\neq 0$. Now, 
\[
(t-\beta_1)^{q-1}\cdot \Big[\frac{w(t)}{(t-\beta_1)^q}\Big]=\Big[\frac{w(t)}{t-\beta_1}\Big]\in M
\]
and $\mathrm{res}(\frac{w(t)}{t-\beta_1})=w(\beta_1)\neq 0$. Thus, the above claim follows, which then implies that $\mathrm{res}_*$ is injective. 

Using the primary decomposition of finitely generated $R$-modules, it is easy to see that as $k$-vector spaces, $\Hom_R(A,K/R)$ and $\Hom_k(A, k)$ have the same dimensions. Therefore $\mathrm{res}_*$ is an isomorphism. Hence, by equation \eqref{eqiso}, we have a functorial $R$-module isomorphism
\[
\Ext^1_R(A,R)\xrightarrow{\cong} A^{\vee_k}.
\]

When $k$ is not algebraically closed, we let $\bar{k}$ be its algebraic closure. The above arguments show that there exists a natural isomorphism
\begin{equation}\label{eqkbar}
\Ext^1_{R\otimes_k \bar{k}}(A\otimes_k \bar{k},R\otimes_k \bar{k})\xrightarrow{\cong} (A\otimes_k \bar{k})^{\vee_k}.
\end{equation}
Notice that the division algorithm and the partial fraction decomposition are invariant under the Galois action of $\mathrm{Gal}(\bar{k}/k)$. Therefore, the isomorphism \eqref{eqkbar} descends to an isomorphism over $k$. 
\end{proof}

\begin{remk}\label{rem:annoyingSign}
Let $\beta$ be as in Definition~\ref{dfn:Res}. In the derived category, there is an exact triangle:
\[
R\xrightarrow{1} K\xrightarrow{1} K/R \xrightarrow{\beta[1]} R[1].
\]
Here $\beta[1]$ is the following floor diagram. Note the sign:
\[
\begin{tikzcd}[row sep = 1.4em]
0 \arrow[d]& K\arrow[r,"1",leftarrow]\arrow[l,leftarrow]\arrow[d,"-1"] & R\arrow[d]\\
K/R & K/R \arrow[l,"1",leftarrow]\arrow[r,leftarrow]& 0.
\end{tikzcd}
\]
This sign means that the map $\Hom_R(\cdot ,K/R)\to \Ext^1_R(\cdot,R)$ appearing in the obvious Ext long exact sequence is opposite to the map in the derived category induced by $\beta[1]$. We will use the former map, but this sign won't affect any of our computations in this paper.
\end{remk}

\begin{cor} \label{isocohom}
In the above notations, assume moreover that $H_{i}(U^f;k)$ is a torsion $R$-module for some $i\geq 0$. Then, there exists a canonical isomorphism
$$
\Tors_R H^{i+1}(U;\ov\calL)\cong H^{i}(U^f;k).
$$
Moreover, if $H_{i+1}(U^f;k)$ is also a torsion $R$-module, then, so is $H^{i+1}(U;\calL)$. Hence, in that case, $H^{i+1}(U;\ov\calL)$ and $H^{i}(U^f;k)$ are naturally isomorphic.
\end{cor}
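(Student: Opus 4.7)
The plan is to combine Proposition~\ref{propcanon} with the ordinary Universal Coefficient Theorem for the topological space $U^f$ with coefficients in the field $k$. From Proposition~\ref{propcanon} we already have a natural $R$-module isomorphism $\Tors_R H^{i+1}(U;\ov\cL) \cong (\Tors_R H_i(U^f;k))^{\vee_k}$, so the task reduces to identifying $(\Tors_R H_i(U^f;k))^{\vee_k}$ with $H^i(U^f;k)$ under the torsion hypothesis.

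First I would observe that, since $U$ has the homotopy type of a finite CW complex, $H_i(U;\cL) \cong H_i(U^f;k)$ is finitely generated over $R$. A finitely generated torsion module over the PID $R$ is automatically finite-dimensional as a $k$-vector space (being a finite direct sum of cyclic modules $R/(p^n)$, each of finite $k$-dimension), so the hypothesis yields $\Tors_R H_i(U^f;k) = H_i(U^f;k)$ and this module is finite-dimensional over $k$. The classical UCT over the field $k$ applied to $U^f$ then has vanishing $\Ext^1_k$ term and delivers a natural isomorphism $H^i(U^f;k) \cong \Hom_k(H_i(U^f;k),k) = (H_i(U^f;k))^{\vee_k}$. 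Composing with Proposition~\ref{propcanon} gives the desired canonical identification
\[
\Tors_R H^{i+1}(U;\ov\cL) \cong H^i(U^f;k).
\]

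For the second statement, I would invoke the UCT short exact sequence from Remark~\ref{UCT},
\[
0 \to \Ext^1_R\big(H_i(U;\cL), R\big) \to H^{i+1}(U;\ov\cL) \to \Hom_R\big(H_{i+1}(U;\cL), R\big) \to 0.
\]
Under the additional hypothesis that $H_{i+1}(U^f;k) \cong H_{i+1}(U;\cL)$ is $R$-torsion, the $\Hom$ term vanishes (any map from a torsion module to the torsion-free module $R$ is zero), so $H^{i+1}(U;\ov\cL) \cong \Ext^1_R(H_i(U;\cL),R)$, which is $R$-torsion for any finitely generated $H_i(U;\cL)$ (the free summand of $H_i(U;\cL)$ contributes $0$ to $\Ext^1$). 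Since the underlying $k$-vector spaces of $H^{i+1}(U;\cL)$ and $H^{i+1}(U;\ov\cL)$ agree and the former's $t$-action is the inverse of the latter's, being torsion transfers between them, so $H^{i+1}(U;\cL)$ is torsion as well. The final isomorphism statement then follows immediately from the first part.

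I do not anticipate any serious obstacle; the argument is a clean combination of two dualities already in place. The most delicate bookkeeping lies in tracking naturality through both isomorphisms and in invoking the PID structure of $R$ (and the resulting finite-dimensionality over $k$ under the torsion hypothesis) at the right moments.
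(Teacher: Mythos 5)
Your proposal is correct and follows essentially the same route as the paper: the first isomorphism is obtained by composing Proposition~\ref{propcanon} with the Universal Coefficient Theorem over the field $k$ applied to $U^f$ (the paper's $\UCT_k$), and the second statement by feeding the torsion hypothesis into the UCT short exact sequence of Remark~\ref{UCT}. The extra remarks you add (finite generation over $R$, vanishing of the $\Hom_R$ term, and transferring torsion between $\cL$ and $\ov\cL$) are exactly the details the paper leaves implicit.
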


\begin{proof}
We have
$$
\Tors_R{H^{i+1}(U;\ov\cL)} 
\overset{(\ref{propcanon})}\cong \Hom_k(\Tors_R H_{i}(U^f;k),k) = \Hom_k( H_{i}(U^f;k),k) \overset{\UCT_k}  
\cong H^i(U^f;k).
$$
The rest of the proof follows from the UCT (written as in Remark~\ref{remk:UCT}).
\end{proof}

The result of the above corollary applies, in particular, to the special case when $U=\C^n \setminus \{f=0\}$ and $i \neq n$, with $f\colon U\to \C^*$ induced by a complex polynomial in general position at infinity; see \cite{DL,Max06}.


\section{Relationship with the Generic Fiber}\label{genfiber}

Let $U,f,\pi$ be as in Section~\ref{ssAlex}. By Verdier's generic fibration theorem\index{generic fibration theorem}\index{Verdier's generic fibration theorem}\index{Verdier's generic fibration theorem|see{generic fibration theorem}} \cite[Corollary 5.1]{verdier}, there exists a finite set of points $\calB\subset\C^*$ such that $$
f\colon f^{-1}(\C^*\setminus\calB)\longrightarrow \C^*\setminus\calB$$ is a locally trivial fibration. Being the pullback of a locally trivial fibration, $f_\infty$ is a locally trivial fibration away from $\exp^{-1}(\calB)$ as well.

\begin{dfn}\label{def:genericFiber}
We will say a fiber $f_\infty^{-1}(c)$ (resp. $f^{-1}(e^c)$) is generic if $c\notin\exp^{-1}(\calB)$ (resp. if $e^c\notin\calB$).\index{fiber!generic}
\end{dfn}

Let $F$ be a generic fiber of $f$. For any $c\in \C$, $\pi$ is an isomorphism between a neighborhood of $f_\infty^{-1}(c)$ and a neighborhood of $f^{-1}(e^c)$ (recall that $\pi$ is the pullback of $\exp\colon\C\to \C^*$). In particular, if $F = f^{-1}(e^c)$, there is a unique lift $i_{\infty}\colon F \hookrightarrow U^f$ of the inclusion $i\colon F\hookrightarrow U$ inducing an isomorphism $F\cong f_\infty^{-1}(c)$.\index{f_infty@$f_\infty$}\index{iinfty@$i_\infty$}

Let us recall the description of the monodromy action of $F$, along with its compatibility with the deck action on $U^f$.
\begin{lem}\label{lem:fiberMonodromy}
For any $j\geq 0$, the map $i_{\infty}$ induces a map:
\[
(i_{\infty})_j\colon H_j(F;k)\longrightarrow H_j(U^f;k).
\]
This map is an $R$-module homomorphism if we let $t\in R$ act on the right hand side as the deck transformation of $U^f$ and on the left-hand side as the \emph{clockwise} monodromy of $f$, i.e. the action of $-1\in \Z\cong \pi_1(\C^*)$.

Further, $H_j(F;k)$ is finite dimensional, so it is necessarily a torsion $R$-module. Therefore, the image of $(i_{\infty})_j$ is contained in $\Tors_R H_j(U^f;k) = A_j(U^f;k)$.\index{t@$t$!monodromy}
\end{lem}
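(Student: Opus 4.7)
The plan is to define $(i_\infty)_j$ as the map induced on singular homology by the continuous inclusion $i_\infty \colon F \hookrightarrow U^f$, and then verify (a) $R$-linearity with the specified actions, and (b) that the image is contained in $A_j(U^f;k)$.

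For (b): since $c$ is a generic value, Verdier's theorem (invoked in Section~\ref{genfiber}) implies that $F = f^{-1}(e^c)$ is a smooth complex algebraic variety, hence has the homotopy type of a finite CW complex, so $H_j(F;k)$ is a finite-dimensional $k$-vector space. Any $R = k[t^{\pm 1}]$-module structure on a finite-dimensional $k$-vector space is automatically torsion: the characteristic polynomial of the $t$-action is a nonzero element of $k[t] \subset R$ that annihilates the whole module. Assuming $R$-linearity has been established, $(i_\infty)_j(H_j(F;k))$ is then a torsion $R$-submodule of $H_j(U^f;k)$ and hence contained in $\Tors_R H_j(U^f;k) = A_j(U^f;k)$.

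The main content, and the expected main obstacle, is (a), which requires careful sign matching. The plan is to construct, for a cycle $\sigma$ in $F$ representing $\alpha$, an explicit homotopy inside $U^f$ from the cycle $t \cdot (i_\infty \circ \sigma) \subset F_{c+2\pi i}$ to the cycle $i_\infty \circ T_-(\sigma) \subset F_c$, where $T_-$ denotes the clockwise monodromy of $f$ acting on $F$. The relevant path is $\gamma \colon [0,1] \to \C$, $\gamma(s) = c + 2\pi i (1-s)$, going from $c+2\pi i$ to $c$; its composition with $\exp$ is the clockwise loop $e^c e^{-2\pi i s}$ in $\C^*$, representing $-1 \in \pi_1(\C^*)$. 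Since $\exp^{-1}(\calB)$ is discrete in $\C$, after a small perturbation one may assume $\gamma$ is disjoint from it; then $f_\infty$ restricted above the contractible arc $\gamma([0,1])$ is a trivial fibration (being the pullback of a fibration to a contractible base), so the homotopy lifting property yields a homotopy $H \colon F_{c+2\pi i} \times [0,1] \to U^f$ with $H(\cdot, 0) = \Id$, $f_\infty \circ H(x,s) = \gamma(s)$, and in particular $H(\cdot, 1) \colon F_{c+2\pi i} \to F_c$.

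To conclude, one identifies $H(\cdot, 1)$ with $T_-$: the family $\pi \circ H(x, \cdot)$ is a path in $U$ covering the clockwise loop $\exp \circ \gamma$ in $\C^*$, so under the canonical identifications $F_{c+2\pi i} \cong F \cong F_c$ provided by the two lifts $t \circ i_\infty$ and $i_\infty$ of $F \hookrightarrow U$, the map $H(\cdot, 1)$ is by definition the clockwise monodromy $T_-$. Since $H$ is a homotopy inside $U^f$,
\[
[t \cdot (i_\infty \circ \sigma)] = [H(t \cdot (i_\infty \circ \sigma), 1)] = [i_\infty \circ T_-(\sigma)] \quad\text{in } H_j(U^f;k),
\]
which is precisely the identity $t \cdot (i_\infty)_j(\alpha) = (i_\infty)_j(T_- \alpha)$. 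The sign bookkeeping—making sure the deck transformation $(x,z) \mapsto (x, z+2\pi i)$ corresponds to the \emph{clockwise} rather than counterclockwise monodromy—is the principal subtlety, and it is what forces the choice of orientation of $\gamma$.
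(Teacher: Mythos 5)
Your proof is correct and takes essentially the same route as the paper: both arguments identify the deck transformation $(x,z)\mapsto (x,z+2\pi i)$ with the clockwise monodromy by transporting the fiber inside $U^f$ along a lift of the loop, using that $f_\infty$ is a locally trivial fibration away from $\exp^{-1}(\calB)$ (the paper packages this as a homotopy-commutative diagram over the preimage of a vertical line, after first choosing the fiber close to $0$ so that the round circle avoids $\calB$, which plays the role of your small perturbation of the path), and both deduce the last claim from finite-dimensionality of $H_j(F;k)$ plus $R$-linearity. One nitpick: Verdier's theorem gives topological local triviality rather than smoothness of $F$, but smoothness is not needed, since $F$ is a complex algebraic variety and hence has finite homotopy type (cf.\ Remark~\ref{homologyAM}), which already gives finite-dimensionality.
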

\begin{proof}
Suppose we have any two generic fibers $F_1,F_2$, with $F_i = f_{\infty}^{-1}(c_i)$, and a path $\gamma$ connecting $c_1$ with $c_2$ in $\C\setminus \exp^{-1}(\calB)$. Then, $f_\infty$ restricted to $f^{-1}_\infty(\gamma)$ is a locally trivial fibration over a contractible base, and therefore $f^{-1}_\infty(\gamma)$ deformation retracts to any fiber. In particular, we obtain a homotopy equivalence $F_1\simeq F_2$. Therefore, we are free to choose a fixed fiber $F\subseteq U^f$ such that $\exp(f_\infty(F))$ is contained in a small neighborhood of $0$. Recall that we can use $\pi$ to identify $F\cong \pi(F)$. Let $c_0 = f_\infty(F)$.

Let $\gamma$ be a counterclockwise circle centered at $0$ passing through $e^{c_0}$. By definition, the monodromy action of $\gamma$ on $\pi(F)\cong F$ is given as follows. Lifting $\gamma$ to a line segment $\wt \gamma = [c_0,c_0+2\pi i]\subset \C$, we use the same deformation retraction argument from above to obtain the monodromy map as the following composition from left to right (defined up to homotopy):
\[
\pi(F)\xleftarrow[\cong]{\pi} f_\infty^{-1}(c_0) \xhookrightarrow[\simeq]{} f_\infty^{-1}(\wt\gamma) \xhookleftarrow[\simeq]{} f_\infty^{-1}(c_0+2\pi i) \xrightarrow[\cong]{\pi} \pi(F).
\]
Let us see how this relates to the deck action. The preimage $\exp^{-1}(\gamma)$ is a line, and therefore it is contractible. Therefore, $\wt F\coloneqq f_\infty^{-1}(\exp^{-1}(\gamma))$ deformation retracts to all its fibers. Fix the homotopy equivalence $\alpha\colon\pi(F)\cong f_\infty^{-1}(c_0)\xhookrightarrow{\simeq} \wt F$. The deck transformation $t$ acts on $\wt F$ by restricting the action on $U^f$. Consider the following diagram.
\[
\begin{tikzcd}[row sep = 1.6em]
\pi(F) \arrow[dd,"\alpha"',bend right = 50]\arrow[r,equals]&
\pi(F)\arrow[from=r,"\gamma"']&
\pi(F) \arrow[dd,"\alpha",bend left = 50]\\
 f_\infty^{-1}(c_0)\arrow[u,"\pi","\cong"']\arrow[d,hookrightarrow,"\simeq"] \arrow[r,"t"]&
 f_\infty^{-1}(c_0+2\pi i)\arrow[d,hookrightarrow,"\simeq"]
 \arrow[u,"\pi","\cong"']&
 f_\infty^{-1}(c_0)\arrow[u,"\pi","\cong"']\arrow[d,hookrightarrow,"\simeq"]
\\
 \wt F\arrow[r,"t"]&
 \wt F\arrow[r,equals] &
 \wt F. 
\end{tikzcd}
\]
Here $\gamma$ denotes the homotopy action of $\gamma$, as defined above. The diagram is commutative, except for the right hand rectangle, which only commutes up to homotopy (by definition of the monodromy). From this diagram we see that on $\pi(F)$, $\alpha^{-1}\circ t \circ \alpha = \gamma^{-1}$, as desired. Choosing a different lift $F$ of $\pi(F)$, we could repeat this proof to obtain the same result.
\end{proof}

\begin{prop}\label{genf}
Under the notations of Lemma~\ref{lem:fiberMonodromy}, the homomorphism $$(i_{\infty})_j\colon H_j(F;k)\longrightarrow \Tors_R H_j(U^f;k)$$ is an epimorphism for all $j\geq 0$. In other words, the $R$-module $H_j(F;k)$ maps onto to the torsion part of the $j$-th homology Alexander module.

Moreover, the kernel of $(i_{\infty})_j$ can be described as follows: for any $b\in \C^*$, let $T_b=f^{-1}(D_b)$, where $D_b$ is a small disk around $b$. Using a path to identify $F$ with a general fiber in $T_b$, let
\[
K_b = \ker \left(
H_j(F;k)\to H_j(T_b;k)
\right).
\]
The kernel of $(i_{\infty})_j$ is the $R$-module generated by the subgroups $K_b$ for $b\in \calB$, where $\calB\subset \C^*$ consists of the points over which $f$ is not a locally trivial fibration.
\end{prop}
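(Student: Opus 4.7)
The strategy is to decompose $U^f$ via a $\Z$-equivariant Mayer-Vietoris decomposition in which generic fibers of $f_\infty$ are separated from Milnor tubes $T_{b,n}$ over small disks $D_{b,n}$ around each point $\log b + 2\pi in \in \calB_\infty := \exp^{-1}(\calB)$.

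Concretely, one constructs a $\Z$-equivariant open cover $U^f = U_1 \cup U_2$ pulled back via $f_\infty$ from a cover $\C = V \cup W$, with $W = \bigsqcup_{b,n} D_{b,n}$ a disjoint union of small disks around $\calB_\infty$ and $V$ a suitable open neighborhood of a contractible $\Z$-equivariant tree $\Delta \subset \C \setminus \calB_\infty$ (built from a generic basepoint $c_0$ and its translates $c_0 + 2\pi in$, vertical segments between them, and arcs $\gamma_{b,n}$ from $c_0 + 2\pi in$ to $\partial D_{b,n}$). By Verdier's generic fibration theorem, pulling back gives $U_2 = \bigsqcup_{b,n} T_{b,n}$ (with each $T_{b,n} \cong T_b$ via a deck translate), $U_1 := f_\infty^{-1}(V)$ homotopy equivalent to $F$ via the trivialization of $f_\infty$ over the contractible $\Delta$, and $U_1 \cap U_2$ homotopy equivalent to $\bigsqcup_{b,n} F$.

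Applying Mayer-Vietoris and tracking the deck $\Z$-action yields identifications as $R$-modules: $H_j(U_1) \cong H_j(F)$ (with $R$-action given by the clockwise monodromy, as in Lemma \ref{lem:fiberMonodromy}), $H_j(U_2) \cong \bigoplus_b R\otimes_k H_j(T_b)$, and $H_j(U_1 \cap U_2) \cong \bigoplus_b R\otimes_k H_j(F)$ (with $R$ acting on the first factor in the last two). Under these identifications, the Mayer-Vietoris map $\Phi = (\Phi_1, \Phi_2)\colon H_j(U_1\cap U_2) \to H_j(U_1)\oplus H_j(U_2)$ decomposes as follows: $\Phi_1$ is, up to $R$-linear automorphisms of each summand, the $R$-module multiplication map $\bigoplus_b R\otimes_k H_j(F) \to H_j(F)$, while $\Phi_2 = \bigoplus_b (R\otimes_k (\iota_b)_*)$, where $\iota_b\colon F\hookrightarrow T_b$ is the inclusion of a generic fiber.

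The map $(i_\infty)_j$ is the restriction of the Mayer-Vietoris map $\Psi$ to $H_j(U_1) = H_j(F)$, so $\alpha\in H_j(F)$ lies in $\ker(i_\infty)_j$ if and only if $(\alpha,0)\in \im\Phi$, that is, $\alpha \in \Phi_1(\ker\Phi_2) = \sum_b R\cdot K_b$, giving the claimed kernel description. For the surjection onto torsion, note $\im(i_\infty)_j \subseteq \Tors_R H_j(U^f;k)$ is automatic since $H_j(F;k)$ is finite-dimensional over $k$. Unwinding the Mayer-Vietoris sequence yields a short exact sequence of $R$-modules
\[
0 \longrightarrow \bigoplus_b R \otimes_k \coker(\iota_b)_* \longrightarrow H_j(U^f;k)/\im(i_\infty)_j \longrightarrow \ker \Phi^{j-1} \longrightarrow 0,
\]
where $\Phi^{j-1}$ is the Mayer-Vietoris map in degree $j-1$. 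The left-hand term is a free $R$-module, while $\ker\Phi^{j-1}$ embeds into the free $R$-module $\bigoplus_b R\otimes_k H_{j-1}(F)$; since extensions of torsion-free $R$-modules are torsion-free, the middle term is torsion-free, whence $\Tors_R H_j(U^f;k) \subseteq \im(i_\infty)_j$. The main technical hurdle is constructing the $\Z$-equivariant cover carefully so that both $U_1 \simeq F$ and $U_1 \cap U_2 \simeq \bigsqcup F$, and verifying that after trivializing the generic fiber bundle over $\Delta$, the Mayer-Vietoris maps take the explicit form described above and respect the deck-translation $R$-module structure.
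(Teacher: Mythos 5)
Your argument is correct in substance and rests on the same geometric decomposition as the paper's proof: an equivariant picture of $U^f$ built from a base fiber joined by paths to the Milnor tubes $f_\infty^{-1}(D_{\wt b})$, $\wt b\in\exp^{-1}(\calB)$, with the tube contributions assembling into free $R$-modules of the form $\bigoplus_b R\otimes_k(\cdot)$. The difference is only in the homological bookkeeping. The paper deformation retracts $U^f$ onto $f_\infty^{-1}(\wt X)$ and uses the long exact sequence of the pair $(U^f,F)$ together with excision, so that freeness of $H_*(U^f,F)\cong\bigoplus_b R\otimes_k H_*(T_b,F)$ makes the surjectivity onto $\Tors_R H_j(U^f;k)$ immediate and the kernel description drops straight out of the pair $(T_b,F)$. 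You instead run Mayer--Vietoris for the cover by the fiber piece and the tubes, which costs two extra (correct) steps: the identification $\ker(i_\infty)_j=\Phi_1(\ker\Phi_2)=\sum_b R\cdot K_b$, using flatness of $R$ over $k$ to compute $\ker\Phi_2$, and the torsion-freeness of $H_j(U^f;k)/\im(i_\infty)_j$ via the extension with kernel $\bigoplus_b R\otimes_k\coker(\iota_b)_*$ and cokernel $\ker\Phi^{j-1}$, both torsion-free. One point does need repair: you cannot literally have a cover $\C=V\cup W$ with $W$ a disjoint union of small disks about $\exp^{-1}(\calB)$ and $V$ an open set deformation retracting onto the contractible tree $\Delta\subset\C\setminus\exp^{-1}(\calB)$. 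Indeed, any open $V\subset\C\setminus\exp^{-1}(\calB)$ with $V\cup W=\C$ contains, for each $\wt b$, a loop of winding number one about $\wt b$, hence is not simply connected, and then $f_\infty^{-1}(V)\simeq F$ (and your description of $U_1\cap U_2$) fails in general. The fix is exactly the retraction the paper performs: first deformation retract $\C$ equivariantly onto $\Delta\cup\bigcup_{b,n}D_{b,n}$ (hence $U^f$ onto its $f_\infty$-preimage, using that $f_\infty$ is a locally trivial fibration away from $\exp^{-1}(\calB)$), and only then apply Mayer--Vietoris to the two-set open cover of that preimage. With that adjustment your proof goes through and recovers both the surjectivity and the kernel description.
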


\begin{proof}
We use an argument similar to \cite[2.3]{DN2}. As in the proof of Lemma \ref{lem:fiberMonodromy}, we may choose $F$ to be a generic fiber of $f_\infty$ such that $|\exp(f_\infty(F))|\ll 1$. Recall that $\pi$ induces an isomorphism between $F$ and a generic fiber of $f$.

For all $c\in \C$, let $D_c$ be a small disk centered at $c$, $D_c^*=D_c\setminus\{c\}$, $T_c\coloneqq f^{-1}(D_c)$ and $T_c^*\coloneqq f^{-1}(D_c^*)$. There is a deformation retraction of $\C^*$ to a subspace we will call $X$, which is the union of $D_0^*$ and $D_b$ for $b\in \calB$, together with a path connecting $D_0^*$ with $D_b$ for each $b$. We choose these paths to be contractible and pairwise disjoint.

Then, $\C$ has a deformation retraction to $\exp^{-1}(X)$, which is the union of $D_{\wt b}$ for $\wt b\in \exp^{-1}(\calB)$, the preimages of all the paths in $X$ and a left half-plane, which is the preimage of $D_0^*$. We can further deformation retract $\C$ to $\wt X$, defined as follows: for every $\wt b\in \exp^{-1}(\calB)$, fix a contractible path from $f_\infty(F)$ to the boundary of $D_{\wt b}$, so that all the paths are disjoint away from $f_\infty(F)$. Let $\wt X$ be the union of all of the $D_{\wt b}$'s and all those paths joining each of the disks to $f_\infty(F)$. It will later be convenient for us to use closed disks to construct $\wt X$.

Recall that $f_\infty$ is a locally trivial fibration away from $\exp^{-1}(\calB)$. Let $Y = f_\infty^{-1}(\wt X)$. Using the fibration, we have that $U^f$ deformation retracts to $Y$. Therefore, the inclusion $Y\to U^f$ induces isomorphisms for all $i$:
\[
H_j(Y, F;k)\cong H_j(U^f, F;k).
\]

Now we consider two contractible neighborhoods $V_1\subsetneq V_2$ of $f_{\infty}(F)$ in $\wt X$: let $V_1$ be the complement of the closed disks around each $\wt b\in \calB$ (i.e. a wedge sum of segments), and let $V_2$ be a contractible open neighborhood of $\ov V_1$ that doesn't intersect $\exp^{-1}(\calB)$. For example, $V_2$ can be taken to be the union of $V_1$ with a small open disk around $\ov V_1\cap D_{\wt b}$ for every $\wt b\in \exp^{-1}(\calB)$.

Since $V_i$ are both contractible and $f_\infty$ is a locally trivial fibration over them, we have that the inclusions are homotopy equivalences $F\simeq f_\infty^{-1}(V_1) \simeq f_\infty^{-1}(V_2)$. Therefore, by these equivalences and by excision, we have isomorphisms induced by inclusions:
\[
H_j(Y,F;k) \cong H_j(Y,f_\infty^{-1}(V_2);k) \cong H_j(Y\setminus f_\infty^{-1}(V_1),f_\infty^{-1}(V_2\setminus V_1);k).
\]
Since $\wt X\setminus V_1$ is a disjoint union of $D_{\wt b}$,
\[
H_j(Y,F;k) \cong \bigoplus_{\wt b\in \exp^{-1}(\calB)} H_j(f^{-1}_{\infty}(D_{\wt b}),f_\infty^{-1}(V_2\cap D_{\wt b}) ;k).\]
Over $V_2\cap D_{\wt b}$, $f_\infty$ is a trivial fibration (since the base is contractible). Therefore, any path connecting $f_\infty (F)$ with some point $c\in V_2\cap D_{\wt b}$ induces a homotopy equivalence $F\simeq f_\infty^{-1}(c)$, which in turn is homotopy equivalent to $f_\infty^{-1}(V_2\cap D_{\wt b})$. If we just denote $F=f_\infty^{-1}(c)$, we have
\[
\bigoplus_{\wt b\in \exp^{-1}(\calB)} H_j(f^{-1}_{\infty}(D_{\wt b}),f_\infty^{-1}(V_2\cap D_{\wt b}) ;k) \cong 
\bigoplus_{\wt b\in \exp^{-1}(\calB)} H_j(f^{-1}_{\infty}(D_{\wt b}),F ;k) .
\]
Now we consider the $R$-module structure on the cohomology of the pair $(U^f,F)$. If $F = f_\infty^{-1}(c)$ for $\Re c \ll 0$ ($\Re$ denotes the real part), $F$ is homotopy equivalent to $\wt F \coloneqq f_{\infty}^{-1}( \{z \mid \Re z = \Re c \})$. Then the deck transformations ($z\mapsto z+2\pi i$) act on the pair $(U^f,\wt F)$, making the maps in the long exact sequence $R$-linear. Recall from Lemma~\ref{lem:fiberMonodromy} that the deck action on $H_j(\wt F;k)$ coincides with the monodromy action on $H_j(F;k)$.

Note that the action of $t\in R$ on $U^f$ restricts to $\bigsqcup_{\wt b\in \exp^{-1}(\calB)}(f^{-1}_{\infty}(D_{\wt b}),F ;k)$: we can choose the disks to be $\Z$-invariant, and we can choose the paths identifying the fibers so that the monodromy action agrees with the action on $F$. Therefore, the homology group above is an $R$-module. Further, it is a free $R$-module, since $f^{-1}_{\infty}(D_{\wt b}) \cong f^{-1}(D_b)\times \Z$:
\[
\bigoplus_{\wt b\in \exp^{-1}(\calB)} H_j(f^{-1}_{\infty}(D_{\wt b}),F ;k) \cong
\bigoplus_{b\in \calB} R\otimes_k H_j(f^{-1}(D_{ b}),F ;k).
\]
We have the long exact sequence of the pair $(U^f,F)$:
\[
H_{j+1}(U^f,F;k)\to H_j(F;k)\to H_j(U^f;k)\to H_j(U^f,F;k) .
\]
Since the last group is free, the restriction $\Tors_R H_j(U^f;k)\to H_j(U^f,F;k)$ must vanish. We have seen above that the inclusion induces an isomorphism $$H_{j+1}(U^f,F;k) \cong \bigoplus_{b\in \calB} H_{j+1}(\Z\times T_b,F;k),$$ so the sequence above restricts to an exact sequence of $R$-modules:
\[
\bigoplus_{b\in \calB} R\otimes_k H_{j+1}(T_b,F;k)\to H_j(F;k)\to \Tors_R H_j(U^f;k)\to 0.
\]
By the exactness, the kernel of $(i_{\infty})_j$ is the image of the first map. This image is the $R$-module generated by $H_{j+1}(T_b,F;k)$ for all $b\in \C^*$ (note that for $b\notin \calB$, this homology group vanishes). Therefore, it is enough to describe the image of: 
\[
H_{j+1}(T_b,F;k) \to H_j(F;k).
\]
By the long exact sequence of the pair $(T_b,F)$, the image of the above map is $K_b$.
\end{proof}

\begin{remk}\label{remk:tube}
The homomorphism $(i_{\infty})_j$ in Proposition~\ref{genf} factors through $H_j((T^*)^f;k)$, where $T^*=f^{-1}(D^*)$ for $D^*$ a punctured disk centered at $0$ in $\C$. We choose $D^*$ to be small enough that the restriction of $f$ to $T^*$ is a fibration, so $(T^*)^f$ is homotopy equivalent to $F$, and the epimorphism $H_j(F; k)\twoheadrightarrow \Tors_{R}H_j(U^f;k)$ factors as
\begin{equation}\label{eqn:tubeTorsion}
H_j(F; k)\xrightarrow{\cong}H_j((T^*)^f;k)\twoheadrightarrow \Tors_{R}H_j(U^f;k),
\end{equation}
where the second morphism doesn't depend on the choice of base points.

Now, if we take the $k$-dual of $(i_\infty)_j$, we get a monomorphism of $R$-modules
$$
\big(\Tors_{R}H_j(U^f;k)\big)^\vee\hookrightarrow \big(H_j(F; k)\big)^\vee.
$$

By Proposition \ref{propcanon} and the isomorphism $(H_j(F; k))^\vee\cong H^j(F;k)$, we get a monomorphism of $R$-modules, only depending on a choice of lift of the base point:
\begin{equation}\label{eqn:monoCohom}
{\Tors_R H^{j+1}(U;\overline\calL)}\hookrightarrow H^j(F;k). 
\end{equation}
By the isomorphism in (\ref{eqn:tubeTorsion}), $H_j((T^*)^f;k)$ is finite dimensional, so it is a torsion $R$-module. Hence, by Corollary~\ref{isocohom}, $H^j((T^*)^f;k)\cong H^{j+1}(T^*;\overline\calL)$ (note that all the results of Section~\ref{ss:duality} are purely topological, so they can be applied to $T^*$). This implies that for $D^*$ small enough, the monomorphism in (\ref{eqn:monoCohom}) factors in this way:
$$
{\Tors_R H^{j+1}(U,\overline\calL)}\hookrightarrow {H^{j+1}(T^*;\overline\calL)}\xrightarrow{\cong}H^j(F;k),
$$
where the first map is given by restriction and does not depend on the choice of base points.
\end{remk}

\begin{remk}
If $f$ is proper, then Proposition~\ref{genf} can be expressed using the nearby cycles of $f$ at $b\in \C^*$, as follows.

By \cite[Part II, Section 6.13]{stratifiedmorsetheory}, $T_b$ has a deformation retraction to $F_b=f^{-1}(b)$, and we can compose with the inclusion of $F$ into $T_b$ to obtain the specialization map in cohomology $H^j(F_b;k)\to H^j(F;k)$. Letting $\psi_{f-b} \ul k$ be the nearby cycles of $f$ supported on $f=b$, there is an isomorphism $H^j(F;k) \cong H^j(F_b;\psi_{f-b} \ul k)$ by \cite[Expos\'e XIII p.103]{SGA72}. Under this isomorphism, the specialization map agrees with the map $H^j(F_b;k)\to H^j(F_b;\psi_{f-b} \ul k)$ induced by the canonical morphism $i^{-1}_b\ul k\to \psi_{f-b} \ul k$, where $i_b\colon F_b\hookrightarrow U$ is the inclusion. Let $K_b^{\vee_k}\subseteq H^j(F;k)$ be the image of the specialization map. Using Proposition~\ref{propcanon}, we have the dual map $\left((i_{\infty})_j\right)^{\vee_k}\colon\Tors_R H^{j+1}(U;\ov\cL)\to H^j(F;k)$. The image of $\left((i_{\infty})_j\right)^{\vee_k}$ is the largest $R$-module contained in $K_b^{\vee_k}$ for all $b\in \calB$.\end{remk}


\section{Monodromy Action on Alexander Modules and Finite Cyclic Covers}\label{sscover}

Let $k=\C$, let $U$ be a smooth connected complex algebraic variety, and let $f\colon U\rightarrow \C^*$ be an algebraic map inducing a local system $\calL$ of $R$-modules as in Section~\ref{ssAlex}. Since $R$ is a PID, we have the primary decomposition
$$
A_i(U^f;\C)=\Tors_R H_i(U;\calL)\cong \bigoplus_{i=1}^r R/\big((t-\lambda_i)^{p_i}\big)
$$
with $p_i\geq 1$ for all $i=1,\ldots, r$. The set $\{\lambda_i \in \C \mid i=1, \ldots, r\}$ is uniquely determined by $A_i(U^f;\C)$.
\begin{prop}\label{eig1}
Every $\lambda_i$ defined above is a root of unity.
\label{monodromy}
\end{prop}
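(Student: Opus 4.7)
The plan is to reduce the claim to the classical monodromy theorem by passing to a generic fiber of $f$. Let $F$ be a generic fiber of $f$ in the sense of Definition~\ref{def:genericFiber}. By Proposition~\ref{genf}, there is an epimorphism of $R$-modules
\[
(i_\infty)_i\colon H_i(F;\C)\twoheadrightarrow \Tors_R H_i(U^f;\C)=A_i(U^f;\C),
\]
and by Lemma~\ref{lem:fiberMonodromy} the $t$-action on the left is the action of $-1\in \Z\cong \pi_1(\C^*)$ by the geometric monodromy of the family $f$. Since an $R$-linear surjection cannot introduce new eigenvalues, and since inversion permutes roots of unity, it suffices to show that every eigenvalue of the geometric monodromy of $f$ acting on $H_i(F;\C)$ is a root of unity.

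This is an instance of the classical monodromy theorem for algebraic families, which I would verify by reducing to the proper case via compactification. Choose a smooth compactification $\overline U\supset U$ such that $D\coloneqq\overline U\setminus U$ is a simple normal crossings divisor, and such that $f$ extends to a morphism $\overline f\colon \overline U\to \C P^1$; after additional blowups, one may also assume $\overline f^{-1}(0)$ is a simple normal crossings divisor. The geometric monodromy of $f$ is compatible with Deligne's mixed Hodge structure on $H_i(F;\Q)$ and in particular preserves the weight filtration, so it suffices to prove quasi-unipotence on each graded piece $\Gr^W_k H_i(F;\Q)$. Each such graded piece is a subquotient of the cohomology of a smooth projective variety built from intersections of irreducible components of the boundary divisor, and the induced monodromy arises from a proper smooth family over a punctured neighborhood of $0\in \C P^1$. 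In this proper setting the classical monodromy theorem yields quasi-unipotence, which lifts back to quasi-unipotence of the monodromy on $H_i(F;\C)$, hence on $A_i(U^f;\C)$.

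The main obstacle is the bookkeeping for the compactification step and the verification that the geometric monodromy is compatible with Deligne's weight filtration on $H_i(F;\Q)$. These are however standard consequences of the functoriality of Deligne's mixed Hodge structures under algebraic morphisms, and are in any case built into the construction of the limit mixed Hodge structure that will be recalled later in the paper. Once these compatibilities are in place, quasi-unipotence in the proper smooth case — which is the heart of the classical monodromy theorem — closes the argument.
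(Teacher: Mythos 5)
Your approach is genuinely different from the paper's. The paper disposes of this proposition by citation: Proposition~1.4 of \cite{budur2018monodromy} proves quasi-unipotence for quasi-projective $U$ by reducing to the structure theorem for cohomology jump loci of \cite{budur2015jump} (that the jump loci are finite unions of torsion-translated subtori of the character variety), and the generalization in \cite{budur2017absolute} of that structure theorem to arbitrary smooth varieties gives the general case. Your route is geometric: reduce to the monodromy on a generic fiber using Proposition~\ref{genf} and Lemma~\ref{lem:fiberMonodromy}, then appeal to quasi-unipotence of the geometric monodromy of an algebraic family over a punctured disc. The reduction itself is sound --- an $R$-linear surjection $H_i(F;\C)\twoheadrightarrow A_i(U^f;\C)$ can only shrink the set of $t$-eigenvalues, and replacing $t$ by $t^{-1}$ preserves the property of being a root of unity. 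Both proofs rest on a nontrivial external input (the structure theorem for jump loci versus the monodromy theorem), but yours stays closer to the geometric machinery the rest of the paper develops.

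One step needs to be stated more carefully. You say the monodromy is ``compatible with Deligne's mixed Hodge structure'' and justify preservation of the weight filtration by ``functoriality of Deligne's mixed Hodge structures under algebraic morphisms.'' That justification does not apply: the geometric monodromy is a diffeomorphism $F\to F$, not an algebraic morphism, and it certainly does not preserve the Hodge filtration --- if it did, there would be nothing interesting to say about limit mixed Hodge structures. What is true, and all you need, is that it preserves the weight filtration $W_\bullet H^i(F;\Q)$. The correct reason is that, after shrinking to a small punctured disc $\Delta^*\subset\C^*$ and passing to a relative compactification of $f^{-1}(\Delta^*)\to\Delta^*$ with relatively simple normal crossings horizontal boundary, the filtrations $W_\bullet H^i(F_s;\Q)$ assemble into a filtration of $R^i f_*\underline{\Q}$ by sub-local-systems over $\Delta^*$, so the $\pi_1(\Delta^*)$-action (i.e.\ the monodromy) automatically respects it. The Leray spectral sequence of that relative compactification is likewise monodromy-equivariant, which is what realizes each $\Gr^W_k H^i(F_s;\Q)$ as a subquotient of the cohomology of a proper smooth family of boundary strata. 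With this correction, the reduction to the proper-smooth monodromy theorem goes through as you describe.
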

\begin{proof}
When $U$ is quasi-projective, this fact is proved in \cite[Proposition 1.4]{BudurLiuWang} by reducing to a structure theorem for the cohomology jump loci of a smooth quasi-projective variety \cite[Theorem 1.1]{budur2015jump}. The structure theorem of cohomology jump loci is generalized to arbitrary smooth complex algebraic varieties in \cite[Theorem 1.4.1]{budur2017absolute}. So the statement of \cite[Proposition 1.4]{BudurLiuWang} applies to any smooth complex algebraic variety.
\end{proof}
In particular, applying Proposition \ref{propcanon}, one has the following.

\begin{cor}
Let $k=\C$. The eigenvalues of the action of $t$ on $\Tors_R H^*(U;\ov\calL)$ are all roots of unity.
\label{roots}
\end{cor}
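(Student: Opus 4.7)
The statement is essentially an immediate consequence of the two results just proven. The plan is to transport the eigenvalue information from the homological Alexander modules (where Proposition \ref{eig1} applies) to the cohomological Alexander modules via the duality isomorphism of Proposition \ref{propcanon}.

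More concretely, I would proceed as follows. First, apply Proposition \ref{propcanon} to obtain a natural $R$-module isomorphism
\[
\Tors_R H^i(U;\ov\calL) \;\cong\; \bigl(\Tors_R H_{i-1}(U^f;\C)\bigr)^{\vee_\C},
\]
where the right-hand side is the $\C$-dual equipped with the $R$-module structure defined by $(t\cdot \phi)(x) = \phi(tx)$. Since this is an isomorphism of $R$-modules, the action of $t$ on the left-hand side is intertwined with the transpose of the $t$-action on $\Tors_R H_{i-1}(U^f;\C)$. The key elementary linear-algebra fact is that a finite-dimensional $\C$-linear endomorphism and its transpose share the same characteristic polynomial, hence the same set of eigenvalues.

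Next, apply Proposition \ref{eig1} to $\Tors_R H_{i-1}(U^f;\C) = A_{i-1}(U^f;\C)$: every eigenvalue of $t$ on this space is a root of unity. Combining this with the observation above, the eigenvalues of $t$ on $\Tors_R H^i(U;\ov\calL)$ agree with those on $A_{i-1}(U^f;\C)$, and are therefore also roots of unity. Running this argument for every $i\geq 0$ gives the corollary.

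There is no real obstacle: the only subtlety is bookkeeping about conjugate local systems and the $R$-module structure on the $\C$-linear dual. The conjugation on $\calL$ has already been absorbed into Proposition \ref{propcanon} (it is precisely why $\ov\calL$ appears there rather than $\calL$), so no additional inversion of eigenvalues is required at this stage.
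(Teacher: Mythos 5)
Your proposal is correct and is essentially the paper's argument: the paper derives the corollary by applying the duality isomorphism of Proposition \ref{propcanon} to Proposition \ref{eig1}, exactly as you do, with the (correct) observation that dualizing preserves eigenvalues. Your extra remark that conjugation poses no issue is also fine, since even if an inversion occurred, roots of unity are stable under $\lambda \mapsto \lambda^{-1}$.
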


The main goal of this paper is to construct a natural mixed Hodge structure on $\Tors_R H^*(U;\ov\calL)$ (for $k=\Q,\R$). For reasons that will become apparent later on, we would like to be able to reduce this problem to the case where the eigenvalues of the action of $t$ on $\Tors_R H^*(U;\ov\calL)$ are not just roots of unity, but equal to $1$. The rest of this section is devoted to justifying such a reduction.

Let $N\in\N$ be such that the $N$-th power of all the eigenvalues of the action of $t$ on $\Tors_R H_*(U;\calL)$ (equiv. on $\Tors_R H^*(U;\ov\calL)$) is $1$ for all $*$. Consider the following pull-back diagram:
$$
\begin{tikzcd}
 U_N = \{(x,z)\in U\times \C^*\mid f(x) = z^N \}\arrow[d,"p"] \arrow[dr,phantom,very near start,"\lrcorner"] \arrow[r,"f_N"] & \C^*\arrow[d,"z\mapsto z^N"] \\
 U \arrow[r,"f"] & \C^*.
\end{tikzcd}
$$\index{cover!finite cyclic}\index{UN@$U_N$}\index{fN@$f_N$}\index{p@$p$}
Here $p$ is an $N$-sheeted cyclic cover. Notice that all of the maps involved in this diagram are algebraic, and that $U_N$ is also a smooth algebraic variety. We can define $(U_N)^{f_N}$, $(f_N)_\infty$, $\pi_N$ and $\cL_N$ as in Section~\ref{ssAlex} for the map $f_N\colon U_N\to \C^*$. We define:\index{UNfN@$U_N^{f_N}$}\index{L@$\cL$!LN@$\cL_N$}\index{piN@$\pi_N$}\index{fNinfty@$(f_N)_\infty$}
\[
\f{\theta_N}{U^f}{U_N^{f_N}}{U\times \C\ni (x,z)}{(x,e^{z/N}, z/N)\in U_N\times \C\subset U\times \C^* \times \C.}
\]\index{thetaN@$\theta_N$}
It fits into the following commutative diagram:
\begin{equation}\label{eq:UN}
\begin{tikzcd}[column sep = 7em]
U^f \arrow[d,"f_\infty"]\arrow[r,"\sim"',"\theta_N", dashed] 
\arrow[rrr,"\pi",rounded corners,to path=
{ --([yshift = 0.7em]\tikztostart.north)
-- ([yshift = 0.7em]\tikztotarget.north)\tikztonodes
-- (\tikztotarget)}] 
&
U^{f_N}_N \arrow[d,"(f_N)_\infty"]\arrow[r,"\pi_N"]\arrow[dr,phantom,very near start,"\lrcorner"] &
U_N \arrow[d,"f_N"] \arrow[r,"p"]\arrow[dr,phantom,very near start,"\lrcorner"] &
U \arrow[d,"f"] \\
\C \arrow[r,"z\mapsto\frac{z}{N}"]\arrow[rrr,"\exp", rounded corners,to path=
{ --([yshift = -0.7em]\tikztostart.south)
-- ([yshift = -0.7em]\tikztotarget.south)\tikztonodes
-- (\tikztotarget)}] &
\C \arrow[r,"\exp"]&
\C^* \arrow[r,"z\mapsto z^N"]&
\C^*.
\end{tikzcd}
\end{equation}
The map $\theta_N$ allows us to identify $U^f$ with $U_N^{f_N}$ in a canonical way, which we will do from now on. In particular, we can also identify the constant sheaves $\ul k_{U_N^{f_N}}$ and $\ul k_{U^f}$ canonically.

On fundamental groups, $p$ induces an isomorphism:
\[
\pi_1(U_N) \cong \{ \gamma\in \pi_1(U) \mid f_*(\gamma)\in N\Z\}.
\]
Let $R(N)\coloneqq k[t^{N},t^{-N}]$\index{R@$R$!R(N)@$R(N)$}. Since $p$ is an $N$-sheeted covering space, $p_*\calL_N =p_!\calL_N$ is a local system of $R(N)$-modules of rank $N$ on $U$. 
Since $R$ is a rank $N$ free $R(N)$-module, we can also consider $\calL$ as a local system of rank $N$ free $R(N)$-modules on $U$. 
In fact, $\theta_N$ induces the following isomorphism of local systems of $R(N)$-modules (we are using equal signs for canonical isomorphisms):
\[
p_*\cL_N= p_!\cL_N =p_! \left((\pi_N)_! \ul k_{U_N^{f_N}} \right)\overset{\theta_N}{\cong} p_!\left((\pi_N \circ \theta_N)_!\ul k_{U^f}\right) = \pi_! \ul k_{U^f} = \cL.
\]
Since $p$ is a finite covering, $p_*=p_!$ is the left and right adjoint to the sheaf inverse image $p^{-1}$. In particular $p_*$ is exact.

As in Remark~\ref{rem:oppositeDual}, there is a basis of the stalk of $\cL$ (resp. $\cL_N$) parametrized by the fiber of $\pi$ (resp. $\pi_N$). If we denote $\delta_{x'}$ the elements in this basis, then $\theta_N$ maps an element of the form $\delta_{(x,e^{z/N},z)}$ (in the stalk of $\cL_N$ at $(x,e^{z/N})\in U_N$) to $\delta_{(x,z)}$. This discussion and some immediate consequences are summarized in the following lemma.

\begin{lem}\label{233}
\label{lemLocal}
In the above notations, $\theta_N$ induces a canonical isomorphism of local systems of $R(N)$-modules $
\theta_{\cL_N}\colon p_*\calL_N \cong \calL$. This induces canonical isomorphisms in the homology of these local systems, which is the same as the map induced by $\theta_N$, i.e. the following diagram commutes:\index{thetaLN@$\theta_{\cL_N}$}
\[
\begin{tikzcd}[row sep = 1em]
H_i(U_N;\cL_N) \arrow[r,"\theta_{\cL_N}","\sim"']\arrow[d,"\sim"] &
H_i(U;\cL) \arrow[d,"\sim"] \\
H_i(U_N^{f_N};k) \arrow[r,"\theta_N","\sim"'] &
H_i(U^f;k).
\end{tikzcd}
\]
Identifying each local system with its conjugate, we obtain another canonical isomorphism $\ov\theta_{\cL_N}\colon\ov{p_*\cL_N}\cong p_*\ov\calL_N \cong \ov\calL$. This induces in cohomology the same map as $\theta_N$, i.e. the following diagram commutes:
\[
\begin{tikzcd}[row sep = 1em]
\Tors_R H^{i+1}(U;\ov\cL) \arrow[from = r,"\ov\theta_{\cL_N}"',"\sim"]\arrow[d,"\sim"] &
\Tors_{R(N)}H^{i+1}(U_N;\ov\cL_N) \arrow[d,"\sim"] \\
(\Tors_R H_i(U^f;k))^{\vee_k} \arrow[from=r,"\theta_N^{\vee_k}"',"\sim"] &
(\Tors_{R(N)} H_i(U_N^{f_N};k))^{\vee_k}.
\end{tikzcd}
\]
The vertical isomorphisms come from Proposition~\ref{propcanon}.
\end{lem}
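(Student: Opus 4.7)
The plan is to construct $\theta_{\cL_N}$ directly from the sheaf-theoretic definitions already developed in the paragraphs preceding the lemma, and then verify the two commutative diagrams by naturality, without passing to explicit cycles. Concretely, since $p$ is a finite covering, $p_! = p_*$ is exact; from $\pi = p \circ \pi_N \circ \theta_N$ in diagram~\eqref{eq:UN} and the fact that $\theta_N$ is a homeomorphism identifying $\ul k_{U^f}$ with $\ul k_{U_N^{f_N}}$, one obtains
\[
p_* \cL_N \;=\; p_! (\pi_N)_! \ul k_{U_N^{f_N}} \;=\; (p \circ \pi_N \circ \theta_N)_! \ul k_{U^f} \;=\; \pi_! \ul k_{U^f} \;=\; \cL.
\]
The $R(N)$-linearity reduces to a stalkwise check in the $\delta$-basis of Remark~\ref{rem:oppositeDual}: the deck transformation $(x,z) \mapsto (x, z + 2\pi i N)$ of $U^f$ corresponds under $\theta_N$ to the generator $(x, w, z') \mapsto (x, w, z' + 2\pi i)$ of deck transformations of $U_N^{f_N}$ over $U_N$, so $t^N$ acts compatibly on both sides.

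For the first diagram I would invoke the naturality of the chain-level isomorphism of \cite{KL} recalled in Remark~\ref{remk:isoLocal-Uf}. Under the identifications $\cL_x \cong k[\pi^{-1}(x)]$ and $(\cL_N)_y \cong k[\pi_N^{-1}(y)]$, both $\theta_{\cL_N}$ on stalks and $\theta_N$ between covers encode the same combinatorial data, so the induced chain-level maps agree literally and commutativity of the first square is immediate. For the second diagram, I would first verify that the canonical conjugation of Remark~\ref{rem:conjugate} commutes with $p_* = p_!$: this holds because $p_*$ on stalks is a direct sum over fiber points, which commutes with the stalkwise operation of conjugation. This yields $\ov\theta_{\cL_N}\colon p_* \ov\cL_N \cong \ov\cL$, and commutativity of the torsion square then follows from the functoriality statement of Proposition~\ref{propcanon}.

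The main subtlety will be that Proposition~\ref{propcanon} is formulated for morphisms $g\colon U_1 \to U_2$ satisfying $f_2 \circ g = f_1$, while $p\colon U_N \to U$ satisfies $f \circ p = (\cdot)^N \circ f_N$ rather than $f \circ p = f_N$. To bypass this, I would inspect the proof of Proposition~\ref{propcanon} and verify that its UCT-based duality argument relies only on having a sheaf isomorphism $p_* \cL_N \cong \cL$ together with a compatible topological map $\theta_N$ between the corresponding infinite cyclic covers, which is exactly the data encoded in $\theta_{\cL_N}$ and $\theta_N$. The functoriality diagram of Proposition~\ref{propcanon} then specializes to the one we need, with $g^*$ replaced by $\ov\theta_{\cL_N}$ and $g_*$ by $\theta_N$, yielding the second commutative square after passing to torsion parts.
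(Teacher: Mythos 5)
Your construction of $\theta_{\cL_N}$ from $p_! = p_*$, the $R(N)$-linearity check in the $\delta$-basis, and the argument for the first (homology) square are all fine and match the paper's approach. The treatment of the second square, however, has a genuine gap that your own paragraph about ``the main subtlety'' does not actually resolve.

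The difficulty is not simply that $f\circ p \neq f_N$; it is that the two vertical arrows in the cohomology square are built over \emph{different rings}. The left-hand isomorphism $\Tors_R H^{i+1}(U;\ov\cL) \cong (\Tors_R H_i(U^f;k))^{\vee_k}$ uses $\UCT_R$ and $\Res$ for the PID $R$, while the right-hand one uses $\UCT_{R(N)}$ and $\Res$ for $R(N)$. Proposition~\ref{propcanon}'s functoriality statement keeps the coefficient ring fixed and hence reduces to naturality of $\UCT$ and $\Res$ in the local system; it cannot be ``specialized'' to compare $\Ext^1_R(-,R)$ with $\Ext^1_{R(N)}(-,R(N))$, nor $\Res_R$ with $\Res_{R(N)}$. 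That comparison is precisely the content of the paper's argument: one needs the $R$-linear isomorphism $R \cong \Hom_{R(N)}(R,R(N))$ given by the averaging formula $g(t)\mapsto (f(t)\mapsto \tfrac{1}{N}\sum_{\xi^N=1}g(\xi t)f(\xi t))$, Verdier duality $\Homm_R(p_!\cL_N,\ul R)\cong p_*\Homm_{R(N)}(\cL_N,\ul{R(N)})$ (using $p^!=p^{-1}$ for the finite covering $p$), and then a separate check -- carried out by reducing to cyclic modules -- that $\Res_R$ and $\Res_{R(N)}$ are intertwined by this trace isomorphism. Finally the paper verifies the remaining (``bottom right'') square by an explicit chain-level computation in $\Hom^\bullet_{k[\pi_1]}(C_\bullet,-)$, which is again outside the reach of pure naturality. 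Your proposal asserts, without argument, that propcanon's proof ``relies only on having a sheaf isomorphism together with a compatible topological map''; that is exactly what is in question, and the change-of-rings compatibility is the missing (and nontrivial) ingredient.
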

\begin{proof}
The statement about homology is a straightforward application of the discussion in \cite[VI.3]{whitehead}. For the cohomology, we will outline the proof and leave the details to the reader. First, it is enough to show that this diagram commutes, since the isomorphism in Proposition~\ref{propcanon} is natural:
\begin{equation}\label{eq:thetaN}
{\scriptsize
	\begin{tikzcd}[column sep = 2.4em]
		H^{i+1}(U;\ov\cL) \arrow[d,leftrightarrow] \arrow[r,leftrightarrow,"\ov\theta_{\cL_N}"] &
		H^{i+1}(U;\ov{p_!\cL_N})\arrow[d,leftrightarrow] \arrow[r,leftrightarrow,"(1)" ]&
		H^{i+1}(U_N;\ov{\cL_N})\arrow[d,leftrightarrow]
		\\
		H^{i+1}(U;\Homm_R(\cL,\ul R))\arrow[r,leftrightarrow,"\theta_{\cL_N}^\vee"] &
		H^{i+1}(U;\Homm_{R}(p_!\cL_N,\ul R)) \arrow[r,leftrightarrow,"(2)"]
		&
		H^{i+1}(U_N;\Homm_{R(N)}(\cL_N,\ul {R(N)})) 
		\\
		\Ext^1_R(H_{i}(U;\cL),R) \arrow[u,"\UCT_R"]\arrow[r,leftrightarrow,"\theta_{\cL_N}"]& 
		\Ext^1_R(H_{i}(U;p_!\cL_N),R) \arrow[u,"\UCT_R"]\arrow[r,leftrightarrow,"(3)"]& 
		\Ext^1_{R(N)}(H_{i}(U_N;\cL_N),R(N))\arrow[u,"\UCT_{R(N)}"]
	\end{tikzcd}
}
\end{equation}
The top row of vertical arrows is the identification of the dual of a rank 1 free (local system of) modules with its conjugate, as in Remark~\ref{rem:oppositeDual}. The top left square commutes because this identification is natural, as one can easily verify. By $\theta_{\cL_N}^\vee$ (resp. $\ov\theta_{\cL_N}$) we mean the result of applying $\Homm_R(\cdot, \ul R)$ (resp. the conjugate structure) to $\theta_{\cL_N}$.

The bottom row of vertical arrows are all the maps of Lemma~\ref{lem:UCT}, which are isomorphisms onto the torsion of the codomain. The bottom left square commutes by the naturality of these maps. The arrow labeled (1) comes from the relation between cohomology and pushforward.

According to the statement we want to prove, the arrow labeled (3) must come from the isomorphism $\Tors_RH_i(U;p_!\cL_N)\cong \Tors_{R(N)} H_i(U_N;\cL_N)$, after taking $k$-duals and using Lemma~\ref{lemcanon} together with the fact that $\Tors_R = \Tors_{R(N)}$.

The arrow labeled (2) is a composition of two maps. We start with the adjunction between restriction of scalars and $\Hom$: $\Hom_R(A,\Hom_{R(N)}(R,B))\cong \Hom_{R(N)}(A_{R(N)},B) $. We have an $R$-linear isomorphism $R\cong\Hom_{R(N)}(R,R(N))$ where $g(t)$ goes to the map $f(t)\mapsto \frac{1}{N}\sum_{\xi^N = 1} g(\xi t)f(\xi t) $. This gives us an isomorphism
\[
\begin{array}{rcl}
\Homm_R(p_!\cL_N,\ul R)&\cong &\Homm_{R(N)}(p_!\cL_N,\ul{R(N)})\\
\left(\delta_{(x,e^{z/N},z)}^\wedge\colon\delta_{(x,e^{z/N},z)}\mapsto 1\right) &\mapsto & \left(\delta_{(x,e^{z/N},z)}^\wedge\colon\delta_{(x,e^{z/N},z)}\mapsto 1\right).
\end{array}
\]
Note that on the left-hand side, $\delta_{(x,e^{z/N},z)}^\wedge$ is $R$-linear, so it sends $$\delta_{(x,e^{(z+2\pi i k)/N},z+2\pi i k)}\mapsto t^k.$$ On the right-hand side, its image sends $
\delta_{(x,e^{(z+2\pi i k)/N},z+2\pi i k)}$ to $0$ whenever $k$ is not a multiple of $N$. Verdier duality (\cite{verdierDuality}, see also \cite[V, 7.17]{borel}) then gives the following isomorphism, taking into account that, since $p$ is a finite covering map, $p^!= p^{-1}$:
\[
\begin{array}{rcl}
\Homm_{R(N)}(p_!\cL_N,\ul{R(N)})&\cong & p_*\Homm_{R(N)}(\cL_N,\ul{R(N)}) = p_*\cL_N^\vee\\
\delta_{(x,e^{z/N},z)}^\wedge &\mapsto & \left(\delta_{(x,e^{z/N},z)}^\wedge\colon\delta_{(x,e^{z/N},z)}\mapsto 1\right).
\end{array}
\]
The stalk at $x$ of $p_*\cL_N^\vee$ is the sum of the stalks of $\cL_N^\vee$ at $p^{-1}(x)$, and $\delta_{(x,e^{z/N},z)}^\wedge$ is $0$ on all these stalks except at $(x,e^{z/N})$. We can now see that the top right square above commutes, as it comes from applying cohomology to the following commutative square, where the elements are mapped as in the right-hand square (the notation $\ov\cdot$ is as in Remark~\ref{rem:conjugate}):
\[
{
\begin{tikzcd}[row sep = 1.5em, column sep = 1em]
	\ov{p_!\cL_N} \arrow[r,leftrightarrow]\arrow[d] &
	p_!\ov{\cL_N} \arrow[d] &
	\ov{\delta_{(x,e^{z/N},z/N)}} \arrow[r,mapsto]\arrow[d,mapsto] &
	\ov{\delta_{(x,e^{z/N},z/N)}} \arrow[d,mapsto]
	\\
	\Homm_R(p_!\cL_N,\ul R) \arrow[r,leftrightarrow] &
	p_*\Homm_{R(N)}({\cL_N},\ul{R(N)}) & 
	\delta_{(x,e^{z/N},z/N)}^\wedge \arrow[r,mapsto] &
	\delta_{(x,e^{z/N},z/N)}^\wedge.
\end{tikzcd}
}
\]
It remains to show that the bottom right square of (\ref{eq:thetaN}) commutes. To this end, consider the arrow labeled $(3)$. Up to composing with the isomorphism $ H_i(U;p_!\cL_N)\cong H_i(U_N;\cL_N)$, it is the bottom path in the following diagram.
\[
{
\begin{tikzcd}[row sep =2em, column sep = {7em,between origins}]
	&  
	\makebox[1cm]{$\Ext^1_R(H_i(U;p_!\cL_N),\Hom_R(R,R(N)))$}\arrow[dr]
	\\
\Ext^1_R(H_i(U;p_!\cL_N),R) \arrow[d,"\Res","\sim"'] \arrow[ur]
& &
\Ext^1_{R(N)}(H_i(U;p_!\cL_N),R(N))\arrow[d,"\Res","\sim"'] \\
\Hom_k(\Tors_R H_i(U;p_!\cL_N),k) \arrow[rr,equals]& &
\Hom_k(\Tors_{R(N)} H_i(U;p_!\cL_N),k) 
\end{tikzcd}
}
\]
On the top part of the diagram we have the same $R$-linear isomorphism $R\cong \Hom_R(R,R(N))$ that was used above, composed with (Ext applied to) the adjunction map $\Hom_R(R,R(N))\mapsto R(N)$, sending $\varphi\mapsto \varphi(1)$. We leave to the reader the verification that this diagram commutes, noting that one can use the classification of modules over a PID to reduce to the case where $H_i(U;p_!\cL_N)$ is a cyclic module.

Therefore, to show that the bottom right square in (\ref{eq:thetaN}) commutes, we may use the description of the map above. We show that it commutes by taking the complexes whose cohomologies give rise to this square. Take a base point $x_N\in U_N$, let $x=p(x_N)$ and $\pi_1 \coloneqq \pi_1(U,x)\supset \pi_1(U_N,x_N) \eqqcolon \pi_1'$. Denote the stalk $M \coloneqq (\cL_N)_{x_N}$, seen as a $R(N)[\pi_1']$-module. Note that $(p_*\cL_N)_x\cong k[\pi_1]\otimes_{k[\pi_1']} M$, which becomes an $R$-module by letting $t\in R$ act as $\gamma\in \pi_1$ such that $f_*(\gamma)$ is the counterclockwise generator of $\pi_1(\C^*)$. Let $C_\bullet$ be the singular chain complex of the universal cover of $U$ with $k$ coefficients, seen as a right $k[\pi_1]$-module by the inverse of deck transformations. Then, the square we are interested in arises from the cohomology of the commutative diagram below.
\[
\begin{tikzcd}[column sep=1.5em]
 \Hom_{k[\pi_1]}^\bullet(C_{\bullet},\Hom_{R}(k[\pi_1]\otimes_{k[\pi_1']} M, R)) \arrow[r]
 &
 \Hom_{k[\pi_1']}^\bullet(C_{\bullet},\Hom_{R(N)}(M,R(N))) \\
\Hom^\bullet_R(C_\bullet \otimes_{k[\pi_1]} k[\pi_1]\otimes_{k[\pi_1']} M,R) \arrow[u,"\text{t-h}"]\arrow[r]& 
\Hom^\bullet_{R(N)}(C_\bullet \otimes_{k[\pi_1']} M,R(N)).\arrow[u,"\text{t-h}"]
\end{tikzcd}
\]
To obtain the bottom	 right square in (\ref{eq:thetaN}), we must take cohomology and then restrict to the torsion of the bottom row, applying the universal coefficient theorem. The vertical arrows are tensor-hom adjunction.

We can directly verify that the diagram commutes. Note that $C_\bullet$ is a free $k[\pi_1]$-module, with a basis that we will denote $\{c_j\}_{j\in J}$. $M$ is a rank $1$ free module over $R(N)$. Let us call a basis (for both module structures) $\{m\}$, and note that $m$ also generates $M$ over $k[\pi_1']$. Take $\gamma\in \pi_1$ to be any lift of the generator of $\pi_1/\pi_1'\cong \Z/N\Z$. Then $\{ \gamma^k\}_{0\le k<N}$ is a $k[\pi_1']$-basis of $k[\pi_1]$. Let $P\colon R\to R(N)$ be the projection $f(t) \mapsto \frac{1}{N}\sum_{\xi^N=1} f(\xi t)$. The elements of the groups above have the following form. They are parametrized by picking $a_{j}\in R$ for $j\in J$.
\[
\begin{tikzcd}
\left\{
c_{j} \mapsto (\gamma^{k}\otimes m \mapsto t^ka_{j})
\right\}_{(a_j)_{j\in J}} \arrow[r] &
\left\{
\gamma^{-k}c_{j} \mapsto ( m \mapsto P(t^k a_{j}))
\right\}_{(a_j)_{j\in J}} 
	 \\
	 \left\{
c_{j}\otimes \gamma^{k} \otimes m \mapsto t^ka_{j}
\right\}_{(a_j)_{j\in J}} \arrow[u] \arrow[r]&
\left\{
\gamma^{-k}c_{j}\otimes m \mapsto P(t^ka_{j})
\right\}_{(a_j)_{j\in J}}.\arrow[u]
\end{tikzcd}
\]
We will leave to the reader the verification that the maps are indeed defined as the diagram suggests, so that the diagram commutes as desired.
\end{proof}

\begin{remk}
Notice that the only eigenvalue of the action of $t^N$ on the module \linebreak$\Tors_{R(N)} H^*(U_N;\ov\calL_N)$ is $1$. So Lemma \ref{233} allows us to reduce the problem of constructing a mixed Hodge structure on $\Tors_{R} H^*(U;\ov\calL)$ to the case when the only eigenvalue is $1$.
\label{remEigenvalue}
\end{remk}


\section{Differential Graded Algebras} 
By a \bi{commutative differential graded $k$-algebra (cdga)} is meant a triple $$(A,d,\wedge)$$ such that:\index{cdga}\index{commutative differential graded algebra}\index{commutative differential graded algebra|see{cdga}}
\begin{itemize}
 \item $(A, \wedge)$ is a positively graded unital $k$-algebra.
 \item $a \wedge b = (-1)^{\deg(a)\deg(b)} b \wedge a$ for homogeneous $a,b \in A$.
 \item $(A,d)$ is a cochain complex.
 \item $d(a \wedge b) = da \wedge b + (-1)^{\deg(a)} a \wedge db$ for homogeneous $a,b \in A$.
\end{itemize} 
Notice that when we write cdga, the field $k$ is implicit.
We often write $A$ instead of $(A,d,\wedge)$ when the differential and multiplication are understood.

When we discuss Hodge complexes, we will often work with filtered cdgas whose filtrations are compatible with the differential and the multiplication. 
Suppose $(A,d,\wedge)$ is a cdga.
By an \bi{increasing cdga-filtration} on $(A, d, \wedge)$ we mean an increasing filtration $W_{\lc}$ on $A$ such that 
\[
W_iA \wedge W_jA \subset W_{i+j}A \quad\text{and}\quad d(W_iA)\subset W_iA
\]
for all integers $i$ and $j$.
By a \bi{decreasing cdga-filtration} on $(A, d, \wedge)$ we mean a decreasing filtration $F^{\lc}$ on $A$ such that 
\[
F^iA \wedge F^jA \subset F^{i+j}A \quad\text{and}\quad d(F^iA)\subset F^iA
\]
for all integers $i$ and $j$. 
One defines \bi{cdga-filtrations on a sheaf of cdgas} analogously, by looking at the cdgas of sections over arbitrary open subsets. 

\section{Mixed Hodge Structures and Complexes}\label{ss:MHSsAndComplexes}
\textit{Let $k$ denote a subfield of $\R$ and $X$ denote a topological space in this section.}

The purpose of this section is to compile relevant definitions and to set notations related to mixed Hodge structures and complexes.

\begin{dfn}[{\cite[Definition 2.1]{peters2008mixed}}]\label{dfn:pureHS}\index{Hodge!structure}\index{filtration!Hodge}
Let $m\in\Z$, $V$ be a finite dimensional $k$-vector space, and let $V_{\C}=V\otimes_k \C$ be its complexification. A \bi{$k$-Hodge structure of weight $m$ }on $V$ is a direct sum decomposition (\bi{Hodge decomposition}):
$$
V_{\C}=\bigoplus_{\substack{p,q\in\Z\\p+q=m}}V^{p,q}\quad\text{ such that }V^{p,q}=\overline{V^{q,p}}.
$$
The data of the Hodge decomposition is equivalent to a decreasing filtration $F^p$ on $V_{\C}$ (the \bi{Hodge filtration}) with the property that $V_{\C} = F^p\oplus\ov{F^q}$ for all $p,q\in\Z$ such that $p+q=m+1$. More precisely, one can be obtained from the other as follows:
$$
F^p=\bigoplus_{r\geq p} V^{r,m-r};\quad V^{p,q}=F^p\cap\ov{F^q}.
$$
\end{dfn}

\begin{dfn}[{\cite[Definition 3.1]{peters2008mixed}}]\label{dfn:MHS}\index{mixed Hodge structure}\index{mixed Hodge structure|see{MHS}}\index{MHS}\index{filtration!weight}
Let $V$ be a finite dimensional $k$-vector space, and let $V_{\C}=V\otimes_k \C$ be its complexification. A \bi{$k$-mixed Hodge structure (MHS)} on $V$ is the data of an increasing filtration $W_{\lc}$ on $V$ (\bi{weight filtration}) and a decreasing filtration $F^{\lc}$ on $V_{\C}$ (\bi{Hodge filtration}), with the following property: for every $m\in \Z$, on $\Gr_m^WV\coloneqq \frac{W_{m}}{W_{m+1}}$, the filtration induced $F^p$ on the complexification gives $\Gr_m^WV$ a pure Hodge structure of weight $m$.
\end{dfn}

\begin{dfn}[{\cite[Definition 2.32]{peters2008mixed}}]\index{Hodge!complex}
If $m \in \Z$ then a \bi{$k$-Hodge complex of sheaves on $X$ of weight $m$} is a triple $\calK^\bullet = (\calK^\bullet_k, (\calK^\bullet_{\C},F), \alpha)$
where:
\begin{itemize}
 \item $\calK^\bullet_k$ is a bounded below cochain complex of sheaves of $k$-vector spaces on $X$ such that the $\mathbb{H}^*(X,\calK_k^\bullet)$ are finite-dimensional.
 \item $\calK^\bullet_\C$ is a bounded below cochain complex of sheaves of $\C$-vector spaces on $X$ and $F$ is a decreasing filtration on $\calK^\bullet_\C$.
 \item $\alpha\colon \calK^\bullet_k \dashrightarrow \calK^\bullet_\C$ is a \bi{pseudo-morphism} of complexes of sheaves of $k$-vector spaces on $X$ (i.e.\ a morphism in the derived category) that induces a \bi{pseudo-isomorphism} $\alpha \otimes 1\colon \calK^\bullet_k \otimes \C \dashrightarrow \calK^\bullet_\C$ (i.e.\ an isomorphism in the derived category).\index{pseudo-morphism}\index{pseudo-isomorphism}
 \item the filtration induced by $F$ and $\alpha$ on $\bH^*(X,\calK^\bullet_k) \otimes \C \cong \bH^*(X,\calK^\bullet_\C)$ endows the $k$-vector space $\mathbb{H}^*(X, \calK^\bullet_k)$ with a $k$-Hodge structure of weight $*+m$. 
 \item The spectral sequence $\bH^{p+q}(X,\Gr_F^p \calK^\bullet_{\C})\Rightarrow \bH^{p+q}(X,\calK^\bullet_{\C})$ associated to $(\calK^\bullet_\C, F)$ degenerates at the $E_1$-page (see \cite[Definition 2.32]{peters2008mixed} for more details).
\end{itemize}
\end{dfn}

\begin{dfn}[{\cite[Definition 3.13]{peters2008mixed}}]\index{mixed Hodge complex}\index{Hodge!mixed Hodge complex}\index{Hodge!mixed Hodge complex|see{mixed Hodge complex}}
A \bi{$k$-mixed Hodge complex of sheaves on $X$} is a triple $\calK^\bullet = ((\calK^\bullet_k,W_{\lc}), (\calK^\bullet_\C, W_{\lc},F^{\lc}), \alpha)$ 
where:
\begin{itemize}
 \item $\calK^\bullet_k$ is a bounded below cochain complex of sheaves of $k$-vector spaces on $X$ such that $\mathbb{H}^*(X,\calK_k^\bullet)$ are finite-dimensional, and $W_{\lc}$ is an increasing (weight) filtration on $\calK^\bullet_k$.
 \item $\calK^\bullet_\C$ is a bounded below cochain complex of sheaves of $\C$-vector spaces on $X$; $W_{\lc}$ is an increasing (weight) filtration and $F^{\lc}$ a decreasing (Hodge) filtration on $\calK^\bullet_\C$.
 \item $\alpha\colon (\calK^\bullet_k,W_{\lc}) \dashrightarrow (\calK^\bullet_\C,W_{\lc})$ is a \bi{pseudo-morphism of filtered complexes} of sheaves of $k$-vector spaces on $X$ (i.e.\ a chain of morphisms of bounded-below complexes of sheaves as in \cite[Definition 2.31]{peters2008mixed} except that each complex in the chain is filtered, as are all the morphisms) that induces a filtered pseudo-isomorphism
 \[
 \alpha \otimes 1\colon (\calK^\bullet_k \otimes \C, W_{\lc} \otimes \C) \dashrightarrow (\calK^\bullet_\C,W_{\lc})
 \]
that is, a pseudo-isomorphism on each graded component.
 \item for $m \in \Z$, the $m$-th $W$-graded component 
\[
\Gr_m^W\calK^\bullet = \Big(\Gr_m^W\calK^\bullet_k, \big(\Gr_m^W\calK^\bullet_\C, F^{\lc}\big), \Gr_m^W\alpha\Big)
\]
is a $k$-Hodge complex of sheaves \cite[Definition 2.32]{peters2008mixed} on $X$ of weight $m$, where $F^{\lc}$ denotes the induced filtration.
\end{itemize}
We will sometimes introduce a $k$-mixed Hodge complex of sheaves on $X$ simply as $\calK^\bullet$ and implicitly assume the components of the triple to be notationally the same as in the above definition.
\end{dfn}

\begin{dfn}\index{mixed Hodge complex!multiplicative}
A \bi{multiplicative $k$-mixed Hodge complex of sheaves on $X$} is a $k$-mixed Hodge complex of sheaves $\calK^\bullet$ on $X$ such that the pseudo-morphism $\alpha$ has a distinguished representative given by a chain of morphisms of sheaves of cdgas on $X$ (with all but $\calK^\bullet_k$ being a sheaf of $\C$-cdgas), and such that \textit{all} filtrations (including those in the chain) are cdga-filtrations (over $\C$ except for the weight filtration on $\calK^\bullet_k$). 
\end{dfn}

From a given Hodge complex, one can construct others as follows. Suppose $m \in \Z$ and $\calK^\bullet = (\calK^\bullet_k, (\calK^\bullet_\C, F^{\lc}), \alpha)$ is a (pure) $k$-Hodge complex of sheaves \cite[Definition 2.32]{peters2008mixed} on $X$ of weight $m$.
If $j \in \Z$, the \bi{$j$-th Tate twist of $\calK^\bullet$} is the triple
\[\calK^\bullet(j) = \Big( \calK^\bullet_k, \big(\calK^\bullet_\C, F[j]\big), \alpha \Big)\]
where $F[j]^i = F^{j+i}$ is the shifted filtration. $\calK^\bullet(j)$ is a $k$-Hodge complex of sheaves on $X$ of weight $m-2j$.
For details see \cite[Definition 2.35]{peters2008mixed} and notice the we have changed the convention by selecting not to multiply by $(2 \pi i )^j$.

Tate twists can also be defined for mixed Hodge complexes of sheaves. Suppose $\calK^\bullet$ is a $k$-mixed Hodge complex of sheaves on $X$.
If $j \in \Z$, the \bi{$j$-th Tate twist of $\calK^\bullet$} is the triple\index{Tate twist}
\[
\calK^\bullet(j) = \Big(\big(\calK^\bullet_k,W[2j]_{\lc}\big), \big(\calK^\bullet_\C, W[2j]_{\lc},F[j]^{\lc}\big), \alpha\Big)
\]
where $W[2j]_i = W_{2j+i}$ and $F[j]^i = F^{j+i}$ are shifted filtrations\index{shift!W[j]@$W[j]$}\index{shift!F[j]@$F[j]$}\index{mixed Hodge complex!translation}.
$\calK^\bullet(j)$ is again a $k$-mixed Hodge complex of sheaves on $X$. For details see \cite[Definition 3.14]{peters2008mixed} and notice again that we have changed convention by selecting not to multiply by $(2 \pi i )^j$.

We will also refer to Tate twists of $k$-mixed Hodge structures. These are defined by shifting the weight and Hodge filtrations with the same formula we used for mixed Hodge complexes above. See \cite[Example 3.2(3)]{peters2008mixed} for an explicit definition, except that, as expected, we opt not to multiply by a power of $2\pi i$.

We will be interested in shifting mixed Hodge complexes.
If $\calF^\bullet$ is a complex of sheaves on $X$, then its \bi{translation} is the complex $\calF^{\bullet}[1] = \calF^{\bullet+1}$ with differential $d^\bullet[1] = -d^{\bullet+1}$.
Suppose $\calK^\bullet$ is a $k$-mixed Hodge complex of sheaves on $X$.
The \bi{translation of $\calK^\bullet$} is the triple\index{shift!MHC@$\calK^\bullet[j]$ of a mixed Hodge complex}\index{translation}\index{translation|see{mixed Hodge complex}}
\[
\calK^\bullet[1] = ((\calK^\bullet_k[1], W[1]_{\lc}), (\calK^\bullet_\C[1], W[1]_{\lc}, F[1]^{\lc}), \alpha[1])
\]
where the filtrations are described by:
\begin{align*}
 &(W[1])_i\left(\calK^\bullet_k[1]\right) = \left(W_{i+1}\calK^\bullet_k\right)[1],\ (W[1])_i\left(\calK^\bullet_\C[1]\right) = \left(W_{i+1}\calK^\bullet_\C\right)[1],\ i \in \Z\\
 &\left(F[1]\right)^p\left(\calK^\bullet_\C[1]\right) = \left(F^{p+1}\calK^\bullet_\C\right)[1],\ p \in \Z.
\end{align*}
This is again a $k$-mixed Hodge complex of sheaves on $X$. Note that this agrees with the translation of a pure Hodge complex as defined in \cite[2.35]{peters2008mixed} for pure Hodge complexes, but not with the translation of mixed Hodge complexes implicit in loc. cit. 3.22.

\begin{remk}\label{transvstate}\index{mixed Hodge complex!hypercohomology}
Suppose $\calK^\bullet$ is a $k$-mixed Hodge complex of sheaves on $X$.
By \cite[Theorem 3.18II]{peters2008mixed} the hypercohomology vector spaces $\mathbb{H}^*(X, \calK^\bullet_k)$ inherit $k$-mixed Hodge structures. 
There is a relation between Tate twists and the translation of a $k$-mixed Hodge complex $\calK^\bullet$ of sheaves on $X$. 
Namely it can be shown that:
\[\mathbb{H}^{*}(X, \calK^\bullet_k[1]) \cong \mathbb{H}^{*+1}(X, \calK^\bullet_k)(1)\]
where a Tate twist has been taken on the right-hand side, and the $k$-mixed Hodge structure on the left-hand side has been induced by the translated $k$-mixed Hodge complex $\calK^\bullet[1]$.
\end{remk}

\begin{remk}[Derived direct image of a mixed Hodge complex of sheaves.]\index{mixed Hodge complex!direct image}
\label{directim}
Suppose $\calK^\bullet$ is a $k$-mixed Hodge complex of sheaves on $X$ where the filtrations $W_{\lc}$ and $F^{\lc}$ are biregular (i.e. for all $m$, the filtrations induced on $\calK^m$ are finite). Let $g\colon X\rightarrow Y$ be a continuous map between two topological spaces. The derived direct image of $\calK^\bullet$ via $g$ is again a mixed Hodge complex of sheaves, and it is defined as follows (\cite[B.2.5]{peters2008mixed}).

Let $\Tot[\calC_{\Gdm}^\bullet\calF^\bullet]$ be the Godement resolution of a complex of sheaves $\calF^\bullet$ as defined in \cite[B.2.1]{peters2008mixed}, which is a flabby resolution. Here, $\Tot[\calC_{\Gdm}^\bullet\calF^\bullet]$ denotes the simple complex associated to the double complex $\calC_{\Gdm}^\bullet\calF^\bullet$. We define $Rg_*\calK^\bullet$ to be the following triple
{\footnotesize
\[
\Big(\big(g_* \Tot[\calC_{\Gdm}^\bullet \calK^\bullet_k],g_* \Tot[\calC_{\Gdm}^\bullet W_{\lc}]\big), \big(g_* \Tot[\calC_{\Gdm}^\bullet \calK^\bullet_\C], g_* \Tot[\calC_{\Gdm}^\bullet W_{\lc}],g_* \Tot[\calC_{\Gdm}^\bullet F^{\lc}]\big), g_*\alpha\Big)
\]
}
where $g_*\alpha$ is the pseudo-morphism of filtered complexes of sheaves of $k$-vector spaces induced by $\alpha$ and the functoriality of both $g_*$ and the Godement resolution.
\end{remk}


\section{Real Mixed Hodge Complexes on Smooth Varieties}\label{realmixedhodgecomplexonsmoothvarieties}

\textit{Throughout this section let $k = \R$, $U$ denote a smooth complex algebraic variety, and $X$ denote a good compactification \cite[Definition 4.1]{peters2008mixed} of $U$ with simple normal crossing divisor $D = X\setminus U$.
Let $j\colon U \rightarrow X$ denote the inclusion.} In this section, we remind the reader of the $\R$-mixed Hodge complex of sheaves associated to the log de Rham complex $\logdr{X}{D}$. For a definition of the log de Rham complex and its properties, see \cite[Section 4.1]{peters2008mixed}.

Before describing the particular filtrations on the log de Rham complex, we define a pair of filtrations that can be applied to any cochain complex.
Suppose $\calF^\bullet$ is a cochain complex of sheaves.
Then we have the increasing \bi{canonical filtration} $\tau_{\lc}$, which is described by:\index{filtration!canonical}
\begin{align*}
 \tau_m(\calF^\bullet) = \left\{ \cdots \rightarrow \calF^{m-2} \rightarrow \calF^{m-1} \rightarrow \ker d^m \rightarrow 0 \rightarrow 0 \rightarrow \cdots\right\}
\end{align*}
and we have the decreasing \bi{trivial filtration}, whose $p$-th filtered subcomplex is $\calF^{\geq p}$. \index{filtration!trivial}

We now look at the log de Rham complex in particular.
On it we have the weight filtration $W_{\lc}$, which is described by:
\begin{align*}
 W_m\Omega^p_X(\log D) = \left\{\begin{array}{ll}
 0 & \mbox{if}\ m < 0\\
 \Omega_X^p(\log D) & \mbox{if}\ m \geq p\\
 \Omega_X^{p-m} \wedge \Omega_X^m(\log D) & \mbox{if}\ 0 \leq m \leq p
 \end{array}\right.
\end{align*}
and we have Hodge filtration $F^{\lc}$, defined to be the trivial filtration.

\begin{thm}\label{logmhs}
The triple
\[
\calH dg^\bullet(X\,\log D) \coloneqq \Big(\big(j_*\calE^\bullet_U, \tau_{\lc}\big), \big(\logdr{X}{D}, W_{\lc}, F^{\lc}\big), \alpha \Big)
\]
where $\calE^\bullet_U$ is the real de Rham complex and $\alpha$ is represented by:
\begin{equation}\label{eqmorphisms}
\begin{tikzcd}
 (j_*\calE^\bullet_U, \tau_{\lc}) \ar[r, "j_*(\id \otimes 1)"] & (j_*(\calE^\bullet_U \otimes \C), \tau_{\lc}) & \ar{l}{\simeq}[swap]{\textnormal{incl}} (\logdr{X}{D}, \tau_{\lc}) \arrow{r}{\id}[swap]{\simeq} & (\logdr{X}{D}, W_{\lc})
\end{tikzcd}
\end{equation}
determines an $\R$-mixed Hodge complex of sheaves on $X$. We will refer to it as the \textbf{Hodge-de Rham complex} of $(X,D)$.\index{mixed Hodge complex!Hodge-de Rham}
\end{thm}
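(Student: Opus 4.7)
The plan is to verify each axiom of an $\R$-mixed Hodge complex of sheaves in turn, essentially following Deligne's classical construction as exposed in \cite[Section 4]{peters2008mixed}.

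First, I would address the basic data. Since $\calE^\bullet_U$ is a fine resolution of $\ul\R_U$ and $j$ is an open inclusion, $\H^*(X, j_*\calE^\bullet_U)\cong H^*(U;\R)$, which is finite-dimensional because $U$ has the homotopy type of a finite CW-complex. The log de Rham complex $\logdr{X}{D}$ is a bounded complex of sheaves of $\C$-vector spaces. The three arrows in (\ref{eqmorphisms}) respect all the indicated filtrations tautologically; the substantive content is that they become quasi-isomorphisms (or filtered quasi-isomorphisms) after tensoring with $\C$. The first arrow is obvious. The middle arrow $(\logdr{X}{D},\tau_{\lc})\hookrightarrow (j_*(\calE^\bullet_U\otimes\C),\tau_{\lc})$ is a filtered quasi-isomorphism by Deligne's theorem that $\logdr{X}{D}\simeq Rj_*\C_U$ in the derived category, applied componentwise on $\tau$-graded pieces via the Leray spectral sequence for $j$. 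The third arrow being a filtered quasi-isomorphism amounts to the statement that on $\logdr{X}{D}$ the canonical filtration $\tau_{\lc}$ and the weight filtration $W_{\lc}$ agree up to quasi-isomorphism, which is a consequence of the Poincar\'e residue computation below.

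Next, and this is the main technical step, I would verify that for each integer $m$ the graded piece
\[
\Gr^W_m \calH dg^\bullet(X\,\log D) = \Big(\Gr^\tau_m j_*\calE^\bullet_U,\ \big(\Gr^W_m \logdr{X}{D},F^{\lc}\big),\ \Gr_m\alpha\Big)
\]
is an $\R$-Hodge complex of sheaves of weight $m$ on $X$. The key tool is the Poincar\'e residue isomorphism
\[
\Res\colon \Gr^W_m \logdr{X}{D} \xrightarrow{\sim} (a_m)_* \Omega^{\bullet-m}_{\wt D^m}[-m],
\]
where $\wt D^m$ is the normalization of the codimension-$m$ stratum of $D$ and $a_m\colon \wt D^m\to X$ is the natural map (see \cite[Proposition 4.5]{peters2008mixed}). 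A parallel residue computation identifies $\Gr^\tau_m j_*\calE^\bullet_U$ with $(a_m)_*\calE^{\bullet-m}_{\wt D^m}[-m]$ up to an appropriate Tate twist. Thus the graded piece is identified, up to shift, with the pure Hodge complex of the smooth projective variety $\wt D^m$, and the Hodge-theoretic content reduces to the classical pure Hodge theory on smooth projective varieties: the Hodge decomposition gives a pure Hodge structure of weight $*$ on $H^*(\wt D^m;\R)$, and the Fr\"olicher spectral sequence degenerates at $E_1$, which is exactly the final axiom of a pure Hodge complex.

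The hard step is really the local Poincar\'e residue computation of $\Gr^W_m$ together with its compatibility with the canonical filtration on $j_*\calE^\bullet_U$ after tensoring with $\C$. This is done in local coordinates $(z_1,\ldots,z_n)$ in which $D$ is cut out by $z_1\cdots z_k=0$, where the logarithmic forms $\frac{dz_{i_1}}{z_{i_1}}\wedge\cdots\wedge\frac{dz_{i_m}}{z_{i_m}}\wedge \omega$ provide explicit generators of $W_m/W_{m-1}$, and the residue map sends such a form to $\omega|_{z_{i_1}=\cdots=z_{i_m}=0}$. Once this comparison is established, assembling the pure Hodge complexes $\Gr^W_m \calH dg^\bullet(X\,\log D)$ for varying $m$ yields the mixed Hodge complex structure. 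The full argument is carried out in detail in \cite[Theorem 4.11]{peters2008mixed}, which is exactly the statement to be proved.
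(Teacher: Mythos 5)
Your proposal is correct and takes essentially the same approach as the paper: both defer to Peters--Steenbrink Proposition-Definition 4.11 (the paper's proof is in fact a one-sentence citation to it, plus the observation that $j_*$ suffices in place of $Rj_*$ since $\calE^\bullet_U$ is a complex of soft sheaves), with your sketch simply unwinding the internal structure of that reference's argument via the Poincar\'e residue and reduction to pure Hodge theory on the normalized strata of $D$.
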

\begin{proof}
This is the content of \cite[Proposition-Definition 4.11]{peters2008mixed} except that here we work over $\R$ instead of $\Q$.
Notice that we do not need derived pushforwards, because the objects being pushed forward are complexes of soft sheaves. 
\end{proof}

\begin{remk}\label{multiplicative}
All the complexes of sheaves appearing in \eqref{eqmorphisms} are cdgas over $\C$ except the first one, and the morphisms are morphisms of sheaves of cdgas.
Furthermore, all filtrations are cdga-filtrations.
In other words, $\calH dg^\bullet(X\, \log D)$ is a multiplicative $\R$-mixed Hodge complex of sheaves.
\end{remk}


\section{Rational Mixed Hodge Complexes on Smooth Varieties}\label{rationalmixedhodgecomplexonsmoothvarieties}\index{mixed Hodge complex!rational}
\textit{Throughout this section let $k = \Q$, let $U$ denote a smooth complex algebraic variety, and $X$ denote good compactification of $U$ with simple normal crossing divisor $D = X\setminus U$.
Let $j\colon U \rightarrow X$ denote the inclusion.}
In \cite[Section 4.4]{peters2008mixed} the authors define a multiplicative $\Q$-mixed Hodge complex of sheaves associated to the log de Rham complex whose pseudo-morphism is actually a morphism. 
The rational component of this mixed Hodge complex is perhaps less familiar than the de Rham complex, which is the reason we have separately considered the real case. 

Let us outline the $\Q$-mixed Hodge complex. 
Let $\calO_X$ denote the sheaf of holomorphic functions on $X$.
Let $\calO_U^*$ denote the sheaf of invertible holomorphic functions on $U$.
Let $\calM_{X,D}^{\mathrm{gp}}$ be the sheaf of abelian groups associated to $\calM_{X,D} = \calO_X \cap j_*\calO_U^*$, defined by the following universal property: there is a universal map $c\colon\calM_{X,D}\rightarrow \calM_{X,D}^{\mathrm{gp}}$ such that every homomorphism of monoid sheaves from $\calM_{X,D}$ to a sheaf of groups on $X$ factorizes uniquely over $c$. In other words, $\calM_{X,D}^{\mathrm{gp}}$ is the sheaf of meromorphic functions on $X$ which are holomorphic on $U$ and whose inverse is holomorphic on $U$ as well. For $i \in \Z$ let $\mathrm{Sym}^i_\Q(\calO_X)$ denote the $i$-th symmetric tensor sheaf on $\calO_X$, where $\calO_X$ has been interpreted as a sheaf of $\Q$-vector spaces. 
As in \cite[Section 4.4]{peters2008mixed} for integers $m$ and $p$ we define:
\begin{align*}
 \calK^p_m = \left\{\begin{array}{ll} \mathrm{Sym}^{m-p}_{\Q}(\calO_X) \otimes_\Q \bigwedge_{\Q}^p (\calM_{X,D}^{\mathrm{gp}} \otimes_\Z \Q) & \mbox{if}\ m \geq p \geq 0\\
 0 & \mbox{otherwise},
 \end{array}\right.
\end{align*}
and the differential $d\colon \calK^p_m \rightarrow \calK^{p+1}_m$ is given by:
\begin{align*}
d(f_1 \cdots f_{m-p} \otimes y) = \sum_{k=1}^{m-p} f_1 \cdots f_{k-1} \cdot f_{k+1} \cdots f_{m-p} \otimes \exp(2\pi i f_k) \wedge y	
\end{align*}
for $m \geq p \geq 0$, where $f_1, \dots, f_{m-p}$ are sections of $\calO_X$ and $y$ is a section of $\bigwedge_{\Q}^p (\calM_{X,D}^{\mathrm{gp}} \otimes_\Z \Q)$.
For fixed $m$, the differential $d$ endows $\calK^\bullet_m$ with the structure of a cochain complex of sheaves on $X$.
It can be related to the log de Rham complex via the map $\varphi_m\colon \calK^\bullet_m \rightarrow W_m\logdr{X}{D}$ described by:
\[
 \varphi_m(f_1 \cdots f_{m-p} \otimes y_1 \wedge \cdots \wedge y_p) = \frac{1}{(2\pi i)^p} \left(f_1 \cdots f_{m-p}\right) \frac{dy_1}{y_1} \wedge \cdots \wedge \frac{dy_p}{y_p}
\]
for $0 \leq p \leq m$. 
The map $\varphi_m \otimes 1\colon \calK^\bullet_m \otimes \C \rightarrow W_m \logdr{X}{D}$ is a quasi-isomorphism by \cite[Theorem~4.15]{peters2008mixed}.

For any $m$, we have an inclusion of complexes $\calK^\bullet_m \rightarrow \calK^\bullet_{m+1}$ given by:
\[
	f_1 \cdots f_{m-p} \otimes y \mapsto 1 \cdot f_1 \cdots f_{m-p} \otimes y
\]
for $0 \leq p \leq m$. Therefore we may consider the direct limit 
\[
\calK^\bullet_\infty \coloneqq \varinjlim_m \calK^\bullet_m.
\]
Define a weight filtration $W_{\lc}$ on $\calK^\bullet_\infty$ by declaring $W_m\calK^\bullet_\infty$ to be the image of $\calK^\bullet_m$ in the direct limit.
$\calK^\bullet_\infty$ can be equipped with the structure of a sheaf of cdgas by defining:
\begin{align*}
(f_1 \cdots f_r \otimes y) \wedge (g_1 \cdots g_s \otimes z) = (f_1 \cdots f_r \cdot g_1 \cdots g_s) \otimes (y \wedge z).	
\end{align*}
The weight filtration $W_{\lc}$ is then a cdga-filtration.
The direct limit $\varphi_\infty\colon \calK^\bullet_\infty \rightarrow \logdr{X}{D}$ is a morphism of cdgas and moreover by \cite[Corollary 4.16]{peters2008mixed} is a $W_{\lc}$-filtered quasi-isomorphism after tensoring with $\C$. 
Moreover, by \cite[Corollary 4.17]{peters2008mixed}, the triple 
\[
\Big(\big(\calK^\bullet_\infty, W_{\lc}\big), \big(\logdr{X}{D}, W_{\lc}, F^{\lc}\big), \varphi_\infty\Big)
\]
 is a multiplicative $\Q$-mixed Hodge complex of sheaves on $X$.

Note that the filtration $W_{\lc}$ on $\calK^\bullet_\infty$ is not bounded above, though bounded above filtrations will be important later.
This can be easily corrected by replacing $W_{\lc}$ on $\calK^\bullet_{\infty}$ with the cdga-filtration $\tilde{W}_{\lc}$ described by:
\begin{align*}
\tilde{W}_m \calK^\bullet_\infty = \begin{cases}
	W_m \calK^\bullet_\infty & \textnormal{if}\, m \leq \dim X\\
	\calK^\bullet_\infty & \textnormal{otherwise.}
\end{cases}
\end{align*}
The morphism $\varphi_\infty$ maps $\tilde{W}_{\lc}$ into $W_{\lc}$ because $W_{\dim X} \logdr{X}{D} = \logdr{X}{D}$.
For the same reason, the morphism
\[
\varphi_\infty \otimes 1\colon \big(\calK_\infty^\bullet \otimes \C, \tilde{W}_{\lc} \otimes \C\big) \rightarrow \big(\logdr{X}{D}, W_{\lc}\big)
\]
continues to be a filtered quasi-isomorphism.
And because $\C$ is faithfully flat over $\Q$, the inclusion $(\calK^\bullet_\infty, W_{\lc}) \rightarrow (\calK^\bullet_\infty, \tilde{W}_{\lc})$ is also a filtered quasi-isomorphism. 
Therefore we have: 
\begin{thm}\index{K_infty@$\calK^\bullet_\infty$}
	The triple $((\calK^\bullet_\infty, \tilde{W}_{\lc}), (\logdr{X}{D}, W_{\lc}, F^{\lc}), \varphi_\infty)$ is a multiplicative $\Q$-mixed Hodge complex of sheaves on $X$, and $\calK^\bullet_\infty$ is pseudo-isomorphic to $Rj_*\underline{\Q}$, where $\underline{\Q}$ is the constant sheaf on $U$.
\end{thm}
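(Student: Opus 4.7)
The plan is to bootstrap from \cite[Corollary 4.17]{peters2008mixed}, which already establishes that the triple $\big((\calK^\bullet_\infty, W_{\lc}), (\logdr{X}{D}, W_{\lc}, F^{\lc}), \varphi_\infty\big)$ is a multiplicative $\Q$-mixed Hodge complex of sheaves on $X$. Since $\tilde W_{\lc}$ coincides with $W_{\lc}$ in degrees $m\le\dim X$ and equals the whole complex above that range, and since on the $\C$-side we already have $W_{\dim X}\logdr{X}{D}=\logdr{X}{D}$, no actual datum on the $\C$-side is altered. The task therefore reduces to checking that ``capping'' the filtration on the $\Q$-side does not disturb any of the axioms of a multiplicative mixed Hodge complex, and then to identifying $\calK^\bullet_\infty$ with $Rj_*\ul\Q_U$ in the derived category.

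First I would verify the cdga-filtration axioms for $\tilde W_{\lc}$: the inclusions $\tilde W_i\wedge \tilde W_j\subset \tilde W_{i+j}$ and $d(\tilde W_i)\subset \tilde W_i$ are inherited from those for $W_{\lc}$ whenever $i+j\le\dim X$, and are trivial otherwise. Next, for each integer $m$ I would check that the $m$-th graded piece forms a pure Hodge complex of sheaves of weight $m$. For $m\le \dim X$ the triples $\Gr^{\tilde W}_m\calK^\bullet_\infty$, $\Gr^W_m\logdr{X}{D}$ and $\Gr^{\tilde W}_m\varphi_\infty$ literally coincide with the corresponding data for $W_{\lc}$, so the pure Hodge complex property is inherited verbatim. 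For $m>\dim X$ both graded pieces vanish and the axioms are trivially satisfied. Combined with the two filtered quasi-isomorphisms noted in the excerpt---namely $\varphi_\infty\otimes 1\colon(\calK^\bullet_\infty\otimes\C,\tilde W_{\lc}\otimes\C)\to(\logdr{X}{D},W_{\lc})$ and the inclusion $(\calK^\bullet_\infty,W_{\lc})\hookrightarrow(\calK^\bullet_\infty,\tilde W_{\lc})$---this gives the multiplicative mixed Hodge complex structure.

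For the pseudo-isomorphism $\calK^\bullet_\infty\simeq Rj_*\ul\Q_U$, I would use the natural morphism of complexes of sheaves $\ul\Q_X\to\calK^0_\infty$ induced by the unit $\Q\hookrightarrow \mathrm{Sym}^0_\Q(\calO_X)$, and then promote it to a map $Rj_*\ul\Q_U\to\calK^\bullet_\infty$ (in the derived category) via adjunction. To prove this is a pseudo-isomorphism, I would check stalks at a point $x\in D$ using a polydisk model $(\Delta^n,\{z_1\cdots z_k=0\})$: there $\calM^{\mathrm{gp}}_{X,D}\otimes_\Z\Q$ is generated over $\calO_X\otimes\Q$ by $z_1,\dots,z_k$ modulo invertible holomorphic functions, and the formula $d(f\otimes y)=\sum_i f_1\cdots\widehat{f_i}\cdots\otimes\exp(2\pi i f_i)\wedge y$ collapses to an explicit Koszul-type complex whose cohomology is $\bigwedge^\bullet\Q^k$; this matches the known stalk cohomology of $Rj_*\ul\Q_U$ on a punctured polydisk.

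The main obstacle will be this last local computation at the $\Q$-level. The classical statement that $\logdr{X}{D}$ computes $Rj_*\ul\C_U$ is available off the shelf, but extracting a $\Q$-structure requires the artifice of the $\exp(2\pi i \cdot)$ differential in $\calK^\bullet_\infty$: this is precisely what allows one to stay within $\Q$-coefficients while still detecting the logarithms $dy_i/y_i$ after complexification. Once this model computation is carried out at a single point, compatibility with restrictions shows that the map is a stalk-wise quasi-isomorphism everywhere, establishing the required pseudo-isomorphism.
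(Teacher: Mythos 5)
You take the same route as the paper: both rest on citing \cite[Corollary 4.17]{peters2008mixed} for the $W_{\lc}$-version and then arguing that capping the weight filtration at $\dim X$ is harmless (the paper's entire proof is that citation plus the discussion preceding the theorem). Your by-hand stalk computation of $\calK^\bullet_\infty \simeq Rj_*\underline{\Q}$ is therefore redundant: that statement, including the fact that the adjunction $\calK^\bullet_\infty \to Rj_*j^{-1}\calK^\bullet_\infty$ is a quasi-isomorphism, is already part of the cited corollary, and the paper does not reprove it.

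One step of your verification is wrong as stated: the claim that for $m > \dim X$ ``both graded pieces vanish.'' On the rational side the filtration $W_{\lc}$ on $\calK^\bullet_\infty$ is unbounded (e.g.\ $W_m\calK^0_\infty$ is the image of $\mathrm{Sym}^m_\Q(\calO_X)$, which keeps growing), so $\Gr^{\tilde{W}}_{\dim X+1}\calK^\bullet_\infty = \calK^\bullet_\infty/W_{\dim X}\calK^\bullet_\infty$ is a nonzero complex of sheaves; it is only \emph{acyclic}, not zero, and the axioms at this weight are not ``trivially satisfied.'' Establishing that acyclicity is precisely the nontrivial content of the replacement being inconsequential: one uses that $\varphi_\infty \otimes 1$ is a $W_{\lc}$-filtered quasi-isomorphism, that $\Gr^W_m\logdr{X}{D}=0$ for $m>\dim X$, and faithful flatness of $\C$ over $\Q$ to descend the resulting acyclicity of $(\calK^\bullet_\infty\otimes\C)/W_{\dim X}(\calK^\bullet_\infty\otimes\C)$ back to $\Q$. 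Since you do invoke exactly these filtered quasi-isomorphisms in your closing sentence, the slip is repairable rather than fatal, but the sentence about vanishing graded pieces should be replaced by this acyclicity argument.
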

\begin{proof}
This is the content of \cite[Corollary 4.17]{peters2008mixed} except that $W_{\lc}$ has been replaced by $\tilde{W}_{\lc}$.
By the discussion preceding the theorem, this replacement is inconsequential.
\end{proof}


\section{Limit Mixed Hodge Structure}\label{LimitMixedHodgeStructure}

After we have identified the mixed Hodge structure of Theorem \ref{mhsexistence}, we will compare it with the well-studied limit mixed Hodge structure on the generic fiber of $f$.
We review the construction of this structure over $\Q$, considered in our setting, and under an assumption of properness.
\textit{Throughout this section, let $U$ denote a smooth, connected $n$-dimensional complex algebraic variety and $f\colon U \rightarrow \C^*$ a proper algebraic map inducing an epimorphism on fundamental groups.}

Select a good compactification $X$ of $U$ with simple normal crossing divisor $D = X\setminus U$ such that $f\colon U \rightarrow \C^*$ extends to an algebraic map $\bar{f}\colon X \rightarrow \C P^1$.
By replacing $f\colon U \rightarrow \C^*$ with a finite cyclic cover $f_N\colon U_N \rightarrow \C^*$ if necessary, we may assume that $\bar{f}^{-1}(0)$ is reduced, by \cite[Semi-stable Reduction Theorem]{kempf2006toroidal}.
\textit{Fix $\bar{f}\colon X \rightarrow \C P^1$ as above for the rest of this section. Let $j\colon U \rightarrow X$ and $i\colon E \hookrightarrow X$ denote inclusions, where $E$ denotes the divisor $\bar{f}^{-1}(0)$, which is reduced.}

Select an open disk $\Delta \subset \C$ centered at the origin such that $f$ is submersive over $\Delta^*$, the punctured disk.
The \bi{nearby cycles functor}\index{nearby cycles} $\psi_{\bar{f}}\colon D^+\left(\bar{f}^{-1}\Delta\right) \rightarrow D^+(E)$ is a functor between derived categories of bounded below complexes of sheaves of vector spaces, defined for example in \cite[Section 11.2.3]{peters2008mixed}.
Importantly, $\mathbb{H}^*(E; \psi_{\bar{f}}\underline{\Q}) \cong H^*(F; \Q)$ where $F$ is any fiber of $f$ over $\Delta^*$.
A clockwise loop in $\Delta^*$ determines a monodromy homeomorphism from $F$ to itself and so equips $\mathbb{H}^*(E; \psi_{\bar{f}}\underline{\Q})$ with the structure of a torsion module over $\Q[t^{\pm 1}]$. 
The limit mixed Hodge complex is assigned to $\psi_{\bar{f}}\underline{\Q}$. \index{Hodge!limit MHS}

To define complex weight and Hodge filtrations, Peters and Steenbrink in \cite[Section 11.2.5]{peters2008mixed} work with a double complex of sheaves that is quasi-isomorphic to $\psi_{\ov f} \underline{\Q}$. 
In this spirit, but with minor shifts of convention, we define a double complex $\calA^{\bullet,\bullet}$ of sheaves of $\C$-vector spaces on $X$ as follows.
Let 
\[
\calA^{r,s} = 
\begin{cases}
 \frac{\Omega_X^{r+s+1}(\log D)}{W_r\Omega_X^{r+s+1}(\log D)}, &\text{ if }r\geq 0,\\
 0, &\text{ if }r< 0,
\end{cases}
\]
with differentials $d' = \frac{1}{2\pi i} \frac{df}{f} \wedge - \colon \calA^{r,s} \rightarrow \calA^{r+1,s}$, and $d'' = d\colon \calA^{r,s} \rightarrow \calA^{r,s+1}$ being the differential from $\logdr{X}{D}$. 

According to \cite[Section 11.2.5]{peters2008mixed}, the associated total complex $\Tot\calA^{\bullet,\bullet}$ is, after applying the functor $i^{-1}$, pseudo-isomorphic to $\psi_{\ov f} \underline{\C}$.
The \bi{monodromy weight filtration}\index{filtration!monodromy weight} on $\Tot\calA^{\bullet, \bullet}$ is denoted by $W(M)_{\lc}$ and described for integers $r \geq 0$, $s$, and $m$ by:
\begin{align*}
 W(M)_m\, \calA^{r,s} = \mbox{image of}\ W_{m+2r+1}\Omega_X^{r+s+1}(\log D)\ \mbox{in}\ \calA^{r,s}.
\end{align*}
The Hodge filtration on $\Tot\calA^{\bullet, \bullet}$ is denoted by $F^{\lc}$ and defined for an integer $p$ by:
\begin{align*}
	F^p(\Tot\calA^{\bullet,\bullet}) = \bigoplus_{r} \bigoplus_{s \geq p} \calA^{r,s}.
\end{align*}
This concludes the description of the $\C$-component of the limit mixed Hodge complex.

Following the same blueprint, we also construct a double complex which is pseudo-isomorphic to $\psi_{\ov f} \underline{\Q}$.
We make use of the pair $(\calK^\bullet_\infty, \tilde{W}_{\lc})$ from Section \ref{rationalmixedhodgecomplexonsmoothvarieties}.
Define $\tilde{\calC}^{\bullet,\bullet}$ as follows:
\[
\tilde{\calC}^{r,s} = 
\begin{cases}
\frac{\calK_\infty^{r+s+1}}{\tilde{W}_r \calK_\infty^{r+s+1}}, &\text{ if }r\geq 0,\\
0, &\text{ if }r< 0,
\end{cases}
\]
with differentials $d'=(1 \otimes f) \wedge -\colon \tilde{\calC}^{r,s} \rightarrow \tilde{\calC}^{r+1,s}$, and $d''=d\colon \tilde{\calC}^{r,s} \rightarrow \tilde{\calC}^{r,s+1}$ being the differential from $\calK^\bullet_\infty$. 
Equip the associated total complex $\Tot\calC^{\bullet,\bullet}$ with the \bi{monodromy weight filtration} denoted by $\tilde{W}(M)_{\lc}$ and described for integers $r \geq 0$, $s$, and $m$ by: 
\begin{align*}
 \tilde{W}(M)_m\, \tilde{\calC}^{r,s} = \mbox{image of}\ \tilde{W}_{m+2r+1}\calK_\infty^\bullet\ \mbox{in}\ \tilde{\calC}^{r,s}.
\end{align*}
The map $\varphi_\infty\colon \calK^\bullet_\infty \rightarrow \logdr{X}{D}$ induces a monodromy-filtered morphism $$\varphi_{\infty}\colon \Tot\tilde{\calC}^{\bullet,\bullet} \rightarrow \Tot\calA^{\bullet,\bullet}.$$

\begin{thm}\label{rationallimitmhs}
The restricted triple:
\begin{align*}
\psi_{ f}^{\textnormal{Hdg}} \coloneqq i^{-1}\left(\big[\Tot\tilde{\calC}^{\bullet,\bullet}, \tilde{W}(M)_{\lc}\big], \big[\vphantom{\tilde{A}} \Tot\calA^{\bullet,\bullet}, W(M)_{\lc}, F^{\lc}\big], \varphi_{\infty}\right)
\end{align*}\index{psi-f-Hdg@$\psi_f^{\mathrm{Hdg}}$}
is a $\Q$-mixed Hodge complex of sheaves on $E$, and $i^{-1}\Tot\tilde{\calC}^{\bullet,\bullet}$ is pseudo-isomorphic to $\psi_{\bar{f}}\underline{\Q}$. 
\end{thm}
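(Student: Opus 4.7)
My plan is to mirror the construction in Peters–Steenbrink \cite[\S 11.2.5]{peters2008mixed} for the $\C$-component, transporting the pure-Hodge-complex structure on graded pieces from the complex side to the rational side by means of the morphism $\varphi_\infty$. Since $(\calK^\bullet_\infty, \tilde W_{\lc})$ together with $\varphi_\infty$ is already known to give a multiplicative $\Q$-mixed Hodge complex computing $Rj_*\ul{\Q}$, the two filtered complexes $\Tot\tilde{\calC}^{\bullet,\bullet}$ and $\Tot\calA^{\bullet,\bullet}$ are built from analogous quotients, and the comparison is essentially formal once the right filtered quasi-isomorphisms are in place.

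The first step is to prove the pseudo-isomorphism $i^{-1}\Tot\tilde{\calC}^{\bullet,\bullet} \dashrightarrow \psi_{\bar f}\ul{\Q}$. By Peters–Steenbrink, $i^{-1}\Tot\calA^{\bullet,\bullet}$ is pseudo-isomorphic to $\psi_{\bar f}\ul{\C}$. The morphism $\varphi_\infty\colon \calK_\infty^\bullet \to \logdr{X}{D}$ is a $(\tilde W, W)$-filtered quasi-isomorphism after $\otimes\C$, so it descends to a quasi-isomorphism on each quotient $\calK_\infty^{r+s+1}/\tilde W_r \calK_\infty^{r+s+1} \to \Omega_X^{r+s+1}(\log D)/W_r\Omega_X^{r+s+1}(\log D)$ after tensoring with $\C$, and hence to a quasi-isomorphism $\Tot\tilde{\calC}^{\bullet,\bullet}\otimes\C \to \Tot\calA^{\bullet,\bullet}$. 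Composing with the Peters–Steenbrink pseudo-isomorphism gives the $\C$-level result; combined with the rational comparison through $\calK^\bullet_\infty \simeq Rj_*\ul{\Q}$, we obtain the desired pseudo-isomorphism of $\Q$-complexes of sheaves.

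Next, I need to verify that $\varphi_\infty$ is a filtered quasi-isomorphism for the monodromy weight filtrations $\tilde W(M)_{\lc}$ and $W(M)_{\lc}$, after $\otimes\C$. By the above, $\varphi_\infty$ is a $(\tilde W, W)$-filtered quasi-isomorphism after $\otimes\C$, so passing to quotients and then to shifted subcomplexes preserves this filtered quasi-isomorphism property. In particular, on each graded piece $\Gr^{\tilde W(M)}_m \Tot\tilde{\calC}^{\bullet,\bullet}\otimes\C \to \Gr^{W(M)}_m \Tot\calA^{\bullet,\bullet}$, the map is a quasi-isomorphism. By Peters–Steenbrink, the right-hand side is, after applying $i^{-1}$, identified (via residue maps) with a direct sum of shifted and Tate-twisted de Rham complexes of smooth intersections of components of $E$, and so carries a pure Hodge structure of weight $m$. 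Transporting this structure through the quasi-isomorphism gives the required pure Hodge complex structure on each graded piece of the rational side.

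The remaining verifications are routine: the $E_1$-degeneration of the Hodge-filtered spectral sequence on $\Tot\calA^{\bullet,\bullet}$ is Peters–Steenbrink's and transfers automatically since the $\C$-component is unchanged; biregularity of all filtrations and boundedness are immediate from the construction (this is the reason we replaced $W_{\lc}$ by $\tilde W_{\lc}$ earlier). The main obstacle I anticipate is bookkeeping: carefully identifying $\Gr^{\tilde W(M)}_m \Tot\tilde{\calC}^{\bullet,\bullet}$ (via the residue-style decomposition of $\Gr^{\tilde W}_m \calK_\infty^\bullet$ coming from the explicit formula for the generators $\exp(2\pi i f_k)\wedge y$) in a way compatible with the identification of $\Gr^{W(M)}_m \Tot\calA^{\bullet,\bullet}$ with Tate-twisted de Rham complexes on strata, and matching the shifts so that $\varphi_\infty$ realizes the expected rational substructure. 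Once this identification is established, all weight and Hodge axioms are inherited from the pure pieces on the complex side.
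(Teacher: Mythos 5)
Your plan correctly isolates the harmless part of the argument (the $(\tilde W,W)$-filtered quasi-isomorphism of $\varphi_\infty\otimes 1$ descends to the quotients $\tilde{\calC}^{r,s}\to\calA^{r,s}$ and to the monodromy-weight graded pieces), but it has a genuine gap at the two places where the rational content actually lies. First, the pseudo-isomorphism $i^{-1}\Tot\tilde{\calC}^{\bullet,\bullet}\simeq\psi_{\bar f}\underline{\Q}$ does not follow from ``$\Tot\tilde{\calC}\otimes\C\simeq\Tot\calA$, $i^{-1}\Tot\calA\simeq\psi_{\bar f}\underline{\C}$, and $\calK^\bullet_\infty\simeq Rj_*\underline{\Q}$'': knowing that two rational complexes become quasi-isomorphic after $\otimes\,\C$ produces no rational comparison map at all, let alone one compatible with the complex identification. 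One must actually construct the rational pseudo-isomorphism, which is exactly what the double complex with differential $d'=(1\otimes f)\wedge-$ is designed for (it computes the unipotent nearby cycles via the logarithm of the monodromy); this is the substantive rational half of Peters--Steenbrink's Theorem~11.22 and cannot be recovered ``formally'' from the $\C$-side. Second, ``transporting'' the pure Hodge complex structure on $\Gr^{W(M)}_m\Tot\calA^{\bullet,\bullet}$ to the rational graded pieces is not automatic either: purity is a condition on the real structure induced by $\Gr^{\tilde W(M)}_m\Tot\tilde{\calC}^{\bullet,\bullet}$, so you must check that under the residue identifications this rational structure corresponds (up to the $(2\pi i)$-normalizations built into $\varphi_m$) to the rational cohomology of the strata $E(k)$ with the appropriate Tate twists and shifts. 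You flag this as bookkeeping, but together with the first point it is precisely the content of the cited theorem, not a consequence of it.

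For comparison, the paper does not attempt this reconstruction: its proof is to invoke Peters--Steenbrink Theorem~11.22 for the whole triple built from the unbounded filtration $W_{\lc}$ (i.e.\ for $\calC^{\bullet,\bullet}$ and $W(M)_{\lc}$), and then to observe that the only deviation here, replacing $W_{\lc}$ by the truncated $\tilde W_{\lc}$, is harmless because the inclusion $(\calK^\bullet_\infty,W_{\lc})\to(\calK^\bullet_\infty,\tilde W_{\lc})$ is a filtered quasi-isomorphism and hence induces a filtered quasi-isomorphism between the two versions of the double complex with their monodromy weight filtrations. Your handling of the $\tilde W$ versus $W$ issue is fine; the fix for your argument is simply to cite (or reproduce) the rational comparison of loc.\ cit.\ rather than trying to deduce it from the complex-coefficient statement.
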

\begin{proof}
This is established (up to minor modifications owing to moderate changes in the definitions of both double complexes) by \cite[Theorem 11.22]{peters2008mixed} except that in place of $\tilde{\calC}^{\bullet, \bullet}$ and its filtration $\tilde{W}(M)_{\lc}$ the authors opt for $\calC^{\bullet,\bullet}$ and $W(M)_{\lc}$, which are built using the unbounded filtration $W_{\lc}$ on $\calK^\bullet_\infty$ instead of $\tilde{W}_{\lc}$.
Because the inclusion $(\calK^\bullet_\infty, W_{\lc}) \rightarrow (\calK^\bullet_\infty, \tilde{W}_{\lc})$ is a filtered quasi-isomorphism of cgdas, the induced map $(\Tot\tilde{\calC}^{\bullet,\bullet}, \tilde{W}(M)_{\lc}) \rightarrow \left(\Tot\calC^{\bullet,\bullet}, W(M)_{\lc}\right)$ can also be shown to be a filtered quasi-isomorphism. 
\end{proof}

The $\Q[t^{\pm 1}]$-module structure on $\mathbb{H}^*(E; \psi_{\bar{f}}\underline{\Q}) \cong H^*(F; \Q)$ induced by the monodromy action can actually be realized on this mixed Hodge complex of sheaves.
Specifically, for integers $r \geq 0$ and $s$ define:
\begin{align*}
&\Theta\colon \tilde{\calC}^{r,s} \rightarrow \tilde{\calC}^{r+1,s-1},\ \Theta(-) = (-)\, \textnormal{mod}\, \tilde{W}_{r+1} \calK_\infty^{r+s+1}\\
&\Theta\colon \calA^{r,s} \rightarrow \calA^{r+1,s-1},\ \Theta(-) = (-)\, \textnormal{mod}\, W_{r+1} \Omega^{r+s+1}_X(\log D).
\end{align*} \index{Theta@$\Theta$}

\begin{thm}\label{limitmhsmonodromy}
Under the above notations, the map $\Theta$ gives rise to a morphism of mixed Hodge complexes of sheaves:
\begin{align*}
i^{-1}\Theta\colon \psi_f^{\textnormal{Hdg}} \rightarrow \psi_f^{\textnormal{Hdg}}(-1)
\end{align*}
which induces the map $\log t \colon \mathbb{H}^*(E; \psi_{\bar{f}}\underline{\Q}) \rightarrow \mathbb{H}^*(E; \psi_{\bar{f}}\underline{\Q})(-1)$.
Here $\log t$ is interpreted as its power series representation at $t = 1$.
\end{thm}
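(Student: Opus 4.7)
The plan is to verify that $\Theta$ (in both its rational and complex incarnations) defines a morphism of double complexes, that it is compatible with $\varphi_\infty$, and that after the Tate shift it becomes filtered for the weight and Hodge filtrations. Only then is there content left to check: that the induced map on hypercohomology is the logarithm of monodromy.

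\medskip

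\emph{Step 1: $\Theta$ is a morphism of double complexes.} On the complex side, the differential $d'$ is wedging with $\tfrac{1}{2\pi i}\tfrac{df}{f}$, which is a local section of $W_1\Omega^1_X(\log D)$, and $d''$ is the usual log de Rham differential which preserves $W_\bullet$. Thus passing from the quotient $\calA^{r,s} = \Omega^{r+s+1}_X(\log D)/W_r\Omega^{r+s+1}_X(\log D)$ to $\calA^{r+1,s-1} = \Omega^{r+s+1}_X(\log D)/W_{r+1}\Omega^{r+s+1}_X(\log D)$ manifestly commutes with $d'$ and $d''$ up to sign, since we are only enlarging the piece we quotient by. The same verification, with $1\otimes f$ in place of $\tfrac{df}{f}/(2\pi i)$, works on $\tilde \calC^{\bullet,\bullet}$, using that $1\otimes f$ is a local section of $\tilde W_1 \calK^1_\infty$. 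The fact that $\varphi_\infty$ takes $\tilde W_\bullet$ into $W_\bullet$ and sends $1\otimes f$ to $\tfrac{1}{2\pi i}\tfrac{df}{f}$ gives the compatibility $\varphi_\infty\circ \Theta = \Theta\circ\varphi_\infty$.

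\medskip

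\emph{Step 2: Filtration compatibilities.} For the weight filtration, unwinding the definitions,
\[
\Theta\bigl(W(M)_m\calA^{r,s}\bigr) \subset \text{image of } W_{m+2r+1}\Omega^{r+s+1}_X(\log D) \text{ in } \calA^{r+1,s-1},
\]
which is exactly $W(M)_{m-2}\calA^{r+1,s-1}$ since $m+2r+1 = (m-2)+2(r+1)+1$. Thus $\Theta\colon (\Tot\calA^{\bullet,\bullet}, W(M)_\bullet) \to (\Tot\calA^{\bullet,\bullet}, W(M)_{\bullet-2})$ is filtered, and the same index bookkeeping works for $\tilde W(M)_\bullet$ on $\Tot\tilde\calC^{\bullet,\bullet}$. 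For the Hodge filtration, $\Theta$ sends $\calA^{r,s}$ to $\calA^{r+1,s-1}$, i.e.\ decreases the second index by one, so $\Theta(F^p) \subset F^{p-1}$. Both compatibilities are precisely what is needed for the target to be the Tate twist $(-1)$, which by definition shifts $W$ by $-2$ and $F$ by $-1$. Restricting via $i^{-1}$ preserves all of this, so $i^{-1}\Theta$ is a genuine morphism of $\Q$-mixed Hodge complexes of sheaves on $E$.

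\medskip

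\emph{Step 3: Identification with $\log t$.} This is the step requiring the most care and is the main obstacle. Under the pseudo-isomorphism $i^{-1}\Tot\tilde\calC^{\bullet,\bullet}\simeq \psi_{\bar f}\underline\Q$ of Theorem~\ref{rationallimitmhs}, the monodromy operator $T$ on the right-hand side corresponds, on the double complex, to the geometric monodromy around the loop $e^{2\pi i \theta}$. The standard computation (cf.\ the treatment of the nearby cycles double complex in \cite[\S 11.2]{peters2008mixed} and Steenbrink's original construction) shows that the class of $T - \mathrm{Id}$ is represented on the double complex by the operator that multiplies by $1\otimes f$ and shifts into the next column modulo $\tilde W_{r+1}$, i.e.\ precisely our $\Theta$ when $T$ is unipotent. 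More precisely, under the identification, $\Theta$ represents $\tfrac{1}{2\pi i}\log T_u$ where $T_u$ is the unipotent part, and since $T$ acts trivially on $\Gr^W_\bullet$ of $\psi_{\bar f}\underline\Q$ (up to finite order), one has $\log t = 2\pi i\,\Theta$ on hypercohomology, or rather, up to our convention of omitting the $(2\pi i)^j$ in the Tate twist, $\log t = \Theta$ on hypercohomology. Passing to $\mathbb H^*(E;\psi_{\bar f}\underline\Q)$ via the pseudo-isomorphism yields the claimed map. This is most cleanly seen by first reducing to the unipotent case and then identifying $\Theta$ with the connecting homomorphism of the short exact sequence relating the different quotients $\calK^\bullet_\infty/\tilde W_r$ for varying $r$, which is the Wang-type realization of $T-\mathrm{Id}$ on the nearby fiber.
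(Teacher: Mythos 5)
Your proposal is correct and takes essentially the same route as the paper, whose entire proof is a citation of \cite[Section 11.3.1]{peters2008mixed} "with minor modifications" for its conventions on Tate twists and the double complexes, plus the remark that unipotency of the monodromy (forced by $E$ being reduced) makes $\log t$ well-defined; your Steps 1--2 simply write out the filtered-morphism and Tate-twist bookkeeping that the paper subsumes under those modifications, and your Step 3 defers to the same Peters--Steenbrink computation. One small slip in Step 3 does not affect the argument: $\Theta$ is the further-quotient (column-shift) map, not the operator "multiplying by $1\otimes f$" (that is the differential $d'$), and the $2\pi i$ and orientation factors you gloss over are exactly what the paper's conventions (no powers of $2\pi i$ in Tate twists, the $\tfrac{1}{2\pi i}$ in $d'$ and $\varphi_\infty$, clockwise monodromy) are designed to absorb.
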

\begin{proof}
The assertion follows from \cite[Section 11.3.1]{peters2008mixed} with minor modifications, because we have selected moderately different definitions for Tate twists and the pair of double complexes appearing in $\psi_f^{\textnormal{Hdg}}$. We remark that $\log t$ is in fact well-defined, because $E$ being reduced implies that the monodromy action on $\mathbb{H}^*(E; \psi_{\ov f}\underline{\Q})$ is unipotent (for a proof see \cite[Corollary 11.19]{peters2008mixed}).
\end{proof}


	\chapter{Thickened Complexes}\label{sec3}

\section{Thickened Complex of a Differential Graded Algebra}

Suppose $(A,d,\wedge)$ is a cdga {over $k$}, $\eta \in A^1 \cap \ker d$, and $m \geq 1$. {Recall that $R_m$ is the quotient ring $R_\infty/(s^m)=k[[s]]/(s^m)$.
We define the \bi{$m$-thickening of $A$ in the direction $\eta$} to be the cochain complex of $R_m$-modules denoted by \index{thickening}
\begin{align*}
	A(\eta,m) = (A {\otimes_k} R_m, d_\eta)
\end{align*}
 and described by:
\begin{itemize}
\item for $p \in \Z$, the $p$-th graded component of $A(\eta,m)$ is the free $R$-module {$A^p {\otimes_k} R_m$.}
\item for $\omega \in A$ and $\phi \in R_m$, we set $d_\eta(\omega \otimes \phi) = d\omega \otimes \phi + (\eta \wedge \omega) \otimes {s\phi} $.
\end{itemize}
A straightforward calculation verifies that $d_{\eta}^2$ vanishes.

If $i\geq 1,j \geq 0$, then the quotient map $R_{i+j} \twoheadrightarrow R_i$ induces a surjective morphism $\phi_{ji}\colon A(\eta,i+j) \twoheadrightarrow A(\eta,i)$.
These maps endow $\{A(\eta,i)\}$ with the structure of an inverse system of $\{R_i\}$-modules.
Using that $R_\infty = \varprojlim R_i$, we define the \bi{$\infty$-thickening of $A$ in the direction $\eta$} to be the cochain complex of $R_\infty$-modules 
\begin{align*}
A(\eta, \infty) = \varprojlim A(\eta,i).
\end{align*}

\begin{remk}\label{thickenedmultiplication}
The inverse system $\{A \otimes R_i\}$ admits a natural multiplication operation, defined on simple tensors by $(\omega \otimes \phi) \wedge (\omega' \otimes \phi') = (\omega \wedge \omega') \otimes \phi \phi'$.
However, this multiplication does not in general endow $\{A(\eta,i)\}$ with the structure of an inverse system of \textit{cdgas}, because it is not compatible with the differential $d_{\eta}$.
Nevertheless, we will find an opportunity to utilize it shortly.
\end{remk}

\begin{remk}\label{filtration}
The complex $A(\eta,m)$ can be equipped with a decreasing filtration $G$, described by 
\begin{align*}
G^pA(\eta,m) = A(\eta,m)s^p
\end{align*} 
for $p \geq 0$.
Its graded components are, for $p \geq 0$:
\begin{align*}
 \Gr^p_G A(\eta,m) = (A \otimes k\langle s^p \rangle, d \otimes 1).
\end{align*}
In particular, $H^*( \Gr^p_G A(\eta,m)) \cong H^*(A)$ $\otimes k\langle s^p \rangle$.
\end{remk}

We next state two lemmas concerning thickened complexes, which are essentially listed in \cite[Section~3]{BudurLiuWang}. For future reference, we also include their proofs.

\begin{lem}\label{cohomologous}
Suppose $(A,d,\wedge)$ is a cdga and $m \geq 1$.
Suppose that $\eta_1$ and $\eta_2$ are cohomologous elements in $A^1 \cap \ker d$.
Then $A(\eta_1, m)$ and $A(\eta_2, m)$ are isomorphic. More precisely: if $a \in A^0$ is such that $\eta_1-\eta_2 = da$, then the morphism
\[\exp(a \otimes s) \wedge -\colon A(\eta_1,m) \rightarrow A(\eta_2,m)\]
is defined in the proof and is an isomorphism of cochain complexes.
\end{lem}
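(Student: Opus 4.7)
My plan is to define $\exp(a\otimes s)$ as the finite sum
\[
\exp(a\otimes s) = \sum_{k=0}^{m-1}\frac{a^k\otimes s^k}{k!} \in A^0\otimes R_m,
\]
which is well-defined because $s^m=0$ in $R_m$ and $a^k = a\wedge\cdots\wedge a$ lies in $A^0$ for all $k\geq 0$ (so no sign issues arise in forming powers). Using the multiplication of Remark~\ref{thickenedmultiplication}, define the map $\exp(a\otimes s)\wedge -\colon A(\eta_1,m)\to A(\eta_2,m)$ by $(\omega\otimes \phi)\mapsto \sum_{k=0}^{m-1}\frac{1}{k!}(a^k\wedge\omega)\otimes s^k\phi$. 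This map is $R_m$-linear by inspection, and preserves degree since $a$ has degree $0$.

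The main content of the proof is checking that the map is a chain map, that is,
\[
d_{\eta_2}\bigl(\exp(a\otimes s)\wedge(\omega\otimes \phi)\bigr) = \exp(a\otimes s)\wedge d_{\eta_1}(\omega\otimes \phi).
\]
Expanding the left-hand side using $d_{\eta_2}(\alpha\otimes\psi)=d\alpha\otimes\psi+(\eta_2\wedge\alpha)\otimes s\psi$ and the graded Leibniz rule $d(a^k\wedge\omega) = k\,a^{k-1}\wedge da\wedge\omega + a^k\wedge d\omega$ (noting that $a\in A^0$ so no signs appear when differentiating $a^k$), the terms involving $d\omega$ cancel against the analogous terms on the right-hand side. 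What remains is the identity
\[
\sum_{k\geq 1}\frac{1}{(k-1)!}(a^{k-1}\wedge da\wedge\omega)\otimes s^k\phi + \sum_{k\geq 0}\frac{1}{k!}(\eta_2\wedge a^k\wedge\omega)\otimes s^{k+1}\phi = \sum_{k\geq 0}\frac{1}{k!}(\eta_1\wedge a^k\wedge\omega)\otimes s^{k+1}\phi,
\]
which follows directly from substituting $da = \eta_1-\eta_2$ in the first sum and using that $a^k\wedge \eta_i = \eta_i \wedge a^k$ (since $a$ has even degree). This step is where the hypothesis $\eta_1-\eta_2=da$ is essential; it is the only real content of the proof, and the main obstacle is simply bookkeeping the indices and sign conventions correctly.

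To show that the map is an isomorphism, I will exhibit an explicit inverse given by $\exp(-a\otimes s)\wedge -$. The identity $\exp(a\otimes s)\wedge\exp(-a\otimes s)=1\otimes 1$ in $A\otimes R_m$ follows from the binomial theorem applied in the commutative subring $k[a]\otimes R_m$ of $A\otimes R_m$, exactly as in the standard proof of $e^x e^{-x}=1$. Alternatively, since $\exp(a\otimes s)\wedge(\omega\otimes\phi) \equiv \omega\otimes \phi \pmod{s}$, the map is the identity on the associated graded with respect to the filtration $G$ of Remark~\ref{filtration}, which also implies it is an isomorphism by induction on $m$.
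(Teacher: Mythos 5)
Your proof is correct and follows essentially the same route as the paper: the same exponential element $\exp(a\otimes s)$ (the paper writes it as an element of the inverse limit, you truncate at $s^{m}=0$, which is the same thing in $A\otimes R_m$), the same Leibniz-rule computation with the substitution $da=\eta_1-\eta_2$, and the same inverse $\exp(-a\otimes s)\wedge-$. The only cosmetic differences are that you check the chain-map identity on general simple tensors $\omega\otimes\phi$ rather than on $\omega\otimes 1$ and offer the filtration argument as an alternative proof of invertibility; neither changes the substance.
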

\begin{proof}
Let $a \in A^0$ be such that $da = \eta_1 - \eta_2$.
The element 
\[
\exp(a \otimes s) = \sum_{n = 0}^\infty \frac{1}{n!}\, a^n \otimes s^n
\]
belongs to $\varprojlim A^0 \otimes R_i$ and determines, via the multiplication described in Remark \ref{thickenedmultiplication}, a morphism 
\[
\exp(a \otimes s) \wedge -\colon A \otimes R_m \rightarrow A \otimes R_m
\]
of $R_m$-modules.
We will show that it in fact determines a morphism 
\[
\exp(a \otimes s) \wedge -\colon A(\eta_1, m) \rightarrow A(\eta_2,m)
\]
by verifying commutativity with the differentials. It suffices to investigate its application to a simple tensor of the form $\omega \otimes 1$:
\begin{align*}
 &d_{\eta_2}\left(\exp(a \otimes s)\wedge (\omega \otimes 1)\right)=\\ &= \sum_{n = 0}^{m-1} \frac{1}{n!}~ d_{\eta_2}\left(a^n \omega \otimes s^n\right)=\\
 &= \sum_{n = 0}^{m-1} \frac{1}{n!}~ d\left(a^n\omega\right) \otimes s^n + \sum_{n = 1}^{m-1} \frac{1}{(n-1)!}a^{n-1}\eta_2 \wedge \omega \otimes s^n=\\ 
 &=\sum_{n = 1}^{m-1}\frac{1}{(n-1)!}~ a^{n-1} da \wedge \omega \otimes s^n + \sum_{n=0}^{m-1}\frac{1}{n!}~a^n \wedge d\omega \otimes s^n + \\ &+ \sum_{n = 1}^{m-1} \frac{1}{(n-1)!}a^{n-1}\eta_2 \wedge \omega \otimes s^n
\end{align*}
and since $da = \eta_1-\eta_2$, the above simplifies to:
\begin{align*}
\sum_{n = 1}^{m-1}\frac{1}{(n-1)!}~ &a^{n-1} \eta_1 \wedge \omega \otimes s^n + \sum_{n=0}^{m-1}\frac{1}{n!}~a^n \wedge d\omega \otimes s^n=\\
&= \exp(a \otimes s) \wedge (\eta_1 \wedge \omega \otimes s) + \exp(a \otimes s) \wedge (d\omega \otimes 1)=\\
&= \exp(a \otimes s) \wedge d_{\eta_1}(\omega \otimes 1).
\end{align*}
Therefore $\exp(a \otimes s) \wedge -$ is a morphism of cochain complexes.
It is in fact an isomorphism, because $\exp(-a \otimes s) \wedge -$ is its inverse. 
\end{proof}

\begin{lem}\label{inducedmap}
Suppose $(A,d,\wedge)$ and $(B, d, \wedge)$ are cdgas, $\eta \in A^1 \cap \ker d$, and $m \geq 1$.
If $F\colon A \rightarrow B$ is a morphism of dgas, then there is a natural induced morphism 
\[
F_{\#}\colon A(\eta, m) \rightarrow B(F(\eta), m).
\]
If $F$ is a quasi-isomorphism, then so is $F_{\#}$.
\end{lem}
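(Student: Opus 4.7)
The plan is to define $F_{\#}$ explicitly on simple tensors by $F_{\#}(\omega \otimes \phi) = F(\omega) \otimes \phi$, extended $R_m$-linearly. Naturality in $F$ is clear from the definition. To verify that this is a map of cochain complexes, I would compute on a simple tensor
\[
d_{F(\eta)}(F_{\#}(\omega \otimes \phi)) = d F(\omega) \otimes \phi + (F(\eta) \wedge F(\omega)) \otimes s\phi,
\]
and use that $F$ is a morphism of dgas (so $F \circ d = d \circ F$ and $F(\eta \wedge \omega) = F(\eta) \wedge F(\omega)$) to see this equals $F_{\#}(d\omega \otimes \phi + (\eta \wedge \omega) \otimes s\phi) = F_{\#}(d_\eta(\omega \otimes \phi))$.

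For the quasi-isomorphism statement, the plan is to induct on $m$, using the filtration $G$ from Remark~\ref{filtration}. The key observation is that $F_{\#}$ is compatible with $G$, since it clearly sends $A(\eta,m) s^p$ into $B(F(\eta), m)s^p$. Concretely, for $m \geq 2$, the ideal $s^{m-1} R_m$ yields a short exact sequence of cochain complexes
\[
0 \longrightarrow (A, d) \longrightarrow A(\eta,m) \longrightarrow A(\eta, m-1) \longrightarrow 0,
\]
where the left-hand inclusion sends $\omega \mapsto \omega \otimes s^{m-1}$; note that the differential on the sub-complex reduces to $d \otimes 1$ because $s \cdot s^{m-1} = 0$ in $R_m$. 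An analogous sequence holds for $B$, and $F_{\#}$ fits into a morphism between these two short exact sequences, inducing a commutative ladder of long exact sequences in cohomology.

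The base case $m=1$ gives $A(\eta,1) = A$ and $B(F(\eta),1) = B$ with $F_{\#} = F$, which is a quasi-isomorphism by assumption. For the inductive step, the leftmost vertical map in the ladder is $F$ (a quasi-isomorphism by hypothesis), and the rightmost is $F_{\#}\colon A(\eta,m-1) \to B(F(\eta),m-1)$, a quasi-isomorphism by induction. By the five-lemma applied to the resulting ladder of long exact sequences, $F_{\#}\colon A(\eta,m) \to B(F(\eta),m)$ induces an isomorphism on cohomology as well. No step of this argument presents a real obstacle; the main point is simply to exploit the $s$-adic filtration to reduce everything to the underlying map $F$.
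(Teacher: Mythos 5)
Your proof is correct and rests on essentially the same idea as the paper's: filter $A(\eta,m)$ by powers of $s$, observe that $F_\#$ is compatible with this filtration, and reduce to the graded pieces where the map is just $F$ tensored with a one-dimensional vector space. The only cosmetic difference is that you induct on $m$ by peeling off the subcomplex $A\otimes s^{m-1}$ and quotienting to $A(\eta,m-1)$, whereas the paper fixes $m$ and inducts downward on the filtration index $p$ starting from $G^m=0$; both are the same argument in different bookkeeping.
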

\begin{proof}
The morphism $F_\#$ is given by $F \otimes \id\colon A \otimes R_m \rightarrow B \otimes R_m$.
$F_\#$ preserves the decreasing filtration $G$ of Remark \ref{filtration}.
In particular, for $p \geq 0$ it induces commutative diagrams of short exact sequences:
$$
\begin{tikzcd}
 0 \ar[r] & G^{p+1} A(\eta,m) \ar[d] \ar[r] & G^p A(\eta,m) \ar[d] \ar[r] & \Gr^p_G A(\eta,m) \ar[d] \ar[r] & 0\\
 0 \ar[r] & G^{p+1} B(F(\eta),m) \ar[r] & G^p B(F(\eta),m) \ar[r] & \Gr^p_G B(F(\eta),m) \ar[r] & 0.
\end{tikzcd}
$$
From the associated commutative diagram of long exact sequences, the central map is a quasi-isomorphism if the outer two are.
For all $p \geq 0$, the induced map on cohomology of graded components is the isomorphism $F^* \otimes \id\colon H^*(A) \otimes k\langle s^p \rangle \rightarrow H^*(B) \otimes k\langle s^p \rangle$.
Therefore the induced map on graded components is a quasi-isomorphism. 
Since $m$ is finite, both $G^mA(\eta,m)$ and $G^mB(F(\eta), m)$ are zero.
This provides the starting point to an induction concluding that $G^pA(\eta,m) \rightarrow G^pB(F(\eta),m)$ is a quasi-isomorphism for all $p \geq 0$.
Take $p = 0$ to conclude.
\end{proof}

\begin{remk}\label{remk:psi-ij}
Besides the maps associated to the inverse system, there is another natural collection of maps between the thickened complexes.
If $i \geq 1,j\geq 0$ then there is an inclusion $R_i \hookrightarrow R_{i+j}$ mapping $R_i$ isomorphically onto $s^jR_{i+j}$.
We denote the induced inclusion of thickened complexes by \index{psi@$\psi_{ij}$}$\psi_{ij}\colon A(\eta,i) \hookrightarrow A(\eta,i+j)$.
\end{remk}

\begin{remk}\label{pid}
Note that $R_\infty$ is a PID.
Therefore all finitely generated $R_\infty$-modules split into torsion and free parts.
We let $\mathrm{Tors}_{R_{\infty}}$ denote torsion as an $R_\infty$-module.
\end{remk}

\begin{remk}\label{rem:tensorRm}
For all $m \geq 1$ we have $A(\eta,m) \cong A(\eta,\infty) \otimes_{R_\infty} R_m$ as $R_\infty$-modules. Therefore, the ring map $R_\infty\to R_m$ induces a map
\[
A(\eta,\infty) =A(\eta,\infty) \otimes_{R_{\infty}} R_{\infty} \to A(\eta,\infty) \otimes_{R_{\infty}} R_{m} \cong A(\eta,m).
\]
\end{remk}

The following lemma allows us to interpret $\mathrm{Tors}_{R_{\infty}}H^*A(\eta,\infty)$ as the kernel of a map between finitely thickened complexes.

\begin{lem}\label{lem:torsion2}
Suppose $C^\bullet$ is a complex of torsion-free $R_\infty$-modules, and suppose that $m$ is such that $s^m$ annihilates $\Tors_{R_\infty}H^*(C^\bullet)$. Let $\psi_{ij} \colon C^\bullet \otimes_{R_\infty} R_i \to C^\bullet \otimes_{R_\infty} R_{i+j}$ be the map induced by the map $R_i \hookrightarrow R_{i+j}$ as in Remark~\ref{remk:psi-ij}, and let $\psi_{ij}^*$ the map induced in cohomology. Let $\phi_{ji}\colon C^\bullet \otimes_{R_\infty} R_{i+j} \to C^\bullet \otimes_{R_\infty} R_{i}$ be the map induced by the projection $R_{i+j}\twoheadrightarrow R_i$. The natural map $C^\bullet\to C^\bullet\otimes_R R_i$ induces isomorphisms
\index{psi@$\psi_{ij}$}\index{psi@$\psi_{ij}$!psi@$\psi_{ij}^*$}\index{phi@$\phi_{ij}$}\index{phi@$\phi_{ij}$!phi@$\phi_{ij}^*$}
\begin{align*}
\Tors_{R_\infty} H^*(C^\bullet) &\cong \ker \psi_{ij}^* \subseteq H^* (C^\bullet\otimes_R R_i)\quad \text{ if $i,j\ge m$};\\
H^*(C^\bullet)\otimes_{R_\infty} R_i &\cong \im \phi_{ji}^* \subseteq H^* (C^\bullet\otimes_R R_i)\quad \text{ if $j\ge m$}.
\end{align*}
\end{lem}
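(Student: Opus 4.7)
The plan is to derive both isomorphisms from the universal-coefficient-type short exact sequences obtained by tensoring $C^\bullet$ with
\[
0 \to R_\infty \xrightarrow{s^n} R_\infty \to R_n \to 0, \qquad n \ge 1.
\]
Since $R_\infty = k[[s]]$ is a PID, the torsion-free complex $C^\bullet$ is in fact flat, so applying $C^\bullet \otimes_{R_\infty} -$ and then taking cohomology yields a short exact sequence
\[
0 \to H^*(C^\bullet)/s^n H^*(C^\bullet) \to H^*(C^\bullet \otimes_{R_\infty} R_n) \xrightarrow{\delta_n} H^{*+1}(C^\bullet)[s^n] \to 0,
\]
whose left-hand term is the image of the natural map $H^*(C^\bullet) \to H^*(C^\bullet \otimes_{R_\infty} R_n)$ (here $[s^n]$ denotes the $s^n$-torsion submodule). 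The hypothesis on $m$ gives, for all $n \ge m$, the two identifications $H^{*+1}(C^\bullet)[s^n] = \Tors_{R_\infty} H^{*+1}(C^\bullet)$ and $\Tors_{R_\infty} H^*(C^\bullet) \cap s^n H^*(C^\bullet) = 0$; in particular $\Tors_{R_\infty} H^*(C^\bullet)$ injects into $H^*(C^\bullet \otimes_{R_\infty} R_n)$.

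Next I would observe that both $\psi_{ij}$ and $\phi_{ji}$ arise from morphisms of these defining short exact sequences. For $\phi_{ji}$ (coming from the surjection $R_{i+j} \twoheadrightarrow R_i$) the morphism goes from the $n=i+j$ sequence to the $n=i$ sequence and has middle vertical the identity and left vertical multiplication by $s^j$. For $\psi_{ij}$ (coming from the inclusion $R_i \hookrightarrow R_{i+j}$, $1 \mapsto s^j$) the morphism goes from $n=i$ to $n=i+j$ and has middle vertical multiplication by $s^j$ and left vertical the identity. Functoriality of the long exact sequence then yields commutative squares relating $\phi_{ji}^*$, $\psi_{ij}^*$, $\delta_i$ and $\delta_{i+j}$ to multiplication by $s^j$ on appropriate copies of $H^*(C^\bullet)$.

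For the second claim, given $\beta \in H^*(C^\bullet \otimes_{R_\infty} R_{i+j})$, commutativity on the torsion side yields $\delta_i(\phi_{ji}^*\beta) = s^j\,\delta_{i+j}(\beta) = 0$, because $\delta_{i+j}(\beta) \in \Tors_{R_\infty} H^{*+1}(C^\bullet)$ and $s^j$ annihilates this when $j \ge m$. Hence $\phi_{ji}^*\beta$ lies in $\ker \delta_i = \im\bigl(H^*(C^\bullet) \to H^*(C^\bullet\otimes_{R_\infty} R_i)\bigr) \cong H^*(C^\bullet)\otimes_{R_\infty} R_i$; the reverse containment follows by factoring the natural map $C^\bullet \to C^\bullet \otimes_{R_\infty} R_i$ through $C^\bullet \otimes_{R_\infty} R_{i+j}$. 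For the first claim, given $\alpha \in \ker \psi_{ij}^*$, commutativity on the right (where the vertical map is the identity) forces $\delta_i(\alpha) = \delta_{i+j}(\psi_{ij}^*\alpha) = 0$, so $\alpha$ is the image of some $\tilde\alpha \in H^*(C^\bullet)$, well-defined modulo $s^i H^*(C^\bullet)$. Commutativity on the left identifies $\psi_{ij}^*\alpha$ with the image of $s^j\tilde\alpha$ in $H^*(C^\bullet \otimes_{R_\infty} R_{i+j})$, which vanishes iff $\tilde\alpha \equiv s^i\tilde\beta \pmod{H^*(C^\bullet)[s^j]}$. Since $j \ge m$ this $s^j$-torsion equals $\Tors_{R_\infty} H^*(C^\bullet)$, and since $i \ge m$ this torsion meets $s^i H^*(C^\bullet)$ only in $0$; hence the natural map carries $\Tors_{R_\infty} H^*(C^\bullet)$ bijectively onto $\ker \psi_{ij}^*$.

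The argument is essentially bookkeeping; the only subtle point is keeping careful track of which vertical map is the identity and which is multiplication by $s^j$ in each of the two comparison diagrams, and invoking the two assumptions $i \ge m$ and $j \ge m$ at the precisely correct moments (the former ensures injectivity on torsion, the latter makes the relevant multiplication-by-$s^j$ vanish).
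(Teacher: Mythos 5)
Your proposal is correct and follows essentially the same route as the paper: both compare the short exact sequences $0 \to C^\bullet \xrightarrow{s^n} C^\bullet \to C^\bullet \otimes_{R_\infty} R_n \to 0$ for $n=i$ and $n=i+j$ via the two morphisms of sequences (identity/$s^j$ in the two slots), and then exploit that $s^j$ kills the torsion when $j\ge m$ and that $\Tors_{R_\infty}H^*(C^\bullet)\cap s^iH^*(C^\bullet)=0$ when $i\ge m$. The only cosmetic difference is that the paper concludes the first statement with the snake lemma on the resulting diagram of universal-coefficient sequences, whereas you do the equivalent element chase with the connecting maps.
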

\begin{proof}
Since $C^\bullet$ is torsion-free, we have the following short exact sequences with maps between them:
\[
{
\begin{tikzcd}[row sep = 1.5em, column sep = 1em]
0 \to
C^\bullet \arrow[r,"s^i"]\arrow[d,"=", shift left = 1.7ex] &
C^\bullet \arrow[r,twoheadrightarrow]\arrow[d,"s^j"] &
C^\bullet \otimes_{R_\infty} R_{i} \arrow[d,hookrightarrow,"\psi_{ij}", shift right = 3ex] \to
0 &
0 \to
C^\bullet \arrow[r,"s^{i+j}"]\arrow[d,"s^j", shift left = 1.7ex] &
C^\bullet \arrow[r,twoheadrightarrow]\arrow[d,"="] &
C^\bullet \otimes_{R_\infty} R_{i+j} \arrow[d,twoheadrightarrow,"\phi_{ji}", shift right = 3ex] \to
0\\
0 \to
C^\bullet \arrow[r,"s^{i+j}"] &
C^\bullet \arrow[r,twoheadrightarrow] &
C^\bullet \otimes_{R_\infty} R_{i+j} \to
0; &
0 \to
C^\bullet \arrow[r,"s^{i}"] &
C^\bullet \arrow[r,twoheadrightarrow] &
C^\bullet \otimes_{R_\infty} R_{i} \to
0.
\end{tikzcd}
}
\]
We take the cohomology long exact sequences corresponding to the sequences above, and we consider the map between them. We obtain the following diagrams, where we denote $\Tors_{s^i} M = \{ a\in M \mid s^ia=0\}$:
\[
\begin{tikzcd}[row sep = 1.3em, column sep = 1.4em]
0 \arrow[r] &
H^*(C^\bullet)\otimes_{R_\infty} R_{i\phantom{+j}} \arrow[r]\arrow[d,"s^j\cdot ","(1)"'] &
H^*\left(C^\bullet \otimes_{R_\infty} R_{i\phantom{+j}}\right) \arrow[r]\arrow[d,"\psi_{ij}^*"] &
\Tors_{s^{i\phantom{+j}}}H^{*+1}(C^\bullet) \arrow[r]\arrow[d,hookrightarrow,"\Id_{H^{*+1}(C^\bullet)}"] &
0\\
0 \arrow[r] &
H^*(C^\bullet)\otimes_{R_\infty} R_{i+j} \arrow[r] &
H^*\left(C^\bullet \otimes_{R_\infty} R_{i+j}\right) \arrow[r] &
\Tors_{s^{i+j}}H^{*+1}(C^\bullet) \arrow[r] &
0;\end{tikzcd}
\]
\[
\begin{tikzcd}[row sep = 1.3em, column sep =1.4em]
0 \arrow[r] &
H^*(C^\bullet)\otimes_{R_\infty} R_{i+j} \arrow[r]\arrow[d,"\Id_{H^{*}(C^\bullet)}",twoheadrightarrow,"(2)"'] &
H^*\left(C^\bullet \otimes_{R_\infty} R_{i+j}\right) \arrow[r]\arrow[d,"\phi_{ji}^*"] &
\Tors_{s^{i+j}}H^{*+1}(C^\bullet) \arrow[r]\arrow[d,"s^j\cdot "] &
0\\
0 \arrow[r] &
H^*(C^\bullet)\otimes_{R_\infty} R_{i\phantom{+j}} \arrow[r] &
H^*\left(C^\bullet \otimes_{R_\infty} R_{i\phantom{+j}}\right) \arrow[r] &
\Tors_{s^{i\phantom{+j}}}H^{*+1}(C^\bullet) \arrow[r] &
0.
\end{tikzcd}
\]
Let $i\ge m$. Since $s^m$ annihilates $\Tors_{R_\infty} H^*(C^\bullet)$, the following composition of obvious maps is an injection:
\[
\Tors_{R_\infty} H^*(C^\bullet)\hookrightarrow H^*(C^\bullet) \to \frac{H^*(C^\bullet)}{s^iH^*(C^\bullet)}.
\]
Seeing $\Tors_{R_\infty} H^*(C^\bullet)$ as a submodule of $\frac{H^*(C^\bullet)}{s^iH^*(C^\bullet)}$ in this way, we have that as long as $j\ge m$, $\Tors_{R_\infty} H^*(C^\bullet)$ is the kernel of the multiplication by the $s^j$ map labeled (1). Applying the snake lemma to the diagram above yields the first statement. For the second statement, note that if $j\ge m$, then the right hand map $s^j$ vanishes. Therefore, the image of $\phi_{ji}^*$ equals the image of the map labeled (2).
\end{proof}

\begin{cor}\label{torsion}
Let $(A,d,\wedge)$ be a cdga and $\eta \in A^1 \cap \ker d$.
Assume $H^*A(\eta,\infty)$ is a finitely generated $R_\infty$-module. Consider the map $H^*A(\eta,\infty)\to H^*A(\eta,i)$ from Remark~\ref{rem:tensorRm}. This map induces the following isomorphisms:
\begin{align*}
\mathrm{Tors}_{R_\infty}\,H^*A(\eta,\infty)&\cong \ker \psi_{ij}^* &\text{if $i,j\gg 0$}\\
H^*A(\eta,\infty)\otimes_{R_\infty} R_i&\cong \im \phi_{ji}^* &\text{if $j\gg 0$.}
\end{align*}
\end{cor}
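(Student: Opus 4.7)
The plan is to deduce the corollary as a direct application of Lemma~\ref{lem:torsion2} with $C^\bullet = A(\eta,\infty)$. There are essentially three hypotheses to verify: (i) each graded component of $A(\eta,\infty)$ is a torsion-free $R_\infty$-module; (ii) there exists $m \geq 1$ with $s^m$ annihilating $\Tors_{R_\infty} H^*A(\eta,\infty)$; and (iii) the natural map $A(\eta,\infty) \to A(\eta,i)$ of Remark~\ref{rem:tensorRm} is the same map obtained by tensoring with $R_i$ over $R_\infty$ (so that the maps $\psi_{ij}$ and $\phi_{ji}$ of Corollary~\ref{torsion} agree with those appearing in Lemma~\ref{lem:torsion2}).

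For (i), I would first unwind the inverse limit defining $A(\eta,\infty)$: in each fixed degree $p$, one has $A(\eta,\infty)^p = \varprojlim_i A^p \otimes_k R_i$, which is the $s$-adic completion and may be identified with the module of formal power series $A^p[[s]]$ in the variable $s$ with coefficients in $A^p$. A nonzero element of $R_\infty = k[[s]]$ has a nonzero leading coefficient, and multiplication by such an element on $A^p[[s]]$ preserves the leading nonzero coefficient in $A^p$, so $A^p[[s]]$ is torsion-free over $R_\infty$. For (ii), by hypothesis $H^*A(\eta,\infty)$ is a finitely generated module over the PID $R_\infty$, hence so is its torsion submodule $\Tors_{R_\infty} H^*A(\eta,\infty)$; writing the latter as a finite direct sum of cyclic modules $R_\infty/(s^{n_\alpha})$, any $m \geq \max_\alpha n_\alpha$ annihilates it. For (iii), the identification is immediate from Remark~\ref{rem:tensorRm}, noting that $A^p[[s]] \otimes_{R_\infty} R_i \cong A^p[[s]]/s^i A^p[[s]] \cong A^p \otimes_k R_i = A(\eta,i)^p$, and that the projection to the $i$-th factor of the inverse limit coincides with this quotient map.

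Having verified the hypotheses, Lemma~\ref{lem:torsion2} yields, for all sufficiently large $i,j$ (concretely, $i,j \geq m$ as in (ii) above), the two claimed isomorphisms
\[
\Tors_{R_\infty} H^*A(\eta,\infty) \cong \ker \psi_{ij}^*, \qquad H^*A(\eta,\infty)\otimes_{R_\infty} R_i \cong \im \phi_{ji}^*,
\]
where both sides are viewed as subspaces of $H^*A(\eta,i)$ via the natural map. There is no real obstacle here beyond the bookkeeping of identifying $A(\eta,\infty)$ with $A[[s]]$ and checking that the structural maps in Corollary~\ref{torsion} are the ones to which Lemma~\ref{lem:torsion2} applies; the finite generation hypothesis on $H^*A(\eta,\infty)$ is exactly what is needed to bound the torsion annihilator uniformly.
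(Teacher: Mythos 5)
Your proof is correct and follows the same route the paper intends: the corollary is stated with no explicit proof precisely because it is a direct application of Lemma~\ref{lem:torsion2} to $C^\bullet = A(\eta,\infty)$, and you have filled in exactly the hypotheses that need checking (torsion-freeness of $A(\eta,\infty)^p \cong A^p[[s]]$ over the DVR $R_\infty$, a uniform annihilator $s^m$ from finite generation over the PID, and identification of the thickening maps $\psi_{ij}$, $\phi_{ji}$ with the ones obtained by base change along $R_i \hookrightarrow R_{i+j}$ and $R_{i+j}\twoheadrightarrow R_i$, which Remark~\ref{rem:tensorRm} makes available). Nothing to add.
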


\begin{remk}\label{torsionrelations}
Suppose $i, j$ are sufficiently large as in Corollary \ref{torsion}. The following statements follow from the proof of Lemma~\ref{lem:torsion2}. If $j' \geq j$ then $\ker\psi_{ij'}^* = \ker\psi_{ij}^*$ and if $i' \geq i$ then $\ker \psi_{i'j}^* \cong \ker \psi_{ij}^*$ via $\phi_{i'-i,i}^*$. Similarly, $\im \phi_{j'i}^* = \im \phi_{ji}^*$, and $\phi_{i'-i,i}^*$ induces the projection map $$\im \phi_{ji'}^* \cong H^*A(\eta,\infty)\otimes_{R_\infty} R_{i'}\twoheadrightarrow H^*A(\eta,\infty)\otimes_{R_\infty} R_{i}\cong \im\phi_{ji}^*.$$ Finally, $\psi_{i,i'-i}$ induces the map coming from the inclusion $R_i\hookrightarrow{s^{i'-i}} R_{i'}$, tensored with $H^*A(\eta,\infty)$.
\end{remk}

\begin{remk}\label{twistedmodule}
Suppose $1 \leq m \leq \infty$.
Later on we will be interested not in the given $R_\infty$-module structure on $A(\eta,m)$ but rather a twisted one.
Namely, if $\tilde{s} \in R_\infty$ satisfies $\tilde{s} = s u$ where $u \in R_\infty$ is some unit, then define $A(\eta,m)_{\tilde{s}}$ to be the vector space cochain complex $A(\eta,m)$ but with $R_\infty$-module multiplication $*_{\tilde{s}}$ described by:
\begin{center}
if $g(s) \in R_\infty$, then $- *_{\tilde{s}} g(s)$ = $- \cdot g(\tilde{s})$. 
\end{center} \index{s@$s$!s@$\tilde s$}
The induced $R_\infty$-module structure on $H^*A(\eta,m)_{\tilde{s}}$ is also given by $- *_{\tilde{s}} g(s)$ = $- \cdot g(\tilde{s})$.
Clearly the maps $\phi_{ji}\colon A(\eta,i+j) \twoheadrightarrow A(\eta,i)$ continue to induce $R_\infty$-module morphisms $\phi_{ji}\colon A(\eta,i+j)_{\tilde{s}} \twoheadrightarrow A(\eta,i)_{\tilde{s}}$.
Similarly, the maps $\psi_{ij}\colon A(\eta,i) \hookrightarrow A(\eta,i+j)$ continue to induce $R_\infty$-module morphisms $\psi_{ij}\colon A(\eta,i)_{\tilde{s}} \hookrightarrow A(\eta,i+j)_{\tilde{s}}$.
Even better, since $\tilde{s}$ and $s$ differ by multiplication with a unit, under the hypotheses of Corollary \ref{torsion}, we have that $H^*A(\eta,\infty)_{\tilde{s}}$ is a finitely generated $R_\infty$-module and for all sufficiently large $i,j$, $\ker\big(\psi_{ij}^*\colon H^*A(\eta,i)_{\tilde{s}} \rightarrow H^*A(\eta,i+j)_{\tilde{s}}\big)$ is naturally isomorphic to $\mathrm{Tors}_{R_\infty}\, H^*A(\eta,\infty)_{\tilde{s}}$ as $R_\infty$-modules.
\end{remk}


\section{Thickened Complexes and Filtrations}\label{thickenedcomplexesandfiltrations} \textit{Fix $m \geq 1$ for this section.}

When we discuss mixed Hodge complexes, we will need to find weight and Hodge filtrations on thickened complexes that are induced by filtrations on the original cdga.
In this section, we describe a general procedure for inducing appropriate filtrations on the thickened complex. 

Suppose $(A,d,\wedge)$ is a cdga with an increasing cdga-filtration $W_{\lc}$ and $\eta \in W_1A^1 \cap \ker d$. Note the requirement that $\eta \in W_1A$. 
We start by defining a weight filtration $w_{\lc}$ on $R_m$.
For $i \geq 0$, define $$w_iR_m = R_m \quad \text{and}\quad w_{-2i}R_m = w_{-2i+1}R_m = s^iR_m.$$

We equip $A(\eta, m)$ with the tensor weight filtration, which we also denote by $W_{\lc}$ since it does not depend on $\eta$.
In other words, for $i \in \Z$, we set:
\begin{align*}
 W_iA(\eta,m) &= \sum_j W_{i+j}A \otimes w_{-j}R_m
\end{align*}
where $\sum$ denotes the internal sum, not a direct sum.
Observe that we can rewrite the expression with a direct sum using the definition of $w_{\lc}$:
\begin{align*}
W_iA(\eta,m) &= \bigoplus_{j=0}^{m-1} W_{i+2j}A \otimes k\langle s^j \rangle.
\end{align*}\index{filtration!thickening}\index{filtration!weight}\index{thickening!filtration}\index{thickening!filtration|see{filtration!thickening}}
Since $\eta \wedge W_i A \subset W_{i+1} A$, we have $s\eta\wedge W_iA(\eta,m)\subset W_iA(\eta,m)$. Therefore, $W_iA(\eta,m)$ is closed under $d_\eta$. 

One technical reason for skipping by twos in the direct sum description is explained by the following lemma.
 
\begin{lem}\label{gradedcomponents}
Suppose $(A,d,\wedge)$ is a cdga with an increasing cdga-filtration $W_{\lc}$ and suppose $\eta \in W_1A^1 \cap \ker d$.
Then,
\begin{align*}
 \Gr_i^W A(\eta,m) &= \bigoplus_{j=0}^{m-1} \Gr_{i+2j}^W A \otimes k\langle s^j \rangle
\end{align*}
with differential given by $d \otimes \id$.
\end{lem}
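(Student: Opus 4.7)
The plan is to read off the graded pieces from the explicit direct sum description and then check that the cross-term in $d_\eta$ is killed.

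First, I would use the formula $W_iA(\eta,m) = \bigoplus_{j=0}^{m-1} W_{i+2j}A \otimes k\langle s^j\rangle$ established immediately before the lemma. Because the direct sum is indexed by powers of $s$, and because the filtration on the $R_m$-factor is built so that $s^j$ has weight exactly $-2j$, the subcomplex $W_{i-1}A(\eta,m)$ decomposes as $\bigoplus_{j=0}^{m-1} W_{i+2j-1}A \otimes k\langle s^j\rangle$. Taking the quotient termwise yields the identification
\[
\Gr_i^W A(\eta,m) = \bigoplus_{j=0}^{m-1} \Gr_{i+2j}^W A \otimes k\langle s^j\rangle
\]
as graded $k$-vector spaces.

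Next, I would verify that the differential induced by $d_\eta$ is $d \otimes \id$. By definition, $d_\eta(\omega \otimes s^j) = d\omega \otimes s^j + (\eta \wedge \omega)\otimes s^{j+1}$ for $\omega \in A$. If $\omega \in W_{i+2j}A$, then since $W_{\lc}$ is a cdga-filtration and $\eta \in W_1A$, we have $\eta \wedge \omega \in W_{i+2j+1}A$. The $s^{j+1}$-component of $W_{i-1}A(\eta,m)$ is $W_{i+2(j+1)-1}A \otimes k\langle s^{j+1}\rangle = W_{i+2j+1}A \otimes k\langle s^{j+1}\rangle$, which contains $\eta \wedge \omega \otimes s^{j+1}$. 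Hence that term vanishes in $\Gr_i^W A(\eta,m)$, leaving only $d\omega \otimes s^j$.

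There is no real obstacle here beyond matching indices: the whole point of the skip-by-two convention in the filtration $w_\lc$ on $R_m$ is precisely to give $\eta \wedge -$ enough of a weight-drop to push the cross-term into the next filtration step, so that the associated graded is just the constant-coefficient complex $\bigoplus_j (\Gr^W A)[s^j]$ with differential $d \otimes \id$.
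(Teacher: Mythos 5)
Your proof is correct and is essentially the paper's argument spelled out: the paper's one-line proof invokes exactly the same point, that the skip-by-two filtration on the $R_m$-factor forces the $(\eta\wedge-)\otimes s$ cross-term of $d_\eta$ into $W_{i-1}A(\eta,m)$, so only $d\otimes\id$ survives on the graded pieces. Your index check confirming $\eta\wedge\omega\otimes s^{j+1}\in W_{i+2j+1}A\otimes k\langle s^{j+1}\rangle\subset W_{i-1}A(\eta,m)$ is precisely the detail the paper leaves implicit.
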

\begin{proof}
This follows from the definition of the weight filtration. In particular, skipping by twos ensures that the $(\eta \wedge - )\otimes s$ component of the differential vanishes on the graded pieces.
\end{proof}

We can state a filtered analogue of Lemma \ref{cohomologous}.

\begin{lem}\label{cohomologousfiltered}
Suppose $(A,d,\wedge)$ is a cdga with an increasing cdga-filtration $W_{\lc}$. Suppose that $\eta_1$ and $\eta_2$ are elements of $W_1A^1 \cap \ker d$ that are cohomologous in $W_1A$.
If $a \in W_1A^0$ is such that $\eta_1-\eta_2 = da$, then the morphism $\exp(a \otimes s) \wedge -$ of Lemma \ref{cohomologous} gives a filtered isomorphism
\[\exp(a \otimes s) \wedge -\colon \left(A(\eta_1, m), W_{\lc}\right) \rightarrow \left(A(\eta_2,m), W_{\lc}\right)\] that induces the identity map on each graded component associated to $W_{\lc}$.
\end{lem}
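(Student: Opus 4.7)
The plan is to leverage what we have already established in Lemma \ref{cohomologous}, where the map $\Phi \coloneqq \exp(a \otimes s) \wedge -$ was shown to be an isomorphism of cochain complexes with inverse $\exp(-a \otimes s) \wedge -$. The only new content here is the interaction of $\Phi$ with the weight filtration, so I would break the verification into two steps: first check that $\Phi$ preserves $W_{\lc}$, and then check that it acts as the identity on $\Gr^W_i$.

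For the first step, I would take an arbitrary homogeneous generator $\omega \otimes s^j$ of $W_i A(\eta_1,m)$, which by definition means $\omega \in W_{i+2j}A$. Expanding
\[
\Phi(\omega \otimes s^j) = \sum_{n=0}^{m-1-j} \tfrac{1}{n!}\, (a^n \wedge \omega) \otimes s^{n+j},
\]
the hypothesis $a \in W_1 A^0$ together with the fact that $W_{\lc}$ is a cdga-filtration gives $a^n \wedge \omega \in W_{n + i + 2j}A$. For $\Phi(\omega \otimes s^j)$ to lie in $W_i A(\eta_2,m)$, I need each term $(a^n \wedge \omega) \otimes s^{n+j}$ to sit in $W_{i + 2(n+j)}A \otimes k\langle s^{n+j}\rangle$; but $n + i + 2j \le i + 2(n+j)$ is immediate, so $\Phi(W_i A(\eta_1,m)) \subseteq W_i A(\eta_2,m)$. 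By the same argument applied to $-a$, the inverse also preserves $W_{\lc}$, so $\Phi$ is a filtered isomorphism.

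For the second step, in the expansion above the $n = 0$ term is $\omega \otimes s^j$ itself, and for $n \ge 1$ the inequality $n + i + 2j \le i + 2(n+j) - 1$ (equivalent to $n \ge 1$) shows that $(a^n \wedge \omega) \otimes s^{n+j}$ already lies in $W_{i-1} A(\eta_2, m)$. Thus in $\Gr^W_i A(\eta_2,m)$ only the $n=0$ term survives, and $\Phi$ induces the identity on each graded piece. I do not expect any genuine obstacle here: the whole point of the convention $w_{-2i}R_m = w_{-2i+1}R_m = s^iR_m$ (stepping by twos rather than ones) is precisely to give the extra unit of room that makes the $n \ge 1$ terms fall into the next lower filtration step, so the argument is essentially a bookkeeping check once that convention is in place.
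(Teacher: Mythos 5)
Your proof is correct and follows essentially the same route as the paper: the paper simply asserts that $a \in W_1A$ forces filtration preservation and that wedging with $\exp(a \otimes s) - 1 \otimes 1$ drops $W_i$ into $W_{i-1}$, and your term-by-term expansion of $\exp(a\otimes s)\wedge(\omega\otimes s^j)$ is precisely the explicit verification of those two claims.
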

\begin{proof}
Let $a \in W_1A^0$ be such that $da = \eta_1-\eta_2$.
By Lemma \ref{cohomologous}, we know that 
\[
\exp(a \otimes s) \wedge - \colon A(\eta_1,m) \rightarrow A(\eta_2,m)
\]
is an isomorphism.
Because $a \in W_1A$, this isomorphism preserves filtrations. Furthermore, wedging by $\exp(a \otimes s) - 1 \otimes 1$ takes $W_i$ into $W_{i-1}$ for all integers $i$.
Consequently, $\exp(a \otimes s) \wedge -$ coincides with $(1 \otimes 1) \wedge -$ on each graded component. 
\end{proof}

We can also expand on Lemma \ref{inducedmap}.

\begin{lem}\label{inducedquasi}
Let $(A, d, \wedge)$ and $(B,d,\wedge)$ be cdgas with increasing cdga-filtrations both denoted by $W_{\lc}$, and $\eta \in W_1A^1 \cap \ker d$.
If $F\colon (A,W_{\lc}) \rightarrow (B,W_{\lc})$ is a filtered morphism of cdgas, then $F_{\#}\colon A(\eta, m) \rightarrow B(F(\eta), m)$ is a filtered morphism with respect to $W_{\lc}$.
Moreover for $i \in \Z$, $F_{\#}$ is given on the $i$-th graded component by:
\begin{align*}
\Gr^W_i(F_{\#}) = \bigoplus_{j=0}^{m-1} \Gr_{i+2j}^W F \otimes \id\colon \bigoplus_{j=0}^{m-1} \Gr_{i+2j}^W A \otimes k\langle s^j \rangle
\rightarrow \bigoplus_{j=0}^{m-1} \Gr_{i+2j}^W B \otimes k\langle s^j \rangle.
\end{align*} 
\end{lem}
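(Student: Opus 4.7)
The plan is to unpack the definitions, noting that the statement is essentially a direct filtered upgrade of Lemma~\ref{inducedmap}. Recall from that lemma that $F_{\#}$ is defined as $F \otimes \id_{R_m} \colon A \otimes R_m \to B \otimes R_m$. So the question is how this map interacts with the weight filtration defined just before Lemma~\ref{gradedcomponents}.

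First I would verify filteredness. Using the direct sum description
\[
W_i A(\eta, m) = \bigoplus_{j=0}^{m-1} W_{i+2j} A \otimes k\langle s^j \rangle,
\]
the map $F \otimes \id$ sends the summand $W_{i+2j} A \otimes k\langle s^j\rangle$ into $W_{i+2j} B \otimes k\langle s^j \rangle$ because $F$ is filtered, i.e.\ $F(W_{\lc} A) \subseteq W_{\lc} B$. Summing over $j$ yields $F_{\#}(W_i A(\eta,m)) \subseteq W_i B(F(\eta),m)$, so $F_{\#}$ is filtered. (One should also remember that $F_{\#}$ is a morphism of complexes with respect to $d_\eta$ and $d_{F(\eta)}$, but that was already done in Lemma~\ref{inducedmap}; note this step uses $\eta \in W_1 A^1$, which guarantees $F(\eta) \in W_1 B^1$ so that the thickening $B(F(\eta),m)$ is even defined as a filtered object.)

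Next I would identify the induced map on graded components. By Lemma~\ref{gradedcomponents}, $\Gr^W_i A(\eta,m) = \bigoplus_{j=0}^{m-1} \Gr^W_{i+2j} A \otimes k\langle s^j \rangle$ with differential $d \otimes \id$ (the $(\eta \wedge -)\otimes s$ term disappears on the associated graded because $\eta$ raises the weight by exactly $1$, which is absorbed into the next summand but then killed by the skip-by-two indexing on the graded piece). Since $F_{\#}$ is diagonal on the direct sum decomposition above and acts as $F$ on the first tensor factor while fixing $k\langle s^j\rangle$, it descends summand-by-summand to $\Gr^W_{i+2j} F \otimes \id$, giving precisely the formula in the statement.

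There is no real obstacle here: the work was done when the filtration was defined to skip by twos and when $F_{\#}$ was constructed in Lemma~\ref{inducedmap}. The only subtlety is the bookkeeping that the single index $i$ on the thickened complex combines contributions from weights $i, i+2, i+4, \ldots$ in $A$ (and in $B$), but this is immediate from the definition.
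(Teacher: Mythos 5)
Your proof is correct and follows the same route as the paper, whose entire argument is the one-line observation that $F_{\#} = F \otimes \id$ makes the claim immediate; you simply spell out the summand-by-summand bookkeeping (filteredness from $F(W_{\lc}A)\subseteq W_{\lc}B$ on each $W_{i+2j}A\otimes k\langle s^j\rangle$, and the graded formula via Lemma~\ref{gradedcomponents}) that the paper leaves implicit.
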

\begin{proof}
Because $F_{\#} = F \otimes \id$, the lemma is immediate.	
\end{proof}

Suppose now that $(A,d,\wedge)$ is a cdga with two cdga-filtrations, an increasing filtration $W_{\lc}$ and a decreasing filtration $F^{\lc}$.
Suppose also that $\eta \in W_1A^1 \cap F^1A^1 \cap \ker\, d$.
We next equip $A(\eta, m)$ with a decreasing filtration, which we also denote by $F^{\lc}$ because it does not depend on $\eta$. For $p \in \Z$, set:
\begin{align*}
 F^pA(\eta,m) = \bigoplus_{j = 0}^{m-1} F^{p+j}A \otimes k\langle s^j \rangle.
\end{align*}\index{filtration!Hodge}\index{filtration!thickening}\index{thickening!filtration}\index{thickening!filtration|see{filtration!thickening}}
This is closed under $d_{\eta}$, because $\eta \wedge F^pA \subset F^{p+1}A$ for all $p \in \Z$.
Unlike the filtration $W_{\lc}$ on $A(\eta,m)$, the decreasing filtration $F^{\lc}$ is \textit{not} a filtration by cochain complexes of $R_m$-modules, only by cochain complexes of $k$-vector spaces.
The following lemma describes the interplay of the two filtrations we have defined.

\begin{lem}\label{filtrationinterplay}
Suppose $(A,d,\wedge)$ is a cdga with an increasing cdga-filtration $W_{\lc}$ and a decreasing cdga-filtration $F^{\lc}$. 
Suppose also that $\eta \in W_1A^1 \cap F^1A^1 \cap \ker d$.
Then for $i \in \Z$ the filtration on $\Gr_i^W A(\eta,m)$ induced by the filtration $F^{\lc}$ of $A(\eta,m)$ is described by:
\begin{align*}
 F^p \Gr_i^WA(\eta,m) = \bigoplus_{j=0}^{m-1}\left(F^{p+j}\Gr^W_{i+2j}A\right) \otimes k \langle s^j \rangle. 
\end{align*}	
where on the right hand side we use the filtration on $\Gr_{i+2j}^W A$ induced by the filtration $F^{\lc}$ of $A$.
\end{lem}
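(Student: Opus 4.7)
The plan is to unwind the definitions of the induced filtration directly. Recall that for a filtered complex $(B, W_\bullet, F^\bullet)$, the filtration induced by $F^\bullet$ on $\Gr_i^W B$ is by definition
\[
F^p \Gr_i^W B = \frac{F^p B \cap W_i B + W_{i-1} B}{W_{i-1} B} \;\cong\; \frac{F^p B \cap W_i B}{F^p B \cap W_{i-1} B}.
\]
So I would take $B = A(\eta,m)$ and compute $F^p A(\eta,m) \cap W_i A(\eta,m)$ explicitly.

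The key observation is that both filtrations $W_\bullet$ and $F^\bullet$ on $A(\eta,m)$ respect the $k$-vector space decomposition $A(\eta,m) = \bigoplus_{j=0}^{m-1} A \otimes k\langle s^j\rangle$: in each summand, $W_i$ picks out $W_{i+2j} A \otimes k\langle s^j\rangle$ and $F^p$ picks out $F^{p+j} A \otimes k\langle s^j\rangle$. Therefore the intersection decomposes summand-by-summand as
\[
F^p A(\eta,m) \cap W_i A(\eta,m) \;=\; \bigoplus_{j=0}^{m-1} \bigl(F^{p+j} A \cap W_{i+2j} A\bigr) \otimes k\langle s^j\rangle,
\]
and similarly for $F^p A(\eta,m) \cap W_{i-1} A(\eta,m)$, where $W_{i+2j}$ gets replaced by $W_{i+2j-1}$.

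Taking the quotient then yields
\[
F^p \Gr_i^W A(\eta,m) \;=\; \bigoplus_{j=0}^{m-1} \frac{F^{p+j} A \cap W_{i+2j} A}{F^{p+j} A \cap W_{i+2j-1} A} \otimes k\langle s^j\rangle,
\]
and by the general formula above applied now to $A$ itself, each quotient in the direct sum is precisely $F^{p+j}\Gr_{i+2j}^W A$, giving the claimed identity. There is no real obstacle here; the only subtle point is bookkeeping: one must check that intersecting with $F^p$ and $W_i$ is compatible with the direct sum decomposition by $s$-degree (which is immediate from the definitions of $W_\bullet$ and $F^\bullet$ on $A(\eta,m)$), so the computation truly splits into a separate computation on each summand.
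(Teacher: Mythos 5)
Your proof is correct and follows essentially the same path as the paper's: compute $F^p A(\eta,m) \cap W_i A(\eta,m)$ summand-by-summand in the $s$-graded decomposition, then pass to the quotient by $W_{i-1}$. The only difference is that you spell out the second-isomorphism-theorem identification of the induced filtration a bit more explicitly, which the paper leaves implicit.
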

\begin{proof}
Using the definition of the two filtrations, for all integers $i$ and $p$ we find:
\begin{align*}
	F^pA(\eta,m)\cap W_iA(\eta,m) = \bigoplus_{j=0}^{m-1}\left(F^{p+j}A \cap W_{i+2j}A\right) \otimes k \langle s^j \rangle
\end{align*}
which, under the quotient map to $\Gr_i^WA(\eta,m)$, maps to: 
\[
	F^p\Gr_i^WA(\eta,m) = \bigoplus_{j=0}^{m-1}\left(F^{p+j}\Gr^W_{i+2j}A\right) \otimes k \langle s^j \rangle.
\qedhere
\]
\end{proof}


\section{Thickened Complexes of Sheaves of Differential Graded Algebras}
Suppose $X$ is a topological space and $(\calA, \wedge, d)$ is a sheaf of cdgas on $X$.
If $m \geq 1$ and $\eta \in \Gamma(X, \calA^1) \cap \ker d$, then we define the \bi{$m$-thickening of $\calA$ in direction $\eta$}, denoted by $\calA(\eta, m)$, to be the cochain complex of sheaves of $R_\infty$-modules on $X$ defined by:
\begin{align*}
 U \mapsto \Gamma(U, \calA)(\eta|_U, m)
\end{align*}
for an open subset $U \subset X$. 
Observe that this does in fact define a cochain complex of sheaves, not just presheaves.
By definition, for all $p \geq 0$ we have $\calA^p(\eta,m) = \calA^p \otimes_k R_m$. 

Recalling Remark \ref{twistedmodule}, if $\tilde{s} \in R_\infty$ differs from $s$ by multiplication with a unit, then we define $\calA(\eta,m)_{\tilde{s}}$ to be the cochain complex of sheaves of $R_\infty$-modules described for open $U \subset X$ by:
\begin{align*}
 U \mapsto \Gamma(U, \calA)(\eta|_U, m)_{\tilde{s}}.
\end{align*}

As before, if $i \geq 1,j \geq 0$ then we have a surjective morphism $\phi_{ji}\colon \calA(\eta,i+j) \twoheadrightarrow \calA(\eta,i)$.
This endows $\{\calA(\eta,i)\}$ with the structure of an inverse system of sheaves of $\{R_i\}$-modules.
We avoid discussing the inverse limit, as it may not coincide with the derived inverse limit.
Again as before, if $i \geq 1,j \geq 0$ then we also have an injective morphism $\psi_{ij}\colon \calA(\eta,i) \hookrightarrow \calA(\eta,i+j)$.

\begin{remk}\label{sheafgeneralities}
The analogues of Lemmas \ref{cohomologous} and \ref{inducedmap} hold for sheaves of cdgas and the dga-sheaf-morphisms between them.
If a sheaf of cdgas $(\calA, \wedge, d)$ is equipped with an increasing cdga-filtration $W_{\lc}$, then as in Section \ref{thickenedcomplexesandfiltrations}, the thickened complex of sheaves $\calA(\eta, m)$ inherits this filtration, provided that $\eta$ is a global section of $W_1\calA$. The same holds if we replace $W_{\lc}$ by a decreasing filtration. The analogues of Lemmas \ref{cohomologousfiltered}, \ref{inducedquasi}, and \ref{filtrationinterplay} are then also valid.
\end{remk}

\begin{remk}\label{remk:torsionForSheaves}
The analogue of Lemma~\ref{lem:torsion2} holds for a complex of sheaves of free $R_\infty$-modules. This can be seen by applying Lemma~\ref{lem:torsion2} to a free resolution of a complex representing the hypercohomology. Therefore, the analogue of Corollary~\ref{torsion} holds for the hypercohomology sheaves of a thickening of a sheaf of cdgas $(\calA, \wedge, d)$.
\end{remk}


	\chapter{Thickened Complexes and Mixed Hodge Complexes}\label{sec4}

The main goal of this chapter is to prove that the thickened complex of a multiplicative mixed Hodge complex is again a mixed Hodge complex, provided that the $1$-forms used to conduct thickenings belong to $W_1$ and $F^1$ when applicable.


\section{Denotations and Assumptions}\label{denotationsandassumptionsthickenedcomplexesandmixedhodgecomplexes}

Let $k$ denote a subfield of $\R$ and $X$ denote a topological space.
Let $\calK^\bullet = ((\calK_k^\bullet, W_{\lc}), \linebreak{(\calK^\bullet_\C, W_{\lc},F^{\lc})}, \alpha)$ denote a multiplicative $k$-mixed Hodge complex on $X$ with distinguished representative for $\alpha$ given by:
$$
{\small
\begin{tikzcd}[column sep = small]
 (\calK^\bullet_k, W_{\lc}) = (\calL^{\bullet}_0, W_{\lc}) \arrow{r}{\alpha_1} & (\calL^\bullet_1, W_{\lc})& \arrow{l}[swap]{\alpha_2} (\calL^\bullet_2, W_{\lc}) \arrow{r}{\alpha_3} & \cdots \arrow{r}{\alpha_{2r+1}} & (\calL^\bullet_{2r+1}, W_{\lc}) = (\calK^\bullet_\C,W_{\lc}),
\end{tikzcd}
}
$$
where $\alpha_{\textrm{odd}}$ are right arrows and $\alpha_{\textrm{even}}$ are left arrows.
To thicken $\calK^\bullet$ as a mixed Hodge complex, we need to thicken each $\calL^{\bullet}_i$ with an element from $\Gamma(X, \calL^1_{i})\cap \ker d$. It is in fact enough to choose such global $1$-cycles from the even-numbered sheaves $\calL^{1}_{\textrm{even}}$, provided their images in the odd-numbered $\calL^{1}_{\textrm{odd}}$ are cohomologous.
Specifically, we need the following assumption on chosen sections $\eta_\ell\in \Gamma(X, \calL^1_{2\ell})\cap \ker d$ to guarantee that the differentials in the lifting are compatible with the filtrations and that the maps $\alpha_i$ extend to pseudo-isomorphisms. 
\begin{assumption}\label{as}
The elements $\eta_\ell\in \Gamma(X, \calL^1_{2\ell})\cap \ker d$ where $0 \leq \ell \leq r$ satisfy: 
\begin{enumerate}
\item $\eta_\ell\in W_1\Gamma(X, \calL^1_{2\ell})\cap \ker d$;
\item $\alpha_{2r+1}\eta_r\in F^1\Gamma(X, \calK^\bullet_\C)$;
\item the images of $\eta_\ell$ in $\calL^1_{\textrm{odd}}$ are cohomologous, i.e., 
\[
 \alpha_1 \eta_0 \simeq_{W_1} \alpha_2 \eta_1,\quad \alpha_3 \eta_1 \simeq_{W_1} \alpha_4 \eta_2,\quad \dots,\quad \alpha_{2r-1} \eta_{r-1} \simeq_{W_1} \alpha_{2r} \eta_r,
\]
where $\simeq_{W_1}$ denotes the relation of being cohomologous in $W_1$.
For $\ell \geq 1$ fix choices of $a_\ell \in W_1\Gamma(X,\calL_{2\ell-1}^0)$ such that $\alpha_{2\ell-1}\eta_{\ell-1} - \alpha_{2\ell} \eta_\ell = da_\ell$. 
\end{enumerate}
\end{assumption}


\section{Thickened Complexes of Mixed Hodge Complexes}\label{thickenedcomplexesofmixedhodgecomplexes}

\textit{Fix $m \geq 1$ for this section.}

Before thickening our complexes, we point out that, if $\ell \geq 1$, then $\calL^\bullet_\ell$ is a sheaf of $\C$-cdgas with filtration(s) defined over $\C$. 
This is part of the definition of a multiplicative mixed Hodge complex. 

Using Remark \ref{sheafgeneralities} and Assumption \ref{as}, we obtain a chain of morphisms of thickened complexes:
$$
{\footnotesize
\begin{tikzcd}[column sep = small]
\calK^\bullet_k(\eta_0,m) \arrow[r,"\alpha_{1\#}", shift left=0.2ex] & \calL^\bullet_1(\alpha_1\eta_0, m) \ar[rr, "\exp(a_1 \otimes s)", "\cong"', shift left=0.2ex] & & \calL^\bullet_1(\alpha_2\eta_1,m) \arrow[from=r,"\alpha_{2\#}"', shift right=0.2ex]& \calL^\bullet_2(\eta_1,m) \arrow[r,"\alpha_{3\#}", shift left=0.2ex] & \cdots \arrow[r,"\alpha_{2r+1\#}", shift left=0.2ex] & \calK^\bullet_\C(\alpha_{2r+1}\eta_r,m) 
\end{tikzcd}
}
$$
where all but the leftmost complex have been thickened over $\C$, not $k$. 
Note that $\alpha_{1\#}$ is technically a composition: first the induced morphism of thickened $k$-complexes, then the map from the thickened $k$-complex to the thickened $\C$-complex induced by $ R_m = R_m \otimes k \xrightarrow{\id \otimes \mathrm{incl}} R_m \otimes \C$.

We define $\alpha_{\#}\colon \calK^\bullet_k(\eta_0,m) \dashrightarrow \calK_\C^\bullet(\alpha_{2r+1}\eta_r,m)$ to be the pseudo-morphism associated to the above chain.

Because the thickenings are conducted with sections of $W_{1}\calL^1$, it follows by Remark \ref{sheafgeneralities} that the thickened complexes of sheaves inherit their own increasing filtrations $W_{\lc}$.
Moreover, $\calK^\bullet_\C(\alpha_{2r+1}\eta_r, m)$ inherits a decreasing filtration $F^{\lc}$, because $\alpha_{2r+1}\eta_r$ belongs to $F^1\Gamma(X, \calK^\bullet_C)$.

\begin{thm}\label{mhsthickened}
Under the above notations and assumptions, the triple 
\[
\calK^\bullet(\eta,m) \coloneqq \Big(\big(\calK_k^\bullet(\eta_0,m),W_{\lc}\big), \big(\calK^\bullet_\C(\alpha_{2r+1}\eta_r,m),W_{\lc},F^{\lc}\big), \alpha_{\#}\Big)
\]
is a $k$-mixed Hodge complex of sheaves on $X$.
\end{thm}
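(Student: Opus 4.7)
My plan is to verify, condition by condition, that $\calK^\bullet(\eta,m)$ satisfies the definition of a $k$-mixed Hodge complex of sheaves, leaning throughout on the sheaf versions of the lemmas of Section \ref{sec3} as licensed by Remark \ref{sheafgeneralities}.

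First I would confirm that $\alpha_\#$ is a well-defined pseudo-morphism of filtered complexes. The arrows $\alpha_{i\#}$ are filtered morphisms by (the sheaf version of) Lemma \ref{inducedquasi}, using Assumption \ref{as}(1) that each $\eta_\ell \in W_1$. The intermediate connecting maps $\exp(a_\ell \otimes s) \wedge -$ are filtered isomorphisms by Lemma \ref{cohomologousfiltered}, available because Assumption \ref{as}(3) provides $a_\ell \in W_1$. The decreasing filtration $F^\bullet$ is inherited on $\calK^\bullet_\C(\alpha_{2r+1}\eta_r,m)$ thanks to Assumption \ref{as}(2).

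Next I would verify that $\alpha_\# \otimes 1$ is a filtered pseudo-isomorphism over $\C$, which reduces to checking that each $\Gr_i^W \alpha_{j\#}$ is a quasi-isomorphism. By Lemma \ref{inducedquasi},
\[
\Gr_i^W (\alpha_{j\#}) = \bigoplus_{k=0}^{m-1} \Gr_{i+2k}^W (\alpha_j) \otimes \id,
\]
a direct sum of quasi-isomorphisms since $\alpha \otimes 1$ is a filtered pseudo-isomorphism for the original $\calK^\bullet$. The exp-isomorphisms act as the identity on graded pieces by Lemma \ref{cohomologousfiltered}, and thus contribute no obstruction.

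The heart of the argument is identifying each graded piece $\Gr_i^W \calK^\bullet(\eta,m)$ as a $k$-Hodge complex of sheaves of weight $i$. Combining Lemma \ref{gradedcomponents} with Lemma \ref{filtrationinterplay}, I would establish the natural isomorphism
\[
\Gr_i^W \calK^\bullet(\eta,m) \;\cong\; \bigoplus_{j=0}^{m-1} \Gr_{i+2j}^W \calK^\bullet\,(j),
\]
where $(j)$ denotes the Tate twist from Section \ref{ss:MHSsAndComplexes}: the weight filtration naturally skips by twos, and the $F$-filtration on the $j$-th summand is shifted by $j$, which matches exactly the effect of a $j$-th Tate twist. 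Each summand $\Gr_{i+2j}^W \calK^\bullet\,(j)$ is then a Hodge complex of weight $(i+2j) - 2j = i$, so the direct sum is a Hodge complex of weight $i$. Finite dimensionality of $\mathbb{H}^*(X,\calK^\bullet_k(\eta_0,m))$ follows inductively from the finite filtration $G$ of Remark \ref{filtration}, whose graded pieces are copies of $\calK^\bullet_k$. The main obstacle I anticipate is careful bookkeeping around the interplay of the filtrations; the factor of $2$ appearing in the definition of $W_\bullet$ on $A(\eta,m)$ was set up precisely to make the Tate-twist identification above work, and once that is in place the remaining verifications are essentially formal.
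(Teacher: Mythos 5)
Your proposal is correct and follows essentially the same route as the paper's proof: filtration-preservation and the filtered pseudo-isomorphism via (the sheaf versions of) Lemmas \ref{cohomologousfiltered} and \ref{inducedquasi}, and the identification of each $W$-graded piece, via Lemmas \ref{gradedcomponents} and \ref{filtrationinterplay}, as a direct sum of Tate twists $\Gr^W_{i+2j}\calK^\bullet(j)$, hence a weight-$i$ Hodge complex. Your added remark on finite-dimensionality of hypercohomology via the filtration $G$ is a harmless supplement the paper leaves implicit.
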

\begin{proof}
We need to verify the following assertions:
\begin{enumerate}[(i)]
\item $\alpha_\#$ preserves the $W_{\lc}$-filtrations; 
\item $
\alpha_\# \otimes 1\colon \calK_k^\bullet(\eta_0,m) \otimes \C \dasharrow \calK^\bullet_\C(\alpha_{2r+1}\eta_r,m)
$
is a  pseudo-isomorphism preserving the $W_{\lc}$-filtrations;
\item the $i$-th $W$-graded component of $\calK^\bullet(\eta,m)$ is a pure Hodge complex of weight $i$. 
\end{enumerate}

To show statements (i) and (ii), we apply the sheaf analogues of Lemma \ref{cohomologousfiltered} and Lemma \ref{inducedquasi}. The chain of morphisms representing $\alpha_\#$ is $W_{\lc}$-filtered, and it induces on the $i$-th $W$-graded component the pseudo-morphism:
{\small
\begin{align*}
\Gr^W_i(\alpha_{\#}) = \bigoplus_{j=0}^{m-1} \Gr_{i+2j}^W \alpha \otimes \mathrm{incl}\colon \bigoplus_{j=0}^{m-1} \Gr_{i+2j}^W \calK^\bullet_k \otimes k\langle s^j \rangle
\dasharrow \bigoplus_{j=0}^{m-1} \Gr_{i+2j}^W \calK^\bullet_\C \otimes_\C \C\langle s^j \rangle.
\end{align*}}
Therefore $\Gr^W_i(\alpha_{\#} \otimes 1) = \bigoplus_{j=0}^{m-1} \Gr_{i+2j}^W (\alpha \otimes 1) \otimes_\C \id$.
Each $\Gr_{i+2j}^W (\alpha \otimes 1)$ is a quasi-isomorphism, because $\calK^\bullet$ is a mixed Hodge complex.
Therefore, as their direct sum, $\Gr^W_i(\alpha_{\#}\otimes 1)$ is also a quasi-isomorphism.

To prove (iii), let us fix $i \in \Z$. 
We have just verified that the $i$-th graded component of $\calK^\bullet(\eta,m)$ is:
\begin{align*}
 \bigoplus_{j = 0}^{m-1}\Big(\Gr^W_{i+2j}\calK^\bullet_k \otimes k\langle s^j \rangle, \left(\Gr^W_{i+2j}\calK^\bullet_\C \otimes \C\langle s^j \rangle, F^{\lc}\right), \Gr_{i+2j}^W\alpha \otimes \mathrm{incl}\Big)
\end{align*}
and we need to show that this is a Hodge complex of weight $i$.
It is enough to check that, for all $0\leq j\leq m-1$, the $j$-th direct summand above is a Hodge complex of weight $i$.
By Lemma \ref{filtrationinterplay}, we have:
\begin{align*}
F^p \left(\Gr_{i+2j}^W\calK^\bullet_\C \otimes \C \langle s^j \rangle\right) = \left(F^{p+j}\Gr_{i+2j}^W\calK^\bullet_\C\right) \otimes \C\langle s^j \rangle	
\end{align*}
for $p\in \Z$. Therefore, the $j$-th direct summand is isomorphic to the $j$-th Tate twist of the following weight $i+2j$ Hodge complex of sheaves:
\begin{align*}
\Big(\Gr^W_{i+2j}\calK^\bullet_k, \left(\Gr^W_{i+2j}\calK^\bullet_\C, F^{\lc}\right), \Gr_{i+2j}^W\alpha\Big).
\end{align*}
The $j$-th Tate twist of a weight $i+2j$ Hodge complex is a weight $i$ Hodge complex.
\end{proof}

\begin{cor}\label{mhshypercohomology}
The $k$-mixed Hodge complex of sheaves $\calK^\bullet(\eta,m)$ induces
a $k$-mixed Hodge structure on the hypercohomology $\mathbb{H}^*(X, \calK^\bullet_k(\eta_0,m))$. 
\end{cor}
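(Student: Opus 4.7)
The proof will be a direct application of Theorem \ref{mhsthickened} combined with the standard principle that the hypercohomology of a $k$-mixed Hodge complex of sheaves carries a canonical $k$-mixed Hodge structure. The plan is essentially to invoke the general machinery recalled in Section \ref{ss:MHSsAndComplexes}: by \cite[Theorem 3.18.II]{peters2008mixed}, if $\calK^\bullet = ((\calK^\bullet_k,W_{\lc}), (\calK^\bullet_\C,W_{\lc},F^{\lc}),\alpha)$ is any $k$-mixed Hodge complex of sheaves on $X$, then the filtrations $W_{\lc}$ and $F^{\lc}$ induce a well-defined $k$-mixed Hodge structure on $\mathbb{H}^*(X,\calK^\bullet_k)$, with weight filtration coming from the shifted filtration $W[-*]$ on $\mathbb{H}^*(X,\calK^\bullet_k)$ and Hodge filtration induced by $F^{\lc}$ via the pseudo-isomorphism $\alpha\otimes 1$.

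The only thing to verify, therefore, is that $\calK^\bullet(\eta,m)$ genuinely satisfies the hypotheses of that theorem. But Theorem \ref{mhsthickened} established precisely this: the triple
\[
\calK^\bullet(\eta,m) = \Big(\big(\calK_k^\bullet(\eta_0,m),W_{\lc}\big), \big(\calK^\bullet_\C(\alpha_{2r+1}\eta_r,m),W_{\lc},F^{\lc}\big), \alpha_{\#}\Big)
\]
is a $k$-mixed Hodge complex of sheaves on $X$. In particular, the $W_{\lc}$-graded pieces are pure Hodge complexes of the appropriate weight, $\alpha_\#$ is $W_{\lc}$-filtered, and $\alpha_\#\otimes 1$ is a $W_{\lc}$-filtered pseudo-isomorphism, as required.

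Hence the plan is just to cite Theorem \ref{mhsthickened} to verify the hypothesis, and then apply \cite[Theorem 3.18.II]{peters2008mixed} (or equivalently Remark \ref{transvstate}) to transfer the conclusion to hypercohomology. There is no genuine obstacle here; the only minor bookkeeping is to confirm that $\calK^\bullet(\eta,m)$ is bounded below (inherited from boundedness of $\calK^\bullet$, since thickening only tensors degree-wise with the finite-length ring $R_m$) and that its hypercohomology is finite-dimensional over $k$ (which follows because for each $i$ the complex $\calK_k^\bullet(\eta_0,m)$ is a finite iterated extension of $\calK_k^\bullet$, whose hypercohomology is finite-dimensional by hypothesis; equivalently this follows from the convergent $G$-filtration spectral sequence of Remark \ref{filtration}). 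With these boundedness and finiteness checks noted, Theorem \ref{mhsthickened} plus the standard Peters--Steenbrink result yields the corollary.
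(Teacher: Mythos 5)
Your proposal is correct and matches the paper's argument exactly: the paper's proof of Corollary \ref{mhshypercohomology} is simply a pointer to Remark \ref{transvstate}, which in turn cites \cite[Theorem 3.18.II]{peters2008mixed}, just as you do. The boundedness and finiteness checks you note are sensible, though the paper leaves them implicit since they are built into the definition of a mixed Hodge complex verified in Theorem \ref{mhsthickened}.
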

\begin{proof}
See Remark \ref{transvstate}.
\end{proof}

Recall that if $ i \geq 1,j \geq 0$, then the natural map $R_{i+j} \twoheadrightarrow R_i$ induces a morphism $\phi_{ji}$ of thickened complexes. Furthermore, the natural map $R_i \hookrightarrow R_{i+j}$ induces a morphism $\psi_{ij}$ of thickened complexes.\index{psi@$\psi_{ij}$}\index{psi@$\psi_{ij}$!psi@$\psi_{ij}^*$}\index{phi@$\phi_{ij}$}\index{phi@$\phi_{ij}$!phi@$\phi_{ij}^*$}

\begin{lem}\label{inducedmhsmaps}
If $i \geq 1,j \geq 0$, then the following are morphisms of $k$-mixed Hodge complexes of sheaves. The numbers $(-j)$ and $(-1)$ denote Tate twists.
\begin{enumerate}
\item $\phi_{ji}\colon \calK^\bullet(\eta,j+i) \twoheadrightarrow \calK^\bullet(\eta,i),$
\item $\psi_{ij}\colon \calK^\bullet(\eta,i) \hookrightarrow \calK^\bullet(\eta,i+j)(-j),$
\item Multiplication by $s$, $S_i:\calK^\bullet(\eta,i) \rightarrow \calK^\bullet(\eta,i)(-1)$.
\end{enumerate}
In particular, the following are morphisms of $k$-mixed Hodge structures:
\begin{enumerate}
\item $\phi_{ji}^*\colon \mathbb{H}^*(X,\calK^\bullet_k(\eta_0,j+i)) \rightarrow \mathbb{H}^*(X,\calK^\bullet_k(\eta_0,i)),$
\item $\psi_{ij}^*\colon \mathbb{H}^*(X,\calK^\bullet_k(\eta_0,i)) \rightarrow \mathbb{H}^*(X,\calK^\bullet_k(\eta_0,i+j))(-j),$
\item Multiplication by $s$, $ \mathbb{H}^*(X,\calK^\bullet_k(\eta_0,i)) \rightarrow \mathbb{H}^*(X,\calK^\bullet_k(\eta_0,i))(-1)$.
\end{enumerate}
\end{lem}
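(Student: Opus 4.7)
The plan is to verify, for each of the three maps in parts (1)--(3), the two conditions that define a morphism of mixed Hodge complexes of sheaves: (a) compatibility with the pseudo-morphism $\alpha_\#$, and (b) compatibility with the weight and Hodge filtrations (after the indicated Tate twists). Once these are in place, the induced maps on hypercohomology are morphisms of $k$-mixed Hodge structures by \cite[Theorem 3.18]{peters2008mixed}, taking into account Remark \ref{transvstate} for the Tate shifts.

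For condition (a), each of $\phi_{ji}$, $\psi_{ij}$, and $S_i$ is of the form $\id_{\calL_\ell^\bullet} \otimes \tau$, where $\tau$ is respectively the projection $R_{i+j}\twoheadrightarrow R_i$, the inclusion $R_i\hookrightarrow R_{i+j}$ (with $1\mapsto s^j$), and multiplication by $s\in R_i$. Each $\alpha_{\ell\#}$ is $\alpha_\ell\otimes \id_{R_m}$, and the transition isomorphisms $\exp(a_\ell\otimes s)\wedge -$ are given by wedge-multiplication against a global element of the thickened algebra. Since the maps $\tau$ are ring maps (or multiplication by a central element), they commute both with the $\alpha_\ell\otimes \id$ and with wedge-multiplication against elements of the thickened algebra; so each of $\phi_{ji}$, $\psi_{ij}$, $S_i$ extends to a morphism of the whole chain defining $\alpha_\#$.

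For condition (b), I would use the explicit formulas
\[
W_n\calA(\eta,m) = \bigoplus_{\ell=0}^{m-1} W_{n+2\ell}\calA\otimes k\langle s^\ell\rangle,
\qquad
F^p\calA(\eta,m) = \bigoplus_{\ell=0}^{m-1} F^{p+\ell}\calA\otimes k\langle s^\ell\rangle
\]
(the latter over $\C$) from Section \ref{thickenedcomplexesandfiltrations}, combined with the Tate-twist conventions $W[2a]_n=W_{n+2a}$ and $F[a]^p=F^{p+a}$ of Section \ref{ss:MHSsAndComplexes}. Since $\phi_{ji}$ simply truncates the direct sum at $\ell<i$, it visibly maps $W_n$ into $W_n$ and $F^p$ into $F^p$, giving (1). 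For $\psi_{ij}$, a summand $\omega\otimes s^\ell\in W_n\calA(\eta,i)$ (so $\omega\in W_{n+2\ell}\calA$) is sent to $\omega\otimes s^{\ell+j}$, which lies in $W_{n-2j}\calA(\eta,i+j) = W[-2j]_n\calA(\eta,i+j)$; the same shift-by-$j$ in the second index gives $F^{p-j} = F[-j]^p$, yielding (2). For $S_i$ the same computation with $j=1$ gives $W_n\mapsto W_{n-2}=W[-2]_n$ and $F^p\mapsto F^{p-1}=F[-1]^p$, yielding (3).

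The main bookkeeping obstacle is ensuring the Tate twist conventions match the filtration shifts; the shift-by-two pattern in $W_n\calA(\eta,m)$ (built precisely so that Lemma \ref{gradedcomponents} gives pure graded pieces) is what makes the weights line up with the twists $(-j)$ and $(-1)$. There is no homological content beyond the observations already codified in Lemmas \ref{inducedquasi} and \ref{filtrationinterplay}; once the filtrations are checked, the hypercohomology statements follow formally.
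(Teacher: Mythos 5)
Your proof is correct and takes essentially the same route as the paper's: check that $\phi_{ji}$, $\psi_{ij}$, $S_i$ are compatible with the chain of pseudo-morphism representatives, verify the explicit filtration shifts ($W_\lc\mapsto W_{\lc-2j}$, $F^\lc\mapsto F^{\lc-j}$) against the Tate-twist conventions, and pass to hypercohomology via \cite[Theorem 3.18]{peters2008mixed}. The only cosmetic difference is part (3), which the paper deduces by factoring $S_i=\psi_{i-1,1}\circ\phi_{1,i-1}$ instead of your (equally immediate) direct filtration computation.
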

\begin{proof}
Recall the definition of the projection $\phi_{ji}$ (at the beginning of Chapter~\ref{sec3}) and the inclusion $\psi_{ij}$ (Remark~\ref{remk:psi-ij}). Observe that $\phi_{ji}$ and $\psi_{ij}$ do indeed induce morphisms between the pseudo-morphism representatives of the two mixed Hodge complexes.
It remains to check their compatibility with the weight and Hodge filtrations.
The morphism $\phi_{ji}$ clearly preserves both filtrations; furthermore, a morphism of $k$-mixed Hodge complexes of sheaves always induces morphisms of $k$-mixed Hodge structures on hypercohomology \cite[Theorem 3.18III]{peters2008mixed}.
It is also straightforward to check that $\psi_{ij}$ maps $W_{\lc}$ into $W_{{\lc}-2j}$ and $F^{\lc}$ into $F^{{\lc}-j}$. In other words, $\psi_{ij}\colon \calK^\bullet(\eta,i) \rightarrow \calK^\bullet(\eta,i+j)(-j)$ is a morphism of $k$-mixed Hodge complexes of sheaves.
Pass to hypercohomology and use that Tate twists commute with taking hypercohomology (see \cite[Theorem 3.18IIi]{peters2008mixed}) to obtain the second statement.

To show part (3), we observe that $S_i$ equals the composition $\psi_{i-1,1}\circ \phi_{1,i-1}$.
\end{proof}

\begin{remk}\label{translatedlemmas}
	Later in the paper, we will be interested not in $\calK^\bullet(\eta,m)$ itself, but rather its translation $\calK^\bullet(\eta,m)[1]$. The valid analogue of Lemma \ref{mhshypercohomology} is that $\calK^\bullet(\eta,m)[1]$ induces a $k$-mixed Hodge structure on $\mathbb{H}^*(X, \calK^\bullet_k(\eta_0,m)[1])$. Note that, by Remark \ref{transvstate},
	$$\mathbb{H}^*(X, \calK^\bullet_k(\eta_0,m)[1]) \cong \mathbb{H}^{*+1}(X, \calK^\bullet_k(\eta_0,m))(1),$$ where a Tate twist has been taken on the right hand side.
The valid analogue of Lemma \ref{inducedmhsmaps} is that for $ i \geq 1,j\ge 0$, the following are morphisms of mixed Hodge structures:
	\begin{align*}
	\phi_{ji}[1]^*&\colon \mathbb{H}^*(X,\calK^\bullet_k(\eta_0,i+j)[1]) \rightarrow \mathbb{H}^*(X,\calK^\bullet_k(\eta_0,i)[1]);\\
	\psi_{ij}[1]^*&\colon \mathbb{H}^*(X,\calK^\bullet_k(\eta_0,i)[1]) \rightarrow \mathbb{H}^*(X,\calK^\bullet_k(\eta_0,i+j)[1])(-j);\\
	(s\cdot)^*&\colon \mathbb{H}^*(X,\calK^\bullet_k(\eta_0,i)[1]) \rightarrow \mathbb{H}^*(X,\calK^\bullet_k(\eta_0,i)[1])(-1). 
	\end{align*}
\end{remk}


	\chapter{Mixed Hodge Structures on Alexander Modules}\label{mhsal}


\section{Denotations and Assumptions}
Let $k = \R$.
Let $U$ be a smooth connected complex algebraic variety of dimension $n$. Let $f\colon U \rightarrow \C^*$ be an algebraic map such that $f_*\colon \pi_1(U)\to \pi_1(\C^*)$ is surjective.
Let $U^f \rightarrow U$ denote the infinite cyclic cover as in Section~\ref{ssAlex}.
Choose a good compactification $X$ of $U$ with simple normal crossing divisor $D = X\setminus U$, such that $f\colon U \rightarrow \C^*$ extends to an algebraic map $\bar{f}\colon X \rightarrow \C P^1$. 
Let 
\[
\calH dg^\bullet(X\, \log D) = \Big(\big(j_*\calE^\bullet_U, \tau_{\lc}\big), \big(\logdr{X}{D}, W_{\lc}, F^{\lc}\big), \alpha \Big)
\]\index{mixed Hodge complex!Hodge-de Rham}
denote the $\R$-mixed Hodge complex of sheaves on $X$ of Theorem \ref{logmhs}. We will consider a choice of $X$ fixed, until Theorem~\ref{indcompactification}, when we prove that our constructions are independent of this choice.

\section{Local Systems and Thickened Real de Rham Complexes}\label{slocal}

The main result of this section is Proposition~\ref{propLocal}. Throughout this section, we will let $k=\R$. As in Section~\ref{ssAlex}, Let $\calL=\pi_!\uR_{U^f}$, and let $\ov\cL$ have the conjugate $R$ action. $\ov\cL$ has stalks isomorphic to $R = \R[t^{\pm 1}]$ and action $\pi_1(U) \rightarrow \Aut(R)$ given by $\gamma \mapsto \left(t^{-f_*(\gamma)}\right)\, \cdot\, $, multiplication by $t^{-f_*(\gamma)}$.
This is a rank one local system of $R$-modules.

Consider $\ov\calL \otimes_R R_m$, which has stalks isomorphic to $R_m = R/(t-1)^mR$ and action $\pi_1(U) \rightarrow \Aut(R_m)$ given by $\gamma \mapsto \left((1+s)^{-f_*(\gamma)}\right)\, \cdot\, $, multiplication by $(1+s)^{-f_*(\gamma)}$ (recall that $1+s=t$). This is a rank one local system of $R_m$-modules.

Next, consider the thickened complex $\calE^{\bullet}_{U}\left(\Im\, df/f,m\right)$, given by
\begin{align*}
 \calE^0_{U} \otimes R_m \xrightarrow{d^0_m = d +\Im\, df/f \wedge \otimes s}
 \calE^1_{U} \otimes R_m \xrightarrow{d^1_m = d +\Im\, df/f \wedge \otimes s}
 \calE^2_{U} \otimes R_m \rightarrow \cdots 
\end{align*}\index{E(df)@$\cE^\bullet_U(\Im\, df/f,m)$}
with differential given by $d_m\colon \omega \otimes 1 \mapsto d\omega \otimes 1 + \left(\Im\, df/f \wedge \omega\right) \otimes s$. Here, $\Im$ denotes the imaginary part. 

 \begin{prop}
 Let
 \[
 \tilde{s}\coloneqq \exp(2\pi s)-1=\sum_{n=1}^\infty \frac{(2\pi s)^{n}}{n!}\in R_\infty.
 \]
 Then
 \[
 \calE^\bullet_U\left(\Im\, df/f,\infty\right)_{\tilde s}\coloneqq \displaystyle\varprojlim_m \left(\calE^\bullet_U\left(\Im\, df/f,m\right)_{\tilde s}\right)
 \]
 is a resolution of $\ov\calL \otimes_R R_\infty$, via a canonical map $\ov\calL \otimes_R R_\infty\to \calE^0_U\left(\Im\, df/f,\infty\right)_{\tilde s}$ (defined in Remark~\ref{rem:nu}). \label{propLocal}
 \end{prop}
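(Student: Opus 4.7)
My plan is to prove this locally on a trivializing cover of $U$, then globalize by tracking how the twisted $R_\infty$-structure absorbs the monodromy ambiguity in the branches of $\arg f$.

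\textbf{Step 1: Construction of the map $\nu$.} Recall that $\Im(df/f) = d\arg f$, and observe that on any simply connected open $V\subset U$ one can choose a single-valued branch of $\arg f$. Define locally the section
\[
e_V \coloneqq \exp(-s\arg f) = \sum_{n\geq 0}\frac{(-\arg f)^n}{n!}\otimes s^n \;\in\; \Gamma(V, \calE^0_U\otimes R_\infty).
\]
A direct computation using the Leibniz rule shows $de_V + s\,\Im(df/f)\wedge e_V = 0$, so $e_V\in \ker d_\infty^0$. Given a point $x\in V$ and a lift $x' = (x,z)\in U^f$ with $\Im z \equiv \arg f(x)\pmod{2\pi}$, set $\nu(\bar\delta_{x'}) \coloneqq e_V$ for the branch of $\arg f$ matching $\Im z$. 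Over a loop $\gamma$ with $f_*(\gamma) = n$, $\arg f$ increases by $2\pi n$, so $e_V$ transforms by $\exp(-2\pi sn)$; on the other hand the monodromy on $\bar\delta_{x'}$ is multiplication by $t^{-n}$, and in the twisted structure $t$ acts as $(1+\tilde s) = \exp(2\pi s)$, so $t^{-n}$ acts as $\exp(-2\pi sn)$. Thus $\nu$ is well-defined, globally and $R_\infty$-linearly, as a map of sheaves $\ov\cL\otimes_R R_\infty \to \calE^0_U(\Im(df/f),\infty)_{\tilde s}$ landing in $\ker d_\infty$.

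\textbf{Step 2: Local trivialization.} The statement is local on $U$, so fix a contractible open $V\subset U$ and a branch of $\arg f$ on $V$, giving a global section $e_V$. I claim that multiplication by $e_V$ defines an isomorphism of complexes of sheaves of $R_m$-modules
\[
\Phi_m\colon \bigl(\calE^\bullet_V\otimes R_m,\, d\otimes 1\bigr)\;\xrightarrow{\;\cong\;}\; \calE^\bullet_V(\Im(df/f),m),\qquad \omega\mapsto e_V\cdot \omega,
\]
for every $1\le m\le \infty$. Indeed, $\Phi_m$ is an invertible $R_m$-linear map (inverse: multiplication by $e_V^{-1}=\exp(s\arg f)$), and the identity $de_V = -s\,\Im(df/f)\wedge e_V$ gives $d_m(e_V\omega) = e_V\, d\omega$, i.e.\ $\Phi_m$ intertwines $d\otimes 1$ with $d_m$. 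Since $\Phi_m$ acts only on the form part, it is compatible with \emph{any} $R_\infty$-module structure imposed via the coefficient ring, in particular with the $\tilde s$-twist.

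\textbf{Step 3: Exactness via Poincaré lemma.} The complex $(\calE^\bullet_V\otimes R_m, d\otimes 1)$ is the ordinary de Rham complex tensored with the free $\R$-module $R_m$. Since $R_m$ is flat over $\R$ and $V$ is contractible, this is a resolution of $\underline{R_m}_V$. For $m=\infty$ the same holds: a closed element $\sum_n\omega_n s^n\in \calE^p_V\widehat\otimes R_\infty$ (with $p\ge 1$) satisfies $d\omega_n=0$ for all $n$, and the ordinary Poincaré lemma yields primitives $\eta_n$ whose formal series $\sum_n \eta_n s^n$ is a preimage. In degree $0$ the kernel is the constant formal power series, giving $\underline{R_\infty}_V$. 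Pulling this back through $\Phi_\infty$, the $0$-th cohomology sheaf of $\calE^\bullet_V(\Im(df/f),\infty)_{\tilde s}$ is $e_V\cdot R_\infty$, which by Step 1 is precisely the image of $\nu|_V$, and the higher cohomology sheaves vanish.

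\textbf{Main obstacle and expectations.} The genuinely delicate point is not the Poincaré lemma but rather matching the $R_\infty$-module structures: the untwisted action of $t\in R\subset R_\infty$ is multiplication by $1+s$, whereas the monodromy of the local trivializations $e_V$ is by $\exp(2\pi s)$. The \emph{sole} purpose of passing to $(\cdot)_{\tilde s}$ with $\tilde s = \exp(2\pi s)-1$ is to reconcile these: under the twist, $t$ acts as $1+\tilde s=\exp(2\pi s)$, so the local generators $e_V$ transform under loops in $U$ exactly like the generators $\bar\delta_{x'}$ of $\ov\cL\otimes_R R_\infty$. Once this identification is in place, the quasi-isomorphism $\Phi_\infty$ of Step 2 globalizes to the isomorphism $\ov\cL\otimes_R R_\infty\cong \ker d_\infty^0$ induced by $\nu$, and the vanishing of the higher cohomology sheaves is local, concluding the proof.
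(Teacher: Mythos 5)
Your proof is correct, and its core mechanism coincides with the paper's: the local sections $\exp(-\arg f\otimes s)$ generating $\ker d^0$, the monodromy factor $\exp(-2\pi f_*(\gamma)s)$, and the observation that the $\tilde s$-twist is precisely what matches this against $t=1+s$ acting on $\ov\cL$ are the content of Lemmas~\ref{lemKer}, \ref{lemPi1}, \ref{lemRm} and Remark~\ref{rem:nu}. Where you genuinely diverge is in the two technical steps. First, rather than computing $\ker d^0_m$ by solving the coefficient recursion as in Lemma~\ref{lemKer} and checking higher-degree exactness locally by hand, you gauge away the connection form: multiplication by $\exp(-\arg f\otimes s)$ is Lemma~\ref{cohomologous} applied on a chart with $\eta_1=0$, identifying the thickened complex with the ordinary de Rham complex tensored with $R_m$, which makes local exactness transparent and works uniformly for all $1\le m\le\infty$. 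Second, and more substantially, you treat $m=\infty$ directly via a Poincar\'e lemma with formal power-series coefficients, whereas the paper passes to the limit through Lemma~\ref{clExact}, a Mittag--Leffler argument needing surjectivity of the transition maps on sections over simply connected opens. Your route buys a shorter, uniform argument (and, as you note, the twist is irrelevant for exactness since it merely relabels the $R_\infty$-structure); the paper's inverse-limit lemma buys generality, since it requires no explicit trivializing section and the same scheme is reused for the rational complex $\calK^\bullet_\infty$, where a gauge like $\exp(-\arg f\otimes s)$ is not available in the same form. Two points deserve an explicit sentence in your write-up, though both are routine: exactness of the limit complex must be verified on stalks, which your argument yields after shrinking to a contractible neighborhood on which the closed representative is honestly closed (stalks of an inverse limit are not the inverse limit of stalks in general); and $g(s)\mapsto g(\tilde s)$ is an automorphism of $R_\infty$, so $\exp(-\arg f\otimes s)\cdot R_\infty$ is indeed the full image of $\nu$ with respect to the twisted module structure, giving that $\nu$ is an isomorphism onto $\ker d^0_\infty$ and not merely a map into it.
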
\index{s@$s$!s@$\tilde s$}
Note that $\tilde{s}=su$, where $u$ is a unit in $R_\infty$, so we are in the situation described by Remark \ref{twistedmodule}.

We will prove Proposition \ref{propLocal} at the end of this section. In simply connected open sets, one can check locally that $\ker d^k_m= \im\, d^{k-1}_m$ for all $k\geq 1$. Thus, $\calE^\bullet_U\left(\Im\, df/f,m\right)$ is a resolution of $\ker\, d^0_m$, which in turn is a rank one local system of $R_m$-modules.

We give a description of this local system.

\begin{lem}\label{lemKer}
If $V \subset U$ is a small enough connected open subset (in the analytic topology) such that $f(V)$ is simply-connected, then $\ker\, d^0_m(V)$ consists of all functions of the form $\exp\left(-\arg f \otimes s\right)g(s) \in \calE^0_U(V) \otimes R_m$, where $g(s)\in R_m$ and $\arg$ can be taken to be any branch of the argument on $f(V)$.
\end{lem}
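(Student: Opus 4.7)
\emph{Plan.} The essential observation is that on $V$, since $f(V)\subset\C^*$ is simply-connected, we may choose a smooth branch of $\arg f$. Writing $\log f = \log|f|+i\arg f$ locally, we obtain
\[
\frac{df}{f} = d\log|f| + i\, d\arg f,
\]
so that $\Im(df/f) = d\arg f$ as a real $1$-form on $V$. In particular, $\Im(df/f)$ admits $\arg f\in \calE^0_U(V)$ as a primitive. This is the only non-formal ingredient; once it is in place, everything else is a direct application of Lemma~\ref{cohomologous}.

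The strategy is then to apply Lemma~\ref{cohomologous} to the cdga $A = \calE^\bullet_U(V)$ with $\eta_1 = \Im(df/f) = d\arg f$, $\eta_2 = 0$, and $a = \arg f$, giving an isomorphism of cochain complexes
\[
\exp(\arg f \otimes s)\wedge -\colon \calE^\bullet_U(V)\bigl(\Im(df/f),m\bigr)\xrightarrow{\;\cong\;} \calE^\bullet_U(V)(0,m),
\]
whose inverse is $\exp(-\arg f\otimes s)\wedge -$. The target complex has the untwisted differential $d\otimes 1$, so $\ker d^0$ on $\calE^0_U(V)\otimes R_m$ is the $R_m$-module of locally constant $R_m$-valued functions. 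Since $V$ is connected, this kernel equals $R_m$, i.e.\ all elements of the form $g(s)$ with $g(s)\in R_m$. Transporting back along the inverse isomorphism identifies $\ker d^0_m(V)$ with $\bigl\{\exp(-\arg f\otimes s)\, g(s)\ \bigm|\ g(s)\in R_m\bigr\}$.

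Finally, I would briefly note why the description is insensitive to the choice of branch: two branches $\arg_1 f$ and $\arg_2 f$ on the connected set $V$ differ by a constant of the form $2\pi k$ with $k\in\Z$, so
\[
\exp(-\arg_1 f\otimes s) = \exp(-2\pi k\otimes s)\cdot \exp(-\arg_2 f\otimes s),
\]
and $\exp(-2\pi k\otimes s)$ is a unit in $R_m$; multiplication by this unit preserves the parameterization by $g(s)\in R_m$. There is no substantial obstacle in this argument; the lemma reduces, via Lemma~\ref{cohomologous}, to the trivial identification $\ker(d\otimes 1)=R_m$ on a connected open set.
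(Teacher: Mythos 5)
Your argument is correct. The one non-formal input — that on $V$ a smooth branch of $\arg f$ exists and satisfies $d\arg f=\Im(df/f)$, so that $\Im(df/f)$ is exact in $\calE^\bullet_U(V)$ — is exactly right, and once you have it, Lemma~\ref{cohomologous} applied with $\eta_1=\Im(df/f)$, $\eta_2=0$, $a=\arg f$ legitimately identifies $\calE^\bullet_U(V)\bigl(\Im(df/f),m\bigr)$ with the untwisted complex, whose degree-zero kernel on the connected set $V$ is visibly $R_m$; transporting by the inverse gauge $\exp(-\arg f\otimes s)\wedge-$ gives precisely the asserted description, and your unit argument $\exp(-2\pi k\otimes s)\in R_m^\times$ correctly disposes of the branch ambiguity. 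This is a genuinely different route from the paper's: there the authors compute directly, writing a general kernel element as $\sum_{k} g_k\otimes s^k$, solving the resulting recursion $dg_0=0$, $dg_k=-g_{k-1}\,d\arg f$ by induction to get $g_k=\sum_{j}\frac{(-1)^j}{j!}c_{k-j}\arg^j f$, and then resumming this into $\bigl(\sum_l c_l s^l\bigr)\exp(-\arg f\otimes s)$. Your reduction via the gauge isomorphism is shorter and reuses machinery already established in Section~3 (it is really the observation that Lemma~\ref{cohomologous} was built for exactly this kind of local triviality), whereas the paper's hands-on computation is self-contained and makes explicit how a change of branch merely reshuffles the constants $c_j$ — a point you recover separately, and correctly, with the unit $\exp(-2\pi k\otimes s)$.
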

\begin{proof}
On $V$, the differential is given by $$d^0_m(V)(\omega \otimes 1) = d\omega \otimes 1+\left(d\arg(f) \wedge \omega\right) \otimes s.$$

 Let $\sum_{k=0}^{m-1} g_k\otimes s^k$ be an element of $\ker d^0_m(V)$. Then,
 \[
 0=d^0_m\left(\sum_{k=0}^{m-1} g_k\otimes s^k\right)=
 d g_0\otimes 1+ \sum_{k=1}^{m-1} \big(d g_k+g_{k-1} d\arg(f)\big)\otimes s^k .
 \]
 Hence, $g_0=c_0\in\R$, $dg_1=-c_0 d\arg f$, so $g_1=-c_0 \arg(f)+c_1$ for some $c_1\in \R$ (choosing a different branch of the argument will result in a different $c_1$). Inductively, we get that
 $$
 g_k=\sum_{j=0}^k \frac{(-1)^j}{j!} c_{k-j} \arg^j(f)
 $$
 for some $c_j\in\R$, $j=0, 1, \ldots, k$. Hence,
 \begin{align*}
 &\sum_{k=0}^{m-1} g_k\otimes s^k
 =\sum_{k=0}^{\infty}\left(\sum_{j=0}^k \frac{(-1)^j}{j!} c_{k-j} \arg^j f\right)\otimes s^k=\\
 & =\sum_{j=0}^{\infty}\sum_{k=j}^{\infty} \frac{(-1)^j}{j!} c_{k-j} \arg^j f \otimes s^k=
 \sum_{j=0}^{\infty}\sum_{l=0}^{\infty} \left(\frac{(-1)^j}{j!} \arg^j f \otimes s^j\right)c_ls^l=\\
 &=\left(\sum_{l=0}^{\infty} c_l s^l\right)\exp\big(-\arg(f)\otimes s\big)=
 \left(\sum_{l=0}^{m-1} c_l s^l\right)\exp\big(-\arg(f)\otimes s\big),
 \end{align*}
 which finishes our proof.
\end{proof}

\begin{remk}\label{rem:IVP}
By the proof of the previous lemma, the elements of $\ker d^0_m$ form a local system of rank $m$ over $\R$. In particular, the following map is an $R$-linear isomorphism for any simply connected open set $V$ and any $x\in V$:
\[
\begin{array}{rcl}
\Gamma(V,\ker d^0_m) & \longrightarrow & R_m \\
\sum_{k=0}^{m-1} g_k\otimes s^k &\longmapsto & \sum_{k=0}^{m-1} g_k(x)s^k.
\end{array}
\]
\end{remk}

\begin{lem}
The $\pi_1(U)$-action on the local system $\ker\, d^0_m$ is given by $$\gamma \mapsto \exp(-2\pi f_*(\gamma) s)\, \cdot\, ,$$ i.e., multiplication by $\exp(-2\pi f_*(\gamma) s)$.
\label{lemPi1}
\end{lem}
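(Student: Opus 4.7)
The plan is to compute the monodromy of $\ker d^0_m$ directly by analytically continuing a local generating section along a loop, using the local description provided by Lemma~\ref{lemKer}. Fix a basepoint $x\in U$ and a loop $\gamma$ based at $x$. I would cover $\gamma$ by finitely many connected open sets $V_0,V_1,\dots,V_N=V_0$ on which $f$ maps to a simply connected region, choosing $V_0\ni x$, so that on each $V_i$ there is a branch $\theta_i$ of $\arg f$. By Lemma~\ref{lemKer}, $\Gamma(V_i,\ker d^0_m)$ is the free rank-one $R_m$-module generated by $\sigma_i\coloneqq \exp(-\theta_i\otimes s)$.

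The monodromy of $\ker d^0_m$ around $\gamma$ acting on the stalk at $x$ is obtained by parallel transport: I start with $\sigma_0$, pass to the identification of $\sigma_0|_{V_0\cap V_1}$ in terms of $\sigma_1$, and continue around $\gamma$. On each overlap $V_i\cap V_{i+1}$ the two branches differ by an integer multiple of $2\pi$, and since $\sigma_i = \exp((\theta_{i+1}-\theta_i)\otimes s)\,\sigma_{i+1}$, the comparison is just multiplication by the appropriate exponential. After traversing the full loop, the accumulated change in the branch is $\theta_N-\theta_0 = 2\pi\, f_*(\gamma)$, where the sign (and identification of $\pi_1(\C^*)$ with $\Z$) is fixed by the fact that $\int_\gamma d\arg f = 2\pi f_*(\gamma)$. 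Thus the section $\sigma_0$ transports to $\exp\bigl(-(\theta_0+2\pi f_*(\gamma))\otimes s\bigr) = \exp(-2\pi f_*(\gamma) s)\cdot \sigma_0$, proving that $\gamma$ acts as multiplication by $\exp(-2\pi f_*(\gamma) s)\in R_m^\times$.

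To make the argument rigorous I would use the canonical trivialization of Remark~\ref{rem:IVP}: the $R_m$-linear map $\Gamma(V,\ker d^0_m)\to R_m$, $\sum g_k\otimes s^k\mapsto \sum g_k(x)s^k$, identifies the stalk at $x$ with $R_m$ and makes the parallel transport comparison above literally the statement that the transported generator, evaluated at $x$ and returning to the original branch $\theta_0$, picks up the factor $\exp(-2\pi f_*(\gamma) s)$. The only subtle step is bookkeeping the sign convention for $f_*$, which is pinned down by the chosen identification $\pi_1(\C^*)\cong \Z$ via the counterclockwise generator; this is the main (mild) obstacle, and it is resolved by the classical formula $f_*(\gamma) = \frac{1}{2\pi}\int_\gamma d\arg f$.
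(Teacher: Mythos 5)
Your proof is correct and follows essentially the same route as the paper: the paper likewise uses the local generator $\exp(-\arg(f)\otimes s)$ from Lemma~\ref{lemKer} and observes that continuation along $\gamma$ shifts $\arg f$ by $2\pi f_*(\gamma)$, yielding the factor $\exp(-2\pi f_*(\gamma)s)$. Your version merely makes the branch-tracking along a covering of the loop explicit, which the paper states in one line.
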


\begin{proof}
The stalk of $\ker\, d^0_m$ at any point in $U$ is generated by $\exp(-\arg(f) \otimes s)$.
A loop $\gamma\in\pi_1(U)$ takes $\arg(f)$ to $\arg(f)+2\pi f_*(\gamma) $.
Thus, the action of $\gamma \in \pi_1(U)$ on $\ker\, d^0_m$ is:
\begin{align*}
  \gamma \cdot \exp(-\arg(f) \otimes s) &= \exp\left(-\left(\arg(f) + 2\pi f_*(\gamma) \right) \otimes s\right) \\ & =\exp(-2\pi f_*(\gamma) \otimes s) \cdot \exp(-\arg(f) \otimes s) \\&= \exp(-2\pi f_*(\gamma) s) \cdot \exp(-\arg(f) \otimes s). 
\end{align*}
\end{proof}

Consequently, we have two rank one local systems of $R_m$-modules on $U$:
\begin{itemize}
 \item $\ov\calL \otimes_R R_m$ with action $\gamma \mapsto (1+s)^{-f_*(\gamma)} \, \cdot\, ,$ i.e., multiplication by $(1+s)^{-f_*(\gamma)}$, for all $\gamma\in\pi_1(U)$.
 \item $\ker\, d^0_m$ with action $\gamma \mapsto \exp(-2\pi f_*(\gamma) s)\, \cdot \, ,$ i.e., given by multiplication by $\exp(-2\pi f_*(\gamma) s)$, for all $\gamma\in\pi_1(U)$.
\end{itemize}
Because these actions are distinct, $\ov\calL \otimes_R R_m$ and $\ker\, d^0_m$ are not isomorphic as local systems of $R_m$-modules, but they are if one considers the twisted module structure $(\ker\, d^0_m)_{\tilde s}$. 

The following lemma is a consequence of the discussion above.

\begin{lem}
The kernel of the $0$-th differential of $\calE_{U}^\bullet\left(\Im\, df/f,m\right)_{\tilde s}$ is $(\ker\, d^0_m)_{\tilde s}$, which is isomorphic to $\ov\calL\otimes_R R_m$ as local systems of $R_m$-modules.
\label{lemRm}
\end{lem}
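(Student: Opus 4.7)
The first assertion, that the kernel of the $0$th differential of $\calE^\bullet_U(\Im\, df/f, m)_{\tilde s}$ is $(\ker d^0_m)_{\tilde s}$, is essentially a tautology: as explained in Remark \ref{twistedmodule}, passing from the standard $R_\infty$-module structure to the $\tilde s$-twisted one does not alter the underlying complex of sheaves of vector spaces, hence does not alter the subsheaf of $d^0_m$-cocycles. So I would dispatch this in one sentence.

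For the isomorphism part, the strategy is to reduce the problem to a comparison of monodromy representations on rank one free $R_m$-modules. Both $\ov\cL \otimes_R R_m$ and $(\ker d^0_m)_{\tilde s}$ are, on each simply connected open set $V \subset U$ with $f(V)$ simply connected, rank one free $R_m$-modules (for $\ov\cL\otimes_R R_m$ this is clear; for $(\ker d^0_m)_{\tilde s}$ this follows from Lemma \ref{lemKer} and Remark \ref{rem:IVP}, noting that twisting only changes the $R_m$-action, not the underlying $\R$-vector space, and the twisting map $s\mapsto \tilde s$ is an automorphism of the completion $R_\infty$ which descends to an automorphism of $R_m$ because $\tilde s\in (s)$ and is a unit multiple of $s$). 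Since local systems of rank one free $R_m$-modules on $U$ are classified by their monodromy representation $\pi_1(U)\to \Aut_{R_m}(R_m)=R_m^{\times}$, it suffices to show that these two representations agree.

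The monodromy of $\ov\cL\otimes_R R_m$ is, by construction, $\gamma \mapsto (1+s)^{-f_*(\gamma)}\cdot$, with $\cdot$ being the usual multiplication in $R_m$. For $(\ker d^0_m)_{\tilde s}$, Lemma \ref{lemPi1} computes the monodromy to be multiplication by $\exp(-2\pi f_*(\gamma) s)$ using the \emph{untwisted} $R_m$-product. I now translate this into the twisted product $\ast_{\tilde s}$: by the definition recalled above, an element $g(s)\in R_m$ acts via $\ast_{\tilde s}$ as ordinary multiplication by $g(\tilde s)$. Taking $g(s) = (1+s)^{-f_*(\gamma)}$ gives ordinary multiplication by
\[
(1+\tilde s)^{-f_*(\gamma)} \;=\; \bigl(\exp(2\pi s)\bigr)^{-f_*(\gamma)} \;=\; \exp\bigl(-2\pi f_*(\gamma)\, s\bigr),
\]
which is exactly the monodromy of Lemma \ref{lemPi1}. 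Therefore, in the twisted $R_m$-module structure, the monodromy of $(\ker d^0_m)_{\tilde s}$ is also $\gamma \mapsto (1+s)^{-f_*(\gamma)}\cdot$, and an $R_m$-module isomorphism $\ov\cL\otimes_R R_m \cong (\ker d^0_m)_{\tilde s}$ of local systems follows.

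I do not anticipate a serious obstacle: the whole content of the lemma is the bookkeeping identity $(1+\tilde s)^{-n} = \exp(-2\pi n s)$ together with the tautological identification of kernels, so the main care needed is to keep straight which $R_m$-action is being used on each side.
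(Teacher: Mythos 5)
Your proof is correct and follows essentially the same route as the paper: the paper also deduces the lemma directly from the monodromy computations of Lemmas \ref{lemKer} and \ref{lemPi1}, observing that the untwisted actions $(1+s)^{-f_*(\gamma)}$ and $\exp(-2\pi f_*(\gamma)s)$ differ, but agree once the module structure on $\ker d^0_m$ is twisted by $\tilde s$, since $(1+\tilde s)^{-f_*(\gamma)}=\exp(-2\pi f_*(\gamma)s)$. Your explicit appeal to the classification of rank one local systems of free $R_m$-modules by their monodromy representations just makes precise what the paper leaves implicit (and is later made canonical in Remark \ref{rem:nu}).
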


\begin{remk}\label{rem:nu}\index{nu@$\nu$}\index{delta@$\delta_x$!deltabar@$\ov\delta_x$}\index{f_infty@$f_\infty$}
We can fix a canonical isomorphism $\nu\colon \ov\calL\otimes_R R_m\cong (\ker\, d^0_m)_{\tilde s}$ as follows. Recall that at any $x\in U$, $\ov\calL_x$ has an $\R$-basis $\{\ov\delta_{x'}\}$ parametrized by $x'\in \pi^{-1}(x)$ (see Remark~\ref{rem:oppositeDual}). For any such $x'$, let $\nu_x(
\ov\delta_{x'})
$ be the unique germ $g$ in the stalk of $(\ker\, d^0_m)$ at $x$ such that $
g(x) = \exp(-\Im f_\infty(x')\otimes s)$. Recall that according to our definitions of $U^f$ and $\cL$ in Section~\ref{ssAlex}, the map $f_\infty$ is fixed as the projection $U^f\subset U\times \C\to \C$.

Since $\exp\circ f_\infty = f\circ \pi$, we can describe $\nu$ as follows. On any simply connected neighborhood $V$ of $x$, let $\iota\colon V\to U^f$ be the section of $\pi$ such that $\iota(x)=x'$. Then $\Im f_\infty \circ \iota$ is a branch of $\arg f$, so
\[
\nu_x(\ov\delta_{x'}) =\exp(-\Im f_\infty\circ \iota\otimes s).
\]
Let us check that $\nu_x$ is an $R_m$-linear map on stalks.
\begin{align*}
\nu_x(t\ov\delta_{x'})& = \nu_x(\ov\delta_{t^{-1}x'}) = \exp(-\Im f_\infty\circ t^{-1}\circ \iota\otimes s)= \exp(-\Im f_\infty\circ \iota\otimes s + 2\pi \otimes s) = \\ &=\nu_x(\ov\delta_{x'})e^{2\pi s} =\nu_x(\ov\delta_{x'})*_{\widetilde s}t.
\end{align*}
Note that, in the formula above, $t$ is seen both as an element of $R_m$ and as a deck transformation.

We can see that $\nu$, which we have defined on stalks, gives us a morphism of local systems over $R_m$. Indeed, from the definition of $\nu$, it agrees with the monodromy action of any path $\gamma$ from $x$ to any given $y\in U$, i.e. $\gamma\cdot \nu_x(\ov\delta_{x'}) = \nu_y(\ov\delta_{\gamma \cdot x'}) =\nu_y(t^{-1}\ov\delta_{ x'}) = \nu_y(\gamma\cdot \ov\delta_{x'})$.
\end{remk}

\begin{remk}\label{remk:thickenedComplex}
The proof of the claims in this remark are straightforward following the definitions of the objects involved, so we omit them. 

The thickened complex $\cE^\bullet_U\left( \darg ,m\right)$ of Lemma~\ref{lemRm} is isomorphic (as sheaves of $\R$-vector spaces) to the complex $\cE^\bullet_U(m)_{\log}$, which is defined as $\cE^i_U(m)_{\log} = \cE^i_U \otimes_\R R_m$ and whose differential is the following, where $\log(1+s)$ represents the power series at $s=0$:
\[
d_{\log}\left(\alpha\otimes s^j\right) = d\alpha\otimes s^j+\darg \wedge \alpha\otimes \frac{ \log(1+s)}{2\pi}s^j.
\] The ($\R$-linear) isomorphism is determined by:
\[
\ff{\cE_U^i\left( \darg ,m\right)}{\cE^i_U(m)_{\log}}{\alpha\otimes s^j}{\alpha\otimes \left(\frac{\log(1+s)}{2\pi}\right)^j.}
\]
Note that the above isomorphism becomes $R_m$-linear if we give $\cE_U^{\bullet}\left( \darg ,m\right)$ the twisted $R_m$-module structure $\cE_U^{\bullet}\left( \darg ,m\right)_{\wt s}$ of Proposition~\ref{propLocal}. \textit{We will use this $\R$-linear identification between $\cE_U^i\left( \darg ,m\right)$ and $\cE^i_U(m)_{\log}$ from now on whenever it is convenient}. It will simplify our notation later, especially in Chapter~\ref{sec:main}.

Using Lemma~\ref{lemRm}, we see that the composition $\overline{\cL}\otimes_R R_m \xrightarrow{\nu}\cE_U^{\bullet}\left( \darg ,m\right)_{\wt s}\to \cE^{\bullet}_U(m)_{\log}$ is an \bi{$\bm{ R_m}$-linear} quasiisomorphism, i.e. $\cE^{\bullet}_U(m)_{\log}$ is a resolution of $\overline{\cL}\otimes_R R_m$ as sheaves of $R_m$-modules. The resolution $\overline{\cL}/s^m\xrightarrow{\eta_m} \cE^{\bullet}_U(m)_{\log}$ is determined by the map $\eta_m$, analogous to the map $\nu$ in Remark~\ref{rem:nu}: using the same notation, 
$\eta_m$ is given by:
$$
\f{(\eta_m)_x}{(\overline{\cL}/s^m)_x}{(\cE^0_U(m)_{\log})_x}{\ov\delta_{x'}}{\exp(-\Im f_\infty \circ \iota \otimes \frac{\log(1+s)}{2\pi}).}
$$
Manipulating power series we can see the expression above as
\[
\eta_m(\ov\delta_{x'}) = (1+s)^{-\frac{1}{2\pi}\Im f_\infty \circ \iota} = \sum_{i=0}^{m-1} \binom{-\frac{1}{2\pi}\Im f_\infty \circ \iota}{i} \otimes s^i.
\]
\end{remk}

We now prove a result that will help us understand $\displaystyle\varprojlim_m \left(\calE^\bullet_U\left(\Im\, df/f,m\right)_{\tilde s}\right)$.

\begin{lem}
Let $\left(\{\calF_m^{\bullet}\}_{m\in \N},\left\{\alpha^\bullet_{m,j}\right\}_{m\geq j\in \N}\right)$ be an inverse system of complexes of sheaves of $R_{\infty}$-modules on a topological space that has a basis of simply connected open sets, such that the maps $\alpha_{m,j}^k(V)\colon \calF_m^k(V)\rightarrow\calF_j^k(V)$ are surjective for all $m\geq j$ and for every simply connected open set $V$. Suppose that $\calF_m^{\bullet}(V)$ are exact complexes for all $m\in\N$ and for every simply connected open set $V$. Then,
$\varprojlim_m \calF_m^\bullet$ is also an exact complex of sheaves of $R_\infty$-modules.
\label{clExact}
\end{lem}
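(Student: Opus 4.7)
The plan is a standard Mittag--Leffler argument at the level of sections over simply connected opens, globalized to sheaves by passing to stalks.

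First, I would fix a simply connected open $V$ and consider the short exact sequence of complexes of $R_\infty$-modules
\begin{equation*}
0 \longrightarrow \varprojlim_m \calF_m^\bullet(V) \longrightarrow \prod_m \calF_m^\bullet(V) \xrightarrow{\;1-\sigma\;} \prod_m \calF_m^\bullet(V) \longrightarrow 0,
\end{equation*}
where $\sigma$ is the shift map assembled from the transition morphisms $\alpha_{m+1,m}^\bullet(V)$. Surjectivity of $1-\sigma$ follows from the hypothesized surjectivity of each $\alpha_{m+1,m}^\bullet(V)$ by a telescoping recursion: given $(b_m)$, pick $a_0$ arbitrarily and inductively choose $a_{m+1}$ to be any lift of $a_m - b_m$ through the surjection $\alpha_{m+1,m}^\bullet(V)$. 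Passing to cohomology and using that products commute with cohomology in the category of complexes, the hypothesized exactness of each $\calF_m^\bullet(V)$ gives $H^i\bigl(\prod_m \calF_m^\bullet(V)\bigr) = \prod_m H^i\bigl(\calF_m^\bullet(V)\bigr) = 0$, so the long exact sequence forces $H^i\bigl(\varprojlim_m \calF_m^\bullet(V)\bigr) = 0$ for every $i$.

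Next, since the inverse limit in the category of sheaves is a right adjoint, it is computed sectionwise, so $\bigl(\varprojlim_m \calF_m^\bullet\bigr)(V) = \varprojlim_m \calF_m^\bullet(V)$ for every open $V$. Combined with the previous step, this shows that the section complex of $\varprojlim_m \calF_m^\bullet$ is exact over every simply connected open set. Because simply connected opens form a basis of the topology, every point of the underlying space admits a cofinal system of simply connected neighborhoods; the stalk of $\varprojlim_m \calF_m^\bullet$ at that point is therefore a filtered colimit of exact complexes of $R_\infty$-modules, and filtered colimits being exact in any module category, the stalk is exact. Exactness on stalks yields the desired exactness of $\varprojlim_m \calF_m^\bullet$ as a complex of sheaves of $R_\infty$-modules.

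The only step requiring any care is the surjectivity of $1 - \sigma$, which is immediate from the pointwise surjectivity of the transition maps, so I do not expect any genuine obstacle; the remainder is routine categorical bookkeeping, relying on the standard facts that products commute with cohomology and that filtered colimits preserve exactness.
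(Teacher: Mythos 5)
Your proof is correct, and it takes a genuinely different (though thematically parallel) route from the paper's. The paper's proof also reduces to sections over simply connected opens, but it then proceeds by breaking each exact complex $\calF_m^\bullet(V)$ into short exact sequences $0 \to \ker(d^k_m) \to \calF_m^k(V) \to \ker(d^{k+1}_m) \to 0$, verifying that the transition maps restricted to the kernels remain surjective (hence Mittag--Leffler), and then invoking the lemma from Hartshorne that surjective transition maps make $\varprojlim$ exact on short exact sequences. You instead use the telescope presentation $0 \to \varprojlim \calF_m^\bullet(V) \to \prod_m \calF_m^\bullet(V) \xrightarrow{1-\sigma} \prod_m \calF_m^\bullet(V) \to 0$, verify surjectivity of $1-\sigma$ directly from the surjective transitions, note that products of acyclic complexes are acyclic, and conclude via the long exact sequence. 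Your route is somewhat leaner: you only need to verify surjectivity once, at the level of the complexes themselves, rather than separately propagating Mittag--Leffler to the inverse systems of kernels. The paper's route is more self-contained modulo the Hartshorne citation; yours effectively reproves the $\varprojlim^1$-vanishing that Hartshorne's statement packages. The final globalization step (sections over a basis of simply connected opens, then filtered colimits on stalks) is the same in both.
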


\begin{proof}
We need to show that the stalk of $\varprojlim_m \calF_m^\bullet$ at every point is exact. Since $U$ has a basis of simply connected open sets, and since
\[
\Big(\displaystyle\varprojlim_m \calF_m^k\Big)(V)\simeq \displaystyle\varprojlim_m \calF_m^k(V)
\]
for every $k$, it suffices to show that $\displaystyle\varprojlim_m \calF_m^\bullet(V)$ is exact for any simply connected open subset $V$ of $U$. 

For a fixed $m$, the exact complex $\calF_m^{\bullet}(V)$ gives rise to short exact sequences
\[
0\to \ker(d^k_m)\to \calF_m^{k}(V) \to \ker (d^{k+1}_m)\to 0,
\]
where $d^k_m\colon \calF_m^{k}(V)\to \calF_m^{k+1}(V)$ are the differentials. By assumption, the homomorphisms $\alpha_{m, j}^k$ in the inverse system $\displaystyle\varprojlim_m \calF_m^k(V)$ are surjective, so the inverse system satisfies the Mittag-Leffler (ML) condition. Using the fact that the spaces $\ker (d^{k+1}_m)$ are quotients of $\calF_m^{k}(V)$ for all $k$, we can see that the restrictions $\alpha_{m, j}^k\colon \ker (d^{k+1}_{m})\to \ker (d^{k+1}_{j})$ are surjective, so the inverse system $\displaystyle\varprojlim_m \ker(d^k_m)$ also satisfies (ML). By \cite[Chapter II, Proposition 9.1 (b)]{Hartshorne}, we have exact sequences,
\[
0\to \displaystyle\varprojlim_m\ker(d^k_m)\to \displaystyle\varprojlim_m\calF_m^{k}(V) \to\displaystyle\varprojlim_m \ker (d^{k+1}_m)\to 0,
\]
for every $k$. The above short exact sequences for all $k$ induce the following long exact sequence
\[
\cdots\to \displaystyle\varprojlim_m\calF_m^{k-1}(V) \to \displaystyle\varprojlim_m\calF_m^{k}(V) \to \displaystyle\varprojlim_m\calF_m^{k+1}(V) \to \cdots,
\]
thus completing the proof of the lemma. 
\end{proof}

Now, we are ready to prove Proposition \ref{propLocal}, the main result of this section.
\begin{proof}[Proof of Proposition~\ref{propLocal}]
The result about the resolution follows from applying Lemma \ref{clExact} to the exact complex of sheaves of $R_m$-modules 
\[
\ov\calL\otimes_R R_m\rightarrow \calE_U^\bullet\left(\Im\, df/f,m\right)_{\tilde s}.
\]
Notice that $\varprojlim \ov\calL \otimes_R R_m$ is naturally isomorphic to $\ov\calL \otimes_R R_{\infty}$.
\end{proof}


\section{Local Systems and Thickened Complexes: Rational Version}
\index{K_infty@$\calK^\bullet_\infty$}
We now state the analogue of the results in Section~\ref{slocal} in the $k=\Q$ case, for the thickened complex $j^{-1}\calK_\infty^\bullet(1\otimes f, m)$ constructed from the complex $\calK_\infty^\bullet$ of Section \ref{rationalmixedhodgecomplexonsmoothvarieties}. In the remainder of this section, $k=\Q$, $\ov\calL$ is a local system of $\Q[t^{\pm 1}]$-modules, $d^i_m$ is the $i$-th differential of $j^{-1}\calK_\infty^\bullet(1\otimes f, m)$, and $\tilde{s}_2\coloneqq \exp(s)-1\in R_\infty$. Abusing notation, we will consider $\tilde{s}_2$ as an element of $R_m$ or $\C[t^{\pm 1}]_m$ for all $m\geq 1$ whenever it makes sense.\index{s@$s$!s@$\tilde s_2$}

As stated in Section \ref{rationalmixedhodgecomplexonsmoothvarieties}, $\varphi_{\infty}\otimes 1\colon (\calK_\infty^\bullet,W_{\lc})\otimes \C\rightarrow (\Omega_X^\bullet(\log D),W_{\lc})$ is a filtered quasi-isomorphism, which takes $1\otimes f$ to $\frac{1}{2\pi i}\frac{df}{f}$. So it induces a quasi-isomorphism $j^{-1}\calK_\infty^\bullet\otimes \C\rightarrow \Omega_U^\bullet$. By Remark \ref{sheafgeneralities}, this induces a quasi-isomorphism
\begin{equation}
F\colon j^{-1}\calK_\infty^\bullet(1\otimes f, m)\otimes \C\rightarrow \Omega_U^\bullet\left(\frac{1}{2\pi i}\frac{df}{f}, m\right)
\label{eqnF}
\end{equation}
for all $m\geq 1$. One can check locally in simply connected open sets that the complex $\Omega_U^\bullet\left(\frac{1}{2\pi i}\frac{df}{f}, m\right)$ is exact at place $k$ for all $k\geq 1$. Therefore, since $\C$ is faithfully flat over $\Q$, $\ker d^{k}_m=\im d_m^{k-1}$ for all $k\geq 1$. Hence, $j^{-1}\calK_\infty^\bullet(1\otimes f, m)$ is a resolution of $\ker\, d^0_m$, which in turn is a rank one local system of $R_m$-modules. Now, we give a description of this local system. 
\begin{lem}[Analogue of Lemma \ref{lemKer}]
If $V \subset U$ is a small enough open subset (in the analytic topology) such that $f(V)$ is simply-connected, then $\ker\, d^0_m(V)$ is generated by $\exp\left(-\frac{\log(f)}{2\pi i}\otimes 1 \otimes s\right) \in \calK^0_\infty(V) \otimes R_m$ as an $R_m$-module, where $\log$ is taken to be a branch of the logarithm function which is defined on $f(V)$.
\end{lem}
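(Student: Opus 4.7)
The plan is to adapt the proof of Lemma \ref{lemKer} to the rational setting, using the differential formula on $\calK^\bullet_\infty$. The key identity is
\[
d\!\left(\frac{\log f}{2\pi i} \otimes 1\right) = 1 \otimes f \quad \text{in } \calK^1_\infty(V),
\]
which follows from $d(g \otimes 1) = 1 \otimes \exp(2\pi i\, g)$ for $g \in \calO_X$ together with $\exp(\log f) = f$. Iterating on the monomial $\left(\frac{\log f}{2\pi i}\right)^n \in \mathrm{Sym}^n_\Q \calO_X$ yields
\[
d\!\left(\left(\tfrac{\log f}{2\pi i}\right)^n \otimes 1\right) = n\,\left(\tfrac{\log f}{2\pi i}\right)^{n-1} \otimes f,
\]
which plays the role that $d(\arg f) = \Im(df/f)$ did in Lemma \ref{lemKer}.

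With this identity in hand, I would first verify directly that
\[
\omega \coloneqq \exp\!\left(-\tfrac{\log f}{2\pi i} \otimes 1 \otimes s\right) = \sum_{n=0}^{m-1} \tfrac{(-1)^n}{n!}\left(\tfrac{\log f}{2\pi i}\right)^n \otimes 1 \otimes s^n
\]
lies in $\ker d^0_m(V)$: expanding $d^0_m = d + (1 \otimes f) \wedge (\cdot) \otimes s$ term by term, the contribution from $d$ on the $s^n$-coefficient cancels with the $(1 \otimes f) \wedge (\cdot) \otimes s$ contribution from the $s^{n-1}$-coefficient, because $(-1)^n + (-1)^{n-1} = 0$. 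To show that $\omega$ in fact generates $\ker d^0_m(V)$ as an $R_m$-module, I would write an arbitrary cocycle as $\tilde\omega = \sum_{k=0}^{m-1} \omega_k \otimes s^k$; the cocycle condition unpacks to $d\omega_0 = 0$ and $d\omega_k = -(1\otimes f) \wedge \omega_{k-1}$ for $k \geq 1$. Since $j^{-1}\calK^\bullet_\infty$ resolves $\underline{\Q}_U$, the sheaf $\ker d^0$ equals $\underline{\Q}_U$, so on $V$ connected one has $\omega_0 = c_0 \in \Q$. Induction on $k$, using $\frac{1}{j+1}\left(\frac{\log f}{2\pi i}\right)^{j+1}$ as a preimage under $d$ of $\left(\frac{\log f}{2\pi i}\right)^j \otimes f$, produces
\[
\omega_k = \sum_{j=0}^{k} \frac{(-1)^j c_{k-j}}{j!}\left(\tfrac{\log f}{2\pi i}\right)^j
\]
with constants $c_l \in \Q$ (the ``constants of integration''). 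Reindexing the double sum $\sum_k \omega_k \otimes s^k$ modulo $s^m$ then exhibits $\tilde\omega = \omega \cdot g(s)$ with $g(s) = \sum_l c_l s^l \in R_m$, as desired.

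The main obstacle is bookkeeping. One must carefully handle the interplay between the symmetric factor $\mathrm{Sym}^\bullet_\Q \calO_X$ and the exterior factor $\bigwedge^\bullet_\Q (\calM^{\mathrm{gp}}_{X,D} \otimes \Q)$ inside each $\calK^p_\infty$, use the additive convention on $\calM^{\mathrm{gp}}_{X,D} \otimes \Q$ consistently (so that $f^{-1}$ there corresponds to $-f$), and confirm that ``$V$ small enough with $f(V)$ simply connected'' can be strengthened, without loss of generality, to $V$ connected with a single-valued branch of $\log f$ on $V$---precisely what is required to make the displayed identities and the equality $\ker d|_{\calK^0_\infty(V)} = \Q$ simultaneously available.
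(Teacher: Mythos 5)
Your proof is correct, and it takes a genuinely different route from the paper's. The paper's argument, after the same direct check that the exponential lies in $\ker d^0_m(V)$, proceeds by complexification: it applies the quasi-isomorphism $F\colon j^{-1}\calK^\bullet_\infty(1\otimes f,m)\otimes\C \to \Omega^\bullet_U\bigl(\tfrac{1}{2\pi i}\tfrac{df}{f},m\bigr)$, observes that $F$ carries the exponential to the generator of the kernel of the thickened holomorphic de Rham complex (which the holomorphic analogue of Lemma \ref{lemKer} already identifies), and concludes by faithful flatness of $\C$ over $\Q$ that the $R_m$-module $A$ generated by the exponential exhausts $\ker d^0_m(V)$. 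Your argument instead stays entirely inside $\calK^\bullet_\infty$, using the structural identity $d\bigl((\tfrac{\log f}{2\pi i})^n\otimes 1\bigr)=n(\tfrac{\log f}{2\pi i})^{n-1}\otimes f$ to rerun the induction of Lemma \ref{lemKer} over $\Q$. The gain is a self-contained argument that never invokes the holomorphic side; the cost is that you must independently justify the step $\ker d^0 = \underline{\Q}_U$ as a subsheaf of $j^{-1}\calK^0_\infty$. This does follow from the fact that $\calK^\bullet_\infty$ is pseudo-isomorphic to $Rj_*\underline{\Q}$ (pulling back along the open immersion $j$ and using that the complex sits in non-negative degrees with $\Q=\mathrm{Sym}^0_\Q\subset\ker d^0$ a sub-local-system of the same rank), but you phrase it as ``$j^{-1}\calK^\bullet_\infty$ resolves $\underline{\Q}_U$'' as if this were given; one should note it is a consequence of the cited pseudo-isomorphism and the degree-zero truncation, since the unusual differential on $\calK^\bullet_\infty$ makes the kernel of $d^0$ far from obvious at a glance.
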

\begin{proof}
A direct computation shows that $\exp\left(-\frac{\log(f)}{2\pi i}\otimes 1 \otimes s\right)\in \ker\, d^0_m(V)$. We have that
$$
F\left(\exp\Big(-\frac{\log(f)}{2\pi i}\otimes 1 \otimes s\Big)\otimes 1\right)=\exp\left(-\frac{\log(f)}{2\pi i} \otimes s\right)\in \Omega_U^0\otimes \C[t^{\pm 1}]_m,
$$ where $F$ is the quasi-isomorphism in equation (\ref{eqnF}). Following an argument as in Lemma \ref{lemKer} for the thickened {holomorphic} de Rham complex $\Omega_U^\bullet\left(\frac{1}{2\pi i}\frac{df}{f}, m\right)$ instead of $\calE_U^\bullet\left(\Im\big(\frac{df}{f}\big),m \right)$, we get that $\exp\Big(-\frac{\log(f)}{2\pi i} \otimes s\Big)$ generates the kernel of the $0$-th differential of $\Omega_U^\bullet\left(\frac{1}{2\pi i}\frac{df}{f}, m\right)$.

Let $A$ be the $R_m$-module generated by $\exp\left(-\frac{\log(f)}{2\pi i}\otimes 1 \otimes s\right)$. Using the map $F$, we see that the inclusion $A\hookrightarrow \ker\, d^0_m(V)$ becomes an isomorphism after tensoring by $\C$. Since $\C$ is faithfully flat over $\Q$, it follows that $A\cong \ker\, d^0_m(V)$.
\end{proof}

The proof of the rest of the results in this section follow the same steps as the proof of their analogue results in the real case, and we will omit them.
\begin{lem}[Analogue of Lemma \ref{lemPi1}]
The $\pi_1(U)$-action on the local system $j^{-1}\ker\, d^0_m$ is given by $\gamma \mapsto \exp(-f_*(\gamma) s)\, \cdot\, $, multiplication by $\exp(- f_*(\gamma) s)$.
\end{lem}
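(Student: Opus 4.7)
My plan is to mimic almost verbatim the proof of Lemma~\ref{lemPi1}, substituting the generator identified in the preceding lemma (the rational analogue of Lemma~\ref{lemKer}). The stalk of $j^{-1}\ker d^0_m$ at any point of $U$ is generated, by that lemma, by the element
\[
\omega \coloneqq \exp\!\left(-\frac{\log(f)}{2\pi i}\otimes 1 \otimes s\right) \in j^{-1}\calK_\infty^0(V)\otimes R_m,
\]
where $V$ is a small simply connected neighborhood on which a branch of $\log f$ is chosen. So to compute the monodromy representation it suffices to compute $\gamma\cdot \omega$ for an arbitrary loop $\gamma \in \pi_1(U)$.

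The key observation is the behavior of $\log(f)$ under analytic continuation along $\gamma$: the branch of $\log f$ is shifted by $2\pi i \cdot f_*(\gamma)$, since $f_*(\gamma)$ is exactly the winding number of $f\circ \gamma$ around $0 \in \C^*$. Consequently $\frac{\log(f)}{2\pi i}$ is shifted by $f_*(\gamma)$, and
\[
\gamma\cdot \omega = \exp\!\left(-\Big(\tfrac{\log(f)}{2\pi i}+f_*(\gamma)\Big)\otimes 1 \otimes s\right) = \exp(-f_*(\gamma)\,s)\cdot \exp\!\left(-\tfrac{\log(f)}{2\pi i}\otimes 1 \otimes s\right) = \exp(-f_*(\gamma)\,s)\cdot \omega,
\]
which gives the claimed formula. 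The scalar $\exp(-f_*(\gamma)\,s)$ lives in $R_m$ and acts on the rank-one $R_m$-local system $j^{-1}\ker d^0_m$ by multiplication, which identifies the representation $\pi_1(U)\to \Aut_{R_m}(R_m)$.

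There is essentially no obstacle here; the difference in normalization with Lemma~\ref{lemPi1} (factor of $2\pi$ absent) is exactly accounted for by the fact that the element used to thicken is $1\otimes f$ in $\calK_\infty^1$ rather than $\Im(df/f)$ in $\calE^1_U$, and under the comparison map $\varphi_\infty$ of Section~\ref{rationalmixedhodgecomplexonsmoothvarieties} the former corresponds to $\tfrac{1}{2\pi i}\tfrac{df}{f}$ rather than $\Im(df/f) = \tfrac{1}{2}(\tfrac{df}{f}-\overline{\tfrac{df}{f}})/i$; this is consistent with the choice of twist $\tilde s_2 = \exp(s)-1$ in the rational setting versus $\tilde s = \exp(2\pi s)-1$ in the real setting. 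Apart from making sure the signs and the $2\pi i$ normalizations are tracked correctly, the argument is purely formal once one has identified the generator $\omega$ of $\ker d^0_m$ locally.
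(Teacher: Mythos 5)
Your proof is correct and follows exactly the route the paper intends: the paper explicitly omits this proof, stating that it follows the same steps as the real case (Lemma \ref{lemPi1}), which is precisely what you do — take the generator $\exp\!\left(-\frac{\log(f)}{2\pi i}\otimes 1\otimes s\right)$ from the rational analogue of Lemma \ref{lemKer}, note that analytic continuation along $\gamma$ shifts $\log f$ by $2\pi i\, f_*(\gamma)$, and factor out $\exp(-f_*(\gamma)s)$. Your remark on the normalization (the absent factor of $2\pi$ relative to the real case, consistent with the twists $\tilde s_2$ versus $\tilde s$) is also accurate.
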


\begin{lem}[Analogue of Lemma \ref{lemRm}]\label{lemRmQ}
The kernel of the $0$-th differential of the complex $j^{-1}\calK_{\infty}^\bullet\left(1\otimes f,m\right)_{\tilde{s}_2}$ is $(j^{-1}\ker\, d^0_m)_{\tilde{s}_2}$, which is isomorphic to $\ov\calL\otimes_R R_m$ as local systems of $R_m$-modules.
\end{lem}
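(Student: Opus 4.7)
The plan is to imitate the proof of Lemma \ref{lemRm} and the construction of the canonical isomorphism $\nu$ in Remark \ref{rem:nu}, changing only the normalization factor from $2\pi$ to $2\pi i$ and the twist from $\tilde{s} = \exp(2\pi s)-1$ to $\tilde{s}_2 = \exp(s)-1$. The first assertion of the lemma (that the kernel of $d^0$ of the twisted complex is $(j^{-1}\ker d^0_m)_{\tilde{s}_2}$) is tautological, since twisting alters only the $R_m$-action, not the underlying $\Q$-vector space. All of the substance lies in the isomorphism with $\ov\calL\otimes_R R_m$.

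To match monodromies, I would invoke the rational analogues of Lemmas \ref{lemKer} and \ref{lemPi1} already established in this subsection: $j^{-1}\ker d^0_m$ is a rank one local system of $R_m$-modules, generated locally by $\exp\bigl(-\tfrac{\log f}{2\pi i}\otimes 1 \otimes s\bigr)$, with $\pi_1(U)$-monodromy $\gamma\mapsto \exp(-f_*(\gamma)s)\cdot$. On the other hand, $\ov\calL\otimes_R R_m$ has monodromy $\gamma\mapsto (1+s)^{-f_*(\gamma)}\cdot$. Under the twist $-*_{\tilde s_2} g(s) = -\cdot g(\tilde s_2)$, the element $(1+s)^{-f_*(\gamma)}\in R_m$ acts on $(j^{-1}\ker d^0_m)_{\tilde s_2}$ as multiplication by $(1+\tilde s_2)^{-f_*(\gamma)} = \exp(s)^{-f_*(\gamma)} = \exp(-f_*(\gamma) s)$, matching the monodromy of $j^{-1}\ker d^0_m$. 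Hence the two local systems are at least abstractly isomorphic as local systems of $R_m$-modules.

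To obtain a canonical isomorphism I would mirror Remark \ref{rem:nu}: for each $x\in U$ and each $x'\in\pi^{-1}(x)$, choose a simply connected neighborhood $V$ of $x$ and the unique section $\iota\colon V\to U^f$ of $\pi$ with $\iota(x)=x'$, then define
\[
\nu^{\Q}_{x}(\ov\delta_{x'}) = \exp\!\Bigl(-\tfrac{f_\infty\circ\iota}{2\pi i}\otimes 1 \otimes s\Bigr).
\]
Because $\exp\circ f_\infty = f\circ\pi$, the function $f_\infty\circ\iota$ is a local branch of $\log f$, so this is genuinely a generator of $j^{-1}\ker d^0_m$ over $V$. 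To check $R_m$-linearity in the twisted structure, note that $t\ov\delta_{x'}=\ov\delta_{t^{-1}x'}$ and that the section through $t^{-1}x'=(x,z_0-2\pi i)$ satisfies $f_\infty\circ\iota'=f_\infty\circ\iota-2\pi i$, whence $\nu^{\Q}_x(t\ov\delta_{x'}) = e^s\cdot\nu^{\Q}_x(\ov\delta_{x'}) = (1+\tilde s_2)\cdot \nu^{\Q}_x(\ov\delta_{x'}) = \nu^{\Q}_x(\ov\delta_{x'})*_{\tilde s_2} t$. The same branch-shift calculation verifies compatibility with monodromy, so $\nu^{\Q}$ assembles into a global isomorphism of local systems of $R_m$-modules.

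I expect no substantive obstacle beyond tracking the change in normalization: in the real case the factor $2\pi$ arose from the identification $\Im(df/f)=d\arg f$, whereas in the holomorphic setting the analogous identification pairs $\tfrac{1}{2\pi i}\frac{df}{f}$ with $d\bigl(\tfrac{1}{2\pi i}\log f\bigr)$ and so the factor $2\pi i$ enters the argument of the exponential instead. Once this adjustment is made, every step of Lemma \ref{lemRm} and Remark \ref{rem:nu} carries over verbatim.
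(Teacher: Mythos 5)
Your proposal is correct and takes essentially the same route as the paper, which omits this proof precisely because it "follows the same steps as the real case": the rational analogues of Lemmas \ref{lemKer} and \ref{lemPi1} give the two monodromies, and the twist by $\tilde{s}_2=\exp(s)-1$ converts the action of $(1+s)^{-f_*(\gamma)}$ into multiplication by $\exp(-f_*(\gamma)s)$, matching them. Your explicit stalkwise map $\ov\delta_{x'}\mapsto \exp\bigl(-\tfrac{f_\infty\circ\iota}{2\pi i}\otimes 1\otimes s\bigr)$ and the $R_m$-linearity check via the shift $f_\infty\circ\iota\mapsto f_\infty\circ\iota-2\pi i$ are exactly the canonical isomorphism $\nu_\Q$ of Remark \ref{rem:nuQ}.
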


\begin{remk}[Analogue of Remark \ref{rem:nu}]\label{rem:nuQ}
We can fix a canonical isomorphism $\nu_{\Q}\colon \ov\calL\otimes_R R_m\cong (\ker\, d^0_m)_{\tilde s_2}$ as follows. Recall that at any $x\in U$, $\ov\calL_x$ has an $\Q$-basis $\{\ov\delta_{x'}\}$ parametrized by $x'\in \pi^{-1}(x)$. For any such $x'$, let $(\nu_{\Q})_x(\ov
\delta_{x'})
$ be the unique germ $g$ in the stalk of $(j^{-1}\ker\, d^0_m)$ at $x$ such that $
g(x) = \exp\left(-\frac{f_\infty(x')}{2\pi i}\otimes 1\otimes s\right)$. On any simply connected neighborhood $V$ of $x$, let $\iota\colon V\to U^f$ be the section of $\pi$ such that $\iota(x)=x'$. Then, $\nu_{\Q}$ can be described by
\[
(\nu_\Q)_x(\ov\delta_{x'}) =\exp\left(-\frac{f_\infty\circ\iota}{2\pi i}\otimes 1\otimes s\right).
\]
Similarly as in Remark \ref{rem:nu}, we can check that $\nu_{\Q}$ is an $R_m$-linear map on stalks (after twisting the $R_m$ module structure by $\widetilde{s_2}$ in the target) and that it defines a morphism of local systems of $R_m$-modules on $U$.
\end{remk}

\begin{prop}[Analogue of Proposition \ref{propLocal}]
\begin{align*}
 j^{-1}\calK^\bullet_\infty\left(1\otimes f,\infty\right)_{\tilde{s}_2}\coloneqq \displaystyle\varprojlim_m \left(j^{-1}\calK^\bullet_\infty\left(1\otimes f,m\right)_{\tilde{s}_2}\right)
 \end{align*}
 is a resolution of $\ov\calL \otimes_R R_\infty$, via a canonical map $\ov\calL \otimes_R R_\infty\to j^{-1}\calK^\bullet_\infty\left(1\otimes f,\infty\right)_{\tilde{s}_2}$ described in Remark~\ref{rem:nuQ}. \label{propLocalQ}
 \end{prop}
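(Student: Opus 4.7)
The plan is to mimic exactly the proof of Proposition \ref{propLocal}, with Lemma \ref{lemRmQ} replacing Lemma \ref{lemRm} and Remark \ref{rem:nuQ} replacing Remark \ref{rem:nu}. The crucial input is again Lemma \ref{clExact}, applied this time to the inverse system of exact complexes of sheaves of $R_m$-modules
\[
\ov\calL\otimes_R R_m \xrightarrow{\nu_\Q} j^{-1}\calK^\bullet_\infty(1\otimes f, m)_{\tilde{s}_2},
\]
whose exactness for each fixed $m$ is the content of Lemma \ref{lemRmQ} together with the discussion immediately preceding it (the check, done via the faithfully flat extension $\C/\Q$ and the quasi-isomorphism $F$ of \eqref{eqnF}, that $\ker d^k_m = \im d^{k-1}_m$ for every $k\ge 1$).

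The transition maps of the inverse system are those induced by the ring surjections $R_{m+j}\twoheadrightarrow R_m$. Since, as sheaves of $\Q$-vector spaces, $j^{-1}\calK^i_\infty(1\otimes f, m)_{\tilde{s}_2}\cong j^{-1}\calK^i_\infty\otimes_\Q R_m$, the transition maps are of the form $\mathrm{id}\otimes(R_{m+j}\twoheadrightarrow R_m)$ and are therefore surjective on sections over every open set. Moreover, $U$ admits a basis of simply connected open sets $V$, on each of which $\ov\cL$ is constant and $j^{-1}\calK^\bullet_\infty(1\otimes f, m)_{\tilde{s}_2}(V)$ is exact (again by Lemma \ref{lemRmQ}). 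The hypotheses of Lemma \ref{clExact} are thus met.

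Applying Lemma \ref{clExact} to the inverse system displayed above yields exactness of
\[
\varprojlim_m \bigl(\ov\calL\otimes_R R_m\bigr) \;\longrightarrow\; \varprojlim_m j^{-1}\calK^\bullet_\infty(1\otimes f, m)_{\tilde{s}_2}\;=\; j^{-1}\calK^\bullet_\infty(1\otimes f, \infty)_{\tilde{s}_2}.
\]
Since $\ov\cL$ is a local system of rank one free $R$-modules and $R_\infty\cong \varprojlim_m R_m$, the left-hand term is naturally isomorphic to $\ov\calL\otimes_R R_\infty$, and the induced map is precisely the canonical one of Remark \ref{rem:nuQ} (obtained by taking the inverse limit of the maps $\nu_\Q$ at each finite level). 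This gives the desired resolution.

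The main (and only) point requiring any care is the verification of the Mittag--Leffler hypothesis for Lemma \ref{clExact}; once this is in hand, the argument is essentially formal. Everything else is a direct transcription of the real-analytic proof into the algebraic-rational setting of $\calK^\bullet_\infty$, and no new ingredient is needed.
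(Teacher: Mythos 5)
Your proof is correct and follows exactly the approach the paper intends, which is to transcribe the proof of Proposition \ref{propLocal} word for word, replacing Lemma \ref{lemRm} by Lemma \ref{lemRmQ}, Remark \ref{rem:nu} by Remark \ref{rem:nuQ}, and invoking Lemma \ref{clExact} on the inverse system of finite-level resolutions, with the Mittag--Leffler surjectivity supplied by the transition maps $\mathrm{id}\otimes(R_{m+j}\twoheadrightarrow R_m)$. The paper explicitly omits this proof as a routine repetition of the real case, and your write-up fills it in faithfully.
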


\begin{remk}
As stated in \cite[Corollary 4.17]{peters2008mixed}, the adjunction map $\calK_\infty^\bullet\rightarrow Rj_*j^{-1}\calK_\infty^\bullet$ is a quasi-isomorphism. Hence, using Remark \ref{sheafgeneralities} and Proposition \ref{propLocalQ}, for all $1 \leq m \leq \infty$ we get natural isomorphisms
$$
\bH^*\big(X,\calK_\infty^\bullet(1\otimes f,m)_{\tilde{s}_2}\big)\cong \bH^*\big(U,j^{-1}\calK_\infty^\bullet(1\otimes f,m)_{\tilde{s}_2}\big)\cong H^*(U,\ov\calL\otimes_R R_m).
$$
Furthermore for large integers $m$ and $j$, the submodule $\Tors_{R_\infty} H^*(U,\ov\calL\otimes_R R_\infty)$ is shown to be contained in $H^*(U, \ov\calL \otimes_R R_m)$ as the kernel of $\psi_{mj}^*$ applying Lemma \ref{lem:torsion2} to the cochain complexes of these local systems.
\label{remQ}
\end{remk}


\section{Thickened Complex of the Log {de Rham} Complex}
\label{sthick}
It is easy to see that the form $df/f$ is an element of the intersection $\Gamma(U, \Omega^1_U) \cap \ker d $.
In fact, we have a stronger result:
\begin{lem}\label{logarithmicform}
We have 
\[
df/f\in W_1\Gamma\big(X, \Omega^1_X(\log D)\big) \cap F^1\Gamma\big(X, \Omega^1_X(\log D)\big) \cap \ker d.
\]
\end{lem}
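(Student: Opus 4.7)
The plan is to verify the three membership conditions separately, observing that two of them reduce immediately to the third and then handling the remaining one by a local computation on $X$.

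First, I note that membership in the Hodge and weight filtrations is automatic once we know $df/f$ is a global section of $\Omega^1_X(\log D)$. Indeed, by the definitions recalled in Section~\ref{realmixedhodgecomplexonsmoothvarieties}, the Hodge filtration $F^{\lc}$ is the trivial (stupid) filtration, so
\[
F^1 \Omega^1_X(\log D) \;=\; \Omega^{\geq 1}_X(\log D) \cap \Omega^1_X(\log D) \;=\; \Omega^1_X(\log D).
\]
Similarly, the weight filtration in degree $p=1$ with $m=1$ satisfies $W_1 \Omega^1_X(\log D) = \Omega^0_X \wedge \Omega^1_X(\log D) = \Omega^1_X(\log D)$. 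Closedness is also clear: on $U$, $df/f = d\log f$ locally, hence $d(df/f) = 0$ on $U$, and since $U$ is dense in $X$ and $\Omega^2_X(\log D)$ has no torsion in the relevant sense (it is locally free), the identity $d(df/f) = 0$ extends to all of $X$ once we know the form extends as a log form.

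The substantive claim is therefore that $df/f$ extends across $D$ with logarithmic poles. I would argue this locally. Since $U = \bar f^{-1}(\C^*)$, we have $D = \bar f^{-1}(\{0,\infty\})$. Pick $x \in D$ and local analytic coordinates $(z_1,\ldots,z_n)$ centered at $x$ such that $D$ is given by $z_1 \cdots z_k = 0$. If $\bar f(x) = 0$, then $\bar f$ is a holomorphic function in a neighborhood of $x$ whose zero locus is contained in $\{z_1\cdots z_k = 0\}$; a standard application of the local factorization of holomorphic functions on a polydisc (or the fact that the vanishing locus of $\bar f$ is a Cartier divisor supported on $D$) gives
\[
\bar f \;=\; u \, z_1^{a_1}\cdots z_k^{a_k}
\]
with $a_i \in \Z_{\geq 0}$ and $u$ a unit in the local ring. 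If $\bar f(x) = \infty$, apply the same reasoning to $1/\bar f$ to obtain $\bar f = u^{-1} z_1^{-a_1}\cdots z_k^{-a_k}$. In either case a direct computation gives
\[
\frac{df}{f} \;=\; \frac{du}{u} \;\pm\; \sum_{i=1}^{k} a_i \, \frac{dz_i}{z_i},
\]
which is manifestly a section of $\Omega^1_X(\log D)$ near $x$. At points $x \in U$ the form $df/f$ is holomorphic, so gluing these local descriptions yields a global section of $\Omega^1_X(\log D)$, completing the proof.

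The only step that requires any thought is the local normal form for $\bar f$ near $D$, and this is essentially a standard consequence of the SNC hypothesis together with $\bar f$ being algebraic with $\bar f^{-1}(\C^*) = U$; there is no genuine obstacle beyond writing it down carefully.
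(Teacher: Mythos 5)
Your proof is correct and follows essentially the same route as the paper: choose local coordinates in which $D$ is a coordinate divisor, pass to the appropriate affine chart of $\C P^1$ (your two cases $\bar f(x)=0$ and $\bar f(x)=\infty$), factor $\bar f$ locally as a unit times a monomial in the $z_i$, and apply the Leibniz rule. Your extra observations—that $F^1$ and $W_1$ contain all of $\Omega^1_X(\log D)$ in degree one, and that closedness extends from $U$ by local freeness—are correct and only make explicit what the paper leaves implicit.
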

\begin{proof}
We will check the stalk condition described in \cite[Section 4.1]{peters2008mixed}.
Let $x \in X$ be given.
Select an open $V$ admitting local holomorphic coordinates $(z_1, \dots, z_n)$ at $x$ in which $D$ is given by $z_1\dots z_\ell = 0$.
By shrinking $V$ if necessary and taking the appropriate affine chart of $\C P^1$, we may assume that $\bar{f}\colon X \rightarrow \C P^1$ determines by restriction a holomorphic map $\bar{f}|_V\colon V \rightarrow \C$, where $\left(\bar{f}|_V\right)^{-1}(0)\subset D\cap V$.
In particular, we can write $\bar{f}|_V = z_1^{a_1} \cdots z_\ell^{a_\ell} \cdot g$ for some $a_1, \dots, a_\ell \geq 0$ and invertible $g\colon V \rightarrow \C^*$.
Application of the Leibniz rule to the following:
\begin{align*}
 \frac{df}{f} = \frac{d(z_1^{a_1} \cdots z_\ell^{a_\ell} \cdot g)}{z_1^{a_1} \cdots z_\ell^{a_\ell} \cdot g}, 
\end{align*}
where we have implicitly further restricted to $U \cap V$,
shows that $df/f$ is of the desired form on $V$.
\end{proof}

Our goal is to conduct a thickening of $\Hdg{X}{D}$ so that the hypotheses of Section \ref{denotationsandassumptionsthickenedcomplexesandmixedhodgecomplexes} are satisfied.
Recall that our pseudo-morphism $\alpha$ is given by the chain:
$$
\begin{tikzcd}
 (j_*\calE^\bullet_U, \tau_{\lc}) \ar[r, "j_*(\id \otimes 1)"] & (j_*(\calE^\bullet_U \otimes \C), \tau_{\lc}) & \ar[l, "\simeq"] (\logdr{X}{D}, \tau_{\lc}) \arrow{r}{\id}[swap]{\simeq} & (\logdr{X}{D}, W_{\lc}) 
\end{tikzcd}
$$
where the central map is the inclusion.
As required by Section \ref{denotationsandassumptionsthickenedcomplexesandmixedhodgecomplexes}, we begin by selecting closed global $1$-forms: $\Im (df/f)$ of $\tau_1j_*\calE^\bullet_U$ (where $\Im$ denotes the imaginary part) and $(1/i)df/f$ of $W_1\logdr{X}{D} \cap F^1\logdr{X}{D}$, allowable by Lemma \ref{logarithmicform}.
We must verify that these two global 1-forms are cohomologous in $\tau_1$:

\begin{lem}\label{imaginarycohomologous}
The forms $\Im(df/f)$ and $(1/i)df/f$ are cohomologous in $\tau_1\Gamma(U, \calE^\bullet_U \otimes \C)$ via:
\[\Im \frac{df}{f}-\frac{1}{i}\frac{df}{f} = d\left(\frac{\Log(|f|)}{-i}\right)\]
where $\Log\colon \R_{>0} \rightarrow \R$ is the real logarithm.
\end{lem}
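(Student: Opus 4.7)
The claim is essentially a direct identity; the only substantive content is verifying that the proposed primitive is globally defined and lies in the correct piece of the canonical filtration. My plan is to first observe that both $\Im(df/f)$ and $(1/i)df/f$ are closed global $1$-forms on $U$ (for $(1/i)df/f$ this follows from $d(df/f)=0$; for $\Im(df/f)$ it follows by taking imaginary parts, or directly from $\Im(df/f)=d\arg(f)$ locally). Hence both forms lie in $\tau_1^1\Gamma(U,\calE_U^\bullet\otimes\C)=\ker d^1$. To show they are cohomologous in $\tau_1$, I only need to exhibit an element of $\tau_1^0\Gamma(U,\calE_U^\bullet\otimes\C)=\Gamma(U,\calE_U^0\otimes\C)$ whose exterior derivative is the difference.

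Since $f\colon U\to\C^*$, the function $|f|\colon U\to\R_{>0}$ is smooth and positive, so $\Log|f|$ is a well-defined global smooth real function on $U$, and therefore $\Log(|f|)/(-i)\in\Gamma(U,\calE_U^0\otimes\C)$. It remains to check the identity $\Im(df/f)-(1/i)\,df/f=d\bigl(\Log|f|/(-i)\bigr)$, which is a purely local computation. On any simply connected open subset of $U$ one may pick a branch of $\log f=\Log|f|+i\arg(f)$; then
\[
\frac{df}{f}=d\log f=d\Log|f|+i\,d\arg(f),
\]
so $\Im(df/f)=d\arg(f)$ and $(1/i)\,df/f=-i\,d\Log|f|+d\arg(f)$. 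Subtracting gives $\Im(df/f)-(1/i)\,df/f=i\,d\Log|f|=d\bigl(i\Log|f|\bigr)=d\bigl(\Log|f|/(-i)\bigr)$, since $1/(-i)=i$. The right-hand side is a globally defined form, and $\arg(f)$ (which has no global branch in general) cancels out, so the identity holds globally on $U$.

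There is essentially no obstacle here beyond keeping track of signs and the factor of $i$. The only point worth emphasizing is that $\Log|f|$ is defined globally on $U$ precisely because $f$ avoids $0$, which is why the primitive exists as a global smooth function rather than only locally; this is what lets the equality be promoted from a local statement about branches of $\log f$ to a global equality in $\Gamma(U,\calE_U^\bullet\otimes\C)$.
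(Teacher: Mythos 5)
Your proof is correct and follows essentially the same route as the paper: pick a local branch of $\log f$ to write $df/f = d\Log|f| + i\,d\arg(f)$, identify the difference $\Im(df/f) - (1/i)\,df/f$ with a multiple of $d\Log|f|$ (equivalently $\Re(df/f)$, as the paper phrases it), and note that $\Log|f|$ is globally defined since $f$ avoids $0$ while $\arg f$ cancels. Your additional remark that the primitive sits in $\tau_1$ because it is a degree-$0$ form matches the paper's observation that $\tau_1\Gamma(U,\calE^0_U\otimes\C)=\Gamma(U,\calE^0_U\otimes\C)$.
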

\begin{proof}
As $\tau_1\Gamma(U, \calE^0_U \otimes \C) = \Gamma(U, \calE^0_U \otimes \C)$ it follows that $\frac{\Log(|f|)}{-i} \in \tau_1\Gamma(U, \calE^\bullet_U \otimes \C)$.
It only remains to verify the presented equality.
Note that:
\[\Im \frac{df}{f}-\frac{1}{i}\frac{df}{f} = -\frac{1}{i}\Re \frac{df}{f}\]
where $\Re$ denotes the real part. 
On the other hand, for any branch $\log$ of the complex logarithm, we have:
\[d\left(\frac{\Log(|f|)}{-i}\right) 
=-\frac{1}{i}d(\Re \log (f)) = -\frac{1}{i}\Re \frac{df}{f}, 
\]
as desired.
\end{proof}

Accounting for Remark \ref{multiplicative}, the assumptions of Section \ref{denotationsandassumptionsthickenedcomplexesandmixedhodgecomplexes} are satisfied (as required by assumption (3) we fix the choice $\Log(|f|)/(-i)$ as witness to $\Im(df/f)$ and $(1/i) df/f$ being cohomologous), and we conclude:

\index{MHS|(}
\begin{thm}\label{logthickenedmhs}\index{mixed Hodge complex!Hodge-de Rham}\index{mixed Hodge complex!Hodge-de Rham@$\calH dg^\bullet(X\,\log D)(\frac{1}{i}\frac{df}{f},m)$}
Consider the $\R$-mixed Hodge complex of sheaves:
\begin{align*}
 \calH dg^\bullet(X\,\log D) = \left((j_*\calE^\bullet_U, \tau_{\lc}), (\logdr{X}{D}, W_{\lc}, F^{\lc}), \alpha \right) 
\end{align*} described in Theorem \ref{logmhs}.
Suppose $m \geq 1$.
Then we have a thickened triple as in Theorem \ref{mhsthickened}:
\begin{align*}
 \calH dg^\bullet&(X\,\log D)\left(\frac{1}{i}\frac{df}{f},m\right)\\
 &\coloneqq \left(\left[j_*\calE^\bullet_U\left(\Im\, \frac{df}{f},m\right), \tau_{\lc}\right], \left[\logdr{X}{D}\left(\frac{1}{i}\frac{df}{f},m\right), W_{\lc}, F^{\lc} \right], \alpha_{\#}\right)
\end{align*}
which is an $\R$-mixed Hodge complex of sheaves on $X$. 
\end{thm}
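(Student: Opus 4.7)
The plan is to realize this theorem as a direct application of Theorem~\ref{mhsthickened} to the multiplicative $\R$-mixed Hodge complex $\calH dg^\bullet(X\,\log D)$ from Theorem~\ref{logmhs} (multiplicativity having been noted in Remark~\ref{multiplicative}). Unpacking the pseudo-morphism $\alpha$ displayed in \eqref{eqmorphisms}, we have a chain of length $r=1$, with $\calL^\bullet_0 = j_*\calE^\bullet_U$, $\calL^\bullet_1 = j_*(\calE^\bullet_U\otimes\C)$, and $\calL^\bullet_2 = \logdr{X}{D}$, and with $\alpha_1 = j_*(\mathrm{id}\otimes 1)$, $\alpha_2$ the natural inclusion, and $\alpha_3 = \mathrm{id}$. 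Accordingly, I would take as thickening data $\eta_0 = \Im(df/f)\in\Gamma(X,\calL^1_0)$ and $\eta_1 = (1/i)\,df/f\in\Gamma(X,\calL^1_2)$, and the whole proof consists in checking the three clauses of Assumption~\ref{as}.

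For clause (1), the weight filtration on $\calL^\bullet_0$ is the canonical filtration $\tau_{\lc}$. Since $\eta_0$ is a closed $1$-form, it lies in $\tau_1\Gamma(X,j_*\calE^1_U)$ by the very definition of $\tau_1$ (which at degree $1$ is the kernel of $d$). For $\eta_1$, Lemma~\ref{logarithmicform} tells us that $df/f\in W_1\Gamma(X,\Omega^1_X(\log D))\cap\ker d$, so the same holds for $(1/i)\,df/f$. Clause (2) requires $\alpha_3\eta_1 = \eta_1$ to lie in $F^1\Gamma(X,\logdr{X}{D})$, which is again part of the conclusion of Lemma~\ref{logarithmicform}.

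Clause (3) requires that $\alpha_1\eta_0$ and $\alpha_2\eta_1$ be cohomologous in $W_1\Gamma(X,\calL^\bullet_1)=\tau_1\Gamma(X,j_*(\calE^\bullet_U\otimes\C))$ via some witness $a_1$ in $\tau_1\Gamma(X,j_*(\calE^0_U\otimes\C))$. But under $\alpha_1$ and $\alpha_2$, $\eta_0$ and $\eta_1$ both land in $\Gamma(U,\calE^1_U\otimes\C)$ as $\Im(df/f)$ and $(1/i)\,df/f$ respectively; and Lemma~\ref{imaginarycohomologous} exhibits $a_1 = \Log(|f|)/(-i)$ as the required primitive. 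This $a_1$ lies in $\tau_1\Gamma(X,j_*(\calE^0_U\otimes\C))$ for the trivial reason that the canonical filtration at degree $0$ is the entire space of sections.

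With all three conditions of Assumption~\ref{as} verified, Theorem~\ref{mhsthickened} applies verbatim and gives the stated conclusion. There is no serious obstacle to the argument; the theorem is essentially a packaging result whose substantive content (multiplicativity, the $W_1\cap F^1$ placement of $df/f$, and the explicit primitive for $\Im(df/f)-(1/i)\,df/f$) has already been extracted in Remark~\ref{multiplicative} and Lemmas~\ref{logarithmicform}--\ref{imaginarycohomologous}. The only bookkeeping concern worth flagging is to keep the three weight-type filtrations straight -- the canonical $\tau_{\lc}$ on the real and complex de Rham sides versus the logarithmic $W_{\lc}$ on $\logdr{X}{D}$ -- when checking that each $\alpha_i$ is filtered as required.
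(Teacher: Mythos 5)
Your proposal is correct and follows exactly the route the paper takes: after noting multiplicativity (Remark~\ref{multiplicative}), it selects $\eta_0 = \Im(df/f)$ and $\eta_1 = (1/i)\,df/f$, verifies the three clauses of Assumption~\ref{as} via Lemmas~\ref{logarithmicform} and \ref{imaginarycohomologous} with witness $a_1 = \Log(|f|)/(-i)$, and invokes Theorem~\ref{mhsthickened}. The only microscopic imprecision is that in clause (1) for $\eta_1$ the relevant filtration on $\calL_2 = (\logdr{X}{D},\tau)$ is $\tau$ rather than $W$, but since $\tau_1$ in degree~$1$ consists of all closed $1$-forms (and $W_1\Omega^1_X(\log D) = \Omega^1_X(\log D)$ anyway), the check goes through either way, and the paper itself states the membership the same way you do.
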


In this paper, we focus on the torsion part of the Alexander modules, but the next Corollary shows that we have a MHS that involves (a quotient of) the free part as well.
\begin{cor}\label{cor:alex/s^mMHS}
Let $m,j\in \mathbb N$. Recall that we define $R_m= R/(s^m)$, where $s=t-1$. The $\R$-vector spaces $M_m \coloneqq H^*(U; \ov\calL) \otimes_R R_m$ admit natural $\R$-mixed Hodge structures for which the following maps are morphisms of mixed Hodge structures:
\begin{enumerate}
\item The projection $M_{m+j} \twoheadrightarrow M_m$.
\item The map induced by multiplication by $(\log(t))^{j}\colon R_m\hookrightarrow R_{m+j}$ after tensoring, that is $(\log(t))^{j}\colon M_{m} \to M_{m+j}(-j)$.
\item Multiplication by $\log(t)$, $M_m\to M_m(-1)$.
\end{enumerate}
Here $\log(t)$ represents the Taylor series centered at $0$, and $(-j)$, $(-1)$ denote Tate twists.
\end{cor}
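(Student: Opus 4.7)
The plan is to realize $M_m$ as the image of a MHS morphism on the hypercohomology of the thickened real de Rham complex, and then to match each of the three listed maps to a restriction of one of the morphisms from Lemma~\ref{inducedmhsmaps}.

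First I would establish the key identification of $M_m$ with the image of $\phi^*_{j,m}$ for $j$ sufficiently large. The ring $R_\infty$ is the $(s)$-adic completion of the Noetherian ring $R$, hence is $R$-flat, giving $H^i(U;\ov\cL)\otimes_R R_\infty\cong H^i(U;\ov\cL\otimes_R R_\infty)$. By Proposition~\ref{propLocal} (applied sheaf-wise, as in Remark~\ref{remk:torsionForSheaves}) the thickening $A(\eta,\infty)_{\tilde s}$ with $\tilde s = e^{2\pi s}-1$ resolves $\ov\cL\otimes_R R_\infty$, so both sides above are isomorphic to $\mathbb{H}^i(X, j_*A(\eta,\infty))_{\tilde s}$. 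This is finitely generated over $R_\infty$ because $H^*(U;\ov\cL)$ is finitely generated over $R$, so Corollary~\ref{torsion} (twisted as in Remark~\ref{twistedmodule}) yields, for $j$ sufficiently large,
\[
M_m \;\cong\; \im\bigl( \phi^*_{j,m}\colon \mathbb{H}^i(X,j_*A(\eta,m+j))\to\mathbb{H}^i(X,j_*A(\eta,m))\bigr),
\]
where the right hand side uses the twisted $R_m$-action but the same underlying vector space. Since $\phi^*_{j,m}$ is a morphism of MHS by Theorem~\ref{logthickenedmhs} and Lemma~\ref{inducedmhsmaps}(1), its image inherits a sub-MHS, endowing $M_m$ with a natural MHS. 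Remark~\ref{torsionrelations} guarantees that this image stabilizes as $j$ grows, so the MHS does not depend on the choice.

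The three morphism properties follow from a dictionary provided by the twisting: since the twisted $s$ acts as $\tilde s = e^{2\pi s}-1$ in the natural structure, multiplication by $\log t = \log(1+s)$ on $M_m$ corresponds to multiplication by $\log(1+\tilde s) = 2\pi s$ on $\im\phi^*_{j,m}$ (natural structures on both sides). With this, the projection $M_{m+j'}\twoheadrightarrow M_m$ in (1) is the restriction of $\phi^*_{j',m}$, a MHS morphism by Lemma~\ref{inducedmhsmaps}(1); the map $(\log t)^{j'}\colon M_m\to M_{m+j'}(-j')$ in (2) is $(2\pi)^{j'}$ times the restriction of $\psi^*_{m,j'}$, a MHS morphism by Lemma~\ref{inducedmhsmaps}(2); and multiplication by $\log t\colon M_m\to M_m(-1)$ in (3) is $2\pi$ times the restriction of multiplication by $s$, a MHS morphism by Lemma~\ref{inducedmhsmaps}(3), whose image remains inside $\im\phi^*_{j,m}$ because the latter is an $R_m$-submodule of $\mathbb{H}^i(X, j_*A(\eta,m))$.

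The main obstacle is the bookkeeping between natural and twisted $R_\infty$-module structures, in particular verifying that multiplication by $\log t$ on the $M_m$ side corresponds to multiplication by $2\pi s$ on the thickened-complex side. Once this substitution is in hand, all three statements reduce to direct applications of Lemma~\ref{inducedmhsmaps}.
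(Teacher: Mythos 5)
Your proposal is correct and follows essentially the same route as the paper: flatness of $R_\infty$ over $R$, the soft resolution of $\ov\cL\otimes_R R_\infty$ from Proposition~\ref{propLocal} with the twist $\tilde s=e^{2\pi s}-1$, identification of $M_m$ with $\im\phi^*$ via Corollary~\ref{torsion} and Remark~\ref{torsionrelations}, and then the dictionary $\log t \leftrightarrow 2\pi s$ together with Lemma~\ref{inducedmhsmaps} for the three maps. The only difference is that the paper works with the $[1]$-translated mixed Hodge complex (a normalization that Tate-twists the resulting MHS, chosen to ease later comparisons), which does not affect the validity of your argument for the statement as phrased.
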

\begin{proof}

By the flatness of $R_\infty$ over $R$, we have
\[
M_m 
=H^*(U;\ov\calL)\otimes_R R_m 
\cong H^*(U;\ov\calL)\otimes_{R} R_\infty \otimes_{R_\infty} R_m 
\cong H^*(U;\ov\calL\otimes_{R} R_\infty) \otimes_{R_\infty} R_m .
\]
For $m \geq 1$, it turns out that the translated mixed Hodge complex of sheaves
\[\calH dg^\bullet(X\,\log D)\left(\frac{1}{i}\frac{df}{f},m\right)[1]\]
is better suited for comparison to known mixed Hodge structures, so we use it to endow $M_m$ with a mixed Hodge structure. This will enable us to state Theorems~\ref{geoIntro} and \ref{comp} without involving Tate twists. This translated $\R$-mixed Hodge complex of sheaves endows, by Remark \ref{translatedlemmas}, $\R$-mixed Hodge structures on the hypercohomology:
\[
\mathbb{H}^*(U, \calE^\bullet_U(\Im df/f, m)[1])
\]
which we fix for the rest of the proof. We will also use Lemma~\ref{inducedmhsmaps} for the maps between these spaces.

Let $\tilde s= \exp(2\pi s)-1\in R_\infty$. Then $\calE_U^\bullet\left(\Im\, df/f,m\right)_{\tilde s}$ is a complex of soft sheaves for all $m$, since it is a complex of $\calE_U^0$-modules (\cite[Proposition 2.1.8]{dimca2004sheaves}). Hence, $\calE_U^\bullet\left(\Im\, df/f,m\right)_{\tilde s}$ is $\Gamma$-acyclic and therefore, Proposition \ref{propLocal} provides isomorphisms of $R_\infty$-modules
$$H^*\left(U; \ov\calL\otimes_R R_\infty \right) \cong H^*\Gamma\left(U, \calE^\bullet_U(\Im\, df/f, \infty)\right)_{\tilde s} \cong \mathbb{H}^{*-1}(U, \calE^\bullet_U(\Im df/f, \infty)[1])_{\tilde{s}}.$$

Since every complex algebraic variety has the homotopy type of a finite CW complex \cite[p. 27]{dimca1992hypersurfaces}, the above isomorphism yields that $H^*\Gamma\left(U, \calE^\bullet_U(\Im\, df/f, \infty)\right)$ is a finitely generated $R_\infty$-module. In particular, we can apply Corollary~\ref{torsion}: $M_m$ is isomorphic to the image of $\phi_{m'm}[1]^*$ for $m'\gg 0$. We will use that all the maps in Lemma~\ref{inducedmhsmaps} are MHS morphisms (rather, their translations as in Remark~\ref{translatedlemmas}). Being the image of a MHS morphism, $M_m$ is a sub-MHS of $\mathbb{H}^{*-1}(U, \calE^\bullet_U(\Im df/f, m)[1])_{\tilde{s}}$. The fact that this MHS is independent of $m'$ and $m$ is a consequence of Remark~\ref{torsionrelations}.

Finally, we show that the maps in the statement are MHS morphisms. Note that multiplication by $\frac{\log(t)}{2\pi}$ in $\mathbb{H}^{*-1}\big(U, \calE^\bullet_U(\Im df/f, m)[1]\big)_{\tilde s}$ corresponds to multiplication by $s$ in $\mathbb{H}^{*-1}\big(U, \calE^\bullet_U(\Im df/f, m)[1]\big)$. Using Remark~\ref{torsionrelations}: the map (1) is induced by $\phi_{jm}[1]^*$, the map (2) is induced by $(2\pi)^j \psi_{mj}[1]^*$ and (3) is multiplication by $2\pi s$ (taking into account the different module structures). Since all three maps induce MHS morphisms in cohomology by Lemma~\ref{inducedmhsmaps} and Remark~\ref{translatedlemmas}, we are done.
\end{proof}

\begin{cor}\label{torsionmhs}
Suppose that the action of $t$ on $\Tors_{R}H^*(U; \ov\calL)$ is unipotent. The $\R$-vector spaces $\Tors_{R}H^*(U; \ov\calL)$ admit natural $\R$-mixed Hodge structures for which multiplication by $\log(t)$ determines a morphism of mixed Hodge structures into the $-1$st Tate twist.
\end{cor}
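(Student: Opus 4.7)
\textbf{Proof plan for Corollary~\ref{torsionmhs}.}

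The plan is to realize $\Tors_R H^*(U;\ov\cL)$ as a naturally defined sub-mixed Hodge structure of the mixed Hodge structures already constructed on $M_m = H^*(U;\ov\cL)\otimes_R R_m$ in Corollary~\ref{cor:alex/s^mMHS}. First, under the unipotence hypothesis, $s=t-1$ acts nilpotently on $\Tors_R H^*(U;\ov\cL)$, so every torsion element is annihilated by some power of $s$. Consequently $\Tors_R H^*(U;\ov\cL)$ is supported at the maximal ideal $(s)\subset R$, and the natural map $R\to R_\infty = k[[s]]$ induces a canonical identification
\[
\Tors_R H^*(U;\ov\cL) \;\cong\; \Tors_{R_\infty}\!\bigl(H^*(U;\ov\cL)\otimes_R R_\infty\bigr).
\]

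Second, I would apply the sheaf analogue of Corollary~\ref{torsion} (see Remark~\ref{remk:torsionForSheaves}) to the hypercohomology of the thickened complex, using Proposition~\ref{propLocal} to identify $H^*A(\eta,m)$ with $H^*(U;\ov\cL\otimes_R R_m) = M_m$ after the module twist. Choosing $i,j\gg 0$ so that $s^i$ annihilates all torsion, this gives a canonical identification
\[
\Tors_R H^*(U;\ov\cL) \;\cong\; \ker\!\Bigl(\psi_{ij}^{*}\colon M_i \longrightarrow M_{i+j}\Bigr)\;\subseteq\; M_i.
\]
By Corollary~\ref{cor:alex/s^mMHS}(2), the map $\psi_{ij}^{*}$ is a morphism of $\R$-mixed Hodge structures $M_i \to M_{i+j}(-j)$, so its kernel inherits a sub-mixed Hodge structure from $M_i$. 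I then transport this mixed Hodge structure along the above isomorphism to $\Tors_R H^*(U;\ov\cL)$.

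Third, I would verify that the resulting mixed Hodge structure does not depend on the choice of $i$ and $j$. By Remark~\ref{torsionrelations}, increasing $j$ does not change the kernel, while the comparison between $\ker\psi_{ij}^{*}$ and $\ker\psi_{i'j}^{*}$ for $i'\ge i$ is given by the map $\phi_{i'-i,i}^{*}$, which is a mixed Hodge structure morphism by Corollary~\ref{cor:alex/s^mMHS}(1); a symmetric argument shows that $\psi$-maps identify the structures in the other direction. This ensures that the mixed Hodge structure on $\Tors_R H^*(U;\ov\cL)$ is well-defined.

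Finally, for the statement about $\log(t)$: under unipotence, $\log(t) = \log(1+s) = s - s^2/2 + \cdots$ acts as a finite polynomial in $s$ on each $M_i$ (modulo $s^i$). By Corollary~\ref{cor:alex/s^mMHS}(3), multiplication by $\log(t)$ is a mixed Hodge structure morphism $M_i\to M_i(-1)$. Since $\psi_{ij}^{*}$ is $R_\infty$-linear, this operation preserves $\ker\psi_{ij}^{*}$, and hence restricts to a mixed Hodge structure morphism $\Tors_R H^*(U;\ov\cL)\to \Tors_R H^*(U;\ov\cL)(-1)$. The main technical point to be careful about is the book-keeping between the twisted module structures (the $\tilde s$ appearing in Proposition~\ref{propLocal}) and the untwisted ones used in Corollary~\ref{cor:alex/s^mMHS}: since $\tilde s$ and $s$ differ by a unit in $R_\infty$, the torsion submodules coincide and the identifications go through, but the precise normalizations (including the factors of $2\pi$) must be tracked to ensure the $\log(t)$-morphism is the asserted one. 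I expect this is the only nontrivial bookkeeping step in an otherwise formal argument.
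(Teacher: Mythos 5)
Your proposal is correct and is essentially the route the paper takes: the paper realizes $\Tors_R H^*(U;\ov\cL)$ as the image of the composite $\Tors_{s^m}\bigl(H^*(U;\ov\cL)\otimes_R R_{2m}\bigr)\hookrightarrow M_{2m}\twoheadrightarrow M_m$, where the first inclusion is a MHS morphism because that torsion is the kernel of $(\log t)^m$ (Corollary \ref{cor:alex/s^mMHS}(3)) and the projection is the map of Corollary \ref{cor:alex/s^mMHS}(1); your description of the same subobject as $\ker\psi_{ij}^*$ is equivalent, as the paper itself points out in Remark \ref{remk:mhsSummary}, and your independence and $\log(t)$ arguments match the paper's use of Corollary \ref{cor:alex/s^mMHS}(1),(3) and Remark \ref{torsionrelations}. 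One inaccuracy to fix in the write-up: the asserted identification $H^*(U;\ov\cL\otimes_R R_m)=M_m$ is false in general — by the short exact sequence in the proof of Lemma \ref{lem:torsion2} these differ by a term $\Tors_{s^m}H^{*+1}(U;\ov\cL)$, and indeed the whole point of Corollary \ref{cor:alex/s^mMHS} is that $M_m=H^*(U;\ov\cL)\otimes_R R_m$ is only a sub-MHS (the image of $\phi_{m'm}^*$) of the hypercohomology group $H^*(U;\ov\cL\otimes_R R_m)$ computed from the thickened complex via Proposition \ref{propLocal}. This slip is harmless for your argument, since the identification $\Tors_R H^*(U;\ov\cL)\cong\ker\psi_{ij}^*$ holds both inside $H^*(U;\ov\cL\otimes_R R_i)$ (Lemma \ref{lem:torsion2}, Remark \ref{remk:torsionForSheaves}) and inside $M_i$ itself (an elementary computation, since $s^m$ kills the torsion and $\Tors_R H^*\cap s^iH^*=0$ for $i\ge m$), and $\psi_{ij}^*$ is a MHS morphism in either ambient module (Lemma \ref{inducedmhsmaps}/Remark \ref{translatedlemmas}, resp.\ Corollary \ref{cor:alex/s^mMHS}(2), the unit relating $s^j$ and $(\log t)^j$ being irrelevant for kernels); but as stated the citation of Corollary \ref{torsion} does not literally support the displayed equality, so you should either work in $H^*(U;\ov\cL\otimes_R R_i)$ throughout or add the one-line argument placing the kernel inside $M_i$.
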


\begin{proof}
Since every complex algebraic variety has the homotopy type of a finite CW complex \cite[p. 27]{dimca1992hypersurfaces}, the modules $H^*(U; \ov\calL)$ are finitely generated over $R$. In particular, there is an $m\ge 0$ such that $s^m$ annihilates $\Tors_{R}H^*(U; \ov\calL)$. For such an $m$, $\Tors_{R}H^*(U; \ov\calL)$ is canonically isomorphic to the image of the following map, since $s$ is nilpotent by hypothesis:
\begin{equation}
\label{eqn:imageTorsion}
\Tors_{s^m} \left(H^*(U; \ov\calL) \otimes_{R} R_{2m}\right) \hookrightarrow H^*(U; \ov\calL) \otimes_{R} R_{2m} \twoheadrightarrow H^*(U; \ov\calL) \otimes_{R} R_{m}.
\end{equation}
Since $s$ and $\log(t)$ differ by a unit, $\Tors_{s^m} H^*(U; \ov\calL)$ is the kernel of the multiplication by $(\log(t))^m$, which is a MHS morphism by Corollary~\ref{cor:alex/s^mMHS}, part (3). Hence, the inclusion in equation (\ref{eqn:imageTorsion}) is a MHS morphism. The second map in equation (\ref{eqn:imageTorsion}) is also a MHS morphism by Corollary~\ref{cor:alex/s^mMHS}, part (1). Therefore, the canonical isomorphism between $\Tors_{R}H^*(U; \ov\calL)$ and the image of the map in equation (\ref{eqn:imageTorsion}) endows $\Tors_{R}H^*(U; \ov\calL)$ with a MHS such that multiplication by $\log(t)$ is a morphism of MHS, as it is the restriction of a MHS morphism. By Corollary~\ref{cor:alex/s^mMHS}, part (1), this MHS is independent of $m$, provided that $s^m$ annihilates $\Tors_{R}H^*(U; \ov\calL)$.
\end{proof}

\begin{cor}\label{alexandermhs}
The $\R$-vector spaces $\mathrm{Tors}_R\, H^*(U;\ov\calL)$ admit canonical $\R$-mixed Hodge structures for which multiplication by $\log (t^N)$ is a morphism of mixed Hodge structures into the $-1$st Tate twist. Here $\log$ is the Taylor series centered at $1$, and $N$ is chosen so that $t^N$ acts unipotently on $\mathrm{Tors}_R\, H^*(U;\ov\calL)$.
\end{cor}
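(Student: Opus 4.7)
The plan is to reduce the general case to the unipotent case handled by Corollary~\ref{torsionmhs}, via the finite cyclic covering trick of Section~\ref{sscover}. Given $N$ as in the statement, consider the $N$-fold cyclic cover $p\colon U_N\to U$ with its induced algebraic map $f_N\colon U_N\to\C^*$ and local system $\cL_N=(\pi_N)_!\underline{k}_{U_N^{f_N}}$. By Remark~\ref{remEigenvalue}, the action of $t$ on $\Tors_{R}H^*(U_N;\ov\cL_N)$ is unipotent. Moreover $U_N$ is a smooth connected complex algebraic variety and $f_N$ induces an epimorphism on fundamental groups (since $t^N$ was chosen so), so Corollary~\ref{torsionmhs} applied to $(U_N,f_N)$ endows the torsion part of its cohomological Alexander module with a canonical $\R$-mixed Hodge structure for which multiplication by $\log(t)$ is a MHS morphism to the $(-1)$st Tate twist.

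Next I would transfer this MHS to $\Tors_R H^*(U;\ov\cL)$ using the canonical $R(N)$-linear isomorphism $\ov\theta_{\cL_N}\colon\Tors_{R(N)}H^*(U_N;\ov\cL_N)\xrightarrow{\sim}\Tors_R H^*(U;\ov\cL)$ of Lemma~\ref{233}. Tracing through the definition of $\theta_N$ in diagram~\eqref{eq:UN}, the deck transformation $t$ on $U_N^{f_N}$ (corresponding to $\zeta\mapsto\zeta+2\pi i$ on the $\C$-factor) is identified with the deck transformation $t^N$ on $U^f$ (which is $z\mapsto z+2\pi i N$). Hence multiplication by $\log(t)$ on the $U_N$-side corresponds precisely to multiplication by $\log(t^N)$ on the $U$-side, so the latter is a MHS morphism to the Tate twist, as desired.

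The main obstacle is showing \emph{canonicity}, i.e.\ independence from the choice of $N$. If $N\mid N'$ and both $t^N$ and $t^{N'}$ act unipotently, then one has a commutative tower of algebraic maps $U_{N'}\xrightarrow{q} U_N\xrightarrow{p} U$, all of which are finite cyclic covers compatible with the corresponding $f_{(-)}$. I would first show that the iterated isomorphism of Lemma~\ref{233}, namely $\ov\theta_{\cL_{N'}}=\ov\theta_{\cL_N}\circ\ov\theta_{(q_*\cL_{N'})}$, matches the canonical identifications. Then it suffices to verify that the thickening construction of Theorem~\ref{logthickenedmhs} is functorial with respect to $q$: the pullback map induced by $q$ on the relevant thickened log de Rham complexes is a morphism of multiplicative mixed Hodge complexes (this uses that $q^*(df_N/f_N)=df_{N'}/f_{N'}$ up to an integer factor coming from $z\mapsto z^{N'/N}$, which is accounted for correctly in the $R(N)$- vs.\ $R(N')$-module structures). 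Given this functoriality, the MHS on $\Tors_R H^*(U;\ov\cL)$ induced through $U_N$ coincides with the one induced through $U_{N'}$ under the canonical identification, yielding canonicity. For two arbitrary valid choices $N_1,N_2$, one compares them both to $N_1N_2$. This functoriality is the essential technical content to nail down; it is the key input that later flowers into the independence and functoriality statements (Theorems~\ref{indcompactification}, \ref{indN}, \ref{functorial}).
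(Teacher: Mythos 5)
Your construction is exactly the paper's: reduce to the unipotent case via the $N$-fold cover (Remark~\ref{remEigenvalue}), apply Corollary~\ref{torsionmhs} to $(U_N,f_N)$, and transport the MHS along the canonical isomorphism of Lemma~\ref{233}, under which the deck generator on the $U_N$-side corresponds to $t^N$, so multiplication by $\log(t^N)$ is a MHS morphism into the $-1$st Tate twist. Your canonicity sketch is likewise the route the paper takes in Theorem~\ref{indN} (pullback maps on thickened complexes, plus comparing $N_1$ and $N_2$ through $N_1N_2$); note only that, beyond functoriality of the thickening, the paper must also verify that the pullback-induced map on the torsion parts is an isomorphism and agrees, up to a nonzero constant, with the canonical identification $\ov\theta_{\cL_N}$ coming from the homeomorphism of infinite cyclic covers (done there via the averaging splitting and the maps $\nu$, $\nu_N$), which is the step your sketch leaves implicit.
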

\begin{proof}
If $1$ is not the only eigenvalue of the action of $t$ on $H^*(U;\ov\calL\otimes \C)$, pick $N$ such that $\lambda^N=1$ for all $\lambda$ eigenvalue of the action of $t$ on $H^*(U;\ov\calL\otimes\C)$. We can reduce this case to the one where $1$ is the only eigenvalue by Lemma \ref{lemLocal} and Remark \ref{remEigenvalue}, obtaining a $k$-linear isomorphism $$\mathrm{Tors}_R\, H^*(U;\ov\calL) \cong \Tors_{R(N)} H^*(U_N;\ov\calL_N).$$ We give the left hand side the MHS of the right hand side, which we constructed in Corollary \ref{torsionmhs}. As we will see in Theorem \ref{indN} below, this construction is independent of the choice of suitable $N$.
\end{proof}
\index{MHS|)}
\begin{thm}[Independence of the compactification]\index{compactification}
Suppose that the action of $t$ on $\Tors_{R}H^*(U;\ov\calL)$ is unipotent. The mixed Hodge structure on $\Tors_R H^*(U;\ov\calL)$ obtained in Corollary \ref{torsionmhs} is independent of the compactification $X$ of $U$ such that $D=X\setminus U$ is a simple normal crossing divisor, which we used to construct the mixed Hodge structure.
\label{indcompactification}
\end{thm}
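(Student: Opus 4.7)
The plan is the standard one for proving compactification-independence of a Hodge-theoretic construction: given any two admissible compactifications of $U$, dominate them by a third and invoke functoriality. What requires extra care here is that we must simultaneously track the global $1$-forms $\Im(df/f)$ and $\tfrac{1}{i}df/f$ used to conduct the thickening.

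First, given two good compactifications $X_1,X_2$ of $U$ with simple normal crossing boundaries $D_i=X_i\setminus U$ over which $f$ extends to $\bar f_i\colon X_i\to \C P^1$, one produces a third such good compactification $X_3$ together with proper birational morphisms $g_i\colon X_3\to X_i$ which restrict to the identity on $U$ and satisfy $\bar f_1\circ g_1=\bar f_2\circ g_2$. This is achieved by taking the closure of the diagonal $U\hookrightarrow X_1\times_{\C P^1} X_2$ and applying Hironaka's theorem so that the complement of $U$ becomes a simple normal crossing divisor while the extensions of $f$ from the two factors agree.

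Second, since each $g_i$ is an isomorphism over $U$, the functoriality of the log de Rham complex (combined with Remark~\ref{directim}, which ensures that derived direct images preserve the category of mixed Hodge complexes) produces morphisms of multiplicative $\R$-mixed Hodge complexes of sheaves on $X_i$:
\[
\calH dg^\bullet(X_i,\log D_i)\longrightarrow Rg_{i*}\calH dg^\bullet(X_3,\log D_3),
\]
taking the global $1$-forms $\tfrac{1}{i}\tfrac{df}{f}$ and $\Im(df/f)$ on $X_i$ to the corresponding forms on $X_3$, which belong to $W_1\cap F^1$ and $\tau_1$ respectively by Lemma~\ref{logarithmicform}. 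Applying Theorem~\ref{logthickenedmhs} on both sides then yields, for each $m\ge 1$, a morphism of multiplicative $\R$-mixed Hodge complexes
\[
\calH dg^\bullet(X_i,\log D_i)\!\left(\tfrac{1}{i}\tfrac{df}{f},m\right)\longrightarrow Rg_{i*}\calH dg^\bullet(X_3,\log D_3)\!\left(\tfrac{1}{i}\tfrac{df}{f},m\right).
\]

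Third, passing to hypercohomology and using Proposition~\ref{propLocal} to identify both sides with $H^{*}(U;\ov\calL\otimes_R R_m)$ (after the appropriate twist and translation), the induced morphism of mixed Hodge structures is the identity on the underlying $\R$-vector space, because over $U$ both thickened real de Rham complexes are canonically equal to $\calE^\bullet_U(\Im df/f,m)$. Consequently the MHS on $H^{*}(U;\ov\calL\otimes_R R_m)$ produced from $X_i$ agrees with the one produced from $X_3$, and applying this for $i=1,2$ shows that the MHS on $H^{*}(U;\ov\calL\otimes_R R_m)$ is independent of the choice of compactification. Since the MHS on $\Tors_R H^*(U;\ov\calL)$ is obtained from the MHS on $H^*(U;\ov\calL)\otimes_R R_{2m}$ and $H^*(U;\ov\calL)\otimes_R R_{m}$ via the canonical construction in the proof of Corollary~\ref{torsionmhs} (kernel of multiplication by $s^m$, then projection), it too is independent of $X$.

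The main obstacle is the first step: producing $X_3$ which simultaneously dominates $X_1$ and $X_2$, has SNC boundary, and makes $\bar f_1\circ g_1=\bar f_2\circ g_2$. This is a standard but somewhat delicate application of Hironaka's resolution of singularities (blow up the indeterminacy locus of $\bar f_1\times\bar f_2$, then further resolve the boundary to SNC without touching $U$). Once $X_3$ is in hand, the remaining steps are essentially formal, since the functoriality built into Section~\ref{sec4} was designed with precisely such comparisons in mind.
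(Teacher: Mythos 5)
Your proof follows essentially the same route as the paper's: dominate the two good compactifications by a third one obtained from a resolution of the closure of the diagonal, use functoriality of the (thickened) mixed Hodge complexes under the proper projections (Remark~\ref{directim}, Lemma~\ref{inducedmap}), and note that on hypercohomology the comparison is the identity on the underlying vector space because both thickened complexes restrict to $\calE^\bullet_U(\Im\, df/f,m)$ over $U$. The only cosmetic difference is that you build the dominating compactification inside $X_1\times_{\C P^1}X_2$ to force the extensions of $f$ to agree, which is harmless but not actually needed, since $df/f$ automatically extends as a logarithmic form on any good compactification dominating a given one.
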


\begin{proof}
Suppose that $X$ and $Y$ are two such compactifications, called \emph{good} compactifications. Let $Z$ be a resolution of the closure of the diagonal $\Delta$ of $U\times U$ inside of $X\times Y$ such that $Z$ is a good compactification of $U$ as well. The two projections $p_X\colon Z\rightarrow X$ and $p_Y\colon Z\rightarrow Y$ induce the identity on $U$. It suffices to show that the mixed Hodge structure on $\Tors_R H^*(U;\ov\calL)$ obtained using $Z$ is the same as the one obtained using $X$.

Let $E=Z\setminus U$. By \cite[Lemma 4.12]{peters2008mixed} (recalling Remark \ref{directim}), we have a canonical morphism
$$
\calH dg^\bullet (X\log D)\rightarrow R(p_X)_*\calH dg^\bullet (Z\log E).
$$
By construction, this morphism factors through $(p_X)_*\calH dg^\bullet (Z\log E)$ as maps of triples. Since $$(p_X)_*\left(\calH dg^\bullet (Z\log E)\left(\frac{1}{i}\frac{df}{f},m\right)\right)=\left((p_X)_*\calH dg^\bullet (Z\log E)\right)\left(\frac{1}{i}\frac{df}{f},m\right),$$Lemma \ref{inducedmap} tells us that this map of triples induces a canonical morphism between their corresponding thickenings
$$
\calH dg^\bullet (X\log D)\left(\frac{1}{i}\frac{df}{f},m\right)\rightarrow (p_X)_*\left(\calH dg^\bullet (Z\log E)\left(\frac{1}{i}\frac{df}{f},m\right)\right).
$$
Composing with $(p_X)_*$ of the canonical map into the Godement resolution, we get a morphism of mixed Hodge complexes of sheaves
$$
\calH dg^\bullet (X\log D)\left(\frac{1}{i}\frac{df}{f},m\right)\rightarrow R(p_X)_*\left(\calH dg^\bullet (Z\log E)\left(\frac{1}{i}\frac{df}{f},m\right)\right).
$$

Let $j_X:U\hookrightarrow X$ and $j_Z:U\hookrightarrow Z$ be the inclusions, and note that $p_X\circ j_Z=j_X$. The $[1]$ translation of the map of mixed Hodge complexes above induces isomorphisms between the $\R$-MHS on $\bH^*(U,\calE_U^\bullet(\Im df/f,m)[1])$ obtained by the compactification $X$ and the $\R$-MHS on $\bH^*(U,\calE_U^\bullet(\Im df/f,m)[1])$ obtained by the compactification $Z$.
Hence, the MHS obtained on $\Tors_{R}H^*(U;\ov\calL)$ using these MHS on $\bH^{*-1}(U,\calE_U^\bullet(\Im df/f,m)[1])$ for large enough $m$ is independent of the compactification.
\end{proof}

\begin{thm}[Independence of $N$]\index{cover!finite cyclic}
\label{indN}
The mixed Hodge structure on the module \linebreak $\Tors_R H^*(U,\ov\calL)$ obtained in the proof of Corollary \ref{alexandermhs} is independent of the choice of cover $U_N$ used to construct it.
\end{thm}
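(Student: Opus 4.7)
The plan is to reduce to the case of divisibility and then use functoriality of the thickened mixed Hodge complex with respect to the finite cover $p_k\colon U_k\to U$.

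First, suppose $N_1$ and $N_2$ are two valid choices and set $N=\mathrm{lcm}(N_1,N_2)$. If we can show that whenever $N_1 \mid N_2$ the MHS constructed via $U_{N_1}$ and via $U_{N_2}$ agree, then applying this twice (to $(N_1,N)$ and to $(N_2,N)$) settles the general case. Next, viewing $U_{N_1}$ in place of $U$ and applying Lemma~\ref{233} to the tower $U_{N_2}\to U_{N_1}\to U$, it suffices to establish the following reduction: if $t$ acts unipotently on $\Tors_R H^*(U;\ov\cL)$ (so Corollary~\ref{torsionmhs} applies directly, $N=1$), then for every positive integer $k$ the MHS on $\Tors_R H^*(U;\ov\cL)$ constructed directly from Corollary~\ref{torsionmhs} equals the MHS obtained via $U_k$ (i.e.\ Corollary~\ref{alexandermhs} with $N=k$).

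To prove this reduction, I would first choose good compactifications $X$ of $U$ and $X_k$ of $U_k$ such that the algebraic cover $p_k$ extends to a morphism $\bar p_k\colon X_k\to X$; such a pair exists by Hironaka's theorem, and by Theorem~\ref{indcompactification} the particular choices are inconsequential. Since $p_k^{\ast}f=f_k^{k}$, we have $\bar p_k^{\ast}\!\left(\tfrac{1}{i}\tfrac{df}{f}\right)=\tfrac{k}{i}\tfrac{df_k}{f_k}$, and $\bar p_k^{\ast}$ induces a morphism of multiplicative mixed Hodge complexes of sheaves
\[
\bar p_k^{\ast}\colon \calH dg^\bullet(X\log D)\longrightarrow R(\bar p_k)_{\ast}\calH dg^\bullet(X_k\log D_k).
\]
Applying the sheaf versions of Lemmas~\ref{inducedmap} and~\ref{inducedquasi}, together with Remark~\ref{directim}, this extends to a morphism of thickened mixed Hodge complexes of sheaves
\[
\calH dg^\bullet(X\log D)\!\left(\tfrac{1}{i}\tfrac{df}{f},m\right)\longrightarrow R(\bar p_k)_{\ast}\calH dg^\bullet(X_k\log D_k)\!\left(\tfrac{k}{i}\tfrac{df_k}{f_k},m\right).
\]

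The main obstacle (and the step requiring care) is to compare the thickening in direction $\tfrac{k}{i}\tfrac{df_k}{f_k}$ appearing on the right with the thickening in direction $\tfrac{1}{i}\tfrac{df_k}{f_k}$ used in the MHS on $U_k$ of Corollary~\ref{torsionmhs}. These 1-forms differ by the scalar $k$, and a direct inspection shows that thickening in direction $k\eta$ with parameter $s$ agrees, after the change of $R_\infty$-module structure described in Remark~\ref{twistedmodule}, with thickening in direction $\eta$ with a rescaled parameter $\tilde s$ differing from $ks$ by a unit. This change of parameter is exactly what Lemma~\ref{233} requires on Alexander modules: under $\theta_k$, the deck generator $t_k$ of $U_k^{f_k}$ corresponds to $t^{k}$ on $U^f$, so $s_k=t_k-1$ corresponds to $(1+s)^k-1=ks\cdot u(s)$ for a unit $u(s)\in R_\infty$.

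Passing to hypercohomology via the resolutions of Propositions~\ref{propLocal} and~\ref{propLocalQ}, the displayed morphism of thickened MHC's realizes, at the level of underlying $\R$-vector spaces, precisely the $R$-module isomorphism of Lemma~\ref{233}, and it is a MHS morphism in view of Lemma~\ref{inducedmhsmaps} (combined with its translated analogue, Remark~\ref{translatedlemmas}). Restricting to the torsion submodules and using Corollary~\ref{torsion} together with Remark~\ref{torsionrelations} to identify $\Tors_R H^*(U;\ov\cL)$ as the image/kernel of $\phi_{ji}^{\ast}$/$\psi_{ij}^{\ast}$ on both sides for $i,j\gg 0$, we conclude that the two MHS on $\Tors_R H^*(U;\ov\cL)$ coincide, completing the reduction and hence the theorem.
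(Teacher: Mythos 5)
Your overall route is the same as the paper's: reduce, via towers of covers and Lemma~\ref{233}, to the situation where $t$ already acts unipotently and one must compare the direct construction on $U$ with the one obtained through a further cover $U_k$; then compare the two thickened Hodge--de Rham complexes by pulling back along $\bar p_k$. Your two-step map (pull back, landing in the thickening in direction $\frac{k}{i}\frac{df_k}{f_k}$, then rescale the formal parameter) is literally the paper's single map $\wh p_\R(\omega\otimes s^j)=p^*\omega\otimes s_N^j/N^j$, and your remark that the exponential twists reconcile the linear rescaling of $s$ with the relation $t_k=t^k$ is the paper's verification that $\wh p_\R$ becomes $R(N)_\infty$-linear after twisting by $\tilde s$ and $\tilde s_N$. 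So the strategy is sound and matches the paper.

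The gap is in your last paragraph, and it is the heart of the theorem. You assert that, after passing to hypercohomology through the resolutions of Propositions~\ref{propLocal} and~\ref{propLocalQ}, this morphism of mixed Hodge complexes ``realizes precisely the isomorphism of Lemma~\ref{233}.'' This is not verified, and at the level of sheaves it is false: the map $\sigma\colon \ov\cL\otimes_R R_\infty\to p_*(\ov\cL_N\otimes_{R(N)}R(N)_\infty)$ determined by $\wh p_\R$ and the resolutions $\nu$, $\nu_N$ sends a generator $\ov\delta_{(x,z)}$ to a combination spread over all $N$ sheets of $p$, with coefficients that are units of $R(N)_\infty$, rather than to the canonical generator $\ov\delta_{(x,e^{z/N},z/N)}$; whether the induced map on cohomology nevertheless agrees with the canonical one is at best unproven. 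This matters because the theorem is a statement about a specific identification: the MHS attached to the choice $N$ is by definition transported along the canonical isomorphism of Lemma~\ref{233}, so it is not enough to produce \emph{some} MHS isomorphism between the two groups (that only shows the two structures are abstractly isomorphic); you must show the canonical identification itself is compatible with the filtrations. The paper closes exactly this gap with work you omit: it constructs the averaging left inverse $\wh p_\R'$, so that $\wh p_\R$ is split injective; deduces that $\sigma$ is an injective morphism of MHS, hence an isomorphism of MHS by equality of dimensions; and then computes on stalks that the map induced by $\wh p_\R'$ equals $\frac{1}{N}$ times the canonical isomorphism, so the canonical isomorphism is indeed an isomorphism of MHS. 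Some argument of this kind -- identifying the cohomology-level map with the canonical one, or exhibiting an explicit (one-sided) inverse that is the canonical map -- is required; as written, your proof does not establish independence of $N$.
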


\begin{proof}

It suffices to show that if $1$ is the only eigenvalue of the action of $t$ on $H^*(U;\ov\calL\otimes \C)$, then the mixed Hodge structure defined on $\Tors_R H^*(U;\ov\calL)$ is the same as the mixed Hodge structure obtained on $\Tors_{R(N)} H^*(U_N;\ov\calL_N)$, where $N\geq 1$ and $p:U_N\rightarrow U$ is a degree $N$ cover as in Section \ref{sscover}. We use the same notation from the discussion preceding Lemma \ref{lemLocal}. In particular, we defined an isomorphism $\cL\cong p_*\cL_N$ of sheaves of $R(N)$-modules, which we will refer to as ``the canonical isomorphism'' throughout the proof. By Lemma~\ref{233}, this is the obvious isomorphism appearing in this situation, since it induces in cohomology the map coming from the isomorphism of infinite cyclic covers.

As explained in \cite[Section 4.5.1]{peters2008mixed}, it is possible to find smooth compactifications $X$ of $U$ and $Y$ of $U_N$ such that $D\coloneqq X\backslash U$ and $E\coloneqq Y\backslash U_N$ are simple normal crossing divisors, and such that $p:U_N\rightarrow U$ extends to $p: Y\rightarrow X$, and $p^{-1}(D)=E$ by construction. By Remark~\ref{directim}, applying $Rp_*$ to $\calH dg^\bullet (Y\log E)\left(\frac{1}{i}\frac{df_N}{f_N},m\right)$ yields a mixed Hodge complex of sheaves on $X$.

We need to define a map of $\R$-mixed Hodge complexes of sheaves as below. In Corollary~\ref{cor:alex/s^mMHS} we used a translation of these complexes to define the mixed Hodge structures, which according to Remark~\ref{transvstate} will result in a Tate twist in their cohomology groups. We will omit the translation in this proof for brevity.
\[
\wt p:\Hdg{X}{D}\left(\frac{1}{i}\frac{df}{f},m\right)
 \to
Rp_*\left(
\calH dg^\bullet (Y\log E)\left(\frac{1}{i}\frac{df_N}{f_N},m\right)
\right).\]
Note that the twisted de Rham complexes on the left use the ring $R$, and the twisted de Rham complexes on the right use the ring $R(N)=k[t^{\pm N}]$. We recall the notation $R(N)_m=R(N)/((s_N)^m)\subset R$, with $s_N=t^N-1$. We define $\wt p$ as a composition:
\[
\wt p:\Hdg{X}{D}\left(\frac{1}{i}\frac{df}{f},m\right)
 \xrightarrow{\wh p}
p_*\left(
\calH dg^\bullet (Y\log E)\left(\frac{1}{i}\frac{df_N}{f_N},m\right)
\right) \to\]\[\to
Rp_*\left(
\calH dg^\bullet (Y\log E)\left(\frac{1}{i}\frac{df_N}{f_N},m\right)
\right).\]
The second arrow is the natural transformation $p_*\to Rp_*$ (it comes from the map including a complex of sheaves into its Godement resolution and then applying $p_*$). For our purposes, we do not need the object in the middle to be a mixed Hodge complex, only the first and last. For $\omega\in \cE^*_U $ defined on any open subset of $U$, we let:
\[
\wh p_{\R} ( \omega\otimes s^k) = p^* \omega \otimes \frac{(s_N)^k}{N^k}.
\]
where $p^* \omega \otimes \frac{(s_N)^k}{N^k}\in\cE^*_{U_N}\left(\Im\frac{df_N}{f_N},m\right)$. This commutes with the differential:
\begin{align*}
d_{m}(\wh p_{\R}(\omega\otimes s^k) ) &=
 d_{m}\left(
 p^* \omega \otimes \frac{(s_N)^k}{N^k}\right) =
 dp^* \omega \otimes \frac{(s_N)^k}{N^k} + p^*\omega \wedge \darg \otimes \frac{(s_N)^{k+1}}{N^k};\\
\wh p_{\R} \circ d_{m}(\omega\otimes s^k) &=
 \wh p_{\R} \left(
d\omega\otimes s^k + \omega \wedge \Im\frac{df}{f}\otimes s^{k+1}
\right)
\overset{p^*\frac{df}{f}=N\frac{df_N}{f_N}}{=}\\
&=
 dp^*\omega\otimes \frac{(s_N)^k}{N^k} + p^*\omega \wedge \Im\frac{df_N}{f_N}N\otimes \frac{(s_N)^{k+1}}{N^{k+1}}.
\end{align*}
And similarly we can define $\wh p_{\C}$ for the complex part. Using Remark~\ref{directim}, the filtrations on $\calH dg^\bullet (Y\log E)\left(\frac{1}{i}\frac{df_N}{f_N},m\right)$ induce filtrations on both its underived and derived pushforwards, and the map between them preserves these filtrations. Then $\wt p$ preserves the filtrations, given that $\wh p$ does, which is straightforward to verify.

The morphism of pseudomorphisms is the natural one, constructed using the pullback of forms similarly to the definition of $\wt p_{\R}$. Hence, $\wt p$ is a morphism of mixed Hodge complexes of sheaves.

Taking cohomology and using Remark~\ref{rem:nu}, we obtain a morphism of mixed Hodge structures $H^i(U;\ov\cL\otimes_R R_m)\to H^i(U_N;\ov\cL_N\otimes_{R(N)} R(N)_m)$. We want to show that this morphism is related to the canonical isomorphism $\ov\cL\cong p_*\ov\cL_N$ from Lemma~\ref{233}.

Since $\cE_{U_N}^\bullet(m,\Im df_N/f_N)$ is a complex of soft sheaves, the map $$p_*\cE_{U_N}^\bullet(m,\Im df_N/f_N)\rightarrow Rp_*\cE_{U_N}^\bullet(m,\Im df_N/f_N)$$ is a quasi-isomorphism. Since $\wt p$ is a morphism of mixed Hodge complexes, this means that $\widehat p_\R$ induces the same morphism of MHS $H^i(U;\ov\cL\otimes_R R_m)\to H^i(U_N;\ov\cL_N\otimes_{R(N)} R(N)_m)$ as $\wt p_\R$. We work with $\widehat p_\R$ from now on.

The map $\widehat p_\R$ is split injective. A left inverse is $\widehat p_\R':p_*\cE_{U_N}^\bullet(m,\Im df_N/f_N)\to \cE_U^\bullet(m,\Im df/f)$ defined on open sets as follows: on an open set $V$,
\[
\Gamma(V;p_*\cE_{U_N}^\bullet(m,\Im df_N/f_N)) = \Gamma(p^{-1}(V);\cE_{U_N}^\bullet(m,\Im df_N/f_N)).
\] Considering $\alpha\otimes (s_N)^{a}\in \cE_{U_N}^\bullet(m,\Im df_N/f_N)$ defined over $p^{-1}(V)$, we let $t$ act as the generator of the deck group of $p:U_N\to U$, and define
\[
\ov \alpha = \frac{1}{N}\sum_{k=0}^{N-1} (t^k)^*\alpha.
\]
By construction, $\ov \alpha$ is $t$-invariant. Therefore, we can define $\wh p'_{\R}\alpha$ as the unique form such that
$
p^*\wh p'_{\R}(\alpha) = \ov \alpha$. We extend $\wt p'_\R$ to the thickening by letting $\wt p'_\R (\alpha\otimes (s_N)^a) = (\wt p'_\R \alpha)\otimes N^as^a$. Direct computation shows that $\wh p'_\R \circ \wh p_\R=\Id$. We consider the following commutative diagram, in which the horizontal arrows compose to the identity.
\begin{equation}\label{eq:indepNAttempt2}
{
\begin{tikzcd}[column sep = 4em, row sep = 1.3em]
\cE_U^0\otimes_\R R_m\arrow[r,"\wh p_{\R}"] &
p_*\cE_{U_N}^0\otimes_{\R} R(N)_m \arrow[r,"\wh p_{\R}'"] &
\cE_U^0\otimes_\R R_m
\\
\varprojlim_m\cE_U^0\otimes_\R R_m\arrow[r,"\wh p_{\R}"]\arrow[u] &
\varprojlim_m p_*\cE_{U_N}^0\otimes_{\R} R(N)_m \arrow[r,"\wh p_{\R}'"]\arrow[u] &
\varprojlim_m \cE_U^0\otimes_\R R_m \arrow[u]
\\
\ov\cL_U \otimes_R R_\infty \arrow[r,"\sigma" ]\arrow[u,"\nu"] &
p_*\ov\cL_N \otimes_{R(N)} R(N)_\infty\arrow[u,"p_*\nu_N"]\arrow[r,"\sigma'"]&
\ov\cL_U \otimes_R R_\infty \arrow[u,"\nu"].
\end{tikzcd}
}
\end{equation}
The top row of vertical arrows is induced by the limit. $\nu$ and $\nu_N$ are defined as (the inverse limit of the maps) in Remark~\ref{rem:nu}. 
Since $\nu$ and $\nu_N$ are isomorphisms onto the kernel of the differential (by Proposition~\ref{propLocal}), $\sigma$ and $\sigma'$ are uniquely determined as the restrictions of $\wh p_\R$ and $\wh p_\R'$ in order to make the diagram commute.

One can check that, switching the $R$ and $R(N)$-module structure respectively, the map
$$
\wh p_{\R}:\cE_U^\bullet(\Im df/f,m)_{\wt s}\rightarrow p_*\cE_{U_N}^\bullet(\Im df_N/f_N,m)_{\wt{s_N}}
$$
is $R(N)_\infty$-linear, where $\wt s=\exp(2\pi s)-1$ and $\wt{s_N}=\exp(2\pi s_N)-1$. Since $\nu$ and $\nu_N$ are $R_\infty$ and $R(N)_\infty$-linear respectively after these changes in the $R$ and $R(N)$-module structures on the de Rham complexes, we get that $\sigma$ is $R(N)_{\infty}$-linear as well.

We will need a formula for $\sigma'$. First, recall our notation for the sections of these sheaves. Let $x\in U$. The stalk at $x$ of $\ov\cL$ is generated over $k$ by elements of the form $\ov\delta_{(x,z)}$ for some $(x,z)\in U^f$. The stalk of $p_*\ov\cL_N \otimes_{R(N)} R(N)_m$ at $x$ is the direct sum of the stalks of $p^{-1}(x)$, where the stalk at $(x,e^{z/N})$ is generated by sections of the form $\ov\delta_{(x,e^{z/N},z/N + 2\pi i k)}$ with $k\in \Z$. Each element $\ov\delta_{(x,e^{y},y)}$ must be interpreted as a section of $\ov\cL_N$ around the point $(x,e^{y})\in U_N\subset U\times \C^*$, as in diagram (\ref{eq:UN}).

We claim that $\sigma'$ is the map defined on stalks as the $\R$-linear map satisfying
\[
\sigma'\ov\delta_{(x,e^{z/N},z/N)} = \frac{1}{N} \ov\delta_{(x,z)}.
\]
A straightforward computation shows that this map on stalks gives a well-defined map of local systems of $R(N)$-modules. Another straightforward computation shows that this is the correct formula for $\sigma'$, that is, we gave the formula that makes the bottom right hand square commute.

Now we take hypercohomology of diagram (\ref{eq:indepNAttempt2}), and we restrict to the torsion part of the left half. We obtain the following diagram. Recall that $\sigma$ is $R(N)_{\infty}$-linear, and that $\Tors_{R_\infty} = \Tors_{R(N)_\infty}$.
\[
\begin{tikzcd}
H^j\Gamma(U;\cE_U^\bullet\otimes_\R R_m) \arrow[r,"\wh p_{\R}"] &
H^j\Gamma(U_N;\cE_{U_N}^\bullet\otimes_{\R} R(N)_m ) \\
\Tors_{R_\infty} H^j(U;\ov\cL \otimes_R R_\infty) \arrow[r,"\sigma" ,hookrightarrow ]\arrow[u,"\nu",hookrightarrow] &
\Tors_{R(N)_\infty}H^j(U_N;\ov\cL_{N} \otimes_{R(N)} R(N)_\infty)\arrow[u,"p_*\nu_N",hookrightarrow].
\end{tikzcd}
\]
Suppose $m$ is large enough. The top arrow is a MHS morphism because it comes from a mixed Hodge complex morphism. The vertical arrows are MHS morphisms by definition of the MHS morphism on the domains (in Corollary~\ref{alexandermhs}). Also, they are injective. Therefore, $\sigma$ is a MHS morphism and it is injective, since it comes from a split injective morphism of sheaves. Also, note that by flatness and the isomorphism $p_*\ov\cL_N\cong \ov\cL$:
\[
\Tors_{R_\infty} H^j(U;\ov\cL \otimes_R R_\infty) \cong
\Tors_{R} H^j(U;\ov\cL ) \cong
\Tors_{R(N)} H^j(U;\ov\cL ) \cong\]\[\cong
\Tors_{R(N)} H^j(U_N;\ov\cL_N ) \cong
\Tors_{R(N)_\infty} H^j(U_N;\ov\cL_N \otimes_{R(N)} R(N)_\infty) .
\]
In particular, these two spaces have the same dimension. So $\sigma$, which is an injection, must be an isomorphism of MHS. Since $\sigma'$ is its right inverse, it is also an isomorphism of MHS. Now we just need to observe that the following diagram clearly commutes (up to multiplication by $N$), where the top horizontal arrow is the canonical map as we've defined it in Lemma~\ref{233}:
\[
\begin{tikzcd}
p_*\ov\cL_N \arrow[r,"\sim"]\arrow[d] &
\ov\cL\arrow[d] \\
p_*\ov\cL_N\otimes_{R(N)} R(N)_\infty \arrow[r,"\sigma'"] &
\ov\cL\otimes_R R_\infty.
\end{tikzcd}
\]
This shows that the canonical isomorphism $p_*\ov\cL_N\cong \ov\cL$ coincides with $\sigma'$ up to multiplication by an integer constant, so the canonical isomorphism induces a MHS isomorphism:
$$
\Tors_R H^*(U;\ov\calL)\cong \Tors_{R(N)} H^*(U_N;\ov\calL_N).
$$
This concludes the proof.
\end{proof}

\begin{thm}[Functoriality of the mixed Hodge structure.]\label{functorial}\index{functoriality}
Let $U_1$ and $U_2$ be smooth connected complex algebraic varieties, with algebraic maps $f_i:U_i\rightarrow \C^*$ such that $f_i$ induces an epimorphism in fundamental groups for $i=1,2$, and assume that there exists an algebraic map $g:U_1\rightarrow U_2$ that makes the following diagram commutative.

\begin{center}
\begin{tikzcd}[row sep = 1.4em]
U_1\arrow[rd,"f_1"']\arrow[rr, "g"] & \ & U_2.\arrow[ld,"f_2"] \\
\ & \C^* & \ 
\end{tikzcd}
\end{center}
Let $\ov\calL_i$ be the local system of $R$-modules induced by $f_i$ for $i=1,2$, where $k=\R$. Then, the natural map $\Tors_R H^*(U_2,\ov\calL_2)\rightarrow \Tors_R H^*(U_1,\ov\calL_1)$ induced by $g$ is a morphism of mixed Hodge structures.
\end{thm}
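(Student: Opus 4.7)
The plan is to reduce to the unipotent case and then exhibit the pullback $g^*$ at the level of thickened mixed Hodge complexes, using Lemma \ref{inducedquasi} (and its sheaf version, Remark \ref{sheafgeneralities}).

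First, assume $t$ acts unipotently on $\Tors_R H^*(U_i;\ov\cL_i)$ for $i=1,2$. Choose a good compactification $X_2$ of $U_2$ such that $f_2$ extends to $\bar f_2\colon X_2\to \C P^1$; then choose a good compactification $X_1$ of $U_1$ admitting \emph{both} an extension $\bar f_1\colon X_1\to \C P^1$ of $f_1$ and an extension $\bar g\colon X_1\to X_2$ of $g$, with $\bar f_2\circ \bar g = \bar f_1$. Such $X_1$ can be produced by starting from any good compactification, taking a resolution of the closure of the graph of $g$, and applying Hironaka to obtain a simple normal crossing divisor at infinity; Theorem \ref{indcompactification} guarantees that the MHS constructed on each side is independent of this choice. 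Let $D_i = X_i\setminus U_i$.

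Functoriality of $\calH dg^\bullet(X\log D)$ (see \cite[Lemma 4.12]{peters2008mixed} and Remark \ref{directim}) gives a morphism of $\R$-mixed Hodge complexes of sheaves on $X_2$,
\[
\calH dg^\bullet(X_2\,\log D_2)\longrightarrow R\bar g_*\calH dg^\bullet(X_1\,\log D_1),
\]
factoring through $\bar g_*\calH dg^\bullet(X_1\,\log D_1)$ as a map of pseudo-morphism representatives. Because $\bar g^*(df_2/f_2) = df_1/f_1$ and $\bar g^*(\Im\, df_2/f_2) = \Im\, df_1/f_1$, the sheaf analogue of Lemma \ref{inducedquasi} applied level-by-level to this chain produces a morphism of the thickened triples
\[
\calH dg^\bullet(X_2\,\log D_2)\!\left(\tfrac{1}{i}\tfrac{df_2}{f_2},m\right)\longrightarrow R\bar g_*\calH dg^\bullet(X_1\,\log D_1)\!\left(\tfrac{1}{i}\tfrac{df_1}{f_1},m\right),
\]
and the resulting triple is again an $\R$-mixed Hodge complex of sheaves by Theorem \ref{mhsthickened}. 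After translation by $[1]$, taking hypercohomology yields a morphism of $\R$-MHS
\[
H^*(U_2;\ov\cL_2\otimes_R R_m)\longrightarrow H^*(U_1;\ov\cL_1\otimes_R R_m)
\]
that, under the resolutions of Proposition \ref{propLocal}, is identified with the usual pullback $g^*$. Applying Corollary \ref{torsion} (equivalently, Lemma \ref{lem:torsion2}), the torsion $\Tors_R H^*(U_i;\ov\cL_i)$ is realized as the image of $\phi^*_{ji}$ (or kernel of $\psi^*_{ij}$) for $j\gg 0$, and commutativity of these identifications with $g^*$ (which is clear from the construction) shows that the induced map on torsion is a MHS morphism.

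For the general case, choose a single $N\geq 1$ such that $t^N$ acts unipotently on both $\Tors_R H^*(U_i;\ov\cL_i)$. Form the $N$-fold cyclic covers $U_{i,N}$ as in Section \ref{sscover}. Since $f_1 = f_2\circ g$, the map $g$ lifts canonically to an algebraic map $g_N\colon U_{1,N}\to U_{2,N}$ over $\C^*$. The unipotent case above applied to $g_N$ produces a MHS morphism
\[
\Tors_{R(N)} H^*(U_{2,N};\ov\cL_{2,N})\longrightarrow \Tors_{R(N)} H^*(U_{1,N};\ov\cL_{1,N}).
\]
By Theorem \ref{indN}, the canonical isomorphism $\Tors_R H^*(U_i;\ov\cL_i)\cong \Tors_{R(N)} H^*(U_{i,N};\ov\cL_{i,N})$ of Lemma \ref{233} is an isomorphism of mixed Hodge structures, and its naturality in the base (with respect to the compatible lifts $g$ and $g_N$) identifies $g^*$ with $g_N^*$, finishing the proof.

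The main obstacle is the compactification step: arranging a single good compactification $X_1$ that simultaneously resolves $\bar f_1$ to a morphism to $\C P^1$ and $\bar g$ to a morphism to $X_2$. This is handled by the standard combination of resolution of indeterminacy and Hironaka, combined with the compactification-independence granted by Theorem \ref{indcompactification}; once this is in place, everything else is essentially formal, propagating through the thickening construction and Corollary \ref{torsion}.
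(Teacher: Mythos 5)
Your proposal is correct and follows essentially the same route as the paper's proof: pass to the $N$-fold cyclic covers to reach the unipotent case, choose compatible good compactifications so that $g$ extends (citing \cite[Section 4.5.1]{peters2008mixed}/Hironaka and Theorem \ref{indcompactification}), invoke the functoriality of $\calH dg^\bullet(X\log D)$ and Remark \ref{directim}, and propagate through the thickening via Lemma \ref{inducedmap}/\ref{inducedquasi} together with the observation that $\bar g^*(df_2/f_2)=df_1/f_1$. The only expository difference is that you structure the argument as unipotent-case-then-reduction, while the paper goes straight to the covers $(U_i)_N$; that restructuring is harmless.
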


\begin{proof}
Note that $\ov\calL_1=g^{*}\ov\calL_2$. Let $N\in\N$ such that the action of $t^N$ on $\Tors_R H^*(U_i;\ov\calL_i)$ is unipotent, for $i=1,2$. The map $g$ lifts to a map $g_N:(U_1)_N\rightarrow (U_2)_N$ such that $(\ov\calL_1)_N=g_N^{*}(\ov\calL_2)_N$.

As explained in \cite[Section 4.5.1]{peters2008mixed}, it is possible to find smooth compactifications $X_i$ of $(U_i)_N$ such that $D_i\coloneqq X_i\backslash (U_i)_N$ is a simple normal crossing divisor for $i=1,2$ and such that $g_N$ extends to $g_N: X_1\rightarrow X_2$. By \cite[Lemma 4.12]{peters2008mixed}, there is a canonical morphism of mixed Hodge complexes of sheaves
$$
(g_N)^*:\Hdg{X_2}{D_2}\rightarrow R{g_N}_*\Hdg{X_1}{D_1}.
$$
By construction, this morphism factors trough $(g_N)_*\Hdg{X_1}{D_1}$ as maps of triples. Since \small
$$(g_N)_*\left(\Hdg{X_1}{D_1}\left(\frac{1}{i}\frac{d(f_1)_N}{(f_1)_N},m\right)\right)=\left((g_N)_*\Hdg{X_1}{D_1}\right)\left(\frac{1}{i}\frac{d(f_1)_N}{(f_1)_N},m\right),$$\normalsize
and using that $(g_N)^* (f_2)_N=(f_1)_N$, Lemma \ref{inducedmap} tells us that this map of triples induces a canonical morphism between their corresponding thickenings
{\small
\[ \Hdg{X_2}{D_2}\left(\frac{1}{i}\frac{d(f_2)_N}{(f_2)_N},m\right)_{\tilde s_N}\rightarrow {g_N}_*\left(\Hdg{X_1}{D_1}\left(\frac{1}{i}\frac{d(f_1)_N}{(f_1)_N},m\right)\right)_{\tilde s_N}. \]
}

Composing with $(g_N)_*$ of the canonical map into the Godement resolution (and twisting the $R(N)_m$-module structure by $\tilde s_N$), we get a morphism of mixed Hodge complexes of sheaves
{\small
\[ \Hdg{X_2}{D_2}\left(\frac{1}{i}\frac{d(f_2)_N}{(f_2)_N},m\right)_{\tilde s_N}\rightarrow R{g_N}_*\left(\Hdg{X_1}{D_1}\left(\frac{1}{i}\frac{d(f_1)_N}{(f_1)_N},m\right)\right)_{\tilde s_N}.\]
}

Therefore, after $[1]$ translating the above morphism, we see that the canonical morphism induced by $g$:
$$
\Tors_R H^*(U_2,\ov\calL_2)\rightarrow \Tors_R H^*(U_1,\ov\calL_1)
$$
is a morphism of $\R$-mixed Hodge structures.
\end{proof}

\begin{thm}[$\Q$-MHS]\label{Qalexandermhs}\index{mixed Hodge complex!rational}
The mixed Hodge structure on $\Tors_R H^i(U;\ov\cL)$ defined for $k=\R$ in Corollary~\ref{alexandermhs} comes from a (necessarily unique) mixed Hodge structure defined for $k=\Q$.
\end{thm}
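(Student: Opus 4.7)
I would mirror the construction of the $\R$-mixed Hodge structure in Section~\ref{mhsal} using the multiplicative $\Q$-mixed Hodge complex of sheaves
\[\bigl((\calK_\infty^\bullet,\tilde W_\bullet),(\logdr{X}{D},W_\bullet,F^\bullet),\varphi_\infty\bigr)\]
of Section~\ref{rationalmixedhodgecomplexonsmoothvarieties} in place of $\calH dg^\bullet(X\,\log D)$. The thickening should be performed with the global section $\eta_0 := 1\otimes f \in \tilde W_1\Gamma(X;\calK_\infty^1)$, which is well-defined because $f\in\Gamma(X;\calM_{X,D})$ (as $\bar f\colon X\to\C P^1$ is given), satisfies $d(1\otimes f) = 0$ since the differential on $\calK_\infty^1$ produces an empty sum on this element, and is mapped by $\varphi_\infty$ to $\tfrac{1}{2\pi i}\tfrac{df}{f}\in F^1\logdr{X}{D}$ by Lemma~\ref{logarithmicform}. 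Since the chain of pseudo-morphisms representing $\varphi_\infty$ consists of a single arrow, Assumption~\ref{as} is trivially met, and Theorem~\ref{mhsthickened} yields a $\Q$-MHC with $\Q$-component $\bigl(\calK_\infty^\bullet(1\otimes f, m), \tilde W_\bullet\bigr)$ and $\C$-component $\bigl(\logdr{X}{D}(\tfrac{1}{2\pi i}\tfrac{df}{f}, m), W_\bullet, F^\bullet\bigr)$.

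Next, I would run the arguments of Corollaries~\ref{cor:alex/s^mMHS}, \ref{torsionmhs}, and \ref{alexandermhs} with this $\Q$-MHC, using Proposition~\ref{propLocalQ} together with the twist $\tilde s_2 = \exp(s)-1$ to identify $\H^*(X;\calK_\infty^\bullet(1\otimes f, m))$ with $H^*(U;\ov\cL\otimes_R R_m)$ (see Remark~\ref{remQ}). This produces a canonical $\Q$-mixed Hodge structure on $\Tors_R H^*(U;\ov\cL)$ (for $k=\Q$), and the independence from the compactification and from $N$ follows by the identical arguments of Theorems~\ref{indcompactification} and \ref{indN}.

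The heart of the argument is to show that the $\R$-extension of this $\Q$-MHS coincides with the $\R$-MHS of Corollary~\ref{alexandermhs}. For this I would introduce the $\C$-linear rescaling
\[\Psi\colon\logdr{X}{D}\bigl(\tfrac{1}{2\pi i}\tfrac{df}{f}, m\bigr) \longrightarrow \logdr{X}{D}\bigl(\tfrac{1}{i}\tfrac{df}{f}, m\bigr),\qquad \omega\otimes s^j\longmapsto (2\pi)^j\omega\otimes s^j,\]
and verify by direct calculation that (i) $\Psi$ is a chain isomorphism preserving both filtrations $W_\bullet$ and $F^\bullet$; (ii) $\Psi$ is $R_m$-linear once source and target are equipped with the twisted module structures of Remark~\ref{twistedmodule} coming respectively from $\tilde s_2 = \exp(s)-1$ and $\tilde s = \exp(2\pi s)-1$; and (iii) $\Psi$ sends the generator $\exp(-\log f/(2\pi i)\otimes s)$ of $\ker d^0_m$ in the source to the generator $\exp(-\log f/i\otimes s)$ in the target. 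Using the identity $-\log f/i = \Log|f|/(-i) - \arg f$ implicit in Lemma~\ref{imaginarycohomologous}, property~(iii) shows that $\Psi$ intertwines the identification $\nu_\Q$ of Remark~\ref{rem:nuQ} composed with $\varphi_\infty$ with the identification $\nu$ of Remark~\ref{rem:nu} composed with the chain representing $\alpha$ in the $\R$-MHC of Theorem~\ref{logmhs}. Consequently $\Psi$ induces, on complexified hypercohomology, a filtered $R_m$-linear isomorphism identifying the complex parts of the $\Q$- and $\R$-thickened MHCs and carrying $\Tors_R H^*(U;\ov\cL)\otimes_\R\C$ to itself compatibly with the two constructions.

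Since any $\Q$-subspace (resp.\ $\R$-subspace) of a $\C$-vector space is recovered from its complexification by intersection with the $\Q$-rational (resp.\ $\R$-rational) part, the equality of complexified weight filtrations forces $W^\Q\otimes_\Q\R = W^\R$, while the Hodge filtrations on $\Tors_R H^*(U;\ov\cL)\otimes\C$ agree tautologically. Hence the $\R$-MHS of Corollary~\ref{alexandermhs} is the $\R$-extension of our $\Q$-MHS, and uniqueness is immediate since the weight filtration of a $\Q$-MHS is determined by its $\R$-extension. \textbf{The main obstacle} is the bookkeeping behind property~(iii) and its consequence: one must carefully trace the two chains of pseudo-morphisms — the $\Q$-chain using $\nu_\Q$ and $(\varphi_\infty)_\#$ versus the $\R$-chain using $\nu$, the filtered isomorphism of Lemma~\ref{cohomologousfiltered}, and the inclusion of the holomorphic de Rham complex — and confirm that $\Psi$ makes these two compatible at the level of thickened complex de Rham cohomology.
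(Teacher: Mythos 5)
Your proposal is correct and follows essentially the same route as the paper: the $\Q$-mixed Hodge complex is the thickening of $(\calK^\bullet_\infty,\tilde{W}_{\lc})$ by $1\otimes f$ with the twist $\tilde{s}_2=\exp(s)-1$, your rescaling $\Psi$ is precisely the paper's bifiltered isomorphism $G_m\colon \omega\otimes s^j\mapsto (2\pi)^j\omega\otimes s^j$, and the compatibility of the two pseudo-morphism chains is verified, exactly as in the paper, by a stalk computation over $U$ showing both composites send $\ov\delta_{x'}$ to $\exp\bigl(-\tfrac{f_\infty\circ\iota}{i}\otimes s\bigr)$, after which faithful flatness of $\R$ over $\Q$ gives the conclusion. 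The paper finishes the non-unipotent case by observing that the transfer isomorphism to $U_N$ is defined over $\Q$, which your appeal to the argument of Corollary~\ref{alexandermhs} covers implicitly.
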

\begin{proof}
In this proof, let $k=\Q$ and therefore $R=\Q[t^{\pm 1}]$. We start by proving the theorem in the case where the action of $t$ on $H^*(U;\ov\cL)$ is unipotent. Let $\tilde{s}_2\coloneqq \exp(s)-1=\Sigma_{n=1}^{\infty}\frac{s^n}{n!}$. Similarly one obtains a $\Q$-mixed Hodge complex: recalling the notation of Section \ref{rationalmixedhodgecomplexonsmoothvarieties} for $m \geq 1$, we select the thickened triple:
\index{mixed Hodge complex!HodgeT@$\Hdg{X}{D}(\frac{1}{2\pi i}\frac{df}{f},m)$}\index{K_infty@$\calK^\bullet_\infty$}
\begin{align*}
& \Hdg{X}{D}\left(\frac{1}{2\pi i}\frac{df}{f},m\right)_{\tilde{s}_2}\\
& \coloneqq \left(\left[\calK^\bullet_\infty\left(1 \otimes f,m\right),  \tilde{W}_{\lc}\right], \left[\logdr{X}{D}\left(\frac{1}{2\pi i}\frac{df}{f},m\right), W_{\lc}, F^{\lc}\right], \varphi_{\infty\#}\right)_{\tilde{s}_2}
\end{align*}
which also satisfies the hypotheses of Section \ref{denotationsandassumptionsthickenedcomplexesandmixedhodgecomplexes}, therefore is a $\Q$-mixed Hodge complex of sheaves on $X$.

Using Remark \ref{remQ}, we see that the translated mixed Hodge complex \[\Hdg{X}{D}\left(\frac{1}{2\pi i}\frac{df}{f},m\right)_{\tilde{s}_2}[1]\] induces $\Q$-mixed Hodge structures on $$H^*(U;\ov\calL\otimes_R R_m) \cong \mathbb{H}^{*-1}(X, \calK^\bullet_\infty(1 \otimes f,m)[1])_{\tilde{s}_2}$$ for which multiplication by $\log t$ is a morphism of mixed Hodge structures into the $-1$st Tate twist, because it is induced by $S_m[1]$ (multiplication by $s$) on the complex level (recall Lemma \ref{inducedmhsmaps}). For large enough $m$, this $\Q$-MHS on $H^*(U;\ov\calL\otimes_R R_m)$ induces a $\Q$-MHS on $\mathrm{Tors}_{R}H^*(U;\ov\calL)$ via the map of sheaves of $R$-modules $\ov\calL\rightarrow \ov\calL\otimes_R R_m$, just like we had in the case of $\R$ coefficients (as in the proof of Corollary~\ref{torsionmhs}).

Let $\ov\cL_\R \coloneqq \ov\cL \otimes_{\Q} \R$, seen as a local system of $\R[t^{\pm 1}]$-modules. Our goal is to see that the $\R$-MHS on $H^*(U;\ov\calL\otimes_R R_m)\otimes_{\Q} \R\cong H^*(U;\ov\calL_\R\otimes_{\R[t^{\pm 1}]}\R[t^{\pm 1}]/(s^m))$ induced by the $\Q$-MHS on $H^*(U;\ov\calL\otimes_R R_m)$ coincides with the $\R$-MHS on $H^*(U;\ov\calL_\R\otimes_{\R[t^{\pm 1}]}\R[t^{\pm 1}]/(s^m))$ which is obtained by using the real mixed Hodge complex of sheaves $\Hdg{X}{D}\left(\frac{1}{i}\frac{df}{f},m\right)_{\tilde{s}}[1]$. Indeed, this will imply our claim about the $\Q$ and $\R$-MHS on $\Tors_R H^i(U;\ov\cL)$.

Using the definitions of the filtrations on the thickened complex (Section \ref{thickenedcomplexesandfiltrations}), it is straightforward to check that the following map is an isomorphism of bi-filtered complexes
\begin{equation}
\f{G_m}{\left(\logdr{X}{D}\left(\frac{1}{2\pi i}\frac{df}{f},m\right), W_{\lc}, F^{\lc}\right)}{ \left(\logdr{X}{D}\left(\frac{1}{ i}\frac{df}{f},m\right), W_{\lc}, F^{\lc}\right)}{\omega\otimes s^j}{(2\pi)^j\omega\otimes s^j}
\label{eqnFilter}
\end{equation}
 for all $m\geq 1$. Recalling the definitions of the mixed Hodge complexes of sheaves involved, our claim follows if we show that the following two maps are the same in the derived category. The first map is the composition
\begin{align*}
(Rj_*\ov\calL\otimes_R R_m)\otimes_{\Q}\C&\xrightarrow{Rj_*\nu_{\Q}\otimes_{\Q}\C} Rj_*j^{-1}\calK^{\bullet}_{\infty}(1\otimes f,m)\otimes_{\Q} \C\xleftarrow{\cong} \calK^{\bullet}_{\infty}(1\otimes f,m)\otimes_{\Q} \C\\
&\xrightarrow{\varphi_{\infty}\otimes 1} \Omega_X^\bullet(\log D)\left(\frac{1}{2\pi i}\frac{df}{f},m\right)\xrightarrow{G_m} \logdr{X}{D}\left(\frac{1}{ i}\frac{df}{f},m\right),
\end{align*}
which (after a $[1]$ translation) endows the cohomology of $\ov\calL\otimes_R R_m$ with a $\Q$-MHS. $\varphi_\infty$ is defined in Section~\ref{rationalmixedhodgecomplexonsmoothvarieties} and it induces a map on thickenings via Lemma~\ref{inducedmap}; and $\nu_\Q$ is defined in Remark~\ref{rem:nuQ}. The second morphism is the composition
\begin{multline*}
(j_*\ov\calL_\R\otimes_{\R[t^{\pm 1}]} \R[t^{\pm 1}]/(s^m))\otimes_{\R}\C
\xrightarrow{j_*\nu \otimes_{\R}\C} j_*\calE^{\bullet}_{U}\left(\Im\frac{df}{f},m\right)\otimes_{\R} \C\\ \xrightarrow{\exp\left(\frac{\Log(|f|)}{-i}\right)\wedge} j_*\calE^{\bullet}_{U}\left(\frac{1}{ i}\frac{df}{f},m\right)\otimes_{\R} \C
\xleftarrow{\cong} \logdr{X}{D}\left(\frac{1}{ i}\frac{df}{f},m\right)
\end{multline*}
which (after a $[1]$ translation) endows the cohomology of $\ov\calL\otimes_{\R[t^{\pm 1}]} \R[t^{\pm 1}]/(s^m)$ with an $\R$-MHS. Note that the two domains are canonically identified. Instead of proving that those two maps are the same, we will prove that the (post) composition of them with the quasi-isomorphism given by inclusion $$\logdr{X}{D}\left(\frac{1}{ i}\frac{df}{f},m\right)\xrightarrow{\cong}j_*\calE^{\bullet}_{U}\left(\frac{1}{ i}\frac{df}{f},m\right)\otimes_{\R} \C$$ give us the same map in the derived category. Note that we are now dealing with two maps from $Rj_*(\ov\calL\otimes_R R_m)\otimes_{\Q}\C$ to $Rj_*\calE^{\bullet}_{U}\left(\frac{1}{ i}\frac{df}{f},m\right)$ (the sheaves involved are $j_*$-acyclic). Since $Rj_*$ is fully faithful, it suffices to check that these two maps of sheaves of $\C$-vector spaces are the same on stalks at points in $U$. 
\begin{multline*}
\ov\calL\otimes_R R_m\otimes_{\Q}\C\xrightarrow{\nu_{\Q}\otimes_{\Q}\C} j^{-1}\calK^{\bullet}_{\infty}(1\otimes f,m)\otimes_{\Q} \C
\xrightarrow{\varphi_{\infty}\otimes 1} j^{-1}\Omega_X^\bullet(\log D)\left(\frac{1}{2\pi i}\frac{df}{f},m\right)\\
\xrightarrow{G_m} j^{-1}\logdr{X}{D}\left(\frac{1}{ i}\frac{df}{f},m\right)\xrightarrow{\cong} \calE^{\bullet}_{U}\left(\frac{1}{ i}\frac{df}{f},m\right)\otimes_{\R} \C ,
\end{multline*}
\begin{multline*}
\ov\calL\otimes_{\R[t^{\pm 1}]} \R[t^{\pm 1}]/(s^m))\otimes_{\R}\C \xrightarrow{\nu \otimes_{\R}\C} \calE^{\bullet}_{U}\left(\Im\frac{df}{f},m\right)\otimes_{\R} \C \\ 
\xrightarrow{\exp\left(\frac{\Log(|f|)}{-i}\otimes s\right)\wedge} \calE^{\bullet}_{U}\left(\frac{1}{ i}\frac{df}{f},m\right)\otimes_{\R} \C
\end{multline*}
It is straightforward to check that the image of $\ov\delta_{x'}$ in $\calE^{\bullet}_{U}\left(\frac{1}{ i}\frac{df}{f},m\right)\otimes_{\R}\C$ is via both maps given by $\exp(-\frac{ f_{\infty}\circ \iota}{i}\otimes s)$, where $x\in U$, $x'\in\pi^{-1}(x)$ and $\iota$ is a section of $\pi$ defined on a simply connected neighborhood of $x$ that takes $x$ to $x'$. Hence, we have proved the theorem in the case when $t$ acts unipotently on $H^*(U;\ov\cL)$.

In the case where the action is not unipotent, let $U_N$ be as in Remark~\ref{remEigenvalue}. Then we have the isomorphism $H^i(U;\ov\cL)\otimes_{\Q} \R\cong H^i(U_N;\ov\cL_N)\otimes_{\Q} \R$ which is defined over $\Q$. The right hand side has a MHS defined over $\Q$, and therefore the left hand side's MHS is also defined over $\Q$.
\end{proof}

\begin{cor}
The MHS on $\Tors_R H^i(U;\ov\cL)$ defined for $k=\Q$ is independent of the compactification, independent of $N$, and functorial (as in Theorems \ref{indcompactification}, \ref{indN}, and \ref{functorial}).
\end{cor}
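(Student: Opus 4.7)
The plan is to deduce the corollary directly from the three previously-established real analogues (Theorems \ref{indcompactification}, \ref{indN}, \ref{functorial}) together with Theorem \ref{Qalexandermhs}, using the general principle that a $\Q$-linear map between $\Q$-mixed Hodge structures is a morphism of $\Q$-MHS if and only if its scalar extension to $\R$ is a morphism of $\R$-MHS.

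More precisely, I would begin with the following elementary observation: if $V$ is a finite-dimensional $\Q$-vector space equipped with a $\Q$-MHS $(W_\bullet,F^\bullet)$, then the weight filtration on $V$ is uniquely recovered from the induced $\R$-MHS on $V\otimes_\Q\R$ by $W_j V = (W_j V \otimes_\Q \R)\cap V$, while the Hodge filtration lives on $V\otimes_\Q \C$ and is unchanged by extension of scalars. Consequently, a $\Q$-linear map $\varphi \colon V_1\to V_2$ between two $\Q$-MHS is a morphism of $\Q$-MHS if and only if $\varphi\otimes \R$ is a morphism of $\R$-MHS. In particular, there is at most one $\Q$-MHS on a given $\Q$-vector space inducing a prescribed $\R$-MHS after scalar extension, which is precisely the uniqueness clause of Theorem \ref{Qalexandermhs}.

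Then I would verify in each of the three cases that the relevant map is $\Q$-linear on the underlying $\Q$-vector space of $\Tors_R H^i(U;\ov\cL)$ (or on the corresponding spaces for the two varieties in the functoriality statement). For independence of the good compactification, the comparison map between the two candidate $\Q$-MHS on $\Tors_R H^i(U;\ov\cL)$ is simply the identity of this $\Q$-vector space, hence trivially $\Q$-linear; by Theorem \ref{indcompactification}, the identity is an $\R$-MHS isomorphism after tensoring with $\R$, so by the observation above it is a $\Q$-MHS isomorphism, and the two $\Q$-MHS coincide. For independence of $N$, the canonical isomorphism $\Tors_R H^i(U;\ov\cL)\cong \Tors_{R(N)} H^i(U_N;\ov\cL_N)$ of Lemma \ref{233} is induced by the sheaf isomorphism $p_*\cL_N\cong \cL$, which is $\Q$-linear on each stalk; since Theorem \ref{indN} shows that this is an $\R$-MHS isomorphism, it is a $\Q$-MHS isomorphism as well. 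For functoriality, the map $g^*\colon \Tors_R H^i(U_2;\ov\cL_2)\to \Tors_R H^i(U_1;\ov\cL_1)$ is the one coming from the pullback of sheaves of $\Q[t^{\pm 1}]$-modules and is manifestly $\Q$-linear, so Theorem \ref{functorial} together with the observation yields that $g^*$ is a morphism of $\Q$-MHS.

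I do not anticipate any serious obstacle: the entire content is the elementary linear-algebra fact that scalar extension $\Q\hookrightarrow \R$ is faithfully flat and preserves and reflects MHS morphisms among $\Q$-structures. The only place that would require care is making sure that, in each of the three settings, the map being compared is indeed defined over $\Q$ at the level of the underlying vector spaces; but this is immediate from the constructions recalled above, since all of them originate from morphisms of sheaves of $\Q[t^{\pm 1}]$-modules (pullback, the canonical identification $p_*\cL_N\cong \cL$, and the identity map, respectively).
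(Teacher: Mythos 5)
Your proposal is correct and follows essentially the same route as the paper, which argues precisely via the faithful flatness of $\R$ over $\Q$ (the $\R$-MHSs determine the $\Q$-MHSs) together with the observation that all the comparison maps in Theorems \ref{indcompactification}, \ref{indN}, and \ref{functorial} are defined over $\Q$ as maps of vector spaces. Your write-up just makes explicit the linear-algebra facts (recovery of the weight filtration by intersection, uniqueness of the $\Q$-structure) that the paper leaves implicit.
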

\begin{proof}
This is a consequence of the faithful flatness of $\R$ over $\Q$: the $\R$-MHSs determine the MHSs over $\Q$, and all the maps involved in the statements and proofs are defined over $\Q$ as maps of vector spaces.
\end{proof}

Note that, in view of Proposition \ref{propcanon}, results of this section also yield the following result, which motivated our paper (Theorem~\ref{mhsexistence}):
\begin{cor}\label{halexandermhs}\index{Alexander module!homological}
The $\Q$-vector spaces $A_*(U^f;\Q)\coloneqq \mathrm{Tors}_R\, H_*(U^f;\Q)$ admit natural $\Q$-mixed Hodge structures, so that multiplication by $\log t^N$ is a morphism of mixed Hodge structures into the $-1$st Tate twist. Here $\log$ is the Taylor series centered at $1$, and $N$ is chosen so that $t^N$ acts unipotently on $A_*(U^f;\Q)$.
\end{cor}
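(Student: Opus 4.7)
The plan is to obtain the mixed Hodge structure on $A_i(U^f;\Q)$ by transporting, then dualizing, the $\Q$-mixed Hodge structure already constructed on the cohomology Alexander modules. All three key ingredients are in place: Corollary~\ref{alexandermhs} and Theorem~\ref{Qalexandermhs} equip $\Tors_R H^{i+1}(U;\ov\cL)$ with a canonical $\Q$-mixed Hodge structure for which multiplication by $\log t^N$ is a morphism into the $(-1)$st Tate twist; and Proposition~\ref{propcanon} provides a natural $R$-linear isomorphism $\Phi_i\colon \Tors_R H^{i+1}(U;\ov\cL)\xrightarrow{\cong}A_i(U^f;\Q)^{\vee_\Q}$.

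First, I would use $\Phi_i$ to transfer the MHS from $\Tors_R H^{i+1}(U;\ov\cL)$ to $A_i(U^f;\Q)^{\vee_\Q}$. Because $\Phi_i$ is $R$-linear and the $R$-module structure on $A_i(U^f;\Q)^{\vee_\Q}$ is by $(r\varphi)(a) = \varphi(ra)$, multiplication by $\log t^N$ on $A_i(U^f;\Q)^{\vee_\Q}$ is precisely the transpose of the corresponding operator on $A_i(U^f;\Q)$, and it is a MHS morphism into the $(-1)$st Tate twist. Next, I would apply the standard construction of the dual of a $\Q$-mixed Hodge structure: the dual filtrations are given by $W_k V^{\vee_\Q} = (W_{-k-1}V)^\perp$ and $F^p V_\C^{\vee_\Q} = (F^{-p+1}V_\C)^\perp$, and Tate twists behave under duality by $V(-1)^{\vee_\Q} = V^{\vee_\Q}(1)$. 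Dualizing once more and identifying $A_i(U^f;\Q)$ with $(A_i(U^f;\Q)^{\vee_\Q})^{\vee_\Q}$ via finite-dimensional biduality then equips $A_i(U^f;\Q)$ with a canonical $\Q$-MHS.

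To check that $\log t^N\colon A_i(U^f;\Q)\to A_i(U^f;\Q)(-1)$ is a MHS morphism, dualize the MHS morphism on $A_i(U^f;\Q)^{\vee_\Q}$: this produces a morphism $A_i(U^f;\Q)(1)\to A_i(U^f;\Q)$, equivalently $A_i(U^f;\Q)\to A_i(U^f;\Q)(-1)$, which under biduality coincides with the original $\log t^N$ action. Naturality of both $\Phi_i$ (Proposition~\ref{propcanon}) and of the MHS of Theorem~\ref{Qalexandermhs} (shown to be independent of the compactification, of $N$, and functorial) transfers directly to the homological side, so the resulting MHS on $A_i(U^f;\Q)$ is canonical.

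The main technical hazard is bookkeeping of Tate twists and dualization conventions: one must verify that the single Tate twist on the cohomology side produces the single Tate twist (not a double twist or no twist) on the homology side after transporting via $\Phi_i$ and dualizing. The computation is essentially the identity $V(-1)^{\vee_\Q} = V^{\vee_\Q}(1)$ applied twice, combined with the fact that the $R$-action and the $\Q$-vector-space structure interact correctly through Lemma~\ref{lemcanon}. No further thickened-complex machinery is required beyond what Theorem~\ref{Qalexandermhs} already delivers.
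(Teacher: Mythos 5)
Your proposal is correct and follows essentially the same route as the paper: the paper likewise defines the MHS on $A_i(U^f;\Q)$ as the dual (via the natural isomorphism of Proposition~\ref{propcanon}) of the MHS on $\Tors_R H^{i+1}(U;\ov\cL)$ from Corollary~\ref{alexandermhs} and Theorem~\ref{Qalexandermhs}, using that the transpose of $\log t^N$ is again $\log t^N$ and then applying a $(-1)$ Tate twist to the dualized morphism $A_i(U^f;\Q)(1)\to A_i(U^f;\Q)$. Your extra step of passing through $A_i(U^f;\Q)^{\vee_\Q}$ and invoking finite-dimensional biduality is just a more explicit phrasing of the same identification.
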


\begin{proof}
By Remark~\ref{remk:isoLocal-Uf}, we have a canonical isomorphism $$H_i(U^f;\Q)\cong H_{i}(U;\cL).$$ Moreover, we use Proposition~\ref{propcanon} to identify $\Tors_R H_{i}(U;\cL)$ with the dual MHS of \linebreak $\Tors_R {H^{i+1}(U;\ov\cL)}$. The dual of multiplication by $t$ (resp. $\log(t^N)$) is multiplication by $t$ (resp. $\log(t^N)$), so we obtain the dual MHS morphism:
\begin{multline*}
\log(t^N)\colon A_*(U^f;\Q)(1) = \left( 
\Tors_R {H^{i+1}(U;\ov\cL)}(-1)
\right)^{\vee_k} \\ \longrightarrow \left( 
\Tors_R {H^{i+1}(U;\ov\cL)}
\right)^{\vee_k} = A_*(U^f;\Q).
\end{multline*}
It suffices to apply the Tate twist $(-1)$ to this map.
\end{proof}

\begin{remk}\label{remk:mhsSummary}
Let us give an overview of where the MHS on $\Tors_R H^{i+1}(U;\ov\cL)$ comes from. First, we pass to a finite cover $U_N\to U$ such that the action of $t^N$ on $\Tors_R H^{i+1}(U;\ov\cL)$ is unipotent. Lemma~\ref{233} provides an isomorphism between the Alexander modules of $U_N$ and $U$, so it suffices to give the former a MHS. Let us denote $U_N=U$.

Let $k=\R$. We consider the shift of the thickening of the Hodge-de Rham complex $\calH dg^\bullet(X\,\log D)\left(\frac{1}{i}\frac{df}{f},m\right)[1]$, which is a mixed Hodge complex of sheaves by Theorem~\ref{logthickenedmhs}. Hence, its $i$th cohomology, $H^{i}\Gamma(U; \cE_U^\bullet(\Im df/f,m)[1])$, carries an $\R$-mixed Hodge structure. For every $m$, we have a map $\nu[1]\colon \ov\cL\otimes_R R_m[1]\to \cE_U^0(\Im df/f,m)[1]$ as in Remark~\ref{rem:nu}. This map makes the right hand side into a soft resolution of $\ov\cL \otimes_R R_m[1]$, providing an isomorphism $H^{*}(U;\ov\cL\otimes_R R_m) \cong H^{*-1}\Gamma(U; \cE_U^\bullet(\Im df/f,m)[1])$, which we use to give $H^{*}(U;\ov\cL\otimes_R R_m)$ a MHS.

Following the proof of Corollary~\ref{cor:alex/s^mMHS}, the MHS on $H^*(U; \ov\calL)\otimes R_m$ is the unique one that makes the following injective (for $m\gg 0$) map into a MHS morphism:
\[
H^*(U; \ov\calL)\otimes R_m \hookrightarrow H^*(U; \ov\calL\otimes_R R_m).
\]
Following the proof of Corollary~\ref{torsionmhs}, the MHS on $\Tors_R H^*(U; \ov\calL)$ is the unique one that makes the following injective (for $m\gg 0$) map into a MHS morphism:
\[
\Tors_R H^*(U; \ov\calL)\hookrightarrow H^*(U; \ov\calL)\otimes R_m.
\]
Combining the two, we get that the MHS on $\Tors_R H^*(U; \ov\calL)$ is the unique one that makes the following injective (for $m\gg 0$) map into a MHS morphism, for $k=\R$ and also $k=\Q$ by Theorem~\ref{Qalexandermhs}:
\[
\Tors_R H^*(U; \ov\calL) \hookrightarrow H^*(U; \ov\calL\otimes_R R_m).
\]
Note that the image of $\Tors_{R}H^*(U; \ov\calL)$ in $H^*(U; \ov\calL\otimes_R R_m)$ is the kernel of $\psi_{mm}^*$, by Corollary~\ref{torsion}.
\end{remk}


\section{Dependence on the Function}
We have seen that the construction of the mixed Hodge structure is well-defined, but we have made a somewhat arbitrary choice of the map $f_\infty\colon U^f\to \C$. We will now see that changing this choice (and even changing $f$) gives an isomorphic MHS. Note that these are MHS's on the same vector space, so they can be isomorphic but unequal.

\begin{prop}[Dependence on the function]\label{prop:depf}\index{f_infty@$f_\infty$}
Let $c\in \C$. Consider the function $\wt f= e^{2\pi c}f$, and consider the following diagram:
\[
\begin{tikzcd}[column sep = 6em, row sep = 1.3em]
U^{\wt f}\arrow[rrr,"\wt f_\infty"]\arrow[ddd,"\wt \pi"'] & & &
\C \arrow[ddd,"\exp"]\\
&U^{f} \arrow[r,"f_{\infty}"]\arrow[d,"\pi"]\arrow[ul,"{(x,z)\mapsto (x,z+2\pi c) }"',"\chi"] &
\C \arrow[ur,"+{2\pi c}"']\arrow[d,"\exp"] &
\\
&U \arrow[r,"f"] &
\C^* \arrow[dr,"e^{2\pi c} "] &
\\
U \arrow[rrr,"\wt f"] \arrow[ur,equals]& & & 
\C^*. 
\end{tikzcd}
\]
Let $\cL = \pi_! \ul k_{U^f}, \cL' = \wt\pi_! \ul k_{U^{\wt f}}$. $\chi$ induces an isomorphism $\chi_{\cL}\colon 
\cL = \pi_!\ul k_{U^f} \cong \wt\pi_! \chi_!\ul k_{U^f} = \wt \pi_! \ul k_{U^{\wt f}} = \cL'$. Let $\Tors_R H^i(U;\ov\cL)$ (resp. $\Tors_R H^i(U;\ov\cL')$) be the mixed Hodge structure obtained from Corollary~\ref{alexandermhs} using the function $f$ (resp. $\wt f$). Then $\ov\chi_{\cL}\circ t^{\Im c}$ induces an isomorphism of MHS $\Tors_R H^i(U;\ov \cL) \to \Tors_R H^i(U;\ov\cL')$, where
\[
t^{\Im c} = \sum_{j=0}^{m-1} \binom{\Im c/N}{j} (t^N-1)^j.
\] 
Here $N, m$ are any natural numbers for which the action of $(t^N-1)^m$ is $0$ (in which case the above expression doesn't depend on $N,m$).
\end{prop}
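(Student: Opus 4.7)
The plan is to exploit the fact that, since $c$ is a constant, $d\wt f/\wt f = df/f$, and hence the thickened real de Rham complex $\cE_U^\bullet(\Im df/f, m)$ used in Theorem~\ref{logthickenedmhs} is literally identical for $f$ and $\wt f$. Consequently, the mixed Hodge complex $\calH dg^\bullet(X\,\log D)\bigl(\tfrac{1}{i}\tfrac{df}{f}, m\bigr)[1]$ and its induced $\R$-mixed Hodge structure on $\H^{i-1}\bigl(U;\cE_U^\bullet(\Im df/f, m)[1]\bigr)$ are one and the same whether viewed as built from $f$ or from $\wt f$. The difference between $\Tors_R H^i(U;\ov\cL)$ and $\Tors_R H^i(U;\ov{\cL'})$ lies only in the two distinct sheaf resolutions $\nu$ and $\nu'$ of Remark~\ref{rem:nu}, which embed each of these Alexander modules into this fixed hypercohomology. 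The content of the proposition is thus that $\ov{\chi_{\cL}}\circ t^{\Im c}$ is the unique comparison isomorphism between these two embeddings.

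After reducing to the unipotent case $N=1$ by passing to the cover $U_N$ (under which $\wt f_N = e^{2\pi c/N} f_N$ up to a canonical identification of covers, and the statement for $c$ becomes the statement for $c/N$, invoking Lemma~\ref{233} and Theorem~\ref{indN}), the core computation is a stalk-level comparison of $\nu$ and $\nu'\circ\ov{\chi_\cL}$. Using the explicit formula of Remark~\ref{rem:nu} together with $\wt f_\infty(x, z+2\pi c) = z+2\pi c$, one finds for any local lift $(x,z)\in U^f$:
\[
\nu'\bigl(\ov{\chi_\cL}(\ov\delta_{(x,z)})\bigr) = \nu'\bigl(\ov\delta_{(x,z+2\pi c)}\bigr) = \exp\bigl(-\Im(z+2\pi c)\otimes s\bigr) = \exp(-2\pi\Im c\cdot s)\cdot \nu(\ov\delta_{(x,z)}).
\]
Thus $\nu'\circ \ov{\chi_\cL}$ and $\nu$ differ by multiplication by the unit $\exp(-2\pi\Im c\cdot s)\in R_\infty$. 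On the other hand, by Lemma~\ref{lemRm} the twist $\tilde s = \exp(2\pi s)-1$ is chosen precisely so that $\nu$ is $R_m$-linear into $(\ker d_m^0)_{\tilde s}$; equivalently, on the underlying vector space, $t$ acts on $\ker d_m^0\subset \cE_U^0(\Im df/f, m)$ by multiplication by $1+\tilde s = \exp(2\pi s)$. For any $\alpha\in \C$ the operator $(1+s)^\alpha = \sum_{j} \binom{\alpha}{j}s^j$ therefore corresponds, via $\nu$, to multiplication by $\exp(2\pi\alpha s)$; specializing $\alpha = \Im c$ exactly cancels the discrepancy, so that $\nu'\circ\ov{\chi_\cL}\circ t^{\Im c} = \nu$ at the level of sheaf resolutions.

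The conclusion then follows from Remark~\ref{remk:mhsSummary}: both $\Tors_R H^i(U;\ov\cL)$ and $\Tors_R H^i(U;\ov{\cL'})$ are characterized as sub-MHSs of $\H^{i-1}\bigl(U;\cE_U^\bullet(\Im df/f, m)[1]\bigr)$ via the respective injections induced by $\nu$ and $\nu'$; since $\nu'\circ \ov{\chi_\cL}\circ t^{\Im c} = \nu$, these two embeddings coincide when precomposed by $\ov{\chi_\cL}\circ t^{\Im c}$, and hence the latter is an isomorphism of mixed Hodge structures. The main subtlety to keep straight throughout is the interplay between the original and the twisted $R_\infty$-module structures on the thickened complex, together with the matching of the formal binomial series $t^{\Im c} = \sum_j \binom{\Im c/N}{j}(t^N-1)^j$ with the concrete exponential $\exp(2\pi\Im c\cdot s)$ produced by the change-of-lift computation.
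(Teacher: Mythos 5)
Your proposal is correct and follows essentially the same route as the paper's own proof: reduce to the unipotent case via the cover $U_N$ (with $c$ replaced by $c/N$, using Lemma~\ref{233} and Theorem~\ref{indN}), observe that the thickened Hodge--de Rham complex is literally the same for $f$ and $\wt f$, and compare the two resolutions of Remark~\ref{rem:nu} on stalks to get $\nu_{\wt f}\circ\ov\chi_\cL\circ t^{\Im c}=\nu_f$, identifying the exponential factor with $t^{\Im c}$ through the $\tilde s$-twisted module structure and concluding via Remark~\ref{remk:mhsSummary}. This matches the paper's argument step for step, so no further comment is needed.
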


\begin{proof}
On sections, $\ov\chi_{\cL}$ is given by $\ov\delta_{(x,z)} \mapsto \ov\delta_{(x,z+2\pi c)}$. Let us start with the case where the only eigenvalue of the action of $t$ on $\Tors_R H^i(U;\ov\cL)$ is $1$.

Let $m$ be large enough so that $(t-1)^m$ annihilates $\Tors_R H^i(U;\ov\cL)$. First, $\cE^\bullet_U(\darg ,m) = \cE^\bullet_U(\Im \frac{d e^{2\pi c}f}{e^{2\pi c}f} ,m)$ has a canonical structure of a mixed Hodge complex independent of $c$. Let
\[
\nu_f:\ov\cL\otimes_R R_m \to \cE^0_U\left( \darg, m\right);\quad 
\nu_{\wt f}:\ov\cL'\otimes_R R_m \to \cE^0_U\left( \darg, m\right) 
\]
be constructed as in Remark~\ref{rem:nu}. Explicitly: for $(x,z)\in U^f$ and $\iota$ a local section of $\pi$ such that $\iota(x)=(x,z)$, we have that $\chi\circ \iota$ is a section of $\wt\pi$. Note that $\wt f_{\infty} \circ \chi= f_\infty + 2\pi c$, so:
\begin{align*}
\nu_{\wt f} \circ \ov\chi_{\cL} (\ov\delta_{(x,z)}) &= 
\nu_{\wt f} (\ov\delta_{(x,z+2\pi c)}) \\ &= 
\exp(-\Im \wt f_{\infty}\circ \chi\circ \iota\otimes s) \\ &= e^{-2\pi \Im c\otimes s}\exp(-\Im f_{\infty}\circ \iota\otimes s) \\ &= \nu_f(t^{-\Im c}\ov\delta_{ (x,z)}).
\end{align*}
In other words, $\nu_{\wt f}\circ \ov\chi_{\cL} = \nu_f\circ t^{-\Im c}$. Recall that $\nu_f$ is not $R_m$-linear unless we change the $R_m$-module structure. In the notation from Remark~\ref{twistedmodule}, we see that $\ul{\phantom{x}}\cdot e^{2\pi \Im c\otimes s}=\ul{\phantom{x}}*_{\widetilde s} t^{\Im c}$, so, since $\widetilde s=\exp(2\pi s)-1$, multiplication by $e^{-2\pi \Im c\otimes s}$ on $\cE^0_U(\darg,m)$ corresponds to multiplication by $t^{-\Im c}$ on $\ov\cL\otimes_R R_m$.

The maps $\nu_f$ and $\nu_{\wt f}$ are used to give mixed Hodge structures to $H^i(U;\ov\cL\otimes_R R_m)$ and $H^i(U;\ov\cL'\otimes_R R_m)$, respectively. This shows that the following is a MHS isomorphism:
\[
\ov\chi_{\cL}\circ t^{\Im c}\colon H^i(U;\ov\cL\otimes_R R_m) \to H^i(U;\ov\cL'\otimes_R R_m).
\]
By Remark~\ref{remk:mhsSummary}, the MHS on $\Tors_R H^i(U;\ov\cL)$ is constructed as a sub-MHS of \linebreak $H^i(U;\ov\cL\otimes_R R_m)$ by the map induced by $R\to R_m$. This means that $\ov\chi_{\cL}\circ t^{\Im c}$ is a MHS morphism $\Tors_R H^i(U;\ov\cL) \to \Tors_R H^i(U;\ov\cL')$.

We now turn to the case where $t$ is not unipotent. As in Remark~\ref{remEigenvalue}, let $N$ be such that $t^N$ is unipotent, which will correspond to deck transformations for the degree $N$ cyclic cover.

Let $\wt{U_{ N}} = \{(x,z)\in U\times \C^*\mid \wt f(x) = z^N \}$. We can draw the maps in Diagram (\ref{eq:UN}) for both $(U,f)$ and $(U,\wt f)$, and they will be connected by the following isomorphisms:
\[
\begin{tikzcd}[column sep = 6em]
U^{f} \arrow[r,"\theta_N"] \arrow[d,"\chi"',"{z\mapsto z+2\pi c}"]&
U_N^{f_N} \arrow[r,"\pi_N"]\arrow[d,"\chi'"',"{z\mapsto z+2\pi c/N}"] &
U_N\arrow[r,"p"] \arrow[d,"\chi_N"',"{z\mapsto e^{2\pi c/N}z}"]&
U \\
 U^{\wt f} \arrow[r,"\wt \theta_N"] &
\wt{U_{ N}}^{\wt f_N} \arrow[r,"\wt\pi_N"] &
\wt{U_{ N}}\arrow[r,"\wt p"] &
U \arrow[u, equals].
\end{tikzcd}
\]
Let $\cL_N = (\pi_N)_! \ul k_{U_N^{f_N}}, \cL'_N = (\wt\pi_N)_! \ul k_{\wt{U_{ N}}^{\wt f_N}}$. Let $t_N = t^N$ be the deck transformation on the cohomology of $\ov\cL_N$. Applying the already proven result in the unipotent case to $U_N$ and $c' = c/N$, we have that $\chi_{\ov\cL_N} \circ t_N^{-\Im c/N}$ is an isomorphism of MHS between $\Tors_{R(N)} H^i(U_N;\ov\cL_N)$ and $\Tors_{R(N)} H^i(\wt{U_{ N}};\ov\cL'_N)$. Now, $t_N =t^N$ and the horizontal maps $\theta_N$ give isomorphisms of MHS (Lemma~\ref{233} and Theorem~\ref{indN}), so we have the desired result.
\end{proof}

\begin{cor}\label{cor:semisimpleIsEasy}\index{semisimple|(}
If the action of $t$ on $\Tors_R H^i(U;\ov\cL)$ is semisimple, then the MHS on the Alexander module is independent of $f_\infty$. 
\end{cor}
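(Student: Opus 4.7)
The plan is to deduce this directly from Proposition~\ref{prop:depf} by showing that the extra factor $t^{\Im c}$ appearing there reduces to the identity under the semisimplicity hypothesis. Recall that a change of the lift $f_\infty$ (keeping $f$ fixed) is precisely an instance of the setup of Proposition~\ref{prop:depf} with $c \in i\Z$, for which $\wt f = e^{2\pi c}f = f$; hence a proof of independence from $f_\infty$ is equivalent to showing that for such $c$ the canonical identification $\ov\chi_\cL$ alone (i.e.\ without the twist by $t^{\Im c}$) is an isomorphism of mixed Hodge structures.

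First I would verify that under semisimplicity of the $t$-action on $\Tors_R H^i(U;\ov\cL)$, one has $t^N = \Id$. Indeed, $t^N$ is a polynomial in $t$, hence semisimple; by the choice of $N$ in Corollary~\ref{alexandermhs} it is also unipotent; and a simultaneously semisimple and unipotent endomorphism must equal the identity. Therefore $(t^N-1)^j = 0$ for all $j \geq 1$.

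Next, substituting into the explicit formula
\[
t^{\Im c} \;=\; \sum_{j=0}^{m-1}\binom{\Im c/N}{j}(t^N-1)^j
\]
given in Proposition~\ref{prop:depf}, only the $j=0$ summand survives, yielding $t^{\Im c} = \binom{\Im c/N}{0}\cdot \Id = \Id$ on $\Tors_R H^i(U;\ov\cL)$, for \emph{every} $c\in\C$.

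Finally, applying Proposition~\ref{prop:depf} now gives that $\ov\chi_\cL\circ t^{\Im c} = \ov\chi_\cL$ is an isomorphism of mixed Hodge structures $\Tors_R H^i(U;\ov\cL) \to \Tors_R H^i(U;\ov{\cL'})$ for any $c\in\C$. Specializing to $c\in i\Z$ (so that $\wt f = f$, $\ov{\cL'}=\ov\cL$, and $\ov\chi_\cL$ is the tautological identification coming from the deck transformation $t^m$), we conclude that the MHS obtained from $f_\infty$ and the MHS obtained from $f_\infty + 2\pi i m$ coincide under the natural identification, as desired. There is no real obstacle here beyond carefully unpacking Proposition~\ref{prop:depf}; the content of the corollary is entirely concentrated in the vanishing of the correction factor $t^{\Im c}$ under semisimplicity.
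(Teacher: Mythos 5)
Your proof is correct and follows the paper's own argument: the paper likewise observes that semisimplicity forces $t^N=\Id$ (equivalently, one may take $m=1$ in the Taylor expansion), so the correction factor $t^{\Im c}$ collapses to the identity and Proposition~\ref{prop:depf} gives that $\ov\chi_\cL$ itself is a MHS isomorphism for every $c$. Your added specialization to $c\in i\Z$ just makes explicit the deck-transformation interpretation that the paper records in the remark following the corollary.
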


\begin{proof}
If the action of $t$ is semisimple, then in the Taylor series of $t^{\Im c}$ we may take $m=1$, so $t^{\Im c} = \Id$. Proposition~\ref{prop:depf} implies that $\ov\chi_{\cL}$ is a MHS isomorphism for any $c\in \C$.
\end{proof}

\begin{remk}\index{semisimple|)}
In particular, if the action of $t$ on $\Tors_R H^i(U;\ov\cL)$ is semisimple, then taking $c= i$ in Corollary~\ref{cor:semisimpleIsEasy}, we conclude that the MHS is preserved by deck transformations.
\end{remk}

\begin{remk}
The converse of Corollary~\ref{cor:semisimpleIsEasy} holds as well, see Proposition~\ref{prop:semisimpleIsEasyConverse}.
\end{remk}


	\chapter{The Geometric Map is a Morphism of MHS}\label{sec:main}

The goal of this chapter is to prove the following result (Theorem~\ref{geoIntro} in the introduction). We use all the notations of Section~\ref{ssAlex}.

\begin{thm}\label{thm:geomIsMHS}\index{pi@$\pi$|(}
The covering space map $\pi\colon U^f\to U$ induces a map in homology $$H_i(\pi)\colon \Tors_R H_i(U^f;k)\to H_i(U;k)$$ and the $k$-dual of $H_i(\pi)$ is a map in cohomology $$H^i(\pi)\colon H^i(U;k)\to \Tors_R H^{i+1}(U;\ov\cL).$$ Both are morphisms of mixed Hodge structures. 
\end{thm}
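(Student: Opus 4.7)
The plan is to realize $H^i(\pi)$ as a connecting homomorphism coming from a short exact sequence of mixed Hodge complexes of sheaves, and then dualize for the homological statement.

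\textbf{Step 1: identify $H^i(\pi)$ as a connecting map.} Via the canonical identification $H_\bullet(U^f;k)\cong H_\bullet(U;\cL)$, the pushforward $\pi_*$ is induced by the sheaf-theoretic augmentation $\cL = \pi_!\ul k_{U^f}\twoheadrightarrow \cL\otimes_R k = \ul k_U$, which appears as the quotient in the short exact sequence of $R$-local systems $0\to \cL\xrightarrow{t-1}\cL\to\ul k\to 0$. Dualizing via Proposition~\ref{propcanon} and the universal coefficient theorem, the map $H^i(\pi)$ coincides (up to sign) with the connecting homomorphism $\delta\colon H^i(U;\ul k)\to H^{i+1}(U;\ov\cL)$ of the cohomological long exact sequence associated to the conjugate sequence $0\to \ov\cL\xrightarrow{t-1}\ov\cL\to\ul k\to 0$. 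The image of $\delta$ automatically lies in $\ker(t-1)\subset \Tors_R H^{i+1}(U;\ov\cL)$, matching the target of $H^i(\pi)$.

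\textbf{Step 2: realize $\delta$ at the mixed Hodge complex level.} For $m\geq 1$, tensoring the short exact sequence $0\to R_m\xrightarrow{s\cdot}R_{m+1}\xrightarrow{\phi_{m,1}}R_1=k\to 0$ with $\cE^\bullet_U$ and tracking the Tate twists dictated by Lemma~\ref{inducedmhsmaps} produces a short exact sequence of $\R$-mixed Hodge complexes of sheaves
\[
0\to \cE^\bullet_U(\Im df/f,m)\xrightarrow{\psi_{m,1}}\cE^\bullet_U(\Im df/f,m+1)(-1)\xrightarrow{\phi_{m,1}}\cE^\bullet_U(-1)\to 0,
\]
together with its $\Q$-analogue built from $\calK^\bullet_\infty(1\otimes f,\bullet)$ of Section~\ref{rationalmixedhodgecomplexonsmoothvarieties}. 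Applying the $[1]$-translation used in Corollary~\ref{cor:alex/s^mMHS} to endow $H^{*}(U;\ov\cL\otimes R_m)$ with its mixed Hodge structure, the Tate twist on the rightmost term cancels via the identification $\H^{i-1}(\cE^\bullet_U(-1)[1]) = H^i(U;k)(1)(-1) = H^i(U;k)_{\mathrm{Deligne}}$ coming from Remark~\ref{transvstate}. The associated long exact sequence then supplies a mixed Hodge structure morphism $\delta_m\colon H^i(U;k)\to H^{i+1}(U;\ov\cL\otimes R_m)$ with no residual Tate twist.

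\textbf{Step 3: factor through $\Tors_R$ and conclude.} After reducing to the case of unipotent $t$-action via Theorems~\ref{indN} and~\ref{functorial}, I would verify that for $m$ large enough the image of $\delta_m$ lies in the sub-mixed Hodge structure $\Tors_R H^{i+1}(U;\ov\cL)\hookrightarrow H^{i+1}(U;\ov\cL\otimes R_m)$ of Remark~\ref{remk:mhsSummary}. This can be obtained by comparing $\delta_m$ with its inverse-limit analogue $\delta_\infty$ arising from the sheaf sequence $0\to \ov\cL\otimes R_\infty\xrightarrow{s\cdot}\ov\cL\otimes R_\infty\to\ul k\to 0$: flatness of $R_\infty$ over $R$ together with unipotence force the inclusion $\ker(s\cdot\colon H^{i+1}(U;\ov\cL\otimes R_\infty)\to H^{i+1}(U;\ov\cL\otimes R_\infty)) \subseteq \Tors_R H^{i+1}(U;\ov\cL)$, and the projection $\phi_{\infty,m}^*$ restricts to the canonical embedding on $\Tors_R$. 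Combining with Step 1 identifies $H^i(\pi)$ with a factor of $\delta_m$, hence a mixed Hodge structure morphism; dualizing as $k$-vector spaces via Proposition~\ref{propcanon}, and using that $k$-duals of mixed Hodge structure morphisms remain such, yields the statement for $H_i(\pi)$.

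\textbf{Main obstacle.} I expect the principal difficulty to lie in Step 1: carefully navigating the chain of natural isomorphisms involving the universal coefficient theorem, the residue isomorphism of Lemma~\ref{lemcanon}, and the derived-category sign conventions (cf. Remark~\ref{rem:annoyingSign}) to verify that the cohomological connecting map $\delta$ genuinely equals the $k$-dual of $\pi_*|_{\Tors}$ (up to a sign that can be absorbed). A secondary challenge is the meticulous bookkeeping of Tate twists in Step 2 to ensure that no net Tate twist appears after the $[1]$-translation.
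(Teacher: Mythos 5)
Your Step 1 is essentially the content of Propositions~\ref{prop:geomMapPoincare} and~\ref{prop:mapsAreEqual} together with the exact triangle observation surrounding display~(\ref{eq:picLTriangles}); you are right that this is where much of the technical work lies, and the paper devotes a multi-page diagram chase through the universal coefficient spectral sequence to it, so your two-sentence account must be treated as an outline of that argument rather than a proof. Step 2, however, is a genuinely different argument from the paper's. The paper constructs an explicit morphism of mixed Hodge complexes $A_{\calH dg}\colon\Hdg{X}{D}\to\Hdg{X}{D}\left(\frac1i\frac{df}{f},m\right)[1]$ by wedging with $\Im\frac{df}{f}$ (Lemma~\ref{lem:theMapA}), an explicit homotopy being needed since the pseudomorphism square~(\ref{eq:nastyDiagram}) fails to commute on the nose; it then identifies $\phi_m\circ\pi_\cL^\vee$ with a scalar multiple of $A$ by a rank-one dimension count in $\Hom_{D^b(\C^*)}\left(\uR_{\C^*},\tfrac{\ov\cL_{\C^*}}{s^m}[1]\right)$ on $\C^*$ (Lemma~\ref{lem:proofInCircle}), pulled back to $U$ via $f$ (Lemma~\ref{lem:pullbacks}). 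You instead package $\psi_{m,1}$ and $\phi_{m,1}$ --- already morphisms of mixed Hodge complexes by Lemma~\ref{inducedmhsmaps} --- into a short exact sequence and take its connecting homomorphism. Unwinding the snake lemma shows that this connecting map at the complex level \emph{is} the paper's wedge map $A$: a cocycle $\omega$ lifts to $\omega\otimes 1$, whose thickened differential is $(\eta\wedge\omega)\otimes s$, whose $\psi_{m,1}$-preimage is $(\eta\wedge\omega)\otimes 1$. So both approaches produce the same map, but your packaging bypasses the rank-one calculation and the explicit homotopy, which is a genuine conceptual gain.

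That gain is not free, and your proposal is silent about the cost. Two things need to be supplied before Step 2 is rigorous. First, you assert that the connecting homomorphism of a short exact sequence of mixed Hodge complexes is a morphism of mixed Hodge structures, but Peters--Steenbrink formulate their long-exact-sequence result via the \emph{mixed cone} of a morphism of MHCs, not via SES's; translating requires verifying that $\Cone(\psi_{m,1})\to\cE^\bullet_U(-1)$ is a filtered quasi-isomorphism with respect to both $W$ and $F$ along the entire pseudomorphism chain --- this is where the homotopy issue that Lemma~\ref{lem:theMapA} handles explicitly resurfaces in disguised form. Second, the resolution $\nu\colon\ov\cL\otimes_R R_m\to\cE^0_U(\Im df/f,m)_{\tilde s}$ of Proposition~\ref{propLocal} is $R_m$-linear only for the twisted module structure in which $t$ acts as $\exp(2\pi s)$, whereas your sheaf-level SES is built from multiplication by $s=t-1$. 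Hence $\nu$ intertwines the two SES's only up to the unit $\tilde s/s=(\exp(2\pi s)-1)/s$, and identifying the two connecting maps requires absorbing this unit. Both points are fixable (a power series in $s$ with nonzero constant term acts on the thickened complex as a filtration-preserving automorphism), but they are precisely the kind of bookkeeping that the paper's rank-one argument sidesteps, and addressing them would bring the length of the argument back roughly on par with the original.
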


The MHS's on $\Tors_R H^{i+1}(U;\ov\cL)$ and $\Tors_R H_i(U;\cL)$ are the ones in Corollary~\ref{alexandermhs} and Corollary~\ref{halexandermhs}, respectively. $H^i(U;k)$ is endowed with Deligne's MHS, and $H_i(U;k)$ is endowed with its dual MHS. Before proving Theorem \ref{thm:geomIsMHS} in Section~\ref{ssec:proofOfGeoThm} we have to do some preliminary work.

\section{Maps between Local Systems}\label{sec:maps}

Let $R$ be as in Section~\ref{ssAlex}, and let $R_m = \frac{R}{s^mR}$. We work with local systems of $R$-modules on $U$. For such a local system $\cF$, we will abbreviate $\cF\otimes_R R_m$ to $\cF/s^m$. We make $\ul{k}$ into a sheaf of $R$-modules by letting $s$ act as $0$. For any $R$-module $M$ (or sheaf of $R$-modules), we will denote by $\ov M$ the conjugate structure as in Remark~\ref{rem:conjugate}. Throughout the whole section, the derived functor $R\Homm_R(\cdot,\ul R)$ (or $R\Hom_R(\cdot,R)$) will be abbreviated $\cdot^\vee$.

The covering map $\pi$ induces a map in homology $H_i(U^f;k)\rightarrow H_i(U;k)$. From Remark~\ref{remk:isoLocal-Uf}, there is a canonical isomorphism of $R$-modules $H_i(U;\cL)\cong H_i(U^f;k)$. As discussed in Remark~\ref{rem:cohom_not_iso}, the same is not true for cohomology in general, see Corollary~\ref{isocohom} for the hypotheses needed to have such isomorphism in cohomology. Restricting the map induced by $\pi$ in homology to the torsion, we get that $\pi$ induces a map
\[
H_i(\pi)\colon \Tors_R H_i(U;\calL)\to H_i(U;k).
\]
Taking the dual as $k$-vector spaces, and using the isomorphism $\Res$ from Definition~\ref{dfn:Res}, we obtain a map $
H^i(U;k)\to \Ext_R^1(\Tors_R H_i(U;\calL),R)$. Finally, the Universal Coefficients Theorem (as in Remark~\ref{remk:UCT}) gives us an isomorphism $$\Ext_R^1(\Tors_R H_i(U;\calL),R)\rightarrow \Tors_R H^{i+1}(U;\ov\cL).$$ Precomposing with the inverse of this isomorphism, we get that the covering space map $\pi$ induces a map
\[
H^i(\pi)\colon H^i(U;k)\to \Tors_R H^{i+1}(U;\ov\cL).
\]
We will use the notation $H_i(\pi)$ and $H^i(\pi)$ from now on to refer to these maps, as well as $H_i(\pi_N)$ and $H^i(\pi_N)$ for the analogous definition using $\pi_N$ (as in Section~\ref{sscover}) instead of $\pi$. The goal of this section is to realize both as the maps in (co)homology arising from maps of local systems (Propositions~\ref{prop:geomMapPoincare} and \ref{prop:mapsAreEqual}).

Corollary~\ref{isocohom} justifies the notation $H^i(\pi)$. Indeed, in its proof, we see that under the identification we are using, and under the condition that $H_i(U^f;k)$ is a torsion $R$-module, $H^i(\pi)$ (as defined by us) is indeed the map $H^i(U;k)\rightarrow H^i(U^f;k)$ induced in cohomology by $\pi$ (the dual of the map induced in homology).\index{pi@$\pi$|)}

\begin{prop}\label{prop:geomMapPoincare}\index{pi@$\pi$!piL@$\pi_\cL$|(}\index{delta@$\delta_x$}
Let $\pi_{\cL}\colon \cL\to \frac{\cL}{s} \xrightarrow{\sim} \ul{k}$ be determined by $\pi_{\cL}(\delta_{(x,z)}) = 1$ for all $(x,z)\in U^f\subset U\times \C$. Up to the identification $H_i(U^f;k)\cong H_i(U;\cL)$ (Remark~\ref{remk:isoLocal-Uf}), the two maps $H_i(U;\cL)\to H_i(U;k)$ induced in homology by $\pi$ and $\pi_{\cL}$ coincide. 
\end{prop}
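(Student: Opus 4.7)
The plan is to prove the statement at the chain level, showing both maps descend from the same chain-level map. To set up, I would use the explicit chain model $C_\bullet(U;\cL)=C_\bullet(\tilde U;k)\otimes_{k[\pi_1]}\cL_{x_0}$ from Definition~\ref{dfn:complex}, where $\tilde U$ is the universal cover of $U$ with a basepoint $\tilde x_0$ over $x_0\in U$. Fix also a basepoint $(x_0,z_0)\in\pi^{-1}(x_0)\subset U^f$, so that $U^f=\tilde U/\ker f_*$, and identify the stalk $\cL_{x_0}$ with $R$ via $\delta_{(x_0,z_0+2\pi i n)}\leftrightarrow t^n$. Under the chain isomorphism $\Phi\colon C_\bullet(U;\cL)\xrightarrow{\sim} C_\bullet(U^f;k)$ underlying Remark~\ref{remk:isoLocal-Uf}, the simple tensor $\tilde\sigma\otimes\delta_{(x_0,z_0+2\pi in)}$ corresponds to $t^n\cdot p_f(\tilde\sigma)$, where $p_f\colon\tilde U\to U^f$ is the natural projection.

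The key computation is then to evaluate both chain maps $C_\bullet(U;\cL)\to C_\bullet(U;k)$ on a generator $\tilde\sigma\otimes\delta_{(x_0,z_0+2\pi in)}$. For $\pi_*\circ\Phi$: the image $t^n\cdot p_f(\tilde\sigma)\in C_\bullet(U^f;k)$ projects under $\pi$ to $\pi\circ p_f(\tilde\sigma)=p_U(\tilde\sigma)\in C_\bullet(U;k)$, because deck transformations commute with $\pi$ (here $p_U\colon\tilde U\to U$ is the universal cover projection). For the map induced by $\pi_{\cL}$: since $\pi_{\cL}(\delta_{(x,z)})=1$ for every lift $(x,z)$, applying $\id_{C_\bullet(\tilde U;k)}\otimes \pi_{\cL,x_0}$ yields $\tilde\sigma\otimes 1\in C_\bullet(\tilde U;k)\otimes_{k[\pi_1]}k$, which equals $p_U(\tilde\sigma)$ under the canonical chain isomorphism $C_\bullet(\tilde U;k)\otimes_{k[\pi_1]}k\cong C_\bullet(U;k)$, i.e.\ the standard identification of singular chains on the base with coinvariants of chains on the universal cover.

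Both chain maps thus agree on generators and hence coincide on all of $C_\bullet(U;\cL)$, which gives the desired equality on homology. A more conceptual way to see this is that $\pi_{\cL}$ is precisely the counit of the adjunction $\pi_!\dashv\pi^{-1}$ applied to the constant sheaf $\ul k_U$, so under the canonical identification $H_i(U;\cL)=H_i(U;\pi_!\ul k_{U^f})\cong H_i(U^f;k)$, the counit-induced map corresponds to the push-forward $\pi_*$. The only obstacle to a proof is bookkeeping: one must carefully fix the direction of the deck action on $\cL$, identify $\cL_{x_0}$ with $R$ correctly, and verify that the Whitehead chain model for $C_\bullet(U^f;k)$ matches the chain model from Definition~\ref{dfn:complex} for $C_\bullet(U;\cL)$ via the formula for $\Phi$ above. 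Once these conventions are pinned down, the verification is tautological.
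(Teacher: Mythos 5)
Your proposal is correct and takes essentially the same route as the paper's proof: both reduce to the chain-level observation, via the Whitehead model and the isomorphism $C_\bullet(U;\cL)\cong C_\bullet(U^f;k)$ of Remark~\ref{remk:isoLocal-Uf}, that each composite sends a chain to its push-forward in $C_\bullet(U;k)$. You spell out the bookkeeping (basepoints, stalk identification, and the deck-action conventions) that the paper condenses into a single line, and your closing remark identifying $\pi_{\cL}$ with the counit of $\pi_!\dashv\pi^{-1}$ is a valid conceptual shortcut, though it is not the route the paper takes.
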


\begin{proof}
The assertion follows by a straightforward computation, given the isomorphism $C_\bullet(U;\cL)\cong C_\bullet(U^f;k)$ of Remark~\ref{remk:isoLocal-Uf}. From the definitions, all we need is to show that the following diagram is commutative:
\[
\begin{tikzcd}[row sep = 1.3em]
C_\bullet(U^f;k)\arrow[r,"\sim"]\arrow[d,"\pi"] &
C_\bullet(U;\cL) = C_\bullet(U^f;k)\otimes_R R \arrow[d,"\pi_\cL:\cL_x=R\to k"]\\
C_\bullet(U;k) \arrow[r,"\sim"]&
C_\bullet(U;\ul{k}) = C_\bullet(U^f;k)\otimes_R k. 
\end{tikzcd}
\]
We can check directly that a chain $c\in C_i(U^f;k)$ is mapped to $\pi_*(c)\in C_\bullet(U;k)$ via both paths.
\end{proof}

Our next goal is to find a map (in the derived category) of local systems $\ul k[-1]\to \ov\cL$ that will induce $H^i(\pi)$ in cohomology.

Let us fix the free resolution $\cL\xrightarrow{s} \cL$ of $\ul{k}$ (see the maps below), so that $\pi_{\cL}$ becomes a composition as the left hand diagram below (the bottom row sits in degree $0$):
\[
\begin{tikzcd}[row sep = 1.4em]
0 \arrow[r]\arrow[d]& \cL\arrow[r]\arrow[d,"s"] & 0\arrow[d] & &
\ov\cL\arrow[r,"1"]\arrow[d,"s"]\arrow[dr,phantom,"\pi_{\cL}^\vee"] &\ov\cL \arrow[d]
\\
\cL \arrow[r,"1"]\arrow[rr,"\pi_{\cL}"',dashrightarrow, bend right =10]& \cL \arrow[r,"\pi_{\cL}"]& \ul{k}; & &
\ov\cL \arrow[r]&
0.
\end{tikzcd}
\]\index{pi@$\pi$!piLvee@$\pi_\cL^\vee$}
Let $\pi_{\cL}^\vee$ be defined by applying $R\Homm_R(\cdot,\ul R)$ to $\pi_{\cL}$. If we identify $\ul k$ with $\cL\xrightarrow{s}\cL$ as above, then it corresponds to the right hand map of complexes above (recall that we have fixed a way of identifying $\Homm_R(\cL,\ul R)\cong \ov\cL$, as in Remark~\ref{rem:oppositeDual}).

\begin{remk}\label{rem:Rvee}\index{residue!Res@$\Res$}
Let $\pi_{\ov\cL}\colon \ov\cL\to \ul k$ be the result of conjugating $\pi_\cL$. This induces a quasi-isomorphism $\ul k^\vee \coloneqq R\Homm_R(\ul k,\ul R)\cong \ul k[-1]$ by using the resolution $\cL\xrightarrow{s}\cL$ above:
$$
\ul k^\vee = R\Homm_R(\ul k,\ul R)\cong (\ov\cL\xrightarrow{s}\ov\cL )\cong \ul k[-1],
$$
where $\ov\cL\xrightarrow{s}\ov\cL$ sits in degrees $0$ and $1$ and the last $\cong$ is given by the map $\pi_{\ov \cL}$.
Definition~\ref{dfn:Res} provides another such isomorphism:
\[
R\Homm_R(\ul k{,}\ul R) \cong \Extt^1_R(\ul k, \ul R)[-1] \xlongrightarrow[\sim]{\Res}
\Homm_k(\ul k,\ul k)[-1] \cong \ul k[-1].
\]
Since $\Aut(\ul k)\cong k^*$ (both in the abelian and the derived categories), these two isomorphisms differ by multiplication by a nonzero constant (as maps in the derived category of sheaves of $R$-modules on $U$). This constant turns out to be $1$, keeping in mind Remark~\ref{rem:annoyingSign}. For all our purposes, it will not matter which constant this is, so we will omit the computation.
\end{remk}

We have two exact triangles, where we are using $\cdot^\vee$ to denote the functor $R\Homm_R(\cdot,\ul R)$.
\begin{equation}\label{eq:picLTriangles}
{
\begin{tikzcd}[column sep = 3em, row sep = 1.3em]
0 \arrow[r]\arrow[d] &
0 \arrow[r]\arrow[d]\arrow[dr,phantom,"\pi_{\cL}"] &
\cL \arrow[r,"1"]\arrow[d,"s"']\arrow[dr,phantom,"-\pi_{\ov\cL}^\vee{[1]}",shift right = 1ex] &
\cL \arrow[d, shift right = 1.8ex]\quad
0 \arrow[r]\arrow[d, shift left = 1.8ex] &
0 \arrow[r]\arrow[d]\arrow[dr,phantom,"\pi_{\ov\cL}"] &
\ov\cL \arrow[r,"1"]\arrow[d,"s"']\arrow[dr,phantom,"-\pi_{\cL}^\vee{[1]}"] &
\ov\cL \arrow[d]\\
\cL \arrow[r,"s"'] &
\cL \arrow[r,"1"'] &
\cL \arrow[r] &
0;\hspace{-1.2mm} \quad
\ov\cL \arrow[r,"s"'] &
\ov\cL \arrow[r,"1"'] &
\ov\cL \arrow[r] &
0.
\end{tikzcd}
}
\end{equation}
Note that the labeling on the maps is consistent with $\cdot^\vee = R\Homm(\cdot,R)$. Below, the first diagram represents $\pi_{\cL}^\vee$, the second diagram is $\pi_{\cL}^\vee[1]$ and the third diagram is the same map, after the isomorphism $\left(\ov\cL\xrightarrow{s}\ov\cL\right)\cong \left(\ov\cL\xrightarrow{-s}\ov\cL\right)$ that is $\Id_{\ov\cL}$ in degree $0$ and $-\Id_{\ov\cL}$ in degree $-1$ (which is the identity morphism in the derived category).
\[
{
\begin{tikzcd}[column sep = {3.5em,between origins}, row sep = {2.7em,between origins}]
\deg = 0 \arrow[r,dashrightarrow]&
\ov\cL \arrow[r,"1"]\arrow[d,"s"'] &
\ov\cL\arrow[d] & 
 & 
\ov\cL \arrow[r,"1"]\arrow[d,"-s"'] &
\ov\cL \arrow[d]& 
 & 
\ov\cL \arrow[r,"-1"]\arrow[d,"s"'] &
\ov\cL \arrow[d]\\
&ø
\ov\cL \arrow[r] &
0 & 
 \deg = 0 \arrow[r,dashrightarrow]& 
\ov\cL \arrow[r] &
0 & 
\deg = 0 \arrow[r,dashrightarrow] & 
\ov\cL \arrow[r] &
0 
\end{tikzcd}
}
\]
We will identify $\ul k$ with its free resolutions, hence our naming of the maps above, with a slight abuse of notation.

\begin{prop}\label{prop:mapsAreEqual}\index{pi@$\pi$!piL@$\pi_\cL$|)}\index{UCT!$\UCT_k$}\index{UCT!$\UCT_R$}\index{UCT}\index{residue!Res@$\Res$}
	The map $H^i(U;k)\to\Tors_R H^{i+1}(U;\ov\cL)$ induced by $\pi^\vee_{\cL}$ in cohomology coincides with $H^i(\pi)$. In other words, the following diagram commutes. The dotted arrow is the desired map computed in two different ways.
\[
{
\begin{tikzcd}[column sep = {9em,between origins}, row sep = 1.2em]
	&
	H^{i}(U;k)\arrow[dr,"\UCT_k",leftarrow,"\sim"'] \arrow[dddl,dashrightarrow, bend left = 20, end anchor = {[xshift=3ex]}] &
	\\
H^{i+1}(U;\ul{k}^\vee) \arrow[dd,"H^i(\pi_{\cL}^\vee)"'] \arrow[ur,"\Res","\sim"']& 
&
\Hom_k( H_i(U;k),k)\arrow[dd,"\Hom(H_i(\pi){,}k)"]\\
\\
 H^{i+1}(U;\ov \cL)\supseteq
 \Tors_R H^{i+1}(U;\ov \cL) &
 &
\Hom_k(\Tors_R H_i(U;\cL),k)\arrow[dl,"\Res","\sim"',leftarrow, end anchor = {[xshift=2ex]}]
\\
&
\Ext^1_R(H_i(U;\cL){,}R).\arrow[ul,"\UCT_R","\sim"', start anchor = {[xshift=-2ex]}]
\end{tikzcd}
}
\]
Here ``UCT'' stands for the maps defined in \ref{lem:UCT}, and $\Res$ (abusing notation) is the canonical isomorphism induced by the isomorphism of $R$-modules $\Res$ of Definition \ref{dfn:Res}. Here we only assume that $U$ is a topological space and $f:U\to \C^*$ is a continuous map.
\end{prop}

\begin{proof}
Throughout this proof, all homology and cohomology groups will be on the space $U$, so we will omit it for brevity. Also, $k$ is always viewed as the $R$-module $R/sR$, via the map sending $1$ to $1$.

The naturality of tensor-hom adjunction in the definition of $\UCT_R$ in Lemma \ref{lem:UCT} tells us that the following diagram commutes:
\[
\begin{tikzcd}[column sep = 5em, row sep =1.3em]
\Ext^1_R(H_{i}(\ul k),R) \arrow[d,"\Ext^1_R(H_i(\pi_{\cL}){,}R)"]\arrow[r,"\UCT_R","\sim"'] & 
H^{i+1}(\ul{k}^\vee)=H^{i}(\Extt^1_R(\ul{k},\underline R))=H^{i+1}(C^\bullet(\ul{k}^\vee)) \arrow[d,"H^i(\pi_{\cL}^\vee)"]\\
\Ext^1_R(H_{i}( \cL),R)\arrow[r,"\UCT_R"',"\sim"]
 & 
\Tors_R H^{i+1}(\ov \cL).
\end{tikzcd}
\]
Recall Remark \ref{rem:Rvee}. For the purposes of the map $H^i(\pi_{\cL}^\vee)$, $\ul{k}^\vee$ is seen as the bounded complex $\ov\cL\xrightarrow{s}\ov\cL$, sitting in degrees $0$ and $1$. Recall that, by truncation, we also identified $\ul{k}^\vee$ with $\Extt^1_R(\ul{k},\underline R)[-1]$. 

In view of the diagram above, this lemma is equivalent to the commutativity of the following diagram: 
\[
{
\begin{tikzcd}[column sep = {9em,between origins}, row sep = 1.2em]
	&
	H^{i}(\ul k) \arrow[rd,"\UCT_k","\sim"',end anchor={[xshift=-1ex]}] 
	\\
H^{i+1}(\ul{k}^\vee) \arrow[ru,"\Res","\sim"'] 
& 
&
\Hom_k( H_i(\ul k),k)\arrow[dd,"\Hom(H_i(\pi){,}k)"]
\\
\\
\Ext^1_R( H_{i}(\ul{k}),R) \arrow[uu,"\UCT_R","\sim"'] \arrow[rd,"\Ext^1_R(H_i(\pi_{\cL}){,}R)"']
& 
&
\Hom_k(\Tors_R H_i(\cL),k).\arrow[dl,"\Res",leftarrow,"\sim"',start anchor={[xshift=-3ex]}]
\\
&
\hspace{2mm}\Ext^1_R(H_i(\cL){,}R) 
\end{tikzcd}
}
\]
We claim that the following diagram of isomorphisms commutes:
\begin{equation}\label{eq:resUCTresUCT}
\begin{tikzcd}[row sep = 1.3em]
H^{i+1}(\ul{k}^\vee) \arrow[d,"\Res"']\arrow[r,"\UCT_R",leftarrow] & 
\Ext^1_R( H_{i}(\ul k),R)\arrow[d,"\Res"]
\\
H^{i}(\ul k) \arrow[r,"\UCT_k"]& 
\Hom_{k}( H_i(\ul k),k). 
\end{tikzcd}
\end{equation}
Let us write the complexes representing these (co)homology groups, like in Definition \ref{dfn:complex} and Remark \ref{remk:dfncomplex}. We will draw a commutative diagram with the corresponding maps between complexes. Let $C_\bullet(\ul{k})$ be as in Remark \ref{remk:dfncomplex}. Note that the coefficients are constant, so we can take $\wt U=U$ in the definition. Then, $C_\bullet(\ul{k}) =C_\bullet(U,R)\otimes_R k$. The proof amounts to looking at the diagram (\ref{eq:betatensorhom}).
\begin{equation}\label{eq:betatensorhom}
{
\begin{tikzcd}[column sep = {6.8em,between origins}, row sep = {3em,between origins}]
H^{i+1}(\ul{k}^\vee)
\arrow[dr,no head, dashed,"a"]
\arrow[dddddddd,"\Res",rightarrow,,
rounded corners,
start anchor={[xshift=0.1ex]},
to path={ -- ([xshift=-1.8ex,yshift=24em]\tikztotarget.west)
	-- ([xshift=-1.8ex]\tikztotarget.west)\tikztonodes
	-- (\tikztotarget)}]
\arrow[rrrr,"\UCT_R ",leftarrow]
&&&&
\Ext^1_R(H_{i}(\ul k),R) 
\arrow[dl,no head, dashed,"c"]
\\
& 
\Hom_R^\bullet(C_\bullet(U,R),(\cF^\bullet)^\vee_x)[1]
\arrow[rr,"\text{t-h}"]
 &&
\Hom_R^\bullet(C_\bullet(\cF^\bullet),R)[1]
\arrow[dd,"\beta",leftarrow]
&
\\
\\
&
\Hom_R^\bullet(C_\bullet(U,R),\Hom_R^\bullet(\cF^\bullet_x,K/R))
\arrow[uu,"\beta"] 
\arrow[rr,"\text{t-h}"]
 &&
\Hom_{R}^\bullet(C_\bullet(\cF^\bullet),K/R)
 &
 \\
 &&&&
 \Hom_R(H_i(\ul k),K/R)
 \arrow[dl,no head, dashed,"b", near start, start anchor={[xshift=-2ex]}, end anchor={[xshift=1ex]}]
 \arrow[uuuu,"\beta"]
 \arrow[dddd,"\res_*"]
\\
&
\Hom_R^\bullet(C_\bullet(U,R),\Hom_R(k,K/R))
\arrow[rr,"\text{t-h}"]
\arrow[uu,"\rho"]
 \arrow[dd,"\Hom_R^\bullet(C_\bullet(U{,}R){,}\res_*)"] 
 &&
\Hom_{R}^\bullet(C_\bullet(\ul{k}),K/R)
\arrow[uu,"\rho"]
\arrow[dd,"\res_*"']
\\
\\
&
\Hom_R^\bullet(C_\bullet(U,R),\Hom_k(k,k))
\arrow[rr,"\text{t-h}"]
 &&
\Hom_{k}^\bullet(C_\bullet(\ul{k}),k)&
\\
H^i(\ul k) 
\arrow[ur,no head, dashed,"a"]
\arrow[rrrr,"\UCT_{k}"]
&&&&
\Hom_{k}(H_i(\ul k),k)
 \arrow[ul,no head, dashed,"b"].
\end{tikzcd}
}
\end{equation}

The four corners and the arrows between them coincide with those of diagram (\ref{eq:resUCTresUCT}). Also, $\cF^\bullet := \ul R \xrightarrow{s} \ul R$ (in degrees $[-1,0]$) is a free resolution of $\ul{k}$, and $\rho\colon \cF^\bullet\to \ul{k}$ is the corresponding quasi-isomorphism (we've abused notation by labeling $\rho$ the maps that are induced by $\rho$ after applying some functors). Note that $(\cF^{\bullet})^\vee[1]$ is a resolution of $\Extt_R^1(\ul{k},\underline R)=\underline{\Ext_R^1(k,R)}$ (recall that we are using $\ul M$ to denote the constant sheaf with stalk $M$). Similarly, $\beta \colon K/R\to R[1]$ is defined as in Definition~\ref{dfn:Res}, and we've labeled $\beta$ the arrows that are induced by $\beta$. The arrows ``t-h'' represent tensor-hom adjunction.

First, note that the ``inside'' rectangle made of solid straight arrows is commutative. The top two squares commute by the naturality of tensor-hom adjunction, and the bottom square can be seen to commute by a direct calculation. \textbf{The dashed lines are not maps}, rather they represent the cohomology of the corresponding complex. We would like to show that the maps drawn ``outside'' in cohomology are induced by the maps of complexes depicted in the diagram.

The lines labeled ``$a$" are obtained by simply taking the cohomology, by definition (together with identifying $\Hom_{k}(k,k) = k$). The lines labeled ``$b$" and ``$c$" arise in the universal coefficient theorem. The $b$'s come from the obvious map $H^i(\Hom_A^\bullet(B^\bullet,C))\to \Hom_A(H^i(B^\bullet),C)$ for any ring $A$, any $A$-module $C$ and any complex $B^\bullet$ of $A$-modules.

The line ``$c$" is obtained from the universal coefficient theorem. We claim that the following diagram commutes:
\[
\begin{tikzcd}[row sep = 0.8em]
H^{i+1}(\Hom_R^\bullet(C_\bullet(\cF^\bullet),R) &
\Ext^1_R(H_i(C_\bullet(\cF^\bullet)),R) \arrow[l,"c"]\\
H^{i}(\Hom_R^\bullet(C_\bullet(\cF^\bullet),K/R) \arrow[u,"\beta"]\arrow[r,"b"]&
\Hom_R(H_i(C_\bullet(\cF^\bullet)),K/R) \arrow[u,"\beta"]
\end{tikzcd}
\]
Note that $\rho$ induces the identity in cohomology, so we are ignoring its role above. The fact that this diagram commutes follows from following the proof of Lemma~\ref{lem:UCT}.

This, together with the definitions, shows that the maps ``inside'' the diagram (\ref{eq:betatensorhom}) induce in cohomology the maps ``outside'', which correspond to diagram (\ref{eq:resUCTresUCT}). Now, the commutativity of diagram (\ref{eq:resUCTresUCT}) follows from the commutativity of the ``inside'' diagram (\ref{eq:betatensorhom}).

After having proved that the diagram (\ref{eq:resUCTresUCT}) commutes, the desired statement is equivalent to proving that the following diagram commutes:
\[
\begin{tikzcd}[column sep = 5em, row sep = 1.3em]
\Ext^1_R(H_{i}(\ul k),R) \arrow[r,"\Ext^1_R(H_i(\pi_{\cL}){,}R)"]\arrow[d,"\Res","\sim"'] & 
\Ext^1_R(H_{i}( \cL),R)\arrow[d,"\Res","\sim"']\\
\Hom_{k}(H_{i}(\ul k),k) \arrow[r,"\Hom_{k}(H_i(\pi){,}k)"] & 
\Hom_{k}(\Tors_R H_{i}(\cL),k).
\end{tikzcd}
\]
But now using the naturality of the ``Res'' isomorphism, this is equivalent to $
H_i(\pi) = H_i(\pi_{\cL})$, which is the content of Proposition~\ref{prop:geomMapPoincare}.
\end{proof}


\section{Proof of the Theorem}\label{ssec:proofOfGeoThm}

In this section we prove Theorem~\ref{thm:geomIsMHS}. We use all the notations of Section~\ref{sec:maps}: we have the map $\pi_{\cL}$ from Proposition~\ref{prop:geomMapPoincare}, and its dual $\pi_{\cL}^\vee$, where we are still writing $\cdot^\vee = R\Homm_R(\cdot,\ul R)$. We will identify $\ul k^\vee\cong \ul k[-1]$ as in Remark~\ref{rem:Rvee}. For a (sheaf of) $R$-modules $M$, we may abbreviate $\frac{M}{s^m}\coloneqq M\otimes_R R_m$.

\begin{remk}
Since $\R$ is faithfully flat over $\Q$, it suffices to prove the claim when $k=\R$. \textit{For the rest of this section, we will only deal with real coefficients.}
\end{remk}

\begin{remk}
Let $N\in\N$ be a natural number such that the action of $t^N$ on $\Tors_R H^i(U;\calL)$ is unipotent, and consider the $N$-fold cover $U_N$ of $U$ as in Lemma~\ref{233} and Remark~\ref{remEigenvalue}.

It suffices to show that the map in cohomology $$H^i(\pi_N)\colon H^i(U_N;\R)\to\Tors_R H^{i+1}(U;\cL)$$ is a MHS morphism, from which the result follows, since $U_N\to U$ is an algebraic map, and therefore it induces a MHS morphism in cohomology. \textit{Throughout the whole proof we will denote $U:=U_N$, $f:=f_N$, $\pi=\pi_N$ and assume that the $t$-action on $\Tors_R H_i(U;\cL)$ is unipotent for all $i$}.
\end{remk}

\begin{proof}[Proof of Theorem~\ref{thm:geomIsMHS}]
In Corollary~\ref{halexandermhs}, the MHS on $\Tors_R H_i(U^f;\R)$ is defined as the dual of the MHS defined on $\Tors_R H^{i+1}(U;\ov\cL)$, via a series of isomorphisms:
$$
(\Tors_R H^{i+1}(U;\ov\cL) )^\vee \overset{\Res}\cong \Ext^1_R(H^{i+1}(U;\ov\cL),R)  \overset{\UCT_R}\cong \Tors_R H_i(U;\cL) \overset{\text{}}\cong \Tors_R H_i(U^f;\R).
$$

We consider the projection $\phi_m=(\phi_U)_m\colon \ov \cL\to \frac{\ov \cL}{s^m} $, which arises from $R\to R_m$ after tensoring with $\ov\cL$. We are interested in $\phi_m\circ \pi_{\cL}^\vee\colon \uR_U^\vee\to \frac{\ov\cL}{s^m}$, where $\pi_{\cL}$ is defined as in Proposition~\ref{prop:geomMapPoincare}. As in Remark~\ref{rem:Rvee}, we identify $\uR_U^\vee$ with $\uR_U[-1]$. By Proposition~\ref{prop:mapsAreEqual}, it is enough to prove that $H^i(\pi_{\cL}^\vee)\colon H^i(U;\R)\to \Tors_R H^{i+1}(U;\ov\cL)$ is a MHS morphism.

We will prove the theorem in a series of lemmas. We will state them all, then we will prove the theorem using them, and after that we will include the proofs of all the lemmas in order.

\begin{lem}\label{lem:theMapA}
Let $X$ be a good compactification of $U$, such that $D:=X\backslash U$ is a normal crossing divisor, as in Section~\ref{ss:MHSsAndComplexes} and \cite[Definition 4.1]{peters2008mixed}. Let $j\colon U\hookrightarrow X$ be the inclusion.

There is a morphism of mixed Hodge complexes $A_{\calH dg}\colon \Hdg{X}{D}\rightarrow \linebreak\Hdg{X}{D}\left(\frac{1}{i}\frac{df}{f}, m \right)[1]$ given, in the real part, by $j_*A_\R$, where:
$$
\f{A_\R}{\calE_{U}^\bullet}{\calE_{U}^{\bullet+1}\otimes R_m}{\alpha}{\left(\Im\frac{df}{f}\wedge \alpha\right)\otimes 1,}
$$
in the complex part, by
$$
\f{A_\C}{\Omega_{X}^\bullet(\log D)}{\Omega_{X}^{\bullet+1}(\log D)\otimes R_m}{\omega}{\left(\frac{1}{i}\frac{df}{f}\wedge \omega\right)\otimes 1.}
$$
Being a morphism of mixed Hodge complexes, it induces a MHS morphism on the cohomology.
\[
H^i(A)\colon H^i(U;\R)\to H^{i+1}\left(U;\frac{\ov\cL}{s^m}\right).
\]
Here $A$ is the composition:
\[
\uR_U\to \cE^\bullet_U \onarrow{A_{\R}} \cE^\bullet_U\left(\darg, m\right)[1] \cong \cE^\bullet_U(m)_{\log}[1] \onarrow{\eta_m^{-1}} \frac{\ov\cL}{s^m}[1],
\]
where $\eta_m$ is as in Remark \ref{remk:thickenedComplex}. Note that $A$ is in principle just a morphism in the derived category of sheaves of $\R$-vector spaces.
\end{lem}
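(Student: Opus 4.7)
The plan is to verify four things: (i) extend $A_{\calH dg}$ to the intermediate terms of the pseudomorphism chain of $\Hdg{X}{D}$, (ii) check each component is a chain map preserving the appropriate filtrations, (iii) check the pseudomorphism-compatibility squares commute, and (iv) identify the induced map in the derived category with the claimed composition.

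For (i), I would define $A_{\calH dg}$ on the intermediate copies of $j_*(\calE^\bullet_U\otimes\C)$ (which appear twice in the thickened pseudomorphism chain, once thickened by $\Im\frac{df}{f}$ and once by $\frac{1}{i}\frac{df}{f}$) by wedging with the $1$-form appropriate to each thickening. For (ii), each component satisfies $-d_m(A(\alpha))=A(d\alpha)$ by direct calculation, using that the chosen $1$-form $\eta$ is closed and $\eta\wedge\eta=0$; the sign arises from the translation $[1]$. Filtration compatibility follows from Lemma \ref{logarithmicform} ($\frac{df}{f}\in W_1F^1\Omega^1_X(\log D)\cap\ker d$) together with $\Im\frac{df}{f}\in\tau_1 j_*\calE^1_U\cap \ker d$ (being a closed global $1$-form on $U$); wedging with such a form raises both the $W$ and $F$ filtrations by one, exactly matching the shifts induced by $[1]$.

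For (iii), the compatibility squares above the inclusions $j_*\calE^\bullet_U \hookrightarrow j_*(\calE^\bullet_U\otimes\C)$ and $\Omega^\bullet_X(\log D) \hookrightarrow j_*(\calE^\bullet_U\otimes\C)$, as well as the one above the identity on $\Omega^\bullet_X(\log D)$, all commute on the nose. The subtle square is the central one, reconciling wedging with $\Im\frac{df}{f}$ followed by the twist isomorphism $\exp((\Log|f|/(-i))\otimes s)\wedge$ of Lemma \ref{cohomologousfiltered} against wedging with $\frac{1}{i}\frac{df}{f}$ directly. Setting $a\coloneqq \Log|f|/(-i)$, so that $da=\Im\frac{df}{f}-\frac{1}{i}\frac{df}{f}$ by Lemma \ref{imaginarycohomologous}, I would exhibit the explicit chain homotopy
\[
H(\omega) = \sum_{k\geq 0}\frac{a^{k+1}\omega}{(k+1)!}\otimes s^k
\]
and verify that it bridges the two compositions. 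Since $a\in W_1$, the homotopy $H$ also preserves the weight filtration, promoting $A_{\calH dg}$ to a morphism of filtered pseudomorphisms and hence to a morphism of mixed Hodge complexes of sheaves.

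Finally, (iv) is essentially tautological: chasing through Remark \ref{remk:thickenedComplex}, the inclusion $\uR_U\hookrightarrow \calE^\bullet_U$ is the real de Rham resolution of the constant sheaf, the $\R$-linear isomorphism $\calE^\bullet_U(\Im\frac{df}{f},m)\cong \calE^\bullet_U(m)_{\log}$ is from that remark, and $\eta_m$ resolves $\ov\calL/s^m$ by $\calE^\bullet_U(m)_{\log}$; the composition through $A_\R$ in cohomology recovers $H^i(A)$ by construction. The main obstacle is the construction and filtration-compatibility verification of the chain homotopy $H$ in step (iii); the remaining verifications are routine computations in the thickened complex.
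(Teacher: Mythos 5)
Your proposal follows the paper's proof essentially step for step: you isolate the same problematic central square comparing $\Im\frac{df}{f}\wedge\,\cdot$ composed with $\exp\bigl(\frac{\Log(|f|)}{-i}\otimes s\bigr)\wedge\,\cdot$ against $\frac{1}{i}\frac{df}{f}\wedge\,\cdot$, exhibit the identical chain homotopy $h(\alpha)=\sum_{j\ge 0}\frac{1}{(j+1)!}\bigl(\tfrac{\Log(|f|)}{-i}\bigr)^{j+1}\alpha\otimes s^{j}$, and identify $H^i(A)$ with the claimed composition via Remark~\ref{remk:thickenedComplex} exactly as the paper does. The only slight imprecision is your claim that the homotopy "promotes $A_{\calH dg}$ to a morphism of filtered pseudomorphisms": since the square only commutes up to (filtered) homotopy, one gets a morphism of mixed Hodge complexes in Deligne's sense rather than the strict sense of Peters--Steenbrink, but this still yields a commutative diagram in hypercohomology and hence the asserted MHS morphism, which is all the lemma needs.
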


\begin{lem}\label{lem:proofInCircle}
Let $\phi_{m,\C^*}, \pi_{\cL,\C^*}^\vee, A_{\C^*},\cL_{\C^*}, \ov\cL_{\C^*}$ be defined analogously to $\phi_m$, $\pi_{\cL}^\vee,A$, $\cL$, $\ov\cL$ in the particular case where $U=\C^*$ and $f=\Id$. There is a constant $c\in \R\setminus \{0\}$ such that
\[
(\phi_{m,\C^*}\circ \pi_{\cL,\C^*}^\vee)[1] = c\cdot A_{\C^*}\colon \uR_{\C^*}\longrightarrow \frac{\ov\cL_{\C^*}}{s^m}[1].
\]
These are equal as maps in the derived category of sheaves of $\R$-vector spaces. In particular, $A_{\C^*}$ comes from a map in the derived category of sheaves of $R$-modules, after restriction of scalars.
\end{lem}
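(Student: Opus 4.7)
The plan is to reduce the statement to a one-dimensional Hom-group computation. Since $\uR_{\C^*}$ is a constant sheaf,
\[
\Hom_{D(\C^*)}\bigl(\uR_{\C^*},\,\ov\cL_{\C^*}/s^m[1]\bigr)\;\cong\; H^1(\C^*;\,\ov\cL_{\C^*}/s^m),
\]
where $D(\C^*)$ denotes the derived category of sheaves of $\R$-vector spaces on $\C^*$. Because $\C^*\simeq S^1$, this cohomology equals $\coker(T-\Id)$ where $T$ is the $\R$-linear monodromy of $\ov\cL_{\C^*}/s^m$, namely multiplication by $t^{-1}=(1+s)^{-1}\in R_m$. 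Since $t^{-1}-1=-s\cdot t^{-1}$ is $-s$ times a unit, the cokernel equals $R_m/sR_m\cong\R$. So both $A_{\C^*}$ and $(\phi_{m,\C^*}\circ\pi_{\cL,\C^*}^\vee)[1]$ lie in a one-dimensional space, and the lemma reduces to checking that each is nonzero.

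For $A_{\C^*}$, I will trace its induced map $H^0(\C^*;\uR)\to H^1(\C^*;\ov\cL_{\C^*}/s^m)$: the constant function $1$ is carried by $A_\R$ to the class $[d\arg(z)\otimes 1]$ in $H^1(\Gamma(\C^*;\cE^\bullet_{\C^*}(\darg,m)))$, and under the $\R$-linear identification with $\cE^\bullet_{\C^*}(m)_{\log}$ from Remark~\ref{remk:thickenedComplex} together with the quasi-isomorphism $\eta_m$, this class corresponds to an element of $H^1(\C^*;\ov\cL_{\C^*}/s^m)$ whose reduction modulo $s$ is the generator $[d\arg(z)]\in H^1(\C^*;\uR)\cong\R$. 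Hence it is nonzero. For the other map, I will use the short exact sequence of sheaves of $R$-modules
\[
0\to \cL_{\C^*}\xrightarrow{s}\cL_{\C^*}\xrightarrow{\pi_\cL}\uR_{\C^*}\to 0,
\]
to which $R\Homm_R(-,\ul R)$ produces a distinguished triangle with first arrow $\pi_{\cL}^\vee$. The associated long exact sequence in cohomology, combined with the vanishing $H^0(\C^*;\ov\cL)=\ker(t^{-1}-1)=0$, forces $\pi_{\cL}^\vee$ to induce an isomorphism $H^0(\C^*;\uR)\xrightarrow{\sim}H^1(\C^*;\ov\cL)\cong\R$; composing with the isomorphism $H^1(\ov\cL)\cong R/sR\to R_m/sR_m\cong H^1(\ov\cL/s^m)$ induced by $\phi_{m,\C^*}$ yields a nonzero map.

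Both maps being nonzero in an $\R$-line produces the desired nonzero constant $c\in\R$. The final assertion of the lemma is then automatic: $\pi_\cL^\vee$ and $\phi_m$ are manifestly $R$-linear morphisms, so their composition lifts to the derived category of sheaves of $R$-modules, and so does any real scalar multiple. The main subtlety is the bookkeeping through the quasi-isomorphisms and shifts used to realize $A_{\C^*}$ at the level of de Rham representatives; I do not anticipate a substantive conceptual obstacle beyond this tracking.
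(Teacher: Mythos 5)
Your proposal is correct and follows the same overall skeleton as the paper's proof: both first identify $\Hom_{D^b_\R(\C^*)}(\uR_{\C^*},\frac{\ov\cL_{\C^*}}{s^m}[1])\cong H^1(\C^*;\ov\cL_{\C^*}/s^m)\cong\coker(t^{-1}-\Id)\cong\R$, reducing the lemma to showing both maps are nonzero, and both verify $A_{\C^*}\neq 0$ by tracking the image of $1\in H^0(\C^*;\R)$ to the class of $\darg[z]=\Im\frac{dz}{z}$ (your extra step of reducing modulo $s$ to land on the generator of $H^1_{dR}(\C^*)$ is a clean way to justify the nonvanishing that the paper only asserts). Where you genuinely diverge is the nonvanishing of $\phi_{m,\C^*}\circ\pi_{\cL,\C^*}^\vee$: the paper interprets this composite as a Yoneda extension class, identifies it with the pushout sequence $0\to\ov\cL/s^m\to\ov\cL/s^{m+1}\to\uR\to 0$, and concludes non-splitness from the indecomposability of $\ov\cL/s^{m+1}$; you instead run the hypercohomology long exact sequence of the triangle $\ov\cL\xrightarrow{s}\ov\cL\to\uR\xrightarrow{\pm\pi_\cL^\vee[1]}\ov\cL[1]$, use $H^0(\C^*;\ov\cL)=\ker(t^{-1}-1)=0$ to get that the connecting map $H^0(\C^*;\uR)\to H^1(\C^*;\ov\cL)$ is an isomorphism of one-dimensional spaces, and then note that $H^1(\phi_m)$ is the isomorphism $R/sR\to R_m/sR_m$ by functoriality of the coinvariants description. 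Both arguments are valid; yours is more computational and leans on the explicit cohomology of $\C^*$ (already needed for the one-dimensionality step), while the paper's extension-class argument is more structural and sign-insensitive. One point to keep straight in your route is the sign/shift bookkeeping identifying the connecting homomorphism of the long exact sequence with the map induced by $\pi_\cL^\vee[1]$ (the paper's triangles carry a sign $-\pi_\cL^\vee[1]$), but since you only need nonvanishing this is harmless; likewise your closing observation that a real scalar multiple of the $R$-linear composite is still $R$-linear correctly disposes of the last assertion.
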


\begin{lem}\label{lem:pullbacks}
Here $A_U=A$, $\cL_U=\cL$, etc. We have two commutative diagrams, whose horizontal arrows are isomorphisms:
\[
\begin{tikzcd}
f^{-1}\uR_{\C^*} \arrow[d,"f^{-1}A_{\C^*}"]\arrow[r] &
\uR_{U} \arrow[d,"A_U"] & & &
f^{-1}\uR_{\C^*} \arrow[d,"f^{-1}(\phi_{m,\C^*}\circ \pi_{\cL,\C^*}^\vee){[1]}"']\arrow[r] &
\uR_{U} \arrow[d,"(\phi_{m,U}\circ \pi_{\cL,U}^\vee){[1]}"] \\
f^{-1}\frac{\ov\cL_{\C^*}}{s^m}[1]\arrow[r] &
\frac{\ov\cL_{U}}{s^m}[1]; & & &
f^{-1}\frac{\ov\cL_{\C^*}}{s^m}[1]\arrow[r] &
\frac{\ov\cL_{U}}{s^m}[1].
\end{tikzcd}
\]
The bottom arrow comes from the (dual of) the base change isomorphism $f^{-1}\cL_{\C^*} = \linebreak f^{-1}\exp_!\uR_{\C} \cong \pi_!f_\infty^{-1}\uR_{\C} \cong \pi_! \uR_{U^f} = \cL_{U}$. The isomorphism $f^{-1}\uR_{\C^*}\to \uR_U$ sends $1$ to $1$.

In particular, there is a constant $c\in \R\setminus \{0\}$ for which
\[
c A_U = \phi_{m,U} \circ \pi_{\cL,U}^\vee.
\]
These are equal as maps in the derived category of sheaves of $\R$-vector spaces. In particular, $A_{U}$ comes from a map in the derived category of sheaves of $R$-modules, after restriction of scalars.
\end{lem}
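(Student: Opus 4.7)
The plan is to establish naturality of both $A$ and $\phi_m\circ\pi_{\cL}^\vee$ under pullback along $f\colon U\to\C^*$, and then transport the identity of Lemma~\ref{lem:proofInCircle} from $\C^*$ to $U$. First I would verify the commutativity of the left-hand square. The map $A_\R$ is literally wedge product with $\Im(df/f)$, so its naturality amounts to the identity $f^*(dz/z)=df/f$, which is standard. For the resolution portion of $A$, recall from Remark~\ref{remk:thickenedComplex} that $\eta_m$ is built on stalks from local sections of $\pi$; since by construction $U^f=U\times_{\C^*}\C$, any local section of $\pi_{\C^*}=\exp$ pulls back under $f$ to a local section of $\pi$, and the formula $\eta_m(\ov\delta_{x'})=(1+s)^{-\frac{1}{2\pi}\Im f_\infty\circ\iota}$ is manifestly compatible with this pullback.

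Next I would verify commutativity of the right-hand square. The key input is that the Cartesian square defining $U^f$ in Section~\ref{ssAlex} gives, by proper base change applied to the covering map $\exp$, a canonical natural isomorphism $f^{-1}\cL_{\C^*}=f^{-1}\exp_!\uR_\C\cong\pi_!f_\infty^{-1}\uR_\C=\cL_U$. Conjugation of the $R$-module structure and tensoring with $R_m$ are pointwise operations on sheaves, so they commute with $f^{-1}$. The map $\pi_{\cL}$ is the counit of the adjunction between $\pi_!$ and $\pi^{-1}$ (sending $\delta_{(x,z)}\mapsto 1$), hence natural in the Cartesian square; dually $\pi_{\cL}^\vee$ is natural in the derived category. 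The projection $\phi_m$ is induced from the ring map $R\to R_m$ tensored with $\ov\cL$, which also commutes with $f^{-1}$. Chasing these identifications gives the commutativity of the second square.

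The last step is to combine both squares with Lemma~\ref{lem:proofInCircle}. Since $f^{-1}$ is exact, it extends to the derived category of sheaves of $\R$-vector spaces, and applying it to the equality $(\phi_{m,\C^*}\circ\pi_{\cL,\C^*}^\vee)[1]=c\cdot A_{\C^*}$ produces, via the horizontal isomorphisms, the desired identity $(\phi_{m,U}\circ\pi_{\cL,U}^\vee)[1]=c\cdot A_U$ on $U$. Because the right-hand side is evidently a morphism in the derived category of sheaves of $R$-modules, the same holds for $A_U$ after restriction of scalars. I expect the main obstacle to be bookkeeping rather than a genuine technical difficulty: one must be careful that the base change isomorphism is the same one implicitly used in defining $\pi_{\cL}^\vee$ through Lemma~\ref{lem:UCT} and the residue isomorphism of Definition~\ref{dfn:Res}, and that the identification $f^{-1}\uR_{\C^*}\cong\uR_U$ sending $1\mapsto 1$ is compatible with all the subsequent choices. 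Once these identifications are pinned down, everything reduces to naturality of standard constructions.
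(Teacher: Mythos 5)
Your proposal is correct and follows essentially the same route as the paper: establish commutativity of both squares (the $A$-square via $f^*(dz/z)=df/f$ together with compatibility of $\eta_m$ with pullback of local sections of $\exp$, and the $\pi_{\cL}^\vee$-square via base-change naturality of the counit $\pi_!\pi^{-1}\to\Id$ combined with the facts that dualizing and $-\otimes_R R_m$ commute with $f^{-1}$ on local systems), and then apply the exact functor $f^{-1}$ to the identity of Lemma~\ref{lem:proofInCircle} and transport it through the horizontal isomorphisms. The paper carries out the same verifications by explicit stalk computations (e.g.\ checking $f^*\circ\eta_{m,\C^*}(\ov\delta_z)=\eta_{m,U}(\ov\delta_{(x,z)})$ and that $\delta_{(x,z)}\mapsto f^{-1}1$ along both paths), which is exactly the bookkeeping you flag; no genuine gap.
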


The proof of all the lemmas in order can be found below. Before, let us show how to prove the theorem.

By Lemma~\ref{lem:theMapA}, $A$ induces a MHS morphism $H^i(U;\R)\to H^{i+1}(U;\frac{\ov\cL}{s^m})$, and by Lemma~\ref{lem:pullbacks}, $\phi_{m} \circ \pi_{\cL}^\vee$ is its multiple. Therefore, $\phi_{m} \circ \pi_{\cL}^\vee$ induces a MHS morphism in cohomology as well. Now, since the image of a torsion module must lie in the torsion, there is a unique map as follows, which we will also denote $H^{i}(\pi_{\cL}^\vee)$:
\[
\begin{tikzcd}
H^{i-1}(U;\R) \arrow[r,"H^{i}(\pi_{\cL}^\vee)"]\arrow[dr,dashrightarrow,"\exists!"] 
& H^i(U;\ov\cL) \arrow[r,"H^i(\phi_m)"] 
& H^i(U;\frac{\ov\cL}{s^m}) \\
& \Tors_R H^i(U;\ov\cL)\arrow[ur,hookrightarrow,"\iota"]\arrow[u,hookrightarrow] &
\end{tikzcd}
\]
By Remark~\ref{remk:mhsSummary}, $\iota$ is a MHS morphism. Since $$H^i(\pi_{\cL}^\vee)\colon H^{i-1}(U;\R)\to \Tors_R H^i(U;\ov\cL)$$ amounts to restricting the codomain of $H^i(\phi_m\circ \pi_{\cL}^\vee)\colon H^{i-1}(U;\R)\to H^i(U;\frac{\ov\cL}{s^m})$ to a sub-MHS, it is itself a MHS morphism.

\begin{proof}[Proof of Lemma~\ref{lem:theMapA}]

Let us recall the filtrations:
\begin{align*}
\tau_k\cE_U^\bullet\left(\darg, m\right)[1] &= \bigoplus_j \tau_{k+2j+2}\cE_U^\bullet\otimes \R\langle s^j\rangle; \\
W_k\Omega_X^\bullet(\log D)\left(\frac{1}{i}\frac{df}{f}, m\right)[1] &= \bigoplus_j W_{k+2j+2}\Omega_X^\bullet(\log D)\otimes \C\langle s^j\rangle; \\
F^p\Omega_X^\bullet(\log D)\left(\frac{1}{i}\frac{df}{f}, m\right)[1] &= \bigoplus_j F^{p+j+1}\Omega_X^\bullet(\log D)\otimes \C\langle s^j\rangle .
\end{align*}

One can easily check that both components of $A_{\calH dg}$ respect the filtrations above.

We must define a morphism of pseudomorphisms that makes the definition of $A_{\calH dg}$ on the real and complex part agree. Let us start by recalling the definitions. The pseudomorphism in the definition of $\Hdg XD$ is given in (\ref{eqmorphisms}). According to the construction in Theorem~\ref{mhsthickened}, the pseudomorphism used in Theorem~\ref{logthickenedmhs} is:
\begin{align*}
j_*\calE^{\bullet}_{U}\left(\Im\frac{df}{f},m\right)&
\hookrightarrow j_*\calE^{\bullet}_{U}\left(\Im\frac{df}{f},m\right)\otimes_{\R} \C\\ &\xrightarrow{\exp\left(\frac{\Log(|f|)}{-i} \otimes s\right)} j_*\calE^{\bullet}_{U}\left(\frac{1}{ i}\frac{df}{f},m\right)\otimes_{\R} \C
\xleftarrow{\cong} \logdr{X}{D}\left(\frac{1}{ i}\frac{df}{f},m\right).
\end{align*}
This is followed by the identity map of $\logdr{X}{D}\left(\frac{1}{ i}\frac{df}{f},m\right)$, where the domain has the filtration induced by $\tau_{\lc}$ and the target has the filtration induced by $W_{\lc}$. If we extend the pseudomorphism in the obvious way it will not define a morphism of pseudomorphisms in the sense of \cite[3.16]{peters2008mixed}, namely the following diagram does not commute:
\begin{equation}\label{eq:nastyDiagram}
\begin{tikzcd}[column sep = 5em]
\calE^{\bullet}_{U}\otimes_\R \C \arrow[r,equals]\arrow[d,"\Im\frac{df}{f}\wedge \cdot "] 
&
\calE^{\bullet}_{U}\otimes_\R \C \arrow[d,"\frac{1}{i}\frac{df}{f}\wedge \cdot "] 
\\
\calE^{\bullet}_{U}\left(\Im \frac{df}{f},m\right)\otimes_\R \C[1] \arrow[r,"\exp\left(\frac{\Log(|f|)}{-i} \otimes s\right)"] 
&
\calE^{\bullet}_{U}\left(\frac{1}{i}\frac{df}{f},m\right)\otimes_\R \C[1] .
\end{tikzcd}
\end{equation}
Instead, it commutes up to a homotopy $h:\calE^{\bullet}_{U}\otimes_\R \C \to \calE^{\bullet}_{U}\left(\frac{1}{i}\frac{df}{f},m\right)\otimes_\R \C$, given by the following equation.
\[
h(\alpha) = \frac{\exp\left(\frac{\Log(|f|)}{-i} \otimes s\right) \alpha - \alpha}{s} = \sum_{j\ge 0} \frac{s^j \left( \frac{\Log(|f|)}{-i}\right)^{j+1}}{(j+1)!} \alpha.
\]
A direct computation shows that indeed we have the desired homotopy (recall that the shifted differential on the target is $d[1] = -d$):
\[
(-dh+hd)(\alpha) = \frac{1}{i} \frac{df}{f}\wedge \alpha - \exp\left(\frac{\Log(|f|)}{-i} \otimes s\right) \Im \frac{df}{f}\wedge \alpha .
\]

Hence, the diagram (\ref{eq:nastyDiagram}) commutes in the (filtered) homotopy category, so in particular it commutes in the filtered derived category. Therefore, we have defined a morphism of mixed Hodge complexes in the sense of \cite[8.1.5]{De3}, but not necessarily in the stricter sense of \cite[3.16]{peters2008mixed}. Nonetheless, since the diagram commutes up to homotopy, the diagram in hypercohomology will commute, which is enough to show that $A_\R$ induces a MHS morphism in hypercohomology following \cite[3.18]{peters2008mixed}.

Note that the sheaves involved in the $\R$ part are soft. We now take global sections in the $\R$ part, to get an induced map in cohomology for all $i$:
$$
H^i(A)\colon H^i(U;\R)\rightarrow H^{i+1}\Gamma\left(U;\calE_{U}^\bullet\left(\darg, m\right)\right)\cong H^{i+1}\Gamma\left(U;\calE_{U}^\bullet(m)_{\log}\right).
$$
Since it comes from a map of mixed Hodge complexes, it is a morphism of MHS. Equivalently, we can define the composition	$A\colon \uR_U\to \cE^\bullet_U\to \cE_U^\bullet(m)_{\log} \onarrow{\eta_m^{-1}} \frac{\ov\cL}{s^m}$, which gives us a morphism of MHS:
\[
H^i(U;\R)\to H^i\Gamma(U;\cE^\bullet_U)\to
H^{i+1}\Gamma(U;\cE_U^\bullet(m)_{\log}) \onarrow{\eta_m^{-1}} H^{i+1}\left(U;\frac{\ov\cL}{s^m}\right).
\]
Indeed, the first and third arrows are MHS morphisms since they are used to define the MHS on $H^i(U;\R)$ and $H^{i+1}(U;\ov\cL/s^m)$, respectively. Note that $\eta_m^{-1}$ is the inverse of a quasi-isomorphism, so it exists in the derived category and it induces maps in cohomology.
\end{proof}

\begin{proof}[Proof of Lemma~\ref{lem:proofInCircle}]
First, recall that $R\Hom_{D^b_{\R}(\C^*)}^\bullet(\uR_{\C^*},\cdot)$ (morphisms in the derived category of sheaves of $\R$-vector spaces on $\C^*$) and $H^\bullet(\C^*,\cdot)$ are both the derived functor of global sections, so $\Hom_{D^b_{\R}(\C^*)}\left(\uR_{\C^*},\frac{\ov \cL_{\C^*}}{s^m}[1]\right) \cong H^1\left({\C^*};\frac{\ov\cL_{\C^*}}{s^m} \right)$. Let us show that this space is one-dimensional.

Using \cite[Example 2.5.7]{dimca2004sheaves}, $H^1(\C^*;\ov\cL_{\C^*}/s^m)$ is isomorphic to the cokernel of $T-\Id$ acting on the stalk of $\ov\cL_{\C^*}/s^m$, where $T$ is the monodromy action of the generator of $\pi_1(\C^*)$. This generator acts as $t^{-1} = (1+s)^{-1}\in R_m$. We can see directly that $t^{-1}-1$ equals $s$ up to multiplication by a unit in $R_m$ (namely $-t$), so its cokernel is one-dimensional.

This shows that $(\phi_{m,\C^*}\circ \pi_{\cL,\C^*}^\vee)[1]$ and $A_{\C^*}$ are scalar multiples of each other as long as they are both nonzero (as classes of maps in the derived category).

Let us show that $\phi_{m,\C^*}\circ \pi_{\cL,\C^*}^\vee\neq 0$. First, recall that $\pi_{\cL,\C^*}^\vee\colon \uR_{\C^*}[-1]\to \ov\cL_{\C^*}$ is the roof diagram:
\[
\begin{tikzcd}
0 \arrow[d]& \ov\cL_{\C^*}\arrow[d,"s"]\arrow[r,"="]\arrow[l] & \ov\cL_{\C^*}\arrow[d] \\
\uR_{\C^*} & \ov\cL_{\C^*}\arrow[l,twoheadrightarrow] \arrow[r]& 0.
\end{tikzcd}
\]
Seeing $\pi_{\cL,\C^*}^\vee\in
 \Hom_{D^b_{\R}(\C^*)}\left(\uR_{\C^*},\ov \cL_{\C^*}[1]\right)
 \cong 
 \Ext^1_{\R}(\uR_{\C^*},\ov \cL_{\C^*})$ as the class of an extension, the roof diagram shows that it corresponds to the short exact sequence $0\to \ov\cL_{\C^*}\onarrow{s} \ov\cL_{\C^*} \to \uR_{\C^*}\to 0$. Note that for our purposes it doesn't matter which surjection $\ov\cL_{\C^*}\to \uR_{\C^*}$ we use. Using the Yoneda product (see \cite[III.5, III.6]{mac}), $\phi_{m,\C^*}\circ \pi_{\cL,\C^*}^\vee$ is the extension class in $\Ext^1_R(\uR,\frac{\ov\cL}{s^m})$ given by the pushout of this short exact sequence by $\phi_{m,\C^*}$, i.e. the class of the following short exact sequence:
\[
\begin{tikzcd}
0 \arrow[r]&
\ov\cL \arrow[r,"s"]\arrow[d,"\phi_{m,\C^*}"]&
\ov \cL \arrow[r,twoheadrightarrow]\arrow[d,"\phi_{m+1,\C^*}"] &
\uR \arrow[r]\arrow[d,equals] & 0 \\
0 \arrow[r]&
\frac{\ov\cL}{s^m} \arrow[r,"s"]&
\frac{\ov \cL}{s^{m+1}} \arrow[r,twoheadrightarrow]\arrow[ul, phantom, "\scalebox{1.25}{$\ulcorner$}" , very near start] &
\uR \arrow[r] & 0. 
\end{tikzcd}
\]
Since $\frac{\ov\cL}{s^{m+1}}$ is indecomposable, the sequence is not split. Therefore $\phi_{m,\C^*}\circ \pi_{\cL,\C^*}^\vee\neq 0$.

To see that $0\neq A_{\C^*}$, we can see that the image of $1\in H^0(\C^*;\R)$ is $\Im\frac{dz}{z} \neq 0 \in H^1\Gamma\left(\C^*,\cE^\bullet_{\C^*}\left(\Im\frac{dz}{z},m\right)\right) \cong H^1(\C^*;\frac{\ov\cL}{s^m})$, so $A_{\C^*}\neq 0$.
\end{proof}

\begin{proof}[Proof of Lemma~\ref{lem:pullbacks}]
We want to show that the following diagram is commutative:
\begin{equation}
\begin{tikzcd}[column sep = 4em]
f^{-1}\uR_{\C^*} \arrow[r,"f^{-1}\pi_{\cL,\C^*}^\vee{[1]}"]\arrow[d]&
f^{-1} \ov\cL_{\C^*}[1] \arrow[r,"f^{-1}\phi_{m,\C^*}{[1]}"] \arrow[d]&
f^{-1} \frac{\ov\cL_{\C^*}}{s^m}[1]\arrow[d]\\
\uR_{U} \arrow[r,"\pi_{\cL,U}^\vee{[1]}"] &
 \ov\cL_{U}[1] \arrow[r,"\phi_{m,U}{[1]}"] &
\frac{\ov\cL_{U}}{s^m}[1].
\end{tikzcd}
\end{equation}
For the right hand square, we see that it commutes because it amounts to tensoring the isomorphism $f^{-1}\ov\cL_{\C^*}\to \ov\cL_U$ with the map $R\to R_m$.

Let us apply $\cdot^\vee= R\Homm_R(\cdot,\ul R)$ to the left hand square. Note that, for locally constant sheaves of $R$-modules, pullback is restriction of scalars on the stalks from $R[\pi_1(\C^*)]$ to $R[\pi_1(U)]$, so there is a natural isomorphism of $\pi_1(U,x)$-modules:
\begin{align*}
(f^{-1}\Homm_R(\calF,\calG))_x &\cong (\Hom_{R}(\calF_{f(x)},\calG_{f(x)}))_{R[\pi_1(U)]} \\ &\cong \Hom_{R}((\calF_{f(x)})_{R[\pi_1(U)]},(\calG_{f(x)})_{R[\pi_1(U)]})
\\ &\cong\Hom_{R}((f^{-1}\calF)_x,(f^{-1}\calG)_x) \\ &\cong \Homm_{R}(f^{-1}\calF,f^{-1}\calG)_x.
\end{align*}
Applying this to resolutions by locally constant sheaves of free $R$-modules, we have that there is also a natural isomorphism $$f^{-1}R\Homm_R^\bullet(\calF^\bullet,\calG^\bullet)\cong R\Homm_R^\bullet(f^{-1}\calF^\bullet,f^{-1}\calG^\bullet)$$ for bounded complexes of locally constant sheaves. Therefore, showing that the left hand square above commutes is equivalent to showing that the following square commutes, obtained by applying $\cdot^\vee[1]$:
\[
\begin{tikzcd}
f^{-1}\uR_{\C^*} &
f^{-1}\cL_{\C^*} = f^{-1}\exp_!\uR_{\C} \arrow[l,"f^{-1}\pi_{\cL,\C^*}"] \\
\uR_{U} \arrow[u]&
\cL_{U} = \pi_!\uR_{U^f} \arrow[l,"\pi_{\cL,U}"] \arrow[u," \pi_!f_{\infty}^{-1}\cong f^{-1}\exp_!"'].
\end{tikzcd}
\]
We can see that on a stalk $(\cL_{U})_x$ for some $x\in U$, the generators $\delta_{(x,z)}$ (as in Remark~\ref{rem:oppositeDual}) are all mapped to $f^{-1}1\in f^{-1}\uR_{\C^*}$ via both paths. Therefore, the square commutes and so does its dual.

We consider now the following diagram, where $\eta_m$ is the map in Remark~\ref{remk:thickenedComplex}:
\[
\begin{tikzcd}[column sep = 5em]
f^{-1}\uR_{\C^*} \arrow[r,"f^{-1}(\eta_{m,\C^*}\circ A_{\C^*})"]\arrow[d] &
f^{-1} \cE^\bullet_{\C^*}(m)_{\log}[1] \arrow[d,"f^*"]&
f^{-1} (\ov\cL_{\C^*}/s^m)[1]\arrow[l,"\eta_{m,\C^*}"] \arrow[d,"\pi_!f^{-1}_{\infty}\cong f^{-1}\exp_!"] \\
\uR_{U} \arrow[r,"\eta_{m,U}\circ A_U"] &
 \cE_{U}^\bullet(m)_{\log} [1]&
\ov\cL_{U}/s^m[1]\arrow[l,"\eta_{m,U}"] 
\end{tikzcd}
\]

The arrow $f^*$ is induced by sending $\alpha\otimes s^j\in \cE_{\C^*}^i(m)_{\log}$ defined on an open set $V$ to $f^*\alpha\otimes s^j\in \cE_{U}^i(m)_{\log}$ defined on $f^{-1}(V)$. Just from the definitions we can see that the left hand square commutes, taking into account that $f^*z= f$.

Let us see that the right hand square commutes. Note that applying Section~\ref{ssAlex} to the special case $f= \Id_{\C^*}$, the stalk of $\ov\cL_{\C^*}$ at some $y\in \C^*$ is generated by sections of the form $\ov\delta_{\wt y}$ for $\wt y\in \C$ such that $e^{\wt y} = y$. Let us look at the stalk at $x\in U$, and let $z\in \C$ be such that $(x,z)\in U^f$. Then $\ov\delta_{z}\in (\ov\cL_{\C^*}/s^m)_{f(x)}$. Since $f_{\infty}$ maps $(x,z)\in U^f$ to $z\in \C$, the image of $f^{-1}\ov\delta_{z}$ in the stalk of $\ov\cL_{U}/s^m$ is $\ov\delta_{(x,z)}$. Now, let $\wt\log$ be a local branch of the logarithm such that $\wt\log(f(x)) = z$. Then, $(\Id_U,\wt\log\circ f)\colon U\to U^f\subset U\times \C$ is a local section of $\pi$ mapping $x$ to $(x,z)$ (it is only defined on a neighborhood $V$ of $x$ small enough that $f(V)$ is contained in the domain of $\wt\log$). Then, from Remark~\ref{remk:thickenedComplex}, $\eta_{m,\C^*}(\ov\delta_{z}) = \exp\left(-\frac{\Im\wt\log }{2\pi}\otimes \log(1+s)\right)$, so taking the image by $f^*$ we obtain
$$
f^*\circ \eta_{m,\C^*}(\ov\delta_z)=\exp\left(-\frac{\Im\wt\log \circ f}{2\pi}\otimes \log(1+s)\right)=\eta_{m,U}(\ov\delta_{(x,z)}),
$$
as desired.

We have shown that both diagrams in the lemma commute. Now, we claim that there is some $c\in \R\setminus \{0\}$ for which the vertical arrows in this commutative diagram coincide:
\[
\begin{tikzcd}[column sep = 6em]
f^{-1}\uR_{\C^*}[-1] \arrow[d,"c\cdot f^{-1}A_{\C^*}", shift left = 0.5ex] 
\arrow[d,"f^{-1}(\phi_{m,\C^*}\circ \pi_{\cL,\C^*}^\vee)"', shift right = 0.5ex]\arrow[r,"\sim"] &
\uR_{U}[-1] \arrow[d,"c\cdot A", shift left = 0.5ex] 
\arrow[d,"\phi_{m}\circ \pi_{\cL}^\vee"', shift right = 0.5ex] \\
f^{-1}\frac{\ov\cL_{\C^*}}{s^m}\arrow[r,"\sim"] &
\frac{\ov\cL_{U}}{s^m}.
\end{tikzcd}
\]

By Lemma~\ref{lem:proofInCircle}, there exists some $c\in \R\setminus \{0\}$ for which the left hand vertical arrows coincide. By the discussion above, this implies that the right hand vertical arrows coincide as well.
\end{proof}

This concludes the proof of Theorem \ref{thm:geomIsMHS}.
\end{proof}

\begin{remk}\label{rem:holyGrail}
Suppose that the action of $t$ on all  $\Tors_R H^{i+1}(U;\ov\cL)$ is unipotent and that $m$ is large enough that $(t-1)^m$ annihilates $\Tors_R H^{i+1}(U;\ov\cL)$. The map $A\colon \uR\to \ov\cL/s^m [1]$ used in the proof of Theorem~\ref{thm:geomIsMHS} induces a map in cohomology $H^i(A)\colon H^i(U;\R)\to H^{i+1}(U;\ov\cL/s^m)$ which factors (up to some $c\in \R\setminus \{0\}$) into the following $R$-linear morphisms of MHS:
\[
\begin{tikzcd}
H^i(U;\R) \arrow[rr,"cH^i(A)"] \arrow[dr,"H^i(\pi)"']& &
H^{i+1}\left(U;\frac{\ov\cL}{s^m}\right).\\
&
\Tors_R H^{i+1}(U;\ov\cL) \arrow[ur,"H^{i+1}(\phi_m)"',hookrightarrow]
\end{tikzcd}
\]
Indeed: by Lemma~\ref{lem:pullbacks}, there is a $c$ for which $cA = \phi_m\circ \pi_{\cL}^\vee$. By Proposition~\ref{prop:mapsAreEqual}, $\pi_{\cL}^\vee$ induces the map $H^i(\pi)$ in cohomology (which is a MHS morphism by Theorem~\ref{thm:geomIsMHS}); and by Remark~\ref{remk:mhsSummary}, $H^{i+1}(\phi_m)$ is the injection used to define the MHS on $\Tors_R H^{i+1}(U;\ov\cL)$, so in particular it is also a morphism of MHS.

From the relation between $A$ and $A_\R$ described in Lemma~\ref{lem:theMapA}, we have that $H^i(A_\R)=H^i(A)$, up to identifying the cohomology of the domain and target local systems with that of the global sections of the corresponding sheaf of cdgas. Recall that these identifications are morphisms of MHS by definition.
\end{remk}


	\chapter{The Geometric Map is a MHS Morphism: Consequences}\label{sec:consequences}

We start by identifying some properties of the maps induced by covering spaces in homology and cohomology.

Let $N\in\Z_{>0}$, and let $p\colon U_N\rightarrow U$ be the $N$-fold cover described in Section \ref{sscover}. The covering space $\pi\colon U^f\rightarrow U$ factors through $p\colon U_N\rightarrow U$ by $\pi_N\colon U^f\rightarrow U_N$. Recall that the deck transformation of $\pi_N$ is $t^N$.

\begin{prop}\label{prop:kerim}
The kernel of the map $H_j(\pi_N)\colon \Tors_R H_{j}(U;\cL)\to H_j(U_N;k)$ is $(t^N-1)\cdot \Tors_R H_{j}(U;\cL)$. The image of $H^j(\pi_N)\colon H^j(U_N;k)\rightarrow \Tors_R H^{j+1}(U;\ov\cL)$ is the $(t^N-1)$-torsion of the target, namely
$$
\Tors_{(t^N-1)} H^{j+1}(U;\ov\cL)\coloneqq\{a\in \Tors_R H^{j+1}(U;\ov\cL)\ \vert\ (t^N-1)a=0\}.
$$
\end{prop}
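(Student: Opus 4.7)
The plan is to realize $H_j(\pi_N)$ as the map on homology induced by a concrete sheaf morphism, and then to extract both parts of the proposition from the long exact sequence it fits into. Observe first that $\pi_N\colon U^f\to U_N$ is itself the infinite cyclic cover corresponding to the algebraic map $f_N\colon U_N\to \C^*$ (via the identification $U^f\cong U_N^{f_N}$ of Section~\ref{sscover}), with deck transformation $t^N$. Applying Proposition~\ref{prop:geomMapPoincare} to the pair $(U_N,f_N)$ will therefore identify $H_j(\pi_N)\colon H_j(U_N;\cL_N)\to H_j(U_N;\underline k)$ with the map induced by the sheaf surjection $\pi_{\cL_N}\colon\cL_N\twoheadrightarrow \cL_N/(t^N-1)\cL_N\cong \underline k$. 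Since $\cL_N$ is locally free of rank one over $R(N)=k[t^{\pm N}]$ and $(t^N-1)$ is a non-zero-divisor in $R(N)$, this will fit into a short exact sequence of local systems of $R(N)$-modules
\[
0\to \cL_N \xrightarrow{t^N-1} \cL_N \xrightarrow{\pi_{\cL_N}} \underline k \to 0,
\]
whose associated long exact sequence in homology gives exactness at the middle term $H_j(U_N;\cL_N)$, yielding $\ker H_j(\pi_N) = (t^N-1)H_j(U_N;\cL_N)$.

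To cut this kernel down to the torsion, I would use the splitting $H_j(U_N;\cL_N)=T\oplus F$ into torsion and free parts as $R(N)$-modules (available because $R(N)$ is a PID, and noting that $\Tors_R=\Tors_{R(N)}$ on any $R$-module since $R$ is integral over $R(N)$). If $a\in T$ satisfies $a=(t^N-1)(b_1+b_2)$ with $b_1\in T$ and $b_2\in F$, then $(t^N-1)b_2=a-(t^N-1)b_1$ lies in $T\cap F=0$, and since $t^N-1$ is a non-zero-divisor on the free module $F$ this forces $b_2=0$, so $a\in(t^N-1)T$. Combined with the obvious inclusion $(t^N-1)T\subseteq \ker H_j(\pi_N)\cap T$, this yields the kernel statement.

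For the image statement, I would appeal to Proposition~\ref{prop:mapsAreEqual}, which realizes $H^j(\pi_N)$ as the $k$-linear dual of $H_j(\pi_N)$ modulo the canonical identifications $\UCT_k$, $\UCT_R$ and $\Res$. For any $k$-linear map $f\colon V\to W$ between finite-dimensional vector spaces, $\im f^{\vee_k}$ is the annihilator of $\ker f$; applied here, the kernel description above gives
\[
\im H^j(\pi_N) = \{\phi\in(\Tors_R H_j(U^f;k))^{\vee_k}\mid \phi((t^N-1)a)=0\ \text{for all } a\}.
\]
Tracking the $R$-action through Proposition~\ref{propcanon} and Lemma~\ref{lemcanon}, multiplication by $(t^N-1)$ on $\Tors_R H^{j+1}(U;\ov\cL)$ corresponds to the natural action $(r\cdot\phi)(a)=\phi(ra)$ on the $k$-dual; thus the annihilator above coincides with the kernel of multiplication by $(t^N-1)$ on $\Tors_R H^{j+1}(U;\ov\cL)$, namely $\Tors_{(t^N-1)}H^{j+1}(U;\ov\cL)$.

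The main subtlety will be verifying that the $R$-action is preserved (and not inverted) across the duality chain $(\Tors_R H_j(U^f;k))^{\vee_k}\cong \Ext^1_R(\Tors_R H_j(U^f;k),R)\cong \Tors_R H^{j+1}(U;\ov\cL)$ used in Proposition~\ref{prop:mapsAreEqual}. Once this compatibility is pinned down from the naturality of $\Res$ and $\UCT$, both assertions of the proposition will follow from the single topological long exact sequence above.
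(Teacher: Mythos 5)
Your argument is correct and essentially reproduces the paper's proof: your short exact sequence $0\to\cL_N\xrightarrow{\,t^N-1\,}\cL_N\to\underline{k}\to 0$ is just the sheaf-level form of the Milnor sequence of chain complexes $0\to C_\bullet(U^f;k)\xrightarrow{\,t^N-1\,}C_\bullet(U^f;k)\to C_\bullet(U_N;k)\to 0$ that the paper uses, and both proofs then restrict the resulting exact sequence to the torsion part and dualize over $k$ to obtain the image statement. Your torsion/free splitting makes explicit a step the paper leaves implicit, and the conjugation subtlety you flag at the end is harmless: since $t$ is a unit, multiplication by $t^N-1$ and by $t^{-N}-1$ have the same kernel and image, so either normalization of the dual $R$-action yields the stated conclusion.
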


\begin{proof}
We use a similar argument to \cite[Assertion 5]{milnor}. We consider the short exact sequence of chain complexes of $R$-modules
$$
0\to C_\bullet(U^f;k)\xrightarrow{\cdot (t^N-1)}C_\bullet(U^f;k)\xrightarrow{(\pi_N)_*}C_\bullet(U_N;k)\to 0,
$$
where $t$ acts on both $U^f$ and $U_N$ by deck transformations. It gives us a long exact sequence in homology (the Milnor sequence), from which we get the exact sequence
$$
H_j(U;\cL)\xrightarrow{\cdot(t^N-1)}H_j(U;\cL)\xrightarrow{H_j(\pi_N)} H_j(U_N;k).
$$
This in turn tells us that the following sequence is exact
$$
\Tors_R H_j(U;\cL)\xrightarrow{\cdot(t^N-1)}\Tors_R H_j(U;\cL)\xrightarrow{H_j(\pi_N)} H_j(U_N;k),
$$
which implies the assertion about the kernel of $H_j(\pi_N)$. We take $\Hom_k(\cdot,k)$ of the above exact sequence and get the exact sequence
\[
H^j(U_N;k)\xrightarrow{H^j(\pi_N)} \Hom_k(\Tors_R H_j(U;\cL),k)\xrightarrow{\cdot(t^N-1)} \Hom_k(\Tors_R H_j(U;\cL),k).
\]
from which we get the claim about the image of $H^j(\pi_N)$.
\end{proof}

The following corollary is a direct consequence of Proposition \ref{prop:kerim}. It says that, if the monodromy action is semisimple, the MHS on $\Tors_R H^{j+1}(U;\ov\cL)$ (respectively, $\Tors_R H_{j}(U;\cL)$) is determined by $H^j(\pi_N)$ and Deligne's MHS on $H^j(U_N;k)$ (respectively, by $H_j(\pi_N)$ and Deligne's MHS on $H_j(U_N;k)$, which is the dual MHS of $H^j(U_N;k)$).

\begin{cor}\label{cor:surj}\index{semisimple|(}	
Suppose that the $t$-action on $\Tors_R H^{j+1}(U;\ov \cL)$ (equivalently, on \linebreak $\Tors_R H_{j}(U;\cL)$) is semisimple for some $j$, and let $N$ be such that the action of $t^N$ on $\Tors_R H_j(U;\cL)$ is unipotent. Then $H^j(\pi_N)\colon H^j(U_N;k)\to \Tors_R H^{j+1}(U;\ov\cL)$ is a surjective morphism of MHS. Equivalently, $H_j(\pi_N)\colon \Tors_R H_{j}(U;\cL)\rightarrow H_j(U_N;k)$ is an injective morphism of MHS.
\end{cor}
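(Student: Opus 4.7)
The plan is to combine Proposition \ref{prop:kerim} with the hypotheses on the $t$-action to obtain the injectivity/surjectivity statements, and then invoke Theorem \ref{thm:geomIsMHS} applied to the infinite cyclic cover $\pi_N\colon U^f\to U_N$ (rather than to $\pi\colon U^f\to U$) to conclude that the resulting map is a morphism of mixed Hodge structures.

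First, I would observe that the two formulations are equivalent by duality. By Proposition \ref{propcanon} together with the construction of the MHS in Corollary \ref{halexandermhs}, the MHS on $\Tors_R H_j(U;\cL)$ is the $k$-dual of the MHS on $\Tors_R H^{j+1}(U;\ov\cL)$, and similarly for $U_N$. Since the map $H^j(\pi_N)$ is by construction (see Section \ref{sec:maps}) the $k$-dual of $H_j(\pi_N)$, duality preserves being a MHS morphism and exchanges injectivity with surjectivity. So it suffices to treat the homological statement.

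Next, I would use algebra to deduce injectivity. A semisimple unipotent endomorphism of a finite-dimensional $k$-vector space is the identity, so under the standing hypotheses $t^N$ acts as $\Id$ on $\Tors_R H_j(U;\cL)$. Equivalently, $(t^N-1)\cdot \Tors_R H_j(U;\cL)=0$. By Proposition \ref{prop:kerim}, this is exactly the kernel of $H_j(\pi_N)$, so the map is injective as a $k$-linear map.

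Finally, I would argue that $H_j(\pi_N)$ is a morphism of MHS. Under the canonical identification $\theta_N\colon U^f\xrightarrow{\sim}U_N^{f_N}$ of Diagram (\ref{eq:UN}), the map $\pi_N\colon U^f\to U_N$ becomes precisely the infinite cyclic covering map associated to the algebraic map $f_N\colon U_N\to \C^*$. Applying Theorem \ref{thm:geomIsMHS} to the pair $(U_N,f_N)$, the natural map
\[
\Tors_{R(N)}H_j(U_N^{f_N};k)\longrightarrow H_j(U_N;k)
\]
is a morphism of MHS (with target carrying the dual of Deligne's MHS). By Theorem \ref{indN} combined with Lemma \ref{233}, the canonical identification $\Tors_R H_j(U;\cL)\cong \Tors_{R(N)}H_j(U_N;\cL_N)$ is itself an isomorphism of mixed Hodge structures; composing gives that $H_j(\pi_N)$ is a MHS morphism, concluding the proof. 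The only potential obstacle is verifying that the identifications used to transport the MHS from the $N$-fold cyclic cover back to $U$ line up with the map $\pi_N$ induced by the covering, but this compatibility is built into the proof of Theorem \ref{indN} (specifically into the naturality of $\theta_N$), so no new work is required.
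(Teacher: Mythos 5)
Your proposal is correct and follows essentially the same route as the paper: the paper treats this corollary as a direct consequence of Proposition~\ref{prop:kerim} (semisimplicity plus unipotence of $t^N$ forces $t^N=\Id$ on the torsion, killing the kernel $(t^N-1)\cdot\Tors_R H_j(U;\cL)$, resp.\ making the image equal to the full $(t^N-1)$-torsion), with the MHS-morphism property of $H_j(\pi_N)$ and $H^j(\pi_N)$ supplied by Theorem~\ref{thm:geomIsMHS}. Your detour through applying Theorem~\ref{thm:geomIsMHS} to the pair $(U_N,f_N)$ and then invoking Lemma~\ref{233} and Theorem~\ref{indN} is only a repackaging of what the paper already builds in, since the proof of Theorem~\ref{thm:geomIsMHS} reduces precisely to showing that $H^i(\pi_N)$ is a MHS morphism after passing to the cover $U_N$.
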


\begin{remk}\label{rem:decomposition}
Let $A$ be a torsion $R$-module which is annihilated by $t^N-1$. Then, there is a canonical isomorphism
$$
A_1\oplus A_{\neq 1}\cong A
$$
where $A_1=\ker(A\xrightarrow{\cdot(t-1)}A)$ and $A_{\neq 1}=\ker(A\xrightarrow{\cdot(t^{N-1}+\ldots+t+1)}A)$.
\end{remk}

\begin{cor}[of Corollary \ref{cor:surj}]\label{cor:t}
Suppose that the $t$-action on $\Tors_R H^{j+1}(U;\ov \cL)$ (equiv. on $\Tors_R H_{j}(U;\cL)$) is semisimple for some $j$. Then, multiplication by $t$ induces a MHS morphism from $\Tors_R H^{j+1}(U;\ov \cL)$ (equiv. on $\Tors_R H_{j}(U;\cL)$) to itself. In particular, we have MHS isomorphisms
$$
\Tors_R H^{j+1}(U;\ov \cL)\cong\Tors_R H^{j+1}(U;\ov \cL)_1\oplus\Tors_R H^{j+1}(U;\ov \cL)_{\neq 1}
$$
and
$$
\Tors_R H_j(U;\cL)\cong\Tors_R H_j(U;\cL)_1\oplus\Tors_R H_j(U;\cL)_{\neq 1}
$$
\end{cor}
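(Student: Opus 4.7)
The plan is to deduce everything from Corollary \ref{cor:surj} by exhibiting $t$-equivariance for the covering map $H_j(\pi_N)$ (equivalently $H^j(\pi_N)$) and then using that $t$ acts on $U_N$ as an algebraic automorphism.

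First, let $N$ be such that $t^N$ acts unipotently on $\Tors_R H_j(U;\cL)$. Under the semisimplicity hypothesis, $t^N$ acts as the identity, so $t$ has order (dividing) $N$. Recall that the deck group $\Z$ of $\pi\colon U^f\to U$ descends, via the quotient $\Z\twoheadrightarrow \Z/N\Z$, to the deck group of $p\colon U_N\to U$; we denote by the same letter $t$ the generating automorphism of $U_N$. Since $t\colon U_N\to U_N$ is an algebraic automorphism, it induces MHS morphisms $t_*\colon H_j(U_N;k)\to H_j(U_N;k)$ and $t^*\colon H^j(U_N;k)\to H^j(U_N;k)$ with respect to Deligne's MHS. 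By construction, $\pi_N\colon U^f\to U_N$ intertwines the deck action of $t$ on $U^f$ with the automorphism $t$ on $U_N$, so the following diagrams commute:
\begin{center}
\begin{tikzcd}[row sep = 1.2em, column sep = 2.2em]
\Tors_R H_{j}(U;\cL) \arrow[r,"H_j(\pi_N)"]\arrow[d,"t\cdot"']& H_j(U_N;k)\arrow[d,"t_*"] & H^j(U_N;k) \arrow[r,"H^j(\pi_N)"]\arrow[d,"t^*"'] & \Tors_R H^{j+1}(U;\ov\cL)\arrow[d,"t\cdot"]\\
\Tors_R H_{j}(U;\cL) \arrow[r,"H_j(\pi_N)"]& H_j(U_N;k); & H^j(U_N;k) \arrow[r,"H^j(\pi_N)"] & \Tors_R H^{j+1}(U;\ov\cL).
\end{tikzcd}
\end{center}

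Next, by Corollary \ref{cor:surj}, $H^j(\pi_N)$ is a surjective MHS morphism and $H_j(\pi_N)$ is an injective MHS morphism. Combined with the commutative diagrams above, where the left vertical arrow is always a MHS morphism (Deligne's MHS on $H_j(U_N;k)$ or $H^j(U_N;k)$ being functorial for the algebraic automorphism $t$), we conclude that multiplication by $t$ on the right-hand side is also a MHS morphism. Indeed, for the cohomological statement, surjectivity of $H^j(\pi_N)$ means that $t\cdot$ is determined as the unique map making the square commute, and since both $H^j(\pi_N)$ and $t^*\circ H^j(\pi_N)$ are MHS morphisms, so is $t\cdot$. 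For the homological statement, injectivity of $H_j(\pi_N)$ identifies $\Tors_R H_j(U;\cL)$ with a sub-MHS of $H_j(U_N;k)$, and $t\cdot$ is the restriction of the MHS morphism $t_*$ (preservation of the sub-MHS is automatic from the diagram).

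Finally, since $t$ acts as a MHS morphism and $t^N=\Id$, the Bezout identity in $R$ expressing $1$ as a $k$-linear combination of $(t-1)$ and $(t^{N-1}+\cdots+t+1)$ produces idempotent projectors onto the two summands in Remark \ref{rem:decomposition}, each expressible as a polynomial in $t$. Hence both projectors are MHS morphisms, their images $\Tors_R H^{j+1}(U;\ov\cL)_1$ and $\Tors_R H^{j+1}(U;\ov\cL)_{\neq 1}$ (and similarly in homology) are sub-MHS, and the canonical isomorphism of Remark \ref{rem:decomposition} is an isomorphism of MHS. The main obstacle is conceptual rather than computational: one must verify that the $t$-action on $U_N$ is truly compatible via $\pi_N$ with the $R$-module structure on the Alexander modules, but this is immediate from the factorization $\pi = p\circ \pi_N$ and the identification of the deck groups, so the argument reduces to invoking Corollary \ref{cor:surj} in both variance directions.
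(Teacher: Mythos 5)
Your proposal is correct and follows essentially the same route as the paper: exploit that the deck transformation $t$ on $U_N$ is an algebraic automorphism (so Deligne functoriality makes $t$ a MHS morphism on $H^j(U_N;k)$ and $H_j(U_N;k)$), then use the surjectivity/injectivity of $H^j(\pi_N)$ and $H_j(\pi_N)$ from Corollary \ref{cor:surj} together with the commuting squares to transfer this to the torsion of the Alexander modules, with the direct sum statement coming from Remark \ref{rem:decomposition}. Your extra detail about the projectors being polynomials in $t$ is a fine elaboration of what the paper leaves implicit.
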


\begin{proof}
We write the proof in the cohomology case. Let $N$ be such that the action of $t^N$ on $\Tors_R H_j(U;\cL)$ is unipotent. Since $t$ acts on $U_N$ by deck transformations, and $U_N\rightarrow U$ is an algebraic covering map, we get that the deck transformation corresponding to $t$ is actually an algebraic map, so multiplication by $t$ on $H^j(U_N)$ is a morphism of MHS. The result follows from the commutativity of the following diagram and the fact that all the arrows except for the bottom one are known to be morphism of MHS.
$$
\begin{tikzcd}
H^j(U_N;k)\arrow[r,"t"]\arrow[d, "H^j(\pi_N)",twoheadrightarrow]& H^j(U_N;k)\arrow[d, "H^j(\pi_N)",twoheadrightarrow]\\
\Tors_R H^{j+1}(U;\ov \cL)\arrow[r,"t"] & \Tors_R H^{j+1}(U;\ov \cL).
\end{tikzcd}
$$
The last statement about the direct sum decomposition follows from Remark \ref{rem:decomposition}.
\end{proof}

The converse to Corollary~\ref{cor:t} also holds.

\begin{prop}\label{prop:semisimpleIsEasyConverse}
Suppose that the $t$-action on $\Tors_R H^{j+1}(U;\ov \cL)$ (equivalently, on \linebreak $\Tors_R H_{j}(U;\cL)$) is a MHS morphism for some $j$. Then, the action of $t$ is semisimple.
\end{prop}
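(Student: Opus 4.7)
The plan is to prove the statement for cohomology; it will then follow for homology by duality, since under the canonical MHS isomorphism of Proposition~\ref{propcanon} the action of $t$ on one side corresponds to the action of $t$ on the other (see the proof of Corollary~\ref{halexandermhs}). Set $A = \Tors_R H^{j+1}(U;\ov\cL)$ and choose $N \geq 1$ such that $t^N$ acts unipotently on $A$, so $s := t^N - \mathrm{id}$ is nilpotent. The strategy is to show $s = 0$; once this is established, $t$ is annihilated by the squarefree polynomial $x^N - 1 \in k[x]$ and is therefore semisimple.

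First I would exploit the formal identity $\log(t^N) = s - s^2/2 + s^3/3 - \cdots = s \cdot u$, where $u = 1 - s/2 + s^2/3 - \cdots$ is a polynomial in the nilpotent operator $s$ and a unit in $\mathrm{End}(A)$, with inverse $u^{-1}$ again a polynomial in $s$. By Corollary~\ref{alexandermhs}, $\log(t^N)\colon A \to A(-1)$ is a MHS morphism. If we assume $t$ is a MHS morphism $A \to A$, then so is $t^N$, hence so is $s$; consequently both $u$ and $u^{-1}$, being polynomials in $s$ and $\mathrm{id}$, are MHS morphisms $A \to A$. Composing, $s = \log(t^N) \circ u^{-1}$ becomes a MHS morphism $A \to A(-1)$. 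Unwinding the Tate twist gives $s(W_k A) \subset W_k(A(-1)) = W_{k-2}(A)$ for all $k$, so $s$ strictly decreases the weight filtration by two.

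The decisive step is to conclude $s = 0$ from the fact that $s$ is \emph{both} a MHS morphism $A \to A$ and strictly decreases $W$. For this I would invoke the functoriality of Deligne's canonical bigrading $A_\C = \bigoplus_{p,q} I^{p,q}(A)$ under MHS morphisms: the endomorphism $s$ preserves each $I^{p,q}(A_\C)$. Combined with $s(I^{p,q}) \subset W_{p+q-2}\, A_\C$ and the fact that $I^{p,q} \cap W_{p+q-1} = 0$ (since $I^{p,q}$ projects isomorphically onto $\Gr^W_{p+q} A_\C$), this forces $s(I^{p,q}) = 0$ for all $p,q$, and hence $s = 0$ on $A_\C$, and therefore on $A$.

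The main obstacle to be careful about is the claim that $s$ really inherits the MHS morphism structure into \emph{both} $A$ and $A(-1)$ simultaneously. The subtle point is that the factorization $\log(t^N) = s \cdot u$ only gives $s$ as a MHS morphism into $A(-1)$ after we know $u^{-1}$ is a MHS morphism $A \to A$, and this in turn rests crucially on the nilpotency of $s$, which allows us to express $u^{-1}$ as a polynomial in the MHS endomorphism $s$ rather than as a formal power series. Everything else is routine once this compatibility is in place.
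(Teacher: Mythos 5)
Your proof is correct, and its skeleton is the same as the paper's: pass to $N$ with $t^N$ unipotent, use Corollary \ref{alexandermhs} to know that $\log(t^N)\colon A \to A(-1)$ is a morphism of MHS, combine this with the hypothesis to see that the relevant nilpotent operator is a MHS morphism both with and without the Tate twist, and conclude that it must vanish. Where you diverge is in the execution, at two spots. First, the paper obtains the untwisted compatibility for $\theta=\log(t^N)$ by reducing to the case $\theta^2=0$ (via $\ker\theta=\ker\theta^2$), so that $\theta=\exp(\theta)-\Id=t^N-\Id$ is a difference of MHS morphisms; you instead work with $s=t^N-\Id$, which is an untwisted MHS endomorphism directly from the hypothesis, and transfer the twisted compatibility from $\log(t^N)$ to $s$ via the factorization $\log(t^N)=s\,u$, with $u$ an invertible polynomial in the nilpotent $s$ whose inverse is again such a polynomial — this cleanly sidesteps the kernel-reduction step. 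Second, for the final vanishing the paper invokes strictness of MHS morphisms with respect to $W$ (applied to the image $\theta(H)$ with its induced weight filtration, citing \cite[Corollary 3.6]{peters2008mixed}), whereas you invoke functoriality of Deligne's bigrading $I^{p,q}$ together with $I^{p,q}\cap W_{p+q-1}=0$; both are standard and give the same conclusion, the strictness route having the advantage of a directly citable statement. Your reduction of the homological statement to the cohomological one via Proposition \ref{propcanon} matches the paper's conventions, and the closing step (from $t^N=\Id$ and separability of $x^N-1$ in characteristic zero to semisimplicity of $t$) is fine, so there is no gap.
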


\begin{proof}
Let $H =\Tors_R H^{j+1}(U;\ov \cL)$. Let $N$ be such that $t^N$ is unipotent (and a morphism of MHS on $H$). By Corollary~\ref{alexandermhs}, $\log(t^N)$ is a morphism $H\to H(-1)$ (viewed as a power series around $t^N=1$). Therefore, we have a MHS $H$ with a nilpotent morphism of MHS $\theta \coloneqq \log(t^N)\colon H\to H(-1)$ such that $\exp(\theta)\colon H\to H$ is also a MHS morphism. We want to prove that $\theta=0$. For this it is enough to show that $\ker \theta = \ker \theta^2$, so we can assume that $\theta^2 (H) = 0$. Therefore, $\exp(\theta) = 1+\theta$. Since $1+\theta$ is a MHS morphism, then $\theta$ is as well.

Therefore, we have to show that if $\theta:H\to H$ and $\theta:H\to H(-1)$ are MHS morphisms, then $\theta=0$. This is a direct consequence of \cite[Corollary 3.6]{peters2008mixed}, where it is shown that a MHS morphism preserves the weight filtration strictly. Consider the image MHS $\theta (H)\subseteq H$. Then,
\begin{multline*}
W_k(\theta (H))\coloneqq W_kH\cap \theta (H) = \theta (W_kH) = \theta (W_{k+2}H) \quad\forall k \\ \Longrightarrow W_k(\theta (H)) = W_{k+1}(\theta (H))\quad\forall k.
\end{multline*}
Since the filtration $W_k$ is finite, for $k\gg 0$, $0=W_{-k}(\theta(H)) = W_{k}(\theta (H)) = \theta (H)$.
\end{proof}

Corollary~\ref{cor:t} and Proposition~\ref{prop:semisimpleIsEasyConverse} together give us Theorem \ref{tsemisimpleIntro} in the introduction, which is stated in homological notation.
\index{semisimple|)}

\section{Relationship with the Cup and Cap Products.}\label{ss:cupcap}

Consider the real component $j_*\calE^\bullet_U\left(\Im\, \frac{df}{f},m\right)$ of the $\R$-mixed Hodge complex of sheaves $\calH dg^\bullet(X\,\log D)\left(\frac{1}{i}\frac{df}{f},m\right)$ used to endow the torsion part of the Alexander modules with a MHS. The differential of $\calE^\bullet_U\left(\Im\, \frac{df}{f},m\right)$ involves a wedge with $\Im\, \frac{df}{f}	$. Wedging real differential forms corresponds to cup products in cohomology, which loosely suggests a relation between Alexander modules and the cup (and cap) products arising from the thickened complexes. In this section, we explore this relation and make it explicit.

\begin{prop}\label{prop:cup}
Let $N$ be such that the action of $t^N$ on $\Tors_R H^{j+1}(U;\ov\cL)$ is unipotent. Let $\gen\in H^1(\C^*;\Z)\cong \Z$ be a generator. We consider $\gen$ as an element of $H^1(\C^*;k)$, and $f_N^*(\gen)\in H^1(U_N;k)$. There exists an arrow (the dashed one) that makes the following diagram a commutative diagram of morphisms of MHS (and of $R$-modules, if $t$ acts on $U_N$ by deck transformations). Up to multiplication by a nonzero constant, it is induced by $\pi_{\ov\cL}\colon \ov\cL\to\ul k$.
$$
\begin{tikzcd}
H^j(U_N;k)\arrow[rr, " f_N^*(\gen)\smile"]\arrow[dr, "H^j(\pi_N)"]&\ & H^{j+1}(U_N;k)(1).\\
\ &\Tors_R H^{j+1}(U;\ov\cL)\arrow[ru, dashed]&\ 
\end{tikzcd}
$$
Here $(1)$ denotes the Tate twist, and $f_N^*(\gen)\smile $ denotes the cup product by $f_N^*(\gen)$. Moreover, if the $t$-action on $\Tors_R H^{j+1}(U;\ov \cL)$ is semisimple, the dashed arrow is injective.
\end{prop}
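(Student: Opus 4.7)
The approach is to exhibit the dashed arrow as (a nonzero scalar multiple of) the map induced on cohomology by the sheaf projection $\pi_{\ov\cL}\colon \ov\cL\to\ul{k}$, restricted to the torsion. By Theorem~\ref{indN}, the canonical isomorphism $\Tors_R H^{j+1}(U;\ov\cL)\cong\Tors_{R(N)}H^{j+1}(U_N;\ov\cL_N)$ is an isomorphism of mixed Hodge structures. Writing $V:=U_N$, $g:=f_N$, $s_N:=t^N-1$, $\ov\cL_V:=\ov\cL_N$, the hypothesis becomes that $s_N$ acts nilpotently on $\Tors_{R(N)} H^{j+1}(V;\ov\cL_V)$, and $H^j(\pi_N)$ is identified with the map $H^j(\pi_V)\colon H^j(V;k)\to \Tors_{R(N)} H^{j+1}(V;\ov\cL_V)$ of Theorem~\ref{thm:geomIsMHS}. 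Choose $m\gg 0$ so that $s_N^m$ annihilates the torsion, and factor $\pi_{\ov\cL_V}$ as $\ov\cL_V\xrightarrow{\phi_m}\ov\cL_V/s_N^m\xrightarrow{\phi_{m-1,1}}\ov\cL_V/s_N\cong \ul k$. Define
\[
D_0\colon \Tors_{R(N)} H^{j+1}(V;\ov\cL_V)\hookrightarrow H^{j+1}(V;\ov\cL_V/s_N^m)\xrightarrow{H^{j+1}(\phi_{m-1,1})} H^{j+1}(V;\ul k).
\]
The first arrow is a MHS morphism by Remark~\ref{remk:mhsSummary}; the second is a MHS morphism by the translated version of Lemma~\ref{inducedmhsmaps}(1) recorded in Remark~\ref{translatedlemmas}, with target $H^{j+1}(V;k)(1)$, since under the shifted-MHC convention the mixed Hodge structure arising from $\Hdg{X}{D}[1]$ on $H^{j+1}(V;\ul k)$ equals Deligne's MHS Tate-twisted by $(1)$ (Remark~\ref{transvstate}).

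To verify commutativity of the diagram (up to the indicated scalar), invoke Remark~\ref{rem:holyGrail}: there is a nonzero $c\in\R$ such that $c\cdot H^j(A_V)=H^{j+1}(\phi_m)\circ H^j(\pi_V)$. Post-composing both sides with $H^{j+1}(\phi_{m-1,1})$, the right-hand side is precisely $D_0\circ H^j(\pi_V)$, while the left-hand side equals $c$ times cup product with $[\Im\,dg/g]\in H^1(V;\R)$, as is immediate from the formula $A_\R(\alpha)=(\Im\,dg/g\wedge\alpha)\otimes 1$ together with the fact that $\phi_{m-1,1}$ kills all positive powers of $s_N$. Since $[\Im\,dg/g]=2\pi\cdot g^*(\gen)$ in $H^1(V;\R)$ for the appropriate sign of the generator, setting the dashed arrow to be $(2\pi c)^{-1}D_0$ produces a MHS morphism making the diagram commute. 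By construction, this is a nonzero scalar multiple of the map induced by $\pi_{\ov\cL}$.

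Finally, for injectivity under semisimplicity: when the $t$-action on $\Tors_{R}H^{j+1}(U;\ov\cL)$ is semisimple, the same holds for the $t^N$-action on the identified torsion, which is then both semisimple and unipotent, hence equal to the identity, so $s_N$ acts as zero on $\Tors_{R(N)} H^{j+1}(V;\ov\cL_V)$. The long exact sequence of $0\to\ov\cL_V\xrightarrow{s_N}\ov\cL_V\xrightarrow{\pi_{\ov\cL_V}}\ul k\to 0$ gives $\ker H^{j+1}(\pi_{\ov\cL_V})=s_N\cdot H^{j+1}(V;\ov\cL_V)$. Decomposing $H^{j+1}(V;\ov\cL_V)=\Tors_{R(N)}\oplus \Free$ as $R(N)$-modules (valid since $R(N)$ is a PID), multiplication by $s_N$ annihilates the torsion summand and preserves the free summand, so this kernel intersects $\Tors_{R(N)}$ trivially; this yields the injectivity of $D_0$, and hence of the dashed arrow. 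The main technical subtlety throughout lies in correctly accounting for the Tate twist $(1)$ introduced by the shift $[1]$ in our MHC convention; with this in hand, every other step reduces to an application of results already established earlier in the paper.
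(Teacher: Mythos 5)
Your proof is correct and follows essentially the same route as the paper's: reduce to the unipotent case via the identification of Lemma~\ref{233}/Theorem~\ref{indN}, invoke Remark~\ref{rem:holyGrail} to express $H^{j+1}(\phi_m)\circ H^j(\pi)$ as a scalar multiple of $H^j(A)$, post-compose with $\phi_{m-1,1}^*$, and identify $[\Im df/f]$ with a nonzero multiple of $f^*(\gen)$. The only divergence is in the final injectivity claim: the paper observes that semisimplicity is equivalent to $m=1$, in which case $\phi_{0,1}^*$ is the identity and the dashed arrow becomes a nonzero multiple of the injection $H^{j+1}(\phi_1)\colon\Tors_R H^{j+1}(U;\ov\cL)\hookrightarrow H^{j+1}(U;\ov\cL/s)$, whereas you argue via the Milnor-type long exact sequence of $0\to\ov\cL\xrightarrow{s_N}\ov\cL\to\ul k\to 0$ together with a torsion--free decomposition; both arguments are sound, with the paper's being more immediate from the already-established picture.
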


\begin{proof}
To make notation simpler, we may replace $U_N$ by $U$, $f_N$ by $f$, $\pi_N$ by $\pi$ and assume that the action of $t$ on $\Tors_R H^*(U;\cL)$ is unipotent. Let $m\in \N$ be the minimum natural number such that $(t-1)^m$ annihilates $\Tors_R H^{i+1}(U;\calL)$. Note that the $t$-action on $\Tors_R H^{i+1}(U;\calL)$ is semisimple if and only if $m=1$.

We continue the rest of the proof in the case when $k=\R$, from which the case $k=\Q$ will follow. Let $A_{\calH dg}\colon \Hdg{X}{D}\rightarrow \Hdg{X}{D}\left(\frac{1}{i}\frac{df}{f}, m \right)[1]$ be as in Lemma \ref{lem:theMapA}, which induces a map of MHS $$H^j(A_\R)\colon H^j(U)\rightarrow H^{j+1}\Gamma(U;\cE_U^\bullet(\Im df/f,m)).$$

Note that $\Im df/f=f^*(\Im dz/z)$, so there exists a non-zero constant $b\in\R^*$ such that $f^*(\gen)=b\Im df/f$. Let $\phi_{m-1,1}^*$ be the map induced in cohomology, corresponding to the map $\cE_U^\bullet(\Im df/f, m)\rightarrow \cE_U^\bullet(\Im df/f, 1)\cong \cE_U^\bullet$ coming from the projection $R_m\rightarrow R_1$. Note that $\phi_{0,1}^*$ is the identity map. By Lemma~\ref{inducedmhsmaps}, $\phi_{m-1,1}^*$ is a MHS morphism. Let $c\in\R^*$ be as in Remark \ref{rem:holyGrail}. By Remark \ref{rem:holyGrail}, we get the following commutative diagram of morphisms of MHS, which finishes the proof.
\[
{\footnotesize
\begin{tikzcd}[column sep = 7em]
H^j(U)\cong H^j\Gamma(U;\cE_U^\bullet)\arrow[r,bend left = 10,end anchor={[xshift=-8ex]}," f^*(\gen)\smile =b\cdot( \Im df/f \smile )"]\arrow[dr, "H^j(A_\R)", end anchor={[xshift=-7ex]}]\arrow[d, "H^j(\pi)"'] & H^j\Gamma(U;\cE_U^\bullet(\Im df/f,1)[1])\cong H^{j+1}(U)(1)\\
\Tors_R H^{j+1}(U;\ov\cL)\arrow[r,"\frac{1}{c}\cdot H^{j+1}(\phi_m)"',hookrightarrow] & H^j\Gamma(U;\cE_U^\bullet(\Im df/f,m)[1])\cong H^{j+1}(U;\ov\cL/s^m)\arrow[u, "b\cdot\phi_{m-1,1}^*{[}1{]}"'].
\end{tikzcd}
}
\]
\end{proof}

\begin{remk}\label{rem:cup}
We have not used it in the proof above, but the cup product $H^m(Y;k)\otimes H^1(Y;k)\rightarrow H^{m+1}(Y;k)$ is a morphism of MHS for every algebraic variety $Y$ (\cite[Corollary 5.45]{peters2008mixed}). The Tate twist in the target of the horizontal map in the commutative diagram above agrees with the fact that $\gen$ has weight $2$ in $H^1(\C^*;\R)$ (the MHS on $H^1(\C^*;\R)$ is pure of type $(1,1)$).
\end{remk}

The following result follows from Corollary \ref{cor:surj} and Proposition \ref{prop:cup}. The result in homology, which already appeared in the introduction as Theorem~\ref{finiteIntro}A, follows from taking $\Hom_k(\cdot,k)$ in the commutative diagram of Proposition \ref{prop:cup}.

\begin{cor}\label{cor:cup}
Let $N$ be such that  $t^N$ acts unipotently on $\Tors_R H^{j+1}(U;\ov\cL)$. Let $\gen\in H^1(\C^*;\Z)\cong \Z$ be a generator. We consider $\gen$ as an element of $H^1(\C^*;k)$, and $f_N^*(\gen)\in H^1(U_N;k)$. Suppose that the $t$-action on $\Tors_R H_j(U;\cL)$ (equivalently, on $\Tors_R H^{j+1}(U,\ov\cL)$) is semisimple. Then,
\begin{itemize}
\item $\Tors_R H^{j+1}(U,\ov\cL)$ is isomorphic, both as MHS and as $R$-modules, to the image of the cup product map $$H^j(U_N;k)\xrightarrow{ f_N^*(\gen)\smile } H^{j+1}(U_N;k)(1).$$

\item $\Tors_R H_j(U;\cL)$ is isomorphic, both as MHS and as $R$-modules, to the image of the cap product map 
$$
H_{j+1}(U_N;k)(-1)\xrightarrow{\frown f_N^*(\gen) } H_j(U_N;k)
$$
\end{itemize}
Here $H^*(U_N;k)$ are endowed with Deligne's MHS, and $H_*(U_N;k)$ are endowed with the dual of Deligne's MHS in cohomology.
\end{cor}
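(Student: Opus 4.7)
The plan is to chain together the surjectivity output of Corollary~\ref{cor:surj} with the factorization of the cup product established in Proposition~\ref{prop:cup}. First I would recall that, under the semisimplicity hypothesis, Proposition~\ref{prop:cup} produces a commutative diagram
\[
\begin{tikzcd}
H^j(U_N;k)\arrow[rr, "c\cdot f_N^*(\gen)\smile"]\arrow[dr, "H^j(\pi_N)"'] & & H^{j+1}(U_N;k)(1) \\
& \Tors_R H^{j+1}(U;\ov\cL)\arrow[ur, hookrightarrow, "\beta"'] &
\end{tikzcd}
\]
of MHS morphisms (and of $R$-module morphisms, when $t$ acts on $U_N$ by deck transformations), for some nonzero constant $c\in k^*$, where $\beta$ denotes the dashed arrow, which is injective precisely because the $t$-action is semisimple.

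Next I would invoke Corollary~\ref{cor:surj}, which under the same semisimplicity hypothesis asserts that $H^j(\pi_N)$ is surjective. Consequently the image of $f_N^*(\gen)\smile$ coincides with the image of $\beta$, and since $\beta$ is an injective morphism of MHS (and of $R$-modules), it restricts to an isomorphism $\Tors_R H^{j+1}(U;\ov\cL)\cong \im\bigl(f_N^*(\gen)\smile\bigr)$ respecting both structures. Multiplying by the nonzero scalar $c$ does not change the image, so this proves the cohomological half of the statement.

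For the homological statement I would dualize. Applying $\Hom_k(-,k)$ to the diagram above turns cup product with $f_N^*(\gen)$ into cap product with $f_N^*(\gen)$, the Tate twist $(1)$ into $(-1)$, Deligne's MHS on cohomology into its dual on homology, and reverses all arrows, so that the surjection $H^j(\pi_N)$ becomes the injection $H_j(\pi_N)$ and the injection $\beta$ becomes a surjection out of $H_{j+1}(U_N;k)(-1)$. Combining this with the identification of $\Tors_R H_j(U;\cL)$ with the $k$-dual of $\Tors_R H^{j+1}(U;\ov\cL)$ supplied by Proposition~\ref{propcanon}, I would conclude that the image of $(-)\frown f_N^*(\gen)$ is naturally isomorphic, as a MHS and as an $R$-module, to $\Tors_R H_j(U;\cL)$.

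There is no genuine obstacle here beyond bookkeeping, since both the heavy lifting (the existence of the factoring $\beta$) and the surjectivity input are already in hand; the only point requiring a little care is checking that the $R$-module structure survives the dualization, which is automatic from the fact that every arrow in Proposition~\ref{prop:cup} is $R$-linear when $t$ acts on $U_N$ as a deck transformation.
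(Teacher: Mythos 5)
Your proposal is correct and follows essentially the same route as the paper: the paper's proof is exactly to combine the injective factoring arrow from Proposition~\ref{prop:cup} with the surjectivity of $H^j(\pi_N)$ from Corollary~\ref{cor:surj}, then dualize via $\Hom_k(\cdot,k)$ for the homological half. The only minor slip is that you place the spurious constant $c$ on the cup product arrow, whereas in Proposition~\ref{prop:cup} the diagram commutes on the nose with $f_N^*(\gen)\smile$ and the constant only appears in the identification of the dashed arrow with a map induced by $\pi_{\ov\cL}$; as you note, this has no effect on images, so the argument is unaffected.
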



\section{Relationship with Deligne's MHS on the Generic Fiber.}\label{ss:genFiber}\index{fiber!generic|(}
Let $F$ be a generic fiber of $f\colon U\rightarrow \C^*$ (as in Definition~\ref{def:genericFiber}), and let $N$ be chosen such that the action of $t^N$ on $\Tors_R H^{j+1}(U;\ov\cL)$ is unipotent. Let $i\colon F\hookrightarrow U$ be the inclusion. As in Section~\ref{genfiber}, $F$ lifts to $U_N$ and $U^f$ via maps $i_N$ and $i_\infty$, making the following diagram commutative, where the vertical arrows are covering space maps.
$$
\begin{tikzcd}
\ & U^f\arrow[d, "\pi_N"]\arrow[dd, "\pi", bend left=90]\\
\ & U_N\arrow[d,"p"]\\
F\arrow[r,"i",hook]\arrow[ru, "i_N" pos=1, hook]\arrow[ruu, "i_{\infty}",hook] & U.
\end{tikzcd}
$$
In homology, the composition $i_N=\pi_N\circ i_{\infty}$ factors through $\Tors_R H_j(U^f;k)\cong \Tors_R H_j(U;\cL)$. Hence, we get
\begin{equation}\label{eq:fiberhom}
\begin{tikzcd}
H_j(F;k)\arrow[r,"H_j(i_\infty)",two heads]\arrow[rr, "H_j(i_N)", bend right = 20] & \Tors_R H_j(U;\cL)\arrow[r, "H_j(\pi_N)"] & H_j(U_N;k),
\end{tikzcd}
\end{equation}
and taking $\Hom_k(\cdot,k)$ and the appropriate identifications, we get
\begin{equation}\label{eq:fibercohom}
\begin{tikzcd}
H^j(U_N;k)\arrow[r,"H^j(\pi_N)"]\arrow[rr, "H^j(i_N)", bend right = 20] & \Tors_R H^{j+1}(U;\ov\cL)\arrow[r, "H^j(i_\infty)",hook] & H^j(F;k).
\end{tikzcd}
\end{equation}
Here $H_j(i_\infty)$ and $H^j(i_\infty)$ are surjective and injective respectively by Proposition~\ref{genf} and Remark~\ref{remk:tube}.

Note that $H_j(i_N)$ and $H^j(i_N)$ are MHS morphisms, since $i_N$ is an algebraic map. Recall that $H_j(\pi_N)$ and $H^j(\pi_N)$ are MHS morphisms by Theorem \ref{thm:geomIsMHS}. If the monodromy is semisimple, Corollary \ref{cor:surj} tells us that $H_j(\pi_N)$ and $H^j(\pi_N)$ are injective and surjective respectively. The information in this paragraph and the commutativity of the two diagrams above tell us that $H_j(i_\infty)$ and $H^j(i_\infty)$ are also MHS morphisms.

\begin{cor}\label{cor:fiber}
Let $N$ be such that the action of $t^N$ on $\Tors_R H^{j+1}(U;\ov\cL)$ is unipotent. Suppose that the $t$-action on $\Tors_R H_j(U;\cL)$ (equivalently, on $\Tors_R H^{j+1}(U,\ov\cL)$) is semisimple. Then, we have the following commutative diagrams, where all the arrows are morphisms of MHS.\index{semisimple}
$$
\begin{tikzcd}
H_j(F;k)\arrow[r,"H_j(i_\infty)",two heads]\arrow[rr, "H_j(i_N)", bend right = 20] & \Tors_R H_j(U;\cL)\arrow[r, "H_j(\pi_N)", hook] & H_j(U_N;k),
\end{tikzcd}
$$
and 
$$
\begin{tikzcd}
H^j(U_N;k)\arrow[r,"H^j(\pi_N)", two heads]\arrow[rr, "H^j(i_N)", bend right = 20] & \Tors_R H^{j+1}(U;\ov\cL)\arrow[r, "H^j(i_\infty)",hook] & H^j(F;k).
\end{tikzcd}
$$
Therefore, $\Tors_R H_j(U;\cL)$ (resp. $\Tors_R H^{j+1}(U;\ov\cL)$) is isomorphic as MHS to the image of the MHS morphism $H_j(i_N)$ (resp. $H^j(i_N)$).
\end{cor}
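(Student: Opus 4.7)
The plan is to observe that essentially all pieces are already assembled in the discussion immediately preceding the corollary, and to carefully explain why the semisimplicity hypothesis upgrades the middle arrows $H_j(i_\infty)$ and $H^j(i_\infty)$ (which a priori are only $k$-linear maps) to morphisms of mixed Hodge structures.

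First, I would note that the commutativity of both diagrams is a purely topological fact, already recorded in \eqref{eq:fiberhom} and \eqref{eq:fibercohom}, and depends only on the factorization $i_N = \pi_N \circ i_\infty$. Next, I would collect the MHS-morphism status of the various arrows already known to us: the maps $H_j(i_N)$ and $H^j(i_N)$ are morphisms of MHS because $i_N\colon F\hookrightarrow U_N$ is the inclusion of an algebraic subvariety (a generic fiber of the algebraic map $f_N$), so this is Deligne's functoriality; the maps $H_j(\pi_N)$ and $H^j(\pi_N)$ are MHS morphisms by Theorem~\ref{thm:geomIsMHS}. The semisimplicity hypothesis, via Corollary~\ref{cor:surj}, upgrades $H_j(\pi_N)$ to an injection and $H^j(\pi_N)$ to a surjection, while Proposition~\ref{genf} together with Remark~\ref{remk:tube} give the surjectivity of $H_j(i_\infty)$ and the injectivity of $H^j(i_\infty)$.

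The crucial step is then to deduce that $H_j(i_\infty)$ and $H^j(i_\infty)$ themselves are MHS morphisms. For the homological case, since $H_j(\pi_N)$ is an injective MHS morphism, strictness of MHS morphisms with respect to both the weight and Hodge filtrations (see \cite[Theorem~3.10]{peters2008mixed}) implies that $H_j(\pi_N)$ is an isomorphism of mixed Hodge structures onto its image $\im H_j(\pi_N) \subseteq H_j(U_N;k)$, viewed as a sub-MHS. Thus $H_j(\pi_N)^{-1}$, defined on this image, is a MHS morphism; and since the image of the MHS morphism $H_j(i_N)$ is contained in $\im H_j(\pi_N)$ by commutativity, we can write
\[
H_j(i_\infty) = H_j(\pi_N)^{-1} \circ H_j(i_N),
\]
exhibiting $H_j(i_\infty)$ as a composition of MHS morphisms. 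The cohomological case is dual: $H^j(\pi_N)$ being a surjective MHS morphism realizes $\Tors_R H^{j+1}(U;\ov\cL)$ as a quotient MHS of $H^j(U_N;k)$, and $H^j(i_\infty)$ is identified with the unique map through this quotient induced by the MHS morphism $H^j(i_N)$.

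Finally, the last assertion (that $\Tors_R H_j(U;\cL)$ and $\Tors_R H^{j+1}(U;\ov\cL)$ are isomorphic as MHS to the images of $H_j(i_N)$ and $H^j(i_N)$ respectively) falls out immediately: by commutativity and the surjectivity of $H_j(i_\infty)$, the image of $H_j(i_N)$ equals the image of $H_j(\pi_N)$, and the latter is isomorphic as a MHS to $\Tors_R H_j(U;\cL)$ by the strictness argument above; dually for cohomology. The only genuine obstacle, and the reason the argument requires some care rather than being formal, is the appeal to strictness of MHS morphisms to make sense of ``dividing by $H_j(\pi_N)$'' at the level of mixed Hodge structures; all other ingredients are either topological or have been established earlier in the paper.
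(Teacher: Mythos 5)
Your proposal is correct and follows essentially the same route as the paper: the commutativity comes from the factorization $i_N=\pi_N\circ i_\infty$ recorded in (\ref{eq:fiberhom}) and (\ref{eq:fibercohom}), the MHS-morphism status of $H_j(i_N)$, $H^j(i_N)$, $H_j(\pi_N)$, $H^j(\pi_N)$ and the injectivity/surjectivity from Corollary~\ref{cor:surj} are exactly the ingredients the paper assembles in the paragraph preceding the corollary. The only difference is that you spell out, via strictness of MHS morphisms, the step the paper leaves implicit (that an injective, resp.\ surjective, MHS morphism lets one ``divide'' and conclude $H_j(i_\infty)$, resp.\ $H^j(i_\infty)$, is a MHS morphism), which is a welcome clarification rather than a deviation.
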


Note that Theorem~\ref{finiteIntro}B is an immediate consequence of the statement in homology of Corollary ~\ref{cor:fiber}.

\begin{remk}\label{rem:indepF}
In general, the MHS on $F$ depends on the specific choice of the fiber, even if $F$ is generic. Corollary \ref{cor:fiber} above tells us that the map induced by a lift of the inclusion of the generic fiber $F$ of $f$ into the infinite cyclic cover $U^f$ induces a MHS morphism in both homology and cohomology for \textbf{any} choice of generic fiber $F$.
\end{remk}

\begin{cor}\label{cor:fiberConverse}
The $t$-action on the module $\Tors_R H_j(U;\cL)$ (equivalently, on \linebreak $\Tors_R H^{j+1}(U;\ov\cL)$) is semisimple if and only if for any generic fiber $F\subset U^f$, the induced map in homology $H_j(i_\infty)\colon H_j(F;k)\to \Tors_R H_j(U^f;k)$ is a MHS morphism (equivalently, the dual map $H^j(i_\infty)\colon \Tors_R H^{j+1}(U;\ov\cL)\to H^j(F;k)$ is a MHS morphism).
\end{cor}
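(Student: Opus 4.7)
My plan is to prove the two implications separately. The forward direction ($\Rightarrow$) is already supplied by Corollary~\ref{cor:fiber}: if the $t$-action is semisimple, then the rightmost commutative diagram in Corollary~\ref{cor:fiber} shows that $H_j(i_\infty)$ is a MHS morphism for every choice of generic fiber and every lift. So the whole proof reduces to the reverse direction ($\Leftarrow$), which I would establish by showing the $t$-action itself is a MHS morphism and then invoking Proposition~\ref{prop:semisimpleIsEasyConverse}.

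Assume that $H_j(i_\infty)$ is a MHS morphism for \emph{every} generic fiber $F\subset U^f$. The key observation is that we may exploit the freedom in choosing a lift of a fixed generic fiber of $f$: pick any generic fiber $F_0\subset U^f$ of $f_\infty$ and set $F_1 \coloneqq t\cdot F_0$, which is again a generic fiber because the deck transformation $t$ sends fibers of $f_\infty$ to fibers of $f_\infty$ and preserves $\exp^{-1}(\calB)$. Let $i_{\infty,k}\colon \pi(F_0) \hookrightarrow U^f$ denote the inclusion via $F_k$, for $k=0,1$. Since $\pi\circ t = \pi$ and $(\pi|_{F_1})^{-1} = t\circ (\pi|_{F_0})^{-1}$, we have $i_{\infty,1} = t\circ i_{\infty,0}$, and therefore in homology
\[
H_j(i_{\infty,1}) \;=\; t\cdot H_j(i_{\infty,0})
\]
where $t$ on the right denotes multiplication by $t\in R$ on $\Tors_R H_j(U^f;k)$. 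By Proposition~\ref{genf}, $H_j(i_{\infty,0})$ is surjective onto $\Tors_R H_j(U^f;k)$, and by assumption both $H_j(i_{\infty,0})$ and $H_j(i_{\infty,1})$ are MHS morphisms.

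Now invoke strictness: a surjective MHS morphism $\alpha\colon A\twoheadrightarrow B$ satisfies $\alpha(W_\ell A)=W_\ell B$ and $\alpha(F^p A_\C)=F^p B_\C$ for all $\ell, p$. Applied to $H_j(i_{\infty,0})$, given $b\in W_\ell \Tors_R H_j(U^f;k)$ we may write $b=H_j(i_{\infty,0})(a)$ with $a\in W_\ell H_j(F;k)$, whence
\[
t\cdot b \;=\; t\cdot H_j(i_{\infty,0})(a) \;=\; H_j(i_{\infty,1})(a) \;\in\; W_\ell \Tors_R H_j(U^f;k),
\]
and the identical argument for $F^p$ shows that $t\colon \Tors_R H_j(U^f;k)\to \Tors_R H_j(U^f;k)$ is a MHS morphism. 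Dualizing via the canonical MHS identification of Proposition~\ref{propcanon}, the same holds for $t$ acting on $\Tors_R H^{j+1}(U;\ov\cL)$. Applying Proposition~\ref{prop:semisimpleIsEasyConverse} then yields semisimplicity of the $t$-action, completing the backward implication.

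The argument is essentially free of obstacles, since every ingredient is already in place: the two lifts related by the deck transformation give the needed relation $H_j(i_{\infty,1})=t\cdot H_j(i_{\infty,0})$; Proposition~\ref{genf} provides surjectivity; strictness of MHS morphisms is automatic; and Proposition~\ref{prop:semisimpleIsEasyConverse} transforms ``$t$ is a MHS morphism'' into semisimplicity. The only subtle point is recognizing that ``for any generic fiber'' must be interpreted as ``for every generic fiber and every lift'', so that both $F_0$ and $F_1=tF_0$ fall within the hypothesis. The cohomological version of the statement is then obtained automatically by dualizing through Proposition~\ref{propcanon}, or by running the same argument with the cup-product-compatible map $H^j(i_\infty)$ replacing $H_j(i_\infty)$.
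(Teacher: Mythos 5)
Your proof is correct and takes essentially the same approach as the paper: consider the two inclusions $i_\infty$ and $t\circ i_\infty$ (equivalently your $F_0$ and $F_1 = t\cdot F_0$), observe both are surjective onto $\Tors_R H_j(U^f;k)$ by Proposition~\ref{genf} and are MHS morphisms by hypothesis, deduce that $t$ is a MHS morphism, and conclude via Proposition~\ref{prop:semisimpleIsEasyConverse}. The paper states the step "$t$ must be a MHS morphism" tersely, whereas you helpfully spell out the underlying strictness argument.
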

\begin{proof}
The forward direction is Corollary~\ref{cor:fiber}. For the converse, consider an inclusion or a fiber $i_\infty\colon F\to U^f$ and the deck transformation $t\colon U^f\to U^f$. Consider the induced maps in homology:
\[
\begin{tikzcd}
H_i(F;k) \arrow[r,"i_\infty",twoheadrightarrow]\arrow[d,equals]&
\Tors_R H_i(U^f;k)\subseteq H_i(U^f;k)\arrow[d,"t",shift right = 2ex] \\
H_i(F;k) \arrow[r,"t\circ i_\infty",twoheadrightarrow]&
\Tors_R H_i(U^f;k)\subseteq H_i(U^f;k).
\end{tikzcd}
\]
The horizontal arrows are surjections by Proposition~\ref{genf}, and by hypothesis they are MHS morphisms. Therefore, the map ``$t$'' must be a MHS morphism as well. By Proposition~\ref{prop:semisimpleIsEasyConverse}, this implies that $t$ is semisimple.
\end{proof}

Let $f\in\C[x_1,\ldots,x_n]$ be a weighted homogeneous polynomial, and let $U=\linebreak\C^n\setminus\{f=0\}$. In this case, we have a global Milnor fibration $f\colon U\to\C^*$. Moreover, assume that $f\colon U\to\C^*$ induces an epimorphism on fundamental groups, which happens if and only if the greatest common divisor of the exponents of the distinct irreducible factors of $f$ is $1$. 
Let $F$ be a fiber of $f\colon U\to\C^*$, and let $i_\infty\colon F\hookrightarrow U^f$ be a lift of the inclusion $i\colon F\hookrightarrow U$. Note that, since $f\colon U\rightarrow \C^*$ is a fibration, $i_\infty$ is a homotopy equivalence, so it induces isomorphisms $H_j(F;k)\rightarrow H_j(U^f;k)$ for all $j$, which are compatible with the $t$-action (see Lemma~\ref{lem:fiberMonodromy}). Moreover, the $t$-action on $F$ comes from an algebraic map $F\rightarrow F$ of finite order, so the $t$-action on $H_j(F)$ and $H^j(F)$ is semisimple. Applying Corollary \ref{cor:fiber}, we have arrived at the following result.

\begin{cor}[The Alexander modules recover the MHS on the global Milnor fiber]\label{cor:quasihom}
Let $f\in\C[x_1,\ldots,x_n]$ be a weighted homogeneous polynomial, and let $U=\C^n\setminus\{f=0\}$. Assume that $f\colon U\rightarrow \C^*$ induces an epimorphism in fundamental groups. Let $F$ be a fiber of $f\colon U\rightarrow \C^*$, and let $i_\infty\colon F\hookrightarrow U^f$ be a lift of the inclusion $i\colon F\hookrightarrow U$. The map
$$
\Tors_R H^{j+1}(U;\ov\cL)\rightarrow H^j(F;k)
$$
induced by $i_\infty$ is a MHS isomorphism, where $H^j(F;k)$ is endowed with Deligne's MHS.
\end{cor}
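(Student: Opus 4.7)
The plan is to exploit the fact that for a weighted homogeneous polynomial, $f\colon U \to \C^*$ is a global Milnor fibration, which collapses the two obstructions (knowing what $\Tors_R H^{j+1}(U;\ov\cL)$ is as a vector space, and knowing that $H^j(i_\infty)$ respects mixed Hodge structures) into statements that follow directly from results already in the paper. First I would observe that $f\colon U \to \C^*$ is a locally trivial fibration with fiber $F$, so pulling back along $\exp\colon \C \to \C^*$ gives a fibration $U^f \to \C$ over a contractible base. Consequently the lift $i_\infty\colon F \hookrightarrow U^f$ is a homotopy equivalence, and in particular induces isomorphisms on (co)homology in every degree. Since $H_j(F;k)$ is a finite-dimensional $k$-vector space, $H_j(U^f;k)$ is a torsion $R$-module, so $\Tors_R H_j(U^f;k) = H_j(U^f;k)$. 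Combined with Proposition \ref{propcanon} and the UCT (both of which identify cohomology with the $k$-dual of homology for finite-dimensional coefficients), this shows that $H^j(i_\infty)\colon \Tors_R H^{j+1}(U;\ov\cL) \to H^j(F;k)$ is an isomorphism of $k$-vector spaces.

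Next I would verify semisimplicity of the $t$-action on $A_j(U^f;k)$, which is the hypothesis needed to invoke Corollary \ref{cor:fiber}. For a weighted homogeneous polynomial of weighted degree $d$ with weights $(w_1, \ldots, w_n)$, the geometric monodromy of the global Milnor fibration is realized by the algebraic automorphism $x \mapsto (e^{2\pi i w_1/d} x_1, \ldots, e^{2\pi i w_n/d} x_n)$ of $F$, which has finite order. Since any finite-order automorphism of a finite-dimensional vector space over a field of characteristic zero is diagonalizable, the induced action on $H_j(F;k)$ is semisimple. Via the isomorphism $H_j(F;k) \cong \Tors_R H_j(U^f;k)$ and Lemma \ref{lem:fiberMonodromy} (which matches the monodromy action with the deck action up to a sign choice), the $t$-action on $A_j(U^f;k)$ is therefore semisimple as well.

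Having established semisimplicity and the vector-space isomorphism, the conclusion is immediate: Corollary \ref{cor:fiber} (applied with $k$-coefficients) tells us that $H^j(i_\infty)\colon \Tors_R H^{j+1}(U;\ov\cL) \to H^j(F;k)$ is a morphism of $k$-mixed Hodge structures, where the target carries Deligne's MHS. Being also a vector-space isomorphism, it is an isomorphism of mixed Hodge structures, which is exactly the statement of the corollary. I do not expect a genuine obstacle here, since every piece of the argument is either a known geometric property of the global Milnor fibration of a weighted homogeneous polynomial or a direct application of Corollary \ref{cor:fiber}; the only bookkeeping to keep straight is that the identification $\Tors_R H^{j+1}(U;\ov\cL) \cong H^j(F;k)$ of vector spaces really is the map induced by $i_\infty$ (and not some twist thereof), which follows from Proposition \ref{propcanon} being functorial and from the explicit formulas in the proof of Corollary \ref{isocohom}.
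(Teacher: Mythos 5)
Your proposal is correct and follows essentially the same route as the paper: use that the global Milnor fibration makes $i_\infty$ a homotopy equivalence (so the map is a vector-space isomorphism onto the torsion part), deduce semisimplicity of the $t$-action from the finite-order algebraic monodromy of a weighted homogeneous polynomial, and then apply Corollary \ref{cor:fiber} to get a bijective MHS morphism, hence an MHS isomorphism. The extra details you supply (the explicit monodromy automorphism and the bookkeeping via Proposition \ref{propcanon} and Corollary \ref{isocohom}) are consistent with, and only elaborate on, the paper's argument.
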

\index{fiber!generic|)}

\section{Dimca-Libgober and Liu's Mixed Hodge Structure}\label{ss:DLandLiu}
In this section, we recall other MHS on Alexander modules in the literature, and we show that the MHS that we consider in this paper generalizes them.

Let $F$ be a generic fiber of $f\colon U\rightarrow \C^*$, as in Definition~\ref{def:genericFiber}. Let $\pi\colon U^f\rightarrow U$ be the covering space induced by $f_*\colon \pi_1(U)\twoheadrightarrow \Z$, Let $i\colon F\hookrightarrow U$ be the inclusion, and let $i_{\infty}\colon F\hookrightarrow U^f$ be a lift of $i$, as in Section~\ref{genfiber}.

We make the following assumption in this section.
\begin{assumption} Fix $j\geq 0$, and suppose that $\pi^*\colon H^j(U;\Q)\rightarrow H^j(U^f;\Q)$ is an epimorphism, and $(i_{\infty})^*\colon H^j(U^f;\Q)\rightarrow H^j(F;\Q)$ is a monomorphism.
\label{a1}
\end{assumption}

Endow $H^j(U;\Q)$ and $H^j(F;\Q)$ with Deligne's MHS. Under Assumption \ref{a1}, we have the following commutative diagram:

\begin{center}
 \begin{tikzcd}
 H^j(U;\Q) \arrow[rd, "i^*"] \arrow[r, two heads, "\pi^*"] & H^j(U^f;\Q)\arrow[d, hook,"(i_{\infty})^*"] \\
\ & H^j(F;\Q)
\end{tikzcd}
\end{center}

\begin{remk}
\label{nota}
Since $i^*$ is a map of MHS, we can endow $H^j(U^f;\Q)$ with a unique MHS such that both $\pi^*$ and $(i_{\infty})^*$ are maps of MHS. Indeed, the image of $i^*$ is a sub-MHS of $H^j(F;\Q)$ that is identified through $(i_{\infty})^*$ with $H^j(U^f;\Q)$.
\end{remk}

\begin{set}\label{set:DL}
In their paper \cite{DL}, Dimca and Libgober consider the following setting: Let $W'=W_0'\cup\ldots\cup W_m'$ be a hypersurface arrangement in $\bP^N$ for $N > 1$, where $W_j'$ is a hypersurface of degree $d_j$ defined by the equation $g_j=0$, where $g_j$ is a reduced homogeneous polynomial. Let $Z\subset \bP^N$ be a smooth complete intersection of dimension $n>1$ which is not contained in $W'$, and let $W_j =W_j'\cap Z$ for $j=0,...,m$ be the corresponding hypersurface in $Z$. Let $W = W_0\cup\ldots\cup W_m$ denote the corresponding hypersurface arrangement in $Z$. Moreover, assume that the following three conditions hold:
\begin{itemize}
\item All the hypersurfaces $W_j$ are distinct, reduced and irreducible; moreover $W_0$ is smooth.
\item The hypersurface $W_0$ is transverse in the stratified sense to $T = W_1\cup\ldots\cup W_m$, i.e., if $\mathcal{S}$ is a Whitney regular stratification of $T$ , then $W_0$ is transverse to any stratum $S\in\mathcal{S}$.
\item $d_0$ divides the sum $\sum_{j=1}^m d_j$ , say $dd_0 = \sum_{j=1}^m d_j$.
\end{itemize}
The complement $M= Z \backslash W_0$ is a smooth affine variety. Let $Y$ be the hypersurface $Y=M\cap T$ in $M$. The authors consider the variety $U =M\backslash Y$, with the map
$$
\f{f}{U}{\C^*}{x}{\frac{g_1(x)\cdot\ldots\cdot g_m(x)}{g_0(x)^d}}.
$$
By \cite[Theorem 1.2]{DL}, the generic fiber of this map is connected.
\end{set}

\begin{remk}
Note that, in this setting, $U$ is an affine variety of dimension $n$, so it has the homotopy type of a finite $n$-dimensional CW-complex, which implies that $H^j(U^f,\Q)$ is a free $R$-module for $j=n$, and $0$ if $j>n$. Moreover, by \cite[Corollary 1.6]{DL}, $H^j(U^f,\Q)$ is annihilated by $t^d-1$ (and therefore is $R$-torsion and semisimple) for $j<n$.
\label{affine}
\end{remk}

Let $p\colon U_d\rightarrow U$ be the $d$-fold cover of $U$ obtained as the pullback by $f$ of the map $g\colon \C^*\rightarrow \C^*$ given by $z\mapsto z^d$, as exemplified by the following commutative diagram:
\begin{center}
 \begin{tikzcd}
U_d \arrow[d, "p"] \arrow[r, "f_d"] & \C^*\arrow[d, "g"] \\
U \arrow[r, "f"] & \C^*.
\end{tikzcd}
\end{center}
Note that $F$ is the generic fiber of $f_d$ as well. Let $\pi_d\colon U^f\rightarrow U_d$ be the covering space such that $\pi=p\circ \pi_d$, and let $i_d$ be a lift of $i$ to $U_d$. Note that $i_{\infty}$ can be considered to be a lift of $i_d$.

\begin{remk}[Dimca and Libgober's MHS]
By \cite[Theorem 1.5]{DL}, we have that $$(\pi_d)^*\colon H^j(U_d;\Q)\rightarrow H^j(U^f;\Q)$$ is an epimorphism for all $j<n$. By \cite[Corollary 1.3]{DL}, $(i_{\infty})^*\colon H^j(U^f;\Q)\rightarrow H^j(F;\Q)$ is an isomorphism if $j<n-1$ and a monomorphism if $j=n-1$. This means that if we switch $U$ for $U_d$ and $\pi$ for $\pi_d$, we are under the conditions of Assumption \ref{a1}. Hence, there is a unique MHS on $H^j(U^f;\Q)$ that makes $\pi_d^*$ and $(i_{\infty})^*$ into morphisms of MHS for $j<n$, and this MHS is the one that Dimca and Libgober construct in \cite{DL} (recall Remark \ref{nota}).
\label{remDL}
\end{remk}

In \cite{Liu}, Liu considers a particular case of the situation described above, which was initially studied in \cite{Max06}. In his setting, $N=n$, $W_0'$ is the hyperplane at infinity and $Z=\bP^{n}$. Then, $U$ is the affine complement of a hypersurface transversal at infinity, and $f$ induces the linking number homomorphism in fundamental groups. That is, the image by $f_*$ of a positively oriented meridian around each of the irreducible components $W_j$ of $X$ gets mapped to $1\in\Z$, for $j=1,\ldots,m$. He recovers Dimca and Libgober's MHS with a completely different approach, using nearby cycles.

\begin{cor}[of Corollary \ref{cor:fiber}]\label{cor:recoverMHS}
Suppose $U$ and $f$ are as in Setting \ref{set:DL}. Under the isomorphism given by Corollary~\ref{isocohom}, the MHS on $\Tors_R H^{j+1}(U;\ov\calL)$ constructed in this paper coincides with the MHS on $H^j(U^f;\Q)$ constructed in \cite{DL} , for $0\leq j\leq n-1$.
\end{cor}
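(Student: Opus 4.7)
The plan is to deduce this corollary as a direct consequence of Corollary~\ref{cor:fiber} combined with the uniqueness statement underlying Dimca and Libgober's construction (Remark~\ref{nota} and Remark~\ref{remDL}). Concretely, the MHS of \cite{DL} on $H^j(U^f;\Q)$ is characterized, for $j<n$, as the \emph{unique} mixed Hodge structure for which both $\pi_d^*\colon H^j(U_d;\Q)\twoheadrightarrow H^j(U^f;\Q)$ and $(i_\infty)^*\colon H^j(U^f;\Q)\hookrightarrow H^j(F;\Q)$ become morphisms of MHS (where the source and target carry Deligne's MHS). So it suffices to prove that, under the identification of Corollary~\ref{isocohom}, our MHS on $\Tors_R H^{j+1}(U;\ov\cL)$ also enjoys this property.

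First, I would verify that the hypotheses of Corollary~\ref{cor:fiber} are met. Since $H^j(U^f;\Q)$ is annihilated by $t^d-1$ for $j<n$ by Remark~\ref{affine}, the same is true of $\Tors_R H^{j+1}(U;\ov\cL)$ via Corollary~\ref{isocohom}, so the $t$-action is semisimple. Moreover, we may take $N=d$ in our construction, since $t^d$ acts as the identity (hence unipotently) on these Alexander modules, and the associated cyclic cover is precisely $U_d$. With these choices, Corollary~\ref{cor:fiber} yields a commutative diagram
\[
\begin{tikzcd}
H^j(U_d;\Q)\arrow[r,"H^j(\pi_d)",twoheadrightarrow]\arrow[rr,"H^j(i_d)",bend right=20] & \Tors_R H^{j+1}(U;\ov\cL)\arrow[r,"H^j(i_\infty)",hook] & H^j(F;\Q)
\end{tikzcd}
\]
in which every arrow is a morphism of MHS, with source and target carrying Deligne's MHS.

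The next step is to identify these arrows with their topological counterparts under the canonical isomorphism of Corollary~\ref{isocohom}. Specifically, I would invoke the sentence in the proof of Corollary~\ref{isocohom} observing that $H^j(\pi)$, as defined in Section~\ref{sec:maps} via duality and $\Res$, coincides under the identification $\Tors_R H^{j+1}(U;\ov\cL)\cong H^j(U^f;\Q)$ with the topological pullback $\pi^*$. The functoriality half of Proposition~\ref{propcanon}, together with the naturality of $\UCT_k$ and the functoriality of the UCT short exact sequence used in defining $H^i(\pi)$, shows that the same holds with $\pi_d$ replaced by any algebraic lift, in particular for $\pi_d\colon U^f\to U_d$ and for $i_\infty\colon F\hookrightarrow U^f$ (the latter being the hard direction but still formal from Proposition~\ref{propcanon}). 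Consequently the diagram above translates, under Corollary~\ref{isocohom}, to
\[
\begin{tikzcd}
H^j(U_d;\Q)\arrow[r,"\pi_d^*",twoheadrightarrow] & H^j(U^f;\Q)\arrow[r,"(i_\infty)^*",hook] & H^j(F;\Q),
\end{tikzcd}
\]
with $\pi_d^*$ and $(i_\infty)^*$ both MHS morphisms.

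The only potential obstacle is the compatibility check in the previous paragraph: one has to verify carefully that the abstract map $H^j(i_\infty)$ we built out of the UCT-and-$\Res$ machinery genuinely agrees with the topologically induced $(i_\infty)^*$. I expect this to follow routinely from the functoriality statement in Proposition~\ref{propcanon} applied to the lift $i_\infty\colon F\hookrightarrow U^f$ of the inclusion $i\colon F\hookrightarrow U$, together with the fact that $H_j(F;\Q)$ is torsion so that Corollary~\ref{isocohom} applies on both sides. Once this identification is in place, the uniqueness of Dimca and Libgober's MHS recalled in Remark~\ref{remDL} forces our MHS on $\Tors_R H^{j+1}(U;\ov\cL)$ to coincide with theirs for all $0\le j\le n-1$, completing the proof.
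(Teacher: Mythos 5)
Your proposal is correct and is essentially the argument the paper intends: the corollary follows from Corollary~\ref{cor:fiber} (applied with $N=d$, semisimplicity coming from Remark~\ref{affine}) together with the uniqueness characterization of the Dimca--Libgober MHS recalled in Remarks~\ref{nota} and~\ref{remDL}. The identification of $H^j(\pi_d)$ and $H^j(i_\infty)$ with the topological pullbacks, which you flag as the only delicate point, is already built into the paper's setup (the remark following Corollary~\ref{isocohom}, the functoriality in Proposition~\ref{propcanon}, and the construction of the diagrams in Section~\ref{ss:genFiber}), so no further work is needed.
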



\section{Bounding the Weights}
In this section we prove the following result, which was already stated in the introduction as Theorem~\ref{boundsIntro} (using homological instead of cohomological notation).

\begin{thm}\label{thm:boundedWeights}\index{filtration!weight}
If $k\notin [i,2i]\cap [i,2n-2]$, then
\[
\Gr^W_k \Tors_RH^{i+1}(U;\ov\cL) = 0,
\]
where $n$ is the dimension of $U$.
\end{thm}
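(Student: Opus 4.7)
The plan is to combine the mixed Hodge structure morphism coming from the covering map (Theorem~\ref{thm:geomIsMHS}) with the nilpotent filtration induced by $\log t$, and then to exploit the inclusion of $M\coloneqq \Tors_R H^{i+1}(U;\ov\cL)$ into the cohomology of a generic fiber. First, I would reduce to the case where $t$ acts unipotently on $M$ by passing to a finite cyclic cover $U_N$ via Theorem~\ref{indN}. After this reduction, $\log t\colon M\to M(-1)$ is a MHS morphism by Corollary~\ref{alexandermhs}, so the kernels
\[
V_k\coloneqq\ker(\log t)^k\subseteq M
\]
form an exhaustive increasing filtration $0=V_0\subsetneq V_1\subseteq\cdots\subseteq V_L=M$ by sub-MHS, since each $(\log t)^k\colon M\to M(-k)$ is a composition of MHS morphisms.

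Next, I would identify the base $V_1$ of the filtration and exploit it to obtain the lower bound on weights. By Proposition~\ref{prop:kerim}, $V_1=\ker(t-1)$ coincides with the image of the MHS morphism $H^i(\pi)\colon H^i(U;\Q)\to M$ supplied by Theorem~\ref{thm:geomIsMHS}, so $V_1$ is a quotient MHS of $H^i(U;\Q)$, and Deligne's theorem gives $V_1$ weights in $[i,2i]$. Since $(\log t)^{k-1}$ restricts to a MHS morphism $V_k\to V_1(-(k-1))$ with kernel $V_{k-1}$, the induced MHS injections $V_k/V_{k-1}\hookrightarrow V_1(-(k-1))$ place the weights of $V_k/V_{k-1}$ in $[i+2(k-1),2i+2(k-1)]$, from which $\Gr^W_k M=0$ for $k<i$ follows at once.

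For the upper bound I would exploit the injection of $R$-modules $M\hookrightarrow H^{i+1}(T^*;\ov\cL)\cong H^i(F;\Q)$ provided by Remark~\ref{remk:tube}, where $F$ is a generic fiber of $f$ of complex dimension $n-1$. Since $H^i(F;\Q)=0$ for $i>2(n-1)=2n-2$, this injection forces $M=0$ whenever $i>2n-2$, which handles the degenerate case in which $[i,2i]\cap[i,2n-2]$ is empty. For the remaining case, the injection pairs the Jordan filtration $V_k$ on $M$ with the Jordan filtration on $H^i(F;\Q)$, whose length is controlled by the monodromy theorem for the fibration $f\colon f^{-1}(D^*)\to D^*$; combining the resulting bound on the length $L$ with the filtration estimate above forces all weights of $M$ to lie in $[i,\min(2i,2n-2)]$.

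The hardest part will be closing the gap between the filtration-generated upper bound $2i+2(L-1)$ and the sharp bound $\min(2i,2n-2)$. The obstacle is that the $R$-module inclusion $M\hookrightarrow H^i(F;\Q)$ is \emph{not} a morphism of mixed Hodge structures when the $t$-action on $M$ fails to be semisimple (Corollary~\ref{cor:fiber}), so one cannot directly transfer Deligne's weight bounds on $H^i(F;\Q)$ to $M$. Overcoming this will require either a direct bound on $L$—for instance by applying Theorem~\ref{comp} to a proper modification of $f$ and exploiting the purity of the limit mixed Hodge structure there—or a finer analysis of the thickened log de Rham complex, in which the explicit weight filtration $W_k\logdr{X}{D}\bigl(\tfrac{1}{i}\tfrac{df}{f},m\bigr)=\bigoplus_j W_{k+2j}\logdr{X}{D}\otimes\C\langle s^j\rangle$ permits one to track how the weight-shifting built into the thickening interacts with the monodromy filtration and so pins down the sharp range of weights on $M$.
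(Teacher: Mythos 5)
Your lower-bound argument is sound, and it is a genuinely different route from the paper's: you filter $M=\Tors_RH^{i+1}(U;\ov\cL)$ by the kernels $V_k=\ker(\log t)^k$, identify $V_1$ with the image of the MHS morphism $H^i(\pi)$ via Proposition~\ref{prop:kerim} and Theorem~\ref{thm:geomIsMHS}, and propagate Deligne's bound along the injections $V_k/V_{k-1}\hookrightarrow V_1(-(k-1))$; this correctly gives $\Gr^W_kM=0$ for $k<i$ (and also rules out $i>2n-2$ via the $R$-module embedding into $H^i(F;\Q)$).

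The upper bound, however, is a genuine gap, and it is not merely a matter of filling in details. As you yourself observe, the embedding $M\hookrightarrow H^i(F;\Q)$ is not a MHS morphism unless the $t$-action is semisimple, so Deligne's weight bounds on the fiber cannot be transferred. Moreover, the fallback you sketch—bounding the length $L$ of the Jordan filtration and combining with your graded estimate—cannot possibly yield the sharp bound: your filtration only gives weights of $M$ at most $\min(2i,2n)+2(L-1)$, which exceeds $\min(2i,2n-2)$ whenever $L\geq 2$, i.e.\ precisely in the non-semisimple case where an argument is needed (and note that the paper's bound on Jordan block sizes, Corollary~\ref{cor:jordan}, is itself deduced \emph{from} Theorem~\ref{thm:boundedWeights}, so it cannot be invoked here). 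Neither of your two proposed repairs is carried out. The paper closes exactly this gap by a different mechanism: it shows that not only $\pi_{\cL}^\vee$ but also the map $\pi_{\ov\cL}\colon M\to H^{i+1}(U;\Q)$ (induced by $\ov\cL\to\ul k$, realized as $\phi_{m1}\circ\phi_{m+1}$ on the thickened complexes) is a MHS morphism, and that the four-term sequence
\[
H^i(U;\Q)\xrightarrow{\pi_{\cL}^\vee} M\xrightarrow{\log t} M(-1)\xrightarrow{\pi_{\ov\cL}} H^{i+1}(U;\Q)
\]
is exact. Taking $\Gr^W_k$ and using Deligne's bounds on the two outer terms gives isomorphisms $\Gr^W_kM\cong\Gr^W_{k-2}M(-1)$ for $k$ outside $[i,\min\{2i,2n-2\}]$, which vanish because the weight filtration is finite; it is the upper end of this argument (the map out of $M$, not into it) that your proposal is missing.
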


Before the proof, we will discuss some consequences. In the context of Alexander modules realized by an algebraic map (as considered in this paper), the following corollary improves on \cite[Proposition 1.10]{BudurLiuWang}, as the bound below is the ceiling of half the bound in loc. cit.

\begin{cor}\label{cor:jordan}
Every Jordan block of the action of $t$ on $\Tors_R H_i(U^f;k)$ (respectively, on $\Tors_R H^{i+1}(U;\ov\cL)$) has size at most $\min\{ \lceil (i+1)/2\rceil, n- \lfloor (i+1)/2\rfloor\}$. (Here, Jordan blocks are considered over the algebraic closure of the field $k$.)
\end{cor}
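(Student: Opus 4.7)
The plan is to translate the bound on Jordan block sizes into a statement about nilpotency of the logarithm operator, then exploit the fact that $\log(t^N)$ is a morphism of mixed Hodge structures shifting weights by $2$, so that the bounded range from Theorem~\ref{thm:boundedWeights} automatically constrains the nilpotency index. Throughout I will work cohomologically on $A \coloneqq \Tors_R H^{i+1}(U;\ov\cL)$; by Proposition~\ref{propcanon} the homological statement is equivalent, as the $k$-dual preserves Jordan block sizes.

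First I would reduce to the case where $t$ acts unipotently on $A$. By Proposition~\ref{monodromy}, every eigenvalue of $t$ is a root of unity, so choose $N$ with $\lambda^N = 1$ for all eigenvalues $\lambda$, and pass to the cover $U_N$ of Section~\ref{sscover}. By Lemma~\ref{233}, the $R$-module $A$ agrees with $\Tors_{R(N)} H^{i+1}(U_N;\ov\cL_N)$ as a $k$-vector space with $t$-action; after decomposing into generalized eigenspaces of $t$ and noting that, if $t = \lambda(1+Y)$ with $Y$ nilpotent of index $m$, then $t^N - 1 = Y\cdot g(Y)$ with $g(0) = N \neq 0$, which is again nilpotent of index $m$, Jordan block sizes for each eigenvalue of $t$ match those of $t^N$ at eigenvalue $1$. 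Since $\dim U_N = n$, Theorem~\ref{thm:boundedWeights} applies on $U_N$ with the same $n$, so it suffices to prove the bound assuming $t$ already acts unipotently on $A$.

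Next I would translate the bound into one on the nilpotency index of $\log t$. When $t$ is unipotent, the series $\log t = (t-1) - (t-1)^2/2 + \cdots$ differs from $t-1$ by a unit in the commutative algebra generated by $t-1$, so the size of the largest Jordan block of $t$ equals the nilpotency index of $\log t$. Hence it is enough to show $(\log t)^B = 0$ on $A$ for $B = \min\{\lceil(i+1)/2\rceil,\ n-\lfloor(i+1)/2\rfloor\}$. By Corollary~\ref{alexandermhs}, $\log t\colon A \to A(-1)$ is a morphism of mixed Hodge structures, so by iteration $(\log t)^{m-1}\colon A \to A(-(m-1))$ is a MHS morphism for every $m \geq 1$.

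The final step is the arithmetic of weights. By Theorem~\ref{thm:boundedWeights}, the weights of $A$ lie in $[i,M]$ where $M = \min(2i, 2n-2)$. A MHS morphism $f\colon A \to A(-(m-1))$ maps $W_k A$ into $W_k A(-(m-1)) = W_{k-2(m-1)} A$, so if $(\log t)^{m-1} \neq 0$ there must exist $k$ with both $W_k A / W_{k-1} A \neq 0$ and $W_{k-2(m-1)} A \neq 0$, forcing $i + 2(m-1) \leq k \leq M$ and hence $m - 1 \leq (M-i)/2$. Splitting into the cases $M = 2i$ (when $i \leq n-1$) and $M = 2n-2$ (when $i \geq n-1$) gives the bounds $m \leq \lceil(i+1)/2\rceil$ and $m \leq n - \lfloor(i+1)/2\rfloor$ respectively; since $\lceil(i+1)/2\rceil + \lfloor(i+1)/2\rfloor = i+1$, a direct check shows that these two cases together are summarized by $m \leq \min\{\lceil(i+1)/2\rceil,\ n - \lfloor(i+1)/2\rfloor\}$. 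The only real obstacle is checking carefully that Jordan block sizes transfer between $t$ (with eigenvalue $\lambda$) and $t^N$ (with eigenvalue $1$) in the reduction to the unipotent case; the rest is a short computation using the weight bound together with strictness of MHS morphisms.
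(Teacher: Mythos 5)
Your proposal is correct and follows essentially the same route as the paper's own proof: both reduce to the unipotent case via the finite cover $U_N$ of Lemma~\ref{233}, convert the Jordan block bound into a nilpotency bound for $\log t$ (which differs from $t-1$ by a unit), and then use the fact that $\log t$ is a MHS morphism into the $(-1)$ Tate twist (Corollary~\ref{alexandermhs}) together with the weight bounds of Theorem~\ref{thm:boundedWeights} to force $(\log t)^K=0$. The only cosmetic differences are that the paper performs the unipotency reduction after the weight computation rather than before, and argues $(\log t)^K A \subseteq W_{i-1}A = 0$ directly rather than by contrapositive; also, the final step does not actually require strictness of MHS morphisms, only that they preserve the weight filtration, but this does not affect correctness.
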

\begin{proof}
Let $K=\min\{ \lceil (i+1)/2\rceil, n-\lfloor(i+1)/2\rfloor\}$. We will start by showing the corollary in the case where the action of $t$ is unipotent. By Corollary~\ref{alexandermhs}, $\log(t) = \sum_{j=0}^\infty (-1)^{j-1}\frac{(t-1)^j}{j}$ is a morphism of MHS into the $(-1)$st Tate twist, i.e. $$\log(t)W_k\Tors_R H^{i+1}(U;\ov\cL)\subseteq W_{k-2}\Tors_R H^{i+1}(U;\ov\cL).$$ Theorem~\ref{thm:boundedWeights} implies: 
\[
\log(t)^{K}\Tors_R H^{i+1}(U;\ov\cL) = \log(t)^{K}W_{\min\{2i,2n-2\}}\Tors_R H^{i+1}(U;\ov\cL)\subseteq\]\[
\subseteq W_{\min\{2i-2K,2n-2-2K\}}\Tors_R H^{i+1}(U;\ov\cL)\subseteq W_{i-1}\Tors_R H^{i+1}(U;\ov\cL)= 0.
\] 
This shows that $\log(t)^{K}$ annihilates the whole module $\Tors_R H^{i+1}(U;\ov\cL)$. Since $\log(t)/(t-1)$ is a unit in $k[[t-1]]$, this implies that $(t-1)^{K}=0$ as well. Therefore, all its Jordan blocks have size at most $K$, as desired.

The general case can be reduced to the unipotent case as described in Section \ref{sscover} through the following isomorphism of $R(N)$-modules, where $R(N) = k[t^{\pm N}]$
\[
\Tors_{R}H^{i+1}(U;\cL)=\Tors_{R(N)}H^{i+1}(U;\cL) \cong\Tors_{R(N)}H^{i+1}(U_N;\cL_N).
\] 
The action of $t^N$ is then unipotent on $\Tors_{R(N)}H^{i+1}(U_N;\cL_N) $, so it has Jordan blocks of size at most $K$, i.e. $(t^N-1)^{K}=0$. Therefore, the minimal polynomial of $t$ acting on $\Tors_{R}H^{i+1}(U;\cL)$ divides $(t^N-1)^K$, so the multiplicity of any root is at most $K$.
\end{proof}

Using the canonical isomorphisms of $R$-modules $$\Tors_R H^{i+1}(U;\ov\cL)\cong \Hom_k(\Tors_R H_i(U;\cL),k)$$ (obtained from the composition $\Res^{-1} \circ \UCT_R$), together with  $\Tors_R H_i(U;\cL)\cong \linebreak \Tors_R H_i(U^f;k)$, we obtain the analogous statement to Corollary \ref{cor:jordan} for the $t$ action by deck transformations on $\Tors_R H_i(U^f;k)$. In particular, if $i=1$, we obtain the following.

\begin{cor}\label{cor:semisimple}\index{semisimple}
If $U$ is an algebraic variety and $f\colon U\to \C^*$ is an algebraic map inducing epimorphisms on fundamental groups and inducing an infinite cyclic cover $U^f$, then the $t$-action on $H_1(U^f;k)$ is semisimple.
\end{cor}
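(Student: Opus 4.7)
The plan is to derive the semisimplicity directly from Corollary~\ref{cor:jordan} applied with $i = 1$. Substituting $i = 1$ into the bound $\min\{\lceil (i+1)/2\rceil,\, n - \lfloor (i+1)/2\rfloor\}$ yields $\min\{1,\, n-1\}$, where $n = \dim_\C U$. Regardless of $n$, this bound is at most $1$, meaning that the action of $t$ on the torsion submodule $\Tors_R H_1(U^f;k)$ admits no Jordan block of size exceeding $1$.

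The conclusion then splits into two cases. When $n \geq 2$, the bound equals $1$, so every Jordan block has size exactly one; in view of the primary decomposition recalled in Section~\ref{sscover}, this is precisely the definition of the $t$-action being semisimple on the torsion part. When $n = 1$, the bound is $0$, which forces $\Tors_R H_1(U^f;k) = 0$ outright (this can also be seen directly from Theorem~\ref{thm:boundedWeights}, since the interval $[1,\,2n-2]$ is empty), and semisimplicity then holds vacuously.

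Passing from the torsion part to all of $H_1(U^f;k)$ is automatic. Indeed, the $R$-module $H_1(U^f;k)$ is finitely generated over the PID $R$, and so decomposes as a free summand plus its torsion; the free part contributes no nontrivial Jordan structure for $t$, since $t-\lambda$ acts as a nonzerodivisor on each copy of $R$ for every $\lambda$ in the algebraic closure of $k$, while the torsion part has been handled above. I anticipate no genuine difficulty in writing out this proof: the substantive work is already encapsulated in the weight bound of Theorem~\ref{thm:boundedWeights} and its direct consequence Corollary~\ref{cor:jordan}, so the present statement reduces to the numerical observation that $\min\{1,\,n-1\} \leq 1$ for every $n \geq 1$.
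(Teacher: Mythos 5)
Your argument is correct and is essentially the paper's own proof: the paper obtains this corollary precisely by specializing Corollary~\ref{cor:jordan} to $i=1$, where the Jordan-block bound $\min\{1,\,n-1\}\le 1$ forces the $t$-action on $\Tors_R H_1(U^f;k)$ to be semisimple. Your extra remarks on the $n=1$ edge case (via Theorem~\ref{thm:boundedWeights}) and on the free part of $H_1(U^f;k)$ — the corollary's substantive content being the torsion part, as the introduction makes explicit — are harmless additions that the paper leaves implicit.
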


\begin{remk}\label{remk:A1Semisimple}
Let $f\colon \C^2\rightarrow \C$ be a polynomial function such that $f^{-1}(0)$ is reduced and connected. Let $U=\C^2\setminus f^{-1}(0)$, and we use the same letter $f$ to denote the induced map $f\colon U\rightarrow \C^*$. By \cite[Corollary 1.7]{DL}, the action of $t$ on $H_1(U^f;k)$ is semisimple. The corollary above generalizes this result, not only to algebraic varieties $U$ that are not affine connected curve complements, but also to connected curve complements in which the corresponding $f$ is given by a non-reduced polynomial, or even a rational function.
\end{remk}

\begin{remk}
The bound in Theorem~\ref{thm:boundedWeights} is sharp in the sense that for any $k\in [i,2i]\cap [i,2n-2]$, $\Gr^W_k \Tors_RH^{i+1}(U;\ov\cL)$ can be nonzero. If $U=F\times \C^*$, where $F$ is a smooth algebraic variety and $f$ is the projection, then $U^f\cong F\times \C$, which is homotopy equivalent to $F$. Therefore, it is straightforward to check that $t-1$ annihilates the Alexander module, and in this case the map $F\to U^f$ induces a MHS isomorphism in cohomology by Corollary~\ref{cor:fiber}. 

For example, we can let $F = E^{n_1}\times (\C^*)^{n_2}$, where $E$ is an elliptic curve. In this case, using the K\"unneth formula (which is a MHS morphism by \cite[Theorem 5.44]{peters2008mixed}), we will have:
\begin{align*}
\Gr^W_{k} H^{i}(F;k)\neq 0 &\Longleftrightarrow
\left\{
\begin{array}{rl}
\Gr^W_{2i-k} H^{2i-k}(E^{n_1};k) & \neq 0;\\
\Gr^W_{2k-2i} H^{k-i}((\C^*)^{n_2};k) &\neq 0
\end{array}
\right\} \\
& \Longleftrightarrow k \in [i,2i] \cap [2i-2n_1,n_2+i]. 
\end{align*}
Varying $n_1,n_2\in \Z_{\ge 0}$ with $n_1+n_2 = \dim F = n-1$, one can have nonzero graded pieces for any weight $k\in [i,2i]\cap [i,2n-2]$.
\end{remk}

\begin{proof}[Proof of Theorem~\ref{thm:boundedWeights}]

First, we claim that it suffices to prove the theorem assuming that $t$ acts unipotently on $\Tors_R H^{i+1}(U;\ov\cL)$. For a general $U$, we let $N$ be such that $t^N$ acts unipotently on $\Tors_R H^{i+1}(U;\ov\cL)$ and we proceed as in Lemma~\ref{233} to obtain $U_N,f_N,\cL_N$. By Remark~\ref{remk:mhsSummary}, there is a MHS isomorphism $\Tors_R H^{i+1}(U;\ov\cL) \cong \Tors_R H^{i+1}(U_N;\ov\cL_N)$. Therefore, if the statement of Theorem~\ref{thm:boundedWeights} holds for $(U_N,f_N)$, it will hold for $(U,f)$. From now on, we will assume that $U=U_N$, and that $t$ acts unipotently on $\Tors_R H^{i+1}(U;\ov\cL)$.

Let $s=t-1\in R$, and let $R_m = \frac{R}{(s^m)}$. We work with local systems of $R$-modules on $U$. For such a local system $\cF$, we will abbreviate $\cF\otimes_R R_m$ to $\cF/s^m$. We make $\ul k$ into a sheaf of $R$-modules by letting $s$ act as $0$. Recall the notation $\ov M$ from Remark~\ref{rem:conjugate}.

Consider the following map of short exact sequences, with $\pi_{\ov\cL}$ defined in Remark~\ref{rem:Rvee}. Later we will assume that $m$ is large enough that $s^m$ annihilates $\Tors_R H^{i}(U;\ov\cL)$ for all $i$.
\begin{equation}\label{eq:twoSessBoundingWeights}
\begin{tikzcd}[row sep = 1.2em]
0\arrow[r] &
 \ov\cL\arrow[r,"s"]\arrow[d,"\phi_m"] &
 \ov\cL\arrow[r,"\pi_{\ov\cL}"]\arrow[d,"\phi_{m+1}"] &
\ul k \arrow[r]\arrow[d,"="]&
0 \\
0\arrow[r] &
 \frac{\ov\cL}{s^m}\arrow[r,"\psi_{m1}"] &
 \frac{\ov\cL}{s^{m+1}}\arrow[r,"\phi_{m1}"]&
 \ul k \arrow[r] &
0.
\end{tikzcd}
\end{equation}
Here $\phi_j$ is induced by the projection $R\to R_j$ after tensoring with $\ov\cL$. Since the vertical arrows are surjections, the arrows in the bottom row are uniquely determined as the arrows closing the diagram. By the discussion in Section~\ref{sec:maps}, concretely from (\ref{eq:picLTriangles}), the top row can be completed to an exact triangle with connecting map $-\pi_{\cL}^\vee[1]\colon \ul k\to \ov\cL[1]$.

Let us see that the bottom row can be completed to an exact triangle with the connecting map $-(\phi_m\circ \pi_{\cL}^\vee)[1]$. Recall that $\phi_m$ is the result of tensoring the quotient map $R\to R_m$ by $\ov\cL$. Using the map $\phi_j$, $\overset{(-1)}{\ov\cL}\xrightarrow{s^j}\overset{(0)}{\ov\cL}$ becomes a free resolution of $\ov\cL/s^j$, as in the following diagram:
\[
\begin{tikzcd}[row sep = 1.2em]
\ov\cL \arrow[d,"s^j"'] \arrow[r]& 0 \arrow[d]\\
\ov\cL\arrow[r,"\phi_j"] & \frac{\ov\cL}{s^j}. 
\end{tikzcd}
\]
Recall from the definition of $\pi_{\cL}^\vee$ that $\phi_m\circ\pi_{\cL}^\vee$ is given by the diagram below, on the left. Therefore, $-\phi_m\circ\pi_{\cL}^\vee[1]$ is given by the middle diagram. We apply an isomorphism of resolutions which induces the identity on $\ul k$, we have $-\phi_m\circ\pi_{\cL}^\vee[1]$ represented in a different way by the rightmost diagram:
\[
{
\hspace{-0.25cm}\begin{tikzcd}[row sep = 1.3em, column sep = 1.4em]
& 0 \arrow[r]\arrow[d]& \ov\cL\arrow[d,"s^m"] & &
0 \arrow[r]\arrow[d]& \ov\cL\arrow[d,"-s^m"]& &
0 \arrow[r]\arrow[d]& \ov\cL\arrow[d,"-s^m"] \\
\deg = 0 \arrow[r,dashrightarrow] & \ov\cL \arrow[r,"1"]\arrow[d,"s"]& \ov\cL\arrow[d] &
&
\ov\cL \arrow[r,"-1"]\arrow[d,"-s"]\arrow[rrr,bend right = 20,"-1"',dashrightarrow]&
 \ov\cL\arrow[d] 
&
&
 \ov\cL \arrow[r,"1"]\arrow[d,"s"]& \ov\cL\arrow[d] 
\\
& \ov\cL\arrow[r] & 0 & \deg = 0\arrow[r,dashrightarrow]
& \ov\cL\arrow[r]\arrow[rrr,bend right = 20,"1"',dashrightarrow] & 0& 
& \ov\cL\arrow[r] & 0
\end{tikzcd}
}
\]

We can check that the short exact sequence together with the connecting map can be lifted (uniquely up to homotopy) to the following diagram:
\[
\begin{tikzcd}[column sep = 5em, row sep = 0.4em]
\frac{\ov\cL}{s^m} \arrow[r,"\psi_{m1}"]\arrow[d,phantom,"\rotatebox{-90}{$\cong$}"]
&
\frac{\ov\cL}{s^{m+1}} \arrow[r,"\phi_{m1}"]\arrow[d,phantom,"\rotatebox{-90}{$\cong$}"]
&
\ul k \arrow[r,"-(\phi_m\circ \pi_{\cL}^\vee){[1]}" ]\arrow[d,phantom,"\rotatebox{-90}{$\cong$}"]
&
\frac{\ov\cL}{s^m} [1] \arrow[d,phantom,"\rotatebox{-90}{$\cong$}"]
\\
0\arrow[r]\arrow[dd] &
0\arrow[r]\arrow[dd] &
0\arrow[r]\arrow[dd] &
\ov\cL \arrow[dd,"-s^m"] 
\\\\
\ov\cL \arrow[dd,"s^m", very near start]\arrow[r,"1"] &
\ov\cL \arrow[dd,"s^{m+1}", very near start] \arrow[r,"s^m"] & 
\ov \cL \arrow[dd,"s", near start] \arrow[r,"1"] & \ov\cL\arrow[dd] \\\\
\ov\cL \arrow[r,"s"] &
\ov\cL \arrow[r,"1"] & 
\ov \cL \arrow[r] & 0.
\end{tikzcd}
\]
To see that this is an exact triangle we need to find a quasiisomorphism between the free resolution of $\ov\cL/s^{m+1}$ and the cone $C:=\Cone(-\phi_m\circ \pi_{\cL}^\vee\colon \ul k[-1]\to \ov\cL/s^m)$, compatible with the maps coming from $\ov\cL/s^m$ and going to $\ul k$ (i.e. the maps using the resolutions). We are resolving $\ul k[-1]$ by $\ov\cL\xrightarrow{-s} \ov\cL$, the shift of the resolution above. Here is the quasiisomorphism: the map $\star$ represents the matrix $\scriptsize\arraycolsep=1.0pt
\Big(\begin{array}{cc}
s^m & -1\\
0 & s
\end{array}\Big)$.
\[
\begin{tikzcd}[column sep = 5em, row sep = 0.7em]
\ov\cL/s^m \arrow[d,phantom,"\rotatebox{-90}{$\cong$}"]
&
C \arrow[d,phantom,"\rotatebox{-90}{$\cong$}"]
&
\ov\cL/s^{m+1} \arrow[d,phantom,"\rotatebox{-90}{$\cong$}"]
&
\ul k \arrow[d,phantom,"\rotatebox{-90}{$\cong$}"]
\\
\ov\cL \arrow[dd,"s^m"]\arrow[r,"(1{,}0)^t"]
&
\ov\cL\oplus \ov\cL \arrow[dd,"\star"]
\arrow[r,"(1{,}0)"]
\arrow[ddr,phantom,"\cong"]
&
 \ov\cL \arrow[r,"s^m"]\arrow[dd,"s^{m+1}"]
&
\ov\cL \arrow[dd,"s"]\\\\
\ov\cL \arrow[r,"(1{,}0)^t"] 
& 
\ov\cL\oplus \ov\cL \arrow[r,"(s{,}1)"]
&
\ov\cL \arrow[r,"1"]
&
\ov\cL
\end{tikzcd}
\]
From this diagram, we automatically see that the quasiisomorphism is compatible with the map from $\ov\cL_m$. For the map to $\ul k$, the map $C\to \ul k$ above should coincide with the projection on the first coordinate, which is realized by the following homotopy:
\[
\begin{tikzcd}[column sep = 12em, row sep =2em]
\ov\cL\oplus \ov\cL \arrow[r,"(s^m{,}0)",shift left = 1.5ex]\arrow[r,"(0{,}1)"', shift right = -0.5 ex]\arrow[d,"\star"] &
\ov\cL \arrow[d,"s"] \\
\ov\cL\oplus \ov\cL \arrow[r,"(s{,}1)",shift left = 0.5ex]\arrow[r,"(0{,}1)"',shift right = 0.5ex]\arrow[ur,"(-1{,}0)",near start] &
\ov\cL
\end{tikzcd}
\]
So the short exact sequences in (\ref{eq:twoSessBoundingWeights}) can be filled in to exact triangles in the derived category, using the maps $-\pi_{\cL}^\vee[1]\colon \ul k\to \ov\cL$ and $-(\phi_m\circ \pi_{\cL}^\vee)[1]\colon \ul k\to \ov\cL/s^m$. Note that clearly, these will create a map between triangles.

Therefore, we have the following map of cohomology long exact sequences, given in the two bottom rows below.\begin{equation}\label{eq:bigExactSeqs}
{
\begin{tikzcd}[column sep = 1.5em, row sep = 1.5em]
H^i(U;k)\arrow[r,"\pi_{\cL}^\vee"]\arrow[d,"="] &
 \Tors_R H^{i+1}(U;\ov\cL) \arrow[r,"s"]\arrow[d,hookrightarrow] &
\Tors_R H^{i+1}(U;\ov\cL) \arrow[r,"\pi_{\ov\cL}"]\arrow[d,hookrightarrow] &
H^{i+1}(U;k) \arrow[d,"="]\\
H^i(U;k)\arrow[r,"\pi_{\cL}^\vee"]\arrow[d,"="] &
 H^{i+1}(U;\ov\cL) \arrow[r,"s"]\arrow[d,"\phi_m"] &
H^{i+1}(U;\ov\cL) \arrow[r,"\pi_{\ov\cL}"]\arrow[d,"\phi_{m+1}"] &
H^{i+1}(U;k)\arrow[d,"="] \\
H^i(U;k)\arrow[r,"\phi_m\circ \pi_{\cL}^\vee"] &
 H^{i+1}(U;\frac{\ov\cL}{s^m}) \arrow[r,"s"] &
H^{i+1}(U;\frac{\ov\cL}{s^{m+1}}) \arrow[r,"\phi_{m1}"] &
H^{i+1}(U;k) .
\end{tikzcd}
}
\end{equation}
We are abusing notation by using the same letters to denote the maps of (the derived category of) sheaves and their induced maps in cohomology.

From now on, we assume that $m$ is large enough so that $s^m$ annihilates $\Tors_R H^{i+1}(U;\ov\cL)$. By Remark~\ref{remk:torsionForSheaves}, the maps $\Tors_R H^*(U;\cL)\rightarrow H^*(U;\cL/s^m)$ induced by $\phi_m$ (the composition of the two central vertical arrows above) are injective. Also, note that on $\Tors_R H^{i+1}(U;\ov\cL)$, $s$ and $\log(t) = \log(1+s)$ differ by a unit, namely the Taylor polynomial of degree $m-1$ of $\frac{s}{\log(1+s)}$, and therefore their kernel and cokernel coincide. This means that the bottom row in the diagram above will remain exact if we replace $s$ by $\log(t)$. We have the following commutative diagram, formed by the first and last rows of equation (\ref{eq:bigExactSeqs}).
\begin{equation}\label{eq:kerOfLogS}
{
\begin{tikzcd}[column sep =1.5em, row sep=1.3em]
H^i(U;k)\arrow[r,"\pi_{\cL}^\vee"]\arrow[d,"="] &
 \Tors_R H^{i+1}(U;\ov\cL) \arrow[r,"\log t"]\arrow[d,"\phi_m",hookrightarrow] &
\Tors_R H^{i+1}(U;\ov\cL)(-1) \arrow[r,"\pi_{\ov\cL}"]\arrow[d,"\phi_{m+1}",hookrightarrow] &
H^{i+1}(U;k) \arrow[d,"="]\\
H^i(U;k)\arrow[r,"\pi_{\cL}^\vee"] &
 H^{i+1}(U;\frac{\ov\cL}{s^m}) \arrow[r,"\log t"] &
H^{i+1}(U;\frac{\ov\cL}{s^{m+1}})(-1)\arrow[r,"\phi_{m1}"] &
H^{i+1}(U;k) .
\end{tikzcd}
}
\end{equation}
We make the following claims:
\begin{enumerate}
\item $\pi_{\cL}^\vee$ is a MHS morphism (Theorem~\ref{thm:geomIsMHS} together with Proposition~\ref{prop:mapsAreEqual}).
\item $\log t$ is a MHS morphism (note the Tate twist), by Corollary~\ref{alexandermhs}.
\item $\phi_m$ and $\phi_{m+1}$ are MHS morphisms (Remark~\ref{remk:mhsSummary}).
\item $\phi_{m1}$ is a MHS morphism. This follows from Lemma~\ref{inducedmhsmaps}. The Tate twist is due to the fact that the MHS on the cohomology $\ov\cL/s$ comes from the shifted Hodge-de Rham complex, as in Remark~\ref{remk:mhsSummary}. Remark~\ref{transvstate} tells us that shifts of mixed Hodge complexes result in Tate twists in their cohomologies.
\item It follows that $\pi_{\ov\cL} = \phi_{m1}\circ \phi_{m+1}$ is a MHS morphism.
\item The top row is exact. This is a straightforward verification.
\end{enumerate}

We have arrived at the following exact sequence of mixed Hodge structures:
\[
H^i(U;k) \xrightarrow{\pi_{\cL}^\vee} \Tors_R H^{i+1}(U;\ov\cL) \xrightarrow{\log(t)} \Tors_R H^{i+1}(U;\ov\cL)(-1) \xrightarrow{\pi_{\ov\cL}} H^{i+1}(U;k).
\]
Taking the associated graded sequence for the weight filtration, we obtain the following exact sequence of pure Hodge structures for any $k$:
\begin{multline*}
\Gr^W_k H^i(U;k) \xrightarrow{\pi_{\cL}^\vee} \Gr^W_k \Tors_R H^{i+1}(U;\ov\cL) \xrightarrow{\log(t)} \Gr^W_{k-2} \Tors_R H^{i+1}(U;\ov\cL)\\ \xrightarrow{\pi_{\ov\cL}} \Gr^W_k H^{i+1}(U;k).
\end{multline*}
By \cite[Corollaire 3.2.15]{De2}, the nonzero graded pieces of the weight filtration of $H^i(U;k)$ are in the interval $[i,\min\{2i,2n\}]$. Therefore, if either $k\ge \min\{2i+3,2n+1\}$ or $k\le i-1$, we have an isomorphism
\[
\Gr^W_k \Tors_R H^{i+1}(U;\ov\cL) \overset{\log(t)}{\cong} \left(\Gr^W_{k-2} \Tors_R H^{i+1}(U;\ov\cL)\right)(-1).
\]
Since the MHS are finite dimensional, these groups must be $0$ for $k\ll 0$ and $k\gg 0$, which implies that
\[
\Gr^W_k \Tors_R H^{i+1}(U;\ov\cL) = 0 \text{ unless } k\in [i,\min\{2i,2n-2\}].
\]
\end{proof}


	\chapter{Semisimplicity for Proper Maps}\label{sect:ss}

In this chapter, assuming that $f\colon U \to \C^*$ is proper, we will prove that the torsion part $A_i(U^f; \Q)$ of the Alexander module $H_i(U^f; \Q)$ is a semisimple $R$-module, that is, the $t$-action on $A_i(U^f; \Q)$ is semisimple, for all $i \geq 0$. 

\begin{thm}\label{thmsimple}\index{semisimple}
Let $U$ be a smooth complex algebraic variety, and let $f\colon U\to \C^*$ be a proper algebraic map. Let $\calL$ be the rank one $\Q[t^{\pm 1}]$-local system on $U$ defined as in Section \ref{ssAlex}. Then $\Tors_R {H^i(U;\calL)}$ is a semisimple $R$-module. 
\end{thm}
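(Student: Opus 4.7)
The plan begins with a reduction to the unipotent case. Using Lemma \ref{233} and the $N$-fold cover construction of Section \ref{sscover}, for $N$ chosen so that $t^N$ acts unipotently on $\Tors_R H^i(U;\cL)$, one obtains a canonical $R(N)$-module isomorphism with $\Tors_{R(N)} H^i(U_N;\cL_N)$; under this isomorphism semisimplicity is preserved in both directions, so I may as well assume that the $t$-action itself is unipotent on $\Tors_R H^i(U;\cL)$. In this case, semisimplicity is equivalent to the vanishing of $\log t$ on the torsion.

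Passing to homology via Proposition \ref{propcanon}, it suffices to show that $\log t$ vanishes on every $A_j(U^f;\Q)$. By Corollary \ref{halexandermhs}, the operator $\log t \colon A_j(U^f;\Q) \to A_j(U^f;\Q)(-1)$ is a morphism of mixed Hodge structures, so if $A_j(U^f;\Q)$ were pure of weight $-j$ then $\log t$ would vanish automatically, there being no nonzero morphisms between pure Hodge structures of distinct weights. The plan is therefore to establish this purity, which would simultaneously yield Theorem \ref{thm:pur} as a by-product. Theorem \ref{thm:boundedWeights}, dualized to homology, already confines the weights of $A_j(U^f;\Q)$ to the interval $[-2j,-j]$, so the extra weights $-2j,\ldots,-j-1$ are what must be ruled out under the properness hypothesis.

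The main obstacle is precisely this sharpening of the weight bound. My plan is to combine Theorem \ref{comp}, which displays $A_j(U^f;\Q)$ as a MHS quotient of the limit MHS $H_j((T^*)^f;\Q)$ of the smooth proper fibration $f|_{T^*}$, with Proposition \ref{genf}, which displays $A_j(U^f;\Q)$ as an $R$-module quotient of $H_j(F;\Q)$ for a generic (hence smooth projective, since $f$ is proper) fiber $F$. Since $F$ is smooth projective, Deligne's mixed Hodge structure on $H_j(F;\Q)$ is pure of weight $-j$. Consequently, if the surjection of Proposition \ref{genf} can be promoted to a morphism of mixed Hodge structures with Deligne's pure MHS on $H_j(F;\Q)$, then $A_j(U^f;\Q)$ is automatically pure of weight $-j$, and the purity (hence semisimplicity) will follow. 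Promoting this surjection to a MHS morphism is the heart of the argument: I expect it will require Saito's decomposition theorem for polarizable pure Hodge modules applied to a suitable compactification $\bar f\colon X \to \C P^1$ of $f$, in order to identify the MHS on $A_j(U^f;\Q)$ coming from the thickened Hodge-de Rham construction of Section \ref{mhsal} with the MHS inherited from Deligne's pure MHS on the generic fiber after passage to the quotient. Once this compatibility is in place, assembling it with Theorem \ref{comp} and the bounds of Theorem \ref{thm:boundedWeights} will force the weight filtration on $A_j(U^f;\Q)$ to collapse to the single weight $-j$, completing the proof.
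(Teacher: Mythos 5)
Your proposed argument has a fundamental circularity problem with the paper's framework, and the key step you identify as "the heart of the argument" is one that the paper shows to be \emph{equivalent} to the conclusion you want, not an independent route to it.

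Specifically: Theorem~\ref{comp}, which you want to invoke, rests on Theorem~\ref{thm:limitMap}, whose proof explicitly uses Theorem~\ref{thmsimple} to establish the surjectivity of $H^j(\pi_{\cL}^\vee)$. Likewise, Corollary~\ref{cor:fiber} (promoting the surjection $H_j(F;\Q)\twoheadrightarrow A_j(U^f;\Q)$ to a MHS morphism) carries semisimplicity as a standing hypothesis, and Corollary~\ref{cor:fiberConverse} shows that this MHS-compatibility is in fact \emph{equivalent} to semisimplicity. So the step you propose to "promote the surjection to a MHS morphism" cannot be carried out using any of the paper's prior results without assuming what you want to prove, and the paper explicitly identifies the substitute you gesture toward — relating the thickened Hodge--de~Rham construction to Saito's mixed Hodge modules — as an open question in Section~\ref{sec:open}. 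The direction of logic in the paper is also the reverse of what you propose: Corollary~\ref{cor:pure} derives purity \emph{from} semisimplicity (via Corollaries~\ref{corss} and~\ref{cor:fiber}), not the other way around.

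The paper's actual proof is purely sheaf-theoretic and avoids Hodge theory entirely. It writes $\calL\cong f^{-1}\calL_{\C^*}$, applies the projection formula to get $H^i(U;\calL)\cong \mathbb{H}^i(\C^*, Rf_*\Q_U\otimes\calL_{\C^*})$, invokes the BBD decomposition theorem to split $Rf_*\Q_U$ into shifted simple perverse sheaves (this is where properness and smoothness of $U$ enter), and then proves Proposition~\ref{propMellin}: for a simple $\Q$-perverse sheaf $\cP$ on $\C^*$, the Mellin-type hypercohomology $\mathbb{H}^0(\C^*;\cP\otimes\calL_{\C^*})$ is simple as an $R$-module when $\cP$ is smooth and free when $\cP$ is not, with all other degrees vanishing. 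This argument sidesteps MHS considerations altogether and is closer in spirit to Gabber--Loeser's theory of the Mellin transform than to Hodge-theoretic purity. Your instinct that a decomposition theorem must be the engine behind the result is correct; but the version that works without circularity is the perverse-sheaf BBD theorem applied to $Rf_*\Q_U$ directly, not Saito's Hodge-theoretic version applied through the infinite cyclic cover.
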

Since the operations of taking the conjugate $\Q[t^{\pm 1}]$-module structure and taking the $\Q$-vector space dual preserve semisimplicity, the following is an immediate consequence of Proposition~\ref{propcanon} and the above theorem. 
\begin{cor}\label{corss}
Under the assumptions of Theorem \ref{thmsimple}, the torsion part $A_i(U^f; \Q)$ of the homology Alexander module $H_i(U^f; \Q)$ is a semisimple $\Q[t^{\pm 1}]$-module, for all $i \geq 0$. 
\end{cor}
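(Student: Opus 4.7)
The strategy is to reduce semisimplicity of the $t$-action to purity of the Alexander mixed Hodge structure, and then to establish purity via the comparison with the limit MHS available in the proper case.

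First, I would reduce to the case where $t$ acts unipotently on $\Tors_R H^i(U;\cL)$. Following Section~\ref{sscover}, after passing to a finite cyclic cover $p\colon U_N\to U$, the operator $t^N$ acts unipotently on $\Tors_{R(N)}H^i(U_N;\cL_N)$; by Lemma~\ref{233} the canonical $R$-module isomorphism $\Tors_RH^i(U;\cL)\cong\Tors_{R(N)}H^i(U_N;\cL_N)$ transfers semisimplicity. Since $f_N$ remains proper, the hypotheses are preserved. Under this reduction, semisimplicity amounts to showing $t=1$, equivalently $\log t=0$, on $\Tors_R H^i(U;\cL)$; via Proposition~\ref{propcanon} it suffices to establish this on the cohomological dual $\Tors_R H^{i+1}(U;\ov\cL)$.

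Second, I would invoke Theorem~\ref{comp}: since $f$ is proper, the inclusion $(T^*)^f\subset U^f$ gives a MHS epimorphism $H_i((T^*)^f;\Q)\twoheadrightarrow A_i(U^f;\Q)$ from the limit MHS. Dualizing yields a MHS injection
\[
\Tors_R H^{i+1}(U;\ov\cL)\hookrightarrow H^i((T^*)^f;\Q),
\]
which intertwines $t$ on the source with the monodromy action on the target. Consequently, $\log t$ on the Alexander module is the restriction of the nilpotent operator $N=\log T_u$ on the limit MHS, and by Corollary~\ref{alexandermhs} it is a MHS morphism into the $-1$st Tate twist.

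Third, the key claim is that $\Tors_R H^{i+1}(U;\ov\cL)$ is pure of weight $i$. Once purity is known, $\log t$ becomes a MHS morphism from a pure Hodge structure of weight $i$ to a pure Hodge structure of weight $i+2$; by strictness of MHS morphisms with respect to the weight filtration, any such map vanishes, so $\log t=0$ and $t$ acts as the identity, proving semisimplicity. The lower bound on weights ($\geq i$) is furnished by Theorem~\ref{thm:boundedWeights}. The upper bound is the heart of the argument: one must show that the image of $\Tors_R H^{i+1}(U;\ov\cL)$ inside $H^i((T^*)^f;\Q)$ lies in the $i$-th piece $W_i$ of the monodromy weight filtration. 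The natural route is via a version of the invariant cycle theorem, exploiting the fact that classes in the Alexander module extend to the global infinite cyclic cover $U^f$ and hence (by properness of $f$) cannot lie in the strictly positive graded pieces of the monodromy weight filtration.

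The principal obstacle is this last purity statement, namely the upper bound on weights. It requires a careful comparison between the weight filtration on the Alexander MHS (defined through the thickened Hodge--de Rham complex of Section~\ref{mhsal}) and the monodromy weight filtration on the limit MHS. A clean way to proceed is to appeal to the BBD decomposition theorem applied to an extension $\bar f\colon X\to\C P^1$ of $f$ and to Saito's theory of mixed Hodge modules; properness ensures that $\bar f_*\Q^{\mathrm{Hdg}}_U[\dim U]$ is a direct sum of pure Hodge modules, which after tensoring with the canonical extension of the rank-one local system underlying $\ov\cL$ controls the weights of the torsion part of $H^{i+1}(U;\ov\cL)$ uniformly in the desired range.
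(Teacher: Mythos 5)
There is a genuine gap, and it sits exactly where you flag it: the purity statement (the upper weight bound) is never proved, and the tools you propose to prove it with are not available at this point. Your plan runs the logic of the paper backwards. In the paper, Theorem \ref{comp} (the MHS epimorphism from the limit MHS) is established in Section \ref{rellim} via Theorem \ref{thm:limitMap}, whose proof explicitly invokes Theorem \ref{thmsimple} together with Corollary \ref{cor:surj} to get surjectivity of $H^j(\pi_{\cL}^\vee)$; likewise, purity (Corollary \ref{cor:pure}) is deduced \emph{from} semisimplicity via Corollary \ref{cor:fiber}, which has semisimplicity as a hypothesis. So invoking Theorem \ref{comp} to embed $\Tors_R H^{i+1}(U;\ov\cL)$ into the limit MHS, and then trying to deduce semisimplicity from purity, is circular unless you supply independent proofs of those inputs — and you do not. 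Moreover, even granting the injection into $H^i((T^*)^f;\Q)$ as a MHS morphism, the limit MHS is not pure, so the injection alone gives no upper bound on weights; the needed statement that the image lands in $W_i$ of the monodromy weight filtration is essentially equivalent to the purity you want, and the appeal to "a version of the invariant cycle theorem" plus the final paragraph about $\bar f_*\Q^{\mathrm{Hdg}}_U[\dim U]$ being a sum of pure Hodge modules and "tensoring with the canonical extension" is a sketch, not an argument: the Alexander MHS is defined via the thickened log de Rham complex, and no comparison with a mixed-Hodge-module construction is carried out (indeed the paper lists such a realization as an open question in Section \ref{sec:open}).

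For contrast, the paper's route is purely sheaf-theoretic and does not use the Hodge structure at all: since $f$ is proper and $U$ smooth, the BBD decomposition theorem applied to $Rf_*\Q_U$ over $\C^*$ writes it as a direct sum of shifted simple perverse sheaves $\mathcal P_\lambda$, and Proposition \ref{propMellin} (a special case of Gabber--Loeser's Mellin transform theory) shows $\mathbb H^*(\C^*;\mathcal P\otimes\cL_{\C^*})$ is concentrated in degree $0$, where it is a simple $R$-module if $\mathcal P$ is smooth and a free $R$-module otherwise; hence $\Tors_R H^i(U;\cL)$ is a direct sum of simple $R$-modules (Theorem \ref{thmsimple}), and Corollary \ref{corss} follows at once by Proposition \ref{propcanon}, since conjugation and $\Q$-duality preserve semisimplicity. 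Your strictness observation (a MHS morphism from a pure structure of weight $i$ to one of weight $i+2$ vanishes) is correct and mirrors Proposition \ref{prop:semisimpleIsEasyConverse}, but as the load-bearing purity input is missing, the proposal does not constitute a proof.
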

In the rest of this chapter, we prove Theorem \ref{thmsimple}.

Note that $\Tors_{\C[t^{\pm 1}]}H^i(U,\cL\otimes_\Q \C)$ is the base change of $\Tors_{\Q[t^{\pm 1}]}H^i(U,\cL)$ from $\Q$ from $\C$. In particular, the minimal polynomial of $t$ acting on either is the same, so one is semisimple if and only if the other is (if and only if the minimal polynomial is separable). Thus, it is sufficient to prove Theorem~\ref{thmsimple} over $\C$. In this chapter, let $R=\C[t^{\pm 1}]$, and $\cL$ will denote the tautological $R$-local system from Section~\ref{ssAlex}, over $k=\C$.

Let $\calL_{\C^*}$ be the tautological rank one $R$-local system on $\C^*$. Under the natural isomorphism $R\cong \C[\pi_1(\C^*)]$, the monodromy action is the natural $\pi_1(\C^*)$-action on $\C[\pi_1(\C^*)]$. Then $\calL\cong f^{-1}(\calL_{\C^*})$ as $R$-local systems. 

By the projection formula, we have
\[
H^i(U;\calL)\cong H^i(U; f^{-1}(\calL_{\C^*}))\cong \mathbb{H}^i(\C^*, Rf_*(\C_U)\otimes_\C \calL_{\C^*}).
\]

Since $U$ is smooth and $f$ is a proper map, the decomposition theorem of \cite{BBD} yields that the push-forward $Rf_*(\C_U)$ decomposes as\index{decomposition theorem}\index{perverse sheaf|(}
\[
Rf_*(\C_U)\cong \bigoplus_{\lambda\in \Lambda}\mathcal{P}_\lambda[d_\lambda],
\]
where $\Lambda$ is a finite index set, $d_\lambda\in \Z$, and each $\mathcal{P}_\lambda$ is a simple ($\C$-)perverse sheaf on $\C^*$. Therefore, to prove Theorem \ref{thmsimple}, it suffices to show the following proposition, which is essentially a special case of the theory of the Mellin transformation developed by Gabber-Loeser \cite{GL}. 

\begin{prop}\label{propMellin}
If $\mathcal{P}$ is a simple perverse sheaf on $\C^*$, we have:
\begin{enumerate}
\item $\mathbb{H}^i(\C^*; \mathcal{P}\otimes \calL_{\C^*})=0$ for $i\neq 0$;
\item $\mathbb{H}^0(\C^*; \mathcal{P}\otimes \calL_{\C^*})$ is a simple $R$-module when $\mathcal{P}$ is smooth (i.e., the shift of a local system on $\C^*$), and a free $R$-module when $\mathcal{P}$ is not smooth. 
\end{enumerate}
\end{prop}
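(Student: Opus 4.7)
My plan is to invoke the classification of simple perverse sheaves on the smooth complex curve $\C^*$ and verify the claims in each of the two resulting cases. A simple $\Q$-perverse sheaf $\mathcal{P}$ on $\C^*$ is either (a) supported at a point, hence of the form $i_{x*}\Q$ for some $x\in\C^*$, or (b) of the form $\mathcal{L}[1]$ for an irreducible $\Q$-local system $\mathcal{L}$ on $\C^*$. Case (a) is immediate: $\mathcal{P}\otimes\cL_{\C^*}$ is a skyscraper at $x$ with stalk $\cL_{\C^*,x}\cong R$, so $\H^0(\C^*;\mathcal{P}\otimes\cL_{\C^*})\cong R$ is a free $R$-module and all other cohomology vanishes.

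For case (b), I would identify $\Q$-local systems of finite rank on $\C^*$ with finitely generated $\Q[s^{\pm 1}]$-modules via the monodromy of a fixed generator $s$ of $\pi_1(\C^*)\cong\Z$, where $s$ is identified with $t\in R$ under the isomorphism $R\cong\Q[\pi_1(\C^*)]$. Since $\Q[s^{\pm 1}]$ is a PID, an irreducible $\mathcal{L}$ corresponds to a simple module $V=\Q[s^{\pm 1}]/(p(s))$ for some $\Q$-irreducible polynomial $p(s)$ (we may assume $p(0)\neq 0$ up to units), and $\mathcal{L}\otimes\cL_{\C^*}$ corresponds to $V\otimes_\Q R$ on which the generator $s\in\pi_1(\C^*)$ acts by $s\otimes t$. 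Since $\C^*\simeq S^1$, cohomology of a local system is concentrated in degrees $0$ and $1$, given by the kernel and cokernel of the endomorphism (monodromy $-$ identity). After the shift, only $\H^{-1}$ and $\H^0$ of $\mathcal{P}\otimes\cL_{\C^*}$ can be nonzero, equal respectively to the kernel and cokernel of multiplication by $st-1$ on $V\otimes_\Q R\cong\Q[s^{\pm 1},t^{\pm 1}]/(p(s))$, with the $R$-module structure inherited from the $t$-factor.

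The core calculation is then direct. Imposing $st=1$ eliminates $s=t^{-1}$, so $\coker(st-1)\cong\Q[t^{\pm 1}]/(p(t^{-1}))\cong R/(\tilde p(t))$ with $\tilde p(t)=t^{\deg p}p(t^{-1})$ again $\Q$-irreducible, which is a simple $R$-module. For the kernel, write a general element as a Laurent polynomial $\sum_n v_n t^n\in V[t^{\pm 1}]$; the equation $(st-1)\sum v_n t^n=0$ forces the recursion $v_n=sv_{n-1}$, and hence $v_n=s^n v_0$. Since only finitely many $v_n$ are nonzero while $s$ is a unit in $V$, this forces $v_0=0$, so the kernel vanishes. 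This yields $\H^{-1}=0$, $\H^0$ simple, and $\H^i=0$ for all other $i$, completing case (b).

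The main obstacle I anticipate is making the classification of simple perverse sheaves precise over $\Q$ rather than over an algebraically closed coefficient field: irreducible $\Q$-local systems on $\C^*$ need not be rank-one, and correspond to $\Q$-irreducible (not necessarily absolutely irreducible) polynomials $p(s)$. The presentation $V=\Q[s^{\pm 1}]/(p(s))$ captures exactly these, and the kernel/cokernel calculation above is uniform in $\deg p$, so once this classification is in place no further obstruction arises.
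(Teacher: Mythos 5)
Your proposed classification of simple perverse sheaves on $\C^*$ is where the argument breaks down, and the gap is exactly the hard case of the proposition. On a curve, a simple $\Q$-perverse sheaf is either a skyscraper or the \emph{intermediate extension} $j_{!*}(\mathcal{L}[1])$ of an irreducible $\Q$-local system $\mathcal{L}$ defined on some Zariski-open subset $V \subseteq \C^*$; it is of the form (shifted local system on all of $\C^*$) only when $\mathcal{L}$ extends across the punctures. For instance, the rank-one local system on $\C^*\setminus\{1\}$ with monodromy $-1$ around $1$ is irreducible and does not extend, and its intermediate extension to $\C^*$ is a simple perverse sheaf that is neither a skyscraper nor smooth. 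Your dichotomy (a)/(b) omits all such objects, so your proof says nothing about them, and for these the statement to be proved is precisely the nontrivial one: $\mathbb{H}^i=0$ for $i\neq 0$ and $\mathbb{H}^0$ free over $R$. Moreover, your kernel/cokernel computation cannot be adapted directly, since $j_{!*}(\mathcal{L}[1])\otimes \mathcal{L}_{\C^*}$ is not a shifted local system: its hypercohomology is not just the kernel and cokernel of (monodromy $-$ id).

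For comparison, the paper treats all non-smooth simple perverse sheaves (skyscrapers and intermediate extensions alike) uniformly in its Case 2, by a Mellin-transform style argument: a base-change isomorphism identifying $\mathbb{H}^*(\C^*;\mathcal{P}\otimes\mathcal{L}_{\C^*})\overset{L}{\otimes}_R R/\mathfrak{m}$ with $\mathbb{H}^*(\C^*;\mathcal{P}\otimes L_k)$ for each maximal ideal $\mathfrak{m}$ with residue field $k$; semisimplicity of $\mathcal{P}\otimes_\Q k$ after field extension (its Lemma on field extensions); the observation that $\mathbb{H}^{-1}\neq 0$ forces a simple perverse sheaf to be the shifted constant sheaf, which is excluded in the non-smooth case; Artin vanishing for $\mathbb{H}^{\geq 1}$ on the affine curve; constancy of $\dim_k\mathbb{H}^0=\chi(\C^*;\mathcal{P})$ in $\mathfrak{m}$; and finally the universal coefficient theorem to conclude vanishing outside degree $0$ and freeness of $\mathbb{H}^0$ over $R$. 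None of these ingredients appears in your proposal, and without them the non-smooth, non-skyscraper case is unproved. The two cases you do handle are fine: the skyscraper computation is correct (and is subsumed in the paper's Case 2), and your explicit $\ker/\coker(st-1)$ computation in the smooth case is a correct, more hands-on version of the citation the paper uses there, correctly producing the conjugate module $R/(\tilde p(t))$ and its simplicity.
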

Here, we recall that an $R$-module is {\em simple} if and only if it is one-dimensional, since we are working over $\C$. Before proving the above proposition, we need a lemma.

\begin{lem}\label{lemH1}
Let $\mathcal{P}$ be a simple perverse sheaf on $\C^*$. If $\mathbb{H}^{-1}(\C^*; \mathcal{P})\neq 0$, then $\mathcal{P}\cong \ul{\C}_{\C^*}[1]$. 
\end{lem}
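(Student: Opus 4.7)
The plan is to use the classification of simple perverse sheaves on the smooth complex curve $\C^*$. Every simple $k$-perverse sheaf $\mathcal{P}_k$ on $\C^*$ is either (i) a skyscraper sheaf $(i_p)_*V$ at a point $p\in\C^*$ for some finite dimensional $k$-vector space $V$, or (ii) the intermediate extension $j_{!*}(\mathcal{L}[1])$ for some open inclusion $j\colon V\hookrightarrow \C^*$ (with $V=\C^*\setminus S$, $S$ finite) and some simple $k$-local system $\mathcal{L}$ on $V$.

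First I would rule out case (i). If $\mathcal{P}_k=(i_p)_*V$, then $\mathbb{H}^{-1}(\C^*;\mathcal{P}_k)=H^{-1}(\mathrm{pt};V)=0$, contradicting the hypothesis. So $\mathcal{P}_k$ must be of the form $j_{!*}(\mathcal{L}[1])$. Since $\C^*$ is a smooth curve and $\mathcal{L}$ is a local system on the open subset $V$, the intermediate extension on a smooth curve is computed by the ordinary pushforward: $j_{!*}(\mathcal{L}[1])\cong j_*\mathcal{L}[1]$, a complex concentrated in cohomological degree $-1$.

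Next I would compute the hypercohomology:
\[
\mathbb{H}^{-1}(\C^*;\mathcal{P}_k)\cong H^0(\C^*;j_*\mathcal{L})\cong H^0(V;\mathcal{L}),
\]
where the last isomorphism uses that $j$ is an open immersion (so the adjunction $\mathrm{id}\to j_*j^{-1}$ is an isomorphism on $H^0$, or equivalently $H^0(\C^*;j_*\mathcal{L})$ is just the set of global sections of $j_*\mathcal{L}$, which equals global sections of $\mathcal{L}$ over $V$). The hypothesis then gives $H^0(V;\mathcal{L})\neq 0$, i.e. the monodromy representation of $\pi_1(V)$ on a stalk of $\mathcal{L}$ has a nonzero fixed vector. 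Since $\mathcal{L}$ is simple (its monodromy is irreducible), the only way an invariant vector can exist is if the representation is trivial of rank one, i.e. $\mathcal{L}\cong k_V$.

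Finally, substituting back, $\mathcal{P}_k\cong j_{!*}(k_V[1])\cong k_{\C^*}[1]$, since the constant sheaf $k_V$ extends as the constant sheaf $k_{\C^*}$ on the ambient smooth curve. The main thing to be careful about is the identification $j_{!*}(\mathcal{L}[1])\cong j_*\mathcal{L}[1]$ on a smooth curve, and the jump from a nonzero fixed vector to triviality of the representation — both are standard, so I do not expect any real obstacle; this is essentially a bookkeeping argument using the structure theorem for simple perverse sheaves on a smooth curve.
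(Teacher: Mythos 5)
Your proof is correct, but it follows a genuinely different route from the paper's. The paper observes that $\mathbb{H}^{-1}(\C^*;\mathcal{P}_k)\cong\Hom_{D^b(\C^*)}(\uk_{\C^*}[1],\mathcal{P}_k)$, and since $\uk_{\C^*}[1]$ and $\mathcal{P}_k$ are both simple objects of the heart (perverse sheaves), a nonzero element of this $\Hom$ is an isomorphism by Schur's lemma — a two-line argument requiring no classification at all, and one that works verbatim on a smooth variety of any dimension. You instead invoke the structure theorem for simple perverse sheaves on a curve (skyscraper or $j_{!*}$ of a simple local system on an open dense $V$), use the curve-specific identity $j_{!*}(\mathcal{L}[1])\cong j_*\mathcal{L}[1]$ with $j_*$ the underived pushforward, compute $\mathbb{H}^{-1}=\Gamma(V;\mathcal{L})$ as monodromy invariants, and conclude $\mathcal{L}\cong\uk_V$ by irreducibility. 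All of these steps are correct. What you lose is the uniformity and brevity of the abstract argument; what you gain is a hands-on computation that makes the geometric content visible (invariant vectors in the monodromy representation) and that sits naturally beside the other curve-specific facts — such as Lemma \ref{lemsmooth} and the classification of simple perverse sheaves — already being used in the surrounding proof of Proposition \ref{propMellin}.
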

\begin{proof}
By the natural isomorphism
\[
\mathbb{H}^{-1}(\C^*; \mathcal{P})\cong \Hom(\ul{\C}_{\C^*}[1], \mathcal{P}),
\]
if $\mathbb{H}^{-1}(\C^*; \mathcal{P})\neq 0$, then there exists a nontrivial map between $\ul\C_{\C^*}[1]$ and $\mathcal{P}$. Since both $\ul{\C}_{\C^*}[1]$ and $\mathcal{P}$ are simple perverse sheaves, it follows that they are isomorphic. 
\end{proof}

\begin{proof}[Proof of Proposition \ref{propMellin}] 
\textbf{Case 1.} Assume $\mathcal{P}$ is smooth. In this case, $\mathcal{P}\cong L[1]$ for a simple (i.e. rank 1) $\C$-local system $L$ on $\C^*$. As in \cite[Example 2.5.7]{dimca2004sheaves}, we have natural isomorphisms of $R$-modules
\[
\mathbb{H}^i(\C^*; \mathcal{P}\otimes \calL_{\C^*})\cong H^{i+1}(\C^*; L\otimes_\C \calL_{\C^*})\cong 
\begin{cases} 
0 \quad &\text{if}\quad i\neq 0 \\
\overline{V_L}\quad &\text{if} \quad i=0,
\end{cases}
\]
where $V_L$ is the $R$-module associated to the monodromy representation of $L$, and $\overline{\cdot}$ denotes the conjugate $R$-module structure, with $t$ acting by $t^{-1}$ (see Remark \ref{rem:conjugate}). In this case, $L$ has rank 1 if and only if its stalk is 1-dimensional, i.e. $L$ is simple if and only if $V_L$ is a simple representation. Thus, in this case, the proposition follows. 

\noindent\textbf{Case 2.} Assume $\mathcal{P}$ is not smooth. Let $p: \C^*\to \mathrm{pt}$ be the projection to a point and let $\mathfrak m$ be any maximal ideal of $R$. We claim that there is a natural isomorphism
\begin{equation}\label{eqpf}
Rp_*(\mathcal{P}\otimes_{\C} \calL_{\C^*})\stackrel{L}{\otimes}_R R/\mathfrak{m}\cong Rp_*(\mathcal{P}\otimes_{\C} \calL_{\C^*}\otimes_R R/\mathfrak{m}),
\end{equation}
where $\stackrel{L}{\otimes}$ denotes the derived tensor product. In fact, take a complex of injective sheaves $\mathcal{I}^\bullet$ on $\C^*$ representing $\mathcal{P}\otimes_{\C}\calL_{\C^*}$ and take a free resolution $F^\bullet$ of the $R$-module $R/\mathfrak{m}$. Then the total complex of $\Gamma(\C^*, \mathcal{I}^\bullet)\otimes_R F^\bullet$ is a complex representing both sides of \eqref{eqpf}. 

Let $L\coloneqq \calL_{\C^*}\otimes_R R/\mathfrak{m}$. Then $\mathcal{P}\otimes_{\C} L$ is a simple perverse sheaf on $\C^*$, since the operation of tensoring with $L$ is invertible. By the support condition for perverse sheaves we have that $\mathcal{H}^i(\mathcal{P}\otimes_{\C} L)=0$ for $i<-1$. Thus, the hypercohomology spectral sequence yields that $\mathbb{H}^i(\C^*; \mathcal{P}\otimes_{\C} L)=0$ for $i<-1$. Additionally, by Lemma \ref{lemH1}, $\mathbb{H}^i(\C^*;\mathcal{P}\otimes_{\C} L)=0$ for $i\leq -1$. On the other hand, we get by Artin's vanishing theorem that $\mathbb{H}^i(\C^*; \mathcal{P}\otimes_{\C} L)=0$ for $i\geq 1$. Therefore, 
\[
\dim \mathbb{H}^0(\C^*; \mathcal{P}\otimes_{\C} L)=\chi(\C^*; \mathcal{P})
\]
is independent of the choice of the maximal ideal $\mathfrak{m}$ of $R$. 

By the isomorphism \eqref{eqpf} and the universal coefficient theorem for cochain complexes, we have the following short exact sequence
$$
0\to
\mathbb{H}^i(\C^*; \mathcal{P}\otimes_{\C} \calL_{\C^*})\otimes_R R/\mathfrak m
\to
\mathbb{H}^i(\C^*; \mathcal{P}\otimes_{\C} L)
\to
\mathrm{Tor}^R_1(\mathbb{H}^{i+1}(\C^*; \mathcal{P}\otimes_{\C} \calL_{\C^*}), R/\mathfrak m)
\to
0.
$$
The results in the previous paragraph then yield that, for any maximal ideal $\mathfrak{m}$ of $R$, we have
\[
\mathbb{H}^i(\C^*; \mathcal{P}\otimes_{\C} \calL_{\C^*})\otimes_R R/\mathfrak{m}=0 \quad \text{for all } i\neq 0,
\]
and
\[
\mathrm{Tor}^R_1(\mathbb{H}^{i}(\C^*; \mathcal{P}\otimes_{\C} \calL_{\C^*}), R/\mathfrak{m})=0 \quad \text{for all } i\neq 1.
\]
Thus, $\mathbb{H}^i(\C^*; \mathcal{P}\otimes_{\C} \calL_{\C^*})=0$ for all $i\neq 0$ and $\mathbb{H}^0(\C^*; \mathcal{P}\otimes_{\C} \calL_{\C^*})$ is free. \index{perverse sheaf|)}
\end{proof}

As a consequence of Corollary \ref{corss}, we get the following purity result (Theorem~\ref{thm:pur} in the introduction).
\begin{cor}\label{cor:pure}
If $f\colon U\to \C^*$ is a proper algebraic map, then $A_i(U^f; \Q)$ carries a pure Hodge structure of weight $-i$. 
\end{cor}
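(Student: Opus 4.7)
The plan is to combine the semisimplicity coming from Corollary \ref{corss} with the comparison result in Corollary \ref{cor:fiber}, then use the classical fact that for a proper algebraic map the generic fiber is smooth projective.

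First, since $f\colon U\to\C^*$ is proper, Corollary \ref{corss} tells us that the $t$-action on $A_i(U^f;\Q)$ is semisimple. We are then in the hypothesis of Corollary \ref{cor:fiber}: fixing $N$ large enough that $t^N$ acts unipotently on $A_i(U^f;\Q)$, and letting $F$ denote a generic fiber of $f$ with $i_\infty\colon F\hookrightarrow U^f$ a lift of the inclusion $F\hookrightarrow U$, the induced homology map
\[
H_i(i_\infty)\colon H_i(F;\Q)\twoheadrightarrow A_i(U^f;\Q)
\]
is a surjective morphism of mixed Hodge structures, where $H_i(F;\Q)$ carries the dual of Deligne's mixed Hodge structure.

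Next I would observe that, because $f$ is proper, the generic fiber $F$ is smooth and compact: by Verdier's generic fibration theorem (as recalled in Section \ref{genfiber}), $f$ is a locally trivial fibration away from a finite set $\calB\subset\C^*$, so a generic fiber is smooth; and the properness of $f$ forces this fiber to be compact. A smooth compact complex algebraic variety is projective, so $F$ is a smooth projective variety. Consequently Deligne's mixed Hodge structure on $H^i(F;\Q)$ is pure of weight $i$, and dually the mixed Hodge structure on $H_i(F;\Q)$ is pure of weight $-i$.

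To finish, I would invoke strictness of morphisms of mixed Hodge structures with respect to the weight filtration (\cite[Theorem 3.18]{peters2008mixed}): the image of a pure Hodge structure of weight $-i$ under a morphism of mixed Hodge structures is itself pure of weight $-i$. Applying this to the surjection $H_i(i_\infty)$ immediately gives that $A_i(U^f;\Q)$ is pure of weight $-i$. I do not anticipate a serious obstacle here, since the heavy lifting (semisimplicity in the proper case, and the description of $A_i(U^f;\Q)$ as a MHS-quotient of $H_i(F;\Q)$) has already been done in Theorem \ref{thmsimple} and Corollary \ref{cor:fiber}; the only point that requires care is the identification of the generic fiber as smooth projective, which follows from properness combined with generic smoothness.
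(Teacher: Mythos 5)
Your proof follows essentially the same route as the paper: semisimplicity from Corollary \ref{corss} puts you in the hypotheses of Corollary \ref{cor:fiber}, exhibiting $A_i(U^f;\Q)$ as a mixed Hodge structure quotient of $H_i(F;\Q)$, which is pure of weight $-i$ because the generic fiber of a proper map is smooth and complete. One small caveat: your claim that a smooth compact complex algebraic variety is projective is false in general (Hironaka's examples in dimension $3$), but this is harmless here, since Deligne's mixed Hodge structure on the cohomology of any smooth \emph{complete} variety is already pure — which is exactly the fact the paper uses — so your conclusion stands.
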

\begin{proof}
The generic fiber $F$ of the proper morphism $f\colon U\to \C^*$ between smooth complex algebraic varieties is a complete smooth algebraic variety, whose rational homology and cohomology groups have pure Hodge structures. By Corollary \ref{corss} and Corollary \ref{cor:fiber}, $A_i(U^f; \Q)$ is a quotient of the weight $-i$ pure Hodge structure on $H_i(F; \Q)$, which proves our claim.
\end{proof}

\begin{remk}\label{rem:smf}
If $f\colon U \to \C^*$ is a projective submersion of smooth complex algebraic varieties, then $f$ is a fibration, and let $F$ denote its fiber. In this case, the semisimplicity of $A_i(U^f;\Q)=H_i(U^f;\Q)\cong H_i(F;\Q)$ is a direct consequence of Deligne's decomposition theorem \cite{De, De2} (compare with \cite[Remark 1.12]{BudurLiuWang}). Indeed, Deligne's theorem implies that the local systems $R^if_*\Q_U$ are semisimple on $\C^*$, or equivalently, the monodromy representation on $H^i(F;\Q)$ is semisimple. The semisimplicity of $A_i(U^f;\Q)$ follows then by using Lemma \ref{lem:fiberMonodromy}.
\end{remk}


	\chapter{Relation to the Limit MHS}\label{rellim}\index{Hodge!limit MHS}
In this chapter, we compare the mixed Hodge structure on $\Tors_R H^*(U;\ov\calL)$ with the limit mixed Hodge structure on the generic fiber of $f$ (as recalled in Section \ref{LimitMixedHodgeStructure}).

\textit{Throughout this chapter assume the ground field $k = \Q$, that the algebraic map $f\colon U \rightarrow \C^*$ is in addition a proper map with generic fiber $F$, and that the good compactification $\bar{f}\colon X \rightarrow \C P^1$ of $f$ has the property that $\bar{f}^{-1}(0)$ is reduced.}

Let $i\colon E \rightarrow X$ denote the inclusion of the reduced divisor $\bar{f}^{-1}(0)$.

\begin{remk}\label{unipotent}
Under the above assumptions, we have that the monodromy of $\Tors_R H^*(U,\ov\calL)$ is unipotent, see \cite[Corollary 11.19]{peters2008mixed}.
\end{remk}

\begin{remk}\label{proper}
Let $f\colon U\rightarrow \C^*$ be a proper map. Recall the notations of the beginning of Section \ref{sscover}. By \cite[Semi-stable Reduction Theorem]{kempf2006toroidal}, there is a finite cover $U_N\rightarrow U$ for some $N$ such that $f_N\colon U_N\rightarrow \C^*$ is also a proper map, and there is a compactification $\bar{f_N}\colon X \rightarrow \C P^1$ satisfying the conditions that we assume throughout this chapter. Further, by loc. cit. $X\setminus U_N = (\ov f_N)^{-1}(\{0,\infty\})$. 

Note that the eigenvalues of the action of $t^N$ in $\Tors_{R(N)}H^*(U_N,\ov\calL_N)$ will be all $1$ by Remark \ref{unipotent}. Recall also that the way we induced $\Tors_R H^*(U;\ov\calL)$ with a MHS was using the natural isomorphism of $R(N)$-modules $\Tors_R H^*(U;\ov\calL)\cong \Tors_{R(N)}H^*(U_N,\ov\calL_N)$ of Lemma \ref{lemLocal}. Hence, the only meaningful conditions we are assuming in this chapter is that the map $f\colon U\rightarrow\C^*$ be \textit{proper}.
\end{remk}

We wish to relate the mixed Hodge structure on $\mathrm{Tors}_R\, H^*(U; \ov\calL)$ with the limit mixed Hodge structure on $\bH^*(E; \psi_{\bar{f}} \underline{\Q})$ discussed in Section \ref{LimitMixedHodgeStructure}.
In fact, we will identify morphisms of mixed Hodge complexes of sheaves that realize this relationship.\index{psi-f-Hdg@$\psi_f^{\mathrm{Hdg}}$}

We compare the subsequent pair of mixed Hodge complexes.
On the one hand, Theorem \ref{Qalexandermhs} provides us for $m \geq 1$:
\begin{align*}
   & \Hdg{X}{D}\left(\frac{1}{2\pi i}\frac{df}{f},m\right)\\
    &= \left(\left[\calK^\bullet_\infty\left(1 \otimes f,m\right), \tilde{W}_{\lc}\right], \left[\logdr{X}{D}\left(\frac{1}{2\pi i}\frac{df}{f},m\right), W_{\lc}, F^{\lc}\right], \varphi_{\infty\#}\right)
\end{align*}
On the other hand, Theorem \ref{rationallimitmhs} offers us, independent of $m$:
\begin{align*}
    \psi_f^{\textnormal{Hdg}} = i^{-1}\left(\left[\Tot\tilde{\calC}^{\bullet,\bullet}, \tilde{W}(M)_{\lc}\right], \left[\vphantom{\tilde{A}} \Tot\calA^{\bullet,\bullet}, W(M)_{\lc}, F^{\lc}\right], \varphi_{\infty}\right).
\end{align*}
Our first goal is to show that, pre-application of $i^{-1}$, the second triple is a quotient of a translation of the first, as long as $m$ is large enough.

We obtain our map of triples from the quotient maps (up to a sign):
\begin{align*}
&(-1)^\ell\Phi^\ell_\Q(m)\colon \bigoplus_{j=0}^{m-1} \calK^{\ell+1}_\infty \otimes \Q \langle s^j \rangle \rightarrow \bigoplus_{j=0}^{m-1} \calK^{\ell+1}_\infty/\tilde{W}_j \calK^{\ell+1}_\infty\\
&(-1)^\ell\Phi^\ell_\C(m)\colon \bigoplus_{j=0}^{m-1} \Omega_X^{\ell+1}(\log D) \otimes_\C \C \langle s^j \rangle  \rightarrow \bigoplus_{j=0}^{m-1} \Omega_X^{\ell+1}(\log D)/ W_j \Omega_X^{\ell+1}(\log D)
\end{align*}
where $\ell$ and $m \geq 1$ denote integers. We define $\Phi^\ell_\Q(\infty),\Phi^\ell_\C(\infty)$ using the same formulas.
If $m \geq \dim X$ then the codomains of the above maps are $(\Tot \tilde{\calC}^{\bullet,\bullet})^\ell$ and $(\Tot\calA^{\bullet, \bullet})^\ell$ respectively, because the filtrations $\tilde{W}_{\lc}$ and $W_{\lc}$ fill out at index $\dim X$ (this is where boundedness of filtrations is important).
This enables us to, assuming $m \geq \dim X$ (or even $m=\infty$), define natural surjections 
\[\Phi_\Q(m)\colon \calK^\bullet_\infty(1 \otimes f, m)[1] \rightarrow \Tot\tilde{\calC}^{\bullet, \bullet}
\]
and
\[
\Phi_\C(m)\colon \logdr{X}{D}\left(\frac{1}{2 \pi i} \frac{df}{f}, m\right)[1] \rightarrow \Tot\calA^{\bullet, \bullet}.
\]
The compatibility with differentials $\Phi(m) \circ d[1] = (d'+d'') \circ \Phi(m)$ follows by definition, where $d'$ and $d''$ are the differentials on the double complexes.

\begin{lem}\label{surj}
Suppose $m \geq \dim X$.
Then: 
\begin{align*}
\Phi(m)\colon \Hdg{X}{D}&\left(\frac{1}{2\pi i}\frac{df}{f},m\right)[1] \rightarrow \\ &\rightarrow\left(\left[\Tot\tilde{\calC}^{\bullet,\bullet}, \tilde{W}(M)_{\lc}\right], \left[\vphantom{\tilde{A}} \Tot\calA^{\bullet,\bullet}, W(M)_{\lc}, F^{\lc}\right], \varphi_{\infty}\right)  	
\end{align*}
is a (surjective) map of triples.	
\end{lem}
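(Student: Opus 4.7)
The plan is to unpack what ``morphism of triples'' means and check each constituent condition by direct computation on simple tensors $\omega\otimes s^j$. A morphism of (multiplicative) mixed Hodge complexes of sheaves must (i) commute with differentials, (ii) preserve the weight filtration on the rational component, (iii) preserve both the weight and Hodge filtrations on the complex component, and (iv) fit into a commutative square with the two pseudomorphisms. Compatibility with differentials has already been observed just before the lemma: the sign $(-1)^\ell$ in the definitions of $\Phi^\ell_{\Q}(m)$ and $\Phi^\ell_{\C}(m)$ is exactly what reconciles the minus introduced by the translation $[1]$ with the sum of the two differentials $d'$ and $d''$ on the total complex.

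For the filtrations, the verification is a matter of aligning indices. After the shift $[1]$, the weight filtration $W[1]_k$ on $\logdr{X}{D}\left(\tfrac{1}{2\pi i}\tfrac{df}{f},m\right)[1]$ at the summand $\otimes\langle s^j\rangle$ in degree $\ell$ equals $W_{k+1+2j}\Omega^{\ell+1}_X(\log D)$; its image in $\Omega^{\ell+1}_X(\log D)/W_j$ is precisely $W(M)_k\calA^{j,\ell-j}$ by the definition of the monodromy weight filtration. The analogous check on the $\Q$-component with $\tilde W_{\lc}$ and $\tilde W(M)_{\lc}$ is formally identical. For the Hodge filtration, $F[1]^p$ at the same summand is $F^{p+1+j}\Omega^{\ell+1}_X(\log D)$; this vanishes unless $j\le\ell-p$, in which case it equals all of $\Omega^{\ell+1}_X(\log D)$ and its image fills $\calA^{j,\ell-j}$, which lies in $F^p\Tot\calA^{\bullet,\bullet}$ because $\ell-j\ge p$.

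Compatibility with the pseudomorphisms reduces to a single commutative square, since $\varphi_\infty$ and $\varphi_{\infty\#}$ are honest morphisms of complexes rather than chains. At the summand $\otimes\langle s^j\rangle$ both compositions send $\omega\otimes s^j$ to $(-1)^\ell$ times the class of $\varphi_\infty(\omega)$ in $\Omega^{\ell+1}_X(\log D)/W_j$, because $\Phi(m)$ and $\varphi_{\infty\#}[1]$ both act coordinate-wise in the direct sum indexed by $j$ and $\varphi_{\infty\#}$ restricts to $\varphi_\infty\otimes\mathrm{id}$ on each summand. Finally, for surjectivity, observe that $\tilde W_r\calK^{\ell+1}_\infty=\calK^{\ell+1}_\infty$ for $r\ge\dim X$ by the very definition of $\tilde W$, and $W_r\Omega^{\ell+1}_X(\log D)=\Omega^{\ell+1}_X(\log D)$ for $r\ge\ell+1$ and hence certainly for $r\ge\dim X$. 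Thus in each degree $\ell$ both total complexes reduce to direct sums indexed by $r=0,\dots,\dim X-1$, and when $m\ge\dim X$ the summands $\otimes\langle s^j\rangle$ with $j=0,\dots,m-1$ surject onto these.

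The main obstacle is the careful index bookkeeping: one must simultaneously track the shift of weight indices induced by $[1]$, the doubling $2j$ intrinsic to the thickened filtrations, and the additional $+1$ appearing in the definitions of $\tilde W(M)_k$ and $W(M)_k$, and verify that these conspire to matching indices on both sides of $\Phi(m)$. Once the indices are aligned the rest is formal, flowing from the coordinate-wise nature of $\Phi(m)$ and the definitions of the pseudomorphisms.
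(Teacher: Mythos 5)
Your proof is correct and follows essentially the same route as the paper's: a direct index check showing that $\Phi(m)$ carries $W[1]_k$ (resp.\ $F[1]^p$) of the shifted thickened complex into $W(M)_k$ (resp.\ $F^p$) of the limit double complex, together with the observation that compatibility with $\varphi_\infty$, compatibility with the differentials, and surjectivity all follow from the coordinate-wise definitions. The only cosmetic difference is that the paper disposes of the differential compatibility and the surjectivity in the discussion immediately preceding the lemma, while its proof records only the pseudomorphism square and the two filtration checks.
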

\begin{proof}
The commutativity $\Phi_\C(m) \circ \varphi_{\infty\#} = \varphi_\infty \circ \Phi_\Q(m)$ follows from the definitions.

We next verify that filtrations are preserved for the $\C$-component of the triple. 
The same arguments apply to the $\Q$-component as well.

The $W[1]_{\lc}$-filtered subcomplex indexed by integer $i$ is:
\begin{align*}
\left[W_{i+1}\logdr{X}{D}\left(\frac{1}{2\pi i}\frac{df}{f},m\right)\right][1] =  \bigoplus_{j = 0}^{m-1} \left[\vphantom{\tilde{A}}W_{i+2j+1} \Omega^{\bullet}_X(\log D)\right][1] \otimes_\C \C\langle s^j \rangle	
\end{align*}
which is mapped under $\Phi_\C(m)$ into $W(M)_i \left(\Tot\calA^{\bullet,\bullet}\right)$.
The $F[1]^{\lc}$-filtered subcomplex indexed by integer $p$ is:
\begin{align*}
\left[F^{p+1}\logdr{X}{D}\left(\frac{1}{2\pi i}\frac{df}{f},m\right) \right][1] &= \bigoplus_{j=0}^{m-1}\left[F^{p+j+1}\logdr{X}{D}\right][1] \otimes_\C \C \langle s^j \rangle\\
&= \bigoplus_{j=0}^{m-1}\left[\Omega^{\geq p + j + 1}_X (\log D)\right][1] \otimes_\C \C \langle s^j \rangle  
\end{align*}
which is mapped under $\Phi_\C(m)$ into $F^p \left(\Tot\calA^{\bullet,\bullet}\right)$. 
\end{proof}

The adjunction $\Id\rightarrow i_*i^{-1}$ applied to $$\left(\left[\Tot\tilde{\calC}^{\bullet,\bullet}, \tilde{W}(M)_{\lc}\right], \left[\vphantom{\tilde{A}} \Tot\calA^{\bullet,\bullet}, W(M)_{\lc}, F^{\lc}\right], \varphi_{\infty}\right)$$ yields a surjective map of triples
$$\Ad\colon \left(\left[\Tot\tilde{\calC}^{\bullet,\bullet}, \tilde{W}(M)_{\lc}\right], \left[\vphantom{\tilde{A}} \Tot\calA^{\bullet,\bullet}, W(M)_{\lc}, F^{\lc}\right], \varphi_{\infty}\right)\rightarrow i_*\psi_{\bar{f}}^{\textnormal{Hdg}}$$
Note that, since $i$ is proper, $i_*=i_!$ is an exact functor, so we may identify $i_*$ with $Ri_*$. Composing $\Phi(m)$ with the adjunction $\Ad$, we immediately get the following.

\begin{cor}\label{relnlimitmhc}
Suppose $m \geq \dim X$. Then:
\begin{align*}
\Ad\circ\Phi(m)\colon \Hdg{X}{D}&\left(\frac{1}{2\pi i}\frac{df}{f},m\right)[1] \rightarrow i_*\psi_{\bar{f}}^{\textnormal{Hdg}} 	
\end{align*}
is a surjective morphism of mixed Hodge complexes of sheaves.
It satisfies the equality:
\begin{align*}
\Phi(m) \circ S_m[1] = \Theta \circ \Phi(m) 	
\end{align*}
where $S_m$ is multiplication by $s$ (as in Lemma \ref{inducedmhsmaps}), and $\Theta$ is defined in Section \ref{LimitMixedHodgeStructure}.
\end{cor}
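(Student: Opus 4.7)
The plan is to deduce the corollary in two steps: first establishing that $\Ad\circ\Phi(m)$ is a surjective morphism of mixed Hodge complexes of sheaves by assembling already-established facts, and then verifying the commutation with $\Theta$ by a direct calculation on bigraded summands.

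For the first step, Lemma \ref{surj} already supplies that $\Phi(m)$ is a map of triples sending $W[1]_i$ into $W(M)_i$ and $F[1]^p$ into $F^p$, and it is termwise surjective because each component $\Phi^\ell_\Q(m)$ and $\Phi^\ell_\C(m)$ is a quotient map; the hypothesis $m\geq \dim X$ ensures that the filtrations $\tilde W_{\lc}$ and $W_{\lc}$ have filled out, so that the targets of the quotients are exactly $(\Tot\tilde\calC^{\bullet,\bullet})^\ell$ and $(\Tot\calA^{\bullet,\bullet})^\ell$. The adjunction $\Id\to i_*i^{-1}$ is surjective since $i$ is a closed immersion and hence $i_*$ is exact (allowing the identification $i_*=Ri_*$). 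Because the filtrations on $\psi_{\bar f}^{\textnormal{Hdg}}$ are defined by applying $i^{-1}$ to the filtrations on the double complexes, the adjunction is tautologically filtered and hence is a morphism of mixed Hodge complexes of sheaves. Composing yields the desired surjective morphism.

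For the second step, I would verify the identity $\Phi(m)\circ S_m[1]=\Theta\circ\Phi(m)$ by unwinding definitions on each bigraded summand before applying $i^{-1}$; this is enough because $i_*$ is exact and $\Ad$ is natural, so any identity upstairs descends. On the $\C$-component, a simple tensor $\omega\otimes s^j$ in degree $\ell$ of $\logdr{X}{D}(\tfrac{1}{2\pi i}\tfrac{df}{f},m)[1]$ is sent by $S_m$ to $\omega\otimes s^{j+1}$ and then by $\Phi^\ell_\C(m)$ to the class of $\omega$ in $\Omega^{\ell+1}_X(\log D)/W_{j+1}$, i.e. the $(j+1,\ell-j-1)$-summand of $\Tot\calA^{\bullet,\bullet}$; on the other hand $\Phi^\ell_\C(m)$ first sends $\omega\otimes s^j$ to the class of $\omega$ in $\Omega^{\ell+1}_X(\log D)/W_j\cong\calA^{j,\ell-j}$, and $\Theta$ then applies the further quotient by $W_{j+1}$, again landing in $\calA^{j+1,\ell-j-1}$ with the same representative. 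The two outputs therefore agree. The analogous verification on the $\Q$-component is identical after replacing $W_\bullet$ with $\tilde W_\bullet$ on $\calK^\bullet_\infty$.

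The only real subtlety, more a matter of bookkeeping than a genuine obstacle, is the consistent tracking of signs introduced by the translation $[1]$, the $(-1)^\ell$ factors built into the definitions of $\Phi^\ell_\Q(m)$ and $\Phi^\ell_\C(m)$, and the mapping-cone conventions fixed in Section \ref{ss:notations}. These signs are precisely what ensures that the shifted differential on $\Hdg{X}{D}(\tfrac{1}{2\pi i}\tfrac{df}{f},m)[1]$ is carried onto the total differential $d'+d''$ on $\Tot\calA^{\bullet,\bullet}$, and once they are unpacked they cancel cleanly in the verification above.
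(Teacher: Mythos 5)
Your proposal is correct and follows essentially the same route as the paper: the first assertion is obtained exactly as in the text by composing the surjective map of triples from Lemma \ref{surj} with the (exact, tautologically filtered) adjunction $\Ad$, and the identity $\Phi(m)\circ S_m[1]=\Theta\circ\Phi(m)$ is proved, as the paper indicates, by expanding definitions on the summands $\omega\otimes s^j\mapsto[\omega]\in\calA^{j+1,\ell-j-1}$ (and likewise for $\tilde{\calC}^{\bullet,\bullet}$), your explicit computation being just the written-out version of that one-line argument. The sign bookkeeping is also fine, since both sides of the identity involve $\Phi$ in the same degree $\ell$, so the $(-1)^\ell$ factors agree.
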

\begin{proof}
The equality is obtainable by expanding definitions.  	
\end{proof}

Recall that we defined the MHS on $H^{*+1}(U; \ov\calL \otimes R_m)$ using the $[1]$ shift of the thickened Hodge complex $\Hdg{X}{D}\left(\frac{1}{2\pi i}\frac{df}{f},m\right)_{\widetilde{s}_2}$. Twisting the $R$-module structure and taking global hypercohomology in Corollary \ref{relnlimitmhc}, we get:

\begin{cor}\label{relnlimitmhs}
For sufficiently large $m$, $$\Ad\circ\Phi(m)\colon \Hdg{X}{D}\left(\frac{1}{2\pi i}\frac{df}{f},m\right)_{\widetilde{s}_2}[1] \rightarrow i_*\psi_{\bar{f}}^{\textnormal{Hdg}}$$ induces a morphism of $\Q$-mixed Hodge structures:
\begin{align*}
	\Phi^*\colon \mathrm{Tors}_R H^{*+1}(U;\ov\calL) \rightarrow \mathbb{H}^*(E; \psi_{\bar{f}}\underline{\Q})
\end{align*}
which is also an $R$-module homomorphism.
\end{cor}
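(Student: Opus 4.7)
The plan is to pass from the morphism of mixed Hodge complexes in Corollary \ref{relnlimitmhc} to a morphism of mixed Hodge structures on hypercohomology, then precompose with the MHS inclusion of the torsion into $H^{*+1}(U;\ov\calL\otimes_R R_m)$ recalled in Remark \ref{remk:mhsSummary}. First I would fix $m$ large enough that both $m\ge \dim X$ (so that Corollary \ref{relnlimitmhc} applies) and $s^m$ annihilates $\Tors_R H^{*+1}(U;\ov\calL)$ (so that this torsion embeds into $H^{*+1}(U;\ov\calL\otimes_R R_m)$). The $\tilde s_2$-twist only modifies the $R_\infty$-module structure on each graded piece, not the underlying filtered complex of $\Q$- and $\C$-vector spaces, so Corollary \ref{relnlimitmhc} continues to yield a morphism of $\Q$-mixed Hodge complexes after twisting.

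Taking hypercohomology and applying \cite[Theorem 3.18III]{peters2008mixed} gives a morphism of $\Q$-mixed Hodge structures
\[
(\Ad\circ\Phi(m))^*\colon \mathbb{H}^{*+1}\bigl(X,\calK^\bullet_\infty(1\otimes f,m)[1]\bigr)_{\tilde s_2}\longrightarrow \mathbb{H}^*(E;\psi_{\bar f}\underline{\Q}).
\]
The source is canonically identified with $H^{*+1}(U;\ov\calL\otimes_R R_m)$ via the $\Q$-analogue of Proposition \ref{propLocal} recorded in Remark \ref{remQ}, and this is precisely the identification used in Theorem \ref{Qalexandermhs} to equip $H^{*+1}(U;\ov\calL\otimes_R R_m)$ with its MHS. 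I would then define $\Phi^*$ as the composition with the injection $\Tors_R H^{*+1}(U;\ov\calL)\hookrightarrow H^{*+1}(U;\ov\calL\otimes_R R_m)$, which is a MHS morphism by Remark \ref{remk:mhsSummary}. Independence of the choice of (sufficiently large) $m$ follows from the compatibility of $\Phi(m)$ with the projections $\phi_{ji}$, which are MHS morphisms by Lemma \ref{inducedmhsmaps}.

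The remaining point is $R$-linearity, which I expect to be the main subtlety because of the bookkeeping around the twist $\tilde s_2 = e^s - 1$. On the twisted thickened complex, multiplication by $s$ in the $*_{\tilde s_2}$-structure corresponds under $\nu_\Q$ to multiplication by $\log t$ on $\ov\calL\otimes_R R_m$ (the Taylor series at $t=1$), while on the nearby cycles side $\Theta$ induces $\log t$ on hypercohomology by Theorem \ref{limitmhsmonodromy}. The relation $\Phi(m)\circ S_m[1] = \Theta\circ \Phi(m)$ from Corollary \ref{relnlimitmhc} thus intertwines the two $\log t$-actions. By Remark \ref{unipotent} (together with the reduction of Remark \ref{proper} passing to a semistable cover if necessary), $\log t$ is nilpotent on both sides, so $t = \exp(\log t)$ is a polynomial in $\log t$ on each side; compatibility with $\log t$ therefore upgrades to compatibility with $t$, which is all that is required for $R$-linearity.
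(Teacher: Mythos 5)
Your proposal is correct and follows essentially the same route as the paper: twist, pass to hypercohomology, identify the source with $H^{*+1}(U;\ov\calL\otimes_R R_m)$ via the resolution of Remark \ref{remQ}, restrict to the torsion, and use $\Phi(m)\circ S_m[1]=\Theta\circ\Phi(m)$ from Corollary \ref{relnlimitmhc} together with Remark \ref{unipotent} for $R$-linearity. You spell out the nilpotence/$t=\exp(\log t)$ step more explicitly than the paper does, but this is an elaboration rather than a different argument.
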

\begin{proof}
Let $k=\Q$, and $m\geq \dim X$. Since $\Theta$ corresponds to multiplication by $\log(t)$, and multiplication by $s$ on $\Hdg{X}{D}\left(\frac{1}{2\pi i}\frac{df}{f},m\right)$ corresponds to multiplication by $\log(t)$ on $\Hdg{X}{D}\left(\frac{1}{2\pi i}\frac{df}{f},m\right)_{\widetilde{s}_2}$, Corollary \ref{relnlimitmhc} tells us that
\[\Ad\circ\Phi(m)\colon \Hdg{X}{D}\left(\frac{1}{2\pi i}\frac{df}{f},m\right)_{\widetilde{s}_2}[1] \rightarrow i_*\psi_{\bar{f}}^{\textnormal{Hdg}}\]
induces the following morphism of mixed Hodge structures and $R$-modules in hypercohomology:
\begin{align*}
H^{*+1}(U;\ov\calL \otimes_R R_m) \rightarrow \mathbb{H}^*(E; \psi_{\bar{f}}\underline{\Q}).
\end{align*}
which we denote by $\Phi(m)^*$.

Note that the torsion submodule $\Tors_{R_\infty}H^{*+1}(U; \ov\calL \otimes_R R_\infty)$ is contained in $H^{*+1}(U;\ov\calL \otimes_R R_m)$ as the kernel of $\psi_{mj}[1]^*$ for sufficiently large $m$ and $j$ (Remark \ref{remQ}).
And because the monodromy action is unipotent (Remark \ref{unipotent}), the torsion submodule is isomorphic to $\Tors_R H^{*+1}(U; \ov\calL)$ as $R$-modules.  
Stitching these observations together, for sufficiently large $m$ the morphism $\Ad\circ\Phi(m)$ induces an $R$-module morphism: 
\begin{align*}
	\Phi^*\colon \mathrm{Tors}_R\,H^{*+1}(U;\ov\calL) \rightarrow \mathbb{H}^*(E; \psi_{\bar{f}}\underline{\Q})
\end{align*}
Recall Remark \ref{torsionrelations} and the proof of Corollary \ref{torsionmhs}, where we see that the MHS constructed on $\mathrm{Tors}_R\,H^{*+1}(U;\ov\calL)$ is independent of $m$, for $m$ sufficiently large. Independence of the map $\Phi^*$ of large enough $m$ is determinable through examination of the maps between the various thickened complexes associated to their inverse limit structure. 		
\end{proof}

Let $D$ and $D^*$ be an open disk and a punctured open disc centered at $0$ in $\C$, res\-pec\-tive\-ly. Let $T\coloneqq (\overline f)^{-1}(D)\subset X$ and $T^*\coloneqq f^{-1}(D^*)\subset U$. Assume further that $D$ is sufficiently small such that $$f_{|T^*}\colon T^*\rightarrow D^*$$ is a fibration. Note that by Remark~\ref{proper}, we can assume that $X\backslash U$ contains only vertical divisors, so $T\backslash E=T^*$. Recall that by definition, $\psi_{\ov f}=i^{-1} \circ R(j\circ \pi)_*\circ (j\circ \pi)^{-1} $, where, abusing notation, $\pi$ is seen as a map $\pi\colon (T^*)^f\rightarrow T^*$, $j$ is seen as $j:T^*\hookrightarrow T$, and $i$ is seen as $i:E\hookrightarrow T$. We will use this notation in the rest of this chapter.

\begin{remk}
In \cite[\S 11.2.2]{peters2008mixed}, a different infinite cyclic cover is chosen in place of $(T^*)^f$, which we will denote $\wt{(T^*)^f}$, namely replacing the exponential map in (\ref{eq:fiberProductIntro}) by the map $z\mapsto e^{2\pi i z}$. We can fix the canonical isomorphism
\[
{\footnotesize
\begin{array}{rcl}
(T^*)^f = \{(x,z)\in T^*\times \C \mid  f(x) = e^z   \} & \longrightarrow & \wt{(T^*)^f} = \{(x,z)\in T^*\times \C \mid  f(x) = e^{2\pi iz }   \} \\
(x,z) & \longmapsto & \left(x,\frac{z}{2\pi i}\right).
\end{array}
}
\]
This allows us to identify $(T^*)^f$ with $\wt{(T^*)^f}$, and we will do so implicitly for the remainder of the chapter.
\end{remk}

In the following definition, we describe a map which has the same domain and target as the MHS morphism $\Phi^*$ of Corollary \ref{relnlimitmhs}, but is defined explicitly in cohomology and in a more geometric and down-to-earth way.

\begin{dfn}\label{def:relLimit}
The map $r^*:\Tors_R H^{*+1}(U;\ov\cL)\hookrightarrow \mathbb{H}^*(E; \psi_{\bar{f}}\underline{\Q})$ is defined as the following composition
$$
\Tors_R H^{*+1}(U;\ov\cL)\hookrightarrow H^{*+1}(T^*;\ov\cL)\cong H^{*}((T^*)^f;\Q)\xrightarrow{\cong} \mathbb{H}^*(E; \psi_{\bar{f}}\underline{\Q}).
$$
Here, the first map is given by the restriction from $U$ to $T^*$ (hence the name $r^*$), and was shown to be injective in Remark \ref{remk:tube}. The isomorphism $H^{*+1}(T^*;\ov\cL)\cong H^{*}((T^*)^f;\Q)$ is given by Corollary \ref{isocohom} and was already described in Remark \ref{remk:tube}. Finally, the map $H^{*}((T^*)^f;\Q)\to \mathbb{H}^*(E; \psi_{\bar{f}}\underline{\Q})$ is given by the adjunction $\Id\to i_*i^{-1}$ applied to $R(j\circ \pi)_*(j\circ\pi)^{-1}\underline \Q_T=R(j\circ \pi)_*\underline \Q_{(T^*)^f}$. The homomorphism $H^{*}((T^*)^f;\Q)\to \mathbb{H}^*(E; \psi_{\bar{f}}\underline{\Q})$ is an isomorphism because by \cite[Remark 2.6.9]{KS},
\[
\mathbb{H}^*(E; \psi_{\bar{f}}\underline{\Q}) \cong \varinjlim_{V\supseteq E} \mathbb{H}^*(V; R(j\circ \pi)_*\underline{\Q}_{(T^*)^f}),
\]
where the limit is taken over open sets $V$. Since $f$ is proper, every such $V$ contains an open set of the form $T=(\ov f)^{-1}(D)$, for $D$ a small enough disk around $0$. The isomorphism follows from the fact that all sufficiently small tubes $T$ are fibrations over the disk, so they are homotopy equivalent to each other.
\end{dfn}

Note that, up to the natural identification $$H^{*}((T^*)^f;\Q)\cong \mathbb{H}^*(E; \psi_{\bar{f}}\underline{\Q})$$ given by adjunction, and the natural identification $$(\Tors_R H_*(U^f;\Q))^{\vee_{\Q}}\cong \Tors_R H^{*+1}(U;\ov\cL)$$ given in Proposition \ref{propcanon}, the map $r^*$ is just the dual as $\Q$-vector spaces of the map $H_*((T^*)^f;\Q)\twoheadrightarrow \Tors_R H_*(U^f;\Q)$ given by the inclusion of infinite cyclic covers $(T^*)^f\subset U^f$. Moreover, $r^*$ allows us to see the MHS on the $\Tors_R H^{*+1}(U;\ov\calL)$ as a sub-MHS of the limit MHS on $\bH^*(E;\psi_{\bar{f}}\underline{\Q})$ for all $*$, as exemplified in the following result.

\begin{thm}\label{thm:limitMap}
The maps $r^*$ and $\Phi^*$ are equal up to multiplication by a rational constant. In particular,
\[
r^*:\Tors_R H^{*+1}(U;\ov\cL)\hookrightarrow \bH^*(E; \psi_{\bar{f}}\underline{\Q})
\]
is a morphism of MHS and of $R$-modules.
\end{thm}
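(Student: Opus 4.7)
The plan is to compare the two maps by restricting all relevant constructions to a tubular neighborhood $T = \bar{f}^{-1}(D)$ of the special fiber $E$, and then to identify the resulting maps explicitly. First, observe that the morphism of mixed Hodge complexes $\Ad\circ\Phi(m)$ from Corollary~\ref{relnlimitmhc} is supported on $E$ in the target, so it factors through its restriction to the open set $T$. By naturality of hypercohomology with respect to the open inclusion $T \hookrightarrow X$, the MHS morphism $\Phi^*$ of Corollary~\ref{relnlimitmhs} decomposes as the restriction $H^{*+1}(U;\ov\calL\otimes_R R_m) \to H^{*+1}(T;\ov\calL|_T\otimes_R R_m)$ followed by a MHS morphism into $\mathbb{H}^*(E;\psi_{\bar f}\underline{\Q})$. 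Since the cohomology of the infinite cyclic cover $(T^*)^f$ of $T^* = f^{-1}(D^*)$ is finite-dimensional (as $T^* \to D^*$ is a fibration with fiber $F$), all of $H^{*+1}(T^*;\ov\calL)$ is torsion, and further restriction to $T^*$ fits into the same factorization as in Definition~\ref{def:relLimit}.

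Next, I would reinterpret $\Phi(m)$ on $T$ using Proposition~\ref{propLocalQ}: the map $\nu_\Q$ identifies $\ov\calL\otimes_R R_m$ on $T^*$ with the kernel of the zeroth differential of the thickened rational complex after a twist, and consequently the thickened log de Rham complex on $T$ computes $H^{*+1}(T^*;\ov\calL\otimes_R R_m)$. Under Corollary~\ref{isocohom} (applied to the space $T^*$, for which the relevant Alexander modules are torsion), this group identifies naturally with $H^*((T^*)^f;\Q)$. Simultaneously, $\mathbb{H}^*(E;\psi_{\bar f}\underline{\Q})$ is canonically isomorphic to $H^*((T^*)^f;\Q)$ by adjunction plus the fact that $T$ is a fibration over $D$ deformation-retracting onto $E$, as already noted in Definition~\ref{def:relLimit}. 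The task reduces to comparing two maps from $H^{*+1}(T^*;\ov\calL\otimes_R R_m)$ to $\mathbb{H}^*(E;\psi_{\bar f}\underline{\Q})$: the one coming from the quotient $\Phi(m)$ at the complex level, and the geometric identification through Corollary~\ref{isocohom} and the nearby-cycles adjunction.

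The main technical step, which is the principal obstacle, is to verify that these two maps agree up to a nonzero rational scalar. Both are natural maps between the same pair of vector spaces, and both are defined in terms of the same geometric data (the log structure along $E$, resp.\ the nearby cycles), so the comparison is ultimately a bookkeeping exercise. Concretely, one chases a simple tensor $\omega\otimes s^j$ in the thickened complex through the quotient $\Phi(m)$ into $\Tot\tilde{\calC}^{\bullet,\bullet}$ and through the inverse of $\nu_\Q$ into $\ov\calL\otimes_R R_m$; the adjunction map from $\ov\calL\otimes_R R_m$ to the pushforward by $\pi\circ j_T$ then places this in $R(j_T\circ\pi)_*\underline{\Q}_{(T^*)^f}$, which by the generic fibration structure of $T^*\to D^*$ is pseudo-isomorphic to $i^{-1}\Tot\tilde{\calC}^{\bullet,\bullet}$. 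Matching these two routes amounts to tracking the factors of $2\pi i$ hidden in $\varphi_\infty$, in $\nu_\Q$, and in the definition of $\tilde{\calC}^{\bullet,\bullet}$ (where $d' = (1\otimes f)\wedge -$), which accounts for the ``up to rational constant'' statement.

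Once this comparison is established, the conclusion of Theorem~\ref{thm:limitMap} is immediate. Since $\Phi^*$ is a morphism of $\Q$-mixed Hodge structures and of $R$-modules by Corollary~\ref{relnlimitmhs}, and $r^*$ differs from it by a nonzero rational scalar, $r^*$ inherits both properties. Moreover, because $r^*$ is visibly injective (the restriction $\Tors_R H^{*+1}(U;\ov\calL) \hookrightarrow H^{*+1}(T^*;\ov\calL)$ is injective by Remark~\ref{remk:tube}, and the two subsequent arrows are isomorphisms), this exhibits $\Tors_R H^{*+1}(U;\ov\calL)$ as a sub-MHS of the limit mixed Hodge structure on the generic fiber, which is the content of Theorem~\ref{comp} in the statement of the main theorem.
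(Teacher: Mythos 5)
Your reduction to the tube $T=\bar f^{-1}(D)$ and the identifications of both targets with $H^*((T^*)^f;\Q)$ are fine, but the proposal has a genuine gap exactly where you declare the matter to be ``a bookkeeping exercise.'' The map $r^*$ is \emph{not} induced by any sheaf-level map out of $\ov\cL\otimes_R R_m$: the identification $H^{*+1}(T^*;\ov\cL)\cong H^*((T^*)^f;\Q)$ entering Definition~\ref{def:relLimit} comes from Corollary~\ref{isocohom}, i.e.\ from the Universal Coefficient Theorem and the residue isomorphism $\Res$ — it passes through homology and dualization. There is no ``adjunction map from $\ov\calL\otimes_R R_m$ to $R(j_T\circ\pi)_*\underline{\Q}_{(T^*)^f}$'' as you assert; constructing a complex-level avatar of this duality-based identification, and proving it agrees with the UCT/$\Res$ one, is precisely the hard content. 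In the paper this is handled by the machinery of Section~\ref{sec:main}: Proposition~\ref{prop:mapsAreEqual} identifies the duality-defined map with the one induced by $\pi_{\cL}^\vee$, and Lemma~\ref{lem:pullbacks} (resting on the one-dimensionality computation on $\C^*$, Lemma~\ref{lem:proofInCircle}) identifies $\pi_{\cL}^\vee$, up to a constant, with wedging by $\frac1i\frac{df}{f}$ at the de Rham level. Your element chase of $\omega\otimes s^j$ through $\Phi(m)$ and $\nu_\Q^{-1}$ cannot even get started against a map defined via $\Ext^1_R$, $\Res$ and $\UCT$, and the bare statement that two natural maps between the same vector spaces must be proportional is false in general.

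Relatedly, you omit the mechanism that actually makes the comparison possible: the paper precomposes both $r^*$ and $\Phi^*$ with $H^j(\pi_{\cL}^\vee)\colon H^j(U;\C)\to\Tors_R H^{j+1}(U;\ov\cL)$, which is \emph{surjective} because $f$ is proper (semisimplicity via Theorem~\ref{thmsimple} and Corollary~\ref{cor:surj}) and the monodromy is unipotent. After this precomposition both maps become maps induced by explicit morphisms of de Rham-type complexes on $T$ (diagrams (\ref{eqn:commPhi}) and (\ref{eqn:commLimit-deRham}), using the Steenbrink quasi-isomorphism $\sum_j (f_\infty)^j\omega_j\mapsto\frac{1}{2\pi i}\frac{df}{f}\wedge\omega_0$ into $\Tot\calA^{\bullet,\bullet}$), and these can be compared up to homotopy and a scalar; surjectivity then lets one cancel $H^j(\pi_{\cL}^\vee)$ to conclude $c\cdot r^*=\Phi^*$, and faithful flatness of $\C$ over $\Q$ forces $c\in\Q$. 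Your proposal never uses properness or semisimplicity at all in the comparison, which is a symptom of the missing step: without the surjection from $H^j(U;\C)$ (or a genuine substitute realizing $r^*$ at the level of complexes), the claimed agreement up to a rational constant is not established.
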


\begin{proof}
In order to prove this theorem, we must first recall how the hypercohomology of $\Tot \calA^{\bullet,\bullet}$ is identified with the nearby cycle cohomology in \cite{peters2008mixed}, Sections 11.2.4 and 11.2.5.

First, note that since $\pi$ is a covering map, we can see that $\pi_*$ is exact by checking over an open cover of $U$. Therefore, there are natural isomorphisms $
H^*((T^*)^f;\C) \cong 
\bH^*(T^*;R\pi_*\C) \cong 
\bH^*(T^*;\pi_*\C) $. We define the complex $$\Omega_T^\bullet(\log E)[f_\infty]\coloneqq \Omega_T^\bullet(\log E)\otimes_{\C} \C[f_\infty],$$ where the differential is given by  letting $df_\infty = \frac{df}{f}$ and using the Leibniz rule. By \cite[Theorem 11.16]{peters2008mixed} and the discussion preceding it, we have quasi-isomorphisms:
\[
i^{-1}Rj_*\pi_*\ul \C_{(T^*)^f} \xhookrightarrow{\sim} i^{-1}Rj_*\pi_*\Omega_{(T^*)^f}^\bullet \xhookleftarrow{\sim}  i^{-1}\Omega_T^\bullet(\log E)[f_\infty] .
\]
The right hand arrow is given by pulling back forms and seeing $f_\infty$ as $$f_\infty\colon(T^*)^f\subseteq U^f\to \C.$$

Next, combining Theorem 11.16 and the discussion in 11.2.5 in loc. cit., we conclude that there is a quasiisormophism as follows:
\begin{equation}\label{eqn:qisoSteenbrink}
\begin{split}
\displaystyle
{i_*i^{-1}\Omega_T^\ell(\log E)[f_\infty]} & \overset{\sim}{\longrightarrow} 
\displaystyle
{(\Tot \calA^{\bullet,\bullet})^\ell = \bigoplus_{p=0}^{\ell+1} \frac{\Omega^{\ell + 1}_T(\log E)}{W_p\Omega^{\ell + 1}_T(\log E)}}\\
\displaystyle
{\sum_{j} (f_\infty)^j\omega_j } & \longmapsto 
\displaystyle
{(-1)^\ell\frac{1}{2\pi i}\frac{df}{f}\wedge \omega_0\in \frac{\Omega^{\ell + 1}_T(\log E)}{W_0\Omega^{\ell + 1}_T(\log E)}.}
\end{split}
\end{equation}
We should note that the $\frac{1}{2\pi i}$ in the formula above does not appear in \cite{peters2008mixed} but it needs to appear here due to the difference in our conventions regarding Tate twists. As mentioned in Section \ref{ss:MHSsAndComplexes}, Tate twists in loc. cit. are defined using powers of $2\pi i$. The rational part of $\psi_{\bar{f}}^{\textnormal{Hdg}}$ is defined in loc. cit. using Tate twists, and the extra $\frac{1}{2\pi i}$ in (\ref{eqn:qisoSteenbrink}) is necessary for that quasiisomorphism to also be defined with rational coefficients, following our conventions. Needless to say, both conventions give rise to the same MHS.

Let us fill the following commutative diagram. The diagonal arrow is defined above.
\begin{equation}\label{eqn:commPhi}
{
\begin{tikzcd}[column sep=1.7em,row sep = 1.3em]
 \Omega^\bullet_T(\log E)[f_\infty]\arrow[r,"i^{-1}\vdash i_*"]\arrow[d,dashrightarrow] 
&
i_*i^{-1}\Omega^\bullet_T(\log E)[f_\infty]\arrow[dr, bend left = 20]\arrow[d,dashrightarrow] 
&
 \\
\Omega_T^{\bullet+1}(\log E)\left(\frac{1}{2\pi i}\frac{df}{ f},\infty\right)[1] \arrow[r,"i^{-1}\vdash i_*"]
&
i_*i^{-1}\Omega_T^{\bullet+1}(\log E)\left(\frac{1}{2\pi i}\frac{df}{ f},\infty\right)[1] \arrow[r,"\Phi^\ell_{\C}(\infty)"]
&
\Tot \calA^{\bullet,\bullet}.
\end{tikzcd}
}
\end{equation}
One can check directly that the map $\sum_{j} (f_\infty)^j\omega_j \mapsto \frac{1}{2\pi i}\frac{df}{f}\wedge \omega_0 \otimes 1\in \Omega_T^\bullet(\log E)\otimes R_\infty$ makes the diagram commute and it induces a map of complexes (taking into account that $d[1]=-d$ on the target).

Consider now the following diagram, which includes the maps explained above. The thickened complex is a resolution of $\ov\cL\otimes_R R_\infty$ using Proposition~\ref{propLocal}. The arrow $\pi_{\cL}^\vee$ is the one in Proposition~\ref{prop:mapsAreEqual}.
\begin{equation}\label{eqn:commLimit}
	\begin{tikzcd}[column sep = 0.1em, row sep = 1.4em]
		Rj_*\ul \C_{T^*} 
		\arrow[d,"\pi_{\cL}^\vee",bend right = 10,start anchor={[xshift=-8em]},
		end anchor={[xshift=-7em]}]
		 \to
		 \arrow[r,"\pi^{-1}\vdash \pi_*", start anchor={[yshift=0.3em,xshift=-18em]},
		 end anchor={[yshift=0.3em, xshift=-17em]}
		 ,draw=none
		 ]
		Rj_*\pi_*\ul\C_{(T^*)^f} \to
		i_*i^{-1}Rj_*\pi_*\Omega_{(T^*)^f}^\bullet
		&
		i_*i^{-1}\Omega^\bullet_T(\log E)[f_\infty]\arrow[d,dashrightarrow]\arrow[l,"\sim"']\\
		Rj_*\ov\cL[1]\otimes_R R_\infty \overset{\sim}{\to} \Omega_T^{\bullet}(\log E)\left(\frac{1}{2\pi i}\frac{df}{f},\infty\right)[1]
		\arrow[r]
		&
		i_*i^{-1}\Omega_T^{\bullet}(\log E)\left(\frac{1}{2\pi i}\frac{df}{f},\infty\right)[1].
	\end{tikzcd}
\end{equation}

We will show that it commutes up to multiplication by a nonzero real constant. Before we show this, let us explain why the commutativity (up to constant) of this diagram finishes the proof. Identifying $\bH^j(E;\psi_{\ov f}\C)$, $\bH^j(E;\Omega^\bullet_T(\log E)[f_\infty])$ and $\bH^j(T;\Tot \calA^{\bullet,\bullet})$ through the quasiisomorphisms between the corresponding complexes that we have described above, taking hypercohomology in the commutative diagrams (\ref{eqn:commPhi}) and (\ref{eqn:commLimit}) yields the following commutative (up to a non-zero constant) diagram. Note that we can identify $H^{j+1}(T^*;\ov\cL\otimes_R R_\infty)$ with $H^{j+1}(T^*;\ov\cL)$ naturally, since the latter is annihilated by $s^m$ for some $m$, using Remark~\ref{proper}.
\[
{
\begin{tikzcd}[column sep = {8em,between origins}, row sep = 1.4em]
H^j(T^*;\C) \arrow[dr,"H^j(\pi_{\cL}^\vee)"',bend right = 10] \arrow[r,"\pi^*"]
&
H^j((T^*)^f;\C) \arrow[r, "i^{-1}\vdash i_*"]&
\bH^j(E;\psi_{\ov f}\C) \arrow[r,equals]\arrow[dr,dashrightarrow]
& \bH^j(T;\Tot \calA^{\bullet,\bullet})
\\
&
H^{j+1}(T^*;\ov\cL)
\arrow[rr, "i^{-1}\vdash i_*"]
&
&
\bH^{j+1}\left(E;\Omega_T^{\bullet}(\log E)\left(\frac{1}{2\pi i}\frac{df}{ f},\infty\right)\right)\arrow[u,"H^j(\Phi^\ell_{\C}(\infty))",swap].
\end{tikzcd}
}
\]
If the above commutes (up to a scalar), we incorporate to this diagram arrows given by the inclusion of $T^*$ in $U$ in the left side.
\begin{equation}\label{eqn:commCohom}
{\footnotesize
\begin{tikzcd}[column sep = 0.8em]
H^j(U;\C)\arrow[r]\arrow[dr,"H^j(\pi_{\cL}^\vee)",bend right = 10, two heads] &H^j(T^*;\C) \arrow[dr,"H^j(\pi_{\cL}^\vee)",bend right = 10] \arrow[r,"\pi^*"]
&
H^j((T^*)^f;\C) \arrow[r, "i^{-1}\vdash i_*"]&
\bH^j(E,\psi_{\ov f}\C)
\\
&
\Tors_R H^{j+1}(U;\ov\cL)\arrow[r]
&
H^{j+1}(T^*;\ov\cL)
\arrow[r]
&
\bH^{j+1}\left(E;\Omega_T^{\bullet}(\log E)\left(\frac{1}{2\pi i}\frac{df}{ f},\infty\right)\right)\arrow[u,"H^j(\Phi^\ell_{\C}(\infty))",swap].
\end{tikzcd}
}
\end{equation}

Let us recall why  $H^j(\pi_{\cL}^\vee)\colon H^j(U;\C)\to \Tors_R H^{j+1}(U;\ov\cL)$ is surjective. By Theorem \ref{thmsimple}, $\Tors_R H^{j+1}(U;\ov\cL)$ is a semisimple $R$-module. Recall that, in this chapter, we assumed  that the monodromy action on $\Tors_R H^{j+1}(U;\ov\cL)$ is unipotent (Remark \ref{unipotent}). The surjectivity now follows from Proposition~\ref{prop:mapsAreEqual}, which shows that $H^j(\pi_{\cL}^\vee)$ is the dual of the map induced by $U^f\to U$, and Corollary \ref{cor:surj}, which shows that if $t$ acts as the identity on $\Tors_R H^{j+1}(U;\ov\cL)$, this map is surjective.

We also note that, by Proposition \ref{prop:mapsAreEqual}, we have the following commutative diagram, where the vertical arrow comes from Proposition~\ref{propcanon}. The vertical isomorphism appears in Definition~\ref{def:relLimit}.
\[
\begin{tikzcd}[row sep = 1.4em]
H^j(T^*;\C) \arrow[dr,"H^j(\pi_{\cL}^\vee)"',bend right = 10] \arrow[r,"\pi^*"]
&
H^j((T^*)^f;\C)\\
&
H^{j+1}(T^*;\ov\cL).\arrow[u,"\cong"]
\end{tikzcd}
\]

Using the diagram above, the map $H^j(U;\C)\rightarrow \bH^j(E;\psi_{\ov f}\C)$ in (\ref{eqn:commCohom}) given by the composition of all the arrows in the top row is just $(r^*\otimes\C)\circ H^j(\pi_{\cL}^\vee)$. The map  $\Tors_R H^{j+1}(U;\ov\cL)\rightarrow \bH^j(E;\psi_{\ov f}\C)$ in (\ref{eqn:commCohom}) given by the composition of all the arrows in the bottom row and the vertical arrow on the right is none other that $\Phi^*\otimes\C$: recall that when we defined $\Phi^*$ in the proof of Corollary~\ref{relnlimitmhs}, we showed that by definition it is induced by $\Phi_{\Q}(\infty)$ taking cohomology and using our identifications. Further, by Lemma~\ref{surj} $\Phi_{\C}(\infty)$ and $\Phi_{\Q}(\infty)\otimes \C$ induced the same morphism in cohomology $\Tors_R H^{j+1}(U;\ov\cL\otimes \C)\to H^j(E;\psi_{\ov f}\C) $. The commutativity (up to non-zero constant) of (\ref{eqn:commCohom}) implies that for some $c\in \R\setminus\{0\}$, $c \cdot (r^*\otimes\C)\circ H^j(\pi_{\cL}^\vee)=(\Phi^*\otimes\C)\circ H^j(\pi_{\cL}^\vee)$.
The surjectivity of $H^j(\pi_{\cL}^\vee)$ implies that
$c \cdot (r^*\otimes_{\Q} \C)=(\Phi^*\otimes_{\Q} \C)$. Finally, the fact that $\C$ is fully faithful over $\Q$ implies that $c\cdot r^*=\Phi^*$ and $c$ must actually be a rational number, as desired.

To conclude the proof, we want to show that diagram (\ref{eqn:commLimit}) commutes up to a real constant. We use the de Rham resolution to identify $\ul \C_{T^*}$ with $\Omega_{T^*}^\bullet$, and similarly on $(T^*)^f$. Then, the commutativity of (\ref{eqn:commLimit}) is reduced to the commutativity of \eqref{eqn:commLimit-deRham} below. We are denoting by $\star$ the map of de Rham complexes induced by $\pi_{\cL}^\vee$, and we explain below why the dashed arrow $\omega\mapsto \frac{1}{i}\frac{df}{ f} \wedge \omega$ is indeed induced by $\pi_\cL^\vee$.

\begin{figure}[h]
\begin{equation}\label{eqn:commLimit-deRham}
{
\begin{tikzcd}[column sep = 0.5em, row sep = 1.2 em]
i_*i^{-1}\Omega_T^\bullet(\log E) 
\arrow[dr,dashrightarrow,"\Id_{\Omega_T^\bullet(\log E)}", bend left = 10,end anchor={[xshift=9ex, yshift = -0.5ex]}]
\\
\Omega_T^\bullet(\log E)\xrightarrow{\pi^{-1}\vdash \pi_*}
\arrow[u,"i^{-1}\vdash i_*",start anchor={[xshift=-5ex, yshift = -1ex]}, end anchor={[xshift=-2ex]}] 
\arrow[dd,"(\Id \to i_*i^{-1})\circ \star"',start anchor={[xshift=-5ex, yshift = 1ex]}, bend right = 15]
 \arrow[ddr,dashrightarrow," \omega\mapsto \frac{1}{i}\frac{df}{ f} \wedge \omega",near end  ,start anchor={[xshift=-12ex, yshift = 1ex]}] 
Rj_*\pi_*\Omega_{(T^*)^f}^\bullet 
\arrow[r]
&
i_*i^{-1}Rj_*\pi_*\Omega_{(T^*)^f}^\bullet 
\xleftarrow{\sim}
i_*i^{-1}\Omega^\bullet_T(\log E)[f_\infty]
\arrow[dd," \sum (f_\infty)^j \omega_j\mapsto  \frac{1}{2\pi i}\frac{df }{f}\wedge \omega_0",start anchor={[xshift=5ex, yshift = 1ex]}, bend left = 10]
\\
\\
i_*i^{-1}\Omega_T^{\bullet}(\log E)\left(\frac{1}{ i}\frac{df}{ f},\infty\right)[1]
\arrow[r,"G_\infty^{-1}"]
&
i_*i^{-1}\Omega_T^{\bullet}(\log E)\left(\frac{1}{2\pi i}\frac{df}{ f},\infty\right)[1].
\end{tikzcd}
}
\end{equation}

\end{figure}

First, a direct computation shows that the inclusion of $\Omega_T^\bullet(\log E)$ into the complex $\Omega^\bullet_T(\log E)[f_\infty]$ makes the top portion of the diagram below commute. Therefore, the composition (in the derived category) of the middle row is the inclusion $\Omega_T^\bullet(\log E)\hookrightarrow\Omega^\bullet_T(\log E)[f_\infty]$ composed with the adjunction map $\Id\to i_*i^{-1}$.

Next, consider the map $\star$ induced by $\pi_{\cL}^\vee$, which is the complexification of the map $\pi_{\cL}^\vee$ appearing in Lemma~\ref{lem:pullbacks}. By this Lemma (up to a real constant and taking the inverse limit as $m\to \infty$), $\pi_{\cL}^\vee$ corresponds to the map of real de Rham complexes $A:\cE_T^\bullet\to \cE_T^{\bullet} \left(
\Im\frac{df}{f},\infty
\right)[1]$ given by $\alpha\mapsto \Im \frac{df}{f}\wedge \alpha$. By Lemma~\ref{lem:theMapA} and its proof, the complexification of this map of real de Rham complexes induces in the logarithmic de Rham complexes the map $\omega\mapsto \frac{1}{i} \frac{df}{f}\wedge \omega:\Omega_{T^*}^\bullet(\log E)\to \Omega_{T^*}^{\bullet}(\log E)\left(\frac{1}{ i}\frac{df}{ f},\infty\right)[1]$ (these are maps in the derived category, in particular, two homotopic maps are equal, as in the proof of Lemma~\ref{lem:theMapA}).

The map $G_\infty$ is the inverse limit of the maps $G_m$ defined in the proof of Theorem~\ref{Qalexandermhs}. Namely, its inverse is given by
\[
G_\infty^{-1}(\omega\otimes s^j) = (2\pi)^{-j}\omega\otimes s^j.
\]
As in the proof of Theorem~\ref{Qalexandermhs}, a straightforward computation shows that the following diagram commutes. Recall that $\nu_{\Q}$ (resp. $\nu$) is defined in Remark~\ref{rem:nuQ} (resp. Remark~\ref{rem:nu}).
\[
\begin{tikzcd}[row sep = 1.2em]
\cL \arrow[r] \arrow[dr,"\nu_\Q"']&
\cL\otimes_{\Q} \C \otimes_R R_\infty \arrow[r,"\nu\otimes \C"]& \Omega_T^{\bullet}(\log E)\left(\frac{1}{ i}\frac{df}{ f},\infty\right) \arrow[d,"G_\infty^{-1}"]\\
&
\calK^\bullet_\infty(1\otimes f,m)\arrow[r,"\varphi_\infty"]
& \Omega_T^{\bullet}(\log E)\left(\frac{1}{ 2 \pi i}\frac{df}{ f},\infty\right)
\end{tikzcd}
\]
Using this diagram, we see that the lower left triangle in (\ref{eqn:commLimit-deRham}) commutes, since up to the resolutions above it is the following diagram:
\[
\begin{tikzcd}[row sep = 0.8 em]
\ul {\C}_{T^*} \arrow[d,"\pi_{\cL}^\vee"'] \arrow[dr,"\pi_{\cL}^\vee"]\\
\ov\cL \arrow[r,equals] &
\ov\cL.
\end{tikzcd}
\]
Finally, it follows that  (\ref{eqn:commLimit-deRham}) commutes up to real constant, by a computation on the remaining triangle.
\end{proof}

\begin{cor}\label{limit}
Let $f\colon U\rightarrow \C^*$ be a fibration. Then, the map
$$
r^*\colon \mathrm{Tors}_R\,H^{*+1}(U;\ov\calL) \rightarrow \mathbb{H}^*(E; \psi_{\bar{f}}\underline{\Q})
$$
of Definition~\ref{def:relLimit} is an isomorphism of MHS for all $*$. In other words, the MHS described in Corollary \ref{alexandermhs} coincides with the limit MHS.
\end{cor}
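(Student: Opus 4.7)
The plan is to deduce the corollary from the general statement of Theorem~\ref{thm:limitMap}, which already supplies the MHS and $R$-module structure on the map $r^*$. It then remains to show that when $f$ is a fibration, $r^*$ is in addition an isomorphism of $\Q$-vector spaces. I would inspect each of the three arrows in the composition defining $r^*$ in Definition~\ref{def:relLimit}, namely
\[
\Tors_R H^{*+1}(U;\ov\cL)\hookrightarrow H^{*+1}(T^*;\ov\cL)\cong H^{*}((T^*)^f;\Q)\xrightarrow{\cong} \mathbb{H}^*(E; \psi_{\bar{f}}\underline{\Q}),
\]
and observe that the last two are already isomorphisms in the general setup (the second by Corollary~\ref{isocohom}, the third by the nearby cycles colimit description recalled in the proof of Definition~\ref{def:relLimit}, which uses properness of $f$ and the fibration property over $D^*$). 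So everything hinges on showing the first arrow is an isomorphism under the extra fibration hypothesis.

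For this, I would use that $f_\infty\colon U^f\to \C$ is obtained as the pullback of $f$ along $\exp\colon\C\to \C^*$. Since $f$ is a fibration with fiber $F$, so is $f_\infty$, and because its base $\C$ is contractible it is fiber-homotopy trivial; in particular $U^f\simeq F$ and so $H_*(U^f;\Q)\cong H_*(F;\Q)$ is finite-dimensional. Hence $H^{*+1}(U;\ov\cL)$ is itself a torsion $R$-module, giving $\Tors_R H^{*+1}(U;\ov\cL)=H^{*+1}(U;\ov\cL)$. Shrinking $D^*$ if necessary we may assume $f\colon T^*\to D^*$ is also a fibration, and the same argument shows $(T^*)^f$ is fibered over the contractible space $\exp^{-1}(D^*)$, hence $(T^*)^f\simeq F$. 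The inclusion $(T^*)^f\hookrightarrow U^f$ is a map of fibrations over the inclusion $\exp^{-1}(D^*)\hookrightarrow \C$ of contractible spaces that restricts to the identity on each fiber, so by a standard five-lemma / Whitehead argument it is a homotopy equivalence. Under the natural identification of Corollary~\ref{isocohom}, the restriction $H^{*+1}(U;\ov\cL)\to H^{*+1}(T^*;\ov\cL)$ is compatible with the pullback $H^*(U^f;\Q)\to H^*((T^*)^f;\Q)$; since the latter is an isomorphism, so is the former.

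Combining these, the three arrows composing $r^*$ are all isomorphisms, so $r^*$ itself is a $\Q$-linear isomorphism. Since $r^*$ is already known to be a morphism of MHS by Theorem~\ref{thm:limitMap}, the strictness of MHS morphisms on Hodge and weight filtrations (\cite[Corollary 3.6]{peters2008mixed}) implies that $r^*$ is in fact an isomorphism of mixed Hodge structures, which is the desired conclusion.

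The only step that is not formal is the verification that the inclusion $(T^*)^f\hookrightarrow U^f$ is a homotopy equivalence; this is the real content of the fibration hypothesis, but it reduces to standard facts about pullback fibrations over contractible bases and is the expected place where properness alone (as in Theorem~\ref{comp}'s first statement) is insufficient to give an isomorphism rather than a mere epimorphism.
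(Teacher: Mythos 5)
Your proof is correct and follows essentially the same route as the paper: both reduce to Theorem~\ref{thm:limitMap} and then observe that the fibration hypothesis forces the first arrow in Definition~\ref{def:relLimit} to be an isomorphism, because the cohomology Alexander modules are then entirely $R$-torsion. The paper gets this in one line by noting that $T^*\hookrightarrow U$ is itself a homotopy equivalence (so restriction on local-system cohomology is already an isomorphism), whereas you verify the same fact at the level of the infinite cyclic covers $(T^*)^f\hookrightarrow U^f$ and transfer it via the naturality of Corollary~\ref{isocohom}; the two verifications are interchangeable.
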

\begin{proof}
If $f$ is a fibration, the inclusion $T^*\hookrightarrow U$ is a homotopy equivalence, so $H^{*}(U;\ov\calL)$ is $R$-torsion, and the monomorphism $\Tors_R H^{*+1}(U;\ov\cL)\hookrightarrow H^{*+1}(T^*;\ov\cL)$ in Definition~\ref{def:relLimit} is actually an isomorphism.
\end{proof}

The content of Theorem~\ref{comp} is the result of combining Theorem~\ref{thm:limitMap} and Corollary~\ref{limit} in homological notation (dualizing via Corollary~\ref{isocohom}).


	\chapter{Examples and Open Questions}\label{sec:examples}

\section{Hyperplane Arrangements}\label{sec:hyp}

Let $n\geq 2$. Let $f_1,\ldots,f_d$ be degree $1$ polynomials in $\C[x_1,\ldots,x_n]$ defining $d$ distinct hyperplanes and let $f=f_1\cdot\ldots\cdot f_d$. The zeros of $f$ define a hyperplane arrangement $\cA$ of $d$ hyperplanes in $\C^n$. Let $U\subset \C^n$ be the corresponding arrangement complement.

\begin{dfn}\label{def:rankessential}
The rank of a hyperplane arrangement $\cA$ in $\C^n$ is the maximal codimension of a non-empty intersection of some subfamily of $\cA$. We say that $\cA$ is an essential hyperplane arrangement if its rank is equal to $n$.
\end{dfn}

\begin{remk}[Reducing to the case where $\cA$ is essential]\label{rem:essential}
Suppose that $\cA$ has rank $l<n$. By \cite[Proposition 6.1]{kohnopajitnov}, there exists an affine subspace $L$ of dimension $l$ such that $U_L:=U\cap L$ (seen in $\C^l$) is an essential hyperplane arrangement complement, and the inclusion $U_L\hookrightarrow U$ is a homotopy equivalence. Let $f_L$ be the restriction of $f\colon U\rightarrow \C^*$ to $U_L$.

The functoriality of the MHS (Theorem \ref{functorial}), as well as the homotopy invariance of cohomology with local systems, ensures that the map
$$
\Tors_R H^j(U;\overline \cL)\rightarrow \Tors_R H^j(U_L;\overline{\cL_L})
$$
induced by inclusion is a MHS isomorphism between cohomology Alexander modules for all $j$, where $\cL_L$ is the local system induced by $f_L$. Therefore, the study of the MHS on $\Tors_R H^j(U;\overline \cL)$ for $\cA$ a hyperplane arrangement can be reduced to the case where $\cA$ is an essential hyperplane arrangement.
\end{remk}

\begin{remk}[The cohomology groups of the infinite cyclic cover.]\label{rem:cohomologyhyperplanes}
Let $\cA$ be an essential hyperplane arrangement in $\C^n$ defined by the zeroes of a reduced polynomial $f$. Let $U$ be the corresponding arrangement complement, and $\cL$ the local system on $U$ induced by $f\colon U\rightarrow \C^*$. By \cite[Theorem 4]{thesiseva}, we have that
$
H_j(U;\cL)$ is a torsion $R$-module for all $j<n$, a free $R$-module for $j=n$, and $0$ for $j>n$. Hence, by Proposition \ref{propcanon} and Corollary \ref{isocohom}, we have canonical isomorphisms
$$\Tors_R H^{j+1}(U;\overline{\cL})\cong H^{j}(U^{f};k)$$
for $0\leq j<n$, and
$$\Tors_R H^{j+1}(U;\overline{\cL})\cong 0$$
for $j\geq n$. We use this canonical isomorphism to endow $H^j(U^{f};k)$ with a MHS, for $0\leq j\leq n-1$. In this section, we will talk about the MHS on $H^j(U^{f};k)$ instead of the isomorphic MHS on $\Tors_R H^{j+1}(U;\overline{\cL})$, both to simplify the notation and to highlight the geometric nature of the situation.
\end{remk}

\begin{remk}[Connectivity of the fiber]\label{rem:connectedfiber}
Let $\cA$ be an essential arrangement of $d$ hyperplanes in $\C^n$ defined by the zeros of a reduced polynomial $f$ of degree $d$, with $n>1$. Then, by \cite[Theorem 2.1]{DimcaTame} (and the discussion following it), the generic fiber of $f$ is connected.
\end{remk}

By Corollary \ref{cor:semisimple}, the $t$-action on $H^1(U^f;k)$ is semisimple, so by Corollary \ref{cor:t}, we have an isomorphism of MHS $H^1(U^f;k)\cong H^1(U^f;k)_1\oplus H^1(U^f;k)_{\neq 1}$. The goal of this section is to arrive at the following result.

\begin{thm}\label{thm:hyperplanes}
Let $\cA$ be an arrangement of $d$ hyperplanes in $\C^n$ defined by the zeros of a reduced polynomial $f$ of degree $d$, for $n\geq 2$. Assume that not all the hyperplanes of $\cA$ are parallel, or equivalently, that the rank of $\cA$ is greater than or equal to $2$. Then,
\begin{enumerate}
\item $H^1(U^f;k)_1$ is a pure Hodge structure of type $(1,1)$, and has dimension $d-1$.
\item $H^1(U^f;k)_{\neq 1}$ is a pure Hodge structure of weight $1$.
\end{enumerate}
\end{thm}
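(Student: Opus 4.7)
The plan is to first reduce to the essential case and identify the MHS, then handle parts (1) and (2) separately, using the covering-space map for (1) and a cup-product/eigenspace argument for (2). By Remark~\ref{rem:essential} one may assume $\mathcal{A}$ is essential, and by Remark~\ref{rem:cohomologyhyperplanes} the identification $H^1(U^f;k) \cong \Tors_R H^2(U;\ov\cL)$ transports the MHS of Corollary~\ref{alexandermhs}. The semisimplicity of the $t$-action (Corollary~\ref{cor:semisimple}) together with Corollary~\ref{cor:t} gives a splitting of MHS $H^1(U^f;k) = H^1(U^f;k)_1 \oplus H^1(U^f;k)_{\neq 1}$, and Theorem~\ref{thm:boundedWeights} forces the weights of $H^1(U^f;k)$ into $\{1,2\}$.

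For part (1), the plan is to apply Theorem~\ref{thm:geomIsMHS} together with Proposition~\ref{prop:kerim} in the case $N=1$: the covering-space map induces a MHS morphism $H^1(\pi)\colon H^1(U;k) \to H^1(U^f;k)$ whose image, by semisimplicity, is exactly the $(t-1)$-torsion, i.e.\ $H^1(U^f;k)_1$. The Wang (Milnor) long exact sequence for the infinite cyclic cover identifies the kernel with $f^*(H^1(\mathbb{C}^*;k))$, a one-dimensional sub-MHS. Since $H^1(U;k)$ is pure of type $(1,1)$ of dimension $d$ by the Orlik-Solomon theorem, the quotient $H^1(U^f;k)_1$ is pure of type $(1,1)$ of dimension $d-1$.

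For part (2), the weight bounds reduce the task to showing $H^1(U^f;k)_{\neq 1}$ has no weight-$2$ component. The plan is to pick $N$ so that $t^N = \mathrm{id}$ on $H^1(U^f;k)$ (possible by semisimplicity) and use Corollary~\ref{cor:cup} to realize $H^1(U^f;k)$ as the image, both as a MHS and as an $R$-module, of the cup product $\cup\,\alpha\colon H^1(U_N;k)\to H^2(U_N;k)(1)$ for $\alpha = f_N^*(\gen)$. A short direct check shows $\alpha$ is invariant under the deck transformation $t$ on $U_N$: since $f_N\circ t = \zeta_N\cdot f_N$ for a primitive $N$-th root of unity $\zeta_N$, and multiplication by $\zeta_N$ on $\mathbb{C}^*$ acts as the identity on $H^1(\mathbb{C}^*;k)$, one gets $t^*\alpha = \alpha$. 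Because $t$ acts algebraically on $U_N$, each eigenspace $H^1(U_N;k)_\zeta$ is a sub-MHS of $H^1(U_N;k)$, $\cup\,\alpha$ is $t$-equivariant, and the eigenspace decomposition identifies $H^1(U^f;k)_\zeta$ as a MHS quotient of $H^1(U_N;k)_\zeta$.

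The main step, and the main obstacle, is then proving that $H^1(U_N;k)_\zeta$ is pure of weight $1$ for every $\zeta\neq 1$. The plan is to use the standard description (via the Gysin/residue exact sequence on any good compactification of $U_N$) that for a smooth algebraic variety $Y$, the space $\Gr^W_2 H^1(Y;k)$ is spanned by classes $d\log g$ with $g\in\Gamma(Y;\mathcal{O}_Y^*)$. If $t^*g = c\,g$ for some $c\in\mathbb{C}^*$, then $t^*(d\log g) = d\log(cg) = d\log g$, so every weight-$2$ class on $U_N$ is $t$-invariant; hence $\Gr^W_2 H^1(U_N;k) \subseteq H^1(U_N;k)_1$. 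Projecting an eigenvector for $\zeta\neq 1$ to $\Gr^W_2$ then gives zero, so $H^1(U_N;k)_\zeta \subseteq W_1 H^1(U_N;k) = \Gr^W_1 H^1(U_N;k)$, which is pure of weight $1$ since $W_0 H^1$ vanishes on any smooth variety. Each $H^1(U^f;k)_\zeta$ with $\zeta\neq 1$ is therefore a quotient of a pure weight-$1$ MHS, hence itself pure of weight $1$, and summing over $\zeta\neq 1$ yields (2). The delicate point will be cleanly invoking the $d\log$-description of $\Gr^W_2$, which requires tracing the weight filtration on $H^1$ of the smooth affine variety $U_N$ through the Gysin sequence of a good compactification.
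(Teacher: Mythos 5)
Your reduction to the essential case, the splitting via semisimplicity, and your part (1) are essentially the paper's argument: the paper likewise uses Proposition~\ref{prop:kerim} to see that $H^1(U;k)\to H^1(U^f;k)_1$ is a surjective MHS morphism from a pure type-$(1,1)$ structure, and gets $\dim = d-1$ from the Milnor sequence (it counts in homology, you count the kernel as $f^*H^1(\C^*;k)$ via the cohomological Wang sequence; both are fine since $H^1(U^f;k)$ is finite dimensional here).

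Part (2), however, has a genuine gap. Your key step is that $\Gr^W_2H^1(U_N;k)$ is spanned by classes $d\log g$ with $g\in\Gamma(U_N;\calO^*)$, and that each such $g$ satisfies $t^*g=cg$. Neither claim is justified. The correct general description, via the Gysin sequence of a good compactification $\ov{U}_N$ with boundary $D=\bigcup D_i$, is $\Gr^W_2H^1(U_N;\Q)\cong\ker\bigl(\bigoplus_i \Q[D_i](-1)\to H^2(\ov{U}_N;\Q)\bigr)$; the $d\log$ classes of global units only account for the \emph{principal} (or torsion in $\mathrm{Pic}^0(\ov{U}_N)$) boundary divisors, not all homologically trivial ones. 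For instance, if $Y=E\setminus\{p,q\}$ with $E$ an elliptic curve and $[p]-[q]$ non-torsion, then $\Gamma(Y;\calO^*)=\C^*$ while $\Gr^W_2H^1(Y;\Q)\neq 0$, so the $d\log$ classes span nothing. Moreover, even for units that do exist, there is no reason an arbitrary $g$ is a semi-invariant of the deck action: the deck group acts on the unit lattice $\Gamma(U_N;\calO^*)/\C^*$ (and on the boundary components $D_i$) by a permutation-type action, and triviality of the induced action on $\ker(\mathrm{Gysin})$ is exactly the statement you are trying to prove — for central arrangements it is the nontrivial assertion $\Gr^W_2H^1(F;k)_{\neq1}=0$ of Theorem~\ref{thm:central}(2), not a formal consequence of functoriality. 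The paper sidesteps this entirely: after a Lefschetz reduction to an essential line arrangement in $\C^2$, it computes $\dim\Gr^W_2H^1(F;k)=d-1$ for a generic fiber $F$ by counting branches at infinity of the pencil (Lemma~\ref{lem:genFiberLines}), and then uses the MHS injection $H^1(U^f;k)\hookrightarrow H^1(F;k)$ of Corollary~\ref{cor:fiber}: since $H^1(U^f;k)_1$ already contributes a $(d-1)$-dimensional weight-$2$ piece, there is no room left for weight $2$ in $H^1(U^f;k)_{\neq1}$, which together with Theorem~\ref{thm:boundedWeights} gives purity of weight $1$. If you want to keep your cup-product framework (Corollary~\ref{cor:cup}), you would still need an independent proof that the deck action is trivial on $\Gr^W_2H^1(U_N;k)$, which your present argument does not supply.
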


\begin{remk}[The Alexander polynomial of an essential line arrangement]\label{rem:alexpoly}
The first Alexander polynomial $\Delta_1(t)$ of an essential line arrangement is defined as the order of the torsion $R$-module $H_1(U;\cL)\cong H_1(U^f;k)$, or equivalently, as a generator of the $0$-th Fitting Ideal of the $R$-module $H_1(U;\cL)$. Hence, it is well defined up to multiplication by a unit of $R$. Since $H_1(U;\cL)$ is semisimple, $\Delta_1(t)$ determines the $R$-module structure of both $H_1(U;\cL)$ and $H^1(U^f;k)$, its dual as a vector space. 

Note that Theorem \ref{thm:hyperplanes} shows that the first Alexander polynomial of an essential line arrangement complement completely determines the Hodge numbers of $H^1(U^f;k)$.
\end{remk}

If $\cA$ is a \textit{central} hyperplane arrangement ($f$ is a homogeneous polynomial), $f$ determines a global Milnor fibration with fiber $F$, so $H^{j}(U^f;k)\cong H^j(F;k)$ for all $j$. In particular, $H^{j}(U^f;k)$ is a finite dimensional vector space for all $j$, so by Remarks \ref{rem:essential} and \ref{rem:cohomologyhyperplanes}, $H^j(U^f;k)=0$ for all $j\geq \text{ rank }\cA$. Hence, Corollary \ref{cor:quasihom} and Remark \ref{rem:cohomologyhyperplanes} tell us that the isomorphism $H^{j}(U^f;k)\cong H^j(F;k)$ is a MHS isomorphism for all $j$. Thus, Theorem \ref{thm:hyperplanes} is a direct generalization of parts (1) (in the case $j=1$) and (3) of the following result regarding \textit{central} hyperplane arrangements, which can be found in \cite[Theorem 7.7]{dimca2017hyperplanes}. Note that the last assertion in the result below follows from the second to last one.

\begin{thm}[\cite{dimca2017hyperplanes}, Theorem 7.7]\label{thm:central}
Let $\cA$ be a central hyperplane arrangement in $\C^{n}$ defined by a homogeneous reduced polynomial $f$. Let $F$ denote
its (global) Milnor fiber, given by the equation $f=1$. 
\begin{enumerate}
\item $H^j(F;k)_1$ is a pure Hodge structure of type $(j,j)$ for any $j\leq n-1$.
\item $\Gr_{2j}^W H^j(F,k)_{\neq 1}=0$ for any $j\leq n-1$. 
\item $H^1(F;k)_{\neq 1}$ is a pure Hodge structure of weight $1$.
\end{enumerate}
\end{thm}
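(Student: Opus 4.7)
The plan is to combine semisimplicity of the $t$-action on $H^1(U^f;k)$ (Corollary~\ref{cor:semisimple}) with Corollary~\ref{cor:surj} (which controls the $t=1$ piece via $U$) and Corollary~\ref{cor:fiber} (which bounds the weight-$2$ part via the generic fiber). By Corollary~\ref{cor:t}, semisimplicity upgrades the $t$-eigenspace decomposition to a MHS decomposition $H^1(U^f;k)=H^1(U^f;k)_1\oplus H^1(U^f;k)_{\neq 1}$, so the two parts of the theorem may be attacked independently.

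For part (1), Corollary~\ref{cor:surj} with $N=1$ produces a surjective MHS morphism $H^1(U;k)\twoheadrightarrow H^1(U^f;k)_1$. Since for any hyperplane arrangement complement $H^1(U;k)$ is pure of type $(1,1)$ and of dimension $d$ (Brieskorn/Orlik--Solomon, realized by the logarithmic forms $df_i/f_i$), the quotient $H^1(U^f;k)_1$ inherits the purity. To pin down its dimension I will use the Cartan--Leray short exact sequence $0\to H_1(U^f;k)_{\Z}\to H_1(U;k)\to H_0(U^f;k)^{\Z}\to 0$ of the $\Z$-cover $U^f\to U$. Connectedness of $U^f$ (from surjectivity of $f_*$) and triviality of $t$ on $H_0$ give $H_0(U^f;k)^{\Z}=k$, and the connecting map is $f_*\colon H_1(U;k)\to H_1(\C^*;k)=k$, which is surjective because every meridian maps to $1$. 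Hence $\dim H_1(U^f;k)_{\Z}=d-1$; semisimplicity identifies these coinvariants with the $t=1$ eigenspace $H_1(U^f;k)_1$, and dualization gives $\dim H^1(U^f;k)_1=d-1$.

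For part (2), Theorem~\ref{thm:boundedWeights} confines the weights of $H^1(U^f;k)$ to $[1,2]$, so it suffices to show $\Gr^W_2 H^1(U^f;k)_{\neq 1}=0$. My plan is first to reduce to $n=2$ via Zariski--Lefschetz: for a generic $2$-plane $L\subset\C^n$, $\cA\cap L$ is a rank-$2$ arrangement of $d$ lines and the inclusion $U\cap L\hookrightarrow U$ is a $\pi_1$-isomorphism compatible with $f$, so the induced map on first Alexander modules is an $R$-linear isomorphism (both sides compute $H_1$ of the same kernel of $f_*$ with its $\Z$-conjugation action). By functoriality (Theorem~\ref{functorial}) the induced map $H^1(U^f;k)\to H^1((U\cap L)^{f_L};k)$ is a MHS morphism, and any MHS morphism which is a vector-space isomorphism is automatically a MHS isomorphism by strict compatibility with both filtrations. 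So we may assume $n=2$. Then Corollary~\ref{cor:fiber} yields an injective MHS morphism $H^1(U^f;k)\hookrightarrow H^1(F;k)$ where $F=f^{-1}(c)$ is a generic fiber, a smooth irreducible affine curve for generic $c$. Let $\tilde F$ be a desingularization of the projective closure $\bar F\subset\C P^2$ (needed only at infinity, where parallels produce nodes) with boundary $\tilde D=\tilde F\setminus F$: the standard exact sequence for smooth affine curves identifies $\Gr^W_2 H^1(F;k)$ with $\ker\!\big(H^0(\tilde D;k)(-1)\xrightarrow{\mathrm{Gysin}}H^2(\tilde F;k)\big)$. Since $\bar F$ has degree $d$, Bezout bounds the total number of branches of $\bar F$ along the line at infinity by $d$, so $|\tilde D|\le d$, and because $\tilde F$ is connected the Gysin map has rank $1$, giving $\dim\Gr^W_2 H^1(F;k)\le d-1$.

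Assembling: strictness gives an injection $\Gr^W_2 H^1(U^f;k)\hookrightarrow\Gr^W_2 H^1(F;k)$ and thus $\dim\Gr^W_2 H^1(U^f;k)\le d-1$; on the other hand, $H^1(U^f;k)_1$, being pure of weight $2$ of dimension $d-1$ by part (1), injects into $\Gr^W_2 H^1(U^f;k)$. Both inequalities must be equalities, so $\Gr^W_2 H^1(U^f;k)=H^1(U^f;k)_1$. The MHS direct summand $H^1(U^f;k)_{\neq 1}$ then has vanishing weight-$2$ graded piece, and combined with weights $\ge 1$ from Theorem~\ref{thm:boundedWeights}, $H^1(U^f;k)_{\neq 1}$ is pure of weight $1$. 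The main technical obstacle I anticipate is the MHS Lefschetz reduction, which rests on the elementary observation that strictness turns any MHS morphism that is a vector-space bijection into a MHS isomorphism, together with verifying genericity of the $2$-plane $L$ so that $\cA\cap L$ consists of exactly $d$ distinct lines of rank $2$ and $(f_L)_*$ remains an epimorphism onto $\Z$.
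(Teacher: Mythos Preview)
The paper does not prove this statement: Theorem~\ref{thm:central} is quoted verbatim from \cite[Theorem~7.7]{dimca2017hyperplanes} as motivation for the paper's own Theorem~\ref{thm:hyperplanes}, and no argument is given. Your proposal does not prove Theorem~\ref{thm:central} either. What you have actually written is a proof of Theorem~\ref{thm:hyperplanes}: you work with an arbitrary (not central) arrangement, you treat only cohomological degree $j=1$, and you never invoke the identification $H^*(U^f;k)\cong H^*(F;k)$ of Corollary~\ref{cor:quasihom} that would connect your $U^f$ statements to the Milnor fiber $F=\{f=1\}$. Parts (1) and (2) of Theorem~\ref{thm:central} for $j>1$ are simply not addressed, and your only semisimplicity input (Corollary~\ref{cor:semisimple}) is specific to $j=1$, so the strategy as written cannot reach those cases.

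Viewed as a proof of Theorem~\ref{thm:hyperplanes}, your argument is essentially the paper's: reduction to a line arrangement, surjectivity of $H^1(U;k)\to H^1(U^f;k)_1$ plus purity of $H^1(U;k)$, the Milnor sequence for the dimension count, and the embedding into $H^1$ of the generic fiber together with a bound on $\dim\Gr^W_2 H^1(F;k)$. Two small differences: the surjection onto $H^1(U^f;k)_1$ is Proposition~\ref{prop:kerim} with $N=1$, not Corollary~\ref{cor:surj} (the latter, as stated, requires $t^N$ unipotent, which for $N=1$ would force $t=\Id$); and the paper proves $\dim\Gr^W_2 H^1(F;k)=d-1$ exactly via an explicit branch count at infinity (Lemma~\ref{lem:genFiberLines}), whereas you only bound it above by $d-1$ via B\'ezout---either suffices for the conclusion.
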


Before we prove Theorem \ref{thm:hyperplanes}, we need the following lemma regarding the MHS on the generic fiber of $f$.

\begin{lem}\label{lem:genFiberLines}
Let $\cA$ be an essential line arrangement in $\C^2$, given by the zeros of a reduced polynomial $f$ of degree $d$. Let $c\in \C$ be generic, and let $F = f^{-1}(c)\subset \C^2$. Then
\[
\dim \Gr^W_2 H^1(F;k) = d-1.
\]
\end{lem}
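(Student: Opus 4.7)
For generic $c$, the curve $F = f^{-1}(c)$ is a smooth affine curve in $\C^2$ (by Verdier's generic fibration theorem, since only finitely many values of $c$ are critical) and is connected by Remark~\ref{rem:connectedfiber}. My plan is to compute $\dim \Gr^W_2 H^1(F;k)$ by exhibiting the smooth projective completion $\tilde F$ of $F$ and counting the added points. I would use the standard Hodge-theoretic computation that for a smooth connected affine curve with smooth projective closure $\tilde F = F \sqcup \{q_1,\ldots,q_r\}$ one has $\dim_k \Gr^W_2 H^1(F;k) = r-1$; this follows from the excision long exact sequence for $(\tilde F,\{q_i\})$ together with Poincar\'e duality, which yields $\Gr^W_0 H^1_c(F;k) \cong k^{r-1}$ and hence $\Gr^W_2 H^1(F;k) \cong (\Gr^W_0 H^1_c(F;k))^{\vee_k}$. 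Thus it suffices to prove $r = d$.

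To compute $r$, I would consider the projective closure $\bar F \subset \bP^2$ defined by $\tilde f(X,Y,Z) = cZ^d$, where $\tilde f$ is the homogenization of $f$. This is a plane curve of degree $d$, smooth over the affine chart $\{Z \ne 0\}$ for generic $c$, and by B\'ezout it meets the line at infinity $L_\infty = \{Z=0\}$ in $d$ points counted with multiplicity. The intersection points $p_1,\ldots,p_s \in L_\infty$ correspond to the ``directions at infinity'' of $\cA$: the multiplicity $m_i$ at $p_i$ equals the number of lines of $\cA$ passing through $p_i$ (i.e., the size of a parallelism class), and $\sum_i m_i = d$. Since $\cA$ is essential, not all lines are parallel, so $m_i < d$ for every $i$.

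The key step (and the main obstacle) is the local branch analysis at each $p_i$: I claim $\bar F$ has exactly $m_i$ smooth branches at $p_i$, each transverse to $L_\infty$. In local analytic coordinates $(u,v)$ at $p_i$ with $L_\infty = \{v=0\}$, the equation of $\bar F$ takes the form
\[
\prod_{k \in S_i}(u - \beta_k v) \,=\, v^d\cdot h(u,v),
\]
where $S_i$ indexes the lines through $p_i$, the $\beta_k$ are pairwise distinct (the lines are distinct), and $h$ is analytic with $h(0,0) \neq 0$ (this uses $c \neq 0$). Setting $w = u/v$ and dividing both sides by $v^{m_i}$ gives
\[
\prod_{k \in S_i}(w - \beta_k) \,=\, v^{d-m_i}\cdot h(wv,v).
\]
Since $d - m_i \geq 1$ and the roots $w = \beta_k$ are simple, the implicit function theorem produces, at each root, a smooth branch of $\bar F$ parametrized by $v$; in particular the $m_i$ branches at $p_i$ are smooth and transverse to $L_\infty$. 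Hence the normalization $\tilde F \to \bar F$ has exactly $m_i$ preimages over $p_i$, and the total number of points added to $F$ to obtain $\tilde F$ is $r = \sum_i m_i = d$. Combined with the first reduction, this yields $\dim_k \Gr^W_2 H^1(F;k) = d - 1$.
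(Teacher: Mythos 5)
Your proof is correct and follows essentially the same route as the paper: reduce to the standard fact that for a smooth connected affine curve $\dim \Gr^W_2 H^1 = (\#\text{points at infinity of the normalization}) - 1$, and then show the projective closure of a generic fiber has an ordinary singularity with exactly $m_i$ (smooth) branches at each point at infinity, so that $r = \sum_i m_i = d$. The only cosmetic differences are that you justify the first step via $H^1_c$ and Poincar\'e duality (the paper counts $2g + \#p - 1$ directly, with $W_1$ the image of $H^1(\wt F)$), and you verify the branch count by an explicit blow-up/implicit-function-theorem computation where the paper invokes the pencil generated by $\{f=0\}$ and $d\cdot L_\infty$; both hinge on the same inequality $m_i < d$ coming from essentiality.
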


\begin{proof}
By generic smoothness and Remark \ref{rem:connectedfiber}, $F$ is a smooth connected curve, whose genus we will denote $g$. Let $\ov F$ be its closure in $\C P^2$, and let $\wt F$ be the normalization of $\ov F$. By \cite[Corollaire 3.2.15 and Corollaire 3.2.17]{De2}, the mixed Hodge structure on $H^1(F;k)$ has $W_0H^1(F;k)=0$, $W_2H^1(F;k) = H^1(F;k)$ and $W_1H^1(F;k)$ is the image of $H^1(\wt F;k)$. The latter has dimension $2g$, and the map $H^1(\wt F;k)\to H^1(F;k)$ is injective, since $F$ is a punctured genus $g$ orientable surface. Let $\#p$ be the number of punctures. Then:
\[
\dim \Gr^W_2 H^1(F;k) = \dim \frac{ H^1(F;k)}{W_1 H^1(F;k)} = (2g + \#p -1) - 2g = \# p-1.
\]
So all we need to show is that $\#p = d$. Take the set of points at infinity $\{p_i\}\coloneqq \ov{f^{-1}(0)}\setminus \C^2$. Let $r_i$ be the number of lines of the arrangement passing through $p_i$. Locally around $p_i$, the closure of $\{f=0\}$ has an ordinary singularity of multiplicity $r_i$. The fibers $\ov F$ are the curves in the pencil generated by $\{f=0\}$ and $d\cdot L_\infty$, where $L_\infty$ denotes the line at infinity. Since the multiplicity of $d\cdot L_\infty$ at $p_i$ is $d>r_i$ (because the arrangement is essential), all the fibers $\ov F$ have ordinary singularities of multiplicity $r_i$ at $p_i$. This means that $\wt F$ has $r_i$ many branches over $p_i$, and this is all we need: $\#p = \sum_i r_i = d$.
\end{proof}

Now, we can finally prove Theorem \ref{thm:hyperplanes}.

\begin{proof}[Proof of Theorem \ref{thm:hyperplanes}]
This result deals with the MHS on $H^1(U^f;k)\cong (H_1(U;\cL))^{\vee_k}$. In light of Remark \ref{rem:essential}, we see that to study $H^1(U^f;k)$, it suffices to consider the case in which $\cA$ is an essential hyperplane arrangement. After intersecting with enough generic hyperplanes, we can and will assume in this proof that $\cA$ is an essential line arrangement in $\C^2$, by a Lefschetz type argument.

Let us start by proving that $H^1(U^f;k)_1$ is a pure Hodge structure of type $(1,1)$. By Proposition \ref{prop:kerim}, the map $H^1(U;k)\rightarrow H^{2}(U;\ov\cL)\cong H^1(U^f;k)$ induced by the covering space map $\pi\colon U^f\rightarrow U$ is surjective onto the $(t-1)$-torsion of $H^1(U^f;k)$. Since the $t$-action on $H^1(U^f;k)$ is semisimple, we get that $H^1(U;k)\rightarrow H^1(U^f;k)_1$ is a surjective MHS morphism, and the purity result follows from the fact that $H^j(U;k)$ is a pure Hodge structure of type $(j,j)$ for all $U$ affine hyperplane arrangement complement, by \cite{shapiro}.

Now, we prove that $\dim_k H^1(U^f;k)_1=\dim_k H_1(U^f;k)_1= d-1$. We start with the Milnor long exact sequence (already discussed in Proposition \ref{prop:kerim})
$$\cdots \to H_1(U^f;k) \overset{t-1}{\to} H_1(U^f;k) \to H_1(U;k) \overset{\partial}{\to} H_0(U^f;k) \overset{t-1}\to H_0(U^f;k) \to H_0(U;k) \to 0.$$
Since $U^f$ and $U$ are connected, $\dim_k H_0(U^f;k)=1=\dim_k H_0(U;k)$, so the homomorphism $H_0(U^f;k) \overset{t-1}\to H_0(U^f;k)$ is the zero map.
Also, since $H_1(U^f;k)$ is a semisimple $R$-module, then $H_1(U^f;k)/(t-1) H_1(U^f;k)\cong H_1(U^f;k)_1$. Hence, the Milnor long exact sequence gives us the short exact sequence
$$
0\to H_1(U^f;k)_1 \to H_1(U;k) \to H_0(U^f;k)\to 0.
$$
Since $\dim_k H_1(U;k)=d$, this finishes our proof of the equality 
\begin{center}$\dim_k H^1(U^f;k)_1=\dim_k H_1(U^f;k)_1= d-1$.\end{center}

Recall that, by Theorem \ref{thm:boundedWeights}, $\Gr_j^W H^1(U^f;k)=0$ for all $j\neq 1,2$. Let $F\subset U$ be a generic fiber of $f$. By Corollary \ref{cor:fiber}, the map
$$
H^1(U^f;k)\hookrightarrow H^1(F;k)
$$
induced by inclusion is a morphism of MHS. We know $H^1(U^f;k)_1$ is pure Hodge structure of weight $2$ and dimension $d-1$. By Lemma \ref{lem:genFiberLines} and the inclusion above, we get that $\Gr_2^W H^1(U^f;k)_{\neq 1}=0$, concluding our proof.
\end{proof}

In light of Theorem \ref{thm:hyperplanes} and Remark \ref{rem:alexpoly}, one might wonder in which cases the MHS of $H^1(U^f;k)$ is pure. If $d>1$, this amounts to $H^1(U^f;k)_{\neq 1}=0$, or equivalently, $\Delta_1=(t-1)^{d-1}$, where $\Delta_1$ is the first Alexander polynomial of the line arrangement complement. One can find sufficient conditions for $\Delta_1=(t-1)^{d-1}$ in \cite[Theorem 6]{thesiseva}, for example, which translated to the notation of this paper reads as follows.

\begin{prop}[\cite{thesiseva}, Theorem 6]\label{prop:purity}
Let $\cA=\{L_1,\ldots,L_d\}$ be an essential line arrangement of $d$ lines in $\C^2$, and, after reordering, let $\cB=\{L_1,\ldots,L_l\}$ be the set of lines in $\cA$ such that for each line in $\cB$ no other line in
$\cA$ is parallel to it, where $0\leq l\leq d$. Suppose that $\cB\neq\emptyset$. If for every $m>2$, there exists a line in $\cB$ with no points of multiplicity divisible by $m$, then $\Delta_1$ is a power of $t-1$, or equivalently,
$$
H^1(U^f;k)=H^1(U^f;k)_1\cong k^{d-1}
$$
is pure of type $(1,1)$.

In particular, if there exists a line in $\cB$ with only double points, the hypotheses of this proposition are satisfied.
\end{prop}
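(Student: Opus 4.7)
The plan is to combine the Hodge-theoretic decomposition supplied by Theorem \ref{thm:hyperplanes} with a local monodromy analysis along a well-chosen line of $\cB$. By Theorem \ref{thm:hyperplanes}(1), $H^1(U^f;k)_1$ is already pure of type $(1,1)$ and of dimension $d-1$, so in view of Remark \ref{rem:alexpoly} the assertion $\Delta_1(t)=(t-1)^{d-1}$ is equivalent to showing that $H^1(U^f;k)_{\neq 1}=0$. Thus it suffices to prove that no primitive $m$-th root of unity $\zeta_m$ with $m\geq 2$ can occur as an eigenvalue of the deck transformation $t$ acting on $H^1(U^f;k)$.

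Suppose toward a contradiction that such a $\zeta_m$ is an eigenvalue. For $m=2$, the argument should go through directly from Theorem~\ref{thm:hyperplanes}(2) together with the fact that a line $L\in\cB$ meets every other line of $\cA$ transversally in $\C^2$; the case $m>2$ is where the main hypothesis is used. Pick $L\in\cB$ whose intersection points with the rest of $\cA$ all have multiplicities not divisible by $m$. Passing to the $m$-fold cyclic cover $p\colon U_m\to U$ as in Section~\ref{sscover}, Corollary \ref{cor:surj} and Lemma \ref{233} identify the $\zeta_m$-isotypic component of $H^1(U^f;k)$ with the corresponding isotypic summand of $H^1(U_m;k)$ on which the deck group of $p$ acts by multiplication by $\zeta_m$.

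The core step is to localize this isotypic summand along the line $L$. Using the Mayer--Vietoris decomposition of $U$ coming from a small tubular neighborhood $T_L$ of $L\cap U$ and its complement, pulled back to $U_m$, and combining with Corollary \ref{cor:fiber} applied to a generic fiber of $f$, I would extract a local-to-global injection of the $\zeta_m$-eigenspace of $H^1(U^f;k)$ into a direct sum indexed by the singular points of $L$ in the projective closure $\bar{\cA}\subset\bP^2$ (the intersection points of $L$ with the other lines of $\cA$, together with the single point of $L$ at infinity, which is disjoint from $\bar{\cA}\setminus\{L\}$ because $L\in\cB$). Each local summand is, via the comparison with the limit mixed Hodge structure provided by Theorem \ref{comp}, a subquotient of the nearby-cycle cohomology at the corresponding singularity, and for such ordinary multiple points of multiplicity $r$ the possible eigenvalues of local monodromy are $r$-th roots of unity. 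By the hypothesis on $L$, no $r$ occurring along $L$ is divisible by $m$, so the eigenvalue $\zeta_m$ cannot survive in any local summand, forcing the isotypic component to vanish and yielding the desired contradiction.

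The principal obstacle I anticipate is making the local-to-global injection precise in the Hodge-theoretic setting of this paper. The Mayer--Vietoris and Wang-type sequences are easy to write topologically, but showing that the resulting maps are compatible with the mixed Hodge structure on $\Tors_R H^2(U;\ov\cL)$ constructed in Section \ref{mhsal} (equivalently on $H^1(U^f;k)$), and in particular that the comparison with local nearby-cycle Hodge structures is functorial, will require care. The point of $L$ at infinity is the most delicate: one has to argue, either by a Lefschetz-type pencil on $\bP^2$ or by extending the good compactification of Section \ref{rellim} to accommodate $L$, that no extra eigenvalue contribution arises from the compactification divisor there. Once this framework is set up, the combinatorial hypothesis on $L\in\cB$ plugs in cleanly to kill every local $\zeta_m$-eigenspace.
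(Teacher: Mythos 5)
The paper does not actually prove this proposition: it is imported from \cite[Theorem 6]{thesiseva}, and the only content supplied in the paper is the translation of ``$\Delta_1$ is a power of $t-1$'' into the purity statement, via Theorem \ref{thm:hyperplanes} and Remark \ref{rem:alexpoly}. Your opening reduction (it suffices to show $H^1(U^f;k)_{\neq 1}=0$, i.e.\ that no root of unity $\neq 1$ occurs as an eigenvalue) reproduces exactly that translation; everything after it would have to be a self-contained proof of the cited divisibility-type result, and what you give is a plan whose decisive steps are missing.

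Two concrete gaps. First, the case $m=2$: Theorem \ref{thm:hyperplanes}(2) cannot rule out the eigenvalue $-1$, since weight-$1$ purity of $H^1(U^f;k)_{\neq 1}$ holds in complete generality (rank $\geq 2$) and is compatible with $-1$ occurring --- e.g.\ for the central arrangement $x^4=y^4$ one has $\Delta_1=(t^4-1)^2(t-1)$, so $-1$ is an eigenvalue while the $\neq 1$ part is pure of weight $1$ (Theorem \ref{thm:central}(3)). Moreover, membership of $L$ in $\cB$ only constrains the point of $L$ at infinity (no parallels), not the multiplicities of the affine points on $L$, so ``$L$ meets every other line transversally'' carries no force: $L$ may pass through points of high multiplicity. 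Since the hypothesis supplies a suitable line only for each $m>2$, your scheme of killing the $\zeta_m$-eigenspace locally along one line does not close for $\zeta_2=-1$: for instance, a point of multiplicity $6$ on the line chosen for $m=4$ already contributes $-1$ among its local monodromy eigenvalues, so some additional idea (a sharper divisibility statement, or a different mechanism, as in \cite{thesiseva}) is required and is not indicated. Second, the local-to-global injection of the $\zeta_m$-eigenspace of $H^1(U^f;k)$ into a sum of local contributions along a single line, with control at the point at infinity, is precisely the substance of the Libgober-type divisibility theorem that \cite{thesiseva} proves; you list it as an anticipated obstacle rather than proving it, and routing it through nearby cycles and the mixed Hodge comparisons of Theorem \ref{comp} and Corollary \ref{cor:fiber} adds unnecessary difficulty, since the statement to be established is purely module-theoretic (which roots of unity divide $\Delta_1$), with the Hodge-theoretic reformulation already handled by Theorem \ref{thm:hyperplanes}. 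As it stands, the proposal is a strategy outline with a step that fails at $m=2$ and its central step unproven.
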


By Proposition \ref{prop:purity}, we know that there are many examples of essential line arrangements such that $H^1(U^f;k)_{\neq 1}=0$ and $H^1(U^f;k)$ is pure of type $(1,1)$. Here are some examples illustrating Theorem \ref{thm:hyperplanes} and Remark \ref{rem:alexpoly}, in which the MHS is not pure.

\begin{ex} Consider a central line arrangement of $d$ lines, i.e., defined by the equation $x^d=y^d$. A simple application of the Thom-Sebastiani theorem yields that the Alexander polynomial of the complement is 
$\prod_{\alpha,\beta}(t-\alpha \beta)=(t^d-1)^{d-2}(t-1)$, where the product runs over $n$th roots of unity $\alpha, \beta$, with $\alpha\neq 1, \beta \neq 1$. Hence, the non-zero Hodge numbers of $H^1(U^f;k)$ are $h^{1,1}=d-1$, $h^{0,1}=h^{1,0}=\frac{(d-1)(d-2)}{2}$.

By Corollary \ref{cor:quasihom}, we have an isomorphism of MHS $H^1(U^f;k)\cong H^1(F;k)$, where $F$ is the global Milnor fiber of the homogeneous polynomial $f$. If $d=3$, the MHS (not just the Hodge numbers) on $H^1(U^f;k)$ is determined as follows. The closure $\ov F$ of $F$ in $\C P^2$ is the elliptic curve whose $j$-invariant is $0$, and $F$ is $\ov F$ with three points removed. Following the proof of Lemma \ref{lem:genFiberLines}, we have the MHS isomorphism $H^1(U^f;k)_{\neq 1}\cong H^1(\ov F;k)$.
\end{ex}

\begin{ex}[A non-central line arrangement with nontrivial $H^1(U^f;k)_{\neq 1}$]
Let $\cA$ be the line arrangement defined by the zeros of $f(x,y)=x(x-1)y(y-1)(x+y-1)$.
$$
\begin{tikzpicture}
\draw (-1,1.5) -- (-1,-1.5);
\draw (1,1.5) -- (1,-1.5);
\draw (-1.5,1.5) -- (1.5,-1.5);
\draw (-1.5,1) -- (1.5,1);
\draw (-1.5,-1) -- (1.5,-1);
\end{tikzpicture}$$

The Alexander polynomial is $(t-1)^4(t^2+t+1)$. Hence, the non-zero Hodge numbers of $H^1(U^f;k)$ are $h^{1,1}=4$, $h^{0,1}=h^{1,0}=1$.
\end{ex}

We end this section with several open questions regarding the mixed Hodge structure on Alexander modules for hyperplane arrangement complements.

\begin{open}\label{q:semisimpleHyperplane}
Is $H^j(U^f;k)$ a semisimple $R$-module for $j>1$?
\end{open}

A positive answer to this question would yield a MHS isomorphism $$H^j(U^f;k) \cong H^j(U^f;k)_1 \oplus H^j(U^f;k)_{\neq 1}.$$ However, this consequence has subsequently been proved in \cite{tsemisimple}, independently of the question above. In particular, $H^j(U^f;k)_1$ is semisimple (see \cite{BudurLiuWang}) and pure of type $(j,j)$, as in part (1) of Theorem \ref{thm:central}, and the proof is the same as the $j=1$ case in Theorem \ref{thm:hyperplanes} (it is made explicit in \cite{tsemisimple}). A related question is whether the generalization of part (2) of Theorem \ref{thm:central} holds, namely,

\begin{open}
Is $\Gr_{2j}^W H^j(U^f;k)_{\neq 1}=0$?
\end{open}

If this question had a positive answer for $j=2$, it would follow that $H^2(U^f;k)$ is a semisimple $R$-module by a similar reasoning to the one in Corollary~\ref{cor:jordan} (see \cite{tsemisimple} for an explicit proof). Conversely, if these modules were known to be semisimple, it would open many more approaches to this question, similar to the one taken in this section.


\section{Future Directions. Open Questions.}\label{sec:open}
In addition to the questions already mentioned in Section \ref{sec:hyp}, we list here several open problems we hope to address in the future. Most of these are motivated by corresponding results for the (co)homology of the Milnor fiber $F_x$ associated to a complex hypersurface singularity germ $f\colon (\C^n,x) \to (\C,0)$. We aim to globalize such statements by replacing the (co)homology of the Milnor fiber $F_x$ with the torsion part $$A_*(U^f;\Q):=\Tors_R H_*(U^f;\Q) $$
of the homology Alexander modules associated to an algebraic map $f\colon U\to \C^*$.

\subsection{Semisimplicity}\index{semisimple}
In subsequent work \cite{tsemisimple}, two of the authors of this paper have proved the following:
\begin{thm}\label{ts}
Let $t_s$ denote the semisimple part of the $t$-action on $A_*(U^f;\Q)$. It is a morphism of MHS.
\end{thm}
This is a generalization of our result from Corollary \ref{cor:t}, which proves the above statement in the case $t=t_s$. It is motivated by the corresponding results for the semisimple part of the monodromy operator acting on the Milnor fiber cohomology, and, respectively, on the cohomology of the generic fiber of a proper family $f\colon U \to \Delta^*$ over a punctured disc (e.g., see \cite[Th\'eor\`eme 15.13]{NA}).

\begin{open}\label{q:semisimple}
Find examples of pairs $(U,f)$, with $U$ a smooth connected complex algebraic manifold and $f\colon U \to \C^*$ an algebraic map, for which $A_i(U^f;\Q)$ is not a semisimple $R$-module for some $i$
\end{open}

In many of the algebraic situations considered in this paper, we have in fact that $A_i(U^f;\Q)$ is semisimple for all $i$. This applies, in particular, to the following cases:
\begin{itemize}
\item When $f\colon U=\C^n \setminus \{f=0\} \to \C^*$ is induced by a complex polynomial $f\colon \C^n \to \C$ which is transversal at infinity (see \cite{DL, Max06}), e.g., $f$ could be a homogeneous polynomial. More generally, $A_i(U^f;\Q)$ is semisimple in the case of Setting \ref{set:DL} (see \cite{DL}).
\item When $f\colon U \to \C^*$ is a projective submersion of smooth complex algebraic varieties, the semisimplicity of $A_i(U^f;\Q)$ is a consequence of Deligne's decomposition theorem (see Remark \ref{rem:smf}).
\item When $f\colon U \to \C^*$ is a proper algebraic map, the semisimplicity of $A_i(U^f;\Q)$ is a consequence of the decomposition theorem of Beilinson-Bernstein-Deligne \cite{BBD} (see Corollary \ref{corss}).
\end{itemize}

Let us also point out here that the semisimplicity property does not hold in general in the local situation, that is, for the monodromy operator acting on the (co)homology of the Milnor fiber associated to a complex hypersurface singularity germ (see, e.g., the discussion in \cite[Section I.9]{Ku}).

There have also been some recent developments in relation to Question~\ref{q:semisimple}. In \cite{Lib21}, Libgober has found such an example $(U,f)$, where $U$ is a smooth variety, but $f$ is a continuous map to $S^1$. In \cite{EHMW}, we describe an example $(U,f)$, where $f$ is an algebraic map to $\C^*$, but this time $U$ is singular. No examples of $(U,f)$ are known to us where $U$ is smooth and $f$ is an algebraic map to $\C^*$.

\subsection{Finite Type Invariants} 
Despite the fact that (unlike the Milnor fiber of a hypersurface singularity germ) the infinite cyclic cover $U^f$ is not in general a CW complex of finite type, one can associate finite type invariants to $U^f$ (or better said, to the pair $(U,f)$) in terms of the $R$-torsion part $A_*(U^f;\Q)$ of the Alexander modules. For instance, one can define:
\begin{itemize}
\item Betti numbers: $b_i(U,f):=\dim_\Q A_i(U^f;\Q)$.
\item mixed Hodge numbers: $h^{p,q,i}(U,f):=\dim_\C Gr^p_FGr_{p+q}^WA_i(U^f;\C)$.
\item spectral pairs: since the semisimple part $t_s$ of the $t$-action is a MHS morphism (cf. Theorem \ref{ts}), we let $h_\alpha^{p,q,i}(U,f)$ denote the dimension of the $\lambda$-eigenspace for the $t_s$-action on $Gr^p_FGr_{p+q}^WA_i(U^f;\C)$, where $\lambda=\exp(2\pi i \alpha)$ and $\alpha \in [0,1)$. The collection $\{h_\alpha^{p,q,i}(U,f)\}$ forms the {\it spectral pairs} of the $t_s$-action on the MHS $A_i(U^f;\Q)$.
\end{itemize}
In future work, we aim to investigate such finite-type invariants of the pair $(U,f)$; compare with \cite{LiMa} for a special case. 

In the case when $U$ is the complement of an essential hyperplane arrangement, it is also natural to ask about the combinatorial nature of such finite type invariants on $A_i(U^f;\Q)$.
This question is motivated by similar open problems in the case of central arrangements, where, for instance, it is still unknown if the Betti numbers of the associated Milnor fiber are determined by the intersection lattice of the arrangement. See, e.g., \cite{LiMa} for results on the case of complements of line arrangements which are transversal at infinity, and also \cite{budursaito}, \cite{DiLe}, \cite{Yo} for the combinatorial invariance of the {\it Hodge spectrum} of a central arrangement and variants of this result.

\subsection{Motivic Realization} Motivated by connections between the Igusa zeta functions, Bernstein--Sato polynomials and the topology of hypersurface singularities, Denef and Loeser introduced in \cite{DeLo} the {\it motivic Milnor fiber} of a hypersurface singularity germ. This is a virtual variety endowed with an action of the group scheme of roots of unity, from which one can retrieve several invariants of the (topological) Milnor fiber, such as the Euler characteristic, Hodge spectrum, etc. More generally, to any (finite type) infinite cyclic cover associated to a punctured neighborhood of a divisor on a smooth quasiprojective variety, one attached in \cite{GLM1, GLM2} a {\it motivic infinite cyclic cover}. This is an element in the Grothendieck ring $K_0(\text{\rm Var}^{\hat \mu}_\C) $ of complex algebraic varieties endowed with a good action of the group scheme $\hat \mu$ of roots of unity, whose Betti realization recovers (upon taking degrees) the Euler characteristic of the (topological) infinite cyclic cover of the punctured neighborhood (see \cite[Proposition 4.2]{GLM1}). In the terminology of \cite[Section 4]{GLM1}, we can therefore ask the following.
\begin{open}
With $U$ and $f\colon U \to \C^*$ as above, does there exist an element in (a certain localization of) $K_0(\text{\rm Var}^{\hat \mu}_\C) $, whose Betti realization is given by 
$\sum_i (-1)^i [A_i(U^f;\Q)] \in K_0(V_\Q^{\text{aut}})$? Under the semisimplicity assumption for the $t$-action, a similar question can be asked about the Hodge realization of such a motive.

\end{open}
Here, $K_0(V_\Q^{\text{aut}})$ denotes the Grothendieck ring of the category of finite dimensional $\Q$-vector spaces endowed with a finite order automorphism (which in our case is given by the semisimple part of the $t$-action).

\subsection{Mixed Hodge Module Realization}
To each complex algebraic variety $X$, M. Saito \cite{Sai} associated an abelian category $\text{MHM}(X)$ of algebraic mixed Hodge modules on $X$, in such a way that Deligne's category of mixed Hodge structures is recovered as mixed Hodge modules over a point space. Mixed Hodge modules are extensions in the singular context of (admissible) variations of mixed Hodge structures, and can be regarded, informally, as sheaves of mixed Hodge structures. Hypercohomology groups of a variety, with coefficients in a complex of mixed Hodge modules, are naturally endowed with mixed Hodge structures.

In recent decades, Saito's theory has been very successful at constructing mixed Hodge structures on new entities (e.g., on intersection cohomology groups of complex algebraic varieties), as well as recovering previously known such structures (see, e.g., \cite[Chapter 11]{Max-book} for an overview). It is therefore natural to ask the following.
\begin{open}
Given the pair $(U,f)$ as before, can one recover the mixed Hodge structures on $A_*(U^f;\Q)$ via Saito's mixed Hodge module theory?
\end{open}
For instance, motivated by Corollary~\ref{torsion}, one can try to define a (complex of) mixed Hodge module(s) whose underlying rational complex is $\ov\cL\otimes_R R_m$, and such that the map $\ov\cL\otimes_R R_m \hookrightarrow \ov\cL\otimes_R R_{2m}$ given by multiplication by $\log(t)^m$ comes from a map in $D^b\text{MHM}(U)$. Then for $m\gg 0$, the torsion part of the cohomology Alexander modules would inherit the MHS on the kernel of the map induced in cohomology by $\cdot \log(t)^m$. See also \cite{EHMW} for very recent developments regarding this question.

Let us just note here that the limit mixed Hodge structure of Chapter~\ref{rellim} has such a mixed Hodge module realization. This is due to the fact that the nearby cycle functor of constructible sheaves lifts to the derived category of bounded complexes of mixed Hodge modules. Therefore, one can ask the following.
\begin{open}
Is the comparison map of Theorem \ref{thm:limitMap} induced by a map of complexes of mixed Hodge modules?
\end{open}

\subsection{Comparison to the Limit Mixed Hodge Structure in the Nonproper Case}
Under certain assumptions on $f$, a limit mixed Hodge structure can be defined even if the map $f\colon U \to \C^*$ is not proper, see \cite[Section 5]{SZ} and also \cite{EZ1,EZ2}. It would therefore be interesting to see if Theorem \ref{comp}  holds without the properness assumption; we hope to address this general situation in future work.

\subsection{Generalizations to other Algebraic Maps}

Let $f\colon X\to Y$ be an algebraic map of connected complex algebraic varieties. Any such map is homotopy equivalent to a fibration over $Y$. The fiber of this fibration, denoted by $E_f$, is called the homotopy fiber of $f$. For instance, if $Y$ is an aspherical space and $f$ induces an epimorphism on $\pi_1$, then $E_f$ is the covering space of $X$ defined by the kernel of $\pi_1(X) \to \pi_1(Y)$.
In \cite{hain1987rham}, Hain proved that if the cohomology groups of $E_f$ are finite dimensional and $\pi_1(Y)$ acts unipotently on them, then $H^*(E_f;\Q)$ have natural mixed Hodge structures. This suggests generalizations of our results to allow singularities as well as to more general algebraic maps. 
\begin{open}
Can Theorem \ref{mhsexistence} be generalized to arbitrary complex algebraic varieties $U$?
\end{open}
\begin{open}
Does Theorem \ref{mhsexistence} generalize to other algebraic maps $f\colon X\to Y$?
\end{open}

Let $f\colon X\to Y$ and $E_f$ be defined as above, where $Y$ is semi-abelian (e.g., $f$ is the Albanese map for $X$). Let $\pi:E_f\to X$ be the corresponding covering space. Each cohomology group $H^i(X; \pi_!\underline{\Q}_{E_f})$ has a natural $\Q[\pi_1(Y)]$-module structure. Moreover, $H^i(X; \pi_!\underline{\Q}_{E_f})$ is a finitely generated $\Q[\pi_1(Y)]$-module, so it contains a unique maximal $\Q[\pi_1(Y)]$-submodule that is a finite dimensional $\Q$-vector space and which is a natural generalization to the torsion part of the Alexander module considered in this paper. We can therefore ask whether this submodule admits a natural mixed Hodge structure. In the case where $Y=(\C^*)^r$ is an algebraic torus (and $X$ is arbitrary), a mixed Hodge structure has been recently constructed in \cite{EHMW} by different methods  (see \cite[Section 1.4]{EHMW} for a comparison with results of this paper).

\subsection{Coverings which are not Realized by Algebraic Maps}

The fact that the epimorphism $\xi\colon \pi_1(U) \to \Z$ is realized by an algebraic map $f\colon U \to \C^*$ plays an essential role in proving the results of this paper. It is however natural to investigate (variants of) our original Question \ref{conj} in more topological contexts when an algebraic realization of $\xi$ is not readily available (e.g., if $U$ is a smooth complex projective variety).


\phantomsection
\addcontentsline{toc}{chapter}{Bibliography}

\let\oldaddcontentsline\addcontentsline
\renewcommand{\addcontentsline}[3]{}

\let\addcontentsline\oldaddcontentsline

	\printindex
	
\end{document}